\DeclareMathAlphabet{\mathpzc}{OT1}{pzc}{m}{it}
\newtheorem{theorem}{Theorem}[section]
\newtheorem{lemma}[theorem]{Lemma}
\newtheorem{proposition}[theorem]{Proposition}
\newtheorem{corollary}[theorem]{Corollary}
\theoremstyle{definition}
\newtheorem{definition}[theorem]{Definition}
\newtheorem{observation}[theorem]{Lemma}
\newtheorem{assumption}[theorem]{Assumption}
\newtheorem{remark}[theorem]{Remark}
\numberwithin{equation}{section}
\acrodef{KPZ}{Kardar--Parisi--Zhang}
\acrodef{SHE}{Stochastic Heat Equation}
\acrodef{LDP}{Large Deviation Principle}
\newcounter{dummy}
\renewcommand{\Pr}{\mathbb{P}}
\newcommand{\Ex}{\mathbb{E}}
\renewcommand{\d}{d}	
\newcommand{\ind}{\mathbf{1}}	
\newcommand{\calI}{\mathbb{Z}^{\operatorname{half}}}
\newcommand{\se}[2]{S_{#1}(#2)}
\newcommand{\pr}[3]{\Pr^{#1;#2 #3}}
\newcommand{\tpr}[3]{\til{\Pr}^{#1;#2 #3}}
\newcommand{\wprw}[2]{{\Pr}_{\operatorname{WPRW}}^{#1;#2}}
\newcommand{\prw}[2]{{\Pr}_{\operatorname{PRW}}^{#1;#2}}
\newcommand{\erw}[2]{{\Ex}_{\operatorname{PRW}}^{#1;#2}}
\newcommand{\SSS}{Q}
\newcommand{\ise}{S_1(1)}
\newcommand{\iise}{S_2(1)}
\newcommand{\iks}{S_1(k)}
\newcommand{\iiks}{S_2(k)}
\newcommand{\kis}{S_i(k)}
\newcommand{\isn}{S_1(n)}
\newcommand{\iisn}{S_2(n)}
\newcommand\myitem[1][]{\item[#1]\refstepcounter{dummy}\def\@currentlabel{#1}}
\newcommand*{\mprime}{\ensuremath{'}}
\newcommand{\ihs}{\mathcal{I}^{(N)}}
\newcommand{\is}{\mathcal{I}_{\mathbf{sym}}^{(N)}}
\let\yt\ll
\renewcommand{\ll}{\llbracket}
\newcommand{\rr}{\rrbracket}
\newcommand{\fa}{\mathsf{f}_{\theta}}
\newcommand{\ffa}{\mathsf{f}}
\newcommand{\ga}{\mathsf{g}_{\zeta}}
\newcommand{\zl}{Z_{N}^{\operatorname{line}}}
\newcommand{\blue}{\textcolor{blue}{\textbf{blue}}}
\newcommand{\black}{\textcolor{black}{\textbf{black}}}
\newcommand{\red}{\textcolor{red}{\textbf{red}}}
\newcommand{\purple}{\textcolor{yellow!70!black}{\textbf{yellow}}}
\newcommand{\yellow}{\textcolor{gray}{\textbf{gray}}}
\newcommand{\green}{\textcolor{green!70!black}{\textbf{green}}}
\newcommand{\zh}{Z}
\newcommand{\hslg}{\mathpzc{HSLG}}
\newcommand{\btf}{bottom-free}
\newcommand{\Con}{\mathrm{C}} 
\newcommand{\R}{\mathbb{R}} 
\newcommand{\Z}{\mathbb{Z}} 
\newcommand{\wcr}{W_{\operatorname{cr}}}
\newcommand{\wsc}{W_{\operatorname{sc}}}
\newcommand{\qo}{\xi}
\newcommand{\e}{\varepsilon}
\renewcommand{\L}{\mathcal{L}}
\newcommand{\m}{\mathsf}
\newcommand{\til}{\widetilde}
\renewcommand{\bar}{\overline}
\renewcommand{\ni}{\mathsf{NI}}
\newcommand{\psa}{\Pr_{\alpha}^{\vec{y},\vec{z};k,T}}
\newcommand{\esa}{\Ex_{\alpha}^{\vec{y},\vec{z};k,T}}
\newcommand{\ps}{\Pr^{\vec{y},\vec{z};k,T}}
\newcommand{\es}{\Ex^{\vec{y},\vec{z};k,T}}
\title[KPZ exponents for the half-space log-gamma polymer]{KPZ exponents for the half-space log-gamma polymer}
\author[G.\ Barraquand]{Guillaume Barraquand}
\address{G.\ Barraquand,
	Laboratoire de physique de l'\'ecole normale sup\'erieure, ENS, Universit\'e PSL, ´
	CNRS, Sorbonne Universit\'e, Universit\'e de Paris, Paris, France
}
\email{guillaume.barraquand@ens.fr}
\author[I.\ Corwin]{Ivan Corwin}
\address{I.\ Corwin,
	Department of Mathematics, Columbia University,
	\newline\hphantom{\quad \ \ I. Corwin}
	2990 Broadway, New York, NY 10027 USA
}
\email{ivan.corwin@gmail.com}
\author[S.\ Das]{Sayan Das}
\address{S.\ Das,
	Department of Mathematics, University of Chicago,
	\newline\hphantom{\quad \ \ S. Das}
	5734 S.~University Avenue, Chicago, Illinois 60637 USA
}
\email{sayan.das@columbia.edu}
\begin{document}
	
	\begin{abstract} We consider the point-to-point log-gamma polymer of length $2N$ in a half-space with i.i.d. $\operatorname{Gamma}^{-1}(2\theta)$ distributed bulk weights and i.i.d. $\operatorname{Gamma}^{-1}(\alpha+\theta)$ distributed
boundary weights for $\theta>0$ and $\alpha>-\theta$. We establish the KPZ exponents ($1/3$ fluctuation and $2/3$ transversal) for this model when $\alpha=N^{-1/3}\mu$  for $\mu\in \R$ fixed (critical regime) and when $\alpha>0$ is fixed (supercritical regime). In particular, in these two regimes, we show that after appropriate centering, the free energy process with spatial coordinate scaled by  $N^{2/3}$ and fluctuations scaled by $N^{1/3}$ is tight. These regimes correspond to a polymer measure which is not pinned at the boundary.

This is the first instance of establishing the $2/3$ transversal exponent for a positive temperature half-space model, and the first instance of the $1/3$ fluctuation exponent besides precisely at the boundary where recent work of \cite{ims22} applies and also gives the exact one-point fluctuation distribution (our methods do not access exact fluctuation distributions).

Our proof relies on two inputs -- the relationship between the half-space log-gamma polymer and half-space Whittaker process (facilitated by the geometric RSK correspondence as initiated in \cite{cosz, osz}), and an identity in \cite{bw} which relates the point-to-line half-space partition function to the full-space partition function for the log-gamma polymer.

The primary technical contribution of our work is to construct the half-space log-gamma Gibbsian line ensemble and develop, in the spirit of work initiated in \cite{ch14}, a toolbox for extracting tightness and absolute continuity results from minimal information about the top curve of such half-space line ensembles. This is the first study of half-space line ensembles.
\end{abstract}
	
	\maketitle
	{
		\hypersetup{linkcolor=black}
		\setcounter{tocdepth}{1}
		\tableofcontents
	}	
	\section{Introduction}

	\subsection{The model and the main results}\label{sec:1.1} Fix $\vec{\theta}:=(\theta_i)_{i\in \mathbb{Z}_{\ge 1}}$ such that $\theta_i>0$ for all $i\in \mathbb{Z}_{\geq 1}$ and $\alpha>-\min\{\theta_i: i\in \mathbb{Z}_{\ge 1}\}$. Consider a family of independent random variables $(W_{i,j})_{(i,j)\in \calI}$ with $\calI:=\{(i,j) \in (\mathbb{Z}_{\ge 1})^2 :  j\le i\}$  such that
\begin{align}\label{eq:wt}
W_{i,j}\sim \operatorname{Gamma}^{-1}(\alpha+\theta_j)  \textrm{ for } i=j \qquad \textrm{and } \quad W_{i,j}\sim \operatorname{Gamma}^{-1}(\theta_i+\theta_j) \textrm{ for } j<i,
	\end{align}
where $X\sim \operatorname{Gamma}^{-1}(\beta)$ means  $X$ is a random variable with density $\ind_{x>0}\Gamma^{-1}(\beta)x^{-\beta-1}e^{-1/x}$. A directed lattice path $\pi=\big((x_i,y_i)\big)_{i=1}^k$ confined to the half-space index set $\calI$ is an up-right path with all $(x_i,y_i)\in \calI$, such that it only makes unit steps in the coordinate directions, i.e., $(x_{i+1},y_{i+1})=(x_i,y_i)+(0,1)$ or $(x_{i+1},y_{i+1})=(x_i,y_i)+(1,0)$; see Figure \ref{fig000}.  Given $(m,n)\in \calI$, we denote $\Pi_{m,n}$ to be the set of all directed paths from $(1,1)$ to $(m,n)$ confined to $\calI$.  Given the random variables from \eqref{eq:wt}, we define the weight of a path $\pi$ and the point-to-point partition function of the half-space log-gamma ($\hslg$) polymer as
	\begin{equation}\label{eq:wz}
		w(\pi):=\prod_{(i,j)\in \pi} W_{i,j},  \qquad Z_{(\alpha,\vec\theta)}(m,n):=\sum_{\pi\in \Pi_{m,n}} w(\pi).
	\end{equation}

	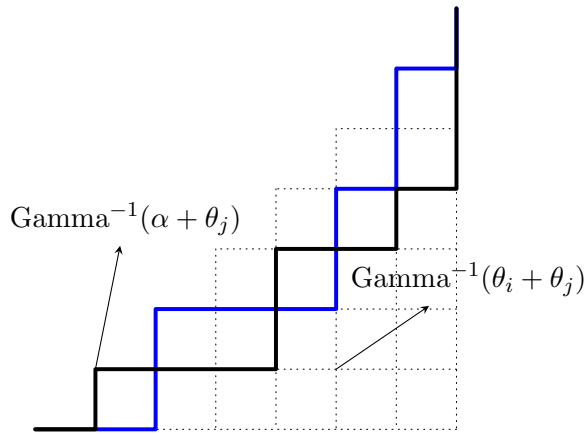
\begin{figure}[h!]
		\centering
		\begin{tikzpicture}[line cap=round,line join=round,>=triangle 45,x=0.8cm,y=0.8cm]
			\foreach \x in {0,1,2,...,7}
			{
				\coordinate (A\x) at ($(9,0)+(\x,0)$) {};
				\coordinate (B\x) at ($(9,0)+(0,\x)$) {};
				\draw[dotted] ($(A\x)+(0,\x)$) -- ($(A\x)$);
				\draw[dotted] ($(B\x)+(7,0)$) -- ($(B\x)+(\x,0)$);
			}
			\draw [line width=1.5pt,color=blue] (9,0)-- (11,0)--(11,2)--(14,2)--(14,4)--(15,4)--(15,6)--(16,6)--(16,7);
			\draw [line width=1.5pt] (9,0)-- (10,0)--(10,1)--(13,1)--(13,3)--(15,3)--(15,4)--(16,4)--(16,7);
			\node[] (A) at (10.5,3.5) {$\operatorname{Gamma}^{-1}(\alpha+\theta_j)$};
			\node[] (B) at (16.2,2.5) {$ \operatorname{Gamma}^{-1}(\theta_i+\theta_j)$};
			\draw[-stealth] (10,1) -- (A);
			\draw[-stealth] (14,1) -- (B);
		\end{tikzpicture}

		\caption{Vertex weights for the half-space log-gamma polymer (with $i=6,j=2$) and two possible paths (one marked in blue and the other in black) in $\Pi_{8,8}$.}
		\label{fig000}
	\end{figure}

\bigskip

{\bf Unless otherwise noted, all of our results and discussions below pertain to the \textit{homogeneous} polymer model where all the $\theta_i$'s are set equal to some $\theta>0$.} In that case, we write $Z_{(\alpha,\theta)}$ for $Z_{(\alpha,\vec{\theta})}$. We include the inhomogeneities when introducing the half-space log-gamma Gibbs property and line ensemble as well as when proving the key tool of stochastic monotonicity. As these key tools extend to the inhomogeneous case, we expect our methods and results should be likewise extendable, though do not pursue that here.

\bigskip

The parameter $\alpha$ controls the strength of the boundary weights and there is a phase transition in the behavior of this model at $\alpha=0$. In our current work we will probe the behavior in the critical regime where $\alpha$ is in a scaling window of order $N^{-1/3}$ of $0$, as well as in the supercritical regime when $\alpha$ is strictly positive. The subcritical regime may be probed in subsequent work as described in Section \ref{sec:relatedwork}. This phase transition has been the subject of quite a lot of previous work, some of which we review in Section \ref{sec:relatedwork}. The basic picture (some as of yet unproved) is as follows. For $\alpha\geq 0$ the free energy (i.e., log of the partition function) should demonstrate the KPZ  $1/3$ fluctuation  and $2/3$ transversal scaling exponents as well as certain universal limiting distributions. Here the transversal scaling references both the $N^{2/3}$ fluctuations of the endpoint of the length $2N$ half-space polymer as well as the $N^{2/3}$ correlation length of the free energy as a function of $(m,n)$ subject to $m+n=2N$. For $\alpha<0$ the situation is different -- the free energy fluctuations should be of order $N^{1/2}$, the endpoint should fluctuate transversally in an order one scale (i.e., not growing with $N$), while the free energy correlation length should be of order $N$ and the limiting distributions should be Gaussian. To be clear, in terms of the polymer measure, this phase transition relates to the pinning ($\alpha<0$) or unpinning ($\alpha\geq 0$) of the path from the diagonal.

Our main result captures the KPZ scaling exponents in the critical and subcritical regimes.
	
	\begin{theorem} \label{t:main0} Fix $\theta,r>0$. For each $\alpha>-\theta$, $s\in [0,r]$, and $N\ge \max\{3,r^3\}$ define the centered and scaled $\hslg$ free energy process
		\begin{equation}\label{eq:fnalpha}
			\mathcal{F}_N^{\alpha}(s):=
			\dfrac{\log Z_{(\alpha,\theta)}(N+sN^{2/3},N-sN^{2/3})+2N\Psi(\theta)}{N^{1/3}}.
		\end{equation}
		Here $\Psi$ denotes the digamma function defined on $\R_{>0}$ by
		\begin{align}\label{psidef}
			\Psi(z):=\partial_z\log \Gamma(z)=-\gamma+\sum_{n=0}^{\infty} \left(\frac1{n+1}-\frac1{n+z}\right),
		\end{align}
		where $\gamma$ is the Euler-Mascheroni constant. The function $\mathcal{F}_N^{\alpha}(\cdot)$ is linearly interpolated in between points where $Z_{(\alpha,\theta)}$ is defined. Let $\Pr^N_{\alpha}$ denotes the law of $\mathcal{F}_N^{\alpha}(\cdot)$ as a random variable in  $(C[0,r],\mathcal{C})$ -- the space of continuous functions on $[0,r]$ equipped with uniform topology and Borel $\sigma$-algebra $\mathcal{C}$. Then the  following holds.
		
		\begin{enumerate}[leftmargin=15pt]
			\item  The sequence $\Pr^N_{\alpha}$ is tight for each $\alpha\in (0,\infty)$.
			\item  For $\alpha_N=N^{-1/3}\mu$ with $\mu\in \R$ fixed (noting that for large enough $N$, $\alpha_N>-\theta$, and thus $\mathcal{F}_N^{\alpha_N}(\cdot)$ is well-defined), the sequence $\Pr^N_{\alpha_N}$ is tight.
		\end{enumerate}
	\end{theorem}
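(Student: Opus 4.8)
The plan is to combine two a priori one-point estimates for the free energy with a Gibbsian line ensemble bootstrap, using exactly the two inputs advertised above: the identity of \cite{bw}, which expresses the half-space point-to-line partition function $\sum_{m+n=2N} Z_{(\alpha,\theta)}(m,n)$ as, in law, a full-space log-gamma partition function — for which the $1/3$ fluctuation exponent and matching one-sided tail bounds are by now classical; and the geometric RSK correspondence of \cite{cosz,osz}, which realizes $Z_{(\alpha,\vec\theta)}$ as a marginal of a half-space Whittaker process and, after an appropriate identification, as the top curve of a half-space analogue of the log-gamma line ensemble carrying a Gibbs resampling property. Parts (1) and (2) of Theorem~\ref{t:main0} should come out of the same argument, differing only in the value of the boundary parameter fed through the inputs.

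First I would extract a one-point \emph{upper} tail for $\mathcal F_N^\alpha(s)$, uniform in $s\in[0,r]$ and in $\alpha$ over both ranges: since every directed path has nonnegative weight, $Z_{(\alpha,\theta)}(N+sN^{2/3},N-sN^{2/3})\le \sum_{m+n=2N} Z_{(\alpha,\theta)}(m,n)$, so the bound descends from the \cite{bw} identity and the full-space upper-tail estimate, once one checks that the parameter matching places the free energy on the stated scale (centering $-2N\Psi(\theta)$, fluctuations $N^{1/3}$). Next I would obtain a one-point \emph{lower} bound near the diagonal: most cleanly from the exact one-point asymptotics of \cite{ims22} at the diagonal endpoint, or — avoiding that input — from the point-to-line lower bound of \cite{bw} together with a pigeonhole argument, since only $O(N^{2/3})$ of the point-to-line summands are non-negligible (the free-energy profile being parabolic and concentrated in a window of width $O(N^{2/3})$ about the diagonal), so the largest summand is within $O(\log N)=o(N^{1/3})$ of the whole sum; this shows the top curve exceeds $-C$ at some point of a slightly enlarged interval with probability close to one.

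The substance of the proof is then constructing the half-space line ensemble and its Gibbs property, and running the bootstrap. Via geometric RSK in the half-space one builds a $\mathbb{Z}_{\ge1}$-indexed ensemble whose top curve, suitably centered and scaled, agrees with $\mathcal F_N^\alpha(\cdot)$ up to negligible corrections, and one must establish: (a) a strong (spatial-Markov) Gibbs property — conditionally on a curve outside an interval and on the curve just below, the curve is a random-walk bridge reweighted by a factor of the form $\exp(-\sum e^{\,\cdot\,})$ encoding the repulsion between consecutive curves, together with an extra boundary term near the diagonal encoding the $\operatorname{Gamma}^{-1}(\alpha+\theta)$ weights; and (b) stochastic monotonicity, in the boundary data (so the ensemble can be sandwiched between ones with simple boundary conditions) and in $\alpha$ (so the critical and supercritical ensembles, and the lower bound, can be transported across the scaling window). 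Granting (a)--(b), the minimal input above — curve bounded above everywhere on the enlarged interval, bounded below at one point — propagates: the Gibbs property rules out deep downward excursions of the top curve as too costly under the tilted-bridge law, upgrading the single lower bound to a uniform pointwise lower bound on $[0,r]$, and then two-sided one-point control plus the random-walk-bridge description of the conditional law forces an equicontinuity estimate; tightness in $(C[0,r],\mathcal C)$ follows by Arzel\`a--Ascoli. This is the half-space analogue of the program of \cite{ch14}.

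The main obstacle I expect is precisely step (a)--(b). This being the first construction of a half-space line ensemble, one must pin down which geometric-RSK observables assemble into a consistent line ensemble and what the resampling kernel is at the boundary, and then prove stochastic monotonicity there — the boundary interaction is the delicate point, and monotonicity in $\alpha$ is likely the single most technical ingredient, since it is what lets the critical and supercritical regimes be treated together and the one-point lower bound be imported across the window. A secondary difficulty, intertwined with the bootstrap, is that \cite{bw} supplies only point-to-line information, so passing to the point-to-point, process-level statement — controlling $\mathcal F_N^\alpha$ uniformly over $[0,r]$ rather than pointwise, and ruling out fluctuations on a scale exceeding $N^{1/3}$ anywhere in the interval — is exactly where the Gibbs machinery must do genuine work rather than bookkeeping.
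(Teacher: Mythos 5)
Your overall skeleton --- upper-tail control by dominating the point-to-point partition function with the point-to-line one, a one-point lower bound at \emph{some} point of an $O(N^{2/3})$ window via the pigeonhole/parabola argument, then a Gibbsian bootstrap to uniform control and equicontinuity --- is the same high-level strategy as the paper (your pigeonhole step is Proposition \ref{p:high}, the ensemble and its Gibbs property are Theorem \ref{thm:conn}, and monotonicity in the boundary data is Proposition \ref{p:gmc}). However, two of your pivotal steps do not go through as stated. First, you propose to transport estimates between the critical and supercritical regimes by ``monotonicity in $\alpha$''. No such monotonicity is proved or used here: Proposition \ref{p:gmc} gives monotonicity only in the boundary data for fixed parameters, and the two regimes are handled by genuinely different arguments (a KMT-coupling representation of the bottom-free measure when $\alpha=N^{-1/3}\mu$, Section \ref{sec5.3}, versus the weighted paired random walk representation when $\alpha=\zeta>0$, Section \ref{sec5.5}); it is doubtful that a useful stochastic ordering in $\alpha$ even holds, and in any case it is not the mechanism that makes the boundary tractable.

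Second, and this is the substantive gap, your bootstrap treats the top curve as a single reweighted bridge whose deep downward excursions are penalized by the Gibbs weight. In the half-space model the boundary factor $e^{-\alpha(L_1(1)-L_2(1))}$ pairs the first two curves, so any resampling of $\L_1^N$ all the way to the boundary requires quantitative control of $\L_2^N$ (and, for the modulus of continuity, separation of the pair from $\L_3^N$); the conditional law of $\L_1^N$ alone is not a tilted bridge with known data. The actual argument must therefore (i) first locate a high point of the \emph{second} curve at a random location of order $N^{2/3}$ (Theorem \ref{p:high2}: if $\L_2^N$ were uniformly low, $\L_1^N$ would behave like an unreweighted bridge, i.e.\ linearly, contradicting the parabolic profile supplied by \cite{bw}); (ii) analyze the two-curve bottom-free measure, i.e.\ the weighted paired random walk law of Definition \ref{prb}, and prove matched $n^{-1/2}$ upper and lower bounds on $\Ex_{\operatorname{PRW}}[\wsc]$ to obtain endpoint lower-tightness and the positive-probability separation (region pass) event, Propositions \ref{lem:ep} and \ref{l:rpass}; and (iii) run a size-biasing step that upgrades separation from positive to high probability before the modulus-of-continuity estimate (Proposition \ref{e:mcd}), after which tightness follows as you say from endpoint tightness (Theorem \ref{thm:eptight}) and the standard criterion. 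None of these ingredients appear in your outline, and without (i) you cannot even specify the boundary data at which to begin the conditioning: a single lower bound on $\L_1^N$ at one point, combined with the Gibbs property for $\L_1^N$ alone, does not yield a uniform lower bound because the conditional law depends on the unknown curve $\L_2^N$, which near the boundary is itself pinned to $\L_1^N$.
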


This theorem is proved at the beginning of Section \ref{sec:mc}.

As discussed below, it is possible to show (e.g. using the ideas of \cite{bcd}) absolute continuity of the limit points in Theorem \ref{t:main0} with respect to certain Brownian measures. We do not pursue this here, but remark further about this and related directions below (see the end of Section \ref{sec:hsle}).

The rest of this introduction is structured as follows. Section \ref{sec:hsle} introduces the idea of a half-space Gibbsian line ensemble, the study of which constitutes the key technical innovation responsible for the above theorem. Section \ref{sec:ideas} provides a precise definition of the half-space log-gamma line ensemble and Gibbs property, the key input from \cite{bw} and then a sketch of the steps to proving Theorem \ref{t:main0}. Finally, Section \ref{sec:relatedwork} reviews some related work in studying half-space polymer and related models (Section \ref{sec:hsle}  reviews the literature on Gibbsian line ensembles).

\subsection{Half-space Gibbsian line ensembles}\label{sec:hsle}

In order to prove Theorem \ref{t:main0} we develop a new probabilistic structure -- half-space Gibbsian line ensembles -- and introduce a toolbox through which to study limits of such ensembles. A remarkable fact, due to the geometric RSK correspondence \cite{cosz,osz, nz, bz19b} and the half-space Whittaker process \cite{barraquand_borodin_corwin_2020}, is that the free energy process $\log Z_{(\alpha,\theta)}(N+m,N-m)$ for the log-gamma polymer can be embedded as the top labeled curve of an ensemble of log-gamma increment random walks interacting through a soft version of non-intersection conditioning and subject to an energetic interaction at the left boundary (where $m=0$) depending on the value of $\alpha$. In particular, when $\alpha>0$ the interaction on the left boundary manifests itself as an attraction between the label $2i-1$ and $2i$ curves of the line ensemble for each relevant choice of $i$; for $\alpha<0$ the interaction is repulsive while for $\alpha=0$ it is not present. We describe this line ensemble embedding in Section \ref{sec:1.2} and Section \ref{sec2.2}.

The basic premise of Gibbsian line ensembles, as initiated in the study of full-space models in \cite{ch14}, is to use the resampling invariance of a sequence of such ensembles to propagate one-point tightness information (generally for the top curve of the ensemble) into tightness of the entire sequence of ensembles. In particular once the scale of one-point fluctuations (in this case $N^{1/3}$) is known, the Gibbs property implies transversal fluctuations are correlated in a diffusive scale (in this case $N^{2/3}$) and that lower curves also all fluctuate with these exponents in the same scale. In other words, one point tightness of the top curve translates into spatial tightness of the entire ensemble. Moreover, all subsequential limits of these line ensembles enjoy, themselves, a Gibbs property corresponding to the diffusive limit of that of the pre-limiting ensembles. This general approach has been applied widely in studying a variety of different Gibbs properties related to probabilistic models, e.g. \cite{ch16,cd18,dnv19,wu2,bcd,xd1,dff,serio,wu1}. Moreover, it has been leveraged to give fine information about the local behavior of these models \cite{ham1,ham2,ham3,ham4,chh19,chaos2,chaos1,ciw1,ciw2,cgh21,dg21,wu3,dz22a,dz22b,gh22}
and in studying related scaling limits such as the Airy sheet and directed landscape \cite{dv18,dov18,bgh1,bgh2,3by2,sv21,chhm,gh21,rv21,wu23}.

In this work we initiate the study of half-space Gibbsian line ensembles. These are measures on collections of curves in which there exists a left boundary around which the Gibbs property differs from its behavior in the bulk. As an illustrative example, consider curves $\L_1(s)\geq \L_2(s)\geq \cdots$ for $s\geq 0$ which enjoy the following resampling invariance. In the bulk, for $0<s<t$ and $1\leq k_1\leq k_2$ the law of $\L_{\ll k_1,k_2\rr}([s,t])$ (i.e., curves $k_1$ through $k_2$ on the interval $[s,t]$) conditioned on the values of $\L_{\ll k_1,k_2\rr}(s)$, $\L_{\ll k_1,k_2\rr}(t)$, $\L_{k_1-1}([s,t])$ (if $k_1=1$ then $\L_0\equiv +\infty$) and  $\L_{k_2+1}([s,t])$ is that of Brownian motions conditioned to start at $s$ and end at $t$ with the correct boundary values and to not intersect each other nor the curve $\L_{k_1-1}([s,t])$ above and  $\L_{k_2+1}([s,t])$ below. Around the left boundary, for $t>0$ and $1\leq k_1\leq k_2$ the law of $\L_{\ll k_1,k_2\rr}([0,t])$ conditioned on the values of  $\L_{\ll k_1,k_2\rr}(t)$, $\L_{k_1-1}([0,t])$ and  $\L_{k_2+1}([0,t])$ is the law of Brownian motions conditioned to end at values $\L_{\ll k_1,k_2\rr}(t)$  at time $t$, not intersect with each other or the $\L_{k_1-1}$ and $\L_{k_2+1}$ curves on the interval $[0,t]$ and to have values at zero such that $\L_{2i-1}(0)=\L_{2i}(0)$ for all $i$. This last condition that is quite novel to the half-space models. An example of such an ensemble is illustrated in Figure \ref{figO1} (B). This Gibbs property arise as a diffusive limit of the half-space log-gamma Gibbs property introduced and studied here.

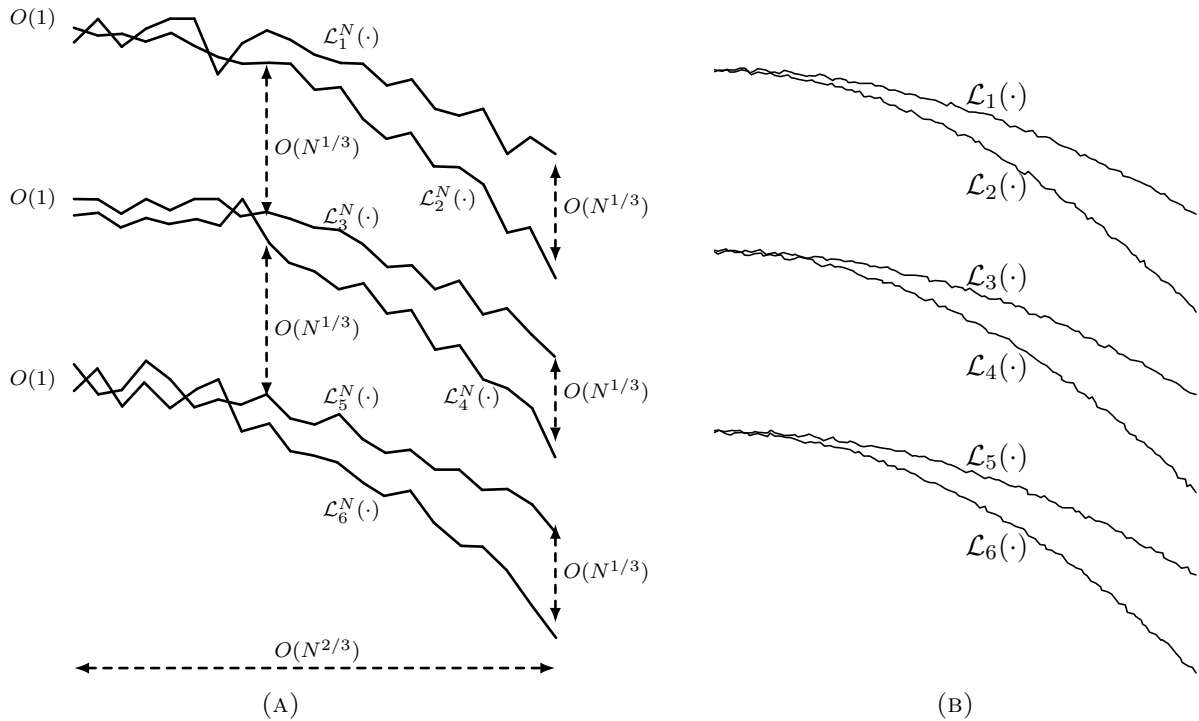
\begin{figure}[h!]
		\centering
		\begin{subfigure}[b]{0.45\textwidth}
			\centering
		\begin{tikzpicture}[scale=.8,line cap=round,line join=round,>=triangle 45,x=4cm,y=1.5cm]
			\draw [line width=1pt] (0,2.7361969564619426)-- (0.09968303324610706,2.9960253171531424);
			\draw [line width=1pt] (0.09968303324610706,2.9960253171531424)-- (0.19830773702771506,2.6884965795913667);
			\draw [line width=1pt] (0.19830773702771506,2.6884965795913667)-- (0.30006302893410286,2.889974785777928);
			\draw [line width=1pt] (0.30006302893410286,2.889974785777928)-- (0.4,3);
			\draw [line width=1pt] (0.4,3)-- (0.5,3);
			\draw [line width=1pt] (0.5,3)-- (0.5973644533669638,2.3832141676196814);
			\draw [line width=1pt] (0.5973644533669638,2.3832141676196814)-- (0.698588596048139,2.7266568810878273);
			\draw [line width=1pt] (0.698588596048139,2.7266568810878273)-- (0.8017593568577984,2.869758011699555);
			\draw [line width=1pt] (0.8017593568577984,2.869758011699555)-- (0.8990902632820055,2.7648171825842884);
			\draw [line width=1pt] (0.8990902632820055,2.7648171825842884)-- (0.9990053878836391,2.600795293991784);
			\draw [line width=1pt] (0.9990053878836391,2.600795293991784)-- (1.1079547104749525,2.5089745438145457);
			\draw [line width=1pt] (1.1079547104749525,2.5089745438145457)-- (1.2,2.5);
			\draw [line width=1pt] (1.2,2.5)-- (1.300093597749738,2.259193187756184);
			\draw [line width=1pt] (1.300093597749738,2.259193187756184)-- (1.4032643585593976,2.32597371537499);
			\draw [line width=1pt] (1.4032643585593976,2.32597371537499)-- (1.5,2);
			\draw [line width=1pt] (1.5,2)-- (1.5998727895362959,1.925290549662153);
			\draw [line width=1pt] (1.5998727895362959,1.925290549662153)-- (1.7,2);
			\draw [line width=1pt] (1.7,2)-- (1.8,1.5);
			\draw [line width=1pt] (1.8,1.5)-- (1.895758745065885,1.6867886653092736);
			\draw [line width=1pt] (1.895758745065885,1.6867886653092736)-- (2,1.5);
			\draw [line width=1pt] (0,2.8983782378219005)-- (0.09903021247502392,2.812517559454864);
			\draw [line width=1pt] (0.09903021247502392,2.812517559454864)-- (0.20025435515619922,2.8315977102030945);
			\draw [line width=1pt] (0.20025435515619922,2.8315977102030945)-- (0.2975852615804062,2.745737031836058);
			\draw [line width=1pt] (0.2975852615804062,2.745737031836058)-- (0.40398128310420617,2.841199415267653);
			\draw [line width=1pt] (0.40398128310420617,2.841199415267653)-- (0.5000335469427568,2.698036654965482);
			\draw [line width=1pt] (0.5000335469427568,2.698036654965482)-- (0.5993110714954479,2.5740156751019847);
			\draw [line width=1pt] (0.5993110714954479,2.5740156751019847)-- (0.7,2.5);
			\draw [line width=1pt] (0.7,2.5)-- (0.8116756285443952,2.510788449351374);
			\draw [line width=1pt] (0.8116756285443952,2.510788449351374)-- (0.9,2.5);
			\draw [line width=1pt] (0.9,2.5)-- (1.0003144059631806,2.2114928108856082);
			\draw [line width=1pt] (1.0003144059631806,2.2114928108856082)-- (1.1073784030298084,2.2401130370079536);
			\draw [line width=1pt] (1.1073784030298084,2.2401130370079536)-- (1.200816073197047,1.8871302481656922);
			\draw [line width=1pt] (1.200816073197047,1.8871302481656922)-- (1.2981469796212541,1.6677085145610433);
			\draw [line width=1pt] (1.2981469796212541,1.6677085145610433)-- (1.3993711223024294,1.7344890421798496);
			\draw [line width=1pt] (1.3993711223024294,1.7344890421798496)-- (1.4947554105981522,1.3624261025893578);
			\draw [line width=1pt] (1.4947554105981522,1.3624261025893578)-- (1.5998727895362959,1.3528860272152425);
			\draw [line width=1pt] (1.5998727895362959,1.3528860272152425)-- (1.701096932217471,1.162084519732939);
			\draw [line width=1pt] (1.701096932217471,1.162084519732939)-- (1.8003744567701623,0.6278402987824895);
			\draw [line width=1pt] (1.8003744567701623,0.6278402987824895)-- (1.8938121269374009,0.6850807510271805);
			\draw [line width=1pt] (1.8938121269374009,0.6850807510271805)-- (2.0008761240040287,0.12221630395438526);
			\draw [line width=1pt] (0,1)-- (0.09968303324610683,0.9960253171531422);
			\draw [line width=1pt] (0.09968303324610683,0.9960253171531422)-- (0.19636111889923094,0.8377219570130232);
			\draw [line width=1pt] (0.19636111889923094,0.8377219570130232)-- (0.3,1);
			\draw [line width=1pt] (0.3,1)-- (0.3988094042615815,0.875882258509484);
			\draw [line width=1pt] (0.3988094042615815,0.875882258509484)-- (0.5,1);
			\draw [line width=1pt] (0.5,1)-- (0.6,1);
			\draw [line width=1pt] (0.6,1)-- (0.6912407305290592,0.808874500983061);
			\draw [line width=1pt] (0.6912407305290592,0.808874500983061)-- (0.7998127387293144,0.8568021077612535);
			\draw [line width=1pt] (0.7998127387293144,0.8568021077612535)-- (0.8990902632820055,0.7804815047683322);
			\draw [line width=1pt] (0.8990902632820055,0.7804815047683322)-- (0.9964211697062124,0.6850807510271805);
			\draw [line width=1pt] (0.9964211697062124,0.6850807510271805)-- (1.1015385486443559,0.656460524904835);
			\draw [line width=1pt] (1.1015385486443559,0.656460524904835)-- (1.2,0.5);
			\draw [line width=1pt] (1.2,0.5)-- (1.3020402158782223,0.2462372838178825);
			\draw [line width=1pt] (1.3020402158782223,0.2462372838178825)-- (1.4032643585593976,0.26531743456611284);
			\draw [line width=1pt] (1.4032643585593976,0.26531743456611284)-- (1.5,0);
			\draw [line width=1pt] (1.5,0)-- (1.5998727895362959,0.09359607783203974);
			\draw [line width=1pt] (1.5998727895362959,0.09359607783203974)-- (1.699150314088987,-0.27846686175845203);
			\draw [line width=1pt] (1.699150314088987,-0.27846686175845203)-- (1.798427838641678,-0.2116863341396458);
			\draw [line width=1pt] (1.798427838641678,-0.2116863341396458)-- (1.9,-0.5);
			\draw [line width=1pt] (1.9,-0.5)-- (1.9969828877470606,-0.7459305550900955);
			\draw [line width=1pt] (0,0.8186418062647929)-- (0.10097683060350805,0.8472620323871384);
			\draw [line width=1pt] (0.10097683060350805,0.8472620323871384)-- (0.1944145007707468,0.6850807510271805);
			\draw [line width=1pt] (0.1944145007707468,0.6850807510271805)-- (0.29953187970889034,0.7900215801424474);
			\draw [line width=1pt] (0.29953187970889034,0.7900215801424474)-- (0.3949161680046132,0.7232410525236411);
			\draw [line width=1pt] (0.3949161680046132,0.7232410525236411)-- (0.50890466407512,0.7773440285883928);
			\draw [line width=1pt] (0.50890466407512,0.7773440285883928)-- (0.6071213261663665,0.7042691302093282);
			\draw [line width=1pt] (0.6071213261663665,0.7042691302093282)-- (0.7,1);
			\draw [line width=1pt] (0.7,1)-- (0.8116756285443949,0.5107884493513744);
			\draw [line width=1pt] (0.8116756285443949,0.5107884493513744)-- (0.893250408896553,0.2939376606884584);
			\draw [line width=1pt] (0.893250408896553,0.2939376606884584)-- (0.9983677878346965,0.19853690694730664);
			\draw [line width=1pt] (0.9983677878346965,0.19853690694730664)-- (1.1,0);
			\draw [line width=1pt] (1.1,0)-- (1.2027626913255312,0.06497585170969422);
			\draw [line width=1pt] (1.2027626913255312,0.06497585170969422)-- (1.300093597749738,-0.2689267863843368);
			\draw [line width=1pt] (1.300093597749738,-0.2689267863843368)-- (1.4013177404309134,-0.23076648488787616);
			\draw [line width=1pt] (1.4013177404309134,-0.23076648488787616)-- (1.5005952649836045,-0.6696099520971741);
			\draw [line width=1pt] (1.5005952649836045,-0.6696099520971741)-- (1.6018194076647798,-0.6219095752265983);
			\draw [line width=1pt] (1.6018194076647798,-0.6219095752265983)-- (1.7,-1);
			\draw [line width=1pt] (1.7,-1)-- (1.8003744567701623,-1.0989133439323568);
			\draw [line width=1pt] (1.8003744567701623,-1.0989133439323568)-- (1.9035452175798215,-1.318335077537006);
			\draw [line width=1pt] (1.9035452175798215,-1.318335077537006)-- (1.9989295058755447,-1.8621193738615707);
			\draw [line width=1pt] (2.002822742132513,-3.865535202425757)-- (1.9,-3.5);
			\draw [line width=1pt] (1.9,-3.5)-- (1.796481220513194,-3.1118692478706587);
			\draw [line width=1pt] (1.796481220513194,-3.1118692478706587)-- (1.6972036959605028,-2.854287212769549);
			\draw [line width=1pt] (1.6972036959605028,-2.854287212769549)-- (1.6076592620502324,-2.8447471373954336);
			\draw [line width=1pt] (1.6076592620502324,-2.8447471373954336)-- (1.4947554105981522,-2.587165102294324);
			\draw [line width=1pt] (1.4947554105981522,-2.587165102294324)-- (1.3974245041739453,-2.2341823134520626);
			\draw [line width=1pt] (1.3974245041739453,-2.2341823134520626)-- (1.2884138889788335,-2.2914227656967534);
			\draw [line width=1pt] (1.2884138889788335,-2.2914227656967534)-- (1.2027626913255312,-2.148321635085026);
			\draw [line width=1pt] (1.2027626913255312,-2.148321635085026)-- (1.0937520761304194,-1.9193598261062619);
			\draw [line width=1pt] (1.0937520761304194,-1.9193598261062619)-- (0.9944745515777283,-1.8430392231133403);
			\draw [line width=1pt] (0.9944745515777283,-1.8430392231133403)-- (0.8990902632820055,-1.7953388462427646);
			\draw [line width=1pt] (0.8990902632820055,-1.7953388462427646)-- (0.8116756285443951,-1.4892115506486259);
			\draw [line width=1pt] (0.8116756285443951,-1.4892115506486259)-- (0.6966419779196549,-1.5759171126381155);
			\draw [line width=1pt] (0.6966419779196549,-1.5759171126381155)-- (0.6,-1);
			\draw [line width=1pt] (0.6,-1)-- (0.5113633117794375,-1.1045969746536137);
			\draw [line width=1pt] (0.5113633117794375,-1.1045969746536137)-- (0.3988094042615815,-1.318335077537006);
			\draw [line width=1pt] (0.3988094042615815,-1.318335077537006)-- (0.29203043996444555,-1.034112711146331);
			\draw [line width=1pt] (0.29203043996444555,-1.034112711146331)-- (0.20025435515619922,-1.2992549267887756);
			\draw [line width=1pt] (0.20025435515619922,-1.2992549267887756)-- (0.09708359434653978,-0.879491610327708);
			\draw [line width=1pt] (0.09708359434653978,-0.879491610327708)-- (0,-1.1275335700547024);
			\draw [line width=1pt] (0,-0.831791233457132)-- (0.10097683060350805,-1.165693871551163);
			\draw [line width=1pt] (0.10097683060350805,-1.165693871551163)-- (0.19830773702771506,-1.1179934946805872);
			\draw [line width=1pt] (0.19830773702771506,-1.1179934946805872)-- (0.29953187970889034,-0.7936309319606714);
			\draw [line width=1pt] (0.29953187970889034,-0.7936309319606714)-- (0.4,-1);
			\draw [line width=1pt] (0.4,-1)-- (0.5000335469427568,-1.3087950021628907);
			\draw [line width=1pt] (0.5000335469427568,-1.3087950021628907)-- (0.5993110714954479,-1.222934323795854);
			\draw [line width=1pt] (0.5993110714954479,-1.222934323795854)-- (0.6927487416626866,-1.2801747760405453);
			\draw [line width=1pt] (0.6927487416626866,-1.2801747760405453)-- (0.7998127387293144,-1.165693871551163);
			\draw [line width=1pt] (0.7998127387293144,-1.165693871551163)-- (0.8990902632820055,-1.432815982026388);
			\draw [line width=1pt] (0.8990902632820055,-1.432815982026388)-- (1,-1.5);
			\draw [line width=1pt] (1,-1.5)-- (1.1015385486443559,-1.385115605155812);
			\draw [line width=1pt] (1.1015385486443559,-1.385115605155812)-- (1.198869455068563,-1.661777791005152);
			\draw [line width=1pt] (1.198869455068563,-1.661777791005152)-- (1.2942537433642858,-1.814418996990995);
			\draw [line width=1pt] (1.2942537433642858,-1.814418996990995)-- (1.3967412169145335,-1.7803544108111566);
			\draw [line width=1pt] (1.3967412169145335,-1.7803544108111566)-- (1.5,-2);
			\draw [line width=1pt] (1.5,-2)-- (1.6,-2);
			\draw [line width=1pt] (1.6,-2)-- (1.6952570778320186,-2.2246422380779474);
			\draw [line width=1pt] (1.6952570778320186,-2.2246422380779474)-- (1.8003744567701623,-2.215102162703832);
			\draw [line width=1pt] (1.8003744567701623,-2.215102162703832)-- (1.9035452175798215,-2.3868235194379053);
			\draw [line width=1pt] (1.9035452175798215,-2.3868235194379053)-- (1.9989295058755447,-2.692105931409591);
			\draw [line width=1pt,dashed,{Latex[length=2mm]}-{Latex[length=2mm]}] (0,-4.2)-- (2,-4.2);
			\draw [line width=1pt,dashed,{Latex[length=2mm]}-{Latex[length=2mm]}] (0.8,0.5)-- (0.8,-1.2);
			\draw [line width=1pt,dashed,{Latex[length=2mm]}-{Latex[length=2mm]}] (0.8,2.5)-- (0.8,0.8);
			\draw [line width=1pt,dashed,{Latex[length=2mm]}-{Latex[length=2mm]}] (2,1.4)-- (2,0.3);
			\draw [line width=1pt,dashed,{Latex[length=2mm]}-{Latex[length=2mm]}] (2,-0.75)-- (2,-1.7);
			\draw [line width=1pt,dashed,{Latex[length=2mm]}-{Latex[length=2mm]}] (2,-2.6)-- (2,-3.7);
			\begin{scriptsize}
				\draw (0.8,-4) node[anchor=west] {$O(N^{2/3})$};
				\draw (0.8,-0.2) node[anchor=north west] {$O(N^{1/3})$};
				\draw (0.8,1.8) node[anchor=north west] {$O(N^{1/3})$};
				\draw (2,1.15) node[anchor=north west] {$O(N^{1/3})$};
				\draw (2,-0.9) node[anchor=north west] {$O(N^{1/3})$};
				\draw (2,-2.9) node[anchor=north west] {$O(N^{1/3})$};
				\draw (-0.3,3) node[anchor=west] {$O(1)$};
				\draw (-0.3,1) node[anchor=west] {$O(1)$};
				\draw (-0.3,-1) node[anchor=west] {$O(1)$};
				\draw (1,2.8) node[anchor=west] {$\L_1^N(\cdot)$};
				\draw (1.4,1) node[anchor=west] {$\L_2^N(\cdot)$};
				\draw (1,0.8) node[anchor=west] {$\L_3^N(\cdot)$};
				\draw (1.5,-1.2) node[anchor=west] {$\L_4^N(\cdot)$};
				\draw (1,-1.2) node[anchor=west] {$\L_5^N(\cdot)$};
				\draw (1,-2.45) node[anchor=west] {$\L_6^N(\cdot)$};
			\end{scriptsize}
		\end{tikzpicture}
	\caption{}
		\end{subfigure}\qquad\qquad
\begin{subfigure}[b]{0.45\textwidth}
		\centering
		\begin{tikzpicture}[scale=.8,line cap=round,line join=round,>=triangle 45,x=4cm,y=1.5cm]
			\draw[semithick, decorate, decoration={random steps,segment length=2pt,amplitude=1pt}] plot[domain=0:2] (\x, {3-2*(\x)^2/5});
			\draw[semithick, decorate, decoration={random steps,segment length=2pt,amplitude=1pt}] plot[domain=0:2] (\x, {2.98-2*(\x)^2/3});
			\draw[semithick, decorate, decoration={random steps,segment length=2pt,amplitude=1pt}] plot[domain=0:2] (\x, {1-2*(\x)^2/5});
			\draw[semithick, decorate, decoration={random steps,segment length=2pt,amplitude=1pt}] plot[domain=0:2] (\x, {.98-2*(\x)^2/3});
			\draw[semithick, decorate, decoration={random steps,segment length=2pt,amplitude=1pt}] plot[domain=0:2] (\x, {-1-2*(\x)^2/5});
			\draw[semithick, decorate, decoration={random steps,segment length=2pt,amplitude=1pt}] plot[domain=0:2] (\x, {-1.02-2*(\x)^2/3});
			\draw (1,2.7) node[anchor=west] {$\L_1(\cdot)$};
			\draw (1,1.7) node[anchor=west] {$\L_2(\cdot)$};
			\draw (1,0.7) node[anchor=west] {$\L_3(\cdot)$};
			\draw (1,-.3) node[anchor=west] {$\L_4(\cdot)$};
			\draw (1,-1.3) node[anchor=west] {$\L_5(\cdot)$};
			\draw (1,-2.3) node[anchor=west] {$\L_6(\cdot)$};
		\end{tikzpicture}
			\caption{}
		\end{subfigure}
		
		\caption{(A) depicts the half-space log-gamma line ensemble for large $N$ along with the type of scalings that are deduced in proving Theorem \ref{t:main0}. This ensemble enjoys a half-space log-gamma Gibbs property. (B) depicts a potential limiting line ensemble which should enjoy a half-space non-intersecting Brownian Gibbs property.}			
\label{figO1}
	\end{figure}

Half-space Gibbsian line ensembles have not previously been studied. However, this structure exists implicitly in some previous literature studying half-space integrable probabilistic models. For instance, the half-space (or Pfaffian) Schur processes \cite{sis,br05, bbcs} have such a structure where the Brownian resampling is replaced by certain discrete random walks (geometric, exponential or Bernoulli), the non-intersection conditioning persists, and where the odd/even pairing at the boundary is replaced by an exponential interaction in the spirit of $e^{-\alpha (\L_{2i-1}(0)-\L_{2i}(0))}$. Half-space Whittaker processes  \cite{barraquand_borodin_corwin_2020} have a more complicated Gibbs property which is the one relevant to our current work. Essentially, the Brownian motion is replaced by log-gamma random walks, the non-intersection by a soft exponential energy reweighing, and the interaction at zero by the same sort of  $e^{-\alpha (\L_{2i-1}(0)-\L_{2i}(0))}$ reweighing. There are other half-space Gibbs properties that should be studied such as related to half-space version of Hall-Littlewood processes, $q$-Whittaker processes and their spin generalizations (see for instance, \cite{barraquand_borodin_corwin_2020,bbcw,jimmy2}). Furthermore, periodic or two-sided boundary versions of Gibbsian line ensembles (e.g. related to versions of Schur processes as in \cite{borodinperiodic, bete, bcy23}) will also likely play a key role in study of related integrable probabilistic models and hence warrant study in the spirit of what is done here.

As in the full-space setting, the challenge is to develop a route to take one-point fluctuation information about the top curve $\L_1^N$ of a sequence of line ensembles $\L^N$ and propagate that into fluctuation information about the whole ensemble. (Figure \ref{figO1} (A) illustrates the scalings that we prove to be associated with this sequence of line ensembles.) One-point information about the top curve for the half-space log-gamma polymer (and hence the top curve of our line ensemble) is in short supply with only two result due to (chronologically) \cite{bw} and then \cite{ims22}.

The core technical purpose and challenge of this paper is to extend the Gibbsian line ensemble methodology to address half-space models and provide tools to show tightness at the edge of such ensembles. We do this for the type of Gibbs property mentioned above that relates to half-space Whittaker processes which, owing to its relation to the log-gamma polymer, we call the half-space log-gamma Gibbs property. Our tools and method should extend to other Gibbs properties.

The challenge  in the half-space models comes from the impact of the pair interaction at the boundary. When $\alpha>0$ is fixed, in edge scaling limits $\L$ of the line ensemble  $\L_{2i-1}(0)=\L_{2i}(0)$ for all $i\geq 1$. Before taking a limit, the pairs of curves can be described in the vicinity of the origin as two (softly) non-intersecting log-gamma random walks whose left boundary endpoints are energetically conditioned to stay within $O(1)$. 
We call this law on pairs of paths the  \textit{weighted paired random walk} (WPRW) measure, see Definition \ref{prb} below. This is a discretization of two-particle Dyson Brownian motion with both particles started at the same point.

The fine and uniform information that we need to know about the WPRW measure does not follow from weak convergence to Dyson Brownian motion. Thus, we develop a variety of results herein to deal with WPRWs, in case with general underlying jump distributions, not just the log-gamma. See Section \ref{sec:133} for further discussion on WPRWs and their role in our analysis and their properties. Appendix \ref{app2} contains our general results on non-intersecting random walks and bridges.

\subsubsection*{Overall Strategy} As explained in Section \ref{sec:132}, we rely only on the  work of \cite{bw}. From \cite{bw} we are able to extract two vital pieces of information: after proper centering the process  $s\mapsto N^{-1/3}\L_1^N(sN^{2/3})$ stays bounded from positive infinity at $N\to\infty$, and at a random sequence of growing times $s^N_1,s^N_2,\ldots$ that stay tight as $N\to \infty$, the process has tight (bounded from positive and negative infinity) fluctuations around the parabola $-\nu s^2$ (for some explicit $\nu>0$). The slightly odd nature of these inputs comes from the fact that \cite{bw} studies a point-to-(partial)line partition function and not point-to-point directly. The work of \cite{ims22} does provide tightness (and a limit theorem) for the point-to-point free energy, but is restricted to precisely the left boundary $\L_1^N(1)$ which is insufficient information for our approach. Currently, there are no limit theorems proved for the point-to-point free energy process away from the left-boundary.

With the above input we proceed to show how the Gibbs property propagates tightness to the whole ensemble. The idea is to first argue that (with proper centering) the process $s\mapsto N^{-1/3} \L_2^N(sN^{2/3})$ must be tight at some random time  $s$. If not, the first curve would not follow a parabolic decay but rather a linear one in contradiction with our parabolic decay input. Now, we know that the (scaled) first and second curves are tight at some random times (not necessarily the same). The next step is to argue that this pair of scaled curves to the left of the random times (including the left-boundary) are likewise tight. This relies on showing (using the Gibbs property and some a priori bounds) that the third curve cannot rise much beyond the first two curves, and that the first two curves remain bounded from infinity (as follows from \cite{bw}). With this and a form of stochastic monotonicity associated to this Gibbs property, the control over the first two curves can be established by a fine analysis of the behavior of a pair of log-gamma random walks subject to soft non-intersection conditioning and attractive energetic pinning at zero. We call these \emph{weighted paired random walks} and a substantial amount of work is needed to develop tools and estimates regarding them. We give a more detailed overview of the steps of our proof in Section \ref{sec:133}.  The attractive nature of the boundary is directly linked to the choice here that $\alpha \geq 0$.

\subsubsection*{Extensions} In this paper we do not pursue showing that the tightness propagation process extends to the entire line ensemble, though it very likely can be done, e.g. in the spirit of \cite{xd1} for a full-space line ensemble. Any subsequential limit should enjoy the type of half-space Brownian Gibbs property discussed earlier. This would show that any such subsequential limit should also enjoy local comparison to Brownian motions away from the boundary, or two 2-particle Dyson Brownian motions started paired together when looking near the boundary. 
In fact, for the top two curves we can extract (though do not explicitly record here) such absolute continuity results without showing tightness of the whole ensemble, e.g. as in \cite{bcd}. The full-space Gibbs property in \cite{xd1,bcd} differs slightly from here since they consider point-to-point polymer endpoints varying along horizontal lines, while we consider endpoints varying along down-right zigzag paths.

Besides the directions alluded to above, we mention here a few more natural points of inquiry spurred by our work. Our analysis is restricted to $\alpha\geq 0$. When $\alpha<0$, the pair interaction at the boundary becomes repulsive, and thus, curves separate and behave quite differently. In particular, the log-gamma free energy (i.e., top curve) is expected to have $O(\sqrt{N})$ Gaussian fluctuations and $O(1)$ transversal fluctuation around $(N,N)$. The Gaussian fluctuations on the diagonal was recently proven in \cite{ims22}, while the $O(1)$ transversal fluctuations result appears in the subsequent work \cite{dz23}. The behavior in this $O(1)$ scale relates to a portion of the phase diagram for the half-space log-gamma stationary measure \cite{bc22}. Using our Gibbsian line ensemble techniques and modifications of the log-gamma polymer (i.e., adding a boundary condition on the first row too), it should be possible to access and re-derive the description of the entire phase diagram.

Beyond tightness, the half-space log-gamma line ensemble should converge to a universal limit, the half-space Airy line ensemble. This object, which should enjoy the type of Brownian Gibbs property discussed earlier, has not been constructed. While the log-gamma convergence result is currently out of reach, it should be possible to construct this from solvable last passage percolation, i.e. half-space Schur processes \cite{bbcs}. This should enjoy uniqueness characterization in the spirit of \cite{dm21,d21} and may even relate to a half-space Airy sheet in the spirit of \cite{dov18}. It is also a compelling challenge to identify a {\it strong characterization} of the half-space Airy line ensemble in the spirit of the recent work \cite{AH23} on the full-space Airy line ensemble.

A different scaling regime for the half-space log-gamma line ensemble involves weak-noise scaling in which $\theta$ goes to infinity while $\alpha$ remains fixed. In the full-space setting, \cite{wu} proved tightness of the full-space line ensemble and (via \cite{corwinnica}) convergence to the KPZ line ensemble \cite{OConWar,ch16}. A half-space analog of this result should hold and help in exploring questions related to the half-space KPZ equation and the corresponding half-space continuous directed random polymer.

\subsection{Ideas in the proof of Theorem \ref{t:main0}}  \label{sec:ideas}

In Section \ref{sec:1.2} we precisely define the half-space log-gamma Gibbs measure and line ensemble. In Section \ref{sec:132} we record the key input from \cite{bw} which we then combine with the Gibbs line ensemble structure in Section \ref{sec:133} to give the key deductions in the course of proving Theorem \ref{t:main0} (see Section \ref{sec:mc} for the full proof of this theorem).

Though the Gibbs measure and line ensemble definition holds for general $\alpha$, most of our discussion, especially around the proof, will focus on the case $\alpha>0$ which is harder than the $\alpha=N^{-1/3}\mu$ case. As noted earlier, we do not address the case of $\alpha<0$ here.

\subsubsection{$\hslg$ Gibbs measures and the $\hslg$ line ensemble}\label{sec:1.2}
The main technique that goes into the proof of Theorem \ref{t:main0} is our construction of the half-space log-gamma ($\hslg$) line ensemble whose law enjoys a property that we call the half-space log-gamma ($\hslg$) Gibbs measures. In what follows we construct these objects and describe how they relate to the $\hslg$ polymer free energy.

We will start by defining the fully-inhomogeneous $\hslg$ Gibbs measure whose state-space and associated weight function is indexed by the following directed and colored (and labeled) graph. Fix any parameters $\Theta:=\{\vartheta_{m,n}>0 \mid (m,n)\in \Z_{\ge 1}^2\}$ and $\alpha>-\min\{\vartheta_{m,n} : (m,n)\in \mathbb{Z}_{\ge 1}^2\}$. Note, we have used $\vartheta$ here to distinguish from $\theta$ used to define the polymer. In Theorem \ref{thm:conn} we will relate these parameters. Define the graph $G$ with vertices $V(G):=\{(m,n): m\in \Z_{\ge 1}, n\in \Z_{<0}+\frac12\ind_{m\in 2\Z} \}$ and with the following directed colored (and labeled) edges:
	\begin{itemize}[leftmargin=15pt]
		\item For each $(m,n)\in \Z_{\ge 1}^2$, we put two {\color{blue}{\textbf{blue}$(\vartheta_{m,n})$}} edges from $$(2m-1,-n)\to(2m,-n+\tfrac12)\mbox{ and }(2m+1,-n)\to(2m,-n+\tfrac12).$$
		\item For each $(m,n)\in \Z_{\ge 1}^2$, we put two \textbf{black} edges from $$(2m,-n-\tfrac12)\to (2m-1,-n)\mbox{ and }(2m,-n-\tfrac12)\to (2m+1,-n).$$
		\item For each $m\in \Z_{\ge 1}$, we put one \textcolor{red}{\textbf{red}} edge from $(1,-2m+1)\to(1,-2m)$.
	\end{itemize}
Note there is a parameter linked to the \blue\ edges, while the \black\ and \red\ edges do not have any associated parameters.
A portion of the corresponding graph is shown in Figure \ref{fig00} (A).	We write $E(G)$ for the set of edges of graph $G$ and $e=\{v_1\to v_2\}$ for a generic directed edge from $v_1$ to $v_2$ in $E(G)$ (the color of the edge is suppressed from the notation).

	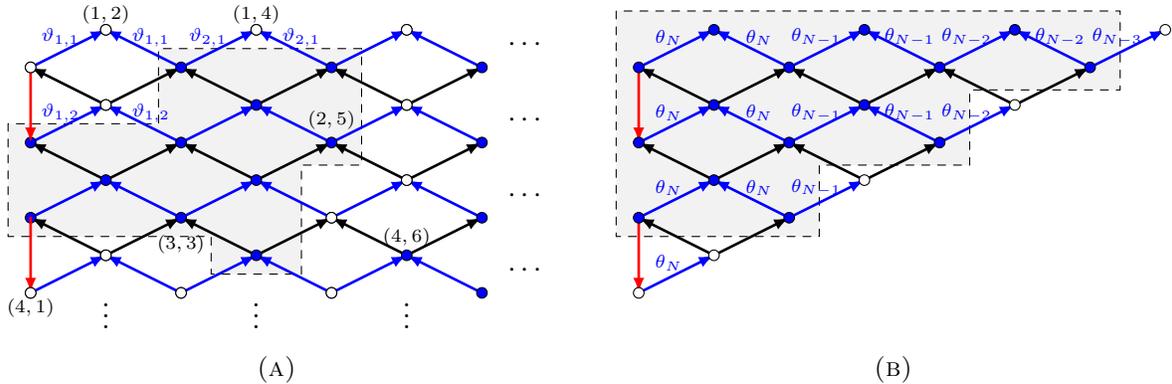
\begin{figure}[h!]
		\centering
		\begin{subfigure}[b]{0.45\textwidth}
			\centering
			\begin{tikzpicture}[line cap=round,line join=round,>=triangle 45,x=2cm,y=1cm]
				\draw[fill=gray!10,dashed] (1.7,-0.25)--(0.35,-0.25)--(0.35,-1.25)--(-0.65,-1.25)--(-0.65,-2.75)--(0.7,-2.75)--(0.7,-3.25)--(1.3,-3.25)--(1.3,-1.8)--(1.7,-1.8)--(1.7,-0.25);
				\foreach \x in {0,1,2}
				{
					\draw[line width=1pt,blue,{Latex[length=2mm]}-]  (\x,0) -- (\x-0.5,-0.5);
					\draw[line width=1pt,blue,{Latex[length=2mm]}-] (\x,0) -- (\x+0.5,-0.5);
					\draw[line width=1pt,black,{Latex[length=2mm]}-] (\x-0.5,-0.5) -- (\x,-1);
					\draw[line width=1pt,black,{Latex[length=2mm]}-] (\x+0.5,-0.5) -- (\x,-1);
					\draw[line width=1pt,blue,{Latex[length=2mm]}-]  (\x,-1) -- (\x-0.5,-1.5);
					\draw[line width=1pt,blue,{Latex[length=2mm]}-] (\x,-1) -- (\x+0.5,-1.5);
					\draw[line width=1pt,black,{Latex[length=2mm]}-] (\x-0.5,-1.5) -- (\x,-2);
					\draw[line width=1pt,black,{Latex[length=2mm]}-] (\x+0.5,-1.5) -- (\x,-2);
					\draw[line width=1pt,blue,{Latex[length=2mm]}-]  (\x,-2) -- (\x-0.5,-2.5);
					\draw[line width=1pt,blue,{Latex[length=2mm]}-] (\x,-2) -- (\x+0.5,-2.5);
					\draw[line width=1pt,black,{Latex[length=2mm]}-] (\x-0.5,-2.5) -- (\x,-3);
					\draw[line width=1pt,black,{Latex[length=2mm]}-] (\x+0.5,-2.5) -- (\x,-3);
					\draw[line width=1pt,blue,{Latex[length=2mm]}-]  (\x,-3) -- (\x-0.5,-3.5);
					\draw[line width=1pt,blue,{Latex[length=2mm]}-] (\x,-3) -- (\x+0.5,-3.5);
				}
				\foreach \x in {0,1,2}
				{
					\draw [fill=blue] (\x,0) circle (2pt);
					\draw [fill=blue] (\x,-1) circle (2pt);
					\draw [fill=blue] (\x,-2) circle (2pt);
					\draw [fill=blue] (\x-0.5,-0.5) circle (2pt);
					\draw [fill=blue] (\x-0.5,-1.5) circle (2pt);
					\draw [fill=blue] (\x-0.5,-2.5) circle (2pt);
					\draw [fill=blue] (\x-0.5,-3.5) circle (2pt);
					\draw [fill=blue] (\x,-3) circle (2pt);
				}
				\draw[line width=1pt,red,{Latex[length=2mm]}-] (-0.5,-1.5) -- (-0.5,-0.5);
				\draw[line width=1pt,red,{Latex[length=2mm]}-] (-0.5,-3.5) -- (-0.5,-2.5);
				\draw [fill=blue] (2.5,-0.5) circle (2pt);
				\draw [fill=blue] (2.5,-1.5) circle (2pt);
				\draw [fill=blue] (2.5,-2.5) circle (2pt);
				\draw [fill=blue] (2.5,-3.5) circle (2pt);
				\draw [fill=white] (2,0) circle (2pt);
				\draw[fill=white] (2,-1) circle (2pt);
				\draw[fill=white] (2,-2) circle (2pt);
				\draw[fill=white] (1.5,-2.5) circle (2pt);
				\draw[fill=white] (1.5,-3.5) circle (2pt);
				\draw[fill=white] (0.5,-3.5) circle (2pt);
				\draw[fill=white] (-0.5,-3.5) circle (2pt);
				\draw[fill=white] (0,-3) circle (2pt);
				\draw[fill=white] (0,0) circle (2pt);
				\draw[fill=white] (0,-1) circle (2pt);
				\draw[fill=white] (-0.5,-0.5) circle (2pt);
				\draw[fill=white] (1,0) circle (2pt);
				\node at (2.8,-0.2) {$\cdots$};
				\node at (2.8,-1.2) {$\cdots$};
				\node at (2.8,-2.2) {$\cdots$};
				\node at (2.8,-3.2) {$\cdots$};
				\node at (0,-3.7) {$\vdots$};
				\node at (1,-3.7) {$\vdots$};
				\node at (2,-3.7) {$\vdots$};
				\node at (0,0.2) {{\tiny${(1,2)}$}};
				\node at (1,0.2) {{\tiny${(1,4)}$}};
				\node at (1.5,-1.2) {{\tiny${(2,5)}$}};
				\node at (0.5,-2.85) {{\tiny${(3,3)}$}};
				\node at (-0.5,-3.7) {{\tiny${(4,1)}$}};
				\node at (2,-2.76) {{\tiny${(4,6)}$}};
				\begin{scriptsize}
					\node at (-0.3,-0.1) {{\tiny\color{blue}$\vartheta_{1,1}$}};
					\node at (-0.3,-1.1) {{\tiny\color{blue}$\vartheta_{1,2}$}};
					\node at (0.3,-0.1) {{\tiny\color{blue}$\vartheta_{1,1}$}};
					\node at (0.3,-1.1) {{\tiny\color{blue}$\vartheta_{1,2}$}};
					\node at (0.68,-0.1) {{\tiny\color{blue}$\vartheta_{2,1}$}};
					\node at (1.3,-0.1) {{\tiny\color{blue}$\vartheta_{2,1}$}};
				\end{scriptsize}
			\end{tikzpicture}
			\caption{}
		\end{subfigure}\qquad
		\begin{subfigure}[b]{0.45\textwidth}
			\centering
			\begin{tikzpicture}[line cap=round,line join=round,>=triangle 45,x=2cm,y=1cm]
				\draw[fill=gray!10,dashed] (-0.65,0.25)--(-0.65,-2.75)--(0.7,-2.75)--(0.7,-1.8)--(1.7,-1.8)--(1.7,-0.8)--(2.7,-0.8)--(2.7,0.25)--(-0.65,0.25);
				
				\foreach \x in {0,1}
				{
					\draw[line width=1pt,blue,{Latex[length=2mm]}-]  (\x,0) -- (\x-0.5,-0.5);
					\draw[line width=1pt,blue,{Latex[length=2mm]}-] (\x,0) -- (\x+0.5,-0.5);
					\draw[line width=1pt,black,{Latex[length=2mm]}-] (\x-0.5,-0.5) -- (\x,-1);
					\draw[line width=1pt,black,{Latex[length=2mm]}-] (\x+0.5,-0.5) -- (\x,-1);
					\draw[line width=1pt,blue,{Latex[length=2mm]}-]  (\x,-1) -- (\x-0.5,-1.5);
					\draw[line width=1pt,blue,{Latex[length=2mm]}-] (\x,-1) -- (\x+0.5,-1.5);
					\draw[line width=1pt,black,{Latex[length=2mm]}-] (\x-0.5,-1.5) -- (\x,-2);
					\draw[line width=1pt,black,{Latex[length=2mm]}-] (\x+0.5,-1.5) -- (\x,-2);
				}
				\draw[line width=1pt,blue,{Latex[length=2mm]}-]  (3,0) -- (3-0.5,-0.5);
				\draw[line width=1pt,blue,{Latex[length=2mm]}-]  (2,0) -- (2-0.5,-0.5);
				\draw[line width=1pt,blue,{Latex[length=2mm]}-]  (2,0) -- (3-0.5,-0.5);
				\draw[line width=1pt,blue,{Latex[length=2mm]}-]  (2,-1) -- (2-0.5,-1.5);
				\draw[line width=1pt,blue,{Latex[length=2mm]}-]  (1,-2) -- (1-0.5,-2.5);
				\draw[line width=1pt,blue,{Latex[length=2mm]}-]  (0,-2) -- (0-0.5,-2.5);
				\draw[line width=1pt,blue,{Latex[length=2mm]}-]  (0,-2) -- (1-0.5,-2.5);
				\draw[line width=1pt,blue,{Latex[length=2mm]}-]  (0,-3) -- (-0.5,-3.5);
				\draw[line width=1pt,black,{Latex[length=2mm]}-]  (-0.5,-2.5) -- (0,-3);
				\draw[line width=1pt,black,{Latex[length=2mm]}-]  (0.5,-2.5) -- (0,-3);
				\draw[line width=1pt,black,{Latex[length=2mm]}-]  (1.5,-0.5) -- (2,-1);
				\draw[line width=1pt,black,{Latex[length=2mm]}-]  (2.5,-0.5) -- (2,-1);
				\draw[line width=1pt,red,{Latex[length=2mm]}-] (-0.5,-1.5) -- (-0.5,-0.5);
				\draw[line width=1pt,red,{Latex[length=2mm]}-] (-0.5,-3.5) -- (-0.5,-2.5);
				\foreach \x in {0,1,2}
				{	
					\draw [fill=blue] (-0.5,-\x-0.5) circle (2pt);
					\draw [fill=blue] (0,-\x) circle (2pt);
					\draw [fill=blue] (0.5,-\x-0.5) circle (2pt);
				}
				\foreach \x in {0,1}
				{	
					\draw [fill=blue] (1,-\x) circle (2pt);
					\draw [fill=blue] (1.5,-\x-0.5) circle (2pt);
				}
				\draw [fill=blue] (2,0) circle (2pt);
				\draw [fill=blue] (2.5,-0.5) circle (2pt);
				\foreach \x in {0,1,2,3}
				{\draw[fill=white] (\x,-3+\x)  circle (2pt);}
				\draw[fill=white] (-0.5,-3.5)  circle (2pt);
				\node at (0,-3.7) {\phantom{$\vdots$}};
				\begin{scriptsize}
					\node at (-0.3,-0.1) {{\tiny\color{blue}$\theta_N$}};
					\node at (-0.3,-2.1) {{\tiny\color{blue}$\theta_N$}};
					\node at (-0.3,-1.1) {{\tiny\color{blue}$\theta_N$}};
					\node at (0.3,-0.1) {{\tiny\color{blue}$\theta_N$}};
					\node at (0.3,-1.1) {{\tiny\color{blue}$\theta_N$}};
					\node at (-0.3,-3.1) {{\tiny\color{blue}$\theta_N$}};
					\node at (0.3,-2.1) {{\tiny\color{blue}$\theta_N$}};
					\node at (0.68,-0.1) {{\tiny\color{blue}$\theta_{N-1}$}};
					\node at (1.3,-0.1) {{\tiny\color{blue}$\theta_{N-1}$}};
					\node at (1.3,-1.1) {{\tiny\color{blue}$\theta_{N-1}$}};
					\node at (0.68,-1.1) {{\tiny\color{blue}$\theta_{N-1}$}};
					\node at (0.68,-2.1) {{\tiny\color{blue}$\theta_{N-1}$}};
					\node at (1.68,-0.1) {{\tiny\color{blue}$\theta_{N-2}$}};
					\node at (2.3,-0.1) {{\tiny\color{blue}$\theta_{N-2}$}};
					\node at (1.68,-1.1) {{\tiny\color{blue}$\theta_{N-2}$}};
					\node at (2.68,-0.1) {{\tiny\color{blue}$\theta_{N-3}$}};
				\end{scriptsize}
			\end{tikzpicture}
			\caption{}
		\end{subfigure}
		
		\caption{(A)  The directed, colored (and labeled) graph $G$ associated to half-space log-gamma $\Theta$-Gibbs measures. A few of the vertices of $G$ have their $\phi$-induced index (i.e., the coordinates above the vertex), and a few of the blue edges are labeled by the $\vartheta_{i,j}$ parameters. A generic bounded connected domain $\Lambda$ is shown in the figure which contains all vertices in the shaded region. $\partial\Lambda$ consists of white vertices in the figure. (B) The domain $K_N$ considered in Theorem \ref{thm:conn}. $\Lambda_N^*$ consists of vertices in the shaded region. The assignment $\vartheta_{i,j}=\theta_{N-i+1}$ of $\Theta$ parameters from Theorem \ref{thm:conn} as shown here over the blue edges.}
		\label{fig00}
	\end{figure}
	
	We next define a bijection $\phi:V(G)\to \Z_{\ge 1}^2$ by $\phi((m,n))=(-\lfloor n \rfloor, m)$. This pushes the directed/colored edges in $G$ onto directed/colored edges on $\Z_{\ge 1}^2$ which we denote  by $E(\Z_{\ge 1}^2)$. We will always view $G$ as in Figure \ref{fig00} and will use the $\phi$-induced indexing when describing this graph. As in Figure \ref{fig00} (B), set $\Lambda_N^*:=\{(i,j)\in \Z_{\ge 1}^2 : i\in [1,N-1], j\in [1,2N-2i+1]\}$.

We associate to each $e\in E(\Z^2)$ a weight function based on the color of edge defined as follows: 
	\begin{align}\label{def:wfn}
		W_{e}(x):=\begin{cases}
			\exp(\vartheta x-e^x) & \mbox{ if $e$ is \blue \color{blue}{$(\vartheta)$}}, \\
			\exp(-e^x) & \mbox{ if $e$ is \black}, \\
			\exp(-\alpha x) & \mbox{ if $e$ is \red,}
		\end{cases}
	\end{align}
	
	\begin{definition}[Half-space log-gamma $\Theta$-Gibbs measure] \label{def:hslggibbs}
		Fix any $\Theta:=\{\vartheta_{m,n}>0 \mid (m,n)\in \Z_{\ge 1}^2\}$. Consider the graph  $\Z_{\ge 1}^2$ endowed with directed/colored edges $E(\Z_{\ge 1}^2)$ as above. Let $\Lambda$ be a bounded connected subset of $\Z_{\ge 1}^2$.  Set
		$$\partial \Lambda:=\big\{v\in \Z_{\ge 1}^2\cap \Lambda^c :\{v'\to v\}\in E(\Z_{\ge 1}^2) \mbox{ or }\{v\to v'\}\in E(\Z_{\ge 1}^2),\mbox{ for some } v'\in \Lambda\big\}.$$
		The half-space log-gamma ($\hslg$) $\Theta$-Gibbs measure  for the domain $\Lambda$, with boundary condition $\big(u_{i,j} \in \R: (i,j)\in \partial\Lambda\big)$, is a measure on $\R^{|\Lambda|}$ with density at $(u_{i,j})_{(i,j)\in \Lambda}$ proportional to
		\begin{align}\label{e:hsgibb}
			\prod_{e=\{v_1\to v_2\}\in E(\Lambda\cup \partial\Lambda)} W_{e}(u_{v_1}-u_{v_2}).
		\end{align}
Lemma \ref{b2} shows that the $\hslg$ $\Theta$-Gibbs measure is well-defined. When all $\vartheta_{m,n}$ are equal to a generic parameter $\theta>0$, we shall simply call the corresponding measure as $\hslg$ Gibbs measure.

Notationally, we will generally use $u_{v}$ for vertices $v=(i,j)\in \Lambda$ as dummy-variables when discussing the density of $\hslg$ Gibbs measures. When discussing multivariate random variables distributed jointly according to a $\hslg$ Gibbs measure we will typically write $L_i(j)$, or sometimes $L(v)$ for $v=(i,j)$, for the $(i,j)$ coordinate of these multivariate random variables.

An event $A$, i.e., elements of the Borel $\sigma$-algebra for $\R^{|\Lambda|}$, is {\it increasing} if it satisfies the condition that $\mathbf{u}'\in A$ implies $\mathbf{u}\in A$ provided $\mathbf{u}\preceq \mathbf{u}'$. Here $\mathbf{u}=\big(u_{i,j}\big)_{(i,j)\in \Lambda}$, $\mathbf{u}'=\big(u'_{i,j}\big)_{(i,j)\in \Lambda}$ and $\mathbf{u}\preceq \mathbf{u}'$ if $u_{i,j}\leq u_{i,j}'$ for all $(i,j)\in \Lambda$. An event is {\it decreasing} if $\mathbf{u}'\in A$ implies $\mathbf{u}\in A$ provided $\mathbf{u}'\preceq \mathbf{u}$
\end{definition}

The following  shows how the $\hslg$ free energy process can be embedded in a $\hslg$ $\Theta$-Gibbs measure. Its proof in Section \ref{sec2.2} relies on results of \cite{bw}  that build on the analysis of the log-gamma polymer via the geometric RSK correspondence \cite{cosz} on symmetrized domains \cite{osz, nz, bz19b}. 
In Section \ref{sec2.2}, for each $N>0$, we will define explicitly such a choice for $\big(\L_i^N(j): (i,j)\in\mathcal{K}_N\big)$ that will satisfy the two criterion of the theorem. We will call this the half-space log-gamma ($\hslg$) line ensemble. We will  use $\L$ when discussing the $\hslg$ line ensemble, while $L$ will be used when discussing general line ensembles that enjoy the $\hslg$ $\Theta$-Gibbs property.
	
	\begin{theorem}[Half-space log-gamma line ensemble]\label{thm:conn} Let $\vec\theta=(\theta_i)_{i\in \mathbb{Z}_{\ge 1}}$ be a sequence of positive parameters. Fix $\alpha>-\theta$ where $\theta:=\min\{\theta_i :i\in \mathbb{Z}_{\ge 1}\}$, and $N\in \Z_{\ge 1}$.  As in Figure \ref{fig00} (B), set $\mathcal{K}_N:=\{(i,j)\in \Z_{\ge 1}^2 : i\in [1,N], j\in [1,2N-2i+2]\}$. There exists random variables $\big(\L_i^N(j) : (i,j)\in \mathcal{K}_N\big)$, called here the $\hslg$ $\Theta$-line ensemble, on a common probability space such:
		\begin{enumerate}[label=(\roman*), leftmargin=20pt]
			\item \label{i01} We have the following equality in distribution
			\begin{align}\label{impi}
				(\L_1^N(2j+1))_{j\in \ll0,N-1\rr} \stackrel{(d)}{=} \big(\log Z_{(\alpha,\vec\theta)}(N+j,N-j)+2N\Psi(\theta)\big)_{j\in \ll0,N-1\rr}.
			\end{align}
			\item \label{i02} 
 The law of $\big(\L_i^N(j) : (i,j)\in \Lambda_N^*\big)$ conditioned on $\big(\L_i^N(j): (i,j)\in (\Lambda_N^*)^c\big)$ is given by the $\hslg$ $\Theta$-Gibbs measure for the domain $\Lambda_N^*$ with boundary condition $\big(\L_i^N(j) : (i,j)\in \partial\Lambda_N^*\big).$ Here the parameters in $\Theta$ are chosen as $\vartheta_{i,j}:=\theta_{N-i+1}$, see the blue edge labeling in Figure \ref{fig00} (B).
		\end{enumerate}
In the homogeneous case we set all $\theta_i\equiv \theta$.
	\end{theorem}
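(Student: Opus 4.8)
\textbf{Proof strategy for Theorem \ref{thm:conn}.}

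The plan is to obtain the $\hslg$ $\Theta$-line ensemble as the image of the boundary-weighted log-gamma polymer on the symmetrized (half-space) array under the geometric RSK correspondence, and then to read off parts \ref{i01} and \ref{i02} from the known Markovian structure of the resulting triangular array of partition functions. First I would recall the geometric RSK map of \cite{cosz} as adapted to symmetrized domains in \cite{osz,nz,bz19b}: applied to the weight array $(W_{i,j})_{(i,j)\in\calI}$ of \eqref{eq:wt} restricted to the triangle of size $N$, it produces an array of positive reals $(t_{i,j})$ whose top entry records the point-to-point partition function $Z_{(\alpha,\vec\theta)}(N,N)$, and more generally whose entries along successive antidiagonals record partition functions for point-to-point problems with shifted endpoints. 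The key structural input from \cite{bw} is that, after the $\log$ transform, this array is exactly a Markov chain with transition weights built from the functions in \eqref{def:wfn}: the \blue\ weights $\exp(\vartheta x - e^x)$ come from the bulk $\operatorname{Gamma}^{-1}(\theta_i+\theta_j)$ factors, the \black\ weights $\exp(-e^x)$ come from the deterministic part of the gRSK local moves, and the \red\ weights $\exp(-\alpha x)$ encode precisely the $\operatorname{Gamma}^{-1}(\alpha+\theta_j)$ diagonal boundary weights. Thus I would \emph{define} $\big(\L_i^N(j):(i,j)\in\mathcal{K}_N\big)$ to be the $\log$ of this gRSK output array, reindexed via the bijection $\phi$ of Section \ref{sec:1.2} so that the combinatorial graph $G$ of Figure \ref{fig00} matches the data-dependency structure of the gRSK recursion on the size-$N$ symmetrized triangle; the domain $\mathcal{K}_N$ is exactly the set of vertices produced.

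For part \ref{i01}, I would track which entries of the gRSK array carry point-to-point partition functions. Along the ``top row'' $j\mapsto \L_1^N(2j+1)$, the reindexing should line up with the family $\log Z_{(\alpha,\vec\theta)}(N+j,N-j)$, $j\in\ll 0,N-1\rr$ — these are the point-to-point free energies whose endpoints slide along the down-right zigzag $m+n=2N$. The additive constant $2N\Psi(\theta)$ is the deterministic centering coming from $\Ex[\log W]$ of the $2N$ weights on a path, i.e. $-2N\Psi(\theta)$ reversed in sign; I would just verify the bookkeeping against the Gamma$^{-1}$ mean formula $\Ex[\log \operatorname{Gamma}^{-1}(\beta)] = -\Psi(\beta)$. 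This is the step where the precise statement of the relevant theorem in \cite{bw} is quoted verbatim and specialized; no new probability is needed.

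For part \ref{i02}, the point is that the gRSK array, as a Markov field, has a local conditional structure: conditioned on the complement of any ``downward-closed in the gRSK order'' region, the law of the interior entries is a Gibbs measure whose density factorizes over the edges of $G$ incident to that region, with edge potentials exactly $W_e$ from \eqref{def:wfn}. I would take $\Lambda = \Lambda_N^*$ (the shaded region of Figure \ref{fig00}(B), which is $\mathcal{K}_N$ with its outermost boundary layer removed), check that $\partial\Lambda_N^*$ in the graph sense coincides with the frozen white vertices in the figure, and then invoke this conditional-factorization property; the identification of the blue parameters as $\vartheta_{i,j}=\theta_{N-i+1}$ is forced by how the original $\theta_i$'s sit on the antidiagonals of the symmetrized triangle under $\phi$. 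Well-definedness of the resulting Gibbs measure (integrability of \eqref{e:hsgibb}) is handed off to Lemma \ref{b2}.

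The main obstacle I expect is purely bookkeeping, but genuinely delicate: matching three different coordinate systems — the polymer lattice $\calI$, the gRSK output array of \cite{bw}, and the graph $G$ with its $\phi$-indexing and coloring — so that (a) the top-curve identity \eqref{impi} has exactly the right endpoints and exactly the right centering constant, (b) the region $\Lambda_N^*$ is precisely the gRSK-closed set whose boundary is the white vertices, and (c) each edge color in $G$ is assigned the potential that the gRSK local move actually produces (in particular checking that the boundary weights generate \red\ edges carrying $e^{-\alpha x}$ and nothing else, and that no spurious parameters attach to \black\ edges). Everything else — the Markov property of gRSK, the explicit transition kernels, integrability — is quoted from \cite{cosz,osz,nz,bz19b,bw} and Lemma \ref{b2}.
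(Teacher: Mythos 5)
Your route is the paper's route: define the ensemble as (log of) the gRSK image of the symmetrized weight array, quote Proposition 2.6 of \cite{nz} to identify that image with ratios of multipath partition functions as in \eqref{zsymr}, quote the explicit density from \cite{bw}, change variables/reindex via $\phi$ to exhibit the conditional law on $\Lambda_N^*$ as the $\hslg$ $\Theta$-Gibbs measure with $\vartheta_{i,j}=\theta_{N-i+1}$, and hand integrability to Lemma \ref{b2}. This is exactly what Section \ref{sec2.2} and Appendix \ref{appd} do, so the strategy is sound; three of your bookkeeping claims, however, are stated in a way that would trip you up if executed literally.

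First, gRSK is applied not to $(W_{i,j})_{(i,j)\in\calI}$ but to the symmetrized array $\til W$ of \eqref{eq:symwt} on the triangle $\ihs$, with the diagonal weights halved; part \ref{i01} then rests on the one substantive identity $2Z_{\operatorname{sym}}^{(1)}(p,q)=Z_{(\alpha,\vec\theta)}(p,q)$ (reflection symmetry of paths, the factor $2$ compensating the halved weight at $(1,1)$), which your write-up replaces by ``the reindexing should line up'' — that identity is the whole content of part \ref{i01}, so name it. Second, the centering $2N\Psi(\theta)$ is not something to verify against $\Ex[\log\operatorname{Gamma}^{-1}(\beta)]=-\Psi(\beta)$: the bulk weights are $\operatorname{Gamma}^{-1}(2\theta)$, so that computation would produce $\Psi(2\theta)$, not $\Psi(\theta)$. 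The constant is simply inserted by hand in the definition (as in Definition \ref{l:nz}) so that it appears identically on both sides of \eqref{impi}, and it is harmless for part \ref{i02} by translation invariance (Lemma \ref{obs1}\ref{traninv}); attempting to ``derive'' it would fail and is unnecessary. Third, \cite{bw} does not supply a ready-made Markov-field statement with the edge potentials \eqref{def:wfn}; it supplies the explicit joint density of $(2T_{i,j})_{i\le j}$ (so the indices must be permuted to the $i\ge j$ half), and the factorization of the conditional law over the colored edges of $G$ — blue terms absorbing both the $\theta$-linear and part of the exponential terms, black terms the remaining exponentials, red terms the $e^{\pm\alpha u_{i,1}}$ factors after absorbing boundary contributions — is precisely what the change-of-variables computation \eqref{e:den2}–\eqref{e:den4} establishes. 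You flag this as the delicate bookkeeping, which is right; just be aware it is a derivation from the density formula, not a citation.
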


\begin{remark}
Theorem \ref{thm:conn} is stated for the polymer model using the inhomogeneous weights in \eqref{eq:wz}. In the homogeneous case (which we will focus upon here) where $\theta_{i}\equiv\theta$ (and hence $\vartheta_{i,j}\equiv\theta$) the $2N\Psi(\theta)$ centering term in \eqref{impi} is chosen to be adapted to our ultimate goal of taking scaling limits. However, this terms inclusion is ultimately inconsequential since it constitutes a constant shift of the Gibbs measures which does not impact the Gibbs property (see Lemma \ref{obs1}\ref{traninv}).
\end{remark}

We assume below that we are dealing with the homogeneous case of the $\hslg$ line ensemble.

It is useful to view  $\hslg$ Gibbs measures (in particular we focus here on the Gibbs measures from Theorem \ref{thm:conn}) in terms of the language of Gibbsian line ensembles. Consider $k$ and $T$ fixed and $N$ sufficiently large so that all of the random variables $\L_1^N\ll 1,T\rr, \L_2^N\ll 1,T\rr, \ldots, \L_{2k}^N\ll 1,T\rr$ are defined. We will think of $\L_i^N$ as the label $i$ `line' (rather, a piecewise linearly interpolated curve) in the ensemble. The values of $\big(\L_i^N(2T+1) : i\in \ll1,2k\rr\big)$ and $\L_{2k+1}^N(\cdot)$ constitute boundary data which, once known, uniquely identify (via the Gibbs description) the laws of $\L_1^N\ll 1,T\rr, \L_2^N\ll 1,T\rr, \ldots, \L_{2k}^N\ll 1,T\rr$.

Let us consider the three types of weights in the Gibbs measure. The weights corresponding to \textbf{black} edges $v_1 \to v_2$ contribute a factor of $e^{-e^{u_{v_1}-u_{v_2}}}$ (here $u_v$ is the dummy variable in the Gibbs density corresponding to a vertex $v$) in the Gibbs measure. Whenever $u_{v_1} \gg u_{v_2}$, this weight is very close to 0, whereas when $u_{v_1} \yt u_{v_2}$ the weight is close to 1 (between, there is a smooth monotone transition from $0$ to $1$). Thus, this weight produces a soft version of conditioning on the event that $\L^N(v_2) \ge \L^N(v_1)$ (recall the notational convention for a line ensemble that $L(v)=L_i(j)$ where $v=(i,j)$). \textbf{Black} edges arise between consecutive lines thus we expect that our measure will strongly favor configurations where $\L_1^N(\cdot) \gtrsim \L_2^N(\cdot) \gtrsim \L_3^N(\cdot) \gtrsim \cdots$, i.e., the curves are non-intersecting up to some error (Theorem \ref{t:order} provides a precise statement substantiating this). Of course, the soft nature of this conditioning will not rule out crossing, but a heavy penalty will be incurred so at a heuristic level it is useful to think in terms of non-intersecting lines.
		
The {\color{red} \textbf{red}} edges are $(2i-1,1)\to (2i,1)$ and come with a weight $e^{-\alpha (u_{2i-1,1}-u_{2i,1})}$. This weight is close to 0 when $u_{2i-1,1} \gg u_{2i,1}$ (since $\alpha>0$). This creates an attractive force between $\L_{2i-1}^N(1)$ and $\L_{2i}^N(1)$ which tries to establish the ordering $\L_{2i-1}^N(1)\leq \L_{2i}^N(1)$. Of course, this is in opposition to the soft non-intersecting influence already discussed. Combined, these forces ultimately (through our analysis of weighted paired random walks) result in the difference $\L_{2i-1}^N(1)-\L_{2i}^{N}(1)=O(1)$ as $N\to \infty$. In contrast, in the critical regime, when $\alpha_N=N^{-1/3}\mu$, the attraction weakens with $N$ and the forces result in $\L_{2i-1}^N(1)- \L_{2i}^{N}(1)=O(N^{1/3})$. It is the $O(1)$ distance between $\L_{2i-1}^N(1)$ and $\L_{2i}^{N}(1)$ that makes the supercritical case harder than the critical case.
		
Finally, consider the {\color{blue} \textbf{blue}} edges that encode the Gibbs weights between consecutive values of a given line, i.e. between $\L_{i}^N(j)$ and $\L_{i}^N(j+1)$. Alone, these weights define log-gamma increment random walks (with two-step periodicity in the law of the increments). Thus, putting these three factors together one arrives at the picture illustrated in Figure \ref{figO1} (A) -- an ensemble of softly non-intersecting log-gamma random walks with starting points $O(1)$ distance apart between the curves labeled $2i-1$ and $2i$ for each relevant $i$.
In order to prove Theorem \ref{t:main0} we essentially need to justify the  distance scales in Figure \ref{figO1} (A). To do that, we use the Gibbs property for the line ensemble described above along with some one-point control over $\L_1^N$ that we describe now.
	%
%
%
%
%
	
	\subsubsection{Point-to-line free energy fluctuations} \label{sec:132}
The $\hslg$ Gibbs measures machinery gives us access to the behavior of the $\hslg$ line ensemble conditioned on the boundary data. However, we still need to understand the behavior of the boundary data. The theory of (full-space) Gibbsian line ensembles that has been developed over the last decade has become proficient at taking very minimal seed information, such as the scale in which tightness occurs for the one-point fluctuations of the top curve of a Gibbsian line ensemble, and outputting the scaling and tightness for the entire edge of the line ensemble. We take the first step in developing such a half-space theory.

There are currently only two fluctuation results about the $\hslg$ polymer. The first (chronologically) is a result of \cite{bw} that we will recall below and appeal to, while the second is the work of \cite{ims22} that proves a limit theorem for $N^{-1/3} \L_1^N(1)$ (i.e. $\mathcal{F}_N^{\alpha}(0)$). Our work began prior to the release of \cite{ims22} and thus we rely only on the work of \cite{bw}. The control \cite{ims22} provides is for $\L_1^N(1)$ only and since we need some information away from the boundary too, most of the work herein is unavoidable and not significantly simplified by using \cite{ims22}. It is natural to wonder if \cite{ims22} could have been used alone, in place of \cite{bw}, at the seed for our analysis. While we do not rule this out, it would certainly require a very different type of argument since we rely heavily on the fact that \cite{bw} provides some information about $\L_1^N(j)$ as $j$ varies.

We recall the result of \cite{bw}.  For each $k>0$, define the point-to-(partial)line partition function
	\begin{align}\label{rfl}
		\zl (k):=\sum_{j=\lceil k \rceil}^{N} Z_{(\alpha,\theta)}(N+j,N-j).
	\end{align}
This sum is restricted to endpoints at least distance $2k$ from the boundary.
Set $p=\frac{N+k}{N-k}$, Let $\theta_c$ be the unique solution to $\Psi'(\theta_c)=p\Psi'(2\theta-\theta_c)$ and set (recall the digamma function $\Psi$ from \eqref{psidef})
	\begin{align*}
		f_{\theta,p}:=-\Psi(\theta_c)-p\Psi(2\theta-\theta_c), \quad \sigma_{\theta,p}:=\bigg(\tfrac12(-\Psi''(\theta_c)-p\psi''(2\theta-\theta_c)\bigg)^{1/3}.
	\end{align*}
	\begin{theorem}[Theorem 1.10 in \cite{bw}]\label{thm:bw} Suppose $(k_N)_{N\in \mathbb{Z}_{>0}}$ is such that for some $y\in \R\cup \{\infty\}$,  $\lim_{N\to\infty} (N-k_N)^{1/3}\sigma_{\theta,p}(\alpha+\theta-\theta_c)=y$. Then, as $N\to \infty$
		\begin{align*}
			\frac{\log \zl(k_N)-(N-k_N)f_{\theta,p}}{(N-k_N)^{1/3}\sigma_{\theta,p}} \stackrel{(d)}{\Longrightarrow} U_{-y}.
		\end{align*}
		where for $y\in \R$, $U_{-y}$ is distributed as the Baik-Ben Arous-P\'ech\'e distribution with parameter $y$ (see Eq.~(5.2) in \cite{bw}). When $y=\infty$, $U_{-\infty}$ is distributed as the GUE Tracy-Widom distribution.
	\end{theorem}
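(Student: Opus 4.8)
This is Theorem~1.10 of \cite{bw}, used here as an external input; in the present paper it is simply cited. For context I outline the route by which it is proved in \cite{bw}. The plan has three stages: (i) a distributional identity reducing the half-space point-to-(partial)line partition function to a perturbed full-space log-gamma partition function; (ii) asymptotics of the latter via a Fredholm determinant formula and steepest descent; (iii) passage from Laplace-transform asymptotics to convergence in distribution.

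For stage (i), apply the geometric RSK correspondence to the symmetrized weight array underlying the half-space polymer (as in \cite{cosz,osz,nz,bz19b}); this realizes the vector $\big(Z_{(\alpha,\theta)}(N+j,N-j)\big)_j$ as a marginal of a half-space Whittaker measure. One then invokes Whittaker-function integral identities --- the geometric analogue of reducing a symmetric array to a one-sided one --- to collapse this half-space measure onto a \emph{full}-space Whittaker measure, whose top coordinate is the free energy of a full-space log-gamma polymer with bulk parameter $2\theta$ and one distinguished row of parameter $\alpha+\theta$, evaluated at an off-diagonal endpoint of aspect ratio $p=\tfrac{N+k}{N-k}$. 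Summing over endpoints to form $\zl(k)$ corresponds, on the full-space side, to the standard collapse to a point-to-point quantity.

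For stage (ii), use the Fredholm determinant expression for $\Ex[\exp(-u Z)]$ available through the Macdonald/Whittaker process machinery (or the Mellin--Barnes contour-integral formula for the log-gamma polymer). Substituting $u=\exp\big(-(N-k)f_{\theta,p}-(N-k)^{1/3}\sigma_{\theta,p}\,r\big)$ and carrying out steepest descent, the relevant exponent has a double critical point at the $\theta_c$ solving $\Psi'(\theta_c)=p\Psi'(2\theta-\theta_c)$; the constant $f_{\theta,p}$ is the value of that exponent at $\theta_c$ and $\sigma_{\theta,p}$ encodes its curvature there, which is why they are the natural centering and scaling. The distinguished row inserts a simple pole at $\theta=\alpha+\theta$, and the rescaled signed distance $(N-k)^{1/3}\sigma_{\theta,p}(\alpha+\theta-\theta_c)$ from the saddle to this pole is precisely the limiting parameter $y$: if $y=+\infty$ the pole is irrelevant in the limit and the Fredholm determinant converges to the GUE Tracy--Widom distribution function, whereas for finite $y$ the pole must be dragged across the descent contour, deforming the limiting kernel into the one whose Fredholm determinant is the Baik--Ben Arous--P\'ech\'e distribution $U_{-y}$.

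Stage (iii) upgrades convergence of $\Ex[\exp(-u Z)]$ at the critical scale to convergence in distribution of $\frac{\log\zl(k_N)-(N-k_N)f_{\theta,p}}{(N-k_N)^{1/3}\sigma_{\theta,p}}$; since $\zl(k_N)$ is positive and concentrates on the scale $\exp((N-k_N)f_{\theta,p})$, this is a routine argument once crude one-point moment/tail bounds for the log-gamma partition function are in hand and the limiting laws are seen to have continuous distribution functions. I expect the genuine difficulty to lie entirely in stage (ii): one must choose steepest-descent contours that pass through $\theta_c$ in the correct direction, correctly handle (avoid or cross) the pole at $\alpha+\theta$, keep all trace-class norms uniformly controlled in $N$, and do so uniformly as $y$ ranges over $\R\cup\{\infty\}$, so that the entire Baik--Ben Arous--P\'ech\'e-to-Tracy--Widom crossover is produced within a single asymptotic analysis.
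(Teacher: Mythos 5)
The paper does not prove this statement: it is imported verbatim as Theorem~1.10 of \cite{bw}, and your outline accurately reflects how \cite{bw} establishes it — first the distributional identity matching the half-space point-to-(partial)line partition function $\zl(k)$ with a full-space log-gamma partition function having bulk parameter $2\theta$ and a boundary row of parameter $\alpha+\theta$ (the very identity this paper highlights as its key input from \cite{bw}), then Fredholm-determinant/steepest-descent asymptotics around the critical point $\theta_c$ producing the BBP-to-Tracy--Widom crossover governed by $(N-k)^{1/3}\sigma_{\theta,p}(\alpha+\theta-\theta_c)\to y$. So your proposal is consistent with the source; nothing further is needed here.
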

	
	The crucial deduction from Theorem \ref{thm:bw} is that there exists $\nu>0$ such that for each $M>0$,
	\begin{align}\label{pardec}
		V_N(M)+M^2 \xRightarrow[N\to\infty]{(d)}  X_{M}, \quad \mbox{where}\quad V_N(M):=\frac{\log \zl(MN^{2/3})+2\Psi(\theta)N}{N^{1/3}\nu}.
	\end{align}
	Here the BBP distributions of the limiting random variables $(X_M)_{M>0}$ form a tight sequence in $M$, in particular they converge in law to the GUE Tracy-Widom distribution as $M\to\infty$. A precise version of this deduction in given later in Lemma \ref{l:ut}.
 Essentially, the rescaled point-to-(partial)line free energy process $V_N(M)$ looks like an inverted parabola $-M^2$ with tight fluctuations around it.

\subsubsection{Using the Gibbs line ensemble structure to prove Theorem \ref{t:main0}}\label{sec:133}

We now give a brief overview of the steps of our proof and how it relies on combining the seed information from \cite{bw}, i.e. \eqref{pardec}, and the $\hslg$ line ensemble Gibbs property. Fixing a bit of notation, we will say that a sequence of random variables $X_N$ is \emph{upper-tight} if $\max(X_N,0)$ is tight, and \emph{lower-tight} if $\min(X_N,0)$ is tight. Recall that $X_N$ is tight if for all $\e>0$ there exists $K=K(\e)>0$ such that $\Pr(|X_N|\geq K)<\e$ for all $N\geq N_0$. If $X_N$ is both upper and lower tight, then it is tight.

We sketch the proof of the main theorem for $r=1$ ($r$ is as in the statement of the theorem). Fix any $N_0$ large enough so everything below is well-defined for $N\geq N_0$. We consider a time
\begin{equation}\label{eq:TT}
T=8\lfloor N^{2/3}\rfloor
\end{equation}
(the key point is that time window $[0,T]$ scales like $N^{2/3}$).
By virtue of the relation \eqref{impi} in Theorem \ref{thm:conn} \ref{i01}, to prove our main theorem for $r=1$ it suffices to establish tightness of the top curve of the $\hslg$ line ensemble $\L^N$ (after appropriate scaling) on the time window $[0,T/4]$.
The broad steps used in establishing our main theorem can be summarized as \ref{it1}-\ref{it3} below.
	\begin{enumerate}[label=(\roman*),leftmargin=20pt]
		\setlength\itemsep{1 em}
		\item\label{it1} Given any $M_1>0$, prove that there exists $M_2>M_1$ such that $N^{-\frac13}\L_1^N(2p^*-1)$ and $N^{-\frac13}\L_2^N(2p^*)$ are tight for some random $p^*\in [M_1N^{\frac23}, M_2N^{\frac23}]$. 
	\end{enumerate}
Here and below we consider staggered (i.e., even and odd) arguments for $\L_1^N$ and $\L_2^N$ (and $L_1$ and $L_2$) due to the diagonal Gibbs interaction. This is a technical point which can be ignored currently.

Owing to the Gibbs property, Theorem \ref{thm:conn} \ref{i02}, enjoyed by the line ensemble $\L^N$, the joint law of $\L_1^N(\ll 1, 2p^*-1\rr)$ and $\L_2^N(\ll 1 , 2p^*\rr)$ given the knowledge of $\L_1^N(2p^*-1),\L_2^N(2p^*)$ and $\L_3^N(\ll 1, 2p^*\rr)$ (where $p^*$ comes from \ref{it1}) is that of a two-curve $\hslg$ Gibbs line ensemble $(L_1,L_2)$ with a bottom boundary data given by $L_3=
\L^N_3$ and right-boundary data determined by $L_1(2p^*-1)=\L_1^N(2p^*-1)$ and $L_2^N(2p^*)=\L_2^N(2p^*)$. The point of this reduction is that we can now make use of a tool known as {\it stochastic monotonicity} (see Proposition \ref{p:gmc} and discussion later in the introduction). This implies that if we instead condition on lower boundary data (i.e., lower $L_3$, $L_1^N(2p^*-1)$ or $L_2^N(2p^*)$), the resulting measure is stochastically dominated by the original measure, i.e. the law of $(\L^N_1,\L^N_2)$.

By using stochastic monotonicity we see that conditioned on the values of $\L_1^N(2p^*-1),\L_2^N(2p^*)$, it is possible to couple on the same probability space $\big(\L_1^N(\ll 1, 2p^*-1\rr), \L_2^N(\ll 1 , 2p^*\rr)\big)$ along with $(L_1,L_2)$ distributed according to a {\it bottom-free} $\hslg$ Gibbs measure specified by $L_3\equiv -\infty$, $L_1(2p^*-1)=\L_1^N(2p^*-1)$ and $L_2^N(2p^*)=\L_2^N(2p^*)$ (see also \eqref{law} below, or Definition \ref{def:btf} for a precise definition) in such a way that $\L^N_1(\ll 1, 2p^*-1\rr)\geq L_1(\ll 1, 2p^*-1\rr)$ and $\L^N_2(\ll 1, 2p^*\rr)\geq L_2(\ll 1, 2p^*\rr)$ point-wise. In particular, this means that any increasing event (recall from Definition \ref{def:hslggibbs}) will have a larger probability under the bottom-free measure than under the original measure. This is an important tool in establishing lower-tightness as well as control over the modulus of continuity.

For the below three items we will assume that $(L_1,L_2)$ is specified in this bottom-free manner and that the right-boundary data given by $\L_1^N(2p^*-1)$ and $\L_2^N(2p^*)$ satisfies \ref{it1} above.

\begin{enumerate}[label=(\roman*),leftmargin=20pt]
		 \myitem[(ii)] \label{it2} Prove  that $N^{-\frac13}L_1(1)$ and $N^{-\frac13}L_2(2)$ are lower-tight.
		
		 \myitem[(iii)] \label{it2.5} Prove that for any $M^*>0$, with  positive probability (depending on $M^*$ and $r$ but not of $N$)
		$$L_1(p)\ge M^*N^{1/3}, \quad \textrm{and} \quad L_2(p) \ge M^*N^{1/3}, \quad \mbox{for all }p \in \ll1,T\rr.$$
		 \myitem[(iv)] \label{it3} Prove process-level tightness of $(N^{-1/3}L_1(xN^{2/3}))_{x\in [0,2]}.$
	\end{enumerate}
	
We shall describe how we establish the above broad steps in a moment. Let us first conclude how the above steps work together to yield our main theorem, Theorem \ref{t:main0}.
	
We first argue that $N^{-\frac13}\L_1^N(1)$ and $N^{-\frac13}\L_2^N(2)$ are tight. 
Indeed, since the point-to-line free energy is an upper bound for the point-to-point free energy process, utilizing \eqref{pardec} it follows immediately that $N^{-\frac13}\L_1^N(1)$ and $N^{-\frac13}\L_2^N(2)$ are upper-tight. To show that they are also lower-tight we utilize the above mentioned stochastic monotonicity of the $\hslg$ Gibbs line ensembles (Proposition \ref{p:gmc}) and instead show lower-tightness for the two-curve bottom-free line ensemble $(L_1,L_2)$ (i.e., under the condition $L_3\equiv\infty$), which is what we established in item \ref{it2}.
	
The next step to proving Theorem \ref{t:main0} is to argue that with strictly positive probability (i.e., not going to zero with $N\to\infty$) 
there is a uniform separation of length $cN^{1/3}$ (for sufficient small $c$) between the first two curves $\L_1^N$ and $\L_2^N$ and the third curve $\L_3^N$. The argument to show this (Proposition \ref{l:rpass} in the text) proceeds as follows. Once we have tightness at the left boundary, it is straight-forward  to show that $N^{-\frac13}\L_1^N(2v-1)$ and $N^{-\frac13}\L_2^N(2v)$ are tight for any choice of $v\in \ll1,p^*\rr$. Combining this with the soft non-intersection property of the line ensembles and \ref{it2}, we deduce in Theorem \ref{l:lhigh} that $\sup_{p\in \ll1,2T\rr} N^{-\frac13}\L_3^N(p)$ is upper tight. The result in \ref{it2.5} shows that the bottom-free line ensemble $(L_1,L_2$) has a strictly positive probability of being uniformly high on $[0,T]$ and thus by stochastic monotonicity so too does $(\L^N_1,\L^N_2)$. Together with upper-tightness of $\sup_{p\in \ll1,2T\rr} N^{-\frac13}\L_3^N(p)$, this shows that the probability that $(\L^N_1,\L^N_2)$ stay separate from $\L^N_3$ stays bounded from 0 as $N\to \infty$.
	
Finally, we prove the process-level tightness of the top curve of our ensemble.  Size biasing plays a key role in this deduction (see around equation \eqref{eq:clm2}). Indeed, once we know that there is a positive probability of uniform separation (as deduced above), we can use the fact that the Radon-Nikodym derivative defining our Gibbs measures highly penalize configurations where the top two curves are close to the third curve. Thus, the positive probability event of separation becomes a high probability event. Finally, we are able to establish process-level tightness (i.e., control on the modulus of continuity) by leveraging the separation and the process-level tightness of the first two curves with the third curve moved to $-\infty$ that was shown in item \ref{it3}. This establishes tightness of the first curve which, through identification with the free energy process, yields Theorem \ref{thm:conn}.
	
\begin{remark} \label{ims22rk}
The result of \cite{ims22} 
immediately implies the tightness of $N^{-1/3}\L_1^N(1)$.  However, to carry our proof outlined above we need tightness of $\L_2^N(1)$, and other fine information about $\L_1^N$ and $\L_2^N$ away from the boundary, as described in item \ref{it2} and \ref{it3}, which to our best understanding is beyond the scope of \cite{ims22}. 
\end{remark}
	
We return to steps \ref{it1}-\ref{it3} stated above and describe the main ideas in achieving them.

\smallskip

	\noindent{\it\textbf{Proof idea for \ref{it1}}:}
We start by proving (Theorem \ref{t:order}) that the curves $\L_i^N$ are typically non-intersecting (or at least do not overlap by much). Combining this with the fact that the point-to-line partition function (controlled in \cite{bw}) dominates the point-to-point partition function for any point along the line, it follows that  $\sup_{i,j} N^{-1/3}\L_i^N(j)$ is upper-tight. Lower-tightness is trickier.

From the parabolic decay of the point-to-(partial)line free energy \eqref{pardec}, we deduce that the point-to-point free energy process has to be in the $N^{1/3}$ fluctuation scale at some random $p_1^*$ in a $O(N^{2/3})$ window. We essentially (see Proposition \ref{p:high}) show that for $M_0$ large enough
		\begin{align}\label{partra}
			\sup_{p\in \ll \SSS N^{2/3},(M_0+2\SSS)N^{2/3}\rr} \frac{\L_1^N(2p+1)}{N^{1/3}\nu}+\SSS^2
		\end{align}
is tight as $N\to \infty$, uniformly over all $\SSS>0$. The parameter $\nu$ is an explicit function of $\theta$, see \eqref{nu}.
The crucial point here is the uniformity, i.e., the $K(\e)$ in the definition of tightness can be chosen independent of $\SSS>0$. Thus, in $N^{1/3}$ and $N^{2/3}$ scaling $\L_1^N$ follows an inverted parabola.
	
We next essentially show (see Proposition \ref{p:high2}) that there exists $M_1$ and $M_2$ large enough so that
$$\sup_{p_2 \in \ll M_1N^{2/3}, M_2N^{2/3}\rr} N^{-1/3}\L_2^N(2p_2)$$
is tight. The idea is if $\L_2^N$ is uniformly low in $[M_1N^{2/3}, M_2N^{2/3}]$, then, due to the Gibbs property of the line ensemble, the first curve $\L_1^N$ behaves like a random bridge, i.e., linearly,  in that interval. However, as we show in proving Proposition \ref{p:high2}, this violates the inverted parabolic trajectory \eqref{partra} for some $\SSS$ thus leaving us with a random $p_2^*\in [M_1N^{2/3},M_2N^{2/3}]$ so that $N^{-1/3}\L_2^N(2p_2^*)$ is tight. Owing to typical non-intersection (Proposition \ref{t:order}) we have that $N^{-1/3}\L_1^N(2p_2^*-1)$ is tight.

\medskip

\noindent{\it$\m{WPRW}$ machinery:}	
The remaining proofs of \ref{it2}, \ref{it2.5} and \ref{it3} rely heavily on understanding the  
\begin{equation}
	\label{law}
	\begin{aligned}
		\mbox{$\hslg$ Gibbs measure on } (L_1, L_2) \mbox{ with } L_1(2n-1)=x_n,\,\, L_2(2n)=y_n,\,\, \textrm{ and }L_3\equiv -\infty,
	\end{aligned}
\end{equation}
for $L_1$ with domain $\ll 1,2n-1\rr$, $L_2$ with domain $\ll 1,2n\rr$, $n$ of order $N^{2/3}$ and $x_n,y_n$ of order $N^{1/3}$ (i.e. order $\sqrt{n}$). We referred above to this as the bottom-free measure.

Set $M_1=16$ (so that $M_1N^{2/3} \ge 2T$, where $T$ is defined in \eqref{eq:TT}) in \ref{it1} and determine a random point $p^*$ from the same item. Essentially, we want to take $n$ in \eqref{law} to be this $p^*$. However, one caveat in taking $p^*$ as a choice for $n$ is that it is random. So, instead we analyze \eqref{law} for all fixed $n\in [M_1N^{2/3},M_2N^{2/3}]$. We shall show \ref{it2}, \ref{it2.51}, and \ref{it3} under the law in \eqref{law} with estimates uniform over all possible choices of $n\in [M_1N^{2/3},M_2N^{2/3}]$. Here \ref{it2.51} is given by
	\begin{enumerate}[leftmargin=25pt]
	\setlength\itemsep{1 em}
 \myitem[(iii\mprime)]\label{it2.51} Prove that any $M^*>0$, with strictly positive probability (depending only on $M^*$ and $M_1,M_2$)
$$\mbox{under \eqref{law}} \quad L_1(p)\ge M^*N^{1/3}, \quad \textrm{and} \quad L_2(p) \ge M^*N^{1/3} \quad \mbox{for all }p \in \ll1,n-1\rr.$$
\end{enumerate}
Note that as $n\ge 2T$, \ref{it2.51} implies \ref{it2.5}.

\smallskip

The law in \eqref{law} is closely related (see \eqref{e.reduction}) to the \textit{weighted paired random walk} ($\m{WPRW}$) law.

	\begin{definition}[Paired Random Walk and Weighted Paired Random Walk] \label{prb}
Let $\Omega^2_n=\R^{n}\times \R^{n}$ and $\mathcal{F}^2_n$ be the Borel $\sigma$-algebra associated to $\Omega^2_n$. Write $\omega\in \Omega^2_n$ as $\omega=(\omega_1(1),\ldots,\omega_1(N),\omega_2(1),\ldots,\omega_2(N))$. (For later purposes write $\Omega^1_n=\R^n$, let $\mathcal{F}^1_n$  be its Borel $\sigma$-algebra and write $\omega=(\omega_1,\ldots, \omega_n)$ for $\omega\in \Omega^1_n$.)
Let $\fa(x)$ denote the density at $x\in \R$ of $\log Y_1-\log Y_2$ were $Y_1,Y_2$ are independent $\operatorname{Gamma}(\theta)$ random variables and $\ga(x)=\Gamma(\alpha)^{-1}e^{\alpha x-e^x}$ (see also \eqref{def:faga} below). For $(x,y)\in \R^2$ and $n\in \mathbb{Z}_{\geq 2}$
The \textit{paired random walk} ($\m{PRW}$) law on $(\Omega^2_n,\mathcal{F}^2_n)$ is the probability measure $\Pr_{\operatorname{PRW}}^{n;(x,y)}$ proportional to the product of two Dirac delta functions $\delta_{\omega_1(n)=x}\delta_{\omega_2(n)=y}$ and a density (against Lebesgue on $\R^{2(n-1)}$) is given by
\begin{equation}
	\label{den}
	\begin{aligned} \ga\big(\omega_2(1)-\omega_1(1)\big)\prod_{k=2}^{n}\fa\big(\omega_1(k)-\omega_1(k-1)\big)\fa\big(\omega_2(k)-\omega_2(k-1)\big)\,d\omega_1(k)\, d\omega_2(k).
	\end{aligned}
\end{equation}
As a slight abuse of notation we will say that the coordinate functions (i.e., random variables)
$$\kis(\omega):=\omega_i(k), \quad k\in \ll1,n\rr, i\in \{1,2\}$$
under this measure $\Pr_{\operatorname{PRW}}^{n;(a,b)}$ are \textit{paired random walks}.
See Figure \ref{figr} for an illustration of the $\m{PRW}$.

The weighted paired random walk ($\m{WPRW}$) law $\Pr_{\operatorname{WPRW}}^{n;(x,y)}$ on $(\Omega^2_n,\mathcal{F}^2_n)$ is absolutely continuous with respect to $\Pr_{\operatorname{PRW}}^{n;(x,y)}$ and defined through a Radon-Nikodym derivative so that for all  $\m{A}\in \mathcal{F}_n$,
\begin{align}
	\label{wscint} \Pr_{\operatorname{WPRW}}^{n;(x,y)}(\m{A})=\frac{\Ex_{\operatorname{PRW}}^{n;(x,y)}[\wsc\ind_{\m{A}}]}{\Ex_{\operatorname{PRW}}^{n;(x,y)}[\wsc]},
\end{align}
where $\wsc=\wsc(\omega)$ is given by
\begin{align}\label{defw}
	\wsc:=\exp\bigg(-e^{\iise-\se{1}{2}}-\sum_{k=2}^{n-1} \left(e^{\iiks-\se{1}{k+1}}+e^{\iiks-\iks}\right)\bigg).
\end{align}
The `sc' here refers to `super-critical' as we will use a different representation of the bottom-free law in the critical case. For the purpose of this introduction we will just write $W$ in place of $\wsc$ below.
The $\m{WPRW}$ law can be seen as a `soft' version of the law that would result from conditioning on non-crossing. Crossing is now allowed but subject to substantial energetic penalization.
\end{definition}

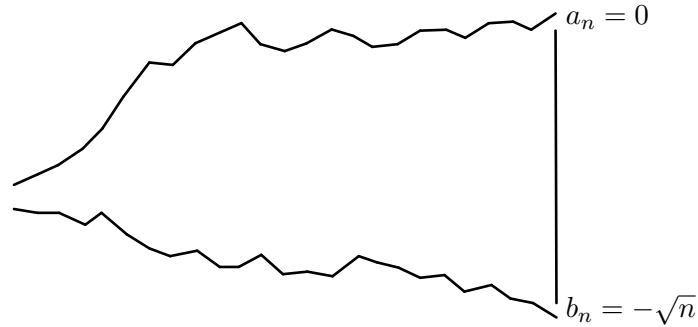
\begin{figure}[h!]
	\centering
		\begin{tikzpicture}[line cap=round,line join=round,>=triangle 45,x=0.6cm,y=0.6cm]
			\draw [line width=1pt] (8,7.2)-- (8.54,7.44);
			\draw [line width=1pt] (8.54,7.44)-- (8.98,7.64);
			\draw [line width=1pt] (8.98,7.64)-- (9.52,8);
			\draw [line width=1pt] (9.52,8)-- (9.96,8.45);
			\draw [line width=1pt] (9.96,8.45)-- (10.42,9.15);
			\draw [line width=1pt] (10.42,9.15)-- (11,9.91333333333334);
			\draw [line width=1pt] (11,9.91333333333334)-- (11.52,9.86);
			\draw [line width=1pt] (11.52,9.86)-- (12.02,10.34);
			\draw [line width=1pt] (12.02,10.34)-- (12.58,10.585714285714293);
			\draw [line width=1pt] (12.58,10.585714285714293)-- (13.04,10.78666666666674);
			\draw [line width=1pt] (13.04,10.786666666666674)-- (13.46,10.32);
			\draw [line width=1pt] (13.46,10.32)-- (14,10.166666666666673);
			\draw [line width=1pt] (14,10.166666666666673)-- (14.48,10.336);
			\draw [line width=1pt] (8,6.665)-- (8.52,6.58);
			\draw [line width=1pt] (8.52,6.58)-- (9,6.58);
			\draw [line width=1pt] (9,6.58)-- (9.58,6.316);
			\draw [line width=1pt] (9.58,6.316)-- (9.94,6.58);
			\draw [line width=1pt] (9.94,6.58)-- (10.5,6.103333333333338);
			\draw [line width=1pt] (10.5,6.103333333333338)-- (11,5.791428571428576);
			\draw [line width=1pt] (11,5.791428571428576)-- (11.46,5.62);
			\draw [line width=1pt] (11.46,5.62)-- (12.06,5.74);
			\draw [line width=1pt] (12.06,5.74)-- (12.56,5.38);
			\draw [line width=1pt] (12.56,5.38)-- (12.98,5.38);
			\draw [line width=1pt] (12.98,5.38)-- (13.48,5.648);
			\draw [line width=1pt] (13.48,5.648)-- (13.96,5.22);
			\draw [line width=1pt] (13.96,5.22)-- (14.5,5.276);
			\draw [line width=1pt] (14.5,5.276)-- (15.06,5.17428571428572);
			\draw [line width=1pt] (15.06,5.17428571428572)-- (15.64,5.62);
			\draw [line width=1pt] (15.64,5.62)-- (16.04,5.4828571428571475);
			\draw [line width=1pt] (16.04,5.4828571428571475)-- (16.52,5.365);
			\draw [line width=1pt] (16.52,5.365)-- (17,5.14);
			\draw [line width=1pt] (17,5.14)-- (17.54,5.2);
			\draw [line width=1pt] (17.54,5.2)-- (17.98,4.833333333333337);
			\draw [line width=1pt] (17.98,4.833333333333337)-- (18.58,4.98);
			\draw [line width=1pt] (18.58,4.98)-- (19,4.68);
			\draw [line width=1pt] (19,4.68)-- (19.5,4.58);
			\draw [line width=1pt] (19.5,4.58)-- (20.02,4.26);
			\draw [line width=1pt] (14.48,10.336)-- (15.04,10.646666666666674);
			\draw [line width=1pt] (15.04,10.646666666666674)-- (15.52,10.50333333333334);
			\draw [line width=1pt] (15.52,10.50333333333334)-- (15.94,10.26);
			\draw [line width=1pt] (15.94,10.26)-- (16.5,10.32);
			\draw [line width=1pt] (16.5,10.32)-- (17,10.62);
			\draw [line width=1pt] (17,10.62)-- (17.58,10.64);
			\draw [line width=1pt] (17.58,10.64)-- (18,10.46);
			\draw [line width=1pt] (18,10.46)-- (18.52,10.785);
			\draw [line width=1pt] (18.52,10.785)-- (19.06,10.82);
			\draw [line width=1pt] (19.06,10.82)-- (19.46,10.64);
			\draw [line width=1pt] (19.46,10.64)-- (20,11);
			\draw [line width=1pt] (20,10.62)-- (20.02,4.58);
			\draw (20,11.4) node[anchor=north west] {$a_n=0$};
			\draw (20,5) node[anchor=north west] {$b_n=-\sqrt{n}$};
		\end{tikzpicture}
	\caption{A  paired random walk ($\m{PRW}$) with top curve $(\iks)_{k=1}^{n}$ and the bottom curve $(\iiks)_{k=1}^{n}$ specified by the condition that $\isn=x_n=0$ and $\iisn=y_n=-\sqrt{n}$. The $\m{PRW}$ law is given in \eqref{den} and should be seen as a reweighting of independent random walks by $\ga(\ise-\iise)$. This explains why the paths approach each other on the left of this figure. Also illustrated here is the situation where the two random walks happen to also be non-intersecting.}
	\label{figr}
\end{figure}

It follows from the Gibbs property for the $\m{WPRW}$ law (see Lemma \ref{l:LSCrit}) that the
\begin{equation}\label{e.reduction}
\textrm{law of $(L_1(2k-1), L_2(2k))_{k=1}^n$ in \eqref{law} equals law of $(\iks,\iiks)_{k=1}^n$ under $\Pr_{\operatorname{WPRW}}^{n;(x_n,y_n)}$}
\end{equation}
where the latter depends only on $n$ and not $N$. Thus, hereon out we study the $\m{WPRW}$ law.

\begin{remark}\label{r.evenodd}
The $\m{WPRW}$ law described above only describes the behavior of points $(L_1(2k-1), L_2(2k))_{k=1}^n$ under the law in \eqref{law}. This leaves half of the points unaccounted for  -- even indexed points in $L_1\ll1,2n\rr$ and odd indexed points in $L_2\ll1,2n\rr$. However, once we have controlled the behavior of the points  $(L_1(2k-1), L_2(2k))_{k=1}^n$, the complementary points can easily be controlled by use of the Gibbs property as explained in Lemma \ref{l:LSCrit}.
\end{remark}

\smallskip

\noindent{\it \textbf{Proof idea for \ref{it2} and \ref{it2.5}}:}  We now illustrate the proof idea of \ref{it2}. The proof idea for \ref{it2.51} is quite similar and done in parallel in Section \ref{sec:rpe}.
To establish \ref{it2}, it suffices to show that for $\m{A}=\{\ise \le -M\sqrt{n}\}$ or $\m{A}=\{\iise \le -M\sqrt{n}\}$, $\Pr_{\operatorname{WPRW}}^{n;(x_n,y_n)}(\m{A})$  can be made arbitrarily small by choosing $M$ large enough in a manner that is uniform as $n\to \infty$. Let us consider the case $\m{A}=\{\ise \le -M\sqrt{n}\}$ as the argument for the other case is completely analogous. The event $\m{A}$ is increasing (recall from Definition \ref{def:hslggibbs}). Thus, by stochastic monotonicity of $\Pr_{\operatorname{WPRW}}^{n;(x_n,y_n)}$ (Proposition \ref{p:gmc}), decreasing the values of the endpoints $(x_n,y_n)$ can only increase the probability of $\m{A}$. Thus
\begin{align*}
	\Pr_{\operatorname{WPRW}}^{n;(x_n,y_n)}(\ise \le -M\sqrt{n}) & \le \Pr_{\operatorname{WPRW}}^{n;(\min\{x_n,y_n\},\min\{x_n,y_n\}-\sqrt{n})}(\ise \le -M\sqrt{n}) \\ & = \Pr_{\operatorname{WPRW}}^{n;(0,-\sqrt{n})}(\ise \le -M\sqrt{n}-\min\{x_n,y_n\}),
\end{align*}
where the last inequality follows from shift invariance of the Gibbs measures (Lemma \ref{obs1} \ref{traninv}). Recall that by the tightness afforded to us from \ref{it1} we were able to assume that $|\min\{x_n,y_n\}|$ is of order $\sqrt{n}$. Since in \ref{it2} and \ref{it2.5} we are likewise trying to prove tightness or that certain events occur with positive probability, it suffices to show that for all $C>0$, those results hold under the assumption $|\min\{x_n,y_n\}|< C\sqrt{n}$. We do not need uniformity in $C$ and the argument is the same for any such value, so we will currently assume $C=1$. Thus we aim now to bound $\Pr_{\operatorname{WPRW}}^{n;(0,-\sqrt{n})}(\ise \le -M\sqrt{n})$ (really $M+1$, but since $M$ is arbitrary we just write $M$ here) for large enough $M$, uniformly in $n$. To summarize, we have currently reduced our consideration to the boundary data  $x_n=0, y_n=-\sqrt{n}$. This type of reduction is also possible while dealing with corresponding events in \ref{it2.51} but not for the event \ref{it3}.



\medskip

We next claim that there exists a constant $\Con>0$ so that uniformly over all $M$ and $n$
$$
\Pr_{\operatorname{WPRW}}^{n;(0,-\sqrt{n})}(\m{A})\le \Con^2\,  \Pr_{\operatorname{PRW}}^{n;(0,-\sqrt{n})}(\m{A})\qquad \textrm{for }  \m{A}=\{\ise\le -M\sqrt{n}\}.
$$
The probability $\Pr_{\operatorname{PRW}}^{n;(0,-\sqrt{n})}(\m{A})$ can be readily shown to go to zero uniformly in $n$ as $M$ grows. Thus, it suffices to prove the above comparison.
In light of \eqref{wscint}, it suffices to show such a comparison for both the numerator and denominator. In particular, we show that
\begin{equation}
	\label{wprwest}
	\begin{aligned}
		 \Ex_{\operatorname{PRW}}^{n;(0,-\sqrt{n})}[W\ind_{\m{A}}] \le \Con \! \cdot \! n^{-1/2}\!\cdot\! \Ex_{\operatorname{PRW}}^{n;(0,-\sqrt{n})}[\ind_{\m{A}}], \quad \textrm{and} \quad\Ex_{\operatorname{PRW}}^{n;(0,-\sqrt{n})}[W] \ge \tfrac1{\Con}\!\cdot\! n^{-1/2},
	\end{aligned}
\end{equation}
where $\Con>0$ is a universal constant that does not depend on $M$ or $n$. Notice that while both numerator and denominator terms in \eqref{wscint} go to zero with $n\to\infty$, they do so at the same rate $n^{-1/2}$ which cancels and yields the desired control on $\Pr_{\operatorname{WPRW}}^{n;(0,-\sqrt{n})}(\m{A})$. Note also that the $n^{-1/2}$ decay behavior here is particular to $x_n,y_n$ of order $\sqrt{n}$ and that for general boundary values of $x_n, y_n$, the estimates in \eqref{wprwest} may not be true.
The inequalities in \eqref{wprwest} are established in the proofs of Lemmas \ref{l:rpass} and \ref{lem:tre}, and Corollary \ref{corb}. We will describe their main ideas here.

\smallskip

\noindent{\it Proof idea for upper bound in \eqref{wprwest}.}
We briefly explain the proof idea for the upper bound on $\Ex_{\operatorname{PRW}}^{n;(0,-\sqrt{n})}[W\ind_{\m{A}}]$ and, along the way, we explain why the $n^{-1/2}$ factor shows up. The starting point of our proof is to compare the soft non-intersection conditioning by $W$ to hard non-intersection conditioning in the following manner. Define
\begin{align}\label{defnipab}
	\ni_p:=\left\{ \iks-\iiks \ge -p, \mbox{ for all } k\in \ll 2,n-1\rr\right\}, \qquad\textrm{with}\quad \ni:=\ni_0.
\end{align}
Under the complement event $\ni_p^c$, we have $W\le e^{-e^p}$ and thus the following deterministic inequality:
\begin{align}\label{eqwineq}
	W & \le \ind_{\ni}+\sum_{p=0}^{\infty} e^{-e^p}\! \cdot\! \ind_{\ni_{p+1}\cap \ni_p^c} \le \ind_{\ni}+\sum_{p=0}^{\infty} e^{-e^p}\! \cdot\! \ind_{\ni_{p+1}}.
\end{align}
Note that if we condition on $(\ise,\iise)$, the $\m{PRW}$ law can be viewed as two independent random bridges from $(\ise,\iise)$ to $(0,-\sqrt{n})$. We denote this law as $\Pr^{n;(\ise,\iise),(0,-\sqrt{n})}$. In Lemma \ref{l:nipp}, we show that there is an absolute constant $\Con>0$, such that
$$\Pr^{n;(\ise,\iise),(0,-\sqrt{n})}(\ni_p) \le e^{\Con p}\cdot \Pr^{n;(\ise,\iise),(0,-\sqrt{n})}(\ni)\quad\textrm{for all } p\ge 0.$$
By this inequality and \eqref{eqwineq} along with the tower property of conditional expectations we find
$$\Ex_{\operatorname{PRW}}^{n;(0,-\sqrt{n})}[W\ind_{\m{A}}] \le \Con \cdot \Ex_{\operatorname{PRW}}^{n;(0,-\sqrt{n})}\left[\ind_{\m{A}}\cdot\Pr^{n;(\ise,\iise),(0,-\sqrt{n})}(\ni)\right]$$
for some $\Con>0$. Thus, to upper bound $\Ex_{\operatorname{PRW}}^{n;(0,-\sqrt{n})}[W\ind_{\m{A}}]$ it suffices to do so to $\Pr^{n;(\ise,\iise),(0,-\sqrt{n})}(\ni)$.

Due to the presence of the $g$ factor in \eqref{den}, under the $\m{PRW}$ law we expect a pinning effect in the left boundary, i.e., $\ise-\iise=O(1)$. Thus we expect the large scale behavior under the $\m{PRW}$ law should be comparable to that of two independent random walks started close to each other. It is well known (see for example \cite{spit,kozlov}) that when $\iks, \iiks$ are independent random walks with $\ise-\iise=0$, the non-intersection probability over a time horizon of $n$ step is of the order $n^{-1/2}$.  This is why we expect the $n^{-1/2}$ behavior of the non-intersection probability under the $\m{PRW}$ law as well. We confirm this expectation  with two lemmas. The first, Lemma \ref{as:el}, show that $|\ise-\iise|$, $\ise/\sqrt{n}$, and $\iise/\sqrt{n}$ all have exponential tails under the $\m{PRW}$ law. The second, Lemma \ref{genni}, bounds the non-intersection probability  as
\begin{equation}
	\label{genniineq}
	\begin{aligned}
		& \ind_{|\ise|+|\iise|\le \sqrt{n}(\log n)^{3/2}} \cdot \Pr^{n;(\ise,\iise),(0,-\sqrt{n})}\big(\ni\big) \\ & \hspace{3cm}\le \tfrac\Con{\sqrt{n}} \!\cdot\!{\max\{\ise-\iise,1\}\!\cdot\! \max\left\{\tfrac1{\sqrt{n}}|\ise|,2\right\}^{\frac32}}\!.
	\end{aligned}
\end{equation} 
This lemma allows us to control the probability when $|\ise|+|\iise| \le \sqrt{n}(\log n)^{3/2}$ (the complementary case probability is controlled by the exponential tails). Lemma \ref{as:el} follows from the description of the $\m{PRW}$ law in \eqref{den} and the exponential tails for the densities $\fa$ and $\ga$. Lemma \ref{genni} is more subtle and requires various estimates under the random bridge law that are uniform over a specified set of starting and ending points. Let us briefly explain here why we have such a bound in \eqref{genniineq}. Intuitively, the non-intersection probability should increase as the difference in starting points, $\ise-\iise$, increases. Thus we see a term of the form $\max\{\ise-\iise,1\}$ on the right-hand side of \eqref{genniineq}. The term involving $\tfrac1{\sqrt{n}}|\ise|$, on the other hand, arises due to the nature of our proof. In the course of proving Lemma \ref{genni}, we proceed by bounding the ratio of density of the random bridge and density of a (pure) random walk. Such bound naturally depends on the slope of  the random bridge and gets worse as the slope $|\ise|/\sqrt{n}$ increases. This is why we encounter $\tfrac1{\sqrt{n}}|\ise|$ term on the right-hand side of \eqref{genniineq}. The details of the proof are presented in Appendix \ref{app2}. 
From the above two lemmas the upper bound in \eqref{wprwest} follows readily.

\smallskip


\noindent{\it Proof idea for lower bound in \eqref{wprwest}.}
Lower bounding $\Ex_{\operatorname{PRW}}^{n;(0,-\sqrt{n})}[W]$ is more involved. The first step is to find a lower bound for $W$ in terms of the indicator function for an event which we call $\m{Gap}$. For simplicity we do this in a simpler setting to lower bound $\Ex_{\operatorname{PRW}}^{n;(0,-\sqrt{n})}[W']$ where
\begin{align}
	W':=\exp\bigg(-\sum_{k=2}^{n-1} e^{\iiks-\iks}\bigg)
\end{align}
is obtain by deleting various terms in the exponential defining of $W$ (see \eqref{defw}). Note that $W'\ge W$ and thus the argument for $W$ is necessarily more involved. It is known from \cite{ritter} that when a random walk 
$(\se{}{k})_{k=1}^n$ is conditioned to stay positive, with high probability $\se{}{k}$ has growth at least of the order $k^{\frac12-\delta}$ for any $\delta>0$. Taking $\delta=\frac14$ this implies that if we condition a random bridge $(\se{}{k})_{k=1}^n$ of length $n$ starting and ending at zero to stay positive, then $\se{}{k}$ should be at least of the order $\min\{k,n-k+1\}^{\frac14}$ with very high probability. Treating $\iks-\iiks$ as a random bridge, this inspires us to define
\begin{align*}
\m{Gap}_\beta':=\{\iks-\iiks \ge \beta\cdot \min\{k,n-k+1\}^{\frac14} \mbox{ for all } k\in \ll2,n\rr \}
\end{align*}
We note that when $\m{Gap}_{\beta'} \cap \{\ise-\iise\in [0,1]\}$ occurs, the sum in the exponent of $W'$ is bounded uniformly in $n$ and hence $W'$ is bounded below by a strictly positive constant, say $a_\beta'$. Thus we have $W' \ge a_{\beta}'\ind_{\m{Gap}_\beta'\cap \{\ise-\iise\in [0,1]\}}$.
For $W$, we define a similar, albeit more complicated, event $\m{Gap}_\beta$  (see \eqref{def:gp} for definition) that captures the above idea and we show in Lemma \ref{wgap} that $W \ge  a_{\beta}\ind_{\m{Gap}_\beta\cap \{\ise-\iise\in [0,1]\}}$ for some deterministic constant $a_{\beta}>0$. Thus to lower bound $\Ex_{\operatorname{PRW}}^{n;(0,-\sqrt{n})}[W]$ we lower bound $\Pr_{\operatorname{PRW}}^{n;(0,-\sqrt{n})}(\m{Gap}_\beta)$ and $\Pr_{\operatorname{PRW}}^{n;(0,-\sqrt{n})}(\ise-\iise\in [0,1])$.
\medskip

	\begin{figure}[h!]
	\centering
	\begin{tikzpicture}[line cap=round,line join=round,>=triangle 45,x=1cm,y=1cm]
		\draw [line width=1pt] (3,4)-- (3.5,3.14);
		\draw [line width=1pt] (3.5,3.14)-- (4,3.74);
		\draw [line width=1pt] (4,3.74)-- (4.5,4.72);
		\draw [line width=1pt] (4.5,4.72)-- (5,5.14);
		\draw [line width=1pt] (5,5.14)-- (5.52,4.88);
		\draw [line width=1pt] (5.52,4.88)-- (5.75,5.44);
		\draw [line width=1pt,color=red] (10,3)-- (10.46,3.18);
		\draw [line width=1pt,color=red] (10.46,3.18)-- (11.25,4.56);
		\draw [line width=1pt] (11.25,4.56)-- (11.54,5.52);
		\draw [line width=1pt] (11.54,5.52)-- (11.98,5.26);
		\draw [line width=1pt] (11.98,5.26)-- (12.52,5.02);
		\draw [line width=1pt] (12.52,5.02)-- (12.98,4.86);
		\draw [line width=1pt] (12.98,4.86)-- (13.52,5.06);
		\draw [line width=1pt] (13.52,5.06)-- (14,6);
		\draw [line width=1pt,color=red] (5.75,5.44)-- (6.44,4.88);
		\draw [line width=1pt,color=red] (6.44,4.88)-- (7.02,5.28);
		\draw [line width=1pt,color=red] (7.02,5.28)-- (7.54,4.64);
		\draw [line width=1pt,color=red] (7.54,4.64)-- (7.98,4.58);
		\draw [line width=1pt,color=red] (7.98,4.58)-- (8.5,4.16);
		\draw [line width=1pt,color=red] (8.5,4.16)-- (9.02,4.78);
		\draw [line width=1pt,color=red] (9.02,4.78)-- (9.46,3.86);
		\draw [line width=1pt,color=red] (9.46,3.86)-- (10,3);
		\begin{scriptsize}
			\draw [fill=black] (3,4) circle (2.5pt);
			\draw (3.16,4.43) node {$A$};
			\draw [fill=black] (14,6) circle (2.5pt);
			\draw (14.16,6.43) node {$D$};
			\draw [fill=blue] (5.75,5.44) circle (2.5pt);
			\draw (5.85,5.74) node {$B$};
			\draw [fill=blue] (11.25,4.56) circle (2.5pt);
			\draw (11.55,4.5) node {$C$};
			\draw[dashed,<->] (3,2)--(5.65,2);
			\draw[dashed,<->] (5.85,2)--(11.15,2);
			\draw[dashed,<->] (11.35,2)--(14,2);
			\node at (4.3,2.3) {$n/4$};
			\node at (8.4,2.3) {$n/2$};
			\node at (12.6,2.3) {$n/4$};
		\end{scriptsize}
	\end{tikzpicture}
	\caption{The modified random bridge is constructed by starting two random walks of length $n/4$, one from $A$ to $B$ and another (run backwards) from $D$ to $C$. The path between $B$ and $C$ is then chosen given the values there according to a random bridge of length $n/2$. 
	}
	\label{fmrb2}
\end{figure}
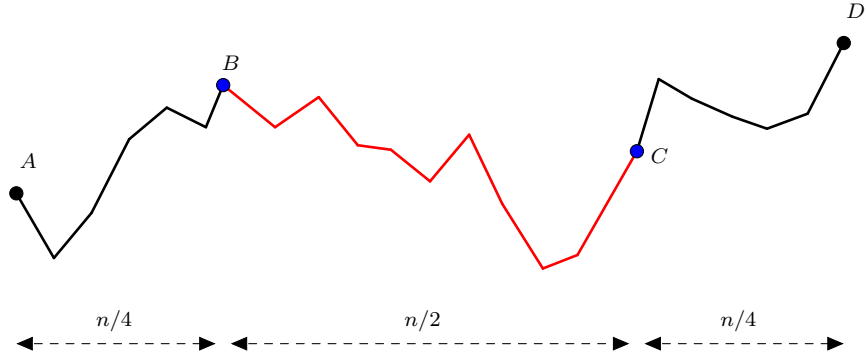

Recall that by the Gibbs property, conditioned on $(\ise,\iise)$, the law of $(\iks,\iiks)_{k=1}^{n}$ under $\Pr_{\operatorname{PRW}}^{n;(0,-\sqrt{n})}$
is that of two independent random bridges (with increment law $\fa$ as in all of our discussion above) started from $(\ise,\iise)$ and ended at $(0,-\sqrt{n})$. In estimating the $\m{Gap}$ event probability under this law we found it easier to work with the law of two independent \textit{modified random bridges}. These are described in Figure \ref{fmrb2} and composed of random walks (with increment law $\fa$) in the first and last $n/4$ portion of its domain, and then a bridge to connect the resulting values. Lemma \ref{lem:compare} shows that the density of the random bridge and modified random bridge are comparable provided the values at $k=1,n/4,3n/4,n$ are all of order $\sqrt{n}$.
In particular, if we set $$\m{E}:=\{|S_1(n/4)-S_1(3n/4)|+|S_2(n/4)-S_2(3n/4)|\le \sqrt{n}\}, \quad \m{F}:=\{|\ise|+|\iise|\le \sqrt{n}\},$$
the results from Lemma \ref{lem:compare} allows us to conclude that
\begin{equation}
	\label{eqrgv}
	\begin{aligned}
		& \Pr_{\operatorname{PRW}}^{n;(0,-\sqrt{n})}(\m{Gap}_\beta \cap \{\ise-\iise \in [0,1]\}) \ge \Pr_{\operatorname{PRW}}^{n;(0,-\sqrt{n})}(\m{Gap}_\beta \cap \{\ise-\iise \in [0,1]\}\cap \m{E}\cap \m{F}) \\ & \ge \tfrac1\Con  \cdot \Ex_{\operatorname{PRW}}^{n;(0,-\sqrt{n})}\left[\ind_{\{\ise-\iise\in [0,1]\}\cap \m{F}} \cdot \tpr{n}{(\ise,\iise)}{,(0,-\sqrt{n})}(\m{Gap}_\beta \cap \m{E})\right]
	\end{aligned}
\end{equation}
where $\til{\Pr}^{n;(\ise,\iise),(0,-\sqrt{n})}$ is the law of two independent modified random bridges started from $(\ise,\iise)$ and ended at $(0,-\sqrt{n})$.  Using shorthand $\til{\Pr}$ for $\til{\Pr}^{n;(\ise,\iise),(0,-\sqrt{n})}$ and Bayes rule, $$\til{\Pr}(\m{Gap}_\beta \cap \m{E})=\til{\Pr}(\ni)\cdot\til{\Pr}(\m{Gap}_\beta \cap \m{E}\mid \ni).$$
Since the modified random bridge has two true random walk portion (first and last quarter) we can now rely on standard non-intersecting random walk techniques to eventually obtain a lower bound on the probability $\til{\Pr}(\ni)$ above. In Appendix \ref{app2} we establish various uniform estimates and in particular (combining  Lemma \ref{l:class} and Corollary \ref{l:niexp}) show that for $x_n,y_n$ of order $n^{1/2}$,
\begin{align}\label{e3r}
	\til\Pr(\ni) \ge \tfrac1\Con \cdot n^{-1/2} \cdot \pr{n/4}{(x_n,y_n)}{}(\til{\ni}),
\end{align}
and $\til\Pr(\m{Gap}_\beta \cap \m{E}\mid \ni) \ge \frac1\Con$ (for small enough $\beta$) uniformly over $\ise,\iise\le M\sqrt{n}$ and $\ise-\iise \in [0,1]$. In \eqref{e3r}, $\pr{n/4}{(x_n,y_n)}{}(\til{\ni})$ denotes the probability of non-intersection of two random walks of length $n/4$ started from $x_n$ and $y_n$. As $x_n=0, y_n=-\sqrt{n}$, we can show that $\pr{n/4}{(x_n,y_n)}{}(\til{\ni})$ is bounded below. Finally, Lemma \ref{as:el} establishes that $\Ex_{\operatorname{PRW}}^{n;(0,-\sqrt{n})}\left[\ind_{\{\ise-\iise\in [0,1]\}\cap \m{F}}\right]$ is bounded below. Thus combining all the estimates leads  to an $n^{-1/2}$ order lower bound for the right hand side of \eqref{eqrgv}. Putting together the various bounds described above now yields the desired lower bound on
$\Ex_{\operatorname{PRW}}^{n;(0,-\sqrt{n})}[W]$ in \eqref{wprwest}.
Since in Section \ref{sec:pimc}, we prove \ref{it2} and \ref{it2.51} in parallel, some parts of the argument presented here in the introduction appear in a more general or slightly different flavor later. However, the core idea and features remain the same.

\smallskip

Non-intersecting random walks and random bridges that are pinned at the starting and/or ending points have been studied extensively (e.g. \cite{n1,n2,n3} and the reference therein) and are known to converge under diffusive scaling to Dyson Brownian motion and non-intersecting Brownian bridges. As demanded by our technical arguments, our work establishes \textit{uniform} (over starting and ending points) estimates  for non-intersection probabilities of pairs of random walks and random bridges in such scaling, i.e.~\textit{uniform} over \textit{all possible} $O(1)$ starting points that potentially can vary in a diffusive $O(\sqrt{n})$ window (precisely how  $(\ise,\iise)$ behaves). 
Appendix \ref{app2} develops the machinery to establish such uniform estimates under general assumptions on increments of the random bridges.

 \medskip

\noindent{\it \textbf{Proof idea for \ref{it3}}:}
The argument to prove \ref{it3} also uses the machinery developed in the proof of \ref{it2} and \ref{it2.5} and the reduction from \eqref{e.reduction} to the study of the weighted paired random walks $(\iks,\iiks)_{k=1}^n$. For $\gamma,\delta,M>0$, consider the events
\begin{equation*}
\m{B}  =\m{B}(\delta,\gamma) :=\bigg\{\sup_{\substack{i_1,i_2\in \ll1,n/4\rr \\ |i_1-i_2|\le \delta n}} \!\!\!|\se{1}{i_1}-\se{1}{i_2}| \ge \gamma\sqrt{n}\bigg\}, \quad
\m{G}  = \m{G}(M):=\big\{|\se{1}{1}|+|\se{2}{1}| \ge M\sqrt{n}\big\}.
\end{equation*}
To prove tightness we will show that for each $\gamma>0$, as $\delta\to 0$, we have $\Pr_{\operatorname{WPRW}}^{n;(x_n,y_n)}(\m{B})\to 0$. Recall that as an input we know that $x_n,y_n$ are of order $n^{1/2}$.
In \ref{it2}, we observed that $\Pr_{\operatorname{WPRW}}^{n;(x_n,y_n)}(\m{G})\to 0$ as $M\to \infty$ uniformly in $n$.  Thus it suffices to provide an upper bound for $\Pr_{\operatorname{WPRW}}^{n;(x_n,y_n)}(\m{B}\cap \m{G}^c)$ for each $M>0$. Thanks to \eqref{wscint}, it suffices to give a upper bound for $\Ex_{\operatorname{PRW}}^{n;(x_n,y_n)}[W\ind_{\m{B}\cap\m{G}^c}]$ and a lower bound for $\Ex_{\operatorname{PRW}}^{n;(x_n,y_n)}[W]$.
An important difference from the discussion regarding \ref{it2} is that now the event $\m{B}$ in question deals with two point differences of $\se{1}{\cdot}$ which is not an increasing event. Thus, the monotonicity of the Gibbs measure with respect to the boundary data does not help here and, unlike in \ref{it2}, we cannot use monotonicity to reduce consideration to $x_n=0, y_n=-\sqrt{n}$.

Instead, using the soft non-intersection property of our Gibbsian line ensemble we can deduce control on the difference of the exit points. We show in Theorem \ref{t:order} that for all large enough $N$
 $$\Pr\Big(\L_1^N(2n-1) \ge \L_1^N(2n)-(\log N)^{7/6}\textrm{ for all }n\in \ll M_1N^{2/3},M_2N^{2/3}\rr\Big) \ge 1-2^{-N}.$$
Note that in \eqref{law} we conditioned upon $L_1(2n-1)=x_n$ and $L_1(2n)=y_n$. In view of the above high probability event, we may thus assume $x_n-y_n\ge -(\log n)^{7/6}$.
Under these boundary conditions ($x_n, y_n$ of order $n^{1/2}$ and $x_n-y_n\ge -(\log n)^{7/6}$) the estimates in \eqref{wprwest} may not hold. 

Nonetheless, for the lower bound of $\Ex_{\operatorname{PRW}}^{n;(x_n,y_n)}[W]$, all the arguments up to  and including \eqref{e3r} hold under the present assumptions on $(x_n,y_n)$. We show in Lemma \ref{crude} that
\begin{align}\label{eq:crude0}
	\Ex_{\operatorname{PRW}}^{n;(x_n,y_n)}[W] \ge \tfrac1{\sqrt{n}}{\Con_1^{-1}}\cdot\pr{n/4}{(x_n,y_n)}{}(\til{\ni}) \ge {\Con_2^{-1}}e^{-\Con_2 (\log n)^{5/4}},
\end{align}
where the second bound above is true under our assumption $x_n-y_n\ge (\log n)^{7/6}$. For the upper bound on $\Ex_{\operatorname{PRW}}^{n;(x_n,y_n)}[W\ind_{\m{B}\cap\m{G}^c}]$  we first obtain a deterministic bound for $W$ (similar to \eqref{eqwineq})
\begin{align*}
	W & \le \Con \left(e^{-(\log n)^2}+\til{W}\right), \qquad \til{W}:=\sum_{p=0}^{\lfloor 2\log\log(n) \rfloor}   e^{-e^p}\! \cdot\! \ind_{\ni_{p+1}}.
\end{align*}
 Due to the $e^{-\Con_2 (\log n)^{5/4}}$ lower bound in \eqref{eq:crude0}, the $e^{-(\log n)^2}$ factor above can be ignored and we instead focus on upper bounding $\til{W}$. Using the Gibbs property and conditional expectations  
\begin{align*}
	\Ex_{\operatorname{PRW}}^{n;(x_n,y_n)}[\til{W}\ind_{\m{B}\cap\m{G}^c}]  & = \sum_{p=0}^{\lfloor 2\log\log(n) \rfloor-1} e^{-e^p} \Ex_{\operatorname{PRW}}^{n;(x_n,y_n)}\left[\ind_{\m{G}^c}\pr{n}{(\ise,\iise)}{,(x_n,y_n)}(\m{B}\cap \ni_{p+1})\right] \\ & = \sum_{p=0}^{\lfloor 2\log\log(n) \rfloor-1} e^{-e^p} \Ex_{\operatorname{PRW}}^{n;(x_n,y_n)}\left[\ind_{\m{G}^c}\pr{n}{(\ise+p+1,\iise)}{,(x_n+p+1,y_n)}(\m{B}\cap \ni)\right].
\end{align*}
where $\pr{n}{(\ise,\iise)}{,(x_n,y_n)}$ is the law of two independent random  bridges started from $(\ise,\iise)$ and ending at $(x_n,y_n)$. The last equality above follows by lifting the $\se{1}{\cdot}$ random walk by $p+1$ units. We then apply the density comparison (Lemma \ref{lem:compare}) to modified random bridges to obtain
\begin{align*}
\ind_{\m{G}^c}\cdot \pr{n}{(\ise+p,\iise)}{,(x_n+p,y_n)}(\m{B}\cap \ni) \le \Con \cdot \ind_{\m{G}^c}\cdot \tpr{n}{(\ise+p,\iise)}{,(x_n+p,y_n)}(\m{B}\cap \ni).
\end{align*}
where $\tpr{n}{(\ise+p,\iise)}{,(x_n+p,y_n)}$ is the law of two independent modified random bridges started from $(\ise,\iise)$ and ending at $(x_n,y_n)$. The above comparison is only possible when we have a control on the slopes of the random bridges. This slope control is precisely furnished by $\ind_{\m{G}^c}$.

Let us write $\til{\Pr}_p$ for $\tpr{n}{(\ise+p+1,\iise)}{,(x_n+p+1,y_n)}$. Using uniform estimates for non-intersection probability for random walks and bridges from Appendix \ref{app2} (combining Lemma \ref{l:class}, Lemma \ref{l:nipp} and Corollary \ref{l:niexp}) we obtain that
\begin{align*}
	\til{\Pr}_p(\ni) \le \tfrac{\Con}{\sqrt{n}}e^{\Con p} \cdot \max\{\ise-\iise,1\}\cdot \pr{n/4}{(x_n,y_n)}{}(\til{\ni}).
\end{align*}
Since $\m{B}$ depends only of the first quarter points, $\til{\Pr}_p(\m{B}\mid \ni)$ can be controlled by modulus of continuity estimates for (pure) random walks under non-intersection which we deduce in Lemma \ref{mret}. In particular, we obtain that $\sup_{p\in \ll0,\lfloor 2\log\log(n) \rfloor -1\rr}\til{\Pr}_p(\m{B}\mid \ni)\to 0$ as $\delta\to 0$. Combining all the above estimates, in view of the exponential tail bounds for $\se{1}{1}-\se{2}{1}$ under the $\m{PRW}$ law from Lemma \ref{as:el}, this leads to the desired estimate.

	\subsection{Related works on half-space polymers}\label{sec:relatedwork}
Half-space polymers are a particular variant of full-space polymers that are well-studied in the literature (introduced in \cite{huse,imb,bol}). Full-space polymers are widely believed to be in the KPZ universality class in the sense that they are expected to have $1/3$ fluctuation exponent and $2/3$ transversal exponent. However, besides a few solvable models, these exponents are not proven rigorously for general polymers. We refer to \cite{comets,timo,batesch,bcd,dz22a,dz22b} and references therein for more details.

	Half-space polymer models have been studied in the physics literature since the work of Kardar \cite{kar2}. They arise naturally in the context of modeling wetting phenomena \cite{phy1,phy2,phy3} where one studies directed polymers in the presence of a wall. They have been of great interest due to the presence of phase transition (called the `depinning transition') and a rich phase diagram for limiting distributions based on the diagonal strength. This phase diagram was first rigorously proven for geometric last passage percolation (LPP), i.e., polymers with zero temperature, in a series of works by Baik and Rains \cite{br1,br01,br3}. Multi-point fluctuations were studied then in \cite{sis} and similar results were later proven for exponential LPP in \cite{bbcs0,bbcs} using Pfaffian Schur processes. For further recents works on half-space LPP, we refer to \cite{bete,ale1,ale2,ale3}.
	
Positive temperature models such as polymers resisted rigorous treatment for longer compared to LPP since they are no longer directly related to Pfaffian point processes. For such class of models in the half-space geometry, the first rigorous proof of depinning transition appeared in \cite{bw} where the authors proved precise fluctuation results including the BBP phase transition \cite{bbap} for the point-to-line log-gamma free energy. For the point-to-point log-gamma free energy, the limit theorem along with Baik-Rains phase transition was conjectured in \cite{barraquand_borodin_corwin_2020} based on an uncontrolled steepest descent analysis of certain formulas coming from half-space Macdonald processes. This result was proved recently in \cite{ims22} using a new set of ideas, relating the half-space model to a free boundary version of the Schur process. In fact, \cite{ims22} also proves analogous results for the half-space KPZ equation which is the free energy of the continuum directed random polymer in half-space.
	The half-space KPZ equation arises as a limit of free energy of $\hslg$ polymer \cite{wu,bc22}. Since the early work by Kardar \cite{kar2}, the half-space KPZ equation has received significant attention, with a flurry of new results recently in
 in both mathematics \cite{cs1,bbcw,barraquand_borodin_corwin_2020,par,par2, bc22,ims22} and physics literature \cite{gld,bbc,ito,de,kr,bkld2,bld1,bkld}. Apart from log-gamma and continuum polymer, a half-space version of the beta polymer was recently introduced and studied in \cite{ryc}.
	
	
\subsection*{Organization} In Section \ref{sec:gibb}, we study several properties of $\hslg$ Gibbs measures and Gibbsian line ensemble, and prove Theorem \ref{thm:conn}. Section \ref{sec:2curve} is divided into three subsections that discuss three important probabilistic results for the line ensemble. In Section \ref{sec:cls}, we show a certain ordering of points on the line ensemble (Theorem \ref{t:order}). This is the precise technical form of the typical non-intersection property discussed at the end of Section \ref{sec:1.2}. In Section \ref{sec:high}, we show that there is a high point on the second curve (Theorem \ref{p:high2}) as discussed at the end of item \ref{it1} from Section \ref{sec:133}. In Section \ref{sec:high}, we provide high probability uniform upper bounds for the second and third curves (Theorem \ref{l:lhigh}). These bounds are used later in proving item \ref{it2} from Section \ref{sec:133}. In Section \ref{sec:rpe}, we prove one-point tightness on the left boundary and study the probability of a certain `region pass event'. The study of the region pass event is utilized in proving the lower bound on the uniform separation between the first two curves and the third curve (described earlier in \ref{it2} from Section \ref{sec:133}). Finally, in Section \ref{sec:mc}, we study the modulus of continuity under the $\m{WPRW}$ law and prove Theorem \ref{t:main0}.
Appendix \ref{appc} includes the proof of stochastic monotonicity for $\hslg$ Gibbsian line ensembles. Appendix \ref{app1} collects several basic facts about log-gamma random variables and related measures. Appendix \ref{app2} is devoted to proving several technical estimates related to non-intersecting random bridges which are required in studying the $\m{WPRW}$ law.

	\subsection*{Notations and Conventions} For $a,b\in \R$, we denote $\llbracket a,b \rrbracket := [a,b]\cap \Z$, $a\wedge b=\min(a,b)$, and $a\vee b=\max(a,b)$. Throughout this paper we work with three fixed parameters: $\theta >0$ (bulk parameter), $\zeta>0$ (supercritical boundary parameter), and $\mu\in \R$ (critical boundary parameter). All our constants appearing in the rest of the paper may depend on $\theta,\zeta,\mu$ and possibly other specified variables. We will only specify the dependency of the constants on the variables besides $\theta,\zeta,\mu$ by writing $\Con = \Con(a, b, c, \cdots) > 0$ to denote a generic deterministic positive finite constant that may change from line to line, but is dependent on the designated variables $a, b, c, \cdots$. We write l.h.s. or r.h.s. to denote the left- or right-hand side of an equation.
Given a density $f$, $X\sim f$ denotes a random variable $X$ whose distribution function has density $f$. We also write $X_i\stackrel{i.i.d.}{\sim}f$ if $\{X_i\}$ are i.i.d. with some common density $f$. We sometimes also use the notation $X\sim \bullet$ where $\bullet$ is the name of a distribution (e.g. $\operatorname{Gamma}^{-1}(\beta)$) to mean $X$ has distribution $\bullet$ and is independent of all other random variables being considered.  For two densities $f$ and $g$, we write $f\ast g(x)=\int_{\R} f(z)g(x-z)dz$ for the convolution density. We use the notation $\sigma(\bullet)$ for denote the $\sigma$-algebra generated by the random variables $\bullet$. We write $\stackrel{(d)}{\Longrightarrow}$ and $\stackrel{(d)}{=}$ for convergence and equality in distribution.
There is a glossary at the end of this text that recalls and points to the definitions of much of the notation introduced elsewhere.

	\subsection*{Acknowledgements} The project was initiated during the authors' participation in the `Universality and Integrability in Random Matrix Theory and Interacting Particle Systems' semester program at MSRI in fall 2021. The authors thank the program organizers for their hospitality and acknowledge the support from NSF DMS-1928930. GB was partially supported by ANR grant ANR-21-CE40-0019. IC was partially supported by the NSF through grants DMS-1937254, DMS-1811143, DMS-1664650, DMS-2246576, as well as through a Packard Fellowship in Science and Engineering, a Simons Fellowship, a Simons Investigator Award, a Miller Visiting Professorship from the Miller Institute for Basic Research in Science, and a W.M. Keck Foundation Science and Engineering Grant. SD's research was partially supported by Ivan Corwin's NSF grant DMS-1811143 and the Fernholz Foundation's ``Summer Minerva Fellows'' program.

\section{Half-space log-gamma objects and proof of Theorem \ref{thm:conn}} \label{sec:gibb}
In Section \ref{sec2.1}, we gather several useful properties of $\hslg$ Gibbs measures from Definition \ref{def:hslggibbs} including stochastic monotonicity (Proposition \ref{p:gmc}). The $\hslg$ line ensemble is defined in Section \ref{sec2.2} which includes the proof of Theorem \ref{thm:conn}.

Here, we introduce few important functions that will come up often in our arguments. We define
\begin{align}
	\label{def:wfns}
	W(a;b,c)=\exp(-e^{a-b}-e^{a-c}), \quad a,b,c\in \R,
\end{align}
For $\theta>0, m\in \Z$ we set
\begin{align}\label{def:gwt}
	G_{\theta,(-1)^m}(y):=e^{\theta (-1)^my-e^{(-1)^my}}/\Gamma(\theta), \quad y\in \R.
\end{align}
One can check that $G_{\theta,(-1)^m}$ is a density (i.e., it integrates to $1$). Using $G$, we define two more probability density functions:
\begin{align}\label{def:faga}
	\fa(x):=G_{\theta,+1}\ast G_{\theta,-1}(x), \qquad \ga(x):=G_{\zeta,1}(x), \qquad \theta,\zeta>0, x\in \R,
\end{align}
where $\ast$ denotes the convolution operation, i.e., $p\ast q(x):=\int_{\R} p(z)q(x-z)dz$. 	
	
	\subsection{Properties of $\hslg$ Gibbs measures} \label{sec2.1}
We start by writing down  several lemmas that all follow directly from the definition of $\hslg$ Gibbs measures (recall from Definition \ref{def:hslggibbs}).

	\begin{observation}\label{obs1}
	Consider the graph  $\Z_{\ge 1}^2$ endowed with directed/colored edges $E(\Z_{\ge 1}^2)$ as above. Let $\Lambda$ be a bounded connected subset of $\Z_{\ge 1}^2$.  For each $(i,j)\in \partial\Lambda$ fix some $u_{i,j}\in \R$. Fix any $c\in \R$. Let $\big(L(v) : v\in \Lambda\big)$ be a collection of random variables that are distributed as the $\hslg$ $\Theta$-Gibbs measure on the domain $\Lambda$ with boundary condition $\big(u_{i,j}:(i,j)\in \partial\Lambda\big)$.
		\begin{enumerate}[label=(\alph*),leftmargin=15pt]
			\item \label{traninv} (Translation invariance) The law of $\big(L(v)+ c : v\in \Lambda\big)$ is given by the $\hslg$ $\Theta$-Gibbs measure on the domain $\Lambda$ with boundary condition $\big(u_{i,j}+c : (i,j)\in \partial\Lambda\big)$.	
			\item \label{gpsd} (Gibbs property on smaller domain)	Take a bounded connected subset $\Lambda'\subset \Lambda$. The law of $\big(L(v) : v\in \Lambda'\big)$ conditioned on  $\big(L(v) : v\in \Lambda\setminus\Lambda'\big)$ is given by the $\hslg$ $\Theta$-Gibbs measure on the domain $\Lambda'$ with the boundary condition $\big(L(v) : v\in \partial\Lambda'\big)$ where we set $L(v)=u_v$ for $v\in \partial\Lambda$.
		\end{enumerate}
	\end{observation}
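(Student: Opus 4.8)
The plan is to derive both parts directly from the product form of the density \eqref{e:hsgibb}, using only two facts: each weight $W_e$ in \eqref{def:wfn} is a function of the \emph{difference} of the values at the two endpoints of $e$, and (for part \ref{gpsd}) a short combinatorial comparison of edge sets. Throughout I will use that the $\hslg$ $\Theta$-Gibbs measures appearing in the statement are genuine probability measures with finite normalizing constants, which is Lemma \ref{b2}.

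For part \ref{traninv}, write a generic configuration as $\mathbf{x}=(x_v)_{v\in\Lambda}$ and set $g_{\mathbf{u}}(\mathbf{x}):=\prod_{e=\{v_1\to v_2\}\in E(\Lambda\cup\partial\Lambda)} W_e(\hat x_{v_1}-\hat x_{v_2})$, where $\hat x_v=x_v$ for $v\in\Lambda$ and $\hat x_v=u_v$ for $v\in\partial\Lambda$, so that the $\hslg$ $\Theta$-Gibbs measure on $\Lambda$ with boundary $\mathbf{u}$ has density proportional to $g_{\mathbf{u}}$. Since subtracting $c$ from every coordinate is a volume-preserving bijection of $\R^{|\Lambda|}$, the law of $(L(v)+c)_{v\in\Lambda}$ has density at $\mathbf{x}$ proportional to $g_{\mathbf{u}}$ evaluated at the configuration obtained from $\mathbf{x}$ by decreasing every coordinate by $c$. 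I then compare this with $g_{\mathbf{u}+c}$ (meaning all boundary values raised by $c$) factor by factor: on an edge $e=\{v_1\to v_2\}$ the value fed into $W_e$ at an interior endpoint $v$ is $x_v-c$ in the former and $x_v$ in the latter, while at a boundary endpoint $v$ it is $u_v$ in the former and $u_v+c$ in the latter; in each of the four cases the difference of the two endpoint values is unchanged, hence so is the factor $W_e$. Therefore the two unnormalized densities coincide as functions of $\mathbf{x}$, so their normalizing constants coincide as well, and part \ref{traninv} follows.

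For part \ref{gpsd}, first observe that $\Lambda\setminus\Lambda'$ is finite (as $\Lambda$ is bounded) and, since any vertex adjacent to $\Lambda'$ is adjacent to $\Lambda$, one has $\partial\Lambda'\subseteq(\Lambda\setminus\Lambda')\cup\partial\Lambda$; hence the conditioning data $(L(v))_{v\in\Lambda\setminus\Lambda'}$ together with the boundary values $(u_v)_{v\in\partial\Lambda}$ determines all of $(L(v))_{v\in\partial\Lambda'}$. Because the joint density is a product over $E(\Lambda\cup\partial\Lambda)$, for a.e.\ realization of the conditioning data the conditional density of $(L(v))_{v\in\Lambda'}$ equals that product with the conditioned and boundary values substituted, divided by a constant depending only on those values; every factor $W_e$ whose edge $e$ has no endpoint in $\Lambda'$ is then a constant and may be dropped. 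It remains to match edge sets: if $e=\{v_1\to v_2\}$ has an endpoint in $\Lambda'$, say $v_1\in\Lambda'$, then $v_2\in\Lambda'\cup\partial\Lambda'$ by the definition of $\partial\Lambda'$, and also $v_2\in\Lambda\cup\partial\Lambda$ since $v_2$ is adjacent to $v_1\in\Lambda$; consequently $\{e\in E(\Lambda\cup\partial\Lambda):e\text{ has an endpoint in }\Lambda'\}=\{e\in E(\Lambda'\cup\partial\Lambda'):e\text{ has an endpoint in }\Lambda'\}$. Finally, the remaining edges of $E(\Lambda'\cup\partial\Lambda')$, namely those with both endpoints in $\partial\Lambda'$, contribute only multiplicative constants, so reinstating them shows the conditional density is proportional to $\prod_{e\in E(\Lambda'\cup\partial\Lambda')}W_e(\cdot)$ evaluated with boundary data $(L(v))_{v\in\partial\Lambda'}$ (setting $L(v)=u_v$ for $v\in\partial\Lambda$), which is exactly the $\hslg$ $\Theta$-Gibbs measure on $\Lambda'$ asserted in the statement.

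Neither step poses a real difficulty; the arguments are pure bookkeeping built on the product structure of \eqref{e:hsgibb} and the translation invariance of each $W_e$. The one point that will need a little care is the edge-set identification in part \ref{gpsd} — in particular verifying that $\partial\Lambda'$ contains only vertices already available as data (either in $\Lambda\setminus\Lambda'$, hence conditioned upon, or in $\partial\Lambda$, hence fixed) and that edges lying entirely inside a boundary layer are harmless because they enter the density as constants absorbed into the normalization.
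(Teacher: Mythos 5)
Your proposal is correct and follows the same route as the paper's own (very terse) proof: part \ref{traninv} comes from the fact that every weight $W_e$ in \eqref{e:hsgibb} depends only on the difference of the endpoint values, and part \ref{gpsd} comes from the product structure of the density, with your edge-set bookkeeping simply filling in details the paper leaves implicit. No gaps.
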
	
	
	\begin{proof} Note that the density of a $\hslg$ $\Theta$-Gibbs measure given in \eqref{e:hsgibb} only involve terms of the form $u_{v_1}-u_{v_2}$. Thus adding a constant $c$ to every term does not change the law. The fact that Gibbs property carries to smaller domains follows from the explicit form of the density as well.		
	\end{proof}
	
	Although $\hslg$ Gibbs measures are defined for any bounded connected subset $\Lambda$, we will be mainly concerned with two kinds of domains $\Lambda$. Given $k\ge 1$ and $T\ge 2$, we define
	\begin{align}\label{def:kkt}
		\mathcal{K}_{k,T}:=\left\{(i,j) : i \in \ll 1,k \rr, j\in \ll 1,2T-1-\ind_{i=1} \rr \right\}, \quad \mathcal{K}_{k,T}':= \ll 1,k \rr \times \ll 1,2T-2 \rr.
	\end{align}
	The domains $\mathcal{K}_{k,T}$ and $\mathcal{K}_{k,T}'$ are shown as shaded regions in Figure \ref{fig4}. We state these results for the homogeneous Gibbs measures, though they could easily be adapted to $\Theta$-Gibbs measures.
	\begin{figure}[h!]
		\centering
		\begin{subfigure}[b]{0.45\textwidth}
			\centering
			\begin{tikzpicture}[line cap=round,line join=round,>=triangle 45,x=1.7cm,y=0.85cm]
				\draw[fill=gray!10,line width=0.5pt,dashed] (-0.65,0.25)--(-0.65,-2.75)--(2.7,-2.75)--(2.7,-0.9)--(2.25,-0.9)--(2.25,0.25)--(-0.65,0.25);
				\foreach \x in {0,1}
				{
					\draw[line width=1.5pt,blue,{Latex[length=2mm]}-]  (\x,0) -- (\x-0.5,-0.5);
					\draw[line width=1.5pt,blue,{Latex[length=2mm]}-] (\x,0) -- (\x+0.5,-0.5);
					\draw[line width=1.5pt,black,{Latex[length=2mm]}-] (\x-0.5,-0.5) -- (\x,-1);
					\draw[line width=1.5pt,black,{Latex[length=2mm]}-] (\x+0.5,-0.5) -- (\x,-1);
					\draw[line width=1.5pt,blue,{Latex[length=2mm]}-]  (\x,-1) -- (\x-0.5,-1.5);
					\draw[line width=1.5pt,blue,{Latex[length=2mm]}-] (\x,-1) -- (\x+0.5,-1.5);
					\draw[line width=1.5pt,black,{Latex[length=2mm]}-] (\x-0.5,-1.5) -- (\x,-2);
					\draw[line width=1.5pt,black,{Latex[length=2mm]}-] (\x+0.5,-1.5) -- (\x,-2);
					\draw[line width=1.5pt,blue,{Latex[length=2mm]}-]  (\x,-2) -- (\x-0.5,-2.5);
					\draw[line width=1.5pt,blue,{Latex[length=2mm]}-] (\x,-2) -- (\x+0.5,-2.5);
					\draw[line width=1pt,black!50!white,{Latex[length=2mm]}-] (\x-0.5,-2.5) -- (\x,-3);
					\draw[line width=1pt,black!50!white,{Latex[length=2mm]}-] (\x+0.5,-2.5) -- (\x,-3);
				}
				\foreach \x in {2}
				{
					\draw[line width=1.5pt,blue,{Latex[length=2mm]}-]  (\x,0) -- (\x-0.5,-0.5);
					\draw[line width=1.5pt,black,{Latex[length=2mm]}-] (\x-0.5,-0.5) -- (\x,-1);
					\draw[line width=1.5pt,blue,{Latex[length=2mm]}-]  (\x,-1) -- (\x-0.5,-1.5);
					\draw[line width=1.5pt,blue,{Latex[length=2mm]}-] (\x,-1) -- (\x+0.5,-1.5);
					\draw[line width=1.5pt,black,{Latex[length=2mm]}-] (\x-0.5,-1.5) -- (\x,-2);
					\draw[line width=1.5pt,black,{Latex[length=2mm]}-] (\x+0.5,-1.5) -- (\x,-2);
					\draw[line width=1.5pt,blue,{Latex[length=2mm]}-]  (\x,-2) -- (\x-0.5,-2.5);
					\draw[line width=1.5pt,blue,{Latex[length=2mm]}-] (\x,-2) -- (\x+0.5,-2.5);
					\draw[line width=1pt,black!50!white,{Latex[length=2mm]}-] (\x-0.5,-2.5) -- (\x,-3);
					\draw[line width=1pt,black!50!white,{Latex[length=2mm]}-] (\x+0.5,-2.5) -- (\x,-3);
				}
				\draw[line width=1pt,black!50!white,{Latex[length=2mm]}-] (2.5,-2.5)--(3,-3);
				\draw[line width=1pt,black!50!white,{Latex[length=2mm]}-] (2.5,-1.5)--(3,-2);
				\draw[line width=1pt,black!50!white,{Latex[length=2mm]}-] (2.5,-0.5)--(2,-1);
				\draw[line width=1pt,blue!50!white,{Latex[length=2mm]}-] (2,0)--(2.5,-0.5);
				\draw[line width=1pt,blue!50!white,{Latex[length=2mm]}-] (3,-3+2)--(3-0.5,-3.5+2);
				\draw[line width=1pt,blue!50!white,{Latex[length=2mm]}-] (3,-3+1)--(3-0.5,-3.5+1);
				\draw [fill=blue] (2.5,-1.5) circle (2pt);
				\draw [fill=blue] (2.5,-2.5) circle (2pt);
				\draw[line width=1.5pt,red,{Latex[length=2mm]}-] (-0.5,-1.5) -- (-0.5,-0.5);
				\draw[line width=1pt,red!50!white,{Latex[length=2mm]}-] (-0.5,-3.5) -- (-0.5,-2.5);
				\node at (2.7,-0.5) {$y_1$};
				\node at (3.2,-1) {$y_2$};
				\node at (3.2,-2) {$y_3$};
				\node at (-0.5,-3.8) {${z_0}$};
				\node at (0,-3.3) {$z_1$};
				\node at (1,-3.3) {$z_2$};
				\node at (2,-3.3) {$z_3$};
				\node at (3.2,-3) {$z_4$};
				\foreach \x in {0,1,2}
				{
					\draw [fill=blue] (\x,0) circle (2pt);
					\draw [fill=blue] (\x,-1) circle (2pt);
					\draw [fill=blue] (\x,-2) circle (2pt);
					\draw [fill=blue] (\x-0.5,-0.5) circle (2pt);
					\draw [fill=blue] (\x-0.5,-1.5) circle (2pt);
					\draw [fill=blue] (\x-0.5,-2.5) circle (2pt);
				}
				\foreach \x in {0,1,2}
				{\draw [fill=white] (3,-\x-1) circle (2pt);
					\draw [fill=white] (\x,-3) circle (2pt);}
				\draw [fill=white] (-0.5,-3.5) circle (2pt);
				\draw [fill=white] (2.5,-0.5) circle (2pt);
				\begin{scriptsize}
				\node at (0,0.2) {$(1,2)$};
				\node at (0.5,-1.2) {$(2,3)$};
				\node at (1.5,-1.2) {$(2,5)$};
				\node at (1,-0.7) {$(2,4)$};
				\node at (2,-1.7) {$(3,6)$};
				\end{scriptsize}
				
			\end{tikzpicture}
			\vspace{-0.4cm}
			\caption{$\mathcal{K}_{k,T}$}
		\end{subfigure}
		\begin{subfigure}[b]{0.45\textwidth}
			\centering
			\begin{tikzpicture}[line cap=round,line join=round,>=triangle 45,x=1.7cm,y=0.85cm]
				\draw[fill=gray!10,line width=0.5pt,dashed] (-0.65,0.25)--(-0.65,-2.75)--(2.25,-2.75)--(2.25,0.25)--(-0.65,0.25);
				\foreach \x in {0,1}
				{
					\draw[line width=1.5pt,blue,{Latex[length=2mm]}-]  (\x,0) -- (\x-0.5,-0.5);
					\draw[line width=1.5pt,blue,{Latex[length=2mm]}-] (\x,0) -- (\x+0.5,-0.5);
					\draw[line width=1.5pt,black,{Latex[length=2mm]}-] (\x-0.5,-0.5) -- (\x,-1);
					\draw[line width=1.5pt,black,{Latex[length=2mm]}-] (\x+0.5,-0.5) -- (\x,-1);
					\draw[line width=1.5pt,blue,{Latex[length=2mm]}-]  (\x,-1) -- (\x-0.5,-1.5);
					\draw[line width=1.5pt,blue,{Latex[length=2mm]}-] (\x,-1) -- (\x+0.5,-1.5);
					\draw[line width=1.5pt,black,{Latex[length=2mm]}-] (\x-0.5,-1.5) -- (\x,-2);
					\draw[line width=1.5pt,black,{Latex[length=2mm]}-] (\x+0.5,-1.5) -- (\x,-2);
					\draw[line width=1.5pt,blue,{Latex[length=2mm]}-]  (\x,-2) -- (\x-0.5,-2.5);
					\draw[line width=1.5pt,blue,{Latex[length=2mm]}-] (\x,-2) -- (\x+0.5,-2.5);
					\draw[line width=1pt,black!50!white,{Latex[length=2mm]}-] (\x-0.5,-2.5) -- (\x,-3);
					\draw[line width=1pt,black!50!white,{Latex[length=2mm]}-] (\x+0.5,-2.5) -- (\x,-3);
				}
				\foreach \x in {2}
				{
					\draw[line width=1.5pt,blue,{Latex[length=2mm]}-]  (\x,0) -- (\x-0.5,-0.5);
					\draw[line width=1.5pt,black,{Latex[length=2mm]}-] (\x-0.5,-0.5) -- (\x,-1);
					\draw[line width=1.5pt,blue,{Latex[length=2mm]}-]  (\x,-1) -- (\x-0.5,-1.5);
					\draw[line width=1.5pt,blue!50!white,{Latex[length=2mm]}-] (\x,-1) -- (\x+0.5,-1.5);
					\draw[line width=1.5pt,black,{Latex[length=2mm]}-] (\x-0.5,-1.5) -- (\x,-2);
					\draw[line width=1pt,black!50!white,{Latex[length=2mm]}-] (\x+0.5,-1.5) -- (\x,-2);
					\draw[line width=1.5pt,blue,{Latex[length=2mm]}-]  (\x,-2) -- (\x-0.5,-2.5);
					\draw[line width=1pt,blue!50!white,{Latex[length=2mm]}-] (\x,-2) -- (\x+0.5,-2.5);
					\draw[line width=1pt,black!50!white,{Latex[length=2mm]}-] (\x-0.5,-2.5) -- (\x,-3);
					\draw[line width=1pt,black!50!white,{Latex[length=2mm]}-] (\x+0.5,-2.5) -- (\x,-3);
				}
				
				\draw[line width=1pt,black!50!white,{Latex[length=2mm]}-] (2.5,-0.5)--(2,-1);
				\draw[line width=1pt,blue!50!white,{Latex[length=2mm]}-] (2,0)--(2.5,-0.5);
				\draw [fill=blue] (2.5,-1.5) circle (2pt);
				\draw [fill=blue] (2.5,-2.5) circle (2pt);
				\draw[line width=1.5pt,red,{Latex[length=2mm]}-] (-0.5,-1.5) -- (-0.5,-0.5);
				\draw[line width=1pt,red!50!white,{Latex[length=2mm]}-] (-0.5,-3.5) -- (-0.5,-2.5);
				\node at (2.7,-0.5) {$y_1$};
				\node at (2.7,-1.5) {$y_2$};
				\node at (2.7,-2.5) {$y_3$};
				\node at (-0.5,-3.8) {${z_0}$};
				\node at (0,-3.3) {$w_1$};
				\node at (1,-3.3) {$w_2$};
				\node at (2,-3.3) {$w_3$};
				\foreach \x in {0,1,2}
				{
					\draw [fill=blue] (\x,0) circle (2pt);
					\draw [fill=blue] (\x,-1) circle (2pt);
					\draw [fill=blue] (\x,-2) circle (2pt);
					\draw [fill=blue] (\x-0.5,-0.5) circle (2pt);
					\draw [fill=blue] (\x-0.5,-1.5) circle (2pt);
					\draw [fill=blue] (\x-0.5,-2.5) circle (2pt);
				}
				\foreach \x in {0,1,2}
				{\draw [fill=white] (2.5,-\x-0.5) circle (2pt);
					\draw [fill=white] (\x,-3) circle (2pt);}
				\draw [fill=white] (-0.5,-3.5) circle (2pt);
				\begin{scriptsize}
					\node at (2,0.2) {$(1,6)$};
					\node at (0.5,-2.2) {$(3,3)$};
					\node at (1.5,-1.2) {$(2,5)$};
					\node at (-0.5,-2.2) {$(3,1)$};
					\node at (0,-0.7) {$(2,2)$};
				\end{scriptsize}
			\end{tikzpicture}
			\vspace{-0.4cm}
			\caption{$\mathcal{K}_{k,T}'$}
		\end{subfigure}
		\caption{Two domains $\mathcal{K}_{k,T}$ and $\mathcal{K}_{k,T}'$ are shown in (A) and (B) with $k=3$, $T=4$ and boundary conditions $(\vec{y},\vec{z})$ and  $(\vec{y},\vec{w})$  respectively. They include all the vertices within the gray dashed box as well some labels for the points. The directed edges with lighter colors are edges connecting vertices from $\Lambda$ to $\partial\Lambda$ or viceversa ($\Lambda=\mathcal{K}_{k,T}$ or $\Lambda=\mathcal{K}_{k,T}'$).
The boundary variable $z_0$ does not actually play any role in the density of the corresponding $\hslg$ Gibbs measure  after normalizing it to be a probability density. This point is explained in the proof of Lemma \ref{def:hsgm1}.}
		\label{fig4}
	\end{figure}
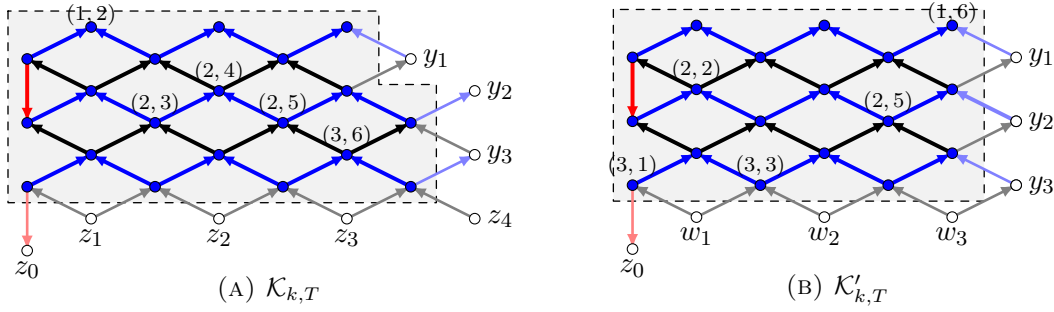

	\begin{observation}[One-sided boundary Gibbs property]\label{def:hsgm1}

		Fix $k,T\in \Z_{\ge2}$ and $\alpha>-\theta$. Fix $\vec{y}\in \R^k$, $\vec{z}\in \R^{T}$, and $\vec{w}\in \R^{T-1}$.
		\begin{enumerate}[label=(\alph*),leftmargin=15pt]
			\item \label{deff}	The $\hslg$ Gibbs measure on the domain $\mathcal{K}_{k,T}$ with boundary condition $(\vec{y},\vec{z})$ is a probability measure on $\R^{|\mathcal{K}_{k,T}|}$ whose density at  $\mathbf{u}=(u_{i,j})_{(i,j)\in \mathcal{K}_{k,T}}$ is proportional to
			\begin{align}\label{def:fhsgm}
				f_{k,T}^{\vec{y},\vec{z}}(\mathbf{u}) & :=\prod_{i=1}^k\left( e^{(-1)^{i}\alpha u_{i,1}} \prod_{j=1}^{T-\ind_{i=1}} W(u_{i+1,2j};u_{i,2j+1},u_{i,2j-1})\hspace{-0.2cm}\prod_{j=1}^{2T-1-\ind_{i=1}}\hspace{-0.3cm}G_{\theta,(-1)^{j+1}}(u_{i,j}-u_{i,j+1})\right)
			\end{align}
			where  $W(a;b,c)$ and $G_{\theta,(-1)^m}(y)$ are defined in \eqref{def:wfns}  and \eqref{def:gwt} respectively.
			Here $u_{k+1,2j}=z_{j}$ for each $j\in \ll1,T\rr$, $u_{1,2T-1}=y_1$, and $u_{i,2T}=y_i$, $u_{i,2T+1}:=+\infty$ (so that the factor $\exp(-e^{u_{i+1,2T}-u_{i,2T+1}})=1$) for each $i\in \ll2,k\rr$. \item \label{denQ}	The $\hslg$ Gibbs measure on the domain $\mathcal{K}_{k,T}'$ with boundary condition $(\vec{y},\vec{w})$ is a probability measure on $\R^{|\mathcal{K}_{k,T}'|}$ whose density at  $\mathbf{u}=(u_{i,j})_{(i,j)\in \mathcal{K}_{k,T}'}$ is proportional to
			\begin{align}\label{def:fhsgm2}
				Q_{k,T}^{\vec{y}',\vec{z}}(\mathbf{u}) & :=\prod_{i=1}^k\left( e^{(-1)^{i}\alpha u_{i,1}} \prod_{j=1}^{T-1} W(u_{i+1,2j};u_{i,2j+1},u_{i,2j-1})\prod_{j=1}^{2T-2}G_{\theta,(-1)^{j+1}}(u_{i,j}-u_{i,j+1})\right).
			\end{align}
			Here $u_{k+1,2j}=w_{j}$ for each $j\in \ll1,T-1\rr$, and $u_{i,2T-1}=y_i$ for each $i\in \ll1,k\rr$.
		\end{enumerate}
	\end{observation}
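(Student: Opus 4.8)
\emph{Proof strategy.} The statement is a verbatim unpacking of Definition \ref{def:hslggibbs} (in the homogeneous case $\Theta\equiv\theta$) for the two concrete domains $\Lambda=\mathcal{K}_{k,T}$ and $\Lambda=\mathcal{K}_{k,T}'$, so the proof is a careful edge-counting exercise rather than a genuine argument. The plan is as follows. Since $\mathcal{K}_{k,T}$ and $\mathcal{K}_{k,T}'$ are bounded connected subsets of $\Z_{\ge 1}^2$, Lemma \ref{b2} already guarantees that the corresponding $\hslg$ Gibbs measures are well-defined probability measures, so it suffices to identify their densities. First I would transport the directed, colored edges of the graph $G$ into $\Z_{\ge 1}^2$-coordinates through $\phi((m,n))=(-\lfloor n\rfloor,m)$: the blue$(\theta)$ edges become $(i,2j-1)\to(i,2j)$ and $(i,2j+1)\to(i,2j)$, the black edges become $(i{+}1,2j)\to(i,2j-1)$ and $(i{+}1,2j)\to(i,2j+1)$, and the red edges become $(2i-1,1)\to(2i,1)$; by \eqref{def:wfn} their weight functions are $x\mapsto\exp(\theta x-e^x)$, $x\mapsto\exp(-e^x)$ and $x\mapsto\exp(-\alpha x)$ respectively.

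For part \ref{deff}, with $\Lambda=\mathcal{K}_{k,T}$, I would list exactly the edges of $E(\Lambda\cup\partial\Lambda)$ carrying a factor that depends on $\mathbf{u}$, grouped by color, and rearrange the product in \eqref{e:hsgibb}. Reading the blue edges along row $i$ in consecutive pairs and using $\Gamma(\theta)G_{\theta,+1}(u_{i,2m-1}-u_{i,2m})=\exp(\theta(u_{i,2m-1}-u_{i,2m})-e^{u_{i,2m-1}-u_{i,2m}})$ together with $\Gamma(\theta)G_{\theta,-1}(u_{i,2m}-u_{i,2m+1})=\exp(\theta(u_{i,2m+1}-u_{i,2m})-e^{u_{i,2m+1}-u_{i,2m}})$ reproduces $\prod_j G_{\theta,(-1)^{j+1}}(u_{i,j}-u_{i,j+1})$ up to a power of $\Gamma(\theta)$; each pair of black edges $(i{+}1,2j)\to(i,2j-1)$, $(i{+}1,2j)\to(i,2j+1)$ multiplies to $\exp(-e^{u_{i+1,2j}-u_{i,2j-1}}-e^{u_{i+1,2j}-u_{i,2j+1}})=W(u_{i+1,2j};u_{i,2j+1},u_{i,2j-1})$, reproducing the $W$-product; and each red edge $(2i-1,1)\to(2i,1)$ contributes $\exp(-\alpha(u_{2i-1,1}-u_{2i,1}))=e^{(-1)^{2i-1}\alpha u_{2i-1,1}}e^{(-1)^{2i}\alpha u_{2i,1}}$, so the product over all red edges incident to $\Lambda$ equals $\prod_{i=1}^k e^{(-1)^i\alpha u_{i,1}}$ when $k$ is even and equals that times the constant $e^{\alpha z_0}$ (coming from the edge $(k,1)\to(k{+}1,1)$, where $u_{k+1,1}=z_0$) when $k$ is odd. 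Along the way I would keep track of the domain-boundary bookkeeping: row $i=1$ of $\mathcal{K}_{k,T}$ has one fewer column than the rows $i\ge 2$ (this is where the $\ind_{i=1}$ corrections in the ranges come from), the convention $u_{i,2T+1}=+\infty$ kills one of the two exponentials in the $j=T$ black term so it becomes a single black edge, and the right/bottom boundary vertices are identified with the entries of $\vec{y}$ and $\vec{z}$. Since $e^{\alpha z_0}$, the powers of $\Gamma(\theta)$, and any factors from edges lying entirely inside $\partial\Lambda$ are all independent of $\mathbf{u}$, they are absorbed into the normalizing constant --- which is precisely why $z_0$ does not appear --- and collecting the remaining factors gives \eqref{def:fhsgm}.

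Part \ref{denQ} is the same computation on the rectangular domain $\mathcal{K}_{k,T}'=\ll1,k\rr\times\ll1,2T-2\rr$; now every row has the same length, so the $\ind_{i=1}$ corrections disappear, the right boundary is $u_{i,2T-1}=y_i$ for all $i$ and the bottom boundary is $u_{k+1,2j}=w_j$, while the red-edge-and-$z_0$ discussion is unchanged, yielding \eqref{def:fhsgm2}.

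The main obstacle is purely organizational: one has to enumerate correctly which edges of $E(\Lambda\cup\partial\Lambda)$ actually contribute a nonconstant factor near the right and bottom of the domains, handle the $\pm\infty$ conventions and the $\ind_{i=1}$ asymmetry of $\mathcal{K}_{k,T}$, and make explicit that the leftover multiplicative constants $e^{\alpha z_0}$ and the power of $\Gamma(\theta)$ cancel against the normalization --- the last of these being exactly what justifies leaving $z_0$ out of the boundary data.
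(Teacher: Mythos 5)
Your proposal is correct and follows essentially the same route as the paper's own proof: identify blue edges with the $G_{\theta,(-1)^{j+1}}$ factors (up to powers of $\Gamma(\theta)$ absorbed into the normalization), pairs of black edges with the $W$ factors, factor each red edge weight into vertex weights $e^{(-1)^i\alpha u_{i,1}}$, and absorb the constant contribution of $u_{k+1,1}=z_0$ (present only for odd $k$) into the proportionality constant. The paper's version is simply terser, delegating most of the edge bookkeeping to Figure \ref{fig4}.
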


\begin{proof} We refer to Figure \ref{fig4} for a visual representation of the above measures. Recall the edge weights from \eqref{def:wfn}. The blue edges in the figure corresponds to $G_{\theta,(-1)^{j+1}}(\cdot)$ factors that appear in \eqref{def:fhsgm} and \eqref{def:fhsgm2}. The $(-1)^{j+1}$ factor is due to the alternate switching of the direction of blue weights as we read off from left to right. Here we have obtained the $G$ function from the blue edge weights by multiplying by a constant. This is done so that  the $G$ function becomes density (i.e., integrates to $1$), a fact that will be useful in the later analysis. The black edge weights from \eqref{def:wfn} corresponds to the $W$ factor in \eqref{def:fhsgm} and \eqref{def:fhsgm2}. Finally the red edge weights are of type $e^{-\alpha u_{2i-1,1}-u_{2i,1}}=e^{-\alpha u_{2i-1,1}}\cdot e^{\alpha u_{2i,1}}$. Note that only for odd $k$ we have $(k+1,1) \in \partial\mathcal{K}_{k,T}, \partial\mathcal{K}_{k,T}'$. In that case, the factor $e^{-\alpha u_{k+1,1}}$ can be absorbed into the proportionality constant. Thus, overall, the red weights contributes the factor $\prod_{i=1}^k e^{(-1)^i\alpha_{i,1}}$ in the above densities. This also explains why the $z_0$ value does not play any role in the definition of these densities.
	\end{proof}

\begin{definition}	\label{def:hslgfmeas}
We will mostly be concerned with the $\hslg$ Gibbs measure on $\mathcal{K}_{k,T}$ with boundary condition $(\vec{y},\vec{z})$ (see Lemma \ref{def:hsgm1}\ref{deff} for the probability density of this measure).
We will denote the probability and the expectation operator under this law as $\psa$ and $\esa$ respectively and a random variable with this law by $L:=\big(L(i,j):=L_i(j) : (i,j)\in \mathcal{K}_{k,T} \big)$. We may  drop $\alpha$ and write $\ps$ and $\es$ when clear from the context.
	\end{definition}

We now define the $\hslg$ Gibbs measure on $\mathcal{K}_{k,T}$  with boundary condition $\vec{y}\in \R^k$, $\vec{z}:=(-\infty)^T$.
	
	\begin{definition}[Bottom-free Gibbs measure] \label{def:btf} The \textit{bottom-free} measure on the domain $\mathcal{K}_{k,T}$ with boundary condition $\vec{y}$ is the $\hslg$ Gibbs measure on the domain $\mathcal{K}_{k,T}$ with boundary condition $(\vec{y},(-\infty)^T)$. By Lemma \ref{obs2.5} this the corresponding density $f_{k,T}^{\vec{y},(-\infty)^T}$ is integrable when $k$ is even and $\alpha\in \R$ (in that case the measure does not even depend on $\alpha$) or when $k$ is odd and $\alpha\in (-\theta,\theta)$.
In this case the \btf\ measure can be normalized to a probability measure so that for $\vec{z}\in \R^T$
		\begin{align}\label{e2gib}
			\psa(\m{A})=\frac{1}{V_{k}^T(\vec{y},\vec{z})}\Ex_{\alpha}^{\vec{y};(-\infty)^{T};k,T}\left[\ind_{\m{A}}\cdot \prod_{j=1}^{T} W(z_{2j};L_{k}(2j+1),L_{k}(2j-1))\right],
		\end{align}
for any event $\m{A}$, where we set $L_k(2T+1)=+\infty$ and the normalization is given by
		\begin{align}\label{defvkt}
			V_{k}^T(\vec{y},\vec{z}):=\Ex_{\alpha}^{\vec{y};(-\infty)^{T};k,T}\left[\prod_{j=1}^{T} W(z_{2j};L_{k}(2j+1),L_{k}(2j-1))\right].
		\end{align}
In other words, we can build the full Gibbs measure $\psa$ by reweighting the bottom-free measure by a Radon-Nikodym derivative given by the expression (except $\ind_{\m{A}}$) inside the expectation in \eqref{e2gib}, normalized by dividing by $V_{k}^T(\vec{y},\vec{z})$.
	\end{definition}

Besides one-sided conditioning as in Lemma \ref{obs1}, we can also use the Gibbs property when conditioning on boundary data on both sides as is standard in full-space discrete line ensembles \cite{dff,bcd,xd1}. We record here one such result that will be useful in our later proofs.
	
	\begin{observation}[Two-sided boundary Gibbs property] \label{i2a}
		Fix $1\le T_1 <T_2-1$. Suppose $L$ is distributed as $\mathbb{P}^{\vec{y},\vec{z};1,T_2}$. Let $(X(j))_{j=T_1-1}^{T_2-1}$ be a random bridge from $X(T_1-1)=a$ to $X(T_2-1)=b$ with i.i.d.~increments from the density $\fa$ defined in \eqref{def:faga}. The law of $\big(L_1(2j+1) : T_1 \le j \le T_2-2\big)$ conditioned on $\{L_1(2T_1-1)=a, L_1(2T_2-1)=b\}$ is absolutely continuous with respect to the law of $(X(j))_{j=T_1}^{T_2-2}$ with Radon-Nikodym derivative proportional to $$\til{W}:= \exp\bigg(-\sum_{j=T_1}^{T_2-1} (e^{z_{j}-X(j)}+e^{z_{j}-X(j-1)})\bigg).$$
	\end{observation}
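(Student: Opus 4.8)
\textbf{Proof plan for Lemma \ref{i2a}.} The statement is a direct consequence of the explicit density \eqref{def:fhsgm} from Lemma \ref{def:hsgm1}\ref{deff} together with the one-sided Gibbs property of Lemma \ref{obs1}. The plan is to first apply Lemma \ref{obs1}\ref{gpsd} to restrict attention to the sub-domain of $\mathcal{K}_{1,T_2}$ consisting of the vertices of the top curve between times $2T_1-1$ and $2T_2-1$, together with the interleaved boundary vertices carrying the $z_j$ values; conditioning on the complement (which includes the endpoints $L_1(2T_1-1)=a$ and $L_1(2T_2-1)=b$) reduces the problem to a single-curve $\hslg$ Gibbs measure on this window. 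Then I would write out the density of this conditional law using \eqref{def:fhsgm}: since we are on the first curve only (and the conditioning places us strictly to the right of the red edge at $j=1$ when $T_1\ge 1$), the only surviving factors are (i) the blue-edge factors $\prod_j G_{\theta,(-1)^{j+1}}(u_{1,j}-u_{1,j+1})$ along the curve, and (ii) the black-edge factors $W(z_j; u_{1,2j+1},u_{1,2j-1}) = \exp(-e^{z_j - u_{1,2j+1}} - e^{z_j - u_{1,2j-1}})$ coming from the bottom boundary data $z_j$.

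The key step is to recognize that the product of the blue-edge $G$-factors over two consecutive steps, namely $G_{\theta,+1}(u_{1,2j+1}-u_{1,2j})\,G_{\theta,-1}(u_{1,2j}-u_{1,2j+2})$ integrated over the intermediate even-indexed variable $u_{1,2j}$, produces exactly the convolution density $\fa(u_{1,2j+1}-u_{1,2j+2}) = G_{\theta,+1}\ast G_{\theta,-1}(u_{1,2j+1}-u_{1,2j+2})$ by the definition \eqref{def:faga}. Here I need to be a little careful about which even-indexed vertices on the top curve are being integrated out versus conditioned upon, and about the indexing convention that the top curve has domain $\ll1,2T-1\rr$ rather than $\ll1,2T\rr$; but after marginalizing over all the intermediate even vertices $u_{1,2j}$ for $T_1\le j\le T_2-1$, the blue factors collapse to $\prod_{j=T_1}^{T_2-2}\fa(X(j+1)-X(j))$ with $X(j):=u_{1,2j+1}$, which is precisely the density of the random bridge $(X(j))_{j=T_1-1}^{T_2-1}$ with increments $\fa$, pinned at $X(T_1-1)=a$ and $X(T_2-1)=b$. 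The black factors, meanwhile, are unaffected by this marginalization (each $W(z_j;\cdot,\cdot)$ depends only on odd-indexed top-curve vertices $u_{1,2j\pm1}$) and reassemble into $\til{W}=\exp(-\sum_{j=T_1}^{T_2-1}(e^{z_j-X(j)}+e^{z_j-X(j-1)}))$. This yields the claimed absolute continuity with Radon-Nikodym derivative proportional to $\til{W}$, the proportionality constant being the normalization that makes the conditional law a probability measure.

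The main obstacle, and the only place requiring genuine care, is bookkeeping: matching the vertex labels $(1,j)\in\mathcal{K}_{1,T_2}$ and the boundary labels to the time index $j$ of the bridge $X$, handling the shift by one between $2j+1$ and $2j-1$ correctly in the black-edge arguments, and checking the edge cases at the two ends of the window (in particular that no red edge or boundary factor at the far right survives given $L_1(2T_2-1)=b$, using the convention $u_{i,2T+1}=+\infty$ in Lemma \ref{def:hsgm1}). Once the indexing is aligned, the computation is purely a matter of applying Fubini to integrate out the even vertices and invoking \eqref{def:faga}; there is no analytic subtlety, since integrability of all the relevant densities is already guaranteed by Lemma \ref{b2} (or Lemma \ref{obs2.5}) and the fact that $\fa,\ga$ are bona fide probability densities.
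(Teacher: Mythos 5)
Your proposal is correct and follows essentially the same route as the paper: the paper's (very brief) proof likewise writes out the Gibbs density \eqref{def:fhsgm} on the window, integrates out the even-indexed top-curve variables so that the blue factors collapse to $\fa=G_{\theta,+1}\ast G_{\theta,-1}$ increments while the black factors (which involve only odd-indexed vertices) produce $\til{W}$. The extra bookkeeping you flag (index alignment, the red-edge factor and endpoint terms being constants under the conditioning) is exactly the routine part the paper leaves implicit.
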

	
\begin{proof} We utilize the form of the Gibbs measure density given in \eqref{def:fhsgm}.	The $G_{\theta,1}\ast G_{\theta,-1}$ function appears in the statement of Lemma \ref{i2a} as we focus on the marginal distribution of the odd points only and hence we integrate out the dummy variables on the even points (see Figure \ref{fig5}).
\end{proof}
	
	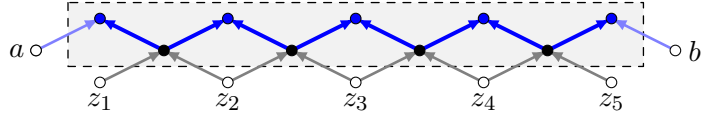
\begin{figure}[h!]
		\centering
		\begin{tikzpicture}[line cap=round,line join=round,>=triangle 45,x=1.7cm,y=0.85cm]
			\draw[fill=gray!10,line width=0.5pt,dashed] (-0.25,0.25)--(-0.25,-0.75)--(4.25,-0.75)--(4.25,0.25)--(-0.25,0.25);
			\foreach \x in {0,1,2,3}
			{
				\draw[line width=1.5pt,blue,{Latex[length=2mm]}-]  (\x+1,0) -- (\x+0.5,-0.5);
				\draw[line width=1.5pt,blue,{Latex[length=2mm]}-] (\x,0) -- (\x+0.5,-0.5);
				\draw[line width=1pt,black!50!white,{Latex[length=2mm]}-] (\x+0.5,-0.5) -- (\x+1,-1);
				\draw[line width=1pt,black!50!white,{Latex[length=2mm]}-] (\x+0.5,-0.5) -- (\x,-1);
			}
			\draw[line width=1pt,blue!50!white,{Latex[length=2mm]}-] (4,0)--(4+0.5,-0.5);
			\draw[line width=1pt,blue!50!white,{Latex[length=2mm]}-] (0,0)--(-0.5,-0.5);
			\node at (4.65,-0.5) {$b$};
			\node at (-0.65,-0.5) {$a$};
			\foreach \x in {1,2,3,4,5}
			{
				\draw [fill=blue] (\x-1,0) circle (2pt);
				\draw [fill=black] (\x-1.5,-0.5) circle (2pt);
				\node at (\x-1,-1.3) {$z_{\x}$};
			}
			\foreach \x in {0,1,2,3,4}
			{\draw [fill=white] (\x,-1) circle (2pt);}
			\draw [fill=white] (4.5,-0.5) circle (2pt);
			\draw [fill=white] (-0.5,-0.5) circle (2pt);
		\end{tikzpicture}
		\caption{The marginal distribution of the odd (black) points of the $\hslg$ Gibbs measure shown above with $T_1=1, T_2=6$ is described in Lemma \ref{i2a}.}
		\label{fig5}
	\end{figure}

As with full-space line ensemble Gibbs measures \cite{ch14,ch16,wu, bcd,xd1}, the $\hslg$ Gibbs measures satisfy \textit{stochastic monotonicity} with respect to the boundary data. The following, stated for the inhomogeneous $\Theta$-Gibbs measures provides a grand monotone coupling over all boundary data.

	\begin{proposition}[Stochastic monotonicity]\label{p:gmc} Fix $k_1\le k_2$, $a_i\le b_i$ for $k_1\le i\le k_2$. Fix $\Theta:=\{\vartheta_{i,j}>0 : (i,j)\in \Z_{\ge 1}^2\}$, and $\alpha>-\min\{\theta_{i,j} : (i,j)\in \Z_{\ge 1}^2\}$. Let
		\begin{align*}
			\Lambda:=\{(i,j): k_1\le i\le k_2, a_i\le j\le b_i\}.
		\end{align*}
		There exists a probability space that supports a collection of random variables
		\begin{align*}
			\big(L(v;(u_w)_{w\in \partial \Lambda}) : v\in \Lambda, (u_w)_{w\in\partial\Lambda} \in \R^{|\partial\Lambda|}\big)
		\end{align*}
		such that
		\begin{enumerate}
			\item For each $(u_w)_{w\in\partial\Lambda}\in \R^{|\partial\Lambda|}$,  the marginal law of $\big(L(v;(u_w)_{w\in \partial \Lambda}): v\in \Lambda\big)$ is given by the $\hslg$ $\Theta$-Gibbs measure for the domain $\Lambda$ with boundary condition $(u_w)_{w\in\partial\Lambda}\in \R^{|\partial\Lambda|}$.
			\item With probability $1$, for all $v\in \Lambda$ we have
$$L\big(v;(u_w)_{w\in \partial \Lambda}\big) \le L\big(v;(u_w')_{w\in \partial \Lambda}\big) \mbox{ whenever }u_w\le u_w' \mbox{ for all }w\in \partial\Lambda.$$
Consequently, the probability of increasing events (defined in Definition \ref{def:hslggibbs}) increase with  respect to decreasing the boundary conditions.
		\end{enumerate}
	\end{proposition}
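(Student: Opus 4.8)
The plan is to establish stochastic monotonicity via the standard route for log-concave Gibbs specifications: construct a single-site Gibbs sampler (a Markov chain on $\R^{|\Lambda|}$ whose stationary measure is the $\hslg$ $\Theta$-Gibbs measure) that is monotone with respect to the partial order $\preceq$, and then run the chain with a common source of randomness ("common update") simultaneously from every initial condition and for every boundary data, using the boundary data only through the resampling kernels. The key structural input is that each conditional single-site density is log-concave and that its dependence on the neighbouring coordinates and on the relevant boundary values is monotone in the appropriate direction; this makes the conditional distribution stochastically increasing in each neighbour, which is exactly what is needed to get a monotone update rule in the sense of quantile coupling.

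Concretely, here is the order I would carry this out. First, I would verify the \emph{single-site conditional laws are log-concave}. Fix $v=(i,j)\in\Lambda$; conditioned on all other coordinates of $\Lambda\cup\partial\Lambda$, the density of $u_v$ is proportional to a product of at most a bounded number of factors of the form $W_e(u_v - u_{v'})$ or $W_e(u_{v'}-u_v)$, one for each incident edge. Each factor $W_e$ is, by \eqref{def:wfn}, either $\exp(\vartheta x - e^x)$ (blue), $\exp(-e^x)$ (black), or $\exp(-\alpha x)$ (red); the first two are log-concave in $x$ and the third is log-linear, hence log-concave. Since incident edges point either into $v$ or out of $v$, each factor is a log-concave function of $u_v$ (composing a log-concave function with an affine map of slope $\pm1$). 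A product of log-concave functions is log-concave, and it is integrable in $u_v$ because at least one incident edge is blue or black (the graph $G$ has this property at every vertex — near the left boundary there is always a blue or black edge, never only a red one), giving super-exponential decay on the relevant side; this is exactly the content invoked via Lemma \ref{b2}. So the conditional law of $u_v$ given the rest is a well-defined log-concave probability measure.

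Second, I would check \emph{monotonicity of the conditional CDF in the conditioning variables}. Write $F_v(t \mid \{u_w\}_{w\neq v})$ for the conditional CDF of $u_v$. For each neighbour $v'$ of $v$, raising $u_{v'}$ tilts the density of $u_v$ by a factor that is monotone; e.g. a black edge $v\to v'$ contributes $\exp(-e^{u_v - u_{v'}})$, and increasing $u_{v'}$ multiplies the density by an \emph{increasing} function of $u_v$ (since $-e^{u_v-u_{v'}}$ increases in $u_{v'}$ and its $u_v$-derivative $-e^{u_v-u_{v'}}$ increases in $u_{v'}$), which by the standard monotone-likelihood-ratio argument makes the conditional law stochastically larger; the same computation works for black edges $v'\to v$, for blue edges in either orientation, and for red edges (where increasing the partner coordinate $u_{v''}$ that appears with opposite sign makes the law stochastically larger in the direction dictated by the sign of the $\alpha$-term). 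The upshot is: for every $v$, the conditional quantile function $F_v^{-1}(r\mid\cdot)$ is (weakly) coordinatewise monotone in the conditioning data, and it depends on the boundary data $(u_w)_{w\in\partial\Lambda}$ only through the coordinates $u_w$ appearing in the incident-edge factors, again monotonically.

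Third, I would run the \emph{monotone Gibbs sampler with a grand coupling}. Enumerate $\Lambda = \{v_1,\dots,v_m\}$ and let $(r_n)_{n\ge1}$ be i.i.d.\ $\mathrm{Unif}[0,1]$ together with an i.i.d.\ sequence of uniformly chosen sites; at step $n$, replace the value at the chosen site $v$ by $F_v^{-1}(r_n\mid \text{current values, } (u_w)_{w\in\partial\Lambda})$, using the \emph{same} $r_n$ and the same site for every initial configuration and every boundary condition. By the coordinatewise monotonicity from step two, the partial order $\preceq$ is preserved by each such update both across initial conditions (classical attractiveness) and across boundary data $(u_w)\preceq(u_w')$; hence, started from a common configuration, the chain run with boundary data $(u_w)$ stays $\preceq$ the chain run with boundary data $(u_w')$ at all times. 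Standard ergodicity of the single-site Gibbs sampler for a positive, continuous, integrable density on $\R^{|\Lambda|}$ (irreducibility is immediate from the positivity of each conditional density; aperiodicity and Harris recurrence are routine) gives convergence in distribution of the chain to the $\hslg$ $\Theta$-Gibbs measure for the corresponding boundary data. Passing to the limit along a subsequence on a common probability space (via Skorokhod or a diagonal/coupling argument, using that the order relation is closed), one obtains the family $\big(L(v;(u_w)_{w\in\partial\Lambda})\big)$ with the two stated properties; the final sentence about increasing events is then immediate from the definition of $\preceq$ and of increasing events in Definition \ref{def:hslggibbs}. The main obstacle I anticipate is purely bookkeeping: organizing the grand coupling so that it holds \emph{simultaneously} for the uncountable family of all boundary data $(u_w)_{w\in\partial\Lambda}\in\R^{|\partial\Lambda|}$ (rather than for a fixed pair), which is handled by noting the update maps are jointly continuous in $(r,\text{current config},\text{boundary data})$ and monotone, so one can first build the coupling on a countable dense set of boundary data and then extend by monotone continuity; also one must make sure the passage to the stationary limit can be taken on one common probability space for all boundary conditions at once, which again follows from continuity of the stationary measure in the boundary data together with monotonicity.
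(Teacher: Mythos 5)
Your proposal is correct in its essentials but takes a genuinely different route from the paper. The paper does \emph{not} use a Gibbs sampler: it builds the coupling by a one-pass sequential construction. It enumerates $\Lambda=\{w_1,\dots,w_{|\Lambda|}\}$ lexicographically, sets $\Lambda_r=\{w_1,\dots,w_r\}$, and samples $L(w_{|\Lambda|}),L(w_{|\Lambda|-1}),\dots$ in turn via inverse-CDF transforms $F_r^{-1}(U_r;\cdot)$ of a \emph{single} sequence of i.i.d.\ uniforms, where $F_r$ is the conditional CDF of $L(w_r)$ given the boundary data of $\Lambda_r$ \emph{with all of $\Lambda_{r-1}$ integrated out}. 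The technical heart is then Lemma \ref{lem45}, a log-supermodularity (TP2) inequality for the integrated densities $H_r$, proved by induction on $r$ using the fact that every edge weight has the form $e^{-R(x)}$ with $R$ convex (inequality \eqref{pineq}) and a split-the-integral argument over $\{x<y\}$. Your route replaces this integrated monotonicity by the elementary MLR monotonicity of the \emph{full single-site} conditionals (no integration needed) at the cost of having to (i) establish ergodicity of the random-scan sampler on the non-compact space $\R^{|\Lambda|}$ and (ii) pass to a stationary limit while retaining the almost-sure ordering for the whole uncountable family of boundary data. Both of these are doable and are the approach of \cite{ch14,wu,serio} (the paper itself acknowledges this alternative in its discussion of Proposition \ref{smrw}), but they are where the real work in your version lives: Harris recurrence/convergence needs a citation rather than being dismissed as routine, and the simultaneous grand coupling requires the countable-dense-set-plus-monotone-continuity extension you sketch, whereas the paper's exact finite-step construction makes the grand coupling automatic (the output is a deterministic monotone function of the uniforms and the boundary data) with no limiting procedure at all. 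Two minor points: your description of the red-edge effect is overcomplicated — the red weight factors as $e^{-\alpha u_{2i-1,1}}e^{\alpha u_{2i,1}}$, so changing the partner coordinate multiplies the single-site conditional density by a constant and leaves the conditional law literally unchanged (weak monotonicity holds trivially); and the integrability of the single-site conditionals does require checking that the incident blue/black edges give decay in \emph{both} directions of $u_v$, which holds by inspection of the graph $G$ but deserves a sentence rather than a wave at Lemma \ref{b2}.
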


The above proposition is stated for the general $\hslg$ $\Theta$-Gibbs measure introduced in Definition \ref{def:hslggibbs}. In light of the second part of the above proposition, we will sometimes say that an increasing event is `increasing with respect to the boundary data'. The proof of the above proposition follows a similar strategy as in \cite{bcd,xd1} and is provided in Appendix \ref{appc} for completeness.

	\subsection{The $\hslg$ line ensemble and Proof of Theorem \ref{thm:conn}} \label{sec2.2}	
In this section we define the half-space log-gamma ($\hslg$) line ensemble and prove Theorem \ref{thm:conn}. We work with the inhomogeneous polymer model determined by parameters $\vec{\theta}:=(\theta_i)_{i\in \mathbb{Z}_{\ge 1}}$. The construction of the line ensemble is based on the multi-path point-to-point partition functions. These are defined in \eqref{zsymr} as sums over multiple non-intersecting paths on the full quadrant $\Z_{\ge 1}^2$ (not just half-quadrant) of products of the symmetrized versions of the weights from \eqref{eq:wt}:
	\begin{align}
		\label{eq:symwt}
		\til{W}_{i,j} \sim \begin{cases}
		\tfrac{1}{2}W_{i,j} & \mbox{when }i=j, \\
			W_{j,i} & \mbox{when }j>i, \\
			W_{i,j} & \mbox{when }j<i.
		\end{cases}
	\end{align}
For $m,n,r\in \Z_{\ge 1}$ with $n\ge r$, let $\Pi_{m,n}^{(r)}$ be the set of $r$-tuples of non-intersecting upright paths in $\Z_{\ge 1}^2$ starting from $(1,r), (1,r-1),\cdots, (1,1)$ and going to $(m,n),(m,n-1), \ldots, (m,n-r+1)$ respectively. We define the multipath point-to-point symmetrized partition function as
	\begin{align}\label{zsymr}
		Z_{\operatorname{sym}}^{(r)}(m,n):=\sum_{(\pi_1,\ldots,\pi_r)\in \Pi_{m,n}^{(r)}}  \prod_{(i,j)\in \pi_1\cup \cdots \cup \pi_r} \til{W}_{i,j},
	\end{align}
with the convention that $Z_{\operatorname{sym}}^{(0)}(m,n)\equiv 1$ for all $m,n\in \Z_{\ge 1}$. The dependence on the $\vec{\theta}:=(\theta_i)_{i\in \mathbb{Z}_{\ge 1}}$ parameters that determine the weights through \eqref{eq:wt} is suppressed here and below.

	\begin{figure}[h!]
		\centering
		\begin{tikzpicture}[line cap=round,line join=round,>=triangle 45,x=1cm,y=1cm]
			\draw [line width=1pt] (7,4.78)-- (8,5);
			\draw [line width=1pt] (8,5)-- (9,3.7);
			\draw [line width=1pt] (9,3.7)-- (10,4.4);
			\draw [line width=1pt] (7,3.5)-- (8,4);
			\draw [line width=1pt] (7,7.6)-- (8,8);
			\draw [line width=1pt] (8,8)-- (9,6.9);
			\draw [line width=1pt] (9,6.9)-- (10,7.6);
			\draw [line width=1pt] (7,6)-- (8,6.5);
			\draw [line width=1pt] (8,6.5)-- (9,5.5);
			\draw [line width=1pt] (9,5.5)-- (10,6);
			\draw [line width=1pt] (10,6)-- (11,4.8);
			\draw [line width=1pt] (11,4.8)-- (12,5.5);
			\draw [line width=1pt] (7,9)-- (8,9.5);
			\draw [line width=1pt] (8,9.5)-- (9,8.6);
			\draw [line width=1pt] (9,8.6)-- (10,9.3);
			\draw [line width=1pt] (10,7.6)-- (11,6.6);
			\draw [line width=1pt] (11,6.6)-- (12,7.2);
			\draw [line width=1pt] (12,7.2)-- (13,5.8);
			\draw [line width=1pt] (13,5.8)-- (14,6.4);
			\draw [line width=1pt] (10,9.3)-- (11,8.3);
			\draw [line width=1pt] (11,8.3)-- (12,8.8);
			\draw [line width=1pt] (12,8.8)-- (13,8.1);
			\draw [line width=1pt] (13,8.1)-- (14,8.4);
			\draw [line width=1pt] (14,8.4)-- (15,7.3);
			\draw [line width=1pt] (15,7.3)-- (16,7.9);
			\draw [line width=1pt] (16,7.9)-- (17,6.9);
			\draw [line width=1pt] (17,6.9)-- (18,7);
			\draw [line width=1pt] (14,6.4)-- (15,5.7);
			\draw [line width=1pt] (15,5.7)-- (16,6.2);
			\draw [line width=1pt] (12,5.5)-- (13,6.3);
			\draw [line width=1pt] (13,6.3)-- (14,5);
			\draw [line width=1pt] (10,4.4)-- (11,3.9);
			\draw [line width=1pt] (11,3.9)-- (12,4.3);
			\draw [line width=1pt] (8,4)-- (9,4.2);
			\draw [line width=1pt] (9,4.2)-- (10,3.2);
			\draw [line width=1pt] (7,2.6)-- (8,2.7);
			\begin{scriptsize}
				\draw [fill=black] (17,6.9) circle (2pt);
				\draw [fill=blue] (18,7) circle (3pt);
				\draw [fill=blue] (7,2.6) circle (3pt);
				\draw [fill=blue] (8,2.7) circle (3pt);
				\draw [fill=black] (9,4.2) circle (2pt);
				\draw [fill=blue] (10,3.2) circle (3pt);
				\draw [fill=black] (11,3.9) circle (2pt);
				\draw [fill=blue] (12,4.3) circle (3pt);
				\draw [fill=blue] (14,5) circle (3pt);
				\draw [fill=black] (13,6.3) circle (2pt);
				\draw [fill=black] (15,5.7) circle (2pt);
				\draw [fill=blue] (16,6.2) circle (3pt);
				\draw [fill=black] (7,4.78) circle (2pt);
				\draw [fill=black] (7,9) circle (2pt);
				\draw [fill=black] (7,7.6) circle (2pt);
				\draw [fill=black] (7,6) circle (2pt);
				\draw [fill=black] (7,4.78) circle (2pt);
				\draw [fill=black] (7,3.5) circle (2pt);
				\draw [fill=black] (8,4) circle (2pt);
				\draw [fill=black] (8,5) circle (2pt);
				\draw [fill=black] (9,3.7) circle (2pt);
				\draw [fill=black] (10,4.4) circle (2pt);
				\draw [fill=black] (8,6.5) circle (2pt);
				\draw [fill=black] (9,5.5) circle (2pt);
				\draw [fill=black] (10,6) circle (2pt);
				\draw [fill=black] (11,4.8) circle (2pt);
				\draw [fill=black] (12,5.5) circle (2pt);
				\draw [fill=black] (8,8) circle (2pt);
				\draw [fill=black] (8,9.5) circle (2pt);
				\draw [fill=black] (9,6.9) circle (2pt);
				\draw [fill=black] (9,8.6) circle (2pt);
				\draw [fill=black] (10,7.6) circle (2pt);
				\draw [fill=black] (10,9.3) circle (2pt);
				\draw [fill=black] (11,6.6) circle (2pt);
				\draw [fill=black] (12,7.2) circle (2pt);
				\draw [fill=black] (14,6.4) circle (2pt);
				\draw [fill=black] (13,5.8) circle (2pt);
				\draw [fill=black] (12,8.8) circle (2pt);
				\draw [fill=black] (14,8.4) circle (2pt);
				\draw [fill=black] (16,7.9) circle (2pt);
				\draw [fill=black] (11,8.3) circle (2pt);
				\draw [fill=black] (13,8.1) circle (2pt);
				\draw [fill=black] (15,7.3) circle (2pt);
			\end{scriptsize}
			\foreach \x in {0,1,2,...,11}
			{
				\coordinate (A\x) at ($(7,2)+(\x*1cm,0)$) {};
				\draw ($(A\x)+(0,5pt)$) -- ($(A\x)-(0,5pt)$);
			}
			\draw[thick] (A0) -- (A11);
			\node at (7,1.5) {$1$};
			\node at (18,1.5) {$2N$};
			\node at (6.3,9) {$\L_{1}(\cdot)$};
			\node at (6.3,7.6) {$\L_{2}(\cdot)$};
			\node at (6.3,5.6) {$\vdots$};
			\node at (6.3,2.6) {$\L_{N}(\cdot)$};
		\end{tikzpicture}
		\caption{The half-space log gamma line ensemble $\L=(\L_i(\cdot))_{i=1}^N$ ($N=6$ in above figure). Each curve $\L_i(\cdot)$ has $2N-2i+2$ many coordinates. $\Lambda_N^*$ in Theorem \ref{thm:conn} is the set of all black points in the above figure. Theorem \ref{thm:conn} tells us that conditioned on the blue points, the law of the black points is given by the $\hslg$ Gibbs measures.}
		\label{fig:hgibbs}
	\end{figure}
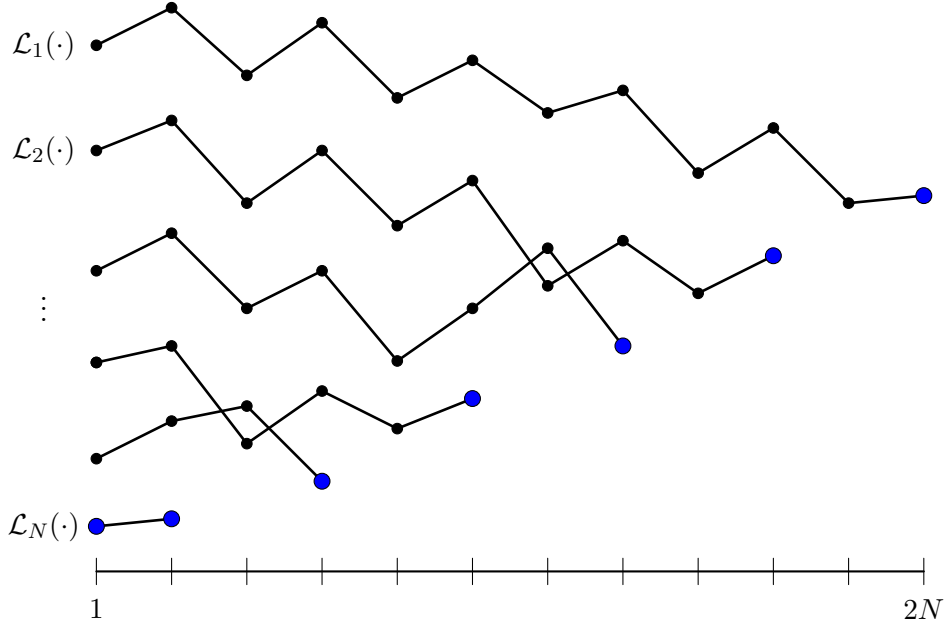	
	
	\begin{definition}[Half-space log-gamma line ensemble] \label{l:nz}
		Fix $N\in \Z_{\geq 1}$. For each $i\in \ll1,N\rr$ and $j\in \ll1,2N-2i+2\rr$, we set		
		\begin{equation*}
			\L_i^N(j)=\log\left(\frac{2 Z_{\operatorname{sym}}^{(i)}(p,q)}{ Z_{\operatorname{sym}}^{(i-1)}(p,q)}\right)+2\Psi(\theta)N.
		\end{equation*}
		where $p:=N+\lfloor j/2 \rfloor$ and $q:=N-\lceil j/2\rceil+1$. We call the collection of random variables
		\begin{align*}
			\big(\L_i^N(j): i\in \ll1,N\rr, j\in \ll1,2N-2i+2\rr\big)
		\end{align*}
the half-space log-gamma ($\hslg$) line ensemble with parameters $(\alpha,\theta)$, see Figure \ref{fig:hgibbs}.
	\end{definition}
	\begin{proof}[Proof of Theorem \ref{thm:conn}] Recalling the convention $Z_{\operatorname{sym}}^{(0)}(m,n)\equiv 1$, we can write
$$\L_1^N(j)=\log\left(2Z_{\operatorname{sym}}^{(1)}(N+\lfloor j/2\rfloor,N-\lceil j/2\rceil+1)\right)+2\Psi(\theta)N.$$
Assuming Part \ref{i02} of Theorem \ref{thm:conn} (verified below), Part \ref{i01} follows immediately from the identity
$2Z_{\operatorname{sym}}^{(1)}(p,q)=Z_{(\alpha,\vec\theta)}(p,q)$. The above identity is noted in Section 2.1 of \cite{bw} and follows easily due to symmetry of the weights (the factor of $2$ comes from a lack of double-counting the weight at $(1,1)$). This is an equality (not just in distribution).

Part \ref{i02} is a highly non-trivial deduction from first principles. However, the works of  \cite{cosz,osz, nz, bz19b, bw} have built a rich theory using the geometric RSK correspondence from which this part follows in a rather straightforward manner, as now described. We seek to determine the joint density of the $\hslg$ line ensemble defined above.
Let us start by defining
		$$K_N:=\{(i,j): i\in \ll1,N\rr, j\in \ll1,2N-2i+2\rr\}, \quad\ihs:=\{(i,j)\in \Z_{\geq 1}^2 :   i+j\le 2N+1\}.$$
Note that the map $(i,j) \mapsto (N+\lfloor j/2\rfloor-i+1,N-\lceil j/2\rceil -i+2)$ is a bijection from $K_N$ to $\ihs\cap \{i\ge j\}.$ For any $(i,j)\in K_N$, we then define
		$$T_{N+\lfloor j/2\rfloor-i+1,N-\lceil j/2\rceil -i+2}:=\frac{Z_{\operatorname{sym}}^{(i)}(N+\lfloor j/2\rfloor,N-\lceil j/2\rceil +1)}{Z_{\operatorname{sym}}^{(i-1)}(N+\lfloor j/2\rfloor,N-\lceil j/2\rceil +1)},$$
and then set $T_{j,i}:=T_{i,j}$ for $i\ge j$. From Proposition 2.6 in \cite{nz}, $(T_{i,j})_{(i,j)\in \ihs}$ is precisely the image under the geometric RSK map of the symmetrized weights \eqref{eq:symwt} with indices restricted to the $\ihs$ array. The density of this image has been computed in \cite{bw}. Indeed, setting $m=0, n=N$, $\alpha_i=\theta_i$ for $i\ge 1$, and $\alpha_0=\alpha$ in the final two (unnumbered) equations on page 28 in \cite{bw} (in the arXiv version see the second unnumbered equation on page 20), we see that the density of $(2T_{i,j})_{i\ge j}$ at $(t_{i,j})_{i\ge j}$ is proportional to
\begin{equation}
		\label{e:den1}
{e^{-\frac1{t_{1,1}}}\prod_{i=1}^{N} t_{i,i}^{(-1)^{N-i+1}\alpha}}\prod_{j=1}^N\Big(\frac{\tau_{2N-2j+2}\cdot \tau_{2N-2j}}{\tau_{2N-2j+1}^2}\Big)^{\theta_j} \!\!\exp\Big(\!-\!\sum_{i\ge j >1}\frac{t_{i,j-1}}{t_{i,j}} -\sum_{i>j}\frac{t_{i-1,j}}{t_{i,j}}\Big)\!\!\prod_{(i,j)\in \ihs} t_{i,j}^{-1}\ind_{t_{i,j}>0}
	\end{equation}
		where the $\tau$ variables are defined as
		$
			\tau_k=\prod \big( t_{i,j}:(i,j)\in \ihs,i-j=k\big) =\prod\big( t_{i+k,i}: 1\le i \le N - \frac{k-1}{2}\big).
		$
In fact, the density formula in \cite{bw} is for $(2T_{i,j})_{i\le j}$ at $(t_{i,j})_{i\le j}$, thus we needed to permute the indices in that formula to arrive at the above formula. The line ensemble $\L_i^N(j)$ defined in Definition \ref{l:nz} is related to $(2T_{i,j})_{(i,j)\in \ihs}$ via the relation
	\begin{align*}
		\L_i^N(j)-2\Psi(\theta)N = \log\big( T_{N+\lfloor j/2\rfloor-i+1,N-\lceil j/2 \rceil-i+2}\big).
	\end{align*}
Under the change of variables $u_{i,j}=\log\big( t_{N+\lfloor j/2\rfloor-i+1,N-\lceil j/2 \rceil-i+2}\big)$ for $(i,j)\in K_N$, we claim that 
the density of $(\L_i^N(j)-2\Psi(\theta)N)$ at $(u_{i,j})_{(i,j)\in K_N}$ is proportional to
	\begin{align}\label{e:den2}
		& 	{e^{-e^{-u_{N,1}}}\prod_{i=1}^{N} e^{(-1)^i u_{i,1}\alpha}}\prod_{i=1}^N \left(e^{-\theta_i u_{i,2N-2i+2}}\prod_{j=1}^{N-i+1} e^{\theta_{N-j+1}(u_{i,2j-1}-u_{i,2j})} \prod_{j=1}^{N-i} e^{-\theta_{N-j+1}(u_{i,2j}-u_{i,2j+1})}\right) \\ \label{e:den3} & \hspace{1.5cm} \cdot\exp\left(-\sum_{i=1}^{N}\sum_{j=1}^{N-i+1}e^{u_{i,2j-1}-u_{i,2j}}-\sum_{i=1}^{N-1}\sum_{j=1}^{N-i}e^{u_{i+1,2j}-u_{i,2j+1}}\right) \\ \label{e:den4} & \hspace{3cm} \cdot\exp\left(-\sum_{i=1}^{N-1}\sum_{j=1}^{N-i} e^{u_{i,2j+1}-u_{i,2j}}- \sum_{i=1}^{N-1} \sum_{j=1}^{N-i} e^{u_{i+1,2j}-u_{i,2j-1}}\right).
	\end{align}
The justification of going from \eqref{e:den1} to \eqref{e:den2}-\eqref{e:den4} is given in Appendix \ref{appd}.
Recall now that we are interested in the density conditioned on $(\L_i^N(j)-2\Psi(\theta)N)$ at $(u_{i,j})_{(i,j)\in K_N\setminus \Lambda_N^*}$. To compute this conditional density we may absorb all the $u_{i,j}$ terms with $(i,j)\in K_N\setminus \Lambda_N^*$ into the proportionality constant. Thus in \eqref{e:den2}, we may absorb the $e^{-u_{N,1}}$ term and $e^{-\theta_i u_{i,2N-2i+2}}$ terms and observe $$\prod_{i=1}^{N} e^{(-1)^i u_{i,1}\alpha} \propto \prod_{i\in \ll1,N/2\rr} e^{-\alpha(u_{2i-1,1}-u_{2i,1})}.$$
	Upon a quick inspection of the form of the weight function in \eqref{def:wfn}, one sees that these factors are precisely the red edge weights functions in the $\hslg$ Gibbs measure on the domain $\Lambda_N^*$; see Figure \ref{fig00} (B) and Definition \ref{def:hslggibbs}. Combining the terms which have  $(u_{i,2j-1}-u_{i,2j})$ and $(u_{i,2j+1}-u_{i,2j})$ in \eqref{e:den2}, \eqref{e:den3}, \eqref{e:den4} give rise to the following factor
	\begin{align*}
		& \prod_{i=1}^N\prod_{j=1}^{N-i+1} \exp\big(\theta_{N-j+1}(u_{i,2j-1}-u_{i,2j}) -e^{u_{i,2j-1}-u_{i,2j}}\big) \\ & \hspace{3cm}\cdot  \prod_{i=1}^{N-1}\prod_{j=1}^{N-i} \exp\big(\theta_{N-j+1}(u_{i,2j+1}-u_{i,2j}) -e^{u_{i,2j+1}-u_{i,2j}}\big).
	\end{align*}
	The above factor corresponds to the blue edge weight functions in the $\hslg$ Gibbs measure on the domain $\Lambda_N^*$. Finally, the remaining terms in \eqref{e:den3} and \eqref{e:den4} corresponds to black edge weight function in the $\hslg$ Gibbs measure on the domain $\Lambda_N^*$. Thus the density of $\{\L_i^N(j)-2\Psi(\theta)N : (i,j)\in \Lambda_N^*\}$ conditioned on $\{\L_i^N(j)-2\Psi(\theta)N : (i,j)\in K_N\setminus \Lambda_N^*\}$ is precisely given by the $\hslg$ Gibbs measure with boundary condition $\{\L_i^N(j)-2\Psi(\theta)N : (i,j)\in K_N\setminus \Lambda_N^*\}$ as in Definition \ref{def:hslggibbs}. By the Gibbs measures translation invariance (Lemma \ref{obs1} \ref{traninv}), we obtain Theorem \ref{thm:conn} \ref{i02} .
	\end{proof}

	\section{Properties of the first three curves}\label{sec:2curve}
	
	In this section we extract probabilistic information about the first few curves of $\hslg$ line ensemble $\L^N$ (Definition \ref{l:nz}). In Section \ref{sec:cls} we prove Theorem \ref{t:order}, which claims that there is a certain high probability ordering among the points of the curve. Section \ref{sec:high} contains Theorem \ref{p:high2} which asserts that with high probability there is a point $p=O(N^{2/3})$ such that $\L_2^N(p)$ is reasonably large.
Finally in Section \ref{sec:spcu}, we show Theorem \ref{l:lhigh} which argues that with high probability $(\L_2^N(s))_{s\in \ll1,kN^{2/3}\rr}$ and $(\L_3^N(s))_{s\in \ll1,kN^{2/3}\rr}$  lie below $MN^{1/3}$ for large enough $M$.
	
	\subsection{Ordering of the points in the line ensemble} \label{sec:cls} In this subsection we show that with high probability there is ordering among the points of the $\hslg$ line ensemble. Throughout this subsection we shall assume $\alpha \in (-\theta,\infty)$ is a fixed parameter. The results can be easily extended to the case where $\alpha=\alpha(N)$ satisfying
	\begin{align*}
		-\theta< \liminf_{N\to\infty} \alpha(N) \le \limsup_{N\to\infty} \alpha(N)<\infty.
	\end{align*}
	We consider the $\hslg$ line ensemble $\L^N$ from Definition \ref{l:nz} with parameter $(\alpha,\theta)$.
	
	\begin{theorem}\label{t:order} Fix any $k\in \Z_{\ge 1}$ and $\rho\in (0,1)$. There exists $N_0=N_0(\rho,k)>0$ such that for all $N\ge N_0$, $i\in \ll 1,k\rr$ and $p\in \ll 1,N-k-2\rr$ the following inequalities holds:
		\begin{equation}\label{t41}
			\begin{aligned}
				\Pr\big(\L_i^N(2p+1)\le \L_i^N(2p)+(\log N)^{7/6}\big) & \ge 1-\rho^{N} , \\ \Pr\big(\L_i^N(2p-1)\le \L_i^N(2p)+(\log N)^{7/6}\big) & \ge 1-\rho^{N}, \\
				\Pr\big(\L_{i+1}^N(2p)\le \L_i^N(2p+1)+(\log N)^{7/6}\big) & \ge 1-\rho^{N} , \\ \Pr\big(\L_{i+1}^N(2p)\le \L_i^N(2p-1)+(\log N)^{7/6}\big) & \ge 1-\rho^{N}.
			\end{aligned}
		\end{equation}
	\end{theorem}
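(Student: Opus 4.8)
\textbf{Proof proposal for Theorem \ref{t:order}.} The plan is to reduce each of the four inequalities to a uniform estimate on a small two- or three-curve $\hslg$ Gibbs measure and then to exploit the explicit exponential form of the edge weights. The key structural observation is that each event in \eqref{t41} is an \emph{increasing} event in the difference of two adjacent vertices joined by a blue or black edge, and decreasing in the other. By the Gibbs property (Theorem \ref{thm:conn}\ref{i02}, together with Lemma \ref{obs1}\ref{gpsd}), the conditional law of the relevant pair of points given everything else is governed by the density \eqref{e:hsgibb}, and in particular by the weight functions \eqref{def:wfn}. For a blue edge $v_1\to v_2$ carrying parameter $\theta$, the conditional law of $u_{v_1}-u_{v_2}$ (given the neighbors) has a density proportional to $\exp(\theta x - e^x)$ times a factor coming from the black edges incident to $v_1$ and $v_2$; the latter factor is of the form $\exp(-e^{x}\cdot c_1 - c_2 e^{-x}\cdot\text{(bounded)})$ or similar, and in any case is bounded above by $1$ on the event $\{x \ge (\log N)^{7/6}\}$ (since there $e^x$ is enormous). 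Thus for a single edge one gets, uniformly in the boundary data,
\[
\Pr\big(u_{v_1}-u_{v_2} \ge (\log N)^{7/6}\big) \le \frac{\int_{(\log N)^{7/6}}^{\infty} e^{\theta x - e^x}\,dx}{\int_{\R} e^{\theta x - e^x - (\text{black-edge factors})}\,dx}.
\]
The numerator is super-exponentially small in $(\log N)^{7/6}$, i.e., bounded by $\exp(-e^{(\log N)^{7/6}}/2)$ for large $N$, which decays faster than any $\rho^N$. The main work is controlling the denominator from below uniformly over all admissible boundary configurations.

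First I would set up the reduction precisely. For the first inequality, $\L_i^N(2p+1) \le \L_i^N(2p) + (\log N)^{7/6}$: the pair $(2p,2p+1)$ on curve $i$ is joined by a blue edge, and I would condition on all other vertices, isolating the one-dimensional conditional law of $\L_i^N(2p) - \L_i^N(2p+1)$. The vertex $(i,2p+1)$ also has an incident black edge (to $(i-1,2p+1)$ or $(i+1,2p)$ depending on parity and $i$) and the vertex $(i,2p)$ has incident blue edges to $(i,2p-1)$ and black edges; written out, the conditional density of $y := u_{i,2p} - u_{i,2p+1}$ is proportional to $\exp(\pm\theta y) \exp(-e^{a - y} - e^{a' - y}) \exp(-e^{y - b})\cdots$ for constants $a,a',b,\ldots$ determined by the conditioned values and the other dummy variables being integrated out. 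Since we want an upper bound on $\Pr(y \le -(\log N)^{7/6})$ (note the sign: $\L_i^N(2p+1) \le \L_i^N(2p) + c$ iff $y \ge -c$, so the bad event is $y < -c$), and on that event the factors $e^{-e^{a-y}}$ are of the form $e^{-e^{\text{large}}}$, one gets the super-exponential decay. The analogous reductions for the other three inequalities use the blue edge $(2p-1,2p)$ and the two black edges $(i{+}1,2p)\to(i,2p{+}1)$ and $(i{+}1,2p)\to(i,2p{-}1)$ respectively. A slight subtlety: integrating out the other dummy variables may move the denominator's lower bound; I would handle this by the standard trick of \emph{not} fully conditioning but rather conditioning on a one-step neighborhood and bounding the normalizing constant of a genuinely one-dimensional density with log-concave-type weights, whose normalizer is bounded below by a constant depending only on the (bounded-below-away-from-the-event) neighbor values.

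The main obstacle, as in all such arguments, is that the conditional density's normalizing constant could in principle degenerate if the neighboring values are extreme. This is where I would invoke the explicit inverse-gamma structure: the $\hslg$ edge weights \eqref{def:wfn} are exactly those for which the marginal increments of the line ensemble are (periodically alternating) log-gamma random walk increments (Section \ref{sec2.2}, and the density \eqref{e:den1}--\eqref{e:den4}), and the weight $\exp(\vartheta x - e^x)$ is the log-density of $\log$ of an inverse-gamma variable, hence integrable and with a normalizer bounded on compacts. Concretely, I would bound the conditional density's denominator below by integrating only over a unit interval of $y$-values in a region where all the $e^{a-y}, e^{y-b}$ factors are $O(1)$; the existence of such a region is guaranteed as long as the bad event threshold $(\log N)^{7/6}$ grows, because the factors causing trouble are precisely the ones that are negligible on the bad event. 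This gives a denominator bounded below by a constant $c_0 > 0$ independent of $N$ and of the boundary data (after possibly restricting to a high-probability event for the boundary data, handled by an easy union bound and induction on $k$). Then $\Pr(\text{bad}) \le c_0^{-1}\exp(-\tfrac12 e^{(\log N)^{7/6}}) \le \rho^N$ for $N \ge N_0(\rho,k)$, since $e^{(\log N)^{7/6}} = \exp((\log N)^{7/6}) \gg N\log(1/\rho)$.

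Finally, I would take a union bound over $i \in \ll 1,k\rr$, $p \in \ll 1, N-k-2\rr$ and the four inequality types — at most $8k(N-k-2) \le 8kN$ events — which costs only a polynomial factor $8kN$ and is absorbed into the super-exponential decay, yielding the stated $1-\rho^N$ bound (after adjusting $N_0$). I expect the bookkeeping of exactly which black and blue edges are incident to the relevant vertices (and the parity-dependent case split near $i=1$, where $\L_0 \equiv +\infty$ and the red edge enters) to be the most tedious part, but none of it is conceptually hard; the whole argument rests on the single fact that $e^{-e^x} \to 0$ super-exponentially as $x \to \infty$.
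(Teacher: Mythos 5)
Your high-level strategy is the same as the paper's: use the Gibbs property to reduce to a ratio of integrals, exploit the double-exponential factor $e^{-e^{x}}$ forced by the bad event in the numerator, lower-bound the normalizing constant, and absorb the union bound over $O(kN)$ events into the super-exponential decay. However, there is a genuine gap at the step you dismiss as ``an easy union bound and induction on $k$'': the lower bound on the denominator requires an \emph{a priori} quantitative bound on the conditioned (boundary/neighboring) values, and no such bound is supplied by your argument. If a neighboring value $u_{i,2p-1}$ or $u_{i,2p+2}$ were allowed to be of size, say, $e^{e^N}$, the normalizing constant of your conditional density would be far smaller than $e^{-e^{(\log N)^{7/6}}}$ and the ratio would not be small. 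The paper supplies exactly this input as Proposition \ref{t:aest0}: with probability $\ge 1-\rho^N$ one has $|\L_i^N(j)|\le \Con N$ for all $i\le k$ and all $j$, and this is proved not by any induction on the Gibbs structure but by going back to the polymer representation --- Chernoff/moment bounds on $\sum_{(i,j)\in\pi}\log\til W_{i,j}$ for a fixed path, combined with the bound $4^{kN}$ on the number of $r$-tuples of non-intersecting paths, applied to $Z_{\operatorname{sym}}^{(r)}$. This is a separate, non-trivial ingredient that your proposal is missing.

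A second, smaller error: even after restricting to $|\text{boundary}|\le \Con N$, your claim that the denominator is bounded below by an $N$-independent constant $c_0$ is false. The gap between, e.g., $\L_i^N(2T-2)$ and the conditioned value $\L_i^N(2T-1)$ can be of order $N$, and the weight $G_{\theta,1}$ evaluated at an argument of order $-N$ is only $e^{-\Theta(N)}$; the paper's lower bound on the denominator is accordingly $\tau^N$ for some $\tau>0$ (obtained by integrating over a unit-measure staircase configuration $\m{D}$), not a constant. This does not break the argument --- $\exp(-\tfrac12 e^{(\log N)^{7/6}})\cdot (R/\tau)^N \le \rho^N$ still holds for large $N$ --- but as written your denominator estimate is incorrect, and the correct one only works \emph{because} the a priori bound is $O(N)$ rather than something larger. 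Finally, note that the paper conditions globally on the domain $\mathcal{K}_{k,T}'$ (whole $(k{+}1)$-th curve plus right boundary) rather than on a one-step neighborhood; your more local conditioning is a legitimate variant, but it does not remove the need for the a priori estimate.
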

	We refer to the caption of Figure \ref{fig:order} for a visual interpretation of the above Theorem. The $7/6$ appearing above can be replaced with any $\gamma>1$. $N_0$ will also depend on $\gamma$ in that case. In order to prove the above theorem, we first provide an apriori loose bound for the entries of the first $k$ curves of the line ensemble $\L^N$.
	
	\begin{proposition}\label{t:aest0} Fix any $\rho\in (0,1)$ and $k\in \Z_{\ge 1}$. There exists a constant $\Con=\Con(\rho,k)>0$  and $N_0(\rho,k)>0$ such that for all $N\ge N_0$, $i\in \ll1,k\rr$, $j\in \ll 1,2N-2i+2\rr$ we have
		\begin{align}\label{e:aest0}
			\Pr\left( |\L_i^N(j)| \le \Con \cdot N\right) \ge 1-\rho^{N}.
		\end{align}
	\end{proposition}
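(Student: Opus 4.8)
The plan is to prove \eqref{e:aest0} directly from the combinatorial definition of the line ensemble (Definition~\ref{l:nz}) by a crude union bound over path configurations, using only tail estimates for logarithms of inverse-Gamma weights. Fix $i\in\ll 1,k\rr$ and $j\in\ll 1,2N-2i+2\rr$, and set $p:=N+\lfloor j/2\rfloor$, $q:=N-\lceil j/2\rceil+1$, so that $p+q\in\{2N,2N+1\}$ and $1\le q\le p\le 2N$. Since $\L_i^N(j)=\log\big(2Z_{\operatorname{sym}}^{(i)}(p,q)\big)-\log Z_{\operatorname{sym}}^{(i-1)}(p,q)+2\Psi(\theta)N$ and both $2\Psi(\theta)N$ and $\log 2$ are harmless ($O(N)$ and $O(1)$ respectively), it suffices to show that for each fixed $\ell\in\ll 0,k\rr$ and each admissible $(p,q)$ one has $\Pr\big(|\log Z_{\operatorname{sym}}^{(\ell)}(p,q)|>\Con N\big)\le \rho^{2N}$ for a suitable $\Con=\Con(\rho,k)$ and all large $N$; the statement for all $(i,j)$ and $\ell\in\{i-1,i\}$ then follows by a union bound, losing only the polynomial factor $O(kN^2)$. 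The case $\ell=0$ is trivial since $Z_{\operatorname{sym}}^{(0)}\equiv 1$.

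For the \emph{lower bound}, keep only a single fixed $\ell$-tuple of non-intersecting up-right paths $(\pi_1^*,\dots,\pi_\ell^*)\in\Pi_{p,q}^{(\ell)}$ (which exists since $q\ge\ell$). Then $Z_{\operatorname{sym}}^{(\ell)}(p,q)\ge\prod_{(a,b)\in\pi_1^*\cup\cdots\cup\pi_\ell^*}\til W_{a,b}$, so $\log Z_{\operatorname{sym}}^{(\ell)}(p,q)\ge\sum_{r} c_r\log W^{(r)}$, a weighted sum of at most $C(k)N$ independent logarithms of $\operatorname{Gamma}^{-1}(\beta)$ variables ($\beta\in\{2\theta,\alpha+\theta\}$), with integer coefficients $c_r\in\{1,2\}$ (a path cell and its reflection in \eqref{eq:symwt} may carry the same underlying weight, but at most two cells do; the $\tfrac12$ on diagonal weights contributes only a bounded shift). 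Because $\log\operatorname{Gamma}^{-1}(\beta)=-\log\operatorname{Gamma}(\beta)$ has exponential moments of all negative orders, $\Ex[e^{-s\log\operatorname{Gamma}^{-1}(\beta)}]=\Ex[\operatorname{Gamma}(\beta)^{s}]=\Gamma(\beta+s)/\Gamma(\beta)<\infty$ for every $s>0$, a Chernoff bound gives $\Pr\big(\sum_r c_r\log W^{(r)}<-\Con N\big)\le e^{-N g_k(\Con)}$ with $g_k(\Con)\to\infty$ as $\Con\to\infty$, which is $\le\rho^{2N}$ once $\Con$ is large.

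For the \emph{upper bound}, use the trivial cardinality estimate $|\Pi_{p,q}^{(\ell)}|\le\binom{p+q}{q}^{\ell}\le e^{c_k N}$ (with $c_k=c(k)$) together with $Z_{\operatorname{sym}}^{(\ell)}(p,q)\le|\Pi_{p,q}^{(\ell)}|\cdot\max_{(\pi_1,\dots,\pi_\ell)}\prod\til W$, so that
\[
\Pr\big(\log Z_{\operatorname{sym}}^{(\ell)}(p,q)>\Con N\big)\ \le\ e^{c_k N}\ \max_{(\pi_1,\dots,\pi_\ell)\in\Pi_{p,q}^{(\ell)}}\Pr\Big(\sum_{(a,b)\in\pi_1\cup\cdots\cup\pi_\ell}\log\til W_{a,b}>(\Con-c_k)N\Big).
\]
Each inner probability is again controlled by Chernoff: $\sum\log\til W_{a,b}$ is a weighted sum of at most $C(k)N$ independent $\log\operatorname{Gamma}^{-1}(\beta)$ variables with coefficients in $\{1,2\}$, and $\Ex[e^{s c_r\log\operatorname{Gamma}^{-1}(\beta)}]=\Ex[\operatorname{Gamma}^{-1}(\beta)^{sc_r}]=\Gamma(\beta-sc_r)/\Gamma(\beta)$ is finite for $sc_r<\beta$ and diverges as $sc_r\uparrow\beta$. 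Hence the Cram\'er rate function grows without bound, giving $\Pr\big(\sum\log\til W>C'N\big)\le e^{-N h_k(C')}$ with $h_k(C')\to\infty$; choosing $\Con$ large enough that $h_k(\Con-c_k)>c_k+2\log(1/\rho)$ makes the displayed expression $\le\rho^{2N}$.

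The only genuinely delicate point is this last competition, in the upper bound, between the $e^{c_k N}$ path $\ell$-tuples and the large-deviation decay of $\Pr\big(\sum\log\til W>C'N\big)$; it works precisely because the Laplace transform of $\log\operatorname{Gamma}^{-1}(\beta)$ is finite up to, and blows up at, the positive value $s=\beta$, so the associated rate function is unbounded and the exponential decay can be forced to overwhelm $c_k$ by taking the threshold $\Con$ large. Everything else---enumerating path tuples, handling the at-most-double-counting coming from the reflection symmetry of $\til W$ in \eqref{eq:symwt}, the bounded contribution of the diagonal weights $\tfrac12 W_{i,i}$, and the final union bound over the $O(kN^2)$ choices of $(i,j,\ell)$---is routine bookkeeping.
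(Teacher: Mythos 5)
Your proposal is correct and follows essentially the same route as the paper's proof: reduce to concentration of $\log Z_{\operatorname{sym}}^{(\ell)}(p,q)$ for each $\ell\le k$, obtain Chernoff/Cram\'er bounds for the log-weight of a path tuple using finiteness of positive and negative moments of the inverse-gamma weights, beat the $e^{cN}$ entropy of path tuples by taking the threshold $\Con$ large, and get the lower bound from a single disjoint tuple of paths. The only cosmetic differences are that the paper fixes the Chernoff exponent at $\theta$ rather than optimizing over $s$, and it phrases the entropy-versus-rate competition as a union bound over all $4^{kN}$ tuples defining a single good event $\m{A}$; your explicit bookkeeping of the coefficients $c_r\in\{1,2\}$ for a cell and its reflection is a sound way to handle the symmetrized weights.
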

	
	We first prove Theorem \ref{t:order} assuming Proposition \ref{t:aest0}.

	\begin{figure}[t]
		\centering
		\begin{tikzpicture}[line cap=round,line join=round,>=triangle 45,x=2.4cm,y=1.2cm]
			\foreach \x in {0,1,2}
			{
				\draw[line width=1pt,blue,{Latex[length=2mm]}-]  (\x,0) -- (\x-0.5,-0.5);
				\draw[line width=1pt,blue,{Latex[length=2mm]}-] (\x,0) -- (\x+0.5,-0.5);
				\draw[line width=1pt,black,{Latex[length=2mm]}-] (\x-0.5,-0.5) -- (\x,-1);
				\draw[line width=1pt,black,{Latex[length=2mm]}-] (\x+0.5,-0.5) -- (\x,-1);
				\draw[line width=1pt,blue,{Latex[length=2mm]}-]  (\x,-1) -- (\x-0.5,-1.5);
				\draw[line width=1pt,blue,{Latex[length=2mm]}-] (\x,-1) -- (\x+0.5,-1.5);
				\draw[line width=1pt,black,{Latex[length=2mm]}-] (\x-0.5,-1.5) -- (\x,-2);
				\draw[line width=1pt,black,{Latex[length=2mm]}-] (\x+0.5,-1.5) -- (\x,-2);
				\draw[line width=1pt,blue,{Latex[length=2mm]}-]  (\x,-2) -- (\x-0.5,-2.5);
				\draw[line width=1pt,blue,{Latex[length=2mm]}-] (\x,-2) -- (\x+0.5,-2.5);
				\draw[line width=1pt,black,{Latex[length=2mm]}-] (\x-0.5,-2.5) -- (\x,-3);
				\draw[line width=1pt,black,{Latex[length=2mm]}-] (\x+0.5,-2.5) -- (\x,-3);
				\draw[line width=1pt,blue,{Latex[length=2mm]}-]  (\x,-3) -- (\x-0.5,-3.5);
				\draw[line width=1pt,blue,{Latex[length=2mm]}-] (\x,-3) -- (\x+0.5,-3.5);
			}
			\foreach \x in {0,1,2}
			{
				\draw [fill=blue] (\x,0) circle (2pt);
				\draw [fill=blue] (\x,-1) circle (2pt);
				\draw [fill=blue] (\x,-2) circle (2pt);
				\draw [fill=blue] (\x-0.5,-0.5) circle (2pt);
				\draw [fill=blue] (\x-0.5,-1.5) circle (2pt);
				\draw [fill=blue] (\x-0.5,-2.5) circle (2pt);
				\draw [fill=blue] (\x-0.5,-3.5) circle (2pt);
				\draw [fill=blue] (\x,-3) circle (2pt);
			}
			\draw[line width=1pt,red,{Latex[length=2mm]}-] (-0.5,-1.5) -- (-0.5,-0.5);
			\draw[line width=1pt,red,{Latex[length=2mm]}-] (-0.5,-3.5) -- (-0.5,-2.5);
			\draw [fill=blue] (2.5,-0.5) circle (2pt);
			\draw [fill=blue] (2.5,-1.5) circle (2pt);
			\draw [fill=blue] (2.5,-2.5) circle (2pt);
			\draw [fill=blue] (2.5,-3.5) circle (2pt);
			\draw [fill=blue] (2,0) circle (2pt);
			\draw[fill=blue] (2,-1) circle (2pt);
			\draw[fill=blue] (2,-2) circle (2pt);
			\draw[fill=blue] (1.5,-2.5) circle (2pt);
			\draw[fill=blue] (1.5,-3.5) circle (2pt);
			\draw[fill=blue] (0.5,-3.5) circle (2pt);
			\draw[fill=blue] (-0.5,-3.5) circle (2pt);
			\draw[fill=blue] (0,-3) circle (2pt);
			\draw[fill=blue] (0,0) circle (2pt);
			\draw[fill=blue] (0,-1) circle (2pt);
			\draw[fill=blue] (-0.5,-0.5) circle (2pt);
			\draw[fill=blue] (1,0) circle (2pt);
			\node at (2.8,-0.2) {$\cdots$};
			\node at (2.8,-1.2) {$\cdots$};
			\node at (2.8,-2.2) {$\cdots$};
			\node at (2.8,-3.2) {$\cdots$};
			\node at (0,-3.7) {$\vdots$};
			\node at (1,-3.7) {$\vdots$};
			\node at (2,-3.7) {$\vdots$};
			\node at (0,0.2) {{\tiny$\L_1^N(2)$}};
			\node at (1,0.2) {{\tiny$\L_1^N(4)$}};
			\node at (1.5,-1.2) {{\tiny$\L_2^N(5)$}};
			\node at (0.5,-2.85) {{\tiny$\L_3^N(3)$}};
			\node at (-0.5,-3.72) {{\tiny$\L_4^N(1)$}};
			\node at (2,-2.73) {{\tiny$\L_4^N(6)$}};
			
		\end{tikzpicture}
		\caption{The ordering of points within the $\hslg$ line ensemble. The figure depicts of first four curves of the line ensemble $\L$. Points $v$ and $v'$ connected by a black or blue arrow from $v\to v'$ satisfy that $\L^N(v')\geq \L^N(v)+ (\log N)^{7/6}$ with high probability (recall that for $v=(i,j)$ we write $\L^N(v)=\L^N_i(j)$). The blue arrows thus imply ordering within a particular indexed curve while the black arrow imply ordering between the two consecutive curves.}
		\label{fig:order}
	\end{figure}
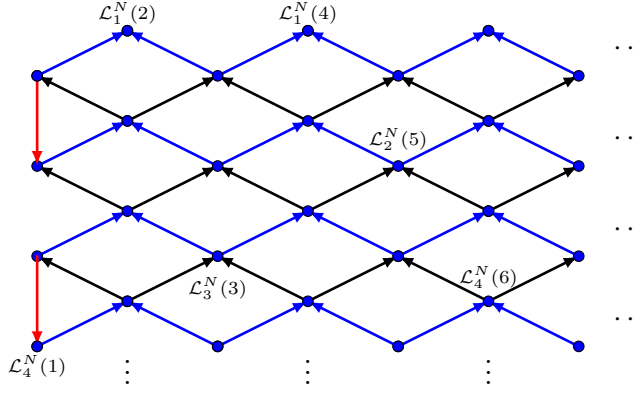

	\begin{proof}[Proof of Theorem \ref{t:order}]  Fix any $\rho \in (0,1)$ and $k\in \Z_{\ge 1}$. Set $T:=N-k$. Fix $i_0\in \ll 1,k\rr$ and $p\in\ll 1,T-2 \rr$.  We will show only the first of the inequalities in \eqref{t41}, as the rest are all proved analogously. For simplicity, we write $\L$ for $\L^N$. Consider the event
		\begin{align*}
			\m{V}:=\left\{\L_{i_0}(2p+1) \ge \L_{i_0}(2p)+(\log N)^{7/6}\right\}.
		\end{align*}
		We apply Proposition \ref{t:aest0} with $k\mapsto k+1$ and $\rho\mapsto \rho/2$ to get $\Con>0$ so that for all large enough $N$, by union bound we have $\Pr(\m{A}) \ge 1-2Nk\cdot (\rho/2)^N$	where
		\begin{align*}
			\m{A}:=\Big\{|\L_{k+1}(j)|,|\L_i(2T-1)| \le \Con \cdot N, \textrm{ for all } j\in \ll 1,2T \rr, i\in \ll 1,k\rr\Big\}.
		\end{align*}
		Thus if we consider the $\sigma$-algebra $$\mathcal{F}:=\sigma\Big(\L_{k+1}(j),\L_i(2T-1) : \ j\in \ll 1,2T \rr, i\in \ll 1,k\rr\Big),$$
		by union bound and tower property of the conditional expectation we have
		\begin{align}\label{etst}
			\Pr(\m{V})\le \Pr(\neg\m{A})+\Pr(\m{V}\cap \m{A}) \le 2Nk \cdot (\rho/2)^N+\Ex\left[\ind_{\m{A}}\Ex[\ind_{\m{V}}\mid \mathcal{F}]\right].
		\end{align}
		Recall $\mathcal{K}_{k,T}'$ from \eqref{def:kkt}. From Theorem \ref{thm:conn} and Lemma \ref{obs1} \ref{gpsd}, the law of $\{\L(v): v\in \mathcal{K}_{k,T}'\}$ conditioned on $\mathcal{F}$ is given by the $\hslg$ Gibbs measure on the domain $\mathcal{K}_{k,T}'$ with boundary condition $\vec{y}:=\{\L_j(2T-1)\}_{j=1}^k$. $\vec{z}:=\{\L_{k+1}(2i)\}_{i=1}^{T-1}$. In view of Lemma \ref{def:hsgm1} \ref{denQ} we see that
		\begin{align}\label{e:ratio}
			\Ex[\ind_{\m{V}}\mid \mathcal{F}]= \frac{\int_{\m{V}} Q_{k,T}^{\vec{y},\vec{z}}(\mathbf{u}) d\mathbf{u}}{\int_{\R^{|\mathcal{K}_{k,T}'|}} Q_{k,T}^{\vec{y},\vec{z}}(\mathbf{u}) d\mathbf{u}}
		\end{align}
		where $Q_{k,T}^{\vec{y},\vec{z}}(\mathbf{u})$ is defined in \eqref{def:fhsgm2}. We will now bound the numerator and denominator of \eqref{e:ratio} respectively. We claim that there exists $R,\tau>0$ depending only on $k,\alpha,\theta,\Con$ such that
		\begin{align}\label{toshowone}
			\ind_{\m{A}}\cdot \int_{\m{V}} Q_{k,T}^{\vec{y},\vec{z}}(\mathbf{u}) d\mathbf{u} \le 	\ind_{\m{A}}\exp(-\tfrac12e^{(\log N)^{7/6}}) \cdot  R^N, \quad \mbox{ and } \quad 	\ind_{\m{A}}\int_{\R^{|\mathcal{K}_{k,T}'|}} Q_{k,T}^{\vec{y},\vec{z}}(\mathbf{u}) d\mathbf{u} \ge 	\ind_{\m{A}}\cdot\tau^N.
		\end{align}
		Clearly plugging this bounds back in \eqref{e:ratio} and then back in \eqref{etst} leads to $\Pr(\m{V})\le \rho^N$ for all large enough $N$, as desired. Thus we focus on proving the two inequalities in \eqref{toshowone}.

		\medskip
		
		\noindent\textbf{Proof of the first inequality in \eqref{toshowone}.} Recall $G$ defined in \eqref{def:gwt}. Set 	
		$$H_{\theta,(-1)^k}(y):=e^{\frac12e^{(-1)^ky}}\cdot G_{\theta,(-1)^k}(y)=\frac1{\Gamma(\theta)}\exp({\theta (-1)^ky-\tfrac12e^{(-1)^ky}}).$$ Set $\sqrt{W}(a;b,c):=\sqrt{W(a;b,c)}$ where $W$ is defined in \eqref{def:wfns}. From \eqref{def:fhsgm2} we have
		\begin{align*}
			Q_{k,T}^{\vec{y},\vec{z}}(\mathbf{u}) & = \prod_{i=1}^k\left( e^{(-1)^{i}\alpha u_{i,1}} \prod_{j=1}^{T-1} \sqrt{W}(u_{i+1,2j};u_{i,2j+1},u_{i,2j-1})\prod_{j=1}^{2T-2}H_{\theta,(-1)^{j+1}}(u_{i,j}-u_{i,j+1})\right) \\ & \hspace{1cm} \cdot \prod_{i=1}^k\left(  \prod_{j=1}^{T-1} \sqrt{W}(u_{i+1,2j};u_{i,2j+1},u_{i,2j-1})\prod_{j=1}^{2T-2} \exp(-\tfrac12e^{(-1)^{j+1}(u_{i,j}-u_{i,j+1})})\right).
		\end{align*}
		On $\m{V}$, among the terms appearing in the last line of the above equation, the term $\exp(-\frac12e^{u_{2p+1,i_0}-u_{2p,i_0}})$ is at most $\exp(-\frac12e^{(\log N)^{7/6}})$. We bound the rest of the terms in the above last line just by $1$, so that on $\m{V}$, we have $Q_{k,T}^{\vec{y},\vec{z}}(\mathbf{u}) \le e^{-\frac12e^{(\log N)^{7/6}}}\til{Q}_{k,T}^{\vec{y},\vec{z}}(\mathbf{u})$ where
		\begin{align*}
			\til{Q}_{k,T}^{\vec{y},\vec{z}}(\mathbf{u}):=\prod_{i=1}^k\left( e^{(-1)^{i}\alpha u_{i,1}} \prod_{j=1}^{T-1} \sqrt{W}(u_{i+1,2j};u_{i,2j+1},u_{i,2j-1})\prod_{j=1}^{2T-2}H_{\theta,(-1)^{j+1}}(u_{i,j}-u_{i,j+1})\right).
		\end{align*}
		By Lemma  \ref{b2} it follows that $\int_{\R^{|\mathcal{K}_{k,T}'|}} \til{Q}_{k,T}^{\vec{y},\vec{z}}(\mathbf{u})d\mathbf{u} \le R^N$ for some $R>0$ depending on $k,\alpha,\theta$ and $\Con$ only. This verifies the first inequality in \eqref{toshowone}.
		
		\medskip

		\noindent\textbf{Proof of the second inequality in \eqref{toshowone}.} We define the event
		\begin{align*}
			\m{D}:=\bigcap_{i=1}^k\bigcap_{j=1}^{2T-2}\big\{\L_i(j)-CN-2N+2i\in [0,1]\big\}.
		\end{align*}	
		Note that on $\m{D}$, $|\L_i(1)| \le CN+2N+3$ and $\L_{i+1}(2j) \le L_i(2j+1),\L_i(2j-1)$. Hence on $\m{D}$ we have $$W(\L_{i+1}(2j);\L_i(2j+1),\L_i(2j-1))=\exp\left(-e^{\L_{i+1}(2j)-\L_i(2j+1)}-e^{\L_{i+1}(2j)-\L_i(2j-1)}\right) \ge e^{-2}.$$
		Hence on $\m{D}$ we have
		\begin{align*}
			Q_{k,T}^{\vec{y},\vec{z}}(\mathbf{u}) & =\prod_{i=1}^k\left( e^{(-1)^{i}\alpha u_{i,1}} \prod_{j=1}^{T-1} W(u_{i+1,2j};u_{i,2j+1},u_{i,2j-1})\hspace{-0.2cm}\prod_{j=1}^{2T-2}G_{\theta,(-1)^{j+1}}(u_{i,j}-u_{i,j+1})\right) \\ & \ge e^{-\alpha k(CN+2N)} e^{-2kT}\prod_{i=1}^{k}\prod_{j=1}^{2T-2}G_{\theta,(-1)^{j+1}}(u_{i,j}-u_{i,j+1}).
		\end{align*}
		Again note that on $\m{D}$, $|\L_{i}(j)-\L_{i}(j+1)| \le 2$ for all $i\in \ll 1,k\rr$ and $j\in\ll 1,2T-3\rr$, whereas on $\m{A}\cap\m{D}$,  $$\L_i(2T-2)-\L_i(2T-1) \in [0,2CN+2N].$$
		Thus, on $\m{A}\cap\m{D}$
		\begin{align*}
			Q_{k,T}^{\vec{y},\vec{z}}(\mathbf{u})  \ge e^{-\alpha k(CN+2N)-2kT}\left(\inf_{|x|\le 2} G_{\theta,1}(x)\right)^{k(2T-3)} \left(\inf_{x\in [0,2CN+2N]} G_{\theta,1}(-x)\right)^{k}.
		\end{align*}
		Note that the lower tail of $G_{\theta,1}(x)$ is exponential. Thus $\inf_{x\in [0,2CN+2N]} G_{\theta,1}(-x) \ge \tau_1^N$ for some $\tau_1>0$ depending on $\alpha,\theta,$ and $C$. Thus overall on $\m{A}\cap\m{D}$, $Q_{k,T}^{\vec{y},\vec{z}}(\mathbf{u})\ge \tau^N$ for some $\tau$ depending on $\alpha,\theta,k,$ and $C$.
		Since the Lebesgue measure of $\m{D}$ is $1$ we have
		\begin{align*}
			\ind_{\m{A}}\int_{\R^{|\mathcal{K}_{k,T}'|}} Q_{k,T}^{\vec{y},\vec{z}}(\mathbf{u})d\mathbf{u} \ge \ind_{\m{A}}\int_{\m{D}} Q_{k,T}^{\vec{y},\vec{z}}(\mathbf{u})d\mathbf{u} \ge \ind_{\m{A}}\cdot\tau^N\int_{\m{D}} d\mathbf{u} = \ind_{\m{A}}\cdot\tau^N.
		\end{align*}
		This proves the second inequality in \eqref{toshowone} completing the proof.
	\end{proof}
	
	\begin{proof}[Proof of Proposition \ref{t:aest0}]
		Recall $\L_i^N(j)$ from Definition \ref{l:nz}. Fix any $k\in \Z_{\ge 1}$ and $\rho\in (0,1)$. For all $r\in \ll1,k\rr$ and $j\in \ll1,2N-2i+2\rr$  set
		\begin{equation*}
			\begin{aligned}
				\mathcal{B}_r(j):=\sum_{i=1}^r \L_i^N(j) & = r\log 2+2r\Psi(\theta) N+\log Z_{\operatorname{sym}}^{(r)}(N+\lfloor j/2 \rfloor,N-\lceil j/2 \rceil+1),
			\end{aligned}
		\end{equation*}
		where recall $Z_{\operatorname{sym}}^{(r)}(\cdot,\cdot)$ defined in \eqref{zsymr}. Set $\mathcal{B}_0(j)\equiv 0$. We claim that there exist $\Con=\Con(\rho,k)>0$ and $N_0=N_0(\rho,k)>0$, such that for all $N\ge N_0$ and $r\in \ll1,k\rr$
		\begin{align}\label{bji}
			\Pr\left(\left|\log Z_{\operatorname{sym}}^{(r)}(N+\lfloor j/2 \rfloor,N-\lceil j/2 \rceil+1)\right|\le \Con \cdot N\right)\ge 1-\rho^N.
		\end{align}
		Setting $\Con'=\Con+2k|\Psi(\theta)|+k\log 2$ we see from above that, by triangle inequality and union bound
		$$\Pr(|\L_r(j)|\le 2\Con'\cdot N) \ge \Pr(|\mathcal{B}_{r-1}(j)|\le \Con' \cdot N) +\Pr(|\mathcal{B}_r(j)|\le \Con'\cdot N)-1 \ge 1-2\cdot \rho^N.$$
		Adjusting $\rho,N_0$ the above inequality yields \eqref{e:aest0}.  The rest of the proof is devoted in proving \eqref{bji}. \\
		
		Recall that $Z_{\operatorname{sym}}^{(r)}(\cdot,\cdot)$, defined in \eqref{zsymr}, can be viewed as sum of weights of $r$-tuple of non-intersecting paths. We first provide concentration bound for weight of a given path $\pi$ with endpoints in $\is:=\{(i,j): i+j\le 2N+1\}$ via standard Chernoff bound for i.i.d.~random variables. Then we provide an upper bound on the number of $r$-tuple of non-intersecting paths. Via union bound, this gives a concentration bound of type \eqref{bji} for $Z_{\operatorname{sym}}^{(r)}(\cdot,\cdot)$. \\
		
		Recall the symmetric weight $\til{W}_{i,j}$ from \eqref{eq:symwt}. Note that for an upright path $\pi$, $(i,j)\in \pi$ and $(j,i)\in \pi$ cannot happen simultaneously provided $i\neq j$. Thus $(\til{W}_{i,j})_{(i,j)\in \pi}$ forms an independent collection.	Set \begin{align*}
			R_1 & :=\max \{\log\Gamma(\theta)-\log\Gamma(2\theta),\log\Gamma(\alpha) -\theta\log 2-\log \Gamma(\alpha+\theta)\}, \\
			R_2 & :=\max\{\log\Gamma(3\theta)-\log\Gamma(2\theta),\log\Gamma(\alpha+2\theta) +\theta\log 2-\log \Gamma(\alpha+\theta)\}.
		\end{align*}
		Using moments of Gamma distribution and Markov inequality for each $s>0$ we have
		\begin{align*}
			\Pr\left(\sum_{(i,j)\in \pi} \log \til{W}_{i,j} \ge \tfrac{s+R_1}{\theta}|\pi| \right) & \le e^{-(s+R_1)|\pi|}\prod_{(i,j)\in \pi} \Ex[\til{W}_{i,j}^{\theta}] \\ & = e^{-(s+R_1)|\pi|}\prod_{(i,j)\in \pi,i\neq j} \frac{\Gamma(\theta)}{\Gamma(2\theta)} \prod_{(i,i)\in \pi} \frac{\Gamma(\alpha)}{2^{\theta}\Gamma(\alpha+\theta)} \le e^{-s|\pi|},
		\end{align*}
		and
		\begin{align*}
			\Pr\left(\sum_{(i,j)\in \pi} \log \til{W}_{i,j} \le -\tfrac{s+R_2}{\theta}|\pi| \right) & \le e^{-(s+R_2)|\pi|}\prod_{(i,j)\in \pi} \Ex[\til{W}_{i,j}^{-\theta}] \\ & = e^{-(s+R_2)|\pi|}\prod_{(i,j)\in \pi,i\neq j} \frac{\Gamma(3\theta)}{\Gamma(2\theta)} \prod_{(i,i)\in \pi} \frac{2^{\theta}\Gamma(\alpha+2\theta)}{\Gamma(\alpha+\theta)} \le e^{-s|\pi|}.
		\end{align*}
		This leads to the following concentration bound
		\begin{align}\label{e:conc}
			\Pr\bigg(\bigg|\sum_{(i,j)\in \pi} \log \til{W}_{i,j}\bigg| \le \tfrac{s+R_1+R_2}{\theta}|\pi| \bigg) \ge 1-2e^{-s|\pi|}.
		\end{align}
		To upgrade the above bound to \eqref{bji}, we need an upper bound for the number of $r$-tuples of non-intersecting upright paths. To do this, we introduce a few notations. Set $m:=N+\lfloor j/2\rfloor$, $n:=N-\lceil j/2\rceil +1$. Given two points $(i_1,j_1),(i_2,j_2)\in \is$, let $F_N((i_1,j_1)\to (i_2,j_2))$ be the set of all upright paths from $(i_1,j_1)$ to $(i_2,j_2)$. For any $\pi\in \Pi_{(m,n)}^{(r)}$ we have $N\le |\pi|\le 2N$. Furthermore, $|F_N((i_1,j_1)\to(i_2,j_2))|\le 4^N$ for all  $(i_1,j_1),(i_2,j_2)\in \is$. Thus $|\Pi_{(m,n)}^{(r)}| \le 4^{kN}$ as $r\le k$. Fix $s=s(\rho,k)>0$ such that $4^{kN}\cdot 2e^{-sN} \le \rho^N$ and consider the event
		\begin{align*}
			\m{A}:=\bigg\{\bigg|\log\prod_{(i,j)\in \pi_1\cup \cdots \cup \pi_r}\til{W}_{i,j} \bigg|\le \tfrac{s+R_1+R_2}{\theta}\cdot 2rN \mbox{ for all } (\pi_q)_{q=1}^r \in \Pi_{(m,n)}^{(r)}\bigg\}.
		\end{align*}
		Applying the concentration bound \eqref{e:conc} for each path in $\Pi_{(m,n)}^{(r)}$, an union bound yields
		\begin{align}\label{e:rh}
			\Pr\left(\m{A}\right) \ge 1-4^{kN}\cdot 2e^{-sN}\ge 1-\rho^N.
		\end{align}
		Next set $\Con=\Con(\rho,k):=k\log 4+\frac{s+R_1+R_2}{\theta}2k$. Note that on $\m{A}$  we have
		\begin{equation}\label{zsymup}
			\begin{aligned}
				\log Z_{\operatorname{sym}}^{(r)}(m,n) & \le \log \left(\sum_{(\pi_1,\ldots,\pi_r)\in \Pi_{(m,n)}^{(r)}} \prod_{(i,j)\in \pi_1\cup \cdots \cup \pi_r} \til{W}_{i,j}\right) \\ & \le \log \left(4^{kN}\cdot e^{\frac{s+R_1+R_2}{\theta}2rN}\right)\le kN\log 4+\tfrac{s+R_1+R_2}{\theta}2kN \le \Con \cdot N.
			\end{aligned}
		\end{equation}
		Similarly for the lower bound we consider any $(\pi_1,\ldots,\pi_r)\in \Pi_{(m,n)}^{(r)}$ which forms a disjoint collection of paths. Then on $\m{A}$ we have
		\begin{align}\label{zsymdn}
			\log Z_{\operatorname{sym}}^{(r)}(m,n) \ge \log  \left(\prod_{(i,j)\in \pi_1\cup \cdots \cup \pi_r} \til{W}_{i,j} \right)\ge -\tfrac{s+R_1+R_2}{\theta}2kN \ge -\Con\cdot N.
		\end{align}
Now	\eqref{bji} follows from \eqref{zsymup}, \eqref{zsymdn} and the bound in \eqref{e:rh}.
	\end{proof}

	\subsection{High point on the second curve}\label{sec:high}  The goal of this subsection is to show there is a point $p=O(N^{2/3})$ such that with high probability $\L_2^N(2p) \ge -\Con N^{1/3}$ where $\L^N$ is the $\hslg$ line ensemble defined in Definition \ref{l:nz}. For the rest of this section we work with the boundary parameter fixed in the critical or supercritical phase. We assume $\alpha$ equals $\alpha_1$ or $\alpha_2$ where
	\begin{align}\label{acric}
		\begin{cases}
			\alpha_1 :=\alpha_1(N):=N^{-1/3}\mu & \mbox{(Critical)} \\
			\alpha_2 :=\zeta & \mbox{(Super-Critical)}
		\end{cases}
	\end{align}
	where $\mu\in \R$ and $\zeta>0$ are fixed numbers. The labeling of the parameter might seem a bit unnatural at this moment. Essentially, when the boundary parameter is $\alpha_i$, we shall resample the top $i$ curves of the $\hslg$ line ensemble in the arguments of Section \ref{sec:rpe}.
	
	\begin{theorem}[High point on the second curve] \label{p:high2} Fix any $\e\in (0,1)$ and $k>0$. There exist $R_0(k,\e)>0$ such that for all $R\ge R_0$
		\begin{align}\label{e:thigh}
			\liminf_{N\to \infty}\Pr\left(\sup_{p\in [kN^{2/3},RN^{2/3}]}\mathcal{L}_2^N(2p) \ge -\big(\tfrac18R^2\nu+2\sqrt{R}\big)N^{1/3} \right)>1-\e.
		\end{align}
		where
		\begin{align}\label{nu}
			\nu:=\frac{(\Psi'(\theta))^2}{(-\Psi''(\theta))^{4/3}}.
		\end{align}
	\end{theorem}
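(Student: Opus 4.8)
The plan is to derive the high point on the second curve from the parabolic-decay input \eqref{pardec} together with the $\hslg$ Gibbs property. Here is the logic. Suppose, toward a contradiction, that with non-negligible probability $\L_2^N(2p)$ stays \emph{uniformly low} on the window $p\in[kN^{2/3},RN^{2/3}]$, say $\L_2^N(2p)\le -CN^{1/3}$ for all such $p$ with $C=\tfrac18R^2\nu+2\sqrt R$. Conditioning on $\L_1^N$ at the two endpoints $p=kN^{2/3}$ and $p=RN^{2/3}$ and on $\L_2^N(\cdot)$ below, the Gibbs property (Lemma \ref{obs1}\ref{gpsd}, via the two-sided boundary description of Lemma \ref{i2a}) tells us that, on the odd sites, $\L_1^N$ restricted to this window is a random walk bridge with increment density $\fa$, reweighted by $\til W=\exp(-\sum_j(e^{z_j-X(j)}+e^{z_j-X(j-1)}))$ where $z_j$ are the $\L_2^N$ values. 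When those $z_j$ are of order $-CN^{1/3}$ while $X(\cdot)$ is of order $N^{1/3}$ (which we will justify), the reweighting factor $\til W$ is uniformly close to $1$, so $\L_1^N$ on this window is, up to a bounded Radon--Nikodym derivative, an unconditioned $\fa$-random bridge between its two endpoint values.

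The key quantitative step is then: a random bridge of length $\asymp (R-k)N^{2/3}$ interpolates \emph{linearly} (up to $O(N^{1/3})$ fluctuations) between its endpoints, because the increment variance is $\Psi'(\theta)\cdot\Psi'(\theta)$-order per step (this is where $\nu=(\Psi'(\theta))^2/(-\Psi''(\theta))^{4/3}$ enters, matching the normalization in \eqref{pardec}). Concretely, if $V_N(M)$ denotes the rescaled point-to-line free energy as in \eqref{pardec}, and if $\L_1^N$ were truly a bridge on $[kN^{2/3},RN^{2/3}]$, then the value $V_N(\cdot)$ at an intermediate point $M\in[k,R]$ would lie, with high probability, within $O(1)$ of the chord joining $(k,V_N(k))$ and $(R,V_N(R))$. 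But \eqref{pardec} asserts $V_N(M)+M^2$ is tight for each $M$, i.e.\ $V_N$ follows an inverted parabola $-M^2$ with $O(1)$ fluctuations, uniformly over the relevant range (the tightness of $(X_M)$ in $M$). A chord of a strictly concave parabola lies strictly \emph{below} the parabola in the interior; the gap at the midpoint $M=\tfrac{k+R}{2}$ is of order $(R-k)^2$, which after rescaling by $N^{1/3}$ is a genuine $\Theta\big((R-k)^2 N^{1/3}\big)$ discrepancy. For $R$ large this contradicts tightness of $V_N$ at that midpoint. Hence the assumption of a uniformly low $\L_2^N$ on the window fails with probability $\to 1$, which is exactly \eqref{e:thigh} once the threshold $-\big(\tfrac18R^2\nu+2\sqrt R\big)N^{1/3}$ is chosen to absorb the bridge fluctuation and the constant in the parabola comparison (the $\tfrac18$ coming from evaluating $-\nu M^2$ at $M=R/2$ relative to the chord, and $2\sqrt R$ a slack term).

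In order to run this cleanly, I would first establish an a priori two-sided bound: from Theorem \ref{t:order} (soft non-intersection) and the upper-tightness of $\sup_{i,j}N^{-1/3}\L_i^N(j)$ that follows from \eqref{pardec} (the point-to-line free energy dominating point-to-point), one gets that on a high-probability event $\L_1^N$ and $\L_2^N$ are both $O(N^{1/3})$ (from above) throughout the window, and $\L_1^N(2p)\ge \L_2^N(2p)-(\log N)^{7/6}$. This legitimizes treating $\til W\approx 1$ in the regime where $\L_2^N$ is assumed low but not where it is not — and in fact the only scenario we need to exclude is precisely ``$\L_2^N$ uniformly $\le -CN^{1/3}$'', in which $\til W$ is genuinely negligible in its deviation from $1$. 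Second, I would pin down the bridge-to-parabola comparison uniformly in the endpoint values: since the endpoint values of $\L_1^N$ on the window are themselves $O(N^{1/3})$ with tight fluctuations around $-\nu k^2 N^{1/3}$ and $-\nu R^2 N^{1/3}$ (again \eqref{pardec}), the chord is deterministic to leading order, and the midpoint gap is robustly positive. Packaging these, I would take $R$ large enough that $\tfrac18\nu R^2$ dominates all error terms, conclude the contradiction, and read off \eqref{e:thigh}.

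The main obstacle is the bridge comparison made \emph{uniform and quantitative at the right scale}: one must control not just that $\L_1^N$ restricted to the window is approximately a random bridge, but that its deviation from the straight chord is $o(R^2 N^{1/3})$ with high probability, uniformly over the $O(N^{1/3})$-scale random endpoint values and over the reweighting by $\til W$. This requires (i) a Gaussian-type fluctuation bound for $\fa$-random bridges of length $\asymp N^{2/3}$ showing deviations from the chord are $O(\sqrt{N^{2/3}})=O(N^{1/3})$ — cf.\ the modulus-of-continuity estimates developed for the $\m{WPRW}$ analysis — and (ii) a uniform bound on the Radon--Nikodym derivative $\til W/\Ex_{\mathrm{bridge}}[\til W]$ when the $z_j$'s sit well below the bridge, which is where the assumed lowness of $\L_2^N$ is used essentially. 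Both ingredients are morally available from the random-bridge toolbox in Appendix \ref{app2}, but stitching them with the correct concavity-gap bookkeeping (ensuring the $\tfrac18 R^2\nu$ constant in \eqref{e:thigh} is the right one) is the delicate part.
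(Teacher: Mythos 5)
Your strategy is essentially the paper's: assume the second curve stays uniformly below the threshold on the window, use the two-sided Gibbs description (Lemma \ref{i2a}) to view $\L_1^N$ there as an $\fa$-random bridge reweighted by $\til W$, and derive a contradiction because a bridge tracks its chord while the input from \cite{bw} forces the first curve to track the inverted parabola; the quantities $\tfrac18R^2\nu$ and $2\sqrt R$ play the same roles in both arguments. However, two of your supporting claims misstate what the input actually provides. First, \eqref{pardec} concerns the point-to-(partial)line free energy and does \emph{not} give two-sided tightness of $\L_1^N(2MN^{2/3}+1)$ around $-\nu M^2N^{1/3}$ at deterministic locations — that is close to what the whole paper is trying to establish. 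What one can extract (and what Propositions \ref{p:high} and \ref{p:low} record) is one-sided: an $O(N^{1/3})$ upper bound on the first curve (in particular at the left endpoint), the decay bound $\L_1^N(2RN^{2/3}+1)\le-\tfrac18R^2\nu N^{1/3}$ (this is where the $\tfrac18$ comes from, not a midpoint-versus-chord evaluation), and a high point of the first curve at a \emph{random} location inside a window $\ll \SSS N^{2/3},(M_0+2\SSS)N^{2/3}\rr$, proved by comparing $\zl$ at two scales so that the supremum cannot be attained beyond the window; your use of the tightness of $V_N$ at an interior point as if it were the curve's value on the window quietly needs this two-scale step. The repair is exactly the paper's bookkeeping: upper bounds on the two bridge endpoints (pushed to the worst case $a=M_0N^{1/3}$, $b=-\tfrac18R^2\nu N^{1/3}$ by stochastic monotonicity, Proposition \ref{p:gmc}) bound the chord from above, and the high-point event in a window near the left end ($\SSS=2^{-5}R$ rather than the midpoint) supplies the contradiction. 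Second, one should not try to show $\til W\approx1$; on the conditioned event the bridge may come within $O(1)$ of the barrier. It suffices to bound the normalized Radon--Nikodym derivative by a constant via $\Ex^{n;a;b}[W(S,v)]\ge\delta$, and this is precisely where the $2\sqrt R$ gap between the barrier $v$ and the right-endpoint bound $b$ (of order $\sqrt n$) is used. With these corrections your outline coincides with the paper's proof.
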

	
The factor $1/8$ appearing in \eqref{e:thigh} can be replaced by any constant $\gamma>0$. $R_0$ will depend on $\gamma$ in that case. The proof of Theorem \ref{p:high2} relies on two results related to the first curve.
	
	\begin{proposition}[High point on the first curve]\label{p:high} Fix any $\e\in (0,1)$.  There exists $M_0(\e)>0$ such that for all  $M_1,M_2\ge M_0$ and $k>0$  we have
		\begin{align}\label{e:high0}
			\liminf_{N\to\infty}\Pr\left(\sup_{p\in \ll kN^{2/3},(M_1+2k)N^{2/3}\rr} \frac{\mathcal{L}_1^N(2p+1)}{N^{1/3}} +k^2\nu \le M_2 \right)>1-\e,
		\end{align}
		\begin{align}\label{e:high}
			\liminf_{N\to\infty}\Pr\left(\sup_{p\in \ll kN^{2/3},(M_1+2k)N^{2/3}\rr} \frac{\mathcal{L}_1^N(2p+1)}{N^{1/3}} +k^2\nu \ge -M_2 \right)>1-\e.
		\end{align}
		where $\nu$ is defined in \eqref{nu}.
	\end{proposition}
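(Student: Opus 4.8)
The plan is to derive both bounds from a single probabilistic input — the parabolic-decay estimate \eqref{pardec} for the point-to-(partial)line free energy, in the uniform-in-$M$ form recorded in Lemma \ref{l:ut} — with everything else being a deterministic pigeonhole argument. By Theorem \ref{thm:conn} \ref{i01} together with \eqref{impi}, the process $(\L_1^N(2p+1))_p$ has the same joint law as $\big(\log Z_{(\alpha,\theta)}(N+p,N-p)+2N\Psi(\theta)\big)_p$, so I would first pass to this free-energy process (both events in the proposition only involve odd arguments of $\L_1^N$, so the probabilities are unchanged). Writing $V_N(M):=\big(\log\zl(MN^{2/3})+2N\Psi(\theta)\big)/(\nu N^{1/3})$ with $\nu$ as in \eqref{nu}, Lemma \ref{l:ut} gives, for the fixed $\e$, a constant $K=K(\e)>0$ and an $N_0'$ such that $\Pr\big(|V_N(M)+M^2|\ge K\big)<\e/4$ for all $M>0$ and all $N\ge N_0'$; the uniformity in $M$ here is exactly what will yield an $M_0$ uniform in $k$.

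For \eqref{e:high0} I would use only that $\zl(kN^{2/3})=\sum_{j=\lceil kN^{2/3}\rceil}^N Z_{(\alpha,\theta)}(N+j,N-j)$ is a sum of nonnegative terms, so $Z_{(\alpha,\theta)}(N+p,N-p)\le \zl(kN^{2/3})$ for every integer $p\in\ll kN^{2/3},(M_1+2k)N^{2/3}\rr$; taking logarithms, $\sup_p N^{-1/3}\L_1^N(2p+1)+\nu k^2\le \nu\big(V_N(k)+k^2\big)$, which on the event $\{|V_N(k)+k^2|<K\}$ is below $\nu K$. Hence \eqref{e:high0} holds for \emph{any} $M_1$ and any $M_2\ge \nu K$, with probability bound uniform in $k$ and $N\ge N_0'$.

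For \eqref{e:high} I would bound the partial sum over the window from below: $\sum_{p\in\ll kN^{2/3},(M_1+2k)N^{2/3}\rr}Z_{(\alpha,\theta)}(N+p,N-p)\ge \zl(kN^{2/3})-\zl\big((M_1+2k)N^{2/3}\big)$, so the largest summand is at least this quantity divided by the number of lattice points in the window, which is $\le (M_1+k+1)N^{2/3}$. Next, $\zl((M_1+2k)N^{2/3})/\zl(kN^{2/3})=\exp\!\big(\nu N^{1/3}(V_N(M_1+2k)-V_N(k))\big)$, and on the event $\mathsf{E}:=\{|V_N(k)+k^2|<K\}\cap\{|V_N(M_1+2k)+(M_1+2k)^2|<K\}$ (probability $\ge 1-\e/2$ for $N\ge N_0'$) one has $V_N(M_1+2k)-V_N(k)<2K-\big((M_1+2k)^2-k^2\big)\le 2K-M_1^2$; provided $M_1^2\ge 2K+1$ and $N$ is large enough that $\nu N^{1/3}\ge\log2$, this ratio is $\le\tfrac12$. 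Therefore, on $\mathsf{E}$ and for such $N$, $\max_pZ_{(\alpha,\theta)}(N+p,N-p)\ge \zl(kN^{2/3})/\big(2(M_1+k+1)N^{2/3}\big)$, and taking logarithms gives $\sup_p N^{-1/3}\L_1^N(2p+1)+\nu k^2\ge \nu\big(V_N(k)+k^2\big)-N^{-1/3}\log\big(2(M_1+k+1)N^{2/3}\big)\ge -\nu K-1$ once $N$ is large. Choosing $M_0:=\max\{\nu K+1,\sqrt{2K+1}\}$ then yields \eqref{e:high} for all $M_1,M_2\ge M_0$ and all $k>0$.

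I expect the only genuine subtlety to be the bookkeeping needed to keep $M_0$ (and the form of the ``large $N$'' thresholds) independent of the left endpoint $k$. This works out because $(M_1+2k)^2-k^2\ge M_1^2$ for every $k\ge 0$, so the spectral gap exploited in the ratio estimate does not degrade as $k$ grows, and because the tail bound from Lemma \ref{l:ut} is uniform over the parabola index $M$. Everything else is soft: \eqref{pardec}/Lemma \ref{l:ut} is the sole probabilistic ingredient, combined with the elementary facts that $\zl$ is a sum of at most $N$ point-to-point partition functions and that the maximum of finitely many nonnegative numbers is at least their average.
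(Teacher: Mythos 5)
Your proposal is correct and follows essentially the same route as the paper: \eqref{e:high0} by dominating each point-to-point partition function by $\zl(kN^{2/3})$ and invoking Lemma \ref{l:ut}, and \eqref{e:high} by using Lemma \ref{l:ut} at $M=k$ and $M=M_1+2k$ to force $\zl((M_1+2k)N^{2/3})\le\tfrac12\zl(kN^{2/3})$ (for $M_1$ large, uniformly in $k$ since $(M_1+2k)^2-k^2\ge M_1^2$) and then pigeonholing the window sum against its maximum, exactly as in the paper's argument around \eqref{e:12}--\eqref{e:relat}. The only nitpick is your phrasing that a single $N_0'$ works ``for all $M>0$''; Lemma \ref{l:ut} only gives a liminf for each fixed $M$, but since you apply it at just the two values $M=k$ and $M=M_1+2k$ with $k,M_1$ fixed and your conclusion is itself a liminf in $N$, this is harmless.
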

	Figure \ref{fig:arg2} depicts the high probability events considered in Proposition \ref{p:high}.

	\begin{figure}[h!]
		\centering
		\begin{overpic}[width=7cm]{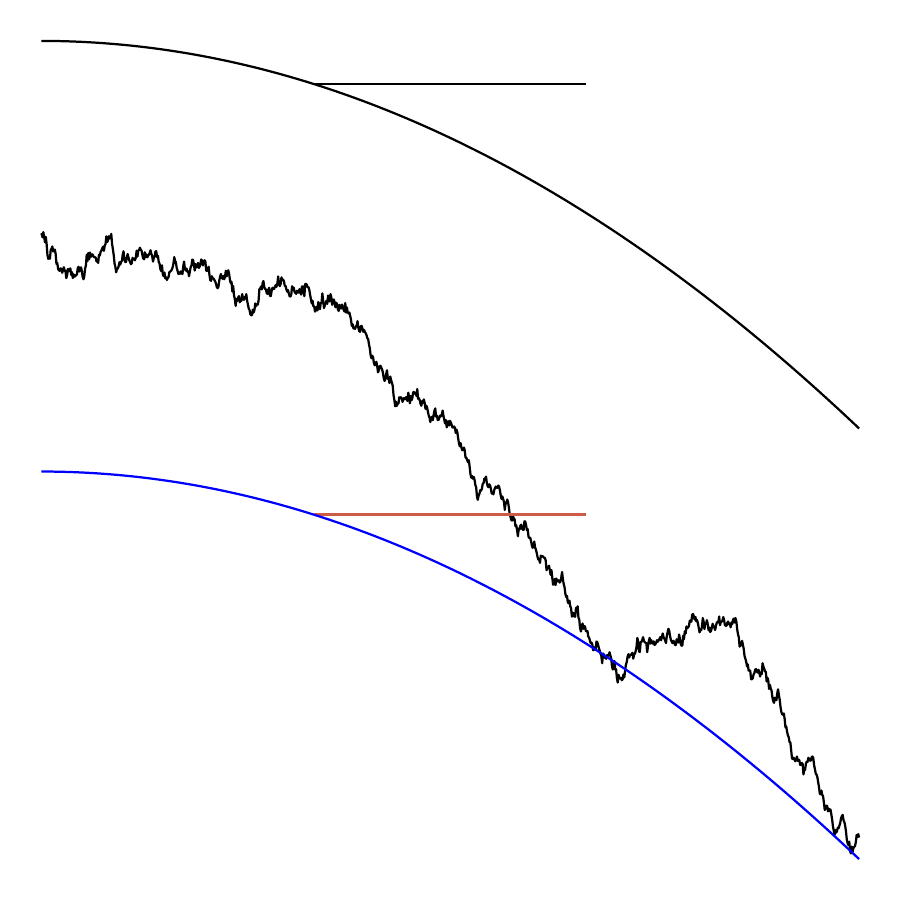}
			\put(33,10){
				\begin{tikzpicture}
					\draw[dashed] (0,1) -- (0,7.1);
			\end{tikzpicture}}
			\put(63,10){
				\begin{tikzpicture}
					\draw[dashed] (0,1) -- (0,7.1);
			\end{tikzpicture}}
			\put(33,12){
				\begin{tikzpicture}
					\draw[dashed] (0,0)--(2.1,0);
			\end{tikzpicture}}
			\put(63,68) {$f_+(\cdot)$}
			\put(53,25) {$f_-(\cdot)$}
			\put(47,15) {${\mathcal{I}_k}$}
			\put(31,8) {\tiny${kN^{2/3}}$}
			\put(50,8) {\tiny${(M_1+2k)N^{2/3}}$}
		\end{overpic}
		\vspace{0.5cm}
		\caption{Events considered in Proposition \ref{p:high}. Here $\L_1^N(2p+1)$ is given by the black rough curve. The parabolic curves $f_{\pm}(x):=-(N\nu)^{-1}x^2\pm M_2N^{1/3}$ are also depicted. Horizontal lines eminate from these parabolas starting at $x=kN^{2/3}$. The event in \eqref{e:high0} tells us that on the horizontal interval $\mathcal{I}_k:=\ll kN^{2/3},(M_1+2k)N^{2/3}\rr$ the black rough curve stays entirely below the black horizontal line while the event in \eqref{e:high} tells us that there is a point in $\mathcal{I}_k$ at which the black rough curve exceeds the red horizontal curve.}
		\label{fig:arg2}
	\end{figure}
	
	\begin{proposition}[Low point on the first curve]\label{p:low} Fix any $\e\in (0,1)$. There exists $M_0(\e)$ such that for all $M\ge M_0$,
		\begin{align}\label{e:low}
			\liminf_{N\to \infty}\Pr\left(\mathcal{L}_1^N(2MN^{2/3}+1) \le -\tfrac18M^2N^{1/3}\nu \right)>1-\e,
		\end{align}
		where $\nu$ is defined in \eqref{nu}.
	\end{proposition}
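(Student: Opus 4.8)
\textbf{Proof proposal for Proposition \ref{p:low}.}

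The plan is to deduce the low-point statement for $\L_1^N$ directly from the parabolic-decay input \eqref{pardec} extracted from \cite{bw}, combined with the deterministic domination of the point-to-point partition function by the point-to-(partial)line partition function. Recall from \eqref{rfl} that $\zl(k)=\sum_{j=\lceil k\rceil}^N Z_{(\alpha,\theta)}(N+j,N-j)$ is a sum of point-to-point partition functions, so in particular for any integer $p\ge 1$ we have the trivial bound $Z_{(\alpha,\theta)}(N+p,N-p)\le \zl(p)$. Taking logarithms, adding the centering $2N\Psi(\theta)$, and dividing by $N^{1/3}\nu$, and using \eqref{impi} from Theorem \ref{thm:conn}\ref{i01} to identify $\L_1^N(2p+1)$ with $\log Z_{(\alpha,\theta)}(N+p,N-p)+2N\Psi(\theta)$ (in distribution, which suffices since \eqref{e:low} is a statement about a single $p$), we obtain the pointwise comparison
\begin{equation*}
	\frac{\L_1^N(2p+1)}{N^{1/3}\nu}\ \stackrel{(d)}{\le}\ V_N\!\left(pN^{-2/3}\right),
\end{equation*}
where $V_N(M)$ is the rescaled point-to-(partial)line free energy from \eqref{pardec}. (Here one should be slightly careful: \eqref{pardec} is phrased with $\zl(MN^{2/3})$ for real $M$, while we need $p=MN^{2/3}$ integer or close to it; this is harmless, one just rounds and absorbs the $O(1)$ discrepancy, which the precise version Lemma \ref{l:ut} presumably handles.)

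Now set $p=MN^{2/3}$. The comparison above gives, for any threshold,
\begin{equation*}
	\Pr\!\left(\L_1^N(2MN^{2/3}+1)\le -\tfrac18 M^2 N^{1/3}\nu\right)\ \ge\ \Pr\!\left(V_N(M)\le -\tfrac18 M^2\right)\ =\ \Pr\!\left(V_N(M)+M^2\le \tfrac78 M^2\right).
\end{equation*}
By \eqref{pardec}, $V_N(M)+M^2\Rightarrow X_M$ as $N\to\infty$, so the right-hand side converges to $\Pr(X_M\le \tfrac78 M^2)$. The family $(X_M)_{M>0}$ is tight (in fact $X_M$ converges in law to the GUE Tracy--Widom distribution as $M\to\infty$), hence there is a uniform constant $K$ with $\Pr(X_M\ge K)<\e/2$ for all $M$; choosing $M_0$ large enough that $\tfrac78 M_0^2\ge K$ gives $\Pr(X_M\le \tfrac78 M^2)\ge 1-\e/2$ for all $M\ge M_0$. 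Combining, $\liminf_{N\to\infty}\Pr(\L_1^N(2MN^{2/3}+1)\le -\tfrac18 M^2 N^{1/3}\nu)\ge 1-\e/2>1-\e$ for all $M\ge M_0$, which is exactly \eqref{e:low}.

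The only genuinely non-routine ingredient is \eqref{pardec} itself, i.e.\ converting Theorem \ref{thm:bw} (stated along a single scaling sequence $k_N$) into the process-level/uniform-in-$M$ statement that $V_N(M)+M^2$ converges to a tight family $(X_M)$; this is presumably the content of Lemma \ref{l:ut} and I would simply cite it. The factor $\tfrac18$ is not sharp and plays no role beyond being strictly less than the coefficient $1$ of the parabola $-M^2$; as the remark after the proposition notes, any $\gamma<1$ (in place of $\tfrac18$... more precisely any $\gamma>0$ with the obvious constraint that the event be eventually typical, which forces $\gamma<1$) works, with $M_0$ depending on $\gamma$. Everything else is bookkeeping: the deterministic domination $Z_{(\alpha,\theta)}(N+p,N-p)\le \zl(p)$, the identification \eqref{impi}, and the rounding of $MN^{2/3}$ to an integer.
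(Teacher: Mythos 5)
Your proposal is correct and rests on the same two ingredients as the paper's proof: the deterministic domination $Z_{(\alpha,\theta)}(N+MN^{2/3},N-MN^{2/3})\le \zl(MN^{2/3})$ together with the uniform-in-$M$ tightness of the centered point-to-line free energy around the parabola $-M^2\nu N^{1/3}$, i.e.\ Lemma \ref{l:ut} (the precise form of \eqref{pardec}). Your direct application of Lemma \ref{l:ut} at the point $MN^{2/3}$ is in fact slightly more streamlined than the paper's write-up, which obtains the needed upper bound on $\log\zl(MN^{2/3})$ indirectly by comparing it with $\zl(N^{2/3})$ through the bounds \eqref{e1} and \eqref{e2}; both routes are valid.
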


	\begin{definition}[$n$-step random walk and bridge measures] \label{def:rws}
Recall the spaces $(\Omega^p_n,\mathcal{F}^p_n)$ for $p\in \{1,2\}$ from Definition \ref{prb}. For $p=2$ that definition provided coordinate function notation $S_i(k):= \omega_i(k)$ for $i\in \{1,2\}$ and $k\in \ll 1,n\rr$. For $p=1$ similarly define coordinate functions $S(k):=\omega(k)$ for $k\in \ll 1,n\rr$.  Recall $\fa$ from \eqref{def:faga}.

For $a\in \R$ define the probability measure $\Pr^{n;a}$ on  $(\Omega^1_n,\mathcal{F}^1_n)$ for a single $n$-step random walk started at $a$ to be proportional to the product of the Dirac delta function $\delta_{\omega(1)=a}$ and a density (against Lebesgue on $\R^{n-1}$) given by
		\begin{align*}			\prod_{k=2}^{n}\fa\big(\omega(k)-\omega(k-1)\big)\,d\omega(k).
		\end{align*}
Similarly, for $(a_1,a_2)\in \R^2$ define the probability measure $\Pr^{n;(a_1,a_2)}$ on  $(\Omega^2_n,\mathcal{F}^2_n)$ for a pair of independent $n$-step random walk started at $a_1$ and $a_2$ by taking the product of
$\Pr`^{n;a_1}$ and $\mathbb{P}^{n;a_2}$.

For $a,b\in \R$ define the probability measure $\Pr^{n;a;b}$
$(\Omega^1_n,\mathcal{F}^1_n)$ for a single $n$-step random bridge started at $a$ and ended at $b$ to be proportional to the product of two Dirac delta function $\delta_{\omega(1)=a}\delta_{\omega(n)=b}$ and a density (against Lebesgue on $\R^{n-2}$) given by
		\begin{align*}			\prod_{k=2}^{n}\fa\big(\omega(k)-\omega(k-1)\big) \prod_{k=2}^{n-1}d\omega(k).
		\end{align*}
Similarly, for $(a_1,a_2),(b_1,b_2)\in \R^2$ define the probability measure $\Pr^{n;(a_1,a_2);(b_1,b_2)}$ on  $(\Omega^2_n,\mathcal{F}^2_n)$ for a pair of independent $n$-step random briges started at $a_1$ and $a_2$ and ended (respectively) at $b_1$ and $b_2$ by taking the product of
$\Pr^{n;a_1;b_1}$ and $\mathbb{P}^{n;a_2;b_2}$.
	\end{definition}

	The proofs of Propositions \ref{p:high} and \ref{p:low} rely on the fluctuation results from \cite{bw}, as restated earlier in Theorem \ref{thm:bw}, and are postponed to the next subsection. Assuming their validity, we complete the proof of Theorem \ref{p:high2}.
	
	\begin{proof}[Proof of Theorem \ref{p:high2}] For clarity we divide the proof into two steps.
		
		\medskip
		
		\noindent\textbf{Step 1.} In this step we define notation and events used in the proof. Fix $\e\in (0,1)$ and $k>0$. Take $M_0$ from Proposition \ref{p:high}. We set $R$ large enough so that
		\begin{align}\label{eR}
			2^{-5}R\ge 2k+1, \qquad M_0-2^{-5}(\tfrac18R^2\nu+M_0)+R^{3/2} \le -M_0-2^{-10}R^2\nu, \qquad R\ge 2M_0
		\end{align}
		and $\SSS:=2^{-5}R$. 
		We will assume some additional conditions on $R$ later, which will depend on certain probability bounds that will be specified in the next step. For convenience, we will also assume $kN^{2/3}$ and $RN^{2/3}$ are integers (instead of using floor functions below).  We set
		\begin{align*}
			a:=M_0N^{1/3}, \ \ b:=-\tfrac18R^2N^{1/3}\nu, \ \ n:=RN^{2/3}-kN^{2/3}+1, \ \ v:=-\big(\tfrac18R^2\nu+2\sqrt{R}\big)N^{1/3}.
		\end{align*}
		Let us define the sets $\mathcal{I}:=\ll \SSS N^{2/3},(M_0+2\SSS)N^{2/3}\rr$ and $\mathcal{J}:=\ll kN^{2/3},RN^{2/3}\rr$. Due to \eqref{eR}, we have $\mathcal{I} \subset \mathcal{J}$. Next we define the following events:
		\begin{align*}
			\mathsf{A} &:=\bigg\{\sup_{p\in \mathcal{J}}{\mathcal{L}_2(2p)} \le v  \bigg\}, \quad \mathsf{B} :=\bigg\{ \mathcal{L}_1(2kN^{2/3}+1) \le a,\mathcal{L}_1(2RN^{2/3}+1) \le b \bigg\} .
		\end{align*}
		The $\mathsf{A}$ event demands that the second curve $\L^N_2(2p)$ does not rise above $v$ for any $p\in \mathcal{J}$. The $\m{B}$ event requires both $\L^N_1(2kN^{2/3}+1)$ and $\L^N_1(2RN^{2/3}+1)$ to be less than $a$ and $b$ respectively. Finally we set
		\begin{align*}
			\mathsf{C}:=\bigg\{ \sup_{p\in \mathcal{I}} {\L^N_1(2p+1)}+\SSS^2\nu N^{1/3}\ge -a\bigg\}
		\end{align*}
		In words, $\mathsf{C}$ ensures there exists some $p\in \mathcal{I}$ such that $\L^N_1(2p+1)$ is greater than $-a-\SSS^2\nu N^{1/3}$. 	
		
		\medskip
		
		Note that by Proposition \ref{p:high} we have $\Pr(\m{C}) \ge 1-\e$ . Furthermore, by Propositions \ref{p:high} and \ref{p:low} for large enough $R$ we also have $\Pr(\neg\m{B})\le 2\e$. We claim that
		for all large enough $R$ we have
		\begin{align}\label{e:tosh}
			\Pr(\m{A}\cap\m{B}\cap \m{C}) \le \e.
		\end{align}
		We prove \eqref{e:tosh} in the next step. Assuming this, note that by union bound we have
		\begin{align*}
			\Pr(\neg \m{A}) \ge \Pr(\m{C})-\Pr(\neg \m{B})-\Pr(\m{A}\cap\m{B}\cap \m{C}) \ge 1-4\e.
		\end{align*}
		Changing $\e \mapsto \e/4$ we arrive at \eqref{e:thigh}. This completes the proof modulo \eqref{e:tosh}.

		\begin{figure}[t]
			\centering
			\begin{overpic}[width=7cm]{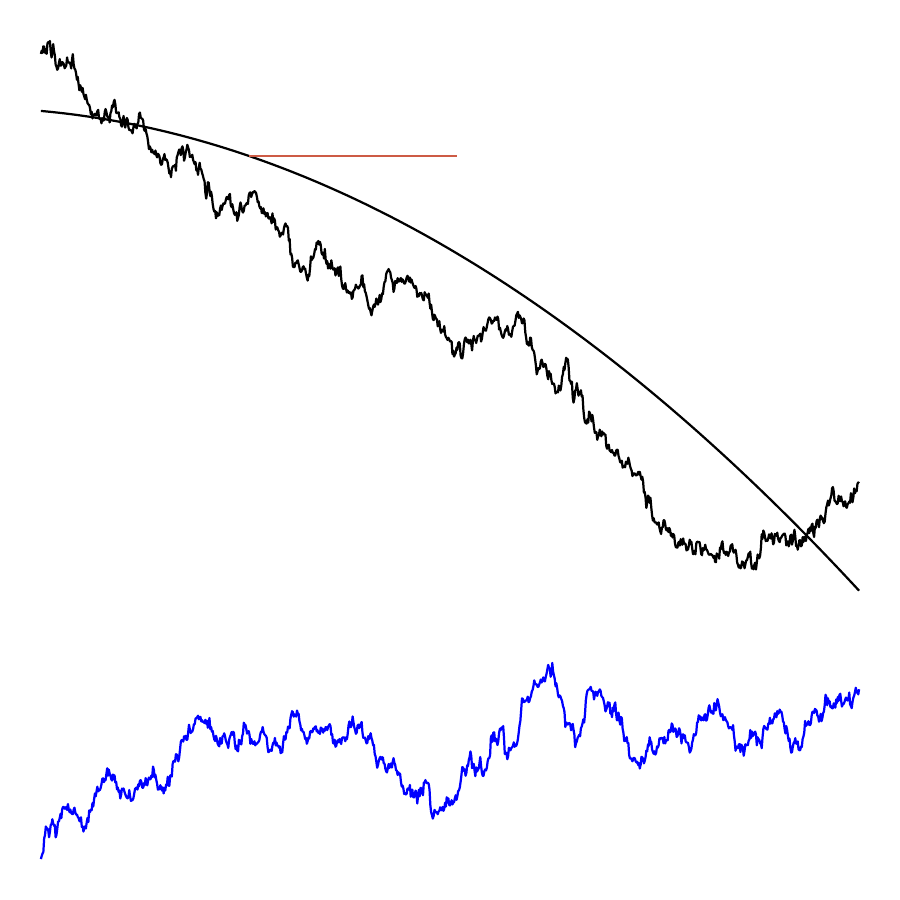}
				\put(2.7,-5){
					\begin{tikzpicture}
						\draw[dashed] (0,0) -- (0,7.5);
				\end{tikzpicture}}
				\put(25.7,40){
					\begin{tikzpicture}
						\draw[dashed] (0,3) -- (0,6.5);
				\end{tikzpicture}}
				\put(49,40){
					\begin{tikzpicture}
						\draw[dashed] (0,3) -- (0,6.5);
				\end{tikzpicture}}
				\put(93.3,-5){
					\begin{tikzpicture}
						\draw[dashed] (0,0) -- (0,7.5);
				\end{tikzpicture}}
				\put(2.3,97){
					\begin{tikzpicture}
						\draw[fill=black] (0,0) circle (1pt);
				\end{tikzpicture}}
				\put(92.8,50){
					\begin{tikzpicture}
						\draw[fill=black] (0,0) circle (1pt);
				\end{tikzpicture}}
				\put(25,82){
					\begin{tikzpicture}
						\draw[fill=red] (0,0) circle (2	pt);
				\end{tikzpicture}}
				\put(0,28){
					\begin{tikzpicture}
						\draw[gray] (0.2,0)--(6.9,0);
				\end{tikzpicture}}
				\put(43.5,30) {$y=v$}
				\put(26,50){
					\begin{tikzpicture}
						\draw[<->] (0,0)--(1.6,0);
				\end{tikzpicture}}
				\put(3,0){
					\begin{tikzpicture}
						\draw[<->] (0.2,0)--(6.5,0);
				\end{tikzpicture}}
				\put(1.3,100) {$a$}
				\put(96,53) {$b$}
				\put(63,68) {$f(\cdot)$}
				\put(37,53) {${\mathcal{I}}$}
				\put(48,3) {${\mathcal{J}}$}
			\end{overpic}
			\vspace{0.5cm}
			\caption{In this figure $\L^N_1(2p+1)$ (black curve) and $\L^N_2(2p)$ (blue curve) are plotted for $p\in \mathcal{J}$. $\m{A}$ denotes the event that the blue curve lies below the horizontal line $y=v$. $\m{B}$ denotes the event that the black curve starts below $a$ and ends below $b$. The curve $f$ in the figure is given by $f(x)=-(N\nu)^{-1}x^2-a$. The event $\m{C}$ denotes that there is a point $p'\in \mathcal{I}$ where the black rough curve stays above the red horizontal line (this event does not occur in the above figure). The key idea is that on $\m{A}\cap \m{B}$, the blue curve lies below $y=v$ completely, and the black curve behaves like a simple random bridge and follows a linear trajectory with starting and ending points less than $a$ and $b$ respectively.  As a result, the event $\m{C}$ (which requires the black curve to follow parabolic trajectory) does not occur with high probability. But we know both $\m{B}$ and $\m{C}$ occurs with high probability. Thus the event $\m{A}$ occurs with low probability.}
			\label{fig:arg}
		\end{figure}

		\medskip
		
		\begin{figure}
			\centering
			\begin{overpic}[width=7cm]{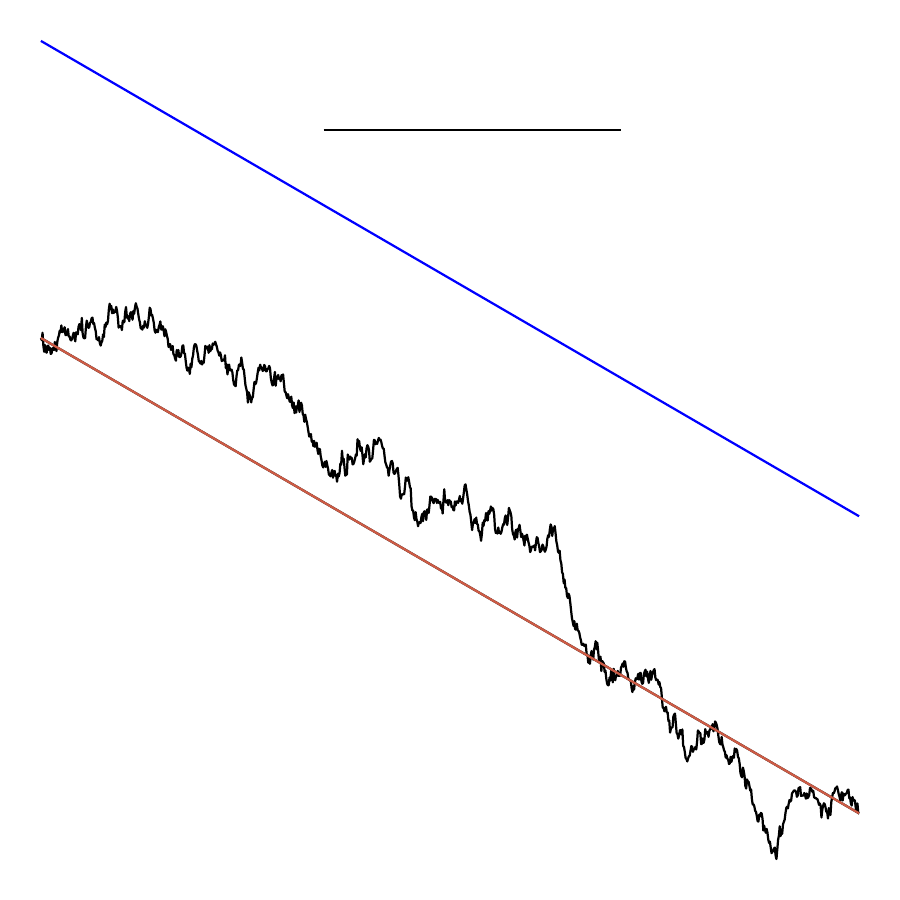}
				\put(2.7,-5){
					\begin{tikzpicture}
						\draw[dashed] (0,0) -- (0,7.5);
				\end{tikzpicture}}
				\put(34,40){
					\begin{tikzpicture}
						\draw[dashed] (0,3) -- (0,6.5);
				\end{tikzpicture}}
				\put(67,40){
					\begin{tikzpicture}
						\draw[dashed] (0,3) -- (0,6.5);
				\end{tikzpicture}}
				\put(93.3,-5){
					\begin{tikzpicture}
						\draw[dashed] (0,0) -- (0,7.5);
				\end{tikzpicture}}
				\put(2.3,61){
					\begin{tikzpicture}
						\draw[fill=red] (0,0) circle (1pt);
				\end{tikzpicture}}
				\put(92.8,9){
					\begin{tikzpicture}
						\draw[fill=red] (0,0) circle (1pt);
				\end{tikzpicture}}
				\put(34,50){
					\begin{tikzpicture}
						\draw[<->] (0,0)--(2.3,0);
				\end{tikzpicture}}
				\put(1,61) {$a$}
				\put(96,9) {$b$}
				\put(51,53) {${\mathcal{K}}$}
				\put(36,90) {\tiny{$y=-a-\SSS^2\nu N^{1/3}$}}
			\end{overpic}
			\vspace{0.5cm}
			\caption{In the above figure the random bridge $S(i)$ from $a$ to $b$ is depicted by the black curve. The event $\m{D}$ ensures the random bridge lies below the blue line $y=a+\frac{x}{n}(b-a)+\sqrt{Rn}$. The event $\m{C}$ requires $S(i)\ge -\big(M_0+\SSS^2\nu\big)N^{1/3}$ for some $i\in \mathcal{K}:= \ll(\SSS-k)N^{2/3},(M_0+2\SSS-k)N^{2/3}\rr$. One can choose $R$ large enough so that the horizontal black line $y=-a-\SSS^2\nu N^{1/3}=-\big(M_0+\SSS^2\nu\big)N^{1/3}$ lies above the blue line $y=a+\frac{x}{n}(b-a)+\sqrt{Rn}$ for all $x\ge (\SSS-k)N^{2/3}$. This forces $\m{D}\subset \neg\m{C}$.}
			\label{fig:arg1}
		\end{figure}

		\noindent\textbf{Step 2.} In this step we will prove \eqref{e:tosh}. The reader is encouraged to consult with Figure \ref{fig:arg} and its caption to get an overview of the key idea behind the proof.

		We consider the $\sigma$-algebra: $$\mathcal{F}:=\sigma\big(\L^N_2\ll1,2N-2i\rr, \L^N_1(\ll1,2kN^{2/3}+1 \rr \cup \ll 2RN^{2/3}+1,2N\rr)\big).$$ Note that $\m{A}\cap \m{B}$ is measurable with respect to  $\mathcal{F}$. Hence
		\begin{align*}
			\Pr(\m{A}\cap \m{B}\cap \m{C})=\Ex\left[\ind_{\m{A}\cap\m{B}} \Ex\left[\ind_{\m{C}}\mid \mathcal{F}\right]\right].
		\end{align*}
		Using the Gibbs property for two-sided boundaries (see Lemma \ref{i2a}), the conditional law is determined by the boundary data and is monotone with respect to  the boundary data (see Proposition \ref{p:gmc}). On the event $\m{A}\cap \m{B}$, $\L^N_2$ (on even points) is at most $v$, $\L^N_1(2kN^{2/3}+1)$ is at most $a$ and $\L^N_1(2RN^{2/3}+1)$ is at most $b$.  Thus by stochastic monotonicity we have
		\begin{align} \label{cl0}
			\ind_{\m{A}\cap\m{B}}\cdot\Ex\big( \ind_{\m{C}}\mid\mathcal{F}\big) \le \ind_{\m{A}\cap\m{B}}\cdot\frac{\Ex^{n;a;b}\left(W(S,v)\ind_{\mathsf{C}}\right)}{\Ex^{n;a;b}\left(W(S,v)\right)} \le \ind_{\m{A}\cap\m{B}}\cdot\frac{\Ex^{n;a;b}\left(\mathsf{C}\right)}{\Ex^{n;a;b}\left(W(S,v)\right)}.
		\end{align}
where $S=(S(1),\ldots,S(n))$ is distributed according to $\Pr^{n;a;b}$, the $n$-step random bridge measure from $a$ to $b$, and where $W(S,v):=\exp\left(-2\sum_{i=2}^{n-1} e^{v-S(i)}\right)$. The event $\mathsf{C}$ should now be treated as being defined in terms of $S$ as
		\begin{align*}
			\mathsf{C}= \bigg\{ \sup_{p\in \ll \SSS N^{2/3},(M_0+2\SSS)N^{2/3}\rr} S(p-kN^{2/3}+1)+\SSS^2\nu N^{1/3} \ge -a\bigg\}.
		\end{align*}
		Note that
		\begin{align}
			\nonumber	\Ex^{n;a;b}\left(W(S,v)\right) & \ge \exp\left(-2ne^{-\sqrt{n}}\right)\Pr^{n;a;b}\left(S(i) \ge v+\sqrt{n} \mbox{ for all } i\in \llbracket 1,n \rrbracket\right) \\ & \ge \exp\left(-2ne^{-\sqrt{n}}\right)\Pr^{n;a;b}\left(S(i)-a-\tfrac{i(b-a)}{{n}} \ge -\sqrt{n} \mbox{ for all } i\in \llbracket 1,n \rrbracket\right). \label{cal3}
		\end{align}
where the last inequality follows by noting that  $S(i) -a-\frac{i(b-a)}{n} \ge -\sqrt{n}$ implies $S(i) \ge b-\sqrt{n} \ge v+\sqrt{n}$. Since random bridges weakly converge to Brownian bridges (see \cite{liggett} and \cite{xd} for a quantitative version), using estimates for Brownian bridges, we see that the r.h.s.~\eqref{cal3} is uniformly bounded below by some absolute constant $\delta$. We now claim that for all large enough $R$
		\begin{align}\label{edc}
			\m{D} \subset \neg\m{{C}}, \quad \Pr^{n;a;b}(\m{D})\ge 1-\e\delta, \ \ \mbox{ where } \  \m{D}:=\left\{\sup_{i\in \ll 1,n\rr} \left(S(i)-a-\tfrac{i(b-a)}{n} \right) \le \sqrt{R}\sqrt{n}\right\}.
		\end{align}
		Note that \eqref{edc} implies $\Pr^{n;a;b}(\mathsf{C}) \le \e\delta$. Plugging this back in \eqref{cl0} along with the bound $\Ex^{n;a;b}\left(W(S,z)\right)\ge \delta$, yields that r.h.s.~\eqref{cl0} is at most $\e$. This proves \eqref{e:tosh}.
		
		\medskip
		
		Let us now verify \eqref{edc}. Indeed, $\Pr^{n;a;b}(\m{D})$ can be made arbitrarily close to $1$ by choosing $R$ large enough (as random bridges weakly converge to Brownian bridges \cite{liggett}). We choose $R$ so large that $\Pr^{n;a;b}(\m{D})$ is at least $1-\e \delta$.
		Let us now verify $\m{D}\subset \neg\m{C}$ (see also Figure \ref{fig:arg1} and its caption). For $q\ge \SSS$ we see that
		\begin{align*}
			a+\tfrac{(q-k)(b-a)}{R-k}+\sqrt{R}\sqrt{n} & \le \left(M_0 - \tfrac{\SSS-k}{R-k}(\tfrac18R^2\nu+M_0)+R^{3/2}\right)N^{1/3}  \\ & \le \left(M_0 - 2^{-5}(\tfrac18R^2\nu+M_0)+R^{3/2}\right)N^{1/3} \le -\left(M_0 +\SSS^2\nu\right)N^{1/3}
		\end{align*}
		The penultimate inequality follows by observing that as $\SSS=2^{-5} R$, we have $\SSS-k \ge 2^{-5}(R-k) >0$. Finally the last inequality follows from \eqref{eR}. Thus for all $p\ge \SSS N^{2/3}$,
		\begin{align*}
			x+\tfrac{(p-kN^{2/3})(y-x)}{(R-k)N^{2/3}}+\sqrt{R}\sqrt{n} \le M_0N^{1/3}-\SSS^2\nu N^{1/3}
		\end{align*}
		Clearly this implies $\m{D}\subset \neg\m{C}$, completing the proof \eqref{edc}.
	\end{proof}

	\subsubsection{Proof of Propositions \ref{p:high} and \ref{p:low}}
	 The proofs of Propositions \ref{p:high} and  \ref{p:low} uses the following.

	\begin{lemma}[Uniform tightness] \label{l:ut} Recall $\zl (m)$, the point-to-(partial)line partition function defined in  \eqref{rfl}. Fix $\e\in (0,1)$. There exists $K_0=K_0(\e)>0$, such that for all $M>0$ and $K\ge K_0$ we have
		\begin{align*}
			\liminf_{N\to\infty}\Pr\left(-K\le \frac{\log \zl(MN^{2/3})+2\Psi(\theta)N}{N^{1/3}}+M^2\nu \le K \right) >1-\e
		\end{align*}
		where $\nu$ is defined in \eqref{nu}.
	\end{lemma}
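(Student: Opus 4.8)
\textbf{Proof proposal for Lemma \ref{l:ut}.} The plan is to deduce this uniform tightness statement directly from the BBP fluctuation result of \cite{bw}, stated as Theorem \ref{thm:bw} above, by analyzing the asymptotics of the deterministic constants $f_{\theta,p},\sigma_{\theta,p}$ appearing there in the regime where $k=k_N=MN^{2/3}$ and $M$ is large but fixed. First I would set $k=k_N=MN^{2/3}$, so that $p=p_N=\frac{N+k_N}{N-k_N}=1+2MN^{-1/3}+O(N^{-2/3})$, and perform a Taylor expansion. Recall $\theta_c=\theta_c(p)$ is defined by $\Psi'(\theta_c)=p\,\Psi'(2\theta-\theta_c)$; at $p=1$ this gives $\theta_c=\theta$, and differentiating implicitly one gets $\theta_c = \theta + c_1 (p-1) + O((p-1)^2)$ for an explicit constant $c_1$ involving $\Psi'(\theta),\Psi''(\theta)$. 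Plugging into $f_{\theta,p}=-\Psi(\theta_c)-p\Psi(2\theta-\theta_c)$ and expanding to second order in $(p-1)$ — using that the first-order term in $\theta_c$ cancels because $\theta_c$ is a critical point of $\theta'\mapsto -\Psi(\theta')-p\Psi(2\theta-\theta')$ — one obtains
\begin{align*}
(N-k_N) f_{\theta,p_N} = -2N\Psi(\theta) + c_2\, M^2 N^{1/3} + o(N^{1/3}),
\end{align*}
and I would check that $c_2 = \nu$ with $\nu$ as in \eqref{nu}; this is the main deterministic computation. Similarly $\sigma_{\theta,p_N}\to \sigma_{\theta,1}=\big(-\Psi''(\theta)\big)^{1/3}$ and $(N-k_N)^{1/3}\sigma_{\theta,p_N}=\sigma_{\theta,1} N^{1/3} + o(N^{1/3})$, so after rescaling the BBP centering-and-scaling in Theorem \ref{thm:bw} matches (up to the $\nu$ versus $\sigma$ bookkeeping) the centering $+2\Psi(\theta)N$ and scaling $N^{1/3}$ used in the statement of the lemma; one should be slightly careful that $\nu$ combines $f$ and $\sigma$ in the right way, i.e. the coefficient of $M^2$ after dividing by $\sigma_{\theta,1}N^{1/3}$ is exactly $\nu$.

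Next I would address the BBP parameter. With $k_N=MN^{2/3}$ the quantity $(N-k_N)^{1/3}\sigma_{\theta,p}(\alpha+\theta-\theta_c)$ governs which limit law appears. When $\alpha=\alpha_N=N^{-1/3}\mu$ (critical case) we have $\alpha+\theta-\theta_c = \theta-\theta_c + o(1) = -c_1(p_N-1)+o(N^{-1/3}) = O(N^{-1/3})$, so this product converges to a finite limit $y_M$ (linear in $M$ up to the choice of $\mu$), and Theorem \ref{thm:bw} gives convergence of $\frac{\log\zl(MN^{2/3})-(N-k_N)f_{\theta,p_N}}{(N-k_N)^{1/3}\sigma_{\theta,p_N}}$ to $U_{-y_M}$, a genuine (non-degenerate) BBP$_{y_M}$ random variable. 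When $\alpha>0$ is fixed (supercritical case) the product tends to $+\infty$ and the limit is the GUE Tracy--Widom distribution. In either case the limit is a fixed, non-degenerate (hence tight, and with no atom at $\pm\infty$) random variable. Translating back through the asymptotics of the constants computed above, $\frac{\log\zl(MN^{2/3})+2\Psi(\theta)N}{N^{1/3}} + M^2\nu$ converges in distribution to a fixed random variable $X_M$ (namely $\sigma_{\theta,1}U_{-y_M}$ in the critical case, $\sigma_{\theta,1}U_{-\infty}$ in the supercritical case, possibly up to an additive constant that I would track). Convergence in distribution to a fixed random variable immediately yields: for every $\e>0$ there is $K_0=K_0(\e,M)$ with $\liminf_N \Pr(-K_0\le \cdot \le K_0) > 1-\e$.

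The last — and genuinely nontrivial — step is to make $K_0$ \emph{uniform in $M$}, i.e. independent of $M$. This is exactly the content of the parenthetical claim following \eqref{pardec} that ``the BBP distributions $(X_M)_{M>0}$ form a tight sequence in $M$, converging to GUE Tracy--Widom as $M\to\infty$.'' The mechanism is the well-known fact that the BBP$_y$ family (with $y=y_M$ here a monotone function of $M$) is stochastically monotone in $y$ and converges weakly to the GUE Tracy--Widom distribution as $y\to+\infty$; in particular $\{U_{-y_M}:M>0\}$ is a tight family of random variables with a uniform lower and upper tail bound. I would invoke this as a known property of the Baik--Ben Arous--P\'ech\'e distributions (see \cite{bbap} and \cite{bw} Eq.~(5.2)), together with the uniformity of the constant asymptotics over $M$ in a bounded window $M\in[0,R]$ — which is all that is needed since Theorem \ref{p:high2} and Propositions \ref{p:high}, \ref{p:low} only ever apply Lemma \ref{l:ut} for $M$ ranging over a fixed finite interval. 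The main obstacle is precisely this uniformity bookkeeping: ensuring that the $o(N^{1/3})$ error terms in the expansion of $(N-k_N)f_{\theta,p_N}$ are uniform over $M$ in a compact set (straightforward since they are smooth in $p$ near $p=1$), and that the tail bounds for the limiting BBP laws are uniform in the parameter $y_M$ (which follows from monotonicity and the Tracy--Widom limit). Everything else is a routine combination of Theorem \ref{thm:bw} with a Taylor expansion and the Portmanteau theorem.
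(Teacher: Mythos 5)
Your proposal follows essentially the same route as the paper's proof: Taylor-expand $\theta_c$, $f_{\theta,p}$, $\sigma_{\theta,p}$ around $p=1$ (the paper does exactly this computation, recorded as \eqref{calcc} and verified at the end of Appendix \ref{appd}), apply Theorem \ref{thm:bw} to get $\operatorname{TW}_{\operatorname{GUE}}$ in the supercritical case and $U_{-y}$ with $y=\sigma_{\theta,1}(\mu-M\Psi'(\theta)/\Psi''(\theta))$ in the critical case, and then obtain uniformity in $M$ from the fact that $U_{-y}\Rightarrow \operatorname{TW}_{\operatorname{GUE}}$ as $y\to\infty$, which is precisely the paper's closing step (citing \cite{br01}).

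One aside in your write-up is wrong and worth correcting, even though it does not sink the argument. You claim that uniformity over a compact window $M\in[0,R]$ suffices ``since Theorem \ref{p:high2} and Propositions \ref{p:high}, \ref{p:low} only ever apply Lemma \ref{l:ut} for $M$ ranging over a fixed finite interval.'' That is a misreading: the lemma quantifies over all $M>0$ with $K_0$ depending only on $\e$, and the applications genuinely need this — e.g.\ in the proof of Proposition \ref{p:high} the lemma is invoked with $M\mapsto k$ and $M\mapsto M_1+2k$ for \emph{arbitrary} $k>0$, with $M_0(\e)$ required to be independent of $k$, and the introduction (around \eqref{partra}) stresses exactly this uniformity. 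Fortunately, the uniformity over $M$ of the $o(N^{1/3})$ error terms that you were worried about is not needed at all: the $\liminf$ in the statement is taken at fixed $M$, so per-$M$ convergence in distribution suffices, and the only uniform ingredient required is a uniform tail bound for the family of limit laws $\{U_{-y(M)}\}_{M>0}$ — which your appeal to the BBP$\to$TW limit (as in the paper) or to monotonicity in $y$ does supply. With that clarification, your proof coincides with the paper's.
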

	
	We remark that the above lemma was alluded in the introduction in the form of \eqref{pardec}.
	\begin{proof} We recall the notations introduced in Section \ref{sec:132}. Fix any $M>0$. Set $k=MN^{2/3}$ and $p:=1+\frac{2k}{N-k}$. Let $\theta_c$ be the unique solution to $\Psi'(\theta_c)-p\Psi'(2\theta-\theta_c)=0$. Set $f_{\theta,p}=-\Psi(\theta_c)-p\Psi(2\theta-\theta_c)$ and $\sigma_{\theta,p}^3=\frac12(-\Psi''(\theta_c)-\Psi''(2\theta-\theta_c))$ where $\Psi$ is the digamma function defined in \eqref{psidef}. A straightforward calculation (done at the end of Appendix \ref{appd}) shows
		\begin{equation}\label{calcc}
			(N-k)f_{\theta,p}=-2N\Psi(\theta)+M^2N^{1/3}(\Psi'(\theta))^2/\Psi''(\theta)+O(1),\,\,\, \mbox{ and } \,\,\, \sigma_{\theta,p}/(-\Psi''(\theta))^{1/3}\stackrel{N\to\infty}{\to} 1,
		\end{equation}
		where $O(1)$ terms depend on $M,\theta$, but are bounded in $N$.
		When $\alpha=\alpha_2>0$, we have that $\lim_{N\to\infty} (N-k)^{1/3}\sigma_{\theta,p}(\alpha_2+\theta-\theta_c)=\infty$ for each fixed $M>0$. Thus by Theorem \ref{thm:bw} we get
		\begin{align*}
			\frac{\log \zl(MN^{2/3})+2\Psi(\theta)N}{(-N\Psi''(\theta))^{1/3}}+M^2\nu \stackrel{(d)}{\Longrightarrow} \operatorname{TW}_{\operatorname{GUE}},
		\end{align*}
		where $\operatorname{TW}_{\operatorname{GUE}}$ is the GUE Tracy-Widom distribution \cite{tw} and $\nu$ is defined in \eqref{nu}. For $\alpha=\alpha_1=N^{-1/3}\mu$, we have  $\lim_{N\to\infty} (N-k)^{1/3}\sigma_{\theta,p}(\alpha_1+\theta-\theta_c)=y:=\sigma_{\theta,1}(\mu-M\Psi'(\theta)/\Psi''(\theta)).$ Another application of Theorem \ref{thm:bw} yields
		\begin{align*}
			\frac{\log \zl(MN^{2/3})+2\Psi(\theta)N}{(-N\Psi''(\theta))^{1/3}}+M^2\nu \stackrel{(d)}{\Longrightarrow} U_{-y}.
		\end{align*}
		where $U_{-y}$ is the Baik-Ben Arous-P\'ech\'e distribution \cite{bbap} (see  \cite[(5.2)]{bw} for definition). As $M\to \infty$, so does $y\to \infty$. Since $U_{-y} \stackrel{(d)}{\Longrightarrow} \operatorname{TW}_{\operatorname{GUE}}$ as $y\to \infty$ (see \cite[(2.36)]{br01}), we thus get tightness uniformly in $M$.
	\end{proof}
	\begin{proof}[Proof of Proposition \ref{p:high}] Fix $k>0$, $\e\in (0,1)$. Since for any $M_1>0$
		\begin{align*}
			\sup_{j\in \ll kN^{2/3}, (M_1+2k)N^{2/3}\rr} \zh(N+j,N-j) \le \zl(kN^{2/3}),
		\end{align*}
		appealing to Lemma \ref{l:ut} with $M\mapsto k$ we see that
		\begin{align*}
			\Pr\left(\sup_{j\in \ll kN^{2/3}, (M_1+2k)N^{2/3}\rr} \frac{\log\zh(N+j,N-j)+2\Psi(\theta)N}{N^{1/3}}+k^2\nu \le M_2\right) \ge 1-\e,
		\end{align*}
		where $M_2$ can be chosen to be any $M \ge K_0$ where $K_0(\e)$ comes from Lemma \ref{l:ut}. Recalling that $\L^N_1(2j+1)=\log \zh(N+j,N-j)+2\Psi(\theta)N$ from \eqref{impi}, we get \eqref{e:high0}.
		
		\medskip
		
		The remainder of the proof is now devoted in proving \eqref{e:high}. Towards this end, set $K_1=\frac{1}{2}(M_1+2k)^2\nu$. Choose $M_1$ large enough so that $K_1\ge K_0(\e/4)$ where $K_0$ comes from Lemma \ref{l:ut}. Applying Lemma \ref{l:ut} with $M\mapsto M_1+2k$, $K\mapsto K_1$, and $\e\mapsto \e/4$ we have
		\begin{align}\label{e1}
			\liminf_{N\to\infty}\Pr\left(\frac{\log \zl((M_1+2k)N^{2/3})+2\Psi(\theta)N}{N^{1/3}} \le -\tfrac{1}{2}(M_1+2k)^2\nu \right)>1-\tfrac14\e.
		\end{align}
		Now we take $K_2=(\frac{(M_1+2k)^2}{4}-k^2)\nu-\log 2\ge \frac14M_1^2\nu$. We again choose $M_1$ large enough so that $K_2\ge K_0(\e/4)$. Then applying Lemma \ref{l:ut} with $M\mapsto k$, $K\mapsto K_2$, and $\e\mapsto \e/4$ we have
		\begin{align}\label{e2}
			\liminf_{N\to\infty}\Pr\left(\frac{\log \zl(kN^{2/3})+2\Psi(\theta)N}{N^{1/3}} \ge -\tfrac14(M_1+2k)^2\nu+\log 2 \right)>1-\tfrac14\e.
		\end{align}
		By union bound the above two estimates implies for all large enough $M_1$ we have \begin{align}\label{e:12}
			\liminf_{N\to\infty}\Pr\left(\zl(kN^{2/3}) > 2\cdot \zl((M_1+2k)N^{2/3}) \right)>1-\tfrac12\e.
		\end{align}
		Let us temporarily set $A=\zl(kN^{2/3})- \zl((M_1+2k)N^{2/3}$ and $B=\zl((M_1+2k)N^{2/3}$. Observe that $A+B>2B$ implies $2A>A+B$. Recall from \eqref{rfl} that
$$A=\!\!\!\!\!\!\sum_{\lceil
			kN^{2/3}\rceil}^{\lceil (M_1+2k)N^{2/3}\rceil -1}\!\!\!\!\!\! \zh(N+j,N-j) \le (M_1+k)N^{\frac23}\!\!\!\!\!\!\sup_{j\in \ll kN^{\frac23},(M_1+2k)N^{\frac23}\rr}\!\!\!\!\!\! \zh(N+j,N-j).$$ We thus have
		\begin{equation}\label{e:relat}
			\begin{aligned}
				& \big\{\zl(kN^{2/3}) > 2\cdot \zl((M_1+2k)N^{2/3})\big\} \\ & \subset \bigg\{ \sup_{j\in \ll kN^{\frac23},(M_1+2k)N^{\frac23}\rr}\!\!\!\!\!\! \log\zh(N+j,N-j) > \log \zl(kN^{\frac23})-\log (2(M_1+k)N^{\frac23}) \bigg\}.
			\end{aligned}
		\end{equation}
		By Lemma \ref{l:ut}, one can choose $M_2$ large enough (but free of $k$) so that
$$\liminf_{N\to\infty}\Pr\Big( \log \zl(kN^{2/3})+2\Psi(\theta)N+k^2\nu N^{1/3} \ge -M_2N^{\frac13}+\log (2(M_1+k)N^{\frac23})\Big)>1-\tfrac12\e.$$
Using this, in view of \eqref{e:relat} and \eqref{e:12}, and using $\L^N_1(2j+1)=\log \zh(N+j,N-j)+2\Psi(\theta)N$ (see \eqref{impi}) we  arrive at \eqref{e:high}. This proves Proposition \ref{p:high}.
	\end{proof}
	
	\begin{proof}[Proof of Proposition \ref{p:low}] We use the same notations as from the proof of Proposition \ref{p:high} and utilize \eqref{e1} and \eqref{e2} obtained there with $k=1$. Set $M=M_1+2$. Combining \eqref{e1} and \eqref{e2} implies
		\begin{align*}
			\liminf_{N\to \infty} \Pr\left(\log \zl(N^{2/3})>\tfrac14M^2N^{1/3}\nu+\log \zl(MN^{2/3})\right) \ge 1-\tfrac12\e.
		\end{align*}
		As $\zl(MN^{2/3})\ge \zh(N+MN^{2/3};N-MN^{2/3})$,	this leads to
		\begin{align*}
			& \liminf_{N\to \infty} \Pr\left(\log \zl(N^{2/3})>\tfrac14M^2N^{1/3}\nu+\log \zh(N+MN^{2/3},N-MN^{2/3})\right) \ge 1-\tfrac12\e.
		\end{align*}
		Again by Lemma \ref{l:ut}, one can choose $M$ large enough so that $$\liminf_{N\to\infty}\Pr\bigg(\log\zl(N^{2/3})\le \tfrac18M^2N^{1/3}\nu -2\Psi(\theta)N\bigg)>1-\tfrac12\e,$$ which forces
		\begin{align*}
			\liminf_{N\to \infty} \Pr\left(\log \zh(N+MN^{2/3};N-MN^{2/3}) <-2N\Psi(\theta)-\tfrac18M^2N^{1/3}\nu\right) \ge 1-\e.
		\end{align*}
		By \eqref{impi}, $\L^N_1(2MN^{2/3}+1)=\log \zh(N+MN^{2/3};N-MN^{2/3})-2\Psi(\theta)N$ hence \eqref{e:low} follows.	\end{proof}

	\subsection{Spatial properties of the lower curves} \label{sec:spcu}
	
	In this subsection, we study spatial properties of the lower curves of the $\hslg$ line ensemble. The main result of this section is the following.

	\begin{theorem}\label{l:lhigh} Fix any $p\in \{1,2\}$. Set $\alpha:=\alpha_p$ according to \eqref{acric}. Consider the $\hslg$ line ensemble from Definition \ref{l:nz} with parameters $(\alpha,\theta)$. Given any $k,\e>0$, there exist constants $M=M(k,\e)\ge 1$ and $N_0(k,\e)\ge 1$ such that for all $N\ge N_0(k,\e)$ and $v\in \{2,3\}$ we have
		\begin{align}\label{e:2high}
			\Pr\left(\sup_{s\in \ll 1,kN^{2/3}\rr} \L_v^N(s)\ge MN^{1/3} \right) \le \e.
		\end{align}
	\end{theorem}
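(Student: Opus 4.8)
\textbf{Proof strategy for Theorem \ref{l:lhigh}.} The goal is an upper-tightness statement for the second and third curves over a spatial window of length $kN^{2/3}$: with high probability $\L_v^N(s)\le MN^{1/3}$ for all $s\in\ll1,kN^{2/3}\rr$ and $v\in\{2,3\}$. My plan is to deduce this from upper-tightness information already available for the \emph{top} curve $\L_1^N$ together with the soft non-intersection ordering of Theorem \ref{t:order}. The first input is that the point-to-(partial)line free energy dominates the point-to-point free energy pointwise, so by Lemma \ref{l:ut} (equivalently \eqref{pardec}) we have, for $M$ large,
\begin{align*}
\Pr\Big(\sup_{s\in\ll1,2kN^{2/3}+1\rr}\L_1^N(s)\le \tfrac12 MN^{1/3}\Big)\ge 1-\tfrac{\e}{4}
\end{align*}
for all large $N$ (using that $\L_1^N(2j+1)=\log\zh(N+j,N-j)+2\Psi(\theta)N\le\log\zl(kN^{2/3})+2\Psi(\theta)N$ and adjusting constants; the even-indexed points of $\L_1^N$ are then controlled by the Gibbs property, or directly by the same argument applied to the appropriate partition function ratio). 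The second input is Theorem \ref{t:order} applied with $k\mapsto 3$: with probability at least $1-\rho^N$, for all relevant $p$ we have $\L_2^N(2p)\le \L_1^N(2p\pm1)+(\log N)^{7/6}$ and $\L_3^N(2p)\le \L_2^N(2p\pm1)+(\log N)^{7/6}$, and similarly for the odd-indexed points via the corresponding inequalities in \eqref{t41}.

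Chaining these ordering inequalities, on the intersection of the two events above every entry $\L_v^N(s)$ with $v\in\{2,3\}$ and $s$ in the window is bounded by $\sup_s\L_1^N(s)+2(\log N)^{7/6}\le \tfrac12MN^{1/3}+2(\log N)^{7/6}\le MN^{1/3}$ for $N$ large. A union bound over the $O(N^{2/3})$ many points, using the exponentially small failure probability $\rho^N$ in Theorem \ref{t:order}, keeps the total error below $\e/4+O(N^{2/3})\rho^N\le\e$ for $N\ge N_0$. This would complete the proof.

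\textbf{Main obstacle.} The delicate point is the bookkeeping at the left boundary and the even/odd staggering. Theorem \ref{t:order} only directly compares a given curve's point to the neighboring curve's point one column to the left or right, so to bound $\L_3^N(s)$ one must compose two such steps (curve $3$ to curve $2$ to curve $1$) while staying inside the valid index ranges $\ll1,2N-2i+2\rr$; near $s=1$ one has to be careful which of the four inequalities in \eqref{t41} is applicable. Also, the top curve bound as stated in \eqref{impi} is naturally for odd-indexed points $\L_1^N(2j+1)$; to cover the even points (and hence to have a clean bound to chain into) one either invokes the Gibbs/monotonicity structure or re-runs the Chernoff-type estimate of Proposition \ref{t:aest0} at the sharper scale. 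Neither is hard, but it is the part requiring actual care rather than a one-line citation. Everything else is a routine union bound, since the ordering failure probability $\rho^N$ is super-exponentially small compared to the $N^{2/3}$ points being controlled.
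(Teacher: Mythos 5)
Your core argument is correct, and it is a genuinely different (and shorter) route than the paper's. The paper does not chain the ordering all the way down from the top curve: it first reduces to the even points of $\L_2^N$, then runs a ``last exceedance'' decomposition (the events $\m{A}(r,M)$, $\m{B}(r,M)$, $\m{F}(r,M)$, $\m{G}^{\pm}$), and uses the $\hslg$ Gibbs property, stochastic monotonicity (Proposition \ref{p:gmc}) and the endpoint tightness of the bottom-free two-curve measure (Proposition \ref{lem:ep}, i.e.\ the WPRW machinery) to transfer a hypothetical bulk exceedance of $\L_2^N$ to a high value of $\L_2^N(2)$ at the left boundary; only at that single point does it invoke your argument ($\L_2^N(2)\le\L_1^N(3)+(\log N)^{7/6}$ plus tightness of $N^{-1/3}(\log\zl(1)+2\Psi(\theta)N)$), and the $v=3$ case is then obtained by chaining off $v=2$. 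You instead apply that same comparison at every even point, which works because the inequalities \eqref{t41} bound even points of $\L_{i+1}^N$ by odd points of $\L_i^N$ and odd points of a curve by its even neighbours, and because the domination of the point-to-point by the point-to-line free energy is deterministic and simultaneous in $j$, while the ordering failures cost only $O(N^{2/3})\rho^N$. What the paper's heavier route buys is reusability: the same decomposition-plus-resampling scheme also yields lower-tail and endpoint control (Theorem \ref{thm:eptight}), which ordering alone can never produce since it only pushes lower curves down; for the purely one-sided statement \eqref{e:2high} your route suffices.

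Two slips in your write-up should be repaired. First, your displayed input is justified by ``$\L_1^N(2j+1)\le\log\zl(kN^{2/3})+2\Psi(\theta)N$'', but $\zl(kN^{2/3})$ sums only over endpoints with $j\ge kN^{2/3}$, so it does \emph{not} dominate the point-to-point partition functions at the indices $j<kN^{2/3}$ you actually need; use $\zl(1)$ instead (every $Z(N+j,N-j)$ with $j\ge1$ is a summand of $\zl(1)$, and $N^{-1/3}(\log\zl(1)+2\Psi(\theta)N)$ is tight by Theorem \ref{thm:bw}, exactly as the paper uses in its own proofs of Theorems \ref{l:lhigh} and \ref{thm:eptight}). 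Second, you do not need---and should drop---the supremum over the even-indexed points of $\L_1^N$: the chain only ever consumes odd points of curve $1$, and the even points of curve $1$ are not controlled by \eqref{impi} nor by the ordering (which makes even points dominate odd ones within a curve); your suggested fix of ``re-running Proposition \ref{t:aest0} at the sharper scale'' does not work, as that Chernoff bound lives at scale $N$, not $N^{1/3}$. With the claim restricted to odd points of $\L_1^N$ and $\zl(1)$ as the dominating quantity, the chaining and union bound go through as you describe.
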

	In plain words, Theorem \ref{l:lhigh} argues that with high probability on the domain $\ll1,kN^{2/3}\rr$, the entire second curve and third curve lies below a threshold $MN^{1/3}$. The proof of Theorem \ref{l:lhigh} can be easily extended to include other lower indexed curves as well. However, for the proofs of our main  results, it suffices to consider the first three curves.
	
	Recall from Theorem \ref{thm:conn} that the conditional laws of the $\hslg$ line ensemble are given by $\hslg$ Gibbs measures introduced in Definition \ref{def:hslggibbs}. The key technical ingredient in proving Theorem \ref{l:lhigh} is the tightness of left boundary points of the first two curves under the \btf\ measure defined in Definition \ref{def:btf}.
	
	\begin{proposition}\label{lem:ep} Fix any $p\in \{1,2\}$. Set $\alpha:=\alpha_p$ according to \eqref{acric}. Fix any $r \ge 1$ and $\e >0$. Set $T=\lfloor rN^{2/3}\rfloor$. Define
		\begin{align}\label{adef}
			A:=\begin{cases}
				1+\sqrt{r}|\mu|\Psi'(\frac12\theta) & \mbox{if }p=1, \\
				1 & \mbox{if }p=2.
			\end{cases}
		\end{align}
		There exists $M=M(\e)>0$ and $N_0(\e)>0$ such that for all $N\ge N_0$ we have
		\begin{align}\label{eq:l5.1}
			\Pr_{\alpha_p}^{(0,-A\sqrt{T}),(-\infty)^{T};2,T}\big(|L_1(1)|+|L_2(2)|\ge M\sqrt{T}\big)\le \e.
		\end{align}
		where the law $	\Pr_{\alpha_p}^{\vec{y},(-\infty)^{2T};2,T}$ is defined in Definition \ref{def:btf}. Furthremore, there exists $\til{M}=\til{M}(\e)>0$ and $\til{N}_0(\e)>0$ such that for all $N\ge \til{N}_0$ we have
		\begin{align}\label{eq:l5.w}
			\Pr_{\alpha_1}^{0,(-\infty)^{T};1,T}\big(|L_1(1)|\ge \til{M}\sqrt{T}\big)\le \e.
		\end{align}
	\end{proposition}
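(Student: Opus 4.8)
\textbf{Proof strategy for Proposition \ref{lem:ep}.} The plan is to reduce both statements to estimates for the weighted paired random walk (WPRW) law and its one-path analogue, and then to prove those estimates via the comparison machinery sketched in the introduction. First I would recall the reduction \eqref{e.reduction}: under the bottom-free measure $\Pr_{\alpha_p}^{(0,-A\sqrt{T}),(-\infty)^{T};2,T}$, the law of $(L_1(2k-1),L_2(2k))_{k=1}^T$ coincides with the WPRW law $\Pr_{\operatorname{WPRW}}^{T;(0,-A\sqrt{T})}$ (for $p=2$, with $\wsc$; for the critical case $p=1$ one uses the alternative representation of the bottom-free law alluded to after \eqref{defw}, which introduces the boundary density $\ga=\fa_{\zeta}$ replaced by a critically-scaled version — this is where the $\sqrt{r}|\mu|\Psi'(\tfrac12\theta)$ shift in $A$ comes from, as it recenters the $O(\sqrt T)$ drift induced by $\alpha_1=N^{-1/3}\mu$). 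Thus \eqref{eq:l5.1} becomes: $\Pr_{\operatorname{WPRW}}^{T;(0,-A\sqrt{T})}(|S_1(1)|+|S_2(1)|\ge M\sqrt T)\to 0$ uniformly in $T$ as $M\to\infty$, and \eqref{eq:l5.w} becomes the one-path version, $\Pr^{T;0}_{\mathrm{WPRW},1}(|S_1(1)|\ge\widetilde M\sqrt T)\to 0$.

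The core estimate is then exactly the one outlined in the introduction around \eqref{wprwest}. For $\m{A}=\{S_1(1)\le -M\sqrt T\}$ (and analogously $\{S_1(1)\ge M\sqrt T\}$, $\{S_2(1)\le -M\sqrt T\}$, $\{S_2(1)\ge M\sqrt T\}$), I would first apply stochastic monotonicity (Proposition \ref{p:gmc}) and shift invariance (Lemma \ref{obs1}\ref{traninv}) to reduce to endpoints $(0,-\sqrt T)$ exactly as in the displayed computation in Section \ref{sec:133}; note $A\sqrt T$ is of order $\sqrt T$ so this reduction is licensed. Then one needs the two-sided comparison: $\Ex_{\operatorname{PRW}}^{T;(0,-\sqrt T)}[\wsc\ind_{\m{A}}]\le \Con\cdot T^{-1/2}\Ex_{\operatorname{PRW}}^{T;(0,-\sqrt T)}[\ind_{\m{A}}]$ (from Lemma \ref{l:rpass} / Lemma \ref{lem:tre} / Corollary \ref{corb}, whichever is the relevant cited lemma — I would cite the exact statement) and the matching lower bound $\Ex_{\operatorname{PRW}}^{T;(0,-\sqrt T)}[\wsc]\ge \Con^{-1}T^{-1/2}$. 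Dividing, $\Pr_{\operatorname{WPRW}}^{T;(0,-\sqrt T)}(\m{A})\le \Con^2\,\Pr_{\operatorname{PRW}}^{T;(0,-\sqrt T)}(\m{A})$, and the right side is controlled by Lemma \ref{as:el}, which gives exponential tails for $S_1(1)/\sqrt T$, $S_2(1)/\sqrt T$ and $|S_1(1)-S_2(1)|$ under the PRW law — so it $\to 0$ as $M\to\infty$ uniformly in $T$. A union over the four events (and over the two coordinates) finishes \eqref{eq:l5.1}.

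For \eqref{eq:l5.w}, the same scheme applies but is strictly easier: there is only one curve, the analogue of $\wsc$ involves only the boundary pinning term (no soft non-intersection reweighting between two paths), and the $n^{-1/2}$ normalization does not even appear — the relevant Radon-Nikodym factor is bounded above and below by constants once we restrict to the event that $|S_1(1)|\le M\sqrt T$, so one directly reduces to the single random-bridge (with a pinning weight at the left end given by the critically-scaled $\ga$), whose left-endpoint tails are again supplied by Lemma \ref{as:el}. I would present this as a short corollary of the two-path argument. Throughout, the passage from PRW to pure random bridges conditioned on $(S_1(1),S_2(1))$ uses the Gibbs property of the PRW law, exactly as in Definition \ref{prb} and the discussion following it.

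The main obstacle I anticipate is the critical case $p=1$: there the bottom-free law is \emph{not} literally a WPRW with the supercritical weight $\wsc$, and the boundary interaction $e^{\alpha_1 x - e^x}$ with $\alpha_1=N^{-1/3}\mu\to 0$ has a genuinely different asymptotic behavior — its mode sits near $\log\alpha_1^{-1}$, producing an $O(\log N)$ or, after the digamma recentering, an $O(\sqrt T)$-scale effective displacement of $S_1(1)-S_2(1)$ that must be absorbed into the constant $A=1+\sqrt r|\mu|\Psi'(\tfrac12\theta)$. Making the shift-invariance reduction work cleanly in that case — i.e. verifying that after recentering by $A\sqrt T$ the critical boundary weight is again dominated (up to universal constants on the relevant increasing/decreasing events) by the weight at endpoints $(0,-\sqrt T)$, and that Lemma \ref{as:el} still delivers uniform exponential tails — is the delicate point. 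Everything else is a bookkeeping assembly of results already established for the PRW and WPRW laws.
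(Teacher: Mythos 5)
Your treatment of the supercritical case $p=2$ of \eqref{eq:l5.1} is essentially the paper's argument: since $A=1$ the endpoints are already $(0,-\sqrt T)$ (so the monotonicity/shift reduction you describe is unnecessary, though harmless), and the proof is exactly the combination of the upper bound of Lemma \ref{lem:tre}, the lower bound $\Ex_{\operatorname{PRW}}[\wsc]\ge \Con^{-1}T^{-1/2}$ of Corollary \ref{corb}, and the tails of Lemma \ref{as:el}. Likewise \eqref{eq:l5.w} is easy, though for a different reason than you state: for a single curve the bottom-free density \eqref{intr1} contains no reweighting at all (the red edge is absorbed into the blue edge parameters, giving $\theta\pm\alpha_1$), so $L_1$ restricted to odd points is literally a random walk and a Chebyshev bound suffices; there is no ``boundary pinning term'' left to control.

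The genuine gap is the critical case $p=1$ of \eqref{eq:l5.1}, which you flag as delicate but propose to resolve by the wrong mechanism. The constant $A=1+\sqrt r|\mu|\Psi'(\tfrac12\theta)$ has nothing to do with the mode of the boundary weight $e^{\alpha_1 x-e^x}$ sitting near $\log\alpha_1^{-1}$ --- that is $O(\log N)$, not $O(\sqrt T)=O(N^{1/3})$, and no recentering turns one into the other. The $O(\sqrt T)$ effect comes from the \emph{bulk}: after the redistribution of the red edge (Figure \ref{fig12}(A), Lemma \ref{l:LCrit}), the increments of $L_1$ and $L_2$ are $G_{\theta+\alpha_1,\pm1}\ast G_{\theta-\alpha_1,\mp1}$, each carrying a mean of order $\alpha_1=\mu N^{-1/3}$ with opposite signs on the two curves; over $T\sim rN^{2/3}$ steps this accumulates to a relative drift of order $\sqrt r|\mu|\Psi'(\tfrac12\theta)\sqrt T$, and $A$ is chosen precisely so that the initial separation $A\sqrt T$ beats it. More importantly, your plan of running the critical case through the $\m{WPRW}$/$n^{-1/2}$ comparison cannot work as stated: in the critical representation the two curves are \emph{independent} under the reference measure $\bar\Pr$ (there is no pairing density $\ga$ between $L_1(1)$ and $L_2(1)$), the would-be boundary density $G_{\alpha_1,1}$ degenerates as $\alpha_1\to0$ (its normalization $\Gamma(\alpha_1)^{-1}\to0$, so Lemma \ref{as:el} and Corollary \ref{corb}, proved for the fixed supercritical $\ga=G_{\zeta,1}$, do not transfer), and the normalizer is not of order $T^{-1/2}$ at all. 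The paper's actual route is much softer: KMT-couple the two independent walks to Brownian motions via \eqref{coupl}, use the separation $A\sqrt T$ minus the accumulated drift to show the curves stay ordered by $\tfrac12\sqrt T$ with probability bounded below, conclude $\bar\Ex[\wcr]\ge c>0$, and then bound $\Pr_{\alpha_1}(\m A)\le c^{-1}\bar\Pr(\m A)$ with the right-hand side controlled by a variance computation for the random walk. You would need to supply this (or an equivalent) argument; as written, the critical case of your proof does not go through.
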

	
	As we shall see in the next section, the proof of the above lemma can be extended to include $L_2(1)$ instead of $L_2(2)$. For technical reasons we work with $L_2(2)$ here.
	
	As mentioned in the introduction, the proof of Proposition \ref{lem:ep} relies on several ingredients related to non-intersecting random walks. We postpone its proof to Section \ref{sec:rpe}. We now complete the proof of Theorem \ref{l:lhigh} assuming Proposition \ref{lem:ep}.
	
	\begin{proof}[Proof of Theorem \ref{l:lhigh}] We prove the $v=2$~case and then use it to show the $v=3$~case.
		
		\medskip
		
		\noindent\textbf{Part I: $v=2$ case.} For clarity we divide the proof into two steps.
		
		\medskip
		
		\noindent\textbf{Step 1.} Recall that the points in the line ensemble satisfy certain high probability ordering due to Theorem \ref{t:order}. In particular, if we know the even points on $\L^N_2$ are not too high, Theorem \ref{t:order} will force that with high probability the odd points are not too high as well. Thus it suffices to control the even points on $\L^N_2$. In this step, we flesh out the details of the above idea. The proof of control on even points on $\L^N_2$ appears in the second step of the proof.
		
		\medskip
		
		We begin by defining a few events that will appear in the rest of the proof. Fix $k,\e>0$. For any $r\in \ll 1,kN^{2/3}\rr\cap 2\Z$, define
		\begin{align*}
			\mathsf{A}(r,M):=\big\{ \L^N_2(r)\ge MN^{1/3}\big\}, \quad \mathsf{F}(r,M):= \{\L^N_1(r-1)\ge \tfrac{3M}4 N^{1/3}\}.
		\end{align*}
		Define
		\begin{align*}
			\mathsf{B}(r,M):= \mathsf{A}(r,M) \cap \hspace{-0.6cm}\bigcap_{s\in \ll r+2,kN^{2/3} \rr \cap 2\Z} \hspace{-0.6cm}\neg\mathsf{A}(s,M),
		\end{align*}
		so that $(\mathsf{B}(r,M))_{r\in \ll 1,kN^{2/3}\rr}$ forms a disjoint collections of events. Note that
		$$\bigsqcup_{r\in \ll 1,kN^{2/3}\rr\cap 2\Z}\hspace{-0.6cm} \mathsf{B}(r,M)=\hspace{-0.6cm}\bigcup_{r\in \ll 1,kN^{2/3}\rr\cap 2\Z}\hspace{-0.6cm} \mathsf{A}(r,M) = \bigg\{\sup_{r\in \ll 1,kN^{2/3}\rr\cap 2\Z} \L^N_2(r) \ge MN^{1/3}\bigg\}.$$
		In the above equation, we use $\sqcup$ instead of $\cup$ to stress on the fact that it is an union of disjoint events. Thus the above union demands at least one of the even points in $\ll1,kN^{2/3}\rr$ of $\L^N_2$ to exceed $MN^{1/3}$. We next define
		\begin{align*}
			\mathsf{G}^+(M):=\hspace{-0.6cm}\bigsqcup_{r\in \ll 1,kN^{2/3}\rr\cap 2\Z}\hspace{-0.6cm} \mathsf{B}(r,M) \cap \mathsf{F}(r,M), \quad
			\mathsf{G}^-(M):=\hspace{-0.6cm}\bigsqcup_{r\in \ll 1,kN^{2/3}\rr\cap 2\Z}\hspace{-0.6cm} \mathsf{B}(r,M) \cap \neg\mathsf{F}(r,M).
		\end{align*}
		 Finally set $\m{G}(M):=\m{G}^+(M)\sqcup \m{G}^{-}(M).$  Observe that the event \begin{align*}
			\neg\m{G}(M)=\left\{\sup_{s\in \ll 1,kN^{2/3}\rr \cap 2\Z} \L^N_2(s) < MN^{1/3}\right\}
		\end{align*} controls the supremum of the second curve over the even points.
		Take $0<k'<k$.  By the union bound we get that
		\begin{align}\label{e:ubg}
			\Pr\left(\sup_{s\in \ll 1,k'N^{2/3}\rr} \L^N_2(s)\ge 3MN^{\frac13} \right) \le \Pr(\m{G}(2M))+\Pr\bigg(\sup_{\substack{s\in \ll 1,k'N^{2/3}\rr \\ s\in (2\Z+1)}} \L^N_2(s) \ge 3MN^{\frac13}, \neg \m{G}(2M)\bigg).
		\end{align}
		Note that on $-\m{G}(2M)$ the supremum of $\L^N_2(s)$ over all $s\in \ll1,kN^{2/3}\rr$ is at most $2MN^{1/3}$. Then by the ordering of the line ensemble (Theorem \ref{t:order}) on $\neg\m{G}(2M)$ it is exponentially unlikely that any odd point within $\ll1,k'N^{2/3}\rr$ will exceed $2MN^{1/3}+(\log N)^{7/6}$. In particular the second term in r.h.s.~\eqref{e:ubg} can be made smaller than $\frac{\e}{2}$ by choosing $N$ large enough and taking $M\ge 1$. For the first term we claim that there exists $M_0, N_0$ depending on $k,\e$ such that for all $N\ge N_0$ and $M\ge M_0$ we have
		\begin{align}\label{e:gmshow}
			\Pr(\m{G}(2M)) \le \tfrac\e2. 
		\end{align}
		Clearly plugging this bound back in r.h.s.~\eqref{e:ubg} proves \eqref{e:2high} with $M\mapsto 3M$ and $k'\mapsto k$. For the remainder of the proof we focus on proving \eqref{e:gmshow}.
		
		\medskip
		
		\noindent\textbf{Step 2.} In this step we prove  \eqref{e:gmshow}. Observe that from the definition of $\m{G}^{-}(2M)$ we have
		$$\Pr(\m{G}^{-}(2M)) \le \Pr \Big(\L^N_1(r-1)-\L^N_2(r)\ge -\tfrac{M}{2}N^{1/3} \mbox{ for some }r\in \ll 1,kN^{1/3}\rr \cap 2\Z\Big).$$
		However by Theorem \ref{t:order} the right-hand side of the above equation can be made smaller that $\frac\e4$ for all $N\ge N_0$ and $M\ge 1$, by choosing $N_0:=N_0(k,\e)>0$ appropriately. We next claim that
		\begin{align}\label{e:gmplus}
			\Pr(\m{G}^+(2M))\le 2\Pr(\m{A}(2,M)) \le \tfrac\e4.
		\end{align}
		As $\m{G}(2M) =\m{G}^{-}(2M)\cup \m{G}^{+}(2M)$, in view of the above claim,  \eqref{e:gmshow} follows via a union bound.
		
		Let us now prove \eqref{e:gmplus}. Observe that by definition of $G^{+}(2M)$ we have
		\begin{align}\label{e:gmp0}
			\Pr\big(\mathsf{A}(2,M)\big)  \ge \Pr\big(\mathsf{G}^+(2M) \cap \mathsf{A}(2,M)\big)  & = \sum_{r\in  \ll 1,kN^{2/3}\rr \cap 2\Z} \Pr\big(\mathsf{B}(r,2M)\cap \m{F}(r,2M) \cap \mathsf{A}(2,M)\big).
		\end{align}
		We focus on each of the terms in the above sum. Using the tower property we have
		\begin{equation}\label{e:gmp1}
			\begin{aligned}
				&	\Pr\big(\mathsf{B}(r,2M)\cap \m{F}(r,2M) \cap \mathsf{A}(2,M)\big) \\ & =  \Ex \left[\ind_{\mathsf{B}(r,2M)\cap \m{F}(r,2M)} \Ex\left(\ind_{\mathsf{A}(2,M)} \mid \sigma\big(\L^N_3, \L^N_1\ll r-1, kN^{2/3}\rr, \L^N_2\ll r, kN^{2/3}\rr\big) \right)\right].
			\end{aligned}
		\end{equation}
		Using the Gibbs property (see Theorem \ref{thm:conn} and Lemma \ref{def:hsgm1} \ref{deff}) we have almost surely that
		\begin{equation}\label{e:gmp2}
			\begin{aligned}
				& \ind_{\mathsf{B}(r,2M)\cap \m{F}(r,2M)}\Ex\left(\ind_{\mathsf{A}(2,M)} \mid\sigma\big( \L^N_3, \L^N_1\ll r-1, kN^{2/3}\rr, \L^N_2\ll r, kN^{2/3}\rr\big) \right) \\ & =\ind_{\mathsf{B}(r,2M)\cap \m{F}(r,2M)}\Pr_{\alpha_p}^{\vec{y},\vec{z};2,r/2}(L_2(2)>MN^{1/3}) \\ & \ge \ind_{\mathsf{B}(r,2M)\cap \m{F}(r,2M)}\Pr_{\alpha_p}^{\vec{w},(-\infty)^{r};2,r/2}(L_2(2)>MN^{1/3}),
			\end{aligned}
		\end{equation}
		where $\vec{y}=(\L^N_1(r-1),\L^N_2(r))$, $\vec{z}=(\L^N_3(2v))_{v=1}^{r/2}$ and $\vec{w}:=(\frac{3M}{2}N^{1/3},\frac{3M}{2}N^{1/3}-A\sqrt{r/2})$ ($A\ge 1$ is defined in \eqref{adef}). The last inequality above follows by stochastic monotonicity (Proposition \ref{p:gmc}).  We now briefly explain how stochastic monotonicity works here. Note that the event $\{L_2(2)>MN^{1/3}\}$ is decreasing thus by stochastic monotonicity to achieve a lower bound, we can reduce the boundary $\vec{z}$ to $(-\infty)^{r}$. Furthermore, on $\m{B}(r,2M)\cap \m{F}(r,2M)$, we may reduce $\vec{y}$ to $\vec{w}$  as $y_i\ge w_i$ on $\m{B}(r,2M)\cap \m{F}(r,2M)$.
		
		Note that $MN^{1/3} \ge Mk^{-\frac12}\sqrt{r/2}$. By translation invariance (Lemma \ref{obs1} \ref{traninv}) and Proposition \ref{lem:ep}, we may choose $M_0(k,\e)$ large enough so that for all $M\ge M_0$ and $r\in \ll1,kN^{2/3}\rr\cap 2\Z$ we have
		\begin{align*}
			\Pr_{\alpha_p}^{\vec{w},(-\infty)^{r};2,r/2}(L_2(2)>MN^{1/3})=\Pr_{\alpha_p}^{(0,-A\sqrt{r/2}),(-\infty)^{r};2,r/2}\big(L_2(2)>-\tfrac12MN^{1/3}\big) \ge \tfrac12.
		\end{align*}
		Inserting the above bound in \eqref{e:gmp2} and then going back to \eqref{e:gmp1} we get
		\begin{align*}
			\mbox{r.h.s.~\eqref{e:gmp1}} \ge \tfrac12\Pr\big(\mathsf{B}(r,2M)\cap \m{F}(r,2M) \big).
		\end{align*}
		Recall that $\mathsf{B}(r,2M)\cap \m{F}(r,2M)$ are all disjoint events whose union over $r\in \ll 1,kN^{2/3}\rr\cap 2\Z$ is given by $\m{G}^+(2M)$. Summing the above inequality over $r\in \ll 1,kN^{2/3}\rr\cap 2\Z$, in view of \eqref{e:gmp0}, we thus arrive at $\Pr(\m{A}(2,M)) \ge \frac12\Pr(\m{G}^+(2M))$. This proves the first inequality in \eqref{e:gmplus}. For the second one observe that by union bound
		\begin{align*}
			\Pr(\m{A}(2,M)) \le \Pr\big(\L^N_1(3)-\L^N_2(2) \le -N^{1/3}\big)+\Pr\big(\L^N_1(3) \ge (M-1)N^{1/3}\big).
		\end{align*}
		By Theorem \ref{t:order} the first term on the right-hand side of the above equation can be made arbitrarily small by choosing $N$ large enough. As for the second term, recall the point-to-line partition function $\zl(\cdot)$ from \eqref{rfl}. From Theorem \ref{thm:bw} we know $N^{-1/3}[\log\zl(1)+2\Psi(\theta)N]$ is tight.  Since $\L^N_1(3) \le \log\zl(1)+2\Psi(\theta)N$ (see \eqref{impi}), one can make the second term arbitrarily small enough by choosing $M,N$ large enough. This completes the proof of \eqref{e:gmplus}.
		
		\medskip
		
		\noindent\textbf{Part II: $v=3$ case.} Fix $k>0$. Let us define
		$$\m{E}:=\bigg\{\sup_{s\in \ll 1,kN^{2/3}\rr} \L^N_3(s)\ge MN^{1/3}\bigg\}, \quad \m{F}:=\bigg\{\sup_{s\in \ll 1,kN^{2/3}\rr} \L^N_2(s)\ge \tfrac12MN^{1/3}\bigg\}.$$ By repeated application of the union bound we have
		\begin{align}
			\nonumber
			\Pr(\m{E}) & \le \Pr(\m{F})+\Pr(\m{E}\cap \neg \m{F}) \\ & \nonumber \le
			\Pr(\m{F})+\Pr\bigg(\L^N_2(s)-\L^N_3(s)\le -\tfrac12MN^{1/3}, \mbox{ for some }s\in \ll1,kN^{2/3}\rr\bigg) \\  & \le \Pr(\m{F})+\sum_{\ll1,kN^{2/3}\rr} \Pr\big(\L^N_2(s)-\L^N_3(s)\le -\tfrac12MN^{1/3}\big). \label{lastwo}
		\end{align}
		By Theorem \ref{t:order}, there exists an absolute constant $N_0$ such that for all $s\ge 1$, and $M\ge 1$, we have  $\Pr\left(\L^N_2(s)-\L^N_3(s)<-\tfrac12 MN^{1/3}\right)\le 2^{-N}$. Since we have established $v=2$ case of Theorem \ref{l:lhigh}, we may directed use \eqref{e:2high} with $v\mapsto 2$, $M\mapsto \frac12M$ and $\e\mapsto \frac12\e$, to get that $\Pr(\m{F})\le \frac12\e$ for all large enough $N,M$. Thus for all $N,M$ large enough we have $\eqref{lastwo} \le \frac12\e+kN^{2/3}2^{-N} \le \e$.
	\end{proof}
	
	Theorem \ref{l:lhigh} and Proposition \ref{lem:ep} can be used to deduce left boundary tightness for the $\hslg$ line ensemble. We shall refer to this property as \textit{endpoint tightness}.
	
	\begin{theorem}[Endpoint Tightness] \label{thm:eptight} Fix any $p\in \{1,2\}$. Set $\alpha:=\alpha_p$ according to \eqref{acric}. Recall the $\hslg$ line ensemble from Definition \ref{l:nz} with parameters $(\alpha,\theta)$. The sequences $\{{N^{-1/3}}\L_1^N(1)\}_{N\geq 1}$ and $\{{N^{-1/3}}\L_2^N(2)\}_{N\geq 1}$ are tight.
	\end{theorem}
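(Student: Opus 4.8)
The plan is to prove upper- and lower-tightness of $N^{-1/3}\L_1^N(1)$ and $N^{-1/3}\L_2^N(2)$ separately; together these give tightness. \emph{Upper-tightness} is immediate: point-to-point free energies are dominated by point-to-(partial)line ones, so $\L_1^N(1)=\log Z_{(\alpha,\theta)}(N,N)+2N\Psi(\theta)\le\log\zl(0)+2N\Psi(\theta)$ and $\L_1^N(3)=\log Z_{(\alpha,\theta)}(N+1,N-1)+2N\Psi(\theta)\le\log\zl(1)+2N\Psi(\theta)$, and both right-hand sides, divided by $N^{1/3}$, are tight by Theorem \ref{thm:bw} applied with $k_N\equiv0$ and $k_N\equiv1$ (in the spirit of Lemma \ref{l:ut}). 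Combining the second bound with the ordering inequality $\L_2^N(2)\le\L_1^N(3)+(\log N)^{7/6}$, valid with probability $\ge1-\rho^N$ by Theorem \ref{t:order}, gives that $N^{-1/3}\L_1^N(1)$ and $N^{-1/3}\L_2^N(2)$ are upper-tight.

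For \emph{lower-tightness} I would first record item \ref{it1} of Section \ref{sec:133} in usable form: with $M_1<M_2$ as furnished by Theorem \ref{p:high2} and a suitable constant $\Con$, the set $\mathcal P:=\{p\in\ll M_1N^{2/3},M_2N^{2/3}\rr:\ \L_1^N(2p-1)\ge-\Con N^{1/3}\ \text{and}\ \L_2^N(2p)\ge-\Con N^{1/3}\}$ is nonempty with probability $\ge1-\e$. Indeed, Theorem \ref{p:high2} produces (with probability $\ge1-\e$) a $p$ with $\L_2^N(2p)\ge-\Con N^{1/3}$, and the ordering $\L_2^N(2p)\le\L_1^N(2p-1)+(\log N)^{7/6}$ of Theorem \ref{t:order} then forces $p\in\mathcal P$ after slightly enlarging $\Con$. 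On $\{\mathcal P\ne\emptyset\}$ set $\hat p:=\max\mathcal P$. The key structural point is that $\{\hat p=n\}$ lies in $\mathcal F_n:=\sigma\big(\L^N|_{(\mathcal K_{2,n})^c}\big)$: whether an index $m$ lies in $\mathcal P$ is decided by the values of $\L_1^N(2m-1)$ and $\L_2^N(2m)$, vertices that for every $m\ge n$ lie outside $\mathcal K_{2,n}$ (whose rows have $j$-ranges $\ll1,2n-2\rr$ and $\ll1,2n-1\rr$).

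Now fix a large $K$ and write $\{\hat p=n\}_n$ for the disjoint, $\mathcal F_n$-measurable events partitioning $\{\mathcal P\ne\emptyset\}$. Then
\begin{align*}
\Pr\big(\L_1^N(1)\le-KN^{1/3}\big)\le\Pr(\mathcal P=\emptyset)+\sum_n\Ex\Big[\ind_{\{\hat p=n\}}\,\Pr\big(\L_1^N(1)\le-KN^{1/3}\mid\mathcal F_n\big)\Big].
\end{align*}
On $\{\hat p=n\}$, Theorem \ref{thm:conn}\ref{i02} together with Lemma \ref{obs1}\ref{gpsd} and Lemma \ref{def:hsgm1}\ref{deff} identify the conditional law of $\L^N|_{\mathcal K_{2,n}}$ given $\mathcal F_n$ as $\Pr_\alpha^{\vec y,\vec z;2,n}$ with $\vec y=(\L_1^N(2n-1),\L_2^N(2n))$, $\vec z=(\L_3^N(2j))_{j=1}^n$, and $y_1\wedge y_2\ge-\Con N^{1/3}$. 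The event $\{L_1(1)\le-KN^{1/3}\}$ is increasing (Definition \ref{def:hslggibbs}), so by stochastic monotonicity (Proposition \ref{p:gmc}) I may lower the boundary data to $\vec z'=(-\infty)^n$ and $\vec y'=(y_1\wedge y_2,\,(y_1\wedge y_2)-A_n\sqrt n)$, where $A_n$ is the constant $A$ of Proposition \ref{lem:ep} at $r=n/N^{2/3}\in[M_1,M_2]$ (the two componentwise inequalities $\vec y'\preceq\vec y$ hold because $A_n>0$); then translation invariance (Lemma \ref{obs1}\ref{traninv}), $y_1\wedge y_2\ge-\Con N^{1/3}$ and $\sqrt n\le\sqrt{M_2}\,N^{1/3}$ give
\begin{align*}
\Pr_\alpha^{\vec y,\vec z;2,n}\big(L_1(1)\le-KN^{1/3}\big)\le\Pr_\alpha^{(0,-A_n\sqrt n),(-\infty)^n;2,n}\Big(|L_1(1)|\ge(K-\Con)N^{1/3}\Big)\le\e
\end{align*}
once $K-\Con\ge M(\e)\sqrt{M_2}$, by Proposition \ref{lem:ep} (uniformly over $r\in[M_1,M_2]$). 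Plugging this back in and using $\sum_n\Pr(\hat p=n)\le1$ bounds the display by $\Pr(\mathcal P=\emptyset)+\e\le2\e$. Replacing $\e$ by $\e/2$, and then repeating the argument verbatim with $L_1(1)$ replaced by $L_2(2)$ (Proposition \ref{lem:ep} controls $|L_1(1)|+|L_2(2)|$), yields lower-tightness of both $N^{-1/3}\L_1^N(1)$ and $N^{-1/3}\L_2^N(2)$, completing the proof.

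The genuinely hard input is entirely upstream: it is Proposition \ref{lem:ep}, whose proof (Section \ref{sec:rpe}) rests on the weighted-paired-random-walk analysis and the uniform non-intersection estimates of Appendix \ref{app2}. Within the deduction above the only delicate points are bookkeeping --- verifying the measurability $\{\hat p=n\}\in\mathcal F_n$ so that conditioning is legitimate, and checking that the realized boundary data dominates $(0,-A_n\sqrt n)$ componentwise so that stochastic monotonicity applies --- together with quoting Theorems \ref{thm:bw}, \ref{t:order} and \ref{p:high2} for the seed estimates.
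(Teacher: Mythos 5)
Your proof is correct, and for the substantive half (lower-tightness) it is essentially the paper's own argument: the paper also locates the \emph{last} index $q\in\ll N^{2/3},RN^{2/3}\rr$ at which the second curve is high (its events $\m{C}_1(q)$, intersected with the ordering event $\m{B}_2$ from Theorem \ref{t:order} so that $\L_1^N(2q-1)$ is high as well), checks that this is measurable with respect to the $\sigma$-algebra of everything to the right/below, and then applies the Gibbs property, stochastic monotonicity (lowering the boundary and translating to $(0,-A\sqrt q)$, $(-\infty)^q$) and Proposition \ref{lem:ep} uniformly over $q$ --- exactly your $\hat p=\max\mathcal P$ decomposition; the only cosmetic difference is that the paper deduces lower-tightness of $\L_1^N(1)$ from that of $\L_2^N(2)$ via Theorem \ref{t:order}, while you read both coordinates off Proposition \ref{lem:ep} directly (and your measurability and componentwise-domination checks, as well as the uniformity of Proposition \ref{lem:ep} in the length $n\asymp N^{2/3}$, are handled in the paper at the same level of informality). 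Where you genuinely diverge is the easy direction: you bound $Z(N,N)\le\zl(0)$ and invoke Theorem \ref{thm:bw} with $k_N\equiv 0$. The paper's restatement of that theorem formally allows this (then $p\to 1$, $\theta_c=\theta$, $y=\sigma_{\theta,1}\mu$ or $\infty$), but the authors never use it at $k=0$: they instead bound $\L_1^N(3)\le\log\zl(1)+2\Psi(\theta)N$ (tight by Theorem \ref{thm:bw} at $k_N\equiv 1$), control $\L_2^N\ll 1,4\rr$ via Theorem \ref{t:order}, and then bound $\L_1^N(1)$ from above by a one-step Gibbs resampling on the two-point domain $\mathcal{K}_{1,2}$ combined with stochastic monotonicity and the explicit exponential tails of the one-point weight. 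If the input from \cite{bw} is only guaranteed for endpoints strictly off the diagonal (plausibly why the paper works with $\zl(1)$ rather than $\zl(0)$), you should replace your $k_N\equiv 0$ step by this short resampling argument; otherwise your shortcut simply gives a slightly cleaner proof of upper-tightness, and everything else matches the paper.
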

	
	Again the proof can be extended to include tightness of $N^{-1/3}\L_2^N(1)$ as well, once we have the corresponding version in Proposition \ref{lem:ep}. We again refrain from doing so, as it is inconsequential to the proofs of our main theorem. We refer to the discussion in the introduction (Remark \ref{ims22rk}) about how Theorem \ref{thm:eptight} relates to the work of \cite{ims22}.
	

	\begin{proof}[Proof of Theorem \ref{thm:eptight}] Fix an $\e>0$. We shall show that for all large enough $N,M$ we have
		\begin{align}\label{ethr}
			\Pr(\L_1^N(1)\le MN^{1/3}) \ge 1-3\e, \quad \Pr(\L_2^N(2)\le -MN^{1/3}) \le 3\e.
		\end{align}
		In view of the ordering of points in the line ensemble (Theorem \ref{t:order}), we know $\L_1^N(1) \ge \L_2^N(2)-(\log N)^{7/6}$ with probability at least $1-2^{-N}$. This along with the above equation ensures endpoint tightness. We thus focus on proving \eqref{ethr}.
		
		\medskip
		
		\noindent{\textbf{Proof of the first inequality in \eqref{ethr}.}} Recall the point-to-line partition function $\zl(\cdot)$ from \eqref{rfl}. From Theorem \ref{thm:bw}, we know $N^{-1/3}\big(\log\zl(1)+2\Psi(\theta)N\big)$ is tight.  Since $\L_1^N(3) \le \log\zl(1)+2\Psi(\theta)N$, there exists $M_1(\e)>0$ such that for all $N\ge 3$ we have $\Pr(\L_1^N(3) \le M_1N^{1/3}) \ge 1-\e.$ Thanks to Theorem \ref{t:order}, there exists $M_2(\e)>M_1(\e)$ such that for all $N\ge 3$
		\begin{align*}
			\Pr(\m{A})\ge 1-2\e, \quad \m{A}:=\bigg\{\L_1^N(3)\le M_1N^{1/3}, \sup_{j\in\ll1,4\rr}\L_2^N(j) \le M_2N^{1/3}\bigg\}.
		\end{align*}
		Define $\mathcal{F}:=\sigma\big((\L_1^N(j))_{j\ge 3}, (\L_i^N\ll1,2N-2i+2\rr)_{i\ge 2}\big)$. By the union bound and tower property of the conditional expectation, for any $M_3>0$ we have
		\begin{align*}
			\Pr(\L_1^N(1)\ge M_2N^{1/3}+M_3) \le 2\e + \Ex\left[\ind_{\m{A}}\Ex\big[\ind_{\L_1^N(1)\ge M_2N^{1/3}+M_3}\mid \mathcal{F}\big]\right]
		\end{align*}
		Using Theorem \ref{thm:conn} we have
		\begin{align*}
			\Ex\big[\ind_{\L_1^N(1)\ge M_2N^{1/3}+M_3}\mid \mathcal{F}\big] = \Pr_{\alpha_p}^{\L_1^N(3),(\L_2^N(2),\L_2^N(4));1,2}(L_1(1)\ge  M_2N^{1/3}+M_3)
		\end{align*}
		On event $\m{A}$, the boundary data are at most $M_2N^{1/3}$. By stochastic monotonicity (Proposition \ref{p:gmc}) and translation invariance of the Gibbs measure (Lemma \ref{obs1} \ref{traninv})  we have
		\begin{align*}
			\ind_{\m{A}}\cdot	\Pr_{\alpha_p}^{\L_1^N(3),(\L_2^N(2),\L_2^N(4));1,2}(L_1(1)\ge  M_2N^{1/3}+M_3) \le \ind_{\m{A}}\cdot\Pr_{\alpha_p}^{0,(0,0);1,2}(L_1(1)\ge M_3).
		\end{align*}
		The last probability can be made less than $\e$ by taking $M_3$ large enough. Thus setting $M_4=M_4(\e):=M_3+M_2$, we see that for all $N\ge 3$, the first inequality in \eqref{ethr} holds with $M=M_4$.
		
		\medskip
		
		\noindent{\textbf{Proof of the second inequality in \eqref{ethr}.}} We start by defining two high probability events $\m{B}_1$ and $\m{B}_2$. The idea is to then show $\Pr\big(\{\L_2^N(2)\le -MN^{1/3}\}\cap \m{B}_1\cap \m{B}_2\big)$ can be made arbitrarily small by choosing $N,M$ large enough.
		
		\medskip
		
		We shall use Theorem \ref{p:high2} (high point on the second curve) with $k\mapsto 1$. Consider $R_0=R_0(1,\e)>0$ from Theorem \ref{p:high2}. Set $R=\max\{R_0,1\}$. By Theorem \ref{p:high2} with $k\mapsto 1$, there exists $M_5(\e)>0$ such that for all large enough $N$
		\begin{align*}
			\Pr(\m{B}_1) \ge 1-\e, \quad \m{B}_1 & := \bigcup_{q\in \ll N^{2/3},RN^{2/3}\rr} \m{B}_1(p), \quad \m{B}_1(q):=\big\{ \L_2^N(2q) \ge -M_5N^{1/3}\big\}.
		\end{align*}
		We write the set $\m{B}_1$ as a union of disjoint sets as follows:
		\begin{align*}
			\m{C}_1(q):= \m{B}_1(q) \cap \bigcap_{s\in \ll q+1,RN^{2/3}\rr}\neg \m{B}_1(s), \quad \m{C}_1:=\bigsqcup_{q\in \ll N^{2/3},RN^{2/3}\rr} \m{C}_1(q)=\m{B}_1.
		\end{align*}
		By Theorem \ref{t:order}, for large enough $N$ we have
		\begin{align*}
			\Pr(\m{B}_2) \ge 1-\e, \quad 	\m{B}_2 & :=\bigcap_{q\in \ll N^{2/3},RN^{2/3}\rr} \m{B}_2(q), \quad \m{B}_2(q):=\big\{\L_2^N(2q)-\L_1^N(2q-1) \le N^{1/3} \big\}.
		\end{align*}
		Set $\mathcal{F}_q:=\sigma\big((\L_1^N(j-1),\L_2^N(j))_{j\ge 2q}, (\L_i^N\ll1,2N-2i+2\rr)_{i\ge 3}\big)$. Observe that $\m{B}_2(q)\cap \m{C}_1(q)$ is measurable with respect to $\mathcal{F}_q$. Note that for any $M_6>0$ we have
		\begin{align}
			\nonumber \Pr\bigg(\left\{\L_2^N(2)\le -{M}_6N^{1/3}\right\} \cap \m{B}_1\cap\m{B}_2\bigg) & \le \sum_{q\in \ll N^{2/3},RN^{2/3}\rr}\Pr\bigg(\left\{\L_2^N(2)\le -{M}_6N^{1/3}\right\}\cap \m{C}_1(p)\cap\m{B}_2(p)\bigg) \\ & = \sum_{q\in \ll N^{2/3},RN^{2/3}\rr} \Ex\left[\ind_{\m{B}_2(q)\cap\m{C}_1(q)}\Ex\left[\ind_{\L_2^N(2)\le -{M}_6N^{1/3}}\mid \mathcal{F}_q\right]\right]. \label{abd}
		\end{align}
		By the Gibbs property (Theorem \ref{thm:conn})  we have
		\begin{align*}
			\ind_{\m{B}_2(q)\cap\m{C}_1(q)} \cdot \Ex\left[\ind_{\L_2^N(2)\le -{M}_6N^{1/3}}\mid \mathcal{F}_q\right] & =\ind_{\m{B}_2(q)\cap\m{C}_1(q)} \cdot \Pr_{\alpha_p}^{(\L_1^N(2q-1),\L_2^N(2q)),(\L_3^N(2i))_{i=1}^{q};2,q}(L_2(2)\le -{M}_6N^{\frac13}) \\ & \le \ind_{\m{B}_2(q)\cap\m{C}_1(q)} \cdot \Pr_{\alpha_p}^{(y_1,y_2),(-\infty)^{q};2,q}(L_2(2)\le -{M}_6N^{1/3}),
		\end{align*}
		where $y_1=-(M_5+1)N^{1/3}$, $y_2=-M_5N^{1/3}$. The last inequality follows due to stochastic monotonicity (Proposition \ref{p:gmc}) as on the event $\m{B}_2(q)\cap\m{C}_1(q)$ we have $\L_2^N(2q) \ge -M_5N^{1/3}$ and $\L_1^N(2q-1) \ge -(M_5+1)N^{1/3}$. By translation invariance (Lemma \ref{obs1} \ref{traninv}) and stochastic monotonicity (Proposition \ref{p:gmc}) we have
		\begin{align*}
			\Pr_{\alpha_p}^{(y_1,y_2),(-\infty)^{q};2,q}(L_2(2)\le -{M}_6N^{1/3}) \le \Pr_{\alpha_p}^{(0,-A\sqrt{q}),(-\infty)^{q};2,q}(L_2(2)\le (M_5+1-{M}_6)N^{1/3}) \le \e,
		\end{align*}
		where the last inequality is uniform over $q\in \ll N^{2/3},RN^{2/3}\rr$ and follows from Proposition \ref{lem:ep} by taking $M_6$ large enough ($A\ge 1$ is defined in \eqref{adef}). Plugging the above bound back in \eqref{abd}, and noting that $(\m{B}_2(q))_{q\in \ll N^{2/3},RN^{2/3}\rr}$ forms a disjoint collection of events we have that $\eqref{abd} \le \e$. Using the fact that $\Pr(\neg \m{B}_i)\le \e$ for $i=1,2$, an application of the union bound yields the second inequality in \eqref{ethr} with $M={M}_6$.
	\end{proof}
	
	\section{Properties of the first two curves of Gibbs measures with no bottom curve}
	\label{sec:rpe}
	In this section, we prove Proposition \ref{lem:ep} that asserts endpoint tightness of \btf\ measures defined in Definition \ref{def:btf}. Along with Proposition \ref{lem:ep}, we also study probabilities of a certain event which we call \textit{region pass event} 	under the \btf\ measure.

	\begin{proposition}\label{l:rpass} Fix any $r,M>0$ and $p\in \{1,2\}$. Set $T=\lfloor rN^{2/3}\rfloor$.  We set $\alpha=\alpha_p$ according to \eqref{acric}. Recall the \btf\ measure from Definition \ref{def:btf}. Let $\vec{y}\in \R^p$ with $y_i=-(M+i-1)N^{1/3}$. There exists $\phi=\phi(r,M)>0$ and $N_0(r,M)>0$ such that for all $N\ge N_0$ we have
		\begin{align}\label{eq:l5.2}
			\Pr_{\alpha_p}^{\vec{y},(-\infty)^{2T};p,2T}(\m{RP}_{p,M})\ge \phi,
		\end{align}
	where the region pass event is defined as
	\begin{align}\label{defrp}
		\m{RP}_{p,M}:= \left\{\inf_{j\in \ll1,2T+p-2\rr} L_p(j) \ge 2MN^{1/3}\right\}.
	\end{align}
	\end{proposition}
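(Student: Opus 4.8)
\textbf{Proof plan for Proposition \ref{l:rpass}.}

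The plan is to analyze the bottom-free measure $\Pr_{\alpha_p}^{\vec y,(-\infty)^{2T};p,2T}$ directly via its explicit density from Definition \ref{def:btf} and Lemma \ref{def:hsgm1}\ref{deff}, reducing (as in \eqref{e.reduction}) to a statement about either a single weighted random walk bridge-free path (when $p=1$) or a weighted paired random walk (when $p=2$). In both cases the underlying reference measure is a random walk started at $\vec y$ with increments of density $\fa$, run for order $N^{2/3}$ steps and reweighted by $W^{(\cdot)}$ (together with the $\ga$ boundary factor when $p=2$). Since $\m{RP}_{p,M}$ is a decreasing event (it asks that $L_p$ stay \emph{high}, i.e. above $2MN^{1/3}$, uniformly on $\ll 1,2T+p-2\rr$), and the starting heights $y_i = -(M+i-1)N^{1/3}$ are of order $N^{1/3}=\sqrt{T}/\sqrt{r}$, this is exactly the regime in which the WPRW machinery described in the introduction (around \eqref{wprwest}) applies. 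By translation invariance (Lemma \ref{obs1}\ref{traninv}) I may shift so that, say, $y_1=0$, $y_2=-N^{1/3}$, and I seek a lower bound on the probability that the (shifted) path stays above a level of order $-\sqrt{T}$ — here $M$ being large makes the target height very negative relative to the start, so the event is not only possible but of probability bounded below uniformly in $N$.

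The key steps, in order: (1) Rewrite the bottom-free probability of $\m{RP}_{p,M}$ using \eqref{e2gib}–\eqref{defvkt} with $\vec z=(-\infty)^{2T}$ (so the $W$ reweighting against $L_p$ disappears and we genuinely have the bottom-free law on $p$ curves), and use Remark \ref{r.evenodd} / Lemma \ref{l:LSCrit} to pass to the WPRW (for $p=2$) or single weighted walk (for $p=1$) description of $(L_p(j))_j$; the extra even/odd interpolated points are then controlled for free by the Gibbs property. (2) Apply \eqref{wscint}: it suffices to lower-bound $\Ex_{\operatorname{PRW}}^{n;(\cdot)}[W\ind_{\m{RP}}]$ and upper-bound $\Ex_{\operatorname{PRW}}^{n;(\cdot)}[W]$, with $n$ of order $T\sim N^{2/3}$. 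The upper bound $\Ex_{\operatorname{PRW}}^{n;(\cdot)}[W]\le \Con n^{-1/2}$ is exactly the content of the discussion after \eqref{wprwest} (it follows from Lemma \ref{as:el} and Lemma \ref{genni}). (3) For the lower bound on the numerator, intersect $\m{RP}$ with the "$\m{Gap}$"-type event and the starting-point localization $\{\ise-\iise\in[0,1]\}\cap\m{F}$ used in \eqref{eqrgv}–\eqref{e3r}: on such events $W$ is bounded below by a positive constant $a_\beta$, so $\Ex_{\operatorname{PRW}}^{n;(\cdot)}[W\ind_{\m{RP}}]\ge a_\beta\,\Pr_{\operatorname{PRW}}^{n;(\cdot)}(\m{RP}\cap\m{Gap}_\beta\cap\{\ise-\iise\in[0,1]\}\cap\m{F})$. (4) Bound this PRW-probability below: conditioning on $(\ise,\iise)$ gives two independent random bridges from $(\ise,\iise)$ to $\vec y$; using the modified-random-bridge comparison (Lemma \ref{lem:compare}) and the fact that under the PRW law $\ise,\iise$ are of order $\sqrt n$ with the left-boundary pinning $\ise-\iise=O(1)$ (Lemma \ref{as:el}), the event $\m{RP}_{p,M}$ — that the bridge stays above a level which (after shifting) is $O(\sqrt n)$ \emph{below} the endpoints and start — is a "stay above a line a fixed number of standard deviations away" event for a random walk bridge of length $n$, hence has probability bounded below by a constant $\delta(r,M)>0$ by invariance principle / Brownian bridge estimates (as invoked in the proof of Theorem \ref{p:high2}), uniformly in $n$. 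Combining, $\Pr_{\operatorname{WPRW}}^{n;(\cdot)}(\m{RP}_{p,M})\ge (a_\beta\delta)/(\Con n^{-1/2})\cdot n^{-1/2}=\phi(r,M)>0$; note the $n^{-1/2}$ factors cancel, which is what makes the bound uniform in $N$.

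The main obstacle is the lower bound in step (4): one must simultaneously force (i) the gap $\iks-\iiks$ to stay above the prescribed $\min\{k,n-k+1\}^{1/4}$ profile so that $W$ is bounded below, (ii) both curves to stay above the level $2MN^{1/3}$ — which, crucially, becomes \emph{easier} as $M$ grows since the endpoints $y_i=-(M+i-1)N^{1/3}$ drop down with $M$, so after translating to endpoints $0,-N^{1/3}$ the target level slides down to $\approx -(3M)N^{1/3}$, far below typical bridge fluctuations — and (iii) the starting-point localization $\ise,\iise$ of order $\sqrt n$ and $\ise-\iise\in[0,1]$. The subtlety is that these are not all monotone in the same direction (a high gap and a uniformly high pair of curves are somewhat in tension near the boundary), so one needs to choose $\beta$ small (so $\m{Gap}_\beta$ is a mild constraint) and then show the joint PRW-probability is still bounded below uniformly in $n$, via the modified-bridge decomposition and standard (but carefully uniform) non-intersecting/stay-above bridge estimates from Appendix \ref{app2}. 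I expect this to be where all the real work sits; the density-rewriting in steps (1)–(3) is bookkeeping, and the cancellation of $n^{-1/2}$ in step (3)–(4) is the same mechanism already highlighted after \eqref{wprwest}.
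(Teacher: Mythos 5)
Your overall mechanism --- pass to the $\m{WPRW}$ ratio \eqref{wscint}, upper-bound the denominator by $\Con n^{-1/2}$, lower-bound the numerator by restricting to a $\m{Gap}_\beta$-type event on which $\wsc$ is bounded below, let the $n^{-1/2}$ factors cancel, and recover the interpolated odd/even points afterwards via the $\qo$-conditional laws --- is the same as the paper's. But there is a genuine gap in your step (4), caused by reading the geometry of $\m{RP}_{p,M}$ backwards. The event requires $L_p(j)\ge +2MN^{1/3}$ for all $j\in\ll1,2T+p-2\rr$ while the right boundary datum is $y_p=-(M+p-1)N^{1/3}$; after translating the endpoints to $(0,-\sqrt{n})$ the target level therefore moves \emph{up} to roughly $+3MN^{1/3}$ (of order $Mr^{-1/2}\sqrt{n}$ above the new endpoints), not down to $-3MN^{1/3}$ as you assert. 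This is not a ``stay above a barrier far below the typical trajectory'' event that gets easier as $M$ grows; it is a ``make an upward excursion of order $M\sqrt{n}$ above the boundary data, hold it for half the time horizon, then descend'' event, whose probability decays in $M$ and must be shown to stay bounded below uniformly in $N$.

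Two concrete consequences. First, the localization $\{\ise-\iise\in[0,1]\}\cap\m{F}$ with $\m{F}=\{|\ise|+|\iise|\le\sqrt{n}\}$ is \emph{incompatible} with $\m{RP}_{p,M}$ once $M$ is moderately large (on $\m{RP}_{p,M}$ the entry point satisfies $\iise\gtrsim 3MN^{1/3}>\sqrt{n}$ after translation), so the intersection you propose to lower-bound is empty. What is needed instead is to condition the entry points into a \emph{high} window of width order $\sqrt{n}$ around $\sim 10Mr^{-1/2}\sqrt{n}$ (the paper's event $\m{E}_m$); that this window has PRW-probability bounded below is Lemma \ref{as:el}, Eq.~\eqref{t3}, applied with a general interval $I_2$ --- a step your plan never invokes. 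Second, the invariance-principle estimate you cite is for the wrong event: the correct argument is a tube construction forcing both bridges to stay in a narrow band at height $\sim 10mr^{-1/2}\sqrt{n}$ on the first half of the domain and then descend along a piecewise-linear profile to $(0,-\sqrt{n})$ while retaining a gap of order $\delta\sqrt{n}$ at the junction, so that the second-half non-intersection probability is bounded below by Lemma \ref{unini}. With those two corrections your outline would align with the paper's proof (including the separate, easier KMT argument for $p=1$), but as written the central lower bound does not go through.
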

	
	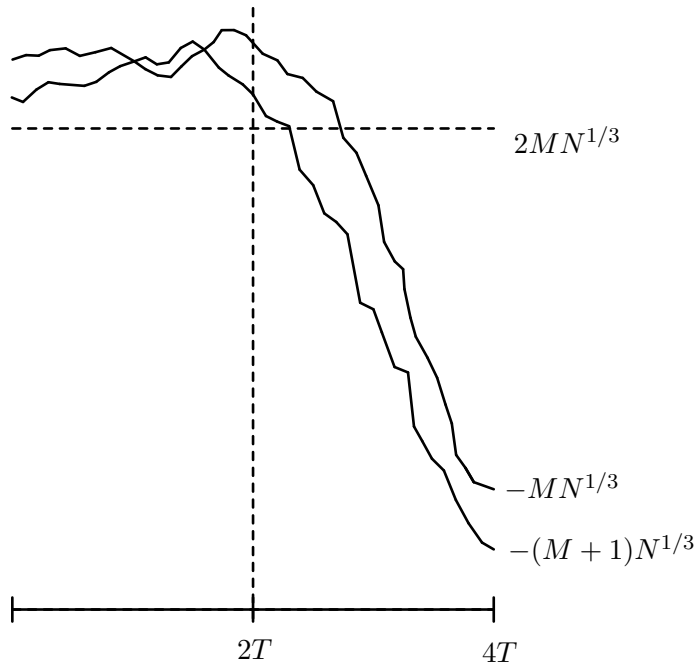
\begin{figure}[h!]
		\centering
		\begin{tikzpicture}[line cap=round,line join=round,>=triangle 45,x=0.8cm,y=0.8cm]
			\draw [line width=1pt] (5,-1)-- (13,-1);
			\draw [line width=1pt,dashed] (5,7)-- (13,7);
			\draw [line width=1pt,dashed] (5,-1)-- (13,-1);
			\draw [line width=1pt] (5,-0.8)-- (5,-1.2);
			\draw [line width=1pt] (9,-0.8)-- (9,-1.2);
			\draw [line width=1pt] (13,-0.8)-- (13,-1.2);
			\draw [line width=1pt] (4.991016453694423,7.515903485791182)-- (5.1773598421639715,7.4409322209817805);
			\draw [line width=1pt] (5.1773598421639715,7.4409322209817805)-- (5.400971908327431,7.647015847315514);
			\draw [line width=1pt] (5.400971908327431,7.647015847315514)-- (5.599738189361617,7.769215770654496);
			\draw [line width=1pt] (5.599738189361617,7.769215770654496)-- (5.786081577831166,7.7405723858819275);
			\draw [line width=1pt] (5.786081577831166,7.7405723858819275)-- (6.196037032464175,7.708820705557649);
			\draw [line width=1pt] (6.196037032464175,7.708820705557649)-- (6.394803313498361,7.775307256603095);
			\draw [line width=1pt] (6.394803313498361,7.775307256603095)-- (6.605992487097184,7.934078733558778);
			\draw [line width=1pt] (6.792335875566733,8.036267974016276)-- (6.605992487097184,7.934078733558778);
			\draw [line width=1pt] (6.792335875566733,8.036267974016276)-- (7.027014017121064,8.115892894703272);
			\draw [line width=1pt] (7.027014017121064,8.115892894703272)-- (7.214714222764378,8.18320017392581);
			\draw [line width=1pt] (7.401057611233927,8.066660137615377)-- (7.599823892268113,8.09871345828387);
			\draw [line width=1pt] (7.401057611233927,8.066660137615377)-- (7.214714222764378,8.18320017392581);
			\draw [line width=1pt] (7.599823892268113,8.09871345828387)-- (7.7985901733022995,8.338211987149002);
			\draw [line width=1pt] (7.7985901733022995,8.338211987149002)-- (7.997356454336486,8.446027041226872);
			\draw [line width=1pt] (7.997356454336486,8.446027041226872)-- (8.190240919375054,8.311989525650649);
			\draw [line width=1pt] (8.190240919375054,8.311989525650649)-- (8.432157694098768,8.011701574788901);
			\draw [line width=1pt] (8.432157694098768,8.011701574788901)-- (8.593655297439044,7.880906406872265);
			\draw [line width=1pt] (8.593655297439044,7.880906406872265)-- (8.82969025616714,7.732916025304531);
			\draw [line width=1pt] (8.82969025616714,7.732916025304531)-- (9,7.566179660144361);
			\draw [line width=1pt] (9,7.566179660144361)-- (9.214799925670874,7.207350679472336);
			\draw [line width=1pt] (9.214799925670874,7.207350679472336)-- (9.401143314140425,7.114299467114728);
			\draw [line width=1pt] (9.401143314140425,7.114299467114728)-- (9.59990959517461,7.037955926656306);
			\draw [line width=1pt] (9.59990959517461,7.037955926656306)-- (9.773830091079523,6.315838385379071);
			\draw [line width=1pt] (9.773830091079523,6.315838385379071)-- (10,6.056501059527714);
			\draw [line width=1pt] (10,6.056501059527714)-- (10.183785545712531,5.58873593358919);
			\draw [line width=1pt] (10.183785545712531,5.58873593358919)-- (10.382551826746719,5.44521690259839);
			\draw [line width=1pt] (10.382551826746719,5.44521690259839)-- (10.568895215216267,5.240384665491709);
			\draw [line width=1pt] (10.568895215216267,5.240384665491709)-- (10.780084388815089,4.102094270590119);
			\draw [line width=1pt] (10.780084388815089,4.102094270590119)-- (11,3.9932274470869396);
			\draw [line width=1pt] (11,3.9932274470869396)-- (11.140348273189552,3.608182865419951);
			\draw [line width=1pt] (11.140348273189552,3.608182865419951)-- (11.351537446788374,3.033384599580616);
			\draw [line width=1pt] (11.351537446788374,3.033384599580616)-- (11.575149512951834,2.9424609433807682);
			\draw [line width=1pt] (11.575149512951834,2.9424609433807682)-- (11.674532653468926,2.0438601711192814);
			\draw [line width=1pt] (11.674532653468926,2.0438601711192814)-- (11.81118447167993,1.8020196500116272);
			\draw [line width=1pt] (11.81118447167993,1.8020196500116272)-- (11.972682075020206,1.5085599529477771);
			\draw [line width=1pt] (11.972682075020206,1.5085599529477771)-- (12.171448356054391,1.3117721799118174);
			\draw [line width=1pt] (12.171448356054391,1.3117721799118174)-- (12.370214637088578,0.8222560561561116);
			\draw [line width=1pt] (12.370214637088578,0.8222560561561116)-- (12.5814038106874,0.4341744267582974);
			\draw [line width=1pt] (12.5814038106874,0.4341744267582974)-- (12.80501587685086,0.11365207232298165);
			\draw [line width=1pt] (12.80501587685086,0.11365207232298165)-- (13,0);
			\draw [line width=1pt] (5.00343934625906,8.146132387316179)-- (5.227051412422519,8.220005022587138);
			\draw [line width=1pt] (5.227051412422519,8.220005022587138)-- (5.438240586021341,8.210403572958194);
			\draw [line width=1pt] (5.438240586021341,8.210403572958194)-- (5.624583974490891,8.302691319256892);
			\draw [line width=1pt] (5.624583974490891,8.302691319256892)-- (5.88546471834826,8.332297677021353);
			\draw [line width=1pt] (5.88546471834826,8.332297677021353)-- (6.121499677076356,8.207041264402111);
			\draw [line width=1pt] (6.121499677076356,8.207041264402111)-- (6.382380420933725,8.26849365061283);
			\draw [line width=1pt] (6.382380420933725,8.26849365061283)-- (6.6432611647910935,8.33854453791707);
			\draw [line width=1pt] (6.6432611647910935,8.33854453791707)-- (7.027014017121064,8.115892894703272);
			\draw [line width=1pt] (7.027014017121064,8.115892894703272)-- (7.20995031934465,7.98730190802151);
			\draw [line width=1pt] (7.20995031934465,7.98730190802151)-- (7.415006177681988,7.883343759704072);
			\draw [line width=1pt] (7.415006177681988,7.883343759704072)-- (7.636688186695326,7.856478554030042);
			\draw [line width=1pt] (7.636688186695326,7.856478554030042)-- (7.808491743680663,8.031221642799224);
			\draw [line width=1pt] (7.808491743680663,8.031221642799224)-- (7.985837350891334,8.202116655464364);
			\draw [line width=1pt] (7.985837350891334,8.202116655464364)-- (8.190240919375054,8.311989525650649);
			\draw [line width=1pt] (8.190240919375054,8.311989525650649)-- (8.346070615538007,8.437552686038327);
			\draw [line width=1pt] (8.346070615538007,8.437552686038327)-- (8.47907982094601,8.634966211551848);
			\draw [line width=1pt] (8.47907982094601,8.634966211551848)-- (8.684135679283349,8.636273837528606);
			\draw [line width=1pt] (8.684135679283349,8.636273837528606)-- (8.883649487395353,8.55007795072708);
			\draw [line width=1pt] (8.883649487395353,8.55007795072708)-- (9.160751998662025,8.243304356069553);
			\draw [line width=1pt] (9.160751998662025,8.243304356069553)-- (9.404602208576696,8.133176139045947);
			\draw [line width=1pt] (9.404602208576696,8.133176139045947)-- (9.570863715336701,7.905714799348354);
			\draw [line width=1pt] (9.570863715336701,7.905714799348354)-- (9.825798025701992,7.840405282105841);
			\draw [line width=1pt] (9.825798025701992,7.840405282105841)-- (10.053022084940663,7.612823212038233);
			\draw [line width=1pt] (10.053022084940663,7.612823212038233)-- (10.330124596207334,7.4539789012551445);
			\draw [line width=1pt] (10.330124596207334,7.4539789012551445)-- (10.5,6.845805361027928);
			\draw [line width=1pt] (10.5,6.845805361027928)-- (10.718068111980674,6.5964635527823585);
			\draw [line width=1pt] (10.718068111980674,6.5964635527823585)-- (10.878787568515344,6.213836242473325);
			\draw [line width=1pt] (10.878787568515344,6.213836242473325)-- (11.08384342685268,5.722227825812045);
			\draw [line width=1pt] (11.08384342685268,5.722227825812045)-- (11.178058280683349,5.113143988825);
			\draw [line width=1pt] (11.35540388789399,4.788603455949127)-- (11.493955143527325,4.659801457397594);
			\draw [line width=1pt] (11.178058280683349,5.113143988825)-- (11.35540388789399,4.788603455949127);
			\draw [line width=1pt] (11.493955143527325,4.659801457397594)-- (11.51612334442866,4.327278443877564);
			\draw [line width=1pt] (11.51612334442866,4.327278443877564)-- (11.615880248484661,3.8529641893566153);
			\draw [line width=1pt] (11.615880248484661,3.8529641893566153)-- (11.704553052089995,3.536710778837294);
			\draw [line width=1pt] (11.704553052089995,3.536710778837294)-- (11.898524809976665,3.1891927771308306);
			\draw [line width=1pt] (11.898524809976665,3.1891927771308306)-- (12.059244266511334,2.8541994787218625);
			\draw [line width=1pt] (13,1)-- (12.668869791297977,1.1181950321951888);
			\draw [line width=1pt] (12.668869791297977,1.1181950321951888)-- (12.530318535664643,1.3548649368496712);
			\draw [line width=1pt] (12.530318535664643,1.3548649368496712)-- (12.375141129355306,1.5709126446947848);
			\draw [line width=1pt] (12.375141129355306,1.5709126446947848)-- (12.303094476425972,2.0937547673411867);
			\draw [line width=1pt] (12.303094476425972,2.0937547673411867)-- (12.197795522144638,2.409651630185193);
			\draw [line width=1pt] (12.197795522144638,2.409651630185193)-- (12.059244266511334,2.8541994787218625);
			\draw [line width=1pt] (12.668869791297977,1.1181950321951888)-- (12.530318535664643,1.3548649368496712);
			\draw [line width=1pt,dashed] (9,9)-- (9,-1.2);
			\draw (13.166135065639724,7.203395294452487) node[anchor=north west] {$2MN^{1/3}$};
			\draw (13.010692073419078,1.45472991623822) node[anchor=north west] {$-MN^{1/3}$};
			\draw (13.072869270307336,0.441062507469147) node[anchor=north west] {$-(M+1)N^{1/3}$};
			\draw (8.565022495908623,-1.319032459012575) node[anchor=north west] {$2T$};
			\draw (12.63762889208953,-1.3812096559008333) node[anchor=north west] {$4T$};
		\end{tikzpicture}
		\caption{The above figure depicts the event $\m{RP}_{2,M}$ under the law $\Pr_{\alpha_2}^{\vec{y},(-\infty)^{2T};2,2T}$.}
		\label{rpass}
	\end{figure}
	
	The event $\m{RP}_{p,M}$ requires the first $2T+p-2$ points of the $p$-th curve to lie above $2MN^{1/3}$. Although this is a low probability event, Proposition \ref{l:rpass} says that this event always has positive probability (independent of $N$) under the \btf\ measure. (see Figure \ref{rpass} for $p=2$ case).

	Recall from \eqref{acric} that $\alpha_1$ and $\alpha_2$ are the boundary parameters corresponding to critical and supercritical phases respectively. Depending on the phase being critical or supercritical, the arguments for proving Proposition \ref{lem:ep} and Proposition \ref{l:rpass} are markedly different. We first give interpretation of the \btf\ laws under the two phases in Section \ref{sec5.1}. In Section \ref{sec5.3} and \ref{sec5.5}, we provide proofs of the aforementioned lemmas for critical and supercritical phases respectively.
	
	In the critical phase, the left boundary attraction between the first two curves is weak as $\alpha_1=O(N^{-1/3})$ -- it is the soft-intersection that only comes into the effect. The analysis of the Gibbs measures in this case is similar to the one done in studying full-space line ensembles and relies on KMT coupling type results. In the supercritical phase, we have $\alpha_2=O(1)$ and the soft-intersection and attraction acts as two opposite forces: one tries to repel the curves and another tries to attract. This situation has asymptotically zero probability. The KMT coupling is no longer suitable to analyze events under this setting. This makes the argument for the supercritical phase more involved.
	
	Let us first introduce a piece of notation that we will use frequently for the remainder of the paper. Consider any probability measure $\Pr^{\bullet}$ on $\R^{|\mathcal{K}_{k,T}|}$ equipped with Borel $\sigma$-algebra on $\R^{|\mathcal{K}_{k,T}|}$ where $\mathcal{K}_{k,T}$ is defined in \eqref{def:kkt}. For $\omega\in \R^{|\mathcal{K}_{k,T}|}$, we denote the coordinate functions as $L_i(j)(\omega):=\omega_i(j)$ for $(i,j)\in \mathcal{K}_{k,T}$. We will simply write  $(L_i(j))_{(i,j)\in \mathcal{K}_{k,T}} \sim \Pr^{\bullet}$ for the random variables $(L_i(j))_{(i,j)\in \mathcal{K}_{k,T}}$ under the measure $\Pr^{\bullet}$.

	\subsection{The \btf\ laws under critical and supercritical phase} \label{sec5.1} In this section, we provide two alternative (and ultimately equivalent) representations of the \btf\ laws defined in Definition \ref{def:btf}. The first representation, provided in Lemma \ref{l:LCrit}, is most suitable for studying the critical phase while the second representation, provided in Lemma \ref{l:LSCrit} (see also Definition \ref{prb} and the discussion in the introduction), is most suitable for studying the supercritical phase.

We begin with the following lemma where we mention how the \btf\ measure on the domain $\mathcal{K}_{k,T}$ (see \eqref{def:kkt}) with boundary condition $\vec{y}$ is well-defined under certain cases.

	\begin{observation}[Well-definedness of \btf\ measures] \label{obs2.5} Take $\vec{y}\in \R^k$. For $k\ge 1$ and $\alpha\in (-\theta,\theta)$, $f_{k,T}^{\vec{y},(-\infty)^{T}}(\mathbf{u})$ (see \eqref{def:fhsgm}) is proportional to
		\begin{align}\label{intr1}
			\prod_{i=1}^{k-1} \prod_{j=1}^{T-\ind_{i=1}} W(u_{i+1,2j};u_{i,2j+1},u_{i,2j-1})\prod_{i=1}^k\prod_{j=1}^{2T-1-\ind_{i=1}}G_{\theta+(-1)^{i+j-1}\alpha,(-1)^{j+1}}(u_{i,j}-u_{i,j+1}).
		\end{align}
		where $W$ and $G$ are defined in \eqref{def:wfns} and \eqref{def:gwt}. Furthermore,	 $f_{2,T}^{\vec{y},(-\infty)^{T}}(\mathbf{u})$ is proportional to
		\begin{equation}
			\label{intr2}\begin{aligned}
				& \exp\left(-e^{u_{2,2}-u_{1,3}}\right)G_{\alpha,1}(u_{2,2}-u_{1,1}) G_{\theta,1}(u_{1,1}-u_{1,2})G_{\alpha+\theta,1}(u_{2,1}-u_{2,2})\\ & \hspace{2cm}\prod_{j=2}^{T-1} W(u_{2,2j};u_{1,2j+1},u_{1,2j-1})\prod_{i=1}^2\prod_{j=2}^{2T-1-\ind_{i=1}} G_{\theta,(-1)^{j+1}}(u_{i,j}-u_{i,j+1}).
			\end{aligned}
		\end{equation}
	Moreover, the above two densities are integrable.
	\end{observation}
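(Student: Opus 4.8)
\textbf{Proof proposal for Lemma~\ref{obs2.5}.}
The plan is to verify the two claimed proportionality identities by direct algebraic manipulation of the density $f_{k,T}^{\vec{y},(-\infty)^T}$ from \eqref{def:fhsgm}, and then to prove integrability by a reduction to one-dimensional log-gamma integrals. First, recall that sending $z_j = u_{k+1,2j}\to-\infty$ in \eqref{def:fhsgm} kills every factor $W(u_{k+1,2j};\,\cdot\,,\cdot\,) = \exp(-e^{u_{k+1,2j}-\cdot}-e^{u_{k+1,2j}-\cdot})$, which tends to $1$; this is what makes the \btf\ measure well-posed and $\alpha$-independent in the structure of the surviving $W$-factors. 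So the surviving $W$-factors in \eqref{def:fhsgm} are exactly $\prod_{i=1}^{k-1}\prod_{j=1}^{T-\ind_{i=1}} W(u_{i+1,2j};u_{i,2j+1},u_{i,2j-1})$. What remains is to absorb the boundary factors $\prod_{i=1}^k e^{(-1)^i\alpha u_{i,1}}$ into the $G$-chain. The key observation is a telescoping identity: for a fixed row $i$, the product $\prod_{j=1}^{2T-1-\ind_{i=1}} G_{\theta,(-1)^{j+1}}(u_{i,j}-u_{i,j+1})$ has exponent $\sum_j \big(\theta(-1)^{j+1}(u_{i,j}-u_{i,j+1}) - e^{(-1)^{j+1}(u_{i,j}-u_{i,j+1})}\big)$; the linear part telescopes. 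Adding $(-1)^i\alpha u_{i,1}$ shifts each $\theta$ in the linear part alternately by $\pm\alpha$ in a way that can be re-expressed as $\prod_j G_{\theta + (-1)^{i+j-1}\alpha,\,(-1)^{j+1}}(u_{i,j}-u_{i,j+1})$ up to a constant, because changing the $\theta$-parameter in $G_{\beta,\pm1}$ only changes the linear term in the exponent and the normalizing constant $\Gamma(\beta)^{-1}$. One carries out this bookkeeping carefully (the alternating signs $(-1)^{i+j-1}$ arise precisely from combining the row-sign $(-1)^i$ with the column-sign $(-1)^{j+1}$), using that $u_{i,2T-\ind_{i=1}} = y_i$ is fixed so terminal telescoped terms can be absorbed into the proportionality constant. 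This yields \eqref{intr1}.

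For \eqref{intr2} I would simply specialize \eqref{intr1} to $k=2$ and rewrite the first few factors in the more explicit form stated: the $i=1$ row contributes $G_{\theta,1}(u_{1,1}-u_{1,2})$ for $j=1$ and then $G_{\theta,(-1)^{j+1}}(u_{1,j}-u_{1,j+1})$ for $j\ge 2$; the $i=2$ row for $j=1$ contributes $G_{\theta+\alpha\cdot(-1)^{2+1-1},\,+1}(u_{2,1}-u_{2,2}) = G_{\theta+\alpha,1}(u_{2,1}-u_{2,2})$, and one extra factor $G_{\alpha,1}(u_{2,2}-u_{1,1})$ comes from how the $\alpha$-boundary term on the first column re-pairs (alternatively, one checks directly that the product of the $j=1$ factors in both rows with $e^{-\alpha u_{1,1}}e^{\alpha u_{2,1}}$ equals $G_{\alpha,1}(u_{2,2}-u_{1,1})G_{\theta,1}(u_{1,1}-u_{1,2})G_{\alpha+\theta,1}(u_{2,1}-u_{2,2})$ times a constant, since the left sides have matching linear-in-$u$ exponents and the single quadratic-type exponential terms $e^{\cdot}$ rearrange). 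The surviving $W$-factor from $i=1$ at $j\ge 2$ is $W(u_{2,2j};u_{1,2j+1},u_{1,2j-1})$, and the bottom-free killing removes the $i=2$ $W$-factors; this matches \eqref{intr2} after noting $\exp(-e^{u_{2,2}-u_{1,3}})$ is the $j=1$ piece of $W(u_{2,2};u_{1,3},u_{1,1})$ that is not already accounted for by the $G_{\alpha,1}(u_{2,2}-u_{1,1})$ absorption (the $e^{u_{2,2}-u_{1,1}}$ piece having been folded in).

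For integrability, the main point is that each $G_{\beta,\pm1}$ with $\beta>0$ is an honest probability density, so the $G$-chains integrate to finite constants provided all relevant parameters are positive. In \eqref{intr1} the parameters are $\theta+(-1)^{i+j-1}\alpha$; requiring $\alpha\in(-\theta,\theta)$ guarantees all of these lie in $(0,2\theta)\subset(0,\infty)$, which is exactly the stated hypothesis. One then integrates out the $u_{i,j}$ variables column by column (or along each row, treating the chain of $G$'s as successive convolutions as in \eqref{def:faga}), using that every $W$-factor is bounded by $1$ so it only helps convergence; this is the same reasoning that underlies Lemma~\ref{b2}, which I would cite for the bound $\int \tilde Q \le R^N$-type estimates. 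The only mild subtlety, and the step I expect to require the most care, is the sign-bookkeeping in the telescoping argument for \eqref{intr1}: one must track the alternating exponents $(-1)^{i+j-1}$ and confirm that the boundary factor $e^{(-1)^i\alpha u_{i,1}}$ redistributes consistently across the whole row without leaving a residual $u$-dependent factor — a routine but error-prone computation best done by writing out the exponent explicitly and matching linear terms. Once that identity is pinned down, both the proportionality claims and integrability follow immediately.
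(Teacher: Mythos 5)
Your argument is essentially the paper's proof, recast algebraically: the paper performs the same manipulation as a graphical redistribution of edge weights (Figure \ref{fig12}), viewing each red edge weight $e^{-\alpha(u_{2i-1,1}-u_{2i,1})}$ as a pair of vertex weights $e^{(-1)^i\alpha u_{i,1}}$ and then moving these along the rows, while you write out the exponents and telescope. Your telescoping identity is correct: since $(-1)^{i+j-1}(-1)^{j+1}=(-1)^i$, shifting the parameters to $\theta+(-1)^{i+j-1}\alpha$ changes the linear exponent of row $i$ by exactly $(-1)^i\alpha\,(u_{i,1}-y_i)$, so the boundary factor is absorbed up to the constant $e^{-(-1)^i\alpha y_i}$ and Gamma-function ratios. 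The integrability argument ($W\le 1$, each $G_\beta$ with $\beta>0$ a density, integrate along the tree of $G$-factors toward the anchored endpoints $y_i$) is also the paper's.

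One caveat on \eqref{intr2}: your primary route, ``specialize \eqref{intr1} to $k=2$ and rewrite the first few factors,'' does not work as stated. First, \eqref{intr1} requires $\alpha\in(-\theta,\theta)$, whereas \eqref{intr2} is precisely the representation needed in the supercritical regime where $\alpha>0$ may exceed $\theta$. Second, \eqref{intr1} at $k=2$ puts alternating parameters $\theta\pm\alpha$ on \emph{every} $G$-factor of both rows (e.g.\ the $(i,j)=(1,1)$ factor becomes $G_{\theta-\alpha,1}(u_{1,1}-u_{1,2})$), whereas \eqref{intr2} keeps all row factors at parameter $\theta$ except the single $j=1$ factor of row $2$ and instead routes the boundary weight through the black edge, creating the cross-factor $G_{\alpha,1}(u_{2,2}-u_{1,1})$; these are genuinely different redistributions, not local rewritings of one another. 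Your fallback, however — the direct verification that
$e^{-\alpha u_{1,1}}e^{\alpha u_{2,1}}\,G_{\theta,1}(u_{1,1}-u_{1,2})\,G_{\theta,1}(u_{2,1}-u_{2,2})\,e^{-e^{u_{2,2}-u_{1,1}}}$
is proportional to $G_{\alpha,1}(u_{2,2}-u_{1,1})\,G_{\theta,1}(u_{1,1}-u_{1,2})\,G_{\alpha+\theta,1}(u_{2,1}-u_{2,2})$ by matching linear and exponential terms — is exactly the identity the paper encodes in Figure \ref{fig12}~(D), and it is correct; the leftover $\exp(-e^{u_{2,2}-u_{1,3}})$ is, as you say, the unabsorbed half of the $j=1$ black weight. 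So promote the ``alternative'' check to the actual derivation of \eqref{intr2} and drop the specialization route.
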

	\begin{proof}  Recall the definition of the $\hslg$ Gibbs measure from \eqref{def:fhsgm} and the corresponding graphical representation from Figure \ref{fig00}. Note that the red colored edges comes with a weight of the form  $\exp(-\alpha(u_{2i-1,1}-u_{2i,1}))$ which can be written as a product $e^{-\alpha u_{2i-1,1}}\cdot e^{\alpha u_{2i,1}}$. Thus we will think of each red edge as two red rings on the endpoints of the edges that comes with the (vertex) weight $e^{(-1)^i\alpha u_{i,1}}$ (see Figure \ref{fig12} (C)).  Upon doing this vertex weight identification, the case $k=1$ and $T=3$ corresponds to the top graph of Figure \ref{fig12} (C). One can check that the weights corresponding two graphs in Figure \ref{fig12} (C) are equal. The red vertex weight on the right boundary of Figure \ref{fig12} (C) can be absorbed in the constant of proportionality of the Gibbs measure. Thus ignoring this weight, the remaining weight is precisely given by the expression in \eqref{intr1}. The general $k$ odd case follows by redistributing the weights according to Figure \ref{fig12} (A). For the case when $k$ is even and $\alpha>0$, we redistribute according to Figure \ref{fig12} (B). This leads to the $k=2$ density given in \eqref{intr2}. One can compute also the explicit density for the general even case from Figure \ref{fig12} (B). Since $0\le W\le 1$ and $G$s are densities, it is clear that the expressions in \eqref{intr1} and \eqref{intr2} are integrable.
	\end{proof}
	
	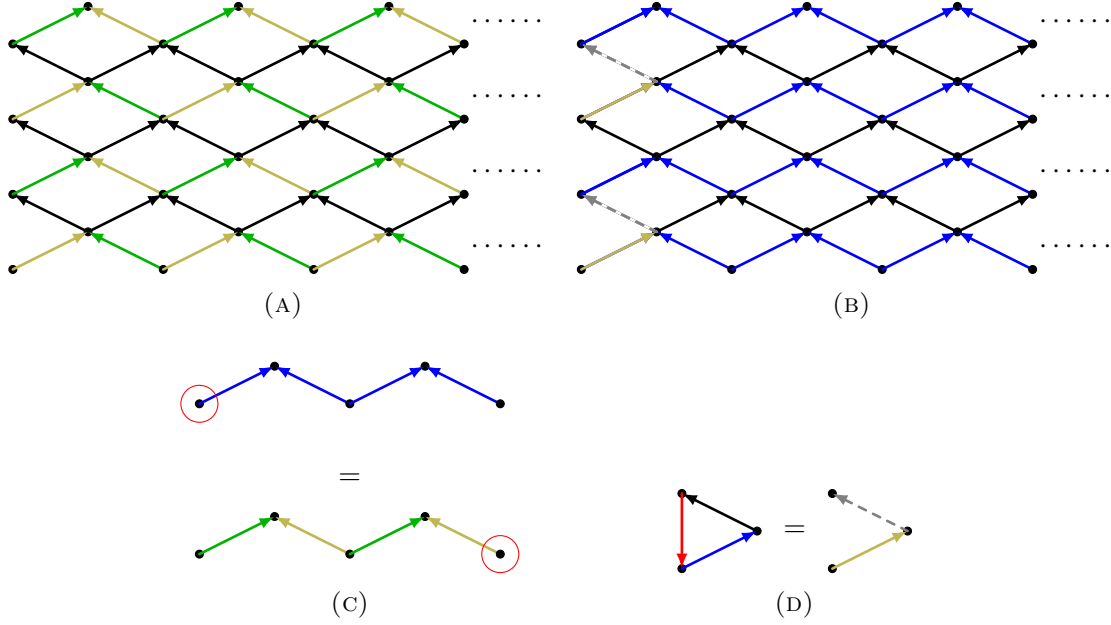
\begin{figure}[h!]
		\centering
		\begin{subfigure}[b]{0.45\textwidth}
			\centering
			\begin{tikzpicture}[line cap=round,line join=round,>=triangle 45,x=2cm,y=1cm]
				\foreach \x in {0,1,2}
				{
					\draw [fill=black] (\x,0) circle (1.5pt);
					\draw [fill=black] (\x,-1) circle (1.5pt);
					\draw [fill=black] (\x,-2) circle (1.5pt);
					\draw [fill=black] (\x-0.5,-0.5) circle (1.5pt);
					\draw [fill=black] (\x-0.5,-1.5) circle (1.5pt);
					\draw [fill=black] (\x-0.5,-2.5) circle (1.5pt);
					\draw [fill=black] (\x-0.5,-3.5) circle (1.5pt);
					\draw [fill=black] (\x,-3) circle (1.5pt);
					\draw[line width=1pt,green!70!black,{Latex[length=2mm]}-]  (\x,0) -- (\x-0.5,-0.5);
					\draw[line width=1pt,yellow!70!black,{Latex[length=2mm]}-] (\x,0) -- (\x+0.5,-0.5);
					\draw[line width=1pt,black,{Latex[length=2mm]}-] (\x-0.5,-0.5) -- (\x,-1);
					\draw[line width=1pt,black,{Latex[length=2mm]}-] (\x+0.5,-0.5) -- (\x,-1);
					\draw[line width=1pt,yellow!70!black,{Latex[length=2mm]}-]  (\x,-1) -- (\x-0.5,-1.5);
					\draw[line width=1pt,green!70!black,{Latex[length=2mm]}-] (\x,-1) -- (\x+0.5,-1.5);
					\draw[line width=1pt,black,{Latex[length=2mm]}-] (\x-0.5,-1.5) -- (\x,-2);
					\draw[line width=1pt,black,{Latex[length=2mm]}-] (\x+0.5,-1.5) -- (\x,-2);
					\draw[line width=1pt,green!70!black,{Latex[length=2mm]}-]  (\x,-2) -- (\x-0.5,-2.5);
					\draw[line width=1pt,yellow!70!black,{Latex[length=2mm]}-] (\x,-2) -- (\x+0.5,-2.5);
					\draw[line width=1pt,black,{Latex[length=2mm]}-] (\x-0.5,-2.5) -- (\x,-3);
					\draw[line width=1pt,black,{Latex[length=2mm]}-] (\x+0.5,-2.5) -- (\x,-3);
					\draw[line width=1pt,yellow!70!black,{Latex[length=2mm]}-]  (\x,-3) -- (\x-0.5,-3.5);
					\draw[line width=1pt,green!70!black,{Latex[length=2mm]}-] (\x,-3) -- (\x+0.5,-3.5);
				}
				\draw [fill=black] (2.5,-0.5) circle (1.5pt);
				\draw [fill=black] (2.5,-1.5) circle (1.5pt);
				\draw [fill=black] (2.5,-2.5) circle (1.5pt);
				\draw [fill=black] (2.5,-3.5) circle (1.5pt);
				\node at (2.8,-0.2) {$\cdots\cdots$};
				\node at (2.8,-1.2) {$\cdots\cdots$};
				\node at (2.8,-2.2) {$\cdots\cdots$};
				\node at (2.8,-3.2) {$\cdots\cdots$};
			\end{tikzpicture}
			\caption{}
		\end{subfigure}
		\begin{subfigure}[b]{0.45\textwidth}
			\centering
			\begin{tikzpicture}[line cap=round,line join=round,>=triangle 45,x=2cm,y=1cm]
				\foreach \x in {0,1,2}
				{
					\draw [fill=black] (\x,0) circle (1.5pt);
					\draw [fill=black] (\x,-1) circle (1.5pt);
					\draw [fill=black] (\x,-2) circle (1.5pt);
					\draw [fill=black] (\x-0.5,-0.5) circle (1.5pt);
					\draw [fill=black] (\x-0.5,-1.5) circle (1.5pt);
					\draw [fill=black] (\x-0.5,-2.5) circle (1.5pt);
					\draw [fill=black] (\x-0.5,-3.5) circle (1.5pt);
					\draw [fill=black] (\x,-3) circle (1.5pt);
					\draw[line width=1pt,blue,{Latex[length=2mm]}-]  (\x,0) -- (\x-0.5,-0.5);
					\draw[line width=1pt,blue,{Latex[length=2mm]}-] (\x,0) -- (\x+0.5,-0.5);
					\draw[line width=1pt,black,{Latex[length=2mm]}-] (\x-0.5,-0.5) -- (\x,-1);
					\draw[line width=1pt,black,{Latex[length=2mm]}-] (\x+0.5,-0.5) -- (\x,-1);
					\draw[line width=1pt,blue,{Latex[length=2mm]}-]  (\x,-1) -- (\x-0.5,-1.5);
					\draw[line width=1pt,blue,{Latex[length=2mm]}-] (\x,-1) -- (\x+0.5,-1.5);
					\draw[line width=1pt,black,{Latex[length=2mm]}-] (\x-0.5,-1.5) -- (\x,-2);
					\draw[line width=1pt,black,{Latex[length=2mm]}-] (\x+0.5,-1.5) -- (\x,-2);
					\draw[line width=1pt,blue,{Latex[length=2mm]}-]  (\x,-2) -- (\x-0.5,-2.5);
					\draw[line width=1pt,blue,{Latex[length=2mm]}-] (\x,-2) -- (\x+0.5,-2.5);
					\draw[line width=1pt,black,{Latex[length=2mm]}-] (\x-0.5,-2.5) -- (\x,-3);
					\draw[line width=1pt,black,{Latex[length=2mm]}-] (\x+0.5,-2.5) -- (\x,-3);
					\draw[line width=1pt,blue,{Latex[length=2mm]}-]  (\x,-3) -- (\x-0.5,-3.5);
					\draw[line width=1pt,blue,{Latex[length=2mm]}-] (\x,-3) -- (\x+0.5,-3.5);
				}
			\draw[line width=1pt,white,{Latex[length=2mm]}-] (-0.5,-2.5) -- (0,-3);
				\draw[line width=1pt,white,{Latex[length=2mm]}-] (-0.5,-0.5) -- (0,-1);
				\draw[line width=1pt,gray,{Latex[length=2mm]}-,dashed] (-0.5,-2.5) -- (0,-3);
				\draw[line width=1pt,blue,{Latex[length=2mm]}-] (0,-2) -- (-0.5,-2.5);
				\draw[line width=1pt,yellow!70!black,{Latex[length=2mm]}-] (0,-3) -- (-0.5,-3.5);
				\draw[line width=1pt,gray,{Latex[length=2mm]}-,dashed] (-0.5,-0.5) -- (0,-1);
				\draw[line width=1pt,blue,{Latex[length=2mm]}-] (0,0) -- (-0.5,-0.5);
				\draw[line width=1pt,yellow!70!black,{Latex[length=2mm]}-] (0,-1) -- (-0.5,-1.5);
				\draw [fill=black] (2.5,-0.5) circle (1.5pt);
				\draw [fill=black] (2.5,-1.5) circle (1.5pt);
				\draw [fill=black] (2.5,-2.5) circle (1.5pt);
				\draw [fill=black] (2.5,-3.5) circle (1.5pt);
				\node at (2.8,-0.2) {$\cdots\cdots$};
				\node at (2.8,-1.2) {$\cdots\cdots$};
				\node at (2.8,-2.2) {$\cdots\cdots$};
				\node at (2.8,-3.2) {$\cdots\cdots$};
			\end{tikzpicture}
			\caption{}
		\end{subfigure}
	\vspace{0.5cm}
	
	\begin{subfigure}[b]{0.35\textwidth}
		\centering
		\begin{tikzpicture}[line cap=round,line join=round,>=triangle 45,x=2cm,y=1cm]
			\foreach \x in {0,1}
			{
				\draw [fill=black] (\x,0) circle (1.5pt);
				\draw [fill=black] (\x,-2) circle (1.5pt);
				\draw [fill=black] (\x-0.5,-0.5) circle (1.5pt);
				\draw [fill=black] (\x-0.5,-2.5) circle (1.5pt);
				\draw[line width=1pt,blue,{Latex[length=2mm]}-]  (\x,0) -- (\x-0.5,-0.5);
				\draw[line width=1pt,blue,{Latex[length=2mm]}-] (\x,0) -- (\x+0.5,-0.5);
				\draw[line width=1pt,green!70!black,{Latex[length=2mm]}-]  (\x,-2) -- (\x-0.5,-2.5);
				\draw[line width=1pt,yellow!70!black,{Latex[length=2mm]}-] (\x,-2) -- (\x+0.5,-2.5);
			}
			\draw [fill=black] (1.5,-0.5) circle (1.5pt);
			\draw [fill=black] (1.5,-2.5) circle (1.5pt);
			\draw[red] (-0.5,-0.5) circle (7pt);
			\draw[red] (1.5,-2.5) circle (7pt);
			\node at (0.5,-1.5) {$=$};
		\end{tikzpicture}
		\caption{}
	\end{subfigure}
			\begin{subfigure}[b]{0.35\textwidth}
		\centering
		\begin{tikzpicture}[line cap=round,line join=round,>=triangle 45,x=2cm,y=1cm]
			\draw [fill=black] (1,-1) circle (1.5pt);
			\draw [fill=black] (0.5,-0.5) circle (1.5pt);
			\draw [fill=black] (0.5,-1.5) circle (1.5pt);
			\draw[line width=1pt,gray,{Latex[length=2mm]}-,dashed] (0.5,-0.5) -- (1,-1);
			\draw[line width=1pt,yellow!70!black,{Latex[length=2mm]}-] (1,-1) -- (0.5,-1.5);
			\draw [fill=black] (0,-1) circle (1.5pt);
			\draw [fill=black] (-0.5,-0.5) circle (1.5pt);
			\draw [fill=black] (-0.5,-1.5) circle (1.5pt);
			\draw[line width=1pt,black,{Latex[length=2mm]}-] (-0.5,-0.5) -- (0,-1);
			\draw[line width=1pt,blue,{Latex[length=2mm]}-] (0,-1) -- (-0.5,-1.5);
			\draw[line width=1pt,red,{Latex[length=2mm]}-] (-0.5,-1.5) -- (-0.5,-0.5);
			\node at (0.25,-1) {$=$};
		\end{tikzpicture}
		\caption{}
	\end{subfigure}

		\caption{Redistribution of edge weights for $\alpha\in (-\theta,\theta)$ (A) and for $\alpha>0$ and $k$ even (B). The weights of \green, \yellow\ (dashed), and \purple\ edges are $e^{(\theta-\alpha)x-e^{x}}$, $e^{\alpha x-e^x}$, and $e^{(\theta+\alpha)x-e^{x}}$ respectively. The weights of \black, \blue\ edges are defined in \eqref{def:wfn}. (A) can be derived from Figure \ref{fig00} by observing the equality (as weight functions) in (C). The red circle around vertex $v$ signifies a vertex weight of the form $e^{\alpha x}$. The red vertex weight on the right boundary can be absorbed in the constant of proportionality of the Gibbs measure. (B) can be derived from Figure \ref{fig00} by observing the equality (as weight functions) in (D).}
		\label{fig12}
	\end{figure}	
	
	This redistribution described in Figure \ref{fig12} will allows us to view the bottom free laws as laws which are absolutely continuous with respect to random walks. We give two such representations which will be useful in our critical and supercritical phase analysis. Towards this end, we introduce  $\qo$-distributions. Given $\theta_1,\theta_2>0$ and $a,b\in \R$, we consider the following two probability density functions
	\begin{align}\label{qdist}
		\qo_{\theta_1,\theta_2;\pm 1}^{(a,b)}(x) \propto G_{\theta_1,\pm1}(a-x)G_{\theta_2,\pm1}(b-x).
	\end{align}
	The graphical representation of the above two distributions are given in Figure \ref{fig13} (B).
	
	\begin{figure}[h!]
		\centering
		\begin{subfigure}[b]{0.45\textwidth}
			\centering
			\begin{tikzpicture}[line cap=round,line join=round,>=triangle 45,x=2cm,y=1cm]
				\draw[fill=gray!10,dashed] (-0.65,-0.25)--(-0.65,-0.75)--(2.65,-0.75)--(2.65,-0.25)--(-0.65,-0.25);
				\foreach \x in {0,1,2}
				{
					\draw [fill=white] (\x,0) circle (1.5pt);
					\draw [fill=black] (\x-0.5,-0.5) circle (1.5pt);
					\draw[line width=1pt,green!70!black,{Latex[length=2mm]}-]  (\x,0) -- (\x-0.5,-0.5);
					\draw[line width=1pt,yellow!70!black,{Latex[length=2mm]}-] (\x,0) -- (\x+0.5,-0.5);
				}
				\draw [fill=black] (2.5,-0.5) circle (1.5pt);
				\node at (2.6,-0.2) {$y_1$};
			\end{tikzpicture}
			\caption{$\Pr_{\alpha_1}^{y_1,(-\infty)^{4};1,4}$}
		\end{subfigure}
		\begin{subfigure}[b]{0.45\textwidth}
			\centering
			\begin{tikzpicture}[line cap=round,line join=round,>=triangle 45,x=2cm,y=1cm]
				\foreach \x in {0}
				{
					\draw [fill=white] (\x,0) circle (1.5pt);
					\draw [fill=black] (\x-0.5,-0.5) circle (1.5pt);
					\draw[line width=1pt,black,{Latex[length=2mm]}-]  (\x,0) -- (\x-0.5,-0.5);
					\draw[line width=1pt,black,{Latex[length=2mm]}-] (\x,0) -- (\x+0.5,-0.5);
				}
				\draw [fill=black] (0.5,-0.5) circle (1.5pt);
				\node at (0.6,-0.4) {$b$};
				\node at (-0.6,-0.4) {$a$};
				\node at (-0.3,-0.1) {$\theta_1$};
				\node at (0.32,-0.1) {$\theta_2$};
				\foreach \x in {2}
				{
					\draw [fill=black] (\x-0.5,0) circle (1.5pt);
					\draw[line width=1pt,black,{Latex[length=2mm]}-]  (\x-0.5,0) -- (\x,-0.5);
					\draw[line width=1pt,black,{Latex[length=2mm]}-] (\x+0.5,0) -- (\x,-0.5);
				}
				\draw [fill=white] (2,-0.5) circle (1.5pt);
				\draw [fill=black] (2.5,0) circle (1.5pt);
				\node at (2.6,0) {$b$};
				\node at (1.4,0) {$a$};
				\node at (1.67,-0.4) {$\theta_1$};
				\node at (2.35,-0.4) {$\theta_2$};
			\end{tikzpicture}
			\caption{$\qo_{\theta_1,\theta_2;+1}^{(a,b)}$ and $\qo_{\theta_1,\theta_2;-1}^{(a,b)}$}
		\end{subfigure}
		\vspace{0.5cm}
		
		\begin{subfigure}[b]{0.9\textwidth}
			\centering
			\begin{tikzpicture}[line cap=round,line join=round,>=triangle 45,x=1.6cm,y=0.8cm]
				\draw[fill=gray!10,dashed] (2.4,-0.35)--(2.4,-0.65)--(4.6,-0.65)--(4.6,-0.35)--(2.4,-0.35);
				\draw[fill=blue!10,dashed] (2.9,-0.85)--(2.9,-1.15)--(5.1,-1.15)--(5.1,-0.85)--(2.9,-0.85);
				\draw (2.35,0.25)--(2.35,-1.75)--(5.23,-1.75)--(5.23,0.25)--(2.35,0.25);
				\draw (5.35,0.25)--(5.35,-1.75)--(8.23,-1.75)--(8.23,0.25)--(5.35,0.25);
				\foreach \x in {-0.2,0.8}
				{
					\draw [fill=black] (\x,0) circle (1.5pt);
					\draw [fill=black] (\x,-1) circle (1.5pt);
					\draw [fill=black] (\x-0.5,-1.5) circle (1.5pt);
					\draw [fill=black] (\x-0.5,-0.5) circle (1.5pt);
					\draw[line width=1pt,green!70!black,{Latex[length=2mm]}-]  (\x,0) -- (\x-0.5,-0.5);
					\draw[line width=1pt,yellow!70!black,{Latex[length=2mm]}-] (\x,0) -- (\x+0.5,-0.5);
					\draw[line width=1pt,black,{Latex[length=2mm]}-] (\x-0.5,-0.5) -- (\x,-1);
					\draw[line width=1pt,black,{Latex[length=2mm]}-] (\x+0.5,-0.5) -- (\x,-1);
					\draw[line width=1pt,yellow!70!black,{Latex[length=2mm]}-]  (\x,-1) -- (\x-0.5,-1.5);
					\draw[line width=1pt,green!70!black,{Latex[length=2mm]}-] (\x,-1) -- (\x+0.5,-1.5);
				}
				\draw[line width=1pt,yellow!70!black,{Latex[length=2mm]}-] (1.8,-1) -- (1.3,-1.5);
				\draw [fill=black] (1.3,-0.5) circle (1.5pt);
				\draw [fill=black] (1.3,-1.5) circle (1.5pt);
				\draw [fill=black] (1.8,-1) circle (1.5pt);
				\node at (1.4,-0.2) {$y_1$};
				\node at (1.9,-0.7) {$y_2$};
				\foreach \x in {3,4}
				{
					\draw [fill=white] (\x,0) circle (1.5pt);
					\draw [fill=black] (\x,-1) circle (1.5pt);
					\draw [fill=white] (\x-0.5,-1.5) circle (1.5pt);
					\draw [fill=black] (\x-0.5,-0.5) circle (1.5pt);
					\draw[line width=1pt,green!70!black,{Latex[length=2mm]}-]  (\x,0) -- (\x-0.5,-0.5);
					\draw[line width=1pt,yellow!70!black,{Latex[length=2mm]}-] (\x,0) -- (\x+0.5,-0.5);
					\draw[line width=1pt,yellow!70!black,{Latex[length=2mm]}-]  (\x,-1) -- (\x-0.5,-1.5);
					\draw[line width=1pt,green!70!black,{Latex[length=2mm]}-] (\x,-1) -- (\x+0.5,-1.5);
				}
				\draw[line width=1pt,yellow!70!black,{Latex[length=2mm]}-] (5,-1) -- (4.5,-1.5);
				\draw [fill=black] (4.5,-0.5) circle (1.5pt);
				\draw [fill=white] (4.5,-1.5) circle (1.5pt);
				\draw [fill=black] (5,-1) circle (1.5pt);
				\node at (4.6,-0.2) {$y_1$};
				\node at (5.1,-0.7) {$y_2$};
				\foreach \x in {6,7}
				{
					\draw [fill=black] (\x,0) circle (1.5pt);
					\draw [fill=black] (\x,-1) circle (1.5pt);
					\draw [fill=black] (\x-0.5,-1.5) circle (1.5pt);
					\draw [fill=black] (\x-0.5,-0.5) circle (1.5pt);
					\draw[line width=1pt,black,{Latex[length=2mm]}-] (\x-0.5,-0.5) -- (\x,-1);
					\draw[line width=1pt,black,{Latex[length=2mm]}-] (\x+0.5,-0.5) -- (\x,-1);
				}
				\draw [fill=black] (7.5,-0.5) circle (1.5pt);
				\draw [fill=black] (7.5,-1.5) circle (1.5pt);
				\draw [fill=black] (8,-1) circle (1.5pt);
				\node at (7.6,-0.2) {$y_1$};
				\node at (8.1,-0.7) {$y_2$};
				\node at (5.3,-0.7) {$\cdot$};
				\node at (2.2,-0.7) {$=$};
			\end{tikzpicture}
			\caption{$\Pr_{\alpha_1}^{(y_1,y_2),(-\infty)^{3};2,3}$}
		\end{subfigure}
		\caption{Figures (A) and (B) are graphical representations of the probability distributions $\Pr_{\alpha_1}^{y_1,(-\infty)^{4};1,4}$ and $\qo_{\theta_1,\theta_2;\pm 1}^{(a,b)}$ respectively. We use the representation from Figure \ref{fig12} (A) here. The black edge labeled as $\theta_i$ in (B) represents that the edge carries a weight of $e^{\theta_i x-e^x}$. (C) shows the decomposition of $\Pr_{\alpha_1}^{(y_1,y_2),(-\infty)^{4};2,4}$ into $\widehat\Pr^{(y_1,y_2)}$ (middle figure) and $\wcr$ (right figure). The marginal law of the gray (blue resp.) shaded region is a random walk started at $y_1$ ($y_2$ resp.) with increment $G_{\theta+\alpha_1,-1}\ast G_{\theta-\alpha_1,+1}$ ($G_{\theta+\alpha_1,+1}\ast G_{\theta-\alpha_1,-1}$ resp.). }
		\label{fig13}
	\end{figure}
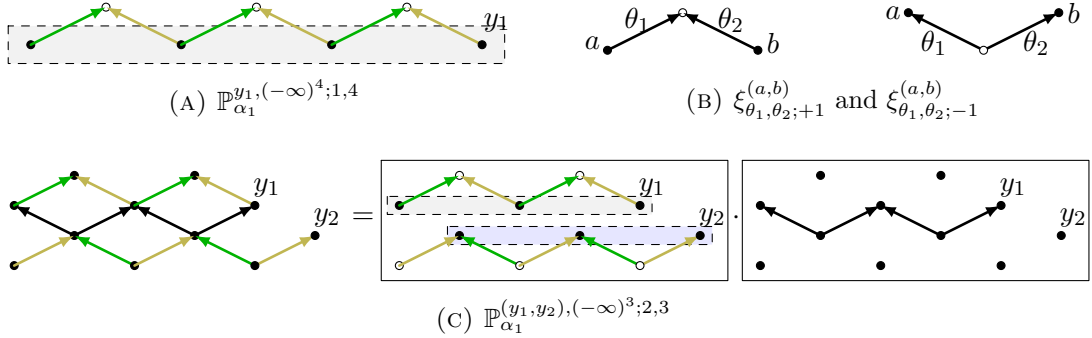

	\begin{observation}[Critical phase representation]\label{l:LCrit}
		Consider an	independent collection of random variables $Y_{i}(j) \stackrel{i.i.d.}{\sim} G_{\theta+\alpha_1,1}$ and $U_{i}(j) \stackrel{i.i.d.}{\sim} \operatorname{Beta}(\theta-\alpha_1,2\alpha_1)$ for $i=1,2$ and $j\in \Z_{\ge 1}$. Define
		\begin{align}\label{def:Vij}
			V_{i}(j):=Y_{i}(2j)+\log U_{i}(2j)-\Ex[\log U_{i}(2j)]-Y_{i}(2j-1).
		\end{align}
		so that $V_{i}(j)$ form an i.i.d.~collection of mean zero random variables. Set $L_i(2T+i-2)=y_i$ and $\mbox{for }k\in \ll1,T-1\rr$ define
		\begin{align}\label{def:LCrit2}
			L_i(2T+i-2k-2):=\left(y_i+(-1)^{i+1}(k-1)\Ex[\log U_{1}(1)]\right)+(-1)^{i+1}\sum_{j=1}^{k-1} V_{i}(j),
		\end{align}
		and set
		\begin{align}\label{def:wcr}
			\wcr:=\exp\left(-\sum_{k=1}^{T-1} \left(e^{L_2(2k)-L_1(2k-1)}+e^{L_2(2k)-L_1(2k+1)}\right)\right),
		\end{align}
		where `cr' stands for critical. Conditioned on $(L_i(2j+i-2))_{i\in \{1,2\},j\in \ll1,T\rr}$, we set $$L_i(2k+i-1) \sim \qo_{\theta-\alpha_1,\theta+\alpha_1,(-1)^{i+1}}^{(L_i(2k+i-2),L_i(2k+i))}\mbox{ for }i\in \{1,2\}, k\in \ll1,T-1\rr$$ and $L_2(1) = X+L_2(2)$ where $X\sim  G_{\theta+\alpha_1,1}$. We have
		
		\begin{enumerate}[label=(\alph*),leftmargin=18pt]
			\setlength\itemsep{0.5em}
			\item $(L_1(j))_{(1,j)\in \mathcal{K}_{1,T}}$ is distributed as $\Pr_{\alpha_1}^{y_1,(-\infty)^{T};1,T}$.
			\item Let $\bar{\Pr}^{(y_1,y_2)}$ denotes the joint law of $\{(L_i(j))_{(i,j)\in \mathcal{K}_{2,T}}\}$. This law has graphical representation given by the middle figure in Figure \ref{fig13} (C). The law $\Pr_{\alpha_1}^{(y_1,y_2),(-\infty)^{T};2,T}$ is absolutely continuous with respect to  $\bar{\Pr}^{(y_1,y_2)}$ with
			\begin{align*}
				\frac{d \Pr_{\alpha_1}^{(y_1,y_2),(-\infty)^{S};2,T}}{d\bar{\Pr}^{(y_1,y_2)}} \propto \wcr.
			\end{align*}
		\end{enumerate}	
	\end{observation}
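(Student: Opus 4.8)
<br>

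The plan is to derive the representation in Lemma \ref{l:LCrit} directly from the explicit density formula for the bottom-free measure recorded in Lemma \ref{obs2.5}, by performing a change of variables that separates the ``walk'' variables (on even-indexed points, where we integrate out the odd-indexed points) from the remaining degrees of freedom, and then matching Radon--Nikodym derivatives term by term. First I would recall from \eqref{intr1} that, under the redistribution of Figure \ref{fig12} (A), the density $f_{k,T}^{\vec y,(-\infty)^T}$ factors as a product of $W(u_{i+1,2j};u_{i,2j+1},u_{i,2j-1})$ black-edge factors together with $\prod_i\prod_j G_{\theta+(-1)^{i+j-1}\alpha,(-1)^{j+1}}(u_{i,j}-u_{i,j+1})$ blue/green/yellow-edge factors. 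The key observation is that the blue-type factors along each curve $L_i$ involve only the nearest-neighbour differences, so the law of $(L_i(2j+i-2))_{j}$ with the odd points integrated out is exactly a Markov chain; and because of the alternating $(-1)^{i+j-1}\alpha$, the increment across two consecutive steps (odd-then-even) has density $G_{\theta-(-1)^{i}\alpha,-(-1)^{i}}\ast G_{\theta+(-1)^{i}\alpha,(-1)^{i}}$, which is precisely what \eqref{def:Vij} produces once one identifies the convolution of a $\operatorname{Gamma}^{-1}$-type density with a Beta-tilted one (Appendix \ref{app1} should contain the relevant beta--gamma algebra: $\log Y + \log U$ with $Y\sim\operatorname{Gamma}(\theta+\alpha)$ and $U\sim\operatorname{Beta}(\theta-\alpha,2\alpha)$ recombines appropriately).

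Concretely, the steps I would carry out are: (i) Start from \eqref{intr1} (for part (a), the $k=1$ case, and \eqref{intr2} for part (b), the $k=2$ case). (ii) Integrate out the odd-indexed variables $u_{i,2k+i-1}$; since these appear only in the $G$-factors sharing the vertex $(i,2k+i-1)$ (for $k=1$, no black $W$ factor touches an interior odd point on $L_1$) and — in the two-curve case — also in the $W$ factors, separate out the $W$-dependence as the Radon--Nikodym factor $\wcr$ and observe that what remains factorizes into the product $\prod_k \qo^{(L_i(2k+i-2),L_i(2k+i))}_{\theta-\alpha_1,\theta+\alpha_1,(-1)^{i+1}}(L_i(2k+i-1))$ times the marginal law of the even points. (iii) Identify the marginal of $(L_i(2j+i-2))_j$ as the random-walk law \eqref{def:LCrit2} by checking that the two-step increment density equals the law of $(-1)^{i+1}V_i(j)$ from \eqref{def:Vij}; this uses the beta--gamma identity and the mean-centering constant $\Ex[\log U_i]$, which is why the walk in \eqref{def:LCrit2} is centered. (iv) Peel off the boundary term $L_2(1)=X+L_2(2)$ with $X\sim G_{\theta+\alpha_1,1}$ from the leftmost green/yellow edge on $L_2$, and check the $\exp(-e^{u_{2,2}-u_{1,3}})$-type term gets absorbed into $\wcr$ (its $k=1$ summand). (v) Conclude that $\Pr_{\alpha_1}^{(y_1,y_2),(-\infty)^T;2,T}$ has density proportional to $\wcr$ against $\bar\Pr^{(y_1,y_2)}$, and similarly for the one-curve case where $\wcr\equiv 1$ so the identification is exact.

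The main obstacle I expect is the bookkeeping around the \emph{odd/even parity} of the exponents $(-1)^{i+j-1}\alpha$ and the boundary conventions at $j=1$ and $j=2T+i-2$: one must be careful that the green edges carry $\theta-\alpha$ and the yellow edges carry $\theta+\alpha$ (per Figure \ref{fig12}), that these swap between curve $1$ and curve $2$, and that the pattern is consistent with the convolution $\qo^{(a,b)}_{\theta_1,\theta_2;\pm 1}\propto G_{\theta_1,\pm1}(a-x)G_{\theta_2,\pm1}(b-x)$ in \eqref{qdist} and with the two-step increments matching \eqref{def:Vij}. A secondary subtlety is verifying that the normalization constants (the beta--gamma prefactors, and the $\ind_{i=1}$ truncations in the domain $\mathcal{K}_{k,T}$) all combine so that the stated densities are genuinely probability densities, but this follows from Lemma \ref{obs2.5}'s integrability statement together with the fact that each $\qo$ and $G$ is itself a density and $0\le\wcr\le1$. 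Once the parity accounting is pinned down, the remainder is a routine (if lengthy) change-of-variables computation.
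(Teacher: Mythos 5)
Your proposal is correct and follows essentially the same route as the paper: starting from the redistributed density of Lemma \ref{obs2.5} (Figure \ref{fig12}), reading off the marginal law of the alternating-parity points as a random walk whose two-step increment matches $V_i(j)$ via the beta--gamma identity, recovering the complementary points as $\qo$-distributed, and isolating the black-edge ($W$) factors as the Radon--Nikodym derivative $\wcr$. The paper phrases this as a graphical superposition (delete the black edges to get $\bar{\Pr}^{(y_1,y_2)}$, under which the two curves decouple) rather than as an explicit integration, but the content is identical; the only caveat is that the beta--gamma recombination is asserted rather than recorded in Appendix \ref{app1}, so you would need to supply that one-line computation yourself.
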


\begin{proof}   Let us consider $(L_1(j))_{(1,j)\in \mathcal{K}_{1,T}} \sim \Pr_{\alpha_1}^{y_1,(-\infty)^T;1,T}$. See Figure \ref{fig13} (A) for the graphical representation of the law. We focus on the odd points (shaded inside the gray box in the figure). Note that $(L_1(2T-1-2k))_{k=0}^{T-1}$ is a random walk starting at $L_1(2T-1)=y_1$ with increments distributed as $G_{\theta+\alpha_1,1}\ast G_{\theta-\alpha_1,-1}$. Conditioned on the odd points, we have $L_1(2k)\sim \qo_{\theta-\alpha_1,\theta+\alpha_1;1}^{(L(2k-1),L(2k+1))}$. Since $V_{1,j}+\Ex[\log U_{1,1}] \sim G_{\theta+\alpha,1}\ast G_{\theta-\alpha,-1}$, Part (a) of the lemma follows.
		
		\medskip
		
		 Let us now consider the $\Pr_{\alpha_1}^{(y_1,y_2),(-\infty)^T;2,T}$ law whose graphical representation is given in Figure \ref{fig13} (C). We view the graph as superimposition of two graphs where in one graph we collect all the non-black edges and the other graph we include only the black edges (see Figure \ref{fig13} (C)). We denote the law of the Gibbs measure formed by deleting the black edges as $\widehat{\Pr}^{(y_1,y_2)}$ (middle figure in Figure \ref{fig13} (C)). The law $\Pr_{\alpha_1}^{(y_1,y_2),(-\infty)^T;2,T}$ can be recovered from $\widehat{\Pr}^{(y_1,y_2)}$ by viewing the black edges as a Radon-Nikodym derivative. Note that $\wcr$, defined in \eqref{def:wcr}, precisely contains all the effect of the black edges in the Gibbs measure.
		
	     If $(L_i(j))_{(i,j)\in \mathcal{K}_{2,T}} \sim \widehat{\Pr}^{(y_1,y_2)}$, we have $L_1(\cdot)$ independent of $L_2(\cdot)$ and $L_1$ is distributed as $\Pr_{\alpha_1}^{y_1,(-\infty)^T;1,T}$. $L_2$ has a similar representation with even points $(L_2(2T-2k))_{k=0}^{T-1}$ forming a random walk starting at $y_2$ with increments distributed as $G_{\theta+\alpha,-1}\ast G_{\theta-\alpha,1}$. Conditioned on the even points, we have $L_2(2k+1)\sim \qo_{\theta-\alpha_1,\theta+\alpha_1;-1}^{(L(2k),L(2k+2))}$ and $L_2(1)\sim G_{\theta+\alpha_1,1}+L_2(2)$. Since $-V_{2,j}-\Ex[\log U_{1,1}] \sim G_{\theta+\alpha,-1}\ast G_{\theta-\alpha,1}$, we see that $\widehat{\Pr}^{(y_1,y_2)}$ is equal to the law $\bar{\Pr}^{(y_1,y_2)}$ defined in Part~(b) of the lemma. This completes the proof of Part~(b).
\end{proof}

	In the supercritical phase, the weighted paired random walk ($\m{WPRW}$) measure (recall this and the $\m{PRW}$ measure from Definition \ref{prb}) provides a useful way to describe the measure $\Pr_{\alpha_2}^{\vec{y},(-\infty)^{T};2,T}$.
\begin{observation}[Supercritical phase representation]\label{l:LSCrit} Fix any $\vec{y}\in \R^2$ and $T\in \Z_{\ge 2}$. Suppose $(L_1(2j-1),L_2(2j))_{j\in \ll1,T-1\rr} \sim \Pr_{\operatorname{WPRW}}^{T;\vec{y}}$. Conditioned on $(L_1(2i-1),L_2(2i))_{i\in \ll1,T-1\rr}$, set $L_2(1) \sim X+L_2(2)$ where $X\sim G_{\alpha_2+\theta,1}$ and $$L_1(2k) \sim \qo_{\theta,\theta;1}^{(L_1(2k-1),L_1(2k+1))}, \ \ L_2(2k+1)\sim \qo_{\theta,\theta;-1}^{(L_2(2k),L_2(2k+2))}\mbox{ for }k\in \ll1, T-1\rr.$$ Then $(L_i(j))_{(i,j)\in \mathcal{K}_{2,T}}$ is distributed as $\Pr_{\alpha_2}^{\vec{y},(-\infty)^{T};2,T}$.
	\end{observation}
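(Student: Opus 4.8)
The plan is to prove equality of the two laws by checking that they have the same density with respect to Lebesgue measure on the $|\mathcal{K}_{2,T}|=4T-3$ in-domain coordinates, with the right-boundary values $y_1,y_2$ held fixed. On the target side this density is already in hand: since $\alpha_2=\zeta>0$, Lemma~\ref{obs2.5} shows that $\Pr_{\alpha_2}^{\vec{y},(-\infty)^T;2,T}$ has density proportional to \eqref{intr2}, with $u_{1,2T-1}=y_1$ and $u_{2,2T}=y_2$. Both measures are genuine probability measures --- the target one by Lemma~\ref{obs2.5} and Definition~\ref{def:btf}, and the constructed one because it is assembled by successively sampling from honest conditional probability measures --- so it suffices to show that the density of the constructed law agrees with \eqref{intr2} up to a multiplicative constant; the normalizations then match automatically.

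First I would write out that density. Identifying the $\m{WPRW}$ coordinates via $S_1(k)=L_1(2k-1)$ and $S_2(k)=L_2(2k)$ for $k\in\ll1,T\rr$ (so that $S_1(T)=L_1(2T-1)=y_1$ and $S_2(T)=L_2(2T)=y_2$), by \eqref{wscint}, \eqref{den} and \eqref{defw} the density of $(S_1,S_2)$ under $\Pr_{\operatorname{WPRW}}^{T;\vec{y}}$ is proportional to
\begin{align*}
	&\ga\big(L_2(2)-L_1(1)\big)\cdot \exp\!\big(-e^{L_2(2)-L_1(3)}\big)\prod_{k=2}^{T-1}W\big(L_2(2k);L_1(2k+1),L_1(2k-1)\big)\\
	&\qquad\cdot \prod_{k=2}^{T}\fa\big(L_1(2k-1)-L_1(2k-3)\big)\,\fa\big(L_2(2k)-L_2(2k-2)\big).
\end{align*}
Multiplying by the conditional densities prescribed in the statement --- namely $G_{\alpha_2+\theta,1}(L_2(1)-L_2(2))$ for $L_2(1)$, and $\qo_{\theta,\theta;1}^{(L_1(2k-1),L_1(2k+1))}(L_1(2k))$ together with $\qo_{\theta,\theta;-1}^{(L_2(2k),L_2(2k+2))}(L_2(2k+1))$ for $k\in\ll1,T-1\rr$ --- yields the joint density of the constructed law.

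The key elementary fact is that the normalizing constant of $\qo_{\theta,\theta;\pm1}^{(a,b)}$ in \eqref{qdist} equals $\int_\R G_{\theta,1}(a-z)G_{\theta,1}(b-z)\,dz=(G_{\theta,1}\ast G_{\theta,-1})(a-b)=\fa(a-b)$ (and likewise with $-1$ in place of $+1$, using commutativity of convolution and the symmetry $\fa(-x)=\fa(x)$, recalling \eqref{def:faga}). Hence each $\qo$-factor contributes a numerator $G_{\theta,\pm1}(\cdot)\,G_{\theta,\pm1}(\cdot)$ together with a denominator $\fa(\cdot)$, and after reindexing and using the symmetry of $\fa$ these denominators cancel exactly against the two $\fa$-products coming from the $\m{PRW}$ part of the density above. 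What survives is precisely \eqref{intr2}: rewriting $G_{\theta,1}(x)=G_{\theta,-1}(-x)$ turns the $\qo$-numerators into $\prod_{j=1}^{2T-2}G_{\theta,(-1)^{j+1}}(L_1(j)-L_1(j+1))$ and $\prod_{j=2}^{2T-1}G_{\theta,(-1)^{j+1}}(L_2(j)-L_2(j+1))$, which are the blue-edge factors of \eqref{intr2} (including the separately displayed $G_{\theta,1}(u_{1,1}-u_{1,2})$); the factor $\ga(L_2(2)-L_1(1))$ is $G_{\alpha_2,1}(u_{2,2}-u_{1,1})$; the $L_2(1)$-factor is $G_{\alpha_2+\theta,1}(u_{2,1}-u_{2,2})$; and the surviving $\wsc$-terms are the black factors $\exp(-e^{u_{2,2}-u_{1,3}})\prod_{j=2}^{T-1}W(u_{2,2j};u_{1,2j+1},u_{1,2j-1})$. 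Matching all of these against \eqref{intr2} finishes the proof.

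The hard part --- though still entirely routine --- will be the bookkeeping at the left boundary: verifying that the three boundary objects produced by the construction ($\ga(L_2(2)-L_1(1))$, the $G_{\alpha_2+\theta,1}$-increment defining $L_2(1)$, and the lone black arm $\exp(-e^{L_2(2)-L_1(3)})$) reproduce exactly the redistributed boundary factors of \eqref{intr2}, which themselves arise from the edge redistribution of Figure~\ref{fig12}(B),(D). Away from the boundary it is a direct term-by-term identification between the non-black $G$-factors and the $\m{PRW}\,\times\,\qo$ factors, with no analysis involved; morally this is the supercritical counterpart of Lemma~\ref{l:LCrit}, with the passage from $\m{PRW}$ to $\m{WPRW}$ (reweighting by $\wsc$) playing the role that the reweighting by $\wcr$ plays there.
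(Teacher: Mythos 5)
Your proposal is correct and is essentially the paper's own argument: the paper decomposes the graph of $\Pr_{\alpha_2}^{\vec{y},(-\infty)^{T};2,T}$ (via the redistribution in Figure~\ref{fig12}(B) underlying \eqref{intr2}) into the non-black edges, whose marginal on $(L_1(2k-1),L_2(2k))_k$ is the $\m{PRW}$ with $\qo$-conditionals for the complementary points, and the black edges, which contribute exactly $\wsc$. Your explicit density bookkeeping --- in particular the identity that the normalizing constant of $\qo_{\theta,\theta;\pm1}^{(a,b)}$ equals $\fa(a-b)$, so the $\qo$-denominators cancel the $\fa$-factors of \eqref{den} --- is precisely the computation the paper's graphical argument leaves implicit, and all the boundary factors match \eqref{intr2} as you describe.
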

	
	\begin{proof} We use the alternative graph representation of $\Pr_{\alpha_2}^{\vec{y},(-\infty)^{T};2,T}$ law from Figure \ref{fig12} (B). We decompose this graph into two graphs: one without the black edges (middle figure of Figure \ref{fig14}), and one with the black edges, (right figure of Figure \ref{fig14}). However, unlike the critical phase, the Gibbs measure corresponding to the middle figure does not split into two independent parts because of the \yellow\ (dashed) edge. For this measure, the marginal law of the odd points of the first curve and even points of the second curve together form the paired random walk. Upon taking the black edges into consideration (which corresponds to the $\wsc$ weight), the odd points of the first curve and even points of the second curve jointly follow the $\m{WPRW}$ law.
	\end{proof}
	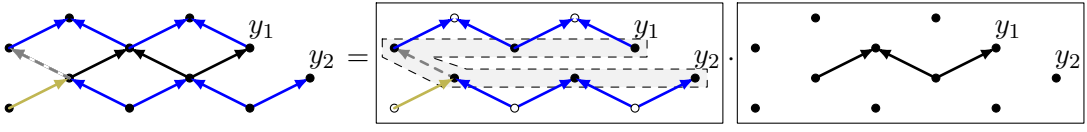
\begin{figure}[h!]
		\centering
		\begin{tikzpicture}[line cap=round,line join=round,>=triangle 45,x=1.6cm,y=0.8cm]
			\draw[fill=gray!10,dashed] (2.4,-0.35)--(2.4,-0.65)--(2.9,-1.15)--(5.1,-1.15)--(5.1,-0.85)--(3.1,-0.85)--(2.8,-0.65)--(4.6,-0.65)--(4.6,-0.35)--(2.4,-0.35);
			\draw (2.35,0.25)--(2.35,-1.75)--(5.23,-1.75)--(5.23,0.25)--(2.35,0.25);
			\draw (5.35,0.25)--(5.35,-1.75)--(8.23,-1.75)--(8.23,0.25)--(5.35,0.25);
			\foreach \x in {-0.2,0.8}
			{
				\draw [fill=black] (\x,0) circle (1.5pt);
				\draw [fill=black] (\x,-1) circle (1.5pt);
				\draw [fill=black] (\x-0.5,-1.5) circle (1.5pt);
				\draw [fill=black] (\x-0.5,-0.5) circle (1.5pt);
				\draw[line width=1pt,blue,{Latex[length=2mm]}-]  (\x,0) -- (\x-0.5,-0.5);
				\draw[line width=1pt,blue,{Latex[length=2mm]}-] (\x,0) -- (\x+0.5,-0.5);
				\draw[line width=1pt,black,{Latex[length=2mm]}-] (\x-0.5,-0.5) -- (\x,-1);
				\draw[line width=1pt,black,{Latex[length=2mm]}-] (\x+0.5,-0.5) -- (\x,-1);
				\draw[line width=1pt,blue,{Latex[length=2mm]}-]  (\x+1,-1) -- (\x+0.5,-1.5);
				\draw[line width=1pt,blue,{Latex[length=2mm]}-] (\x,-1) -- (\x+0.5,-1.5);
			}
		\draw[line width=1pt,white,{Latex[length=2mm]}-] (-0.7,-0.5) -- (-0.2,-1);
			\draw[line width=1pt,yellow!70!black,{Latex[length=2mm]}-] (-0.2,-1) -- (-0.7,-1.5);
			\draw[line width=1pt,gray,{Latex[length=2mm]}-,dashed] (-0.7,-0.5) -- (-0.2,-1);
			\draw[line width=1pt,gray,{Latex[length=2mm]}-,dashed] (2.5,-0.5) -- (3,-1);
			\draw [fill=black] (1.3,-0.5) circle (1.5pt);
			\draw [fill=black] (1.3,-1.5) circle (1.5pt);
			\draw [fill=black] (1.8,-1) circle (1.5pt);
			\node at (1.4,-0.2) {$y_1$};
			\node at (1.9,-0.7) {$y_2$};
			\foreach \x in {3,4}
			{
				\draw [fill=white] (\x,0) circle (1.5pt);
				\draw [fill=black] (\x,-1) circle (1.5pt);
				\draw [fill=white] (\x-0.5,-1.5) circle (1.5pt);
				\draw [fill=black] (\x-0.5,-0.5) circle (1.5pt);
				\draw[line width=1pt,blue,{Latex[length=2mm]}-]  (\x,0) -- (\x-0.5,-0.5);
				\draw[line width=1pt,blue,{Latex[length=2mm]}-] (\x,0) -- (\x+0.5,-0.5);
				\draw[line width=1pt,blue,{Latex[length=2mm]}-]  (\x+1,-1) -- (\x+0.5,-1.5);
				\draw[line width=1pt,blue,{Latex[length=2mm]}-] (\x,-1) -- (\x+0.5,-1.5);
			}
			\draw[line width=1pt,yellow!70!black,{Latex[length=2mm]}-] (3,-1) -- (2.5,-1.5);
			\draw [fill=black] (4.5,-0.5) circle (1.5pt);
			\draw [fill=white] (4.5,-1.5) circle (1.5pt);
			\draw [fill=black] (5,-1) circle (1.5pt);
			\node at (4.6,-0.2) {$y_1$};
			\node at (5.1,-0.7) {$y_2$};
			\foreach \x in {6,7}
			{
				\draw [fill=black] (\x,0) circle (1.5pt);
				\draw [fill=black] (\x,-1) circle (1.5pt);
				\draw [fill=black] (\x-0.5,-1.5) circle (1.5pt);
				\draw [fill=black] (\x-0.5,-0.5) circle (1.5pt);
			}
			\draw[line width=1pt,black,{Latex[length=2mm]}-] (6+0.5,-0.5) -- (6,-1);
			\draw[line width=1pt,black,{Latex[length=2mm]}-] (7-0.5,-0.5) -- (7,-1);
			\draw[line width=1pt,black,{Latex[length=2mm]}-] (7+0.5,-0.5) -- (7,-1);
			\draw [fill=black] (7.5,-0.5) circle (1.5pt);
			\draw [fill=black] (7.5,-1.5) circle (1.5pt);
			\draw [fill=black] (8,-1) circle (1.5pt);
			\node at (7.6,-0.2) {$y_1$};
			\node at (8.1,-0.7) {$y_2$};
			\node at (5.3,-0.7) {$\cdot$};
			\node at (2.2,-0.7) {$=$};
		\end{tikzpicture}
		\caption{$\Pr_{\alpha_2}^{(y_1,y_2),(-\infty)^{3};2,3}$ law is decomposed into two parts. We use the representation from Figure \ref{fig12} (B) here. The first part (middle figure) shaded region corresponds to a paired random walk. The second part (right figure) corresponds to $\wsc$.}
		\label{fig14}
	\end{figure}

	\subsection{Proof of Proposition \ref{lem:ep} and Proposition \ref{l:rpass} in the critical phase}  \label{sec5.3} We continue with the notations from Lemma \ref{l:LCrit}. By the KMT coupling for random walks \cite{kmt}, we may find independent Brownian motions $B_1,B_2$ defined on the same probability space such that the following holds. There exists a constant $\Con>0$ depending only on $\theta$ and $\mu$ such that for all $T$ large enough,
	\begin{align}\label{coupl}
		{\Pr}\left(\max_{k\le T-2} \big|\sum_{j=1}^k V_{i}(j)-\sigma B_i(k)\big| \ge \Con \log T\right) \le 1/T.
	\end{align}
	 $V_{1}(1)$ defined in \eqref{def:Vij} and $\sigma^2:=\operatorname{Var}(V_{1}(1))$. Recall that in the critical phase we have $\alpha_1=N^{-1/3}\mu$. Set $\kappa:=\tfrac14|\mu| \Psi'(\tfrac12\theta)\ge 0$. As $\Psi'$ is a decreasing nonnegative function on $[0,\infty)$, for large enough $N$
	\begin{align}\label{logu}
		\big|\Ex[\log U_1(1)]\big| = \big|\Psi(\theta-\alpha_1)-\Psi(\theta+\alpha_1)\big| \le 2|\alpha_1|\Psi'(\tfrac12\theta) = \tfrac12\kappa N^{-1/3}.
	\end{align}
	Propositions \ref{lem:ep} and \ref{l:rpass} can now be proven using the above coupling and estimate for $|\Ex[\log U_1(1)]|$.
	
\begin{proof}[Proof of Proposition \ref{lem:ep} in the case $p=1$ (critical phase)]   Fix $\e\in (0,1)$. Set
$$\beta_1:=\Pr\Big(\sup_{x\in [0,T]} B_1(x)\le \tfrac{\sqrt{T}}8\Big)=\Pr\Big(\sup_{x\in [0,1]} B_1(x)\le \tfrac{1}8\Big)>0, \quad \beta_2:=\inf_{n\in \mathbb{N}} \exp\big(-2(n-1)e^{-\frac12\sqrt{n}}\big) >0.$$
		Set $T:=\lfloor rN^{2/3}\rfloor$. Continuing with the notations from Lemma \ref{l:LCrit}, let us assume $$(L_i(j))_{(i,j)\in \mathcal{K}_{2,T}}\sim\bar{\Pr}^{(0,-A\sqrt{T})}.$$ 	Observe that $|(T-1)\Ex[\log U_1(1)]|\le \sqrt{r}\kappa\sqrt{T}$. Following the relation in \eqref{def:LCrit2} and the estimate in \eqref{coupl} we get that
		\begin{align*}
		\bar{\Pr}^{(0,-A\sqrt{T})} \bigg(	L_1(2k-1) & \ge -\tfrac18\sqrt{T}-\sqrt{r}\kappa\sqrt{T} -\Con \log T \mbox{ for all } k\in \ll1,T\rr, \mbox{ and} \\ L_2(2k) & \le -A\sqrt{T}+\tfrac18\sqrt{T}+\sqrt{r}\kappa\sqrt{T}+\Con \log T \mbox{ for all } k\in \ll1,T\rr \bigg) \ge \beta_1^2-\tfrac2T.
		\end{align*}
		Recall that $A=1+2\sqrt{r}\kappa$ from \eqref{adef}. Thus for large enough $T$ we can ensure that
		$$\big(-\tfrac18\sqrt{T}-\sqrt{r}\kappa\sqrt{T} -\Con \log T\big)-\big(-A\sqrt{T}+\tfrac18\sqrt{T}+\sqrt{r}\kappa\sqrt{T}+\Con \log T\big) \ge \tfrac12\sqrt{T},$$
		and $\beta_1^2-\frac2T \ge \frac12\beta_1^2$. Thus  for large enough $T$ we have
		\begin{align*}
			\bar{\Pr}^{(0,-A\sqrt{T})}\Big(L_1(2k-1)\wedge L_1(2k+1) \ge L_2(2k)+\tfrac12\sqrt{T}, \mbox{ for all }k\in \ll1,T-1\rr\Big) \ge \tfrac12\beta_1^2.
		\end{align*}
		Following the definition of $\wcr$ from \eqref{def:wcr} we thus get
		\begin{align*}
			\bar{\Ex}^{(0,-A\sqrt{T})}[\wcr] \ge \tfrac12\beta_1^2\cdot \exp\big(-2(T-1)e^{-\frac12\sqrt{T}}\big) \ge \tfrac12\beta_1^2\beta_2.
		\end{align*}
		By Lemma \ref{l:LCrit}, this forces
		\begin{align*}
			\Pr_{\alpha_1}^{(0,-A\sqrt{T}),(-\infty)^{T};2,T}\big(|L_2(2)|\ge M\sqrt{T}\big)  & = \frac{\bar{\Ex}^{(0,-A\sqrt{T})}\big[\wcr\ind_{|L_2(2)|\ge M\sqrt{T}}\big]}{\bar{\Ex}^{(0,-A\sqrt{T})}[\wcr]} \\ & \le 2\beta_1^{-2}\beta_2^{-1}\cdot\bar{\Pr}^{(0,-A\sqrt{T})}\big(|L_2(2)|\ge M\sqrt{T}\big).
		\end{align*}
		Under $\bar{\Pr}^{(0,-A\sqrt{T})}$, $L_2(2)$ has variance $T\cdot\operatorname{Var}(V_{1}(1))$ and mean $-A\sqrt{T}+(T-1)\Ex[\log U_{1}(1)]$. One can thus choose $M$ large enough so that the last term in the above equation is at most $\e$. Similarly one can show $\Pr_{\alpha_2}^{(0,-A\sqrt{T}),(-\infty)^{T};2,T}\big(|L_1(1)|\ge M\sqrt{T}\big)\le \e$ for all large enough $M$. This proves \eqref{eq:l5.1} for $p=1$. For \eqref{eq:l5.w}, observe that by Lemma \ref{l:LCrit} (a) and the Markov inequality one can take $\til{M}$ large enough so that have \begin{align*}
			\Pr_{\alpha_1}^{0,(-\infty)^{T};1,T}\big(|L_1(1)|\ge \til{M}\sqrt{T}\big) & \le \tfrac1{\til{M}^2T}\big(T\cdot\operatorname{Var}(V_{1}(1))+(|(T-1)\Ex[\log U_1(1)])^2\big) \le \e.
		\end{align*}
	\end{proof}
	\begin{proof}[Proof of Proposition \ref{l:rpass} in the case $p=1$ (critical case)]  We continue with the same notations as in Lemma \ref{l:LCrit} with $T\mapsto 2T$. Set $T:=\lfloor rN^{2/3}\rfloor$. Consider the collection of random variables $(L_i(j))_{(i,j)\in \mathcal{K}_{2,2T}}$ defined in \eqref{def:LCrit2} with $y_1=-MN^{\frac13}$. By Lemma \ref{l:LCrit} (a),  we get that $$(L_1(j))_{(i,j)\in \mathcal{K}_{1,2T}} \sim \Pr_{\alpha_1}^{-MN^{1/3};(-\infty)^{2T};1,2T}.$$ We may assume $V_{1}(j)$ are defined in a probability space that includes a Brownian motion $B_1(\cdot)$ such that \eqref{coupl} holds. Recall that given a standard Brownian motion $B$ and an open set $\mathcal{U} \subset C([0,1])$ with $\{f: f(0)=0\} \subset \mathcal{U}$, we have $\Pr(B|_{[0,1]}\in \mathcal{U})>0$. Thus by the scale invariance of Brownian motion, there exists $\phi(\theta,\mu,r,M)>0$ such that
		\begin{align*}
			\Pr\Big( 0 \le \sigma B_1(x) -(16M+5\kappa r)N^{1/3} \le {MN^{1/3}} \mbox{ for all } x\in [\tfrac{T}2,2T]\Big) \ge 2\phi.
		\end{align*}
		Here $\kappa=\frac14|\mu|\Psi'(\frac12\theta)\ge 0$. Now for $y=-MN^{\frac13}$ we have $|y+(k-1)\Ex[\log U_1(1)]| \le (M+\kappa r)N^{\frac13}$ for all $k\le 2T$. For large enough $N$ we also have $\Con \log 2T \le MN^{1/3}$ where $\Con$ comes from \eqref{coupl}. Thus in view of \eqref{def:LCrit2} and \eqref{coupl} we have
		\begin{equation}\label{lwrbdA}
			\begin{aligned}
				& \Pr_1\bigg((14M+4\kappa r)N^{\frac13} \le L_1(4T-1-2k)  \\ & \hspace{3.5cm} \le (19M+6\kappa r)N^{\frac13} \mbox{ for all } k\in \ll\tfrac{T}2,2T-1\rr\bigg) \ge 2\phi-\tfrac1{2T},
			\end{aligned}
		\end{equation}
		where for simplicity we write $\Pr_1:=\Pr_{\alpha_1}^{-MN^{\frac13},(-\infty)^{2T};1,2T}$. Let us set
		\begin{align*}
			\m{A} & :=\left\{(14M+4\kappa r)N^{\frac13} \le L_1(4T-1-2k)  \le (19M+6\kappa r)N^{\frac13} \mbox{ for all } k\in \ll\tfrac{T}2,2T-1\rr\right\}, \\
			\m{B}(k) & :=\left\{ |L_1(2k-1)-L_1(2k)|, |L_1(2k+1)-L_1(2k)| \ge  2(5M+2\kappa r)N^{\frac13} \right\}.
		\end{align*}
		Recall the event $\m{RP}_{1,M}$ from \eqref{defrp}. Observe that
		\begin{align*}
			\m{RP}_{1,M} \supset \m{A}\cap \bigcap_{k\in \ll1,3T/2-1\rr}  \m{B}(k)
		\end{align*}
		Thus by applying the union bound we get
		\begin{align}
			\Pr_1(\m{RP}_{1,M})  \ge \Pr_1\bigg(\m{A}\cap \bigcap_{k\in \ll1,3T/2-1\rr}  \m{B}(k)\bigg) \ge \Pr_1(\m{A})-\sum_{k\in \ll1,3T/2-1\rr}\Pr_1\bigg(\m{A}\cap  \neg \m{B}(k)\bigg) \label{lwrbd1}
		\end{align}
		Let us denote $\mathcal{F}_{\operatorname{odd}}:=\sigma\big((L_1(2k-1))_{k=1}^{2T}\big)$. Note that the event $\m{A}$ is measurable with respect to $\mathcal{F}_{\operatorname{odd}}$. On the event $\m{A}$, $|L_1(2k+1)-L_1(2k-1)| \le (5M+2\kappa r)N^{\frac13}$ for all $k\in \ll1,3T/2-1\rr$. Recall that the distribution of even points of $L_1$ conditioned on $\mathcal{F}_{\operatorname{odd}}$ are given by the $\qo$-distributions (see \eqref{qdist} and Lemma \ref{l:LCrit}). Applying the tail bound for the $\qo$-distribution from Lemma \ref{qlemma} we have
		\begin{align*}
			\ind_{\m{A}}\Ex_1\big(\ind_{\neg \m{B}(k)}\mid \mathcal{F}_{\operatorname{odd}}\big) \le \ind_{\m{A}}\cdot \exp\left(-\Con (5M+2\kappa r)N^{\frac13}\right),
		\end{align*}
		for all $k\in \ll1,3T/2-1\rr$. Taking another expectation  with respect to $\mathcal{F}_{\operatorname{odd}}$ above and then plugging the bound back in \eqref{lwrbd1}, along with the lower bound of $\Pr_1(\m{A})$ from \eqref{lwrbdA} we get that
		\begin{align*}
			\Pr_1(\m{RP}_{1,M}) \ge 2\phi-\tfrac1{2T}-3r N^{\frac23} \exp\left(-\Con (5M+2\kappa r)N^{\frac13}\right).
		\end{align*}
		For large enough $N$, the right side of above equation is always larger than $\phi$.
	\end{proof}

	\subsection{Proof of Propositions \ref{lem:ep} and \ref{l:rpass} in the supercritical phase} \label{sec5.5} Recall that Lemma \ref{l:LSCrit} establishes that the law of $\Pr_{\alpha_2}^{\vec{y},(-\infty)^{T};2,T}$ is related to the law of weighted paired random walk ($\m{WPRW}$) defined in Definition \ref{prb}. We will start by developing a few important properties of the paired random walk and weighted paired random walk measures before going into the proof of Propositions \ref{lem:ep} and \ref{l:rpass} in the $p=2$ case (supercritical phase).
	
\begin{lemma}[Properties of the increments] \label{as:in} The densities $\fa$ and $\ga$, defined in \eqref{def:faga}, enjoy the following properties.
		\begin{enumerate}
			\item The density $\fa$ is symmetric and $\log \fa$ is concave.
			
			\item Let $\psi$ denote the characteristic function corresponding to $\fa$. $|\psi|$ is integrable. Given any $\delta>0$, there exists $\eta$ such that $\sup_{t\ge \delta} |\psi(t)|=\eta <1$.

			\item For any $a<b$, $\inf_{x\in [a,b]} \fa(x) >0$ and $\inf_{x\in [a,b]} \ga(x) >0$.
			
			\item There exists a constant $\Con>0$ such that $\fa(x)\le \Con e^{-|x|/\Con}$ and $\ga(x)\le \Con e^{-|x|/\Con}$. In particular, this implies that if $X \sim \fa$ and $Y\sim \ga$, there exists $v>0$ such that and
			\begin{align*}		\sup_{|t|\le v}\big[\Ex[e^{tX}]+\Ex[e^{tY}]\big] <\infty.
			\end{align*}
			In other words $X$ and $Y$ are subexponential random variables.
		\end{enumerate}
	\end{lemma}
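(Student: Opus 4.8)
\textbf{Proof proposal for Lemma \ref{as:in}.} Recall from \eqref{def:gwt} that $G_{\theta,+1}(y)=e^{\theta y-e^y}/\Gamma(\theta)$ and $G_{\theta,-1}(y)=e^{-\theta y-e^{-y}}/\Gamma(\theta)$, that $\ga=G_{\zeta,1}$, and that $\fa=G_{\theta,+1}\ast G_{\theta,-1}$. The plan is to verify the four items in order, extracting everything from elementary properties of these log-gamma densities.

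For part (1), observe that $G_{\theta,-1}(y)=G_{\theta,+1}(-y)$, so $\fa(x)=\int_\R G_{\theta,+1}(z)G_{\theta,+1}(z-x)\,dz$, which is manifestly symmetric under $x\mapsto -x$. For log-concavity, the key fact is that $\log G_{\theta,+1}(y)=\theta y - e^y - \log\Gamma(\theta)$ has second derivative $-e^y<0$, so $G_{\theta,+1}$ is log-concave; the same holds for $G_{\theta,-1}$. The convolution of two log-concave densities on $\R$ is log-concave (Prékopa's theorem), so $\log\fa$ is concave. For part (2), integrability of $|\psi|$ follows since $\psi(t)=\psi_+(t)\psi_-(t)$ where $\psi_\pm$ is the characteristic function of $G_{\theta,\pm1}$; a direct computation (change of variables $e^{\mp y}=s$) gives $\psi_+(t)=\Gamma(\theta+it)/\Gamma(\theta)$ and $\psi_-(t)=\Gamma(\theta-it)/\Gamma(\theta)$, hence $|\psi(t)|=|\Gamma(\theta+it)|^2/\Gamma(\theta)^2$, which decays like $e^{-\pi|t|}|t|^{2\theta-1}$ as $|t|\to\infty$ by the standard asymptotics for $|\Gamma(\theta+it)|$, giving integrability. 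For the uniform bound $\sup_{t\ge\delta}|\psi(t)|=\eta<1$: since $\fa$ has a density (is non-lattice) we have $|\psi(t)|<1$ for all $t\neq 0$, and since $|\psi|$ is continuous with $|\psi(t)|\to 0$ as $t\to\infty$, its supremum over the closed set $\{t\ge\delta\}$ is attained and strictly less than $1$.

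For part (3), since $G_{\theta,\pm1}$ and $G_{\zeta,1}$ are continuous and strictly positive on all of $\R$, their infima over any compact $[a,b]$ are positive; for $\fa$ this also follows directly from continuity and strict positivity of the convolution (the integrand $G_{\theta,+1}(z)G_{\theta,+1}(z-x)$ is strictly positive for each $x$). For part (4), the tail bound: from $\log G_{\theta,+1}(y)=\theta y-e^y-\log\Gamma(\theta)$ one sees $G_{\theta,+1}(y)\le \Con e^{-y^2}$ for $y\ge 0$ large (super-exponential decay as $y\to+\infty$ since $e^y$ dominates) and $G_{\theta,+1}(y)\le \Con e^{\theta y}$ for $y\le 0$ (exponential decay as $y\to-\infty$); so $G_{\theta,+1}(y)\le \Con e^{-|y|/\Con}$, and symmetrically for $G_{\theta,-1}$ and for $\ga=G_{\zeta,1}$ (with constant depending on $\zeta$). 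The tail bound for $\fa=G_{\theta,+1}\ast G_{\theta,-1}$ then follows since the convolution of two densities each bounded by $\Con e^{-|y|/\Con}$ is bounded by $\Con' e^{-|x|/\Con'}$ (split the convolution integral at $z=x/2$). The moment generating function statement follows immediately: for $|t|<1/\Con$, $\Ex[e^{tX}]=\int e^{tx}\fa(x)\,dx\le \Con\int e^{tx-|x|/\Con}\,dx<\infty$, and likewise for $Y\sim\ga$; taking $v$ strictly smaller than the relevant radius of convergence gives the claimed uniform bound on $\sup_{|t|\le v}[\Ex[e^{tX}]+\Ex[e^{tY}]]$.

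The only step requiring any care is part (2): one must identify the characteristic functions explicitly as ratios of Gamma functions and invoke the asymptotic $|\Gamma(\theta+it)|\sim \sqrt{2\pi}\,|t|^{\theta-1/2}e^{-\pi|t|/2}$ to get integrability of $|\psi|$; everything else is routine. If one prefers to avoid the Gamma-function computation, integrability of $|\psi|$ can alternatively be deduced from the fact that $\fa$ is $C^\infty$ with all derivatives bounded and integrable — indeed $\fa$ is smooth since $G_{\theta,+1}$ is smooth and convolution preserves this — but the explicit form is cleaner and also makes the non-lattice/strict-contraction property transparent. I expect no genuine obstacle here; this is a collection of standard facts about log-gamma distributions assembled for later use.
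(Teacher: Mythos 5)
Your proof is correct, and its backbone is the same as the paper's: the key identity $|\psi(t)|=|\Gamma(\theta+it)|^2/\Gamma(\theta)^2$, and the explicit exponential tails of the $G$ densities for parts (3) and (4). Two sub-arguments differ in route. For log-concavity in part (1) you invoke Pr\'ekopa's theorem (convolution of log-concave densities is log-concave), whereas the paper checks the second derivative explicitly; the latter implicitly uses the closed form $\fa(x)=\tfrac{\Gamma(2\theta)}{\Gamma(\theta)^2}\,e^{\theta x}(1+e^x)^{-2\theta}$, from which $(\log\fa)''(x)=-2\theta e^x(1+e^x)^{-2}<0$. Your route avoids computing the convolution at the cost of citing a nontrivial theorem; both are fine. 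For part (2) you use the Stirling-type asymptotic $|\Gamma(\theta+it)|\sim\sqrt{2\pi}\,|t|^{\theta-1/2}e^{-\pi|t|/2}$ for integrability, plus continuity, decay at infinity, and the non-lattice property for the strict bound $\sup_{t\ge\delta}|\psi|<1$. The paper instead uses the product formula $|\psi(t)|=\prod_{n\ge 0}(1+t^2/(\theta+n)^2)^{-1}\le(1+t^2/\theta^2)^{-1}$, which yields both integrability and the uniform strict bound in one line without any compactness argument. Everything you wrote is valid; the product-formula bound is simply the more economical path.
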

	\begin{proof} Recall that $\fa=G_{\theta,+1}\ast G_{\theta,-1}$. Thus the random variable corresponding to $\fa$ can be viewed as difference of two independent random variables drawn from $G_{\theta,+1}$. Hence symmetricity claim of part~(1) follows. Concavity of $\log\fa$ can be checked by computing the second derivative explicitly. {For the characteristic function from \cite[Formula 5.8.3]{NIST:DLMF} one has
			\begin{align}\label{eq:chint}
				\psi(t)=\left|\frac{\Gamma(\theta+it)}{\Gamma(\theta)}\right|^2=\prod_{n=0}^{\infty} \left(1+\frac{t^2}{(\theta+n)^2}\right)^{-1} \le \left(1+\frac{t^2}{\theta^2}\right)^{-1}.
			\end{align}
			From here, one can verify part~(2) of the above lemma.} Part~(3) and (4) follows from the explicit form of the $G$ function.
	\end{proof}

The following corollary allows us to use estimates developed in Appendix \ref{app2} regarding non-intersection probabilities for random walks and bridges with general jump distributions.

\begin{corollary}\label{rm:con} The density $\fa$ defined in \eqref{def:faga} satisfies Assumption \ref{asp:in}.
\end{corollary}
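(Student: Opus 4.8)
The plan is to check, clause by clause, that the density $\fa = G_{\theta,+1}\ast G_{\theta,-1}$ satisfies every hypothesis bundled into Assumption \ref{asp:in} — namely the standing assumptions on the increment law used throughout Appendix \ref{app2} for the non-intersection estimates: symmetry, log-concavity, strict positivity on $\R$, continuity and boundedness of the density, finiteness of exponential moments in a neighborhood of the origin (in particular finite and strictly positive variance), integrability of the characteristic function, and the quantitative aperiodicity bound $\sup_{|t|\ge\delta}|\psi(t)| < 1$ for each $\delta > 0$. Almost all of these are already recorded in Lemma \ref{as:in}, so the proof amounts to matching them up and reading off the few remaining elementary facts from the closed form of $G_{\theta,\pm 1}$ in \eqref{def:gwt}.

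Concretely, first I would invoke Lemma \ref{as:in}(1) for symmetry and log-concavity of $\fa$; strict positivity of $\fa$ on all of $\R$ follows since $G_{\theta,\pm1}$ are everywhere positive, hence so is their convolution (and positivity on compact intervals is Lemma \ref{as:in}(3)). Boundedness and continuity of $\fa$ follow from $G_{\theta,\pm1}\in L^1\cap L^\infty$ via $\|G_{\theta,+1}\ast G_{\theta,-1}\|_\infty \le \|G_{\theta,+1}\|_\infty \|G_{\theta,-1}\|_1$ together with dominated convergence (alternatively, from integrability of $\psi$ and Fourier inversion). The exponential tail bound $\fa(x)\le \Con e^{-|x|/\Con}$ and the resulting subexponentiality, hence the existence of exponential moments near $0$, are exactly Lemma \ref{as:in}(4); and $\operatorname{Var}(\fa) = 2\Psi'(\theta) \in (0,\infty)$ since $\fa$ is the law of $\log Y_1 - \log Y_2$ with $Y_1, Y_2$ i.i.d.\ $\operatorname{Gamma}(\theta)$. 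Finally, integrability of $|\psi|$ and the bound $\sup_{t\ge\delta}|\psi(t)| = \eta < 1$ are Lemma \ref{as:in}(2); since $\psi$ here is real and even, this upgrades to $\sup_{|t|\ge\delta}|\psi(t)|<1$.

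The only point that is more than bookkeeping is this last aperiodicity bound, which is the hypothesis that powers the local-limit-type controls in Appendix \ref{app2}; but it is already nailed down by the explicit estimate $\psi(t) = \prod_{n\ge 0}(1 + t^2/(\theta+n)^2)^{-1} \le (1 + t^2/\theta^2)^{-1}$ from \eqref{eq:chint}, which is strictly less than $1$ for $t\ne 0$ and tends to $0$ as $|t|\to\infty$. Thus there is no genuine obstacle: Corollary \ref{rm:con} is immediate once the list in Assumption \ref{asp:in} is placed beside Lemma \ref{as:in}, with the handful of leftover regularity and moment statements supplied directly by the formula for $G_{\theta,\pm1}$.
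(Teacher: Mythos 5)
Your proposal is correct and is essentially the paper's proof, which consists of the single line "This follows immediately from Lemma \ref{as:in}": Assumption \ref{asp:in} is verbatim the subset of Lemma \ref{as:in} (items (1), (2), (4)) that concerns $\fa$, so the clause-by-clause matching you perform is exactly what is intended. The extra items you verify (strict positivity on all of $\R$, boundedness, continuity, finite variance, the upgrade from $t\ge\delta$ to $|t|\ge\delta$) are not actually part of Assumption \ref{asp:in} as stated and are therefore harmless but unnecessary.
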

\begin{proof}
This follows immediately from Lemma \ref{as:in}.
\end{proof}

	Recall the $\m{PRW}$ law from Definition \ref{prb}. Let $\bar\fa^{(n)}$ be the density of $\frac{X(1)+\cdots+X(n)}{\sqrt{n}}$ where $X(i)$ are i.i.d.~drawn from $\fa$. Assume $U(n),V(n) \stackrel{i.i.d.}{\sim} \bar\fa^{(n-1)}$.   Note that for any Borel set $A\subset \R^2$ and $x,y\in \R$ we have
	\begin{align}\label{wrn}
		\Pr_{\operatorname{PRW}}^{n;(x,y)}\big((\ise,\iise)\in A\big)= \frac{\Ex\Big[\ga(\sqrt{n}(V(n)-U(n)-\frac{x-y}{\sqrt{n}}))\ind_{(U(n)+x,V(n)+y)\in {n}^{-1/2}A}\Big]}{\Ex[\ga(\sqrt{n}(V(n)-U(n)-\frac{x-y}{\sqrt{n}}))]}.
	\end{align}
	The above formula is the guiding principle for extracting tail estimates of various kinds of functions of $(\ise, \iise)$. We list few of them that are indispensable for our later analysis.
	
	\begin{lemma}[Tail estimates for the entrance law] \label{as:el} Fix any $M>0$, $n\ge 1$, and consider $x_n,y_n\in \R$ with $|x_n|, |y_n|\le M\sqrt{n}$. Fix two open intervals $I_1,I_2>0$. Under the above setup, there exists a constant $\Con=\Con(M,I_1,I_2)>1$ such that for all $n\ge 1$ and $\tau\ge 1$, we have	
		\begin{align} \label{t1}
			& \Pr_{\operatorname{PRW}}^{n;(x_n,y_n)}\big(|\ise|\ge \tau\sqrt{n}\big) \le \Con e^{-\frac1\Con \tau}, \\
			& \Pr_{\operatorname{PRW}}^{n;(x_n,y_n)}\big(|\ise-\iise|\ge \tau\big) \le \Con e^{-\frac1\Con \tau}, \label{t2} \\  & \Pr_{\operatorname{PRW}}^{n;(x_n,y_n)}\big(\ise-\iise \in I_1, \ise \in \sqrt{n}I_2\big) \ge \tfrac1\Con. \label{t3}
		\end{align}
		
	\end{lemma}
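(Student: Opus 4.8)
\textbf{Proof plan for Lemma \ref{as:el}.}
The starting point is formula \eqref{wrn}, which expresses $\Pr_{\operatorname{PRW}}^{n;(x_n,y_n)}$-probabilities of events depending only on the entrance coordinates $(\ise,\iise)$ as ratios of expectations over $U(n),V(n)\stackrel{i.i.d.}{\sim}\bar\fa^{(n-1)}$, the normalized $(n-1)$-step random walk densities. The plan is therefore to control numerator and denominator of \eqref{wrn} separately using (a) a local central limit theorem for $\bar\fa^{(n-1)}$ and (b) the tail and positivity properties of $\fa,\ga$ recorded in Lemma \ref{as:in}. By part (2) of Lemma \ref{as:in}, $|\psi|$ is integrable, so the Gnedenko-type local CLT applies: $\bar\fa^{(m)}$ converges uniformly to the Gaussian density $\phi_{\sigma^2}$ of variance $\sigma^2=\operatorname{Var}(\fa)$, and moreover $\sup_m\|\bar\fa^{(m)}\|_\infty<\infty$ while $\bar\fa^{(m)}(z)\gtrsim 1$ uniformly on any fixed compact set for $m$ large (handle small $m$ separately via part (3), $\inf_{[a,b]}\fa>0$). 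These two facts — a uniform upper bound on $\bar\fa^{(m)}$ and a uniform lower bound on compacts — together with the exponential tail of $\ga$ from part (4) will drive all three estimates.

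For the denominator, write $D_n := \Ex[\ga(\sqrt n(V(n)-U(n)-\tfrac{x_n-y_n}{\sqrt n}))]$. Since $\ga$ is a probability density, $\int_{\R^2}\ga(\sqrt n(v-u-c))\,\bar\fa^{(n-1)}(u)\bar\fa^{(n-1)}(v)\,du\,dv$: substitute $w=\sqrt n(v-u-c)$ (so $dv = dw/\sqrt n$) to get $D_n = \tfrac1{\sqrt n}\int \ga(w)\,\bar\fa^{(n-1)}(u)\,\bar\fa^{(n-1)}(u+c+w/\sqrt n)\,du\,dw$. Because $|u|,|c|$ with $|c|\le 2M$ are $O(1)$ on the dominant region and $\ga$ has exponential tails, the inner integral is bounded below by a constant times $\inf$ of $\bar\fa^{(n-1)}$ over a fixed compact neighbourhood of $0$, which is $\gtrsim 1$; hence $D_n \ge \tfrac1{\Con}n^{-1/2}$. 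An identical computation gives the matching upper bound $D_n\le \Con n^{-1/2}$, uniformly over $|x_n|,|y_n|\le M\sqrt n$.

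For the numerator of \eqref{t1}, the event $\{|\ise|\ge\tau\sqrt n\}$ forces $U(n)+x_n/\sqrt n$ to have absolute value $\ge\tau$, so $|U(n)|\ge \tau - M$; using $\|\bar\fa^{(n-1)}\|_\infty\le\Con$ and the boundedness of the $\ga$-weight by $\|\ga\|_\infty$ after integrating out $w$ as above, the numerator is at most $\Con n^{-1/2}\Pr(|U(n)|\ge\tau-M)$, and by Lemma \ref{as:in}(4) (subexponentiality of $\fa$, hence of the normalized sum $U(n)$ via a Chernoff bound with the uniform-in-$n$ mgf estimate $\sup_{|t|\le v}\Ex[e^{t\fa}]<\infty$) this is $\le\Con n^{-1/2}e^{-\tau/\Con}$; dividing by $D_n\ge \Con^{-1}n^{-1/2}$ yields \eqref{t1}. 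For \eqref{t2}, the event $\{|\ise-\iise|\ge\tau\}$ means $|\sqrt n(V(n)-U(n)) - (x_n-y_n) | \ge \tau$, i.e. the $\ga$-argument $w$ satisfies $|w|\ge\tau$; since $\ga(w)\le\Con e^{-|w|/\Con}$, the numerator is bounded by $\Con n^{-1/2}\int_{|w|\ge\tau}e^{-|w|/\Con}dw\cdot\|\bar\fa^{(n-1)}\|_\infty^{?}\le \Con n^{-1/2}e^{-\tau/\Con}$ after the same change of variables, and again we divide by $D_n$. For the lower bound \eqref{t3}, restrict the region of integration to $w\in \sqrt n(I_1-(x_n-y_n)/\sqrt n)$ intersected with a bounded window and $u$ so that $U(n)+x_n/\sqrt n\in I_2$; on this product region $\ga$ and both copies of $\bar\fa^{(n-1)}$ are bounded below by positive constants (using $\inf_{\text{compact}}\ga>0$ and the local-CLT lower bound for $\bar\fa^{(n-1)}$, depending on $M,I_1,I_2$ but not $n$), so the numerator is $\ge\Con^{-1}n^{-1/2}$, and dividing by $D_n\le\Con n^{-1/2}$ gives \eqref{t3}.

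\textbf{Main obstacle.} The delicate point is establishing the uniform-in-$n$ lower bound $\bar\fa^{(n-1)}(z)\ge c>0$ on a fixed compact set together with the uniform upper bound $\|\bar\fa^{(n-1)}\|_\infty\le\Con$; this requires a quantitative local central limit theorem valid for all $n\ge 1$ (not just asymptotically), which is why Lemma \ref{as:in}(2) — integrability of $|\psi|$ and the strict-contraction bound $\sup_{|t|\ge\delta}|\psi(t)|=\eta<1$ — is needed: integrability gives the density formula $\bar\fa^{(m)}(z)=\tfrac1{2\pi}\int e^{-itz}\psi(t/\sqrt m)^m\,dt$, the contraction bound controls the non-local part of the Fourier integral, and the small-$t$ behaviour $\psi(t)=1-\tfrac{\sigma^2}{2}t^2+o(t^2)$ from \eqref{eq:chint} controls the local part to produce both bounds. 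Small values of $n$ (where the CLT is not yet effective) are dispatched directly from Lemma \ref{as:in}(3)-(4). The remaining manipulations — changes of variables, Chernoff bounds, and splitting integration regions — are routine once these uniform density estimates are in hand.
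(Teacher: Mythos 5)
Your proposal is correct and rests on the same skeleton as the paper's proof: both start from \eqref{wrn} and reduce everything to showing that the denominator $\Ex[\ga(\sqrt{n}(V(n)-U(n)-z_n))]$ is of order $n^{-1/2}$ (with matching upper and lower constants) and then bounding the numerators. Where you differ is in how the local-limit input is deployed. The paper invokes the quantitative local CLT of Lemma \ref{sharp}, which is only stated on the window $|x|\le(\log n)^2$, and therefore proceeds by a dyadic strip decomposition $|\sqrt{n}(V(n)-U(n)-z_n)|\in[p,p+1]$, bounding each strip probability by $R/\sqrt{n}$ and summing against the exponential tail of $\ga$; for \eqref{t1} it additionally conditions on $U(n)$ and treats the region $|U(n)|\ge(\log n)^{3/2}$ separately. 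You instead perform the change of variables $w=\sqrt{n}(v-u-z_n)$ once, which extracts the $n^{-1/2}$ Jacobian globally, and then only need two uniform-in-$n$ facts about $\bar\fa^{(m)}$: a sup-norm bound and a lower bound on compacts. These do follow from Lemma \ref{as:in}(2)--(3) by the standard Fourier-inversion argument you sketch (the contraction bound $\sup_{|t|\ge\delta}|\psi(t)|=\eta<1$ kills the non-local part of $\int|\psi(t/\sqrt m)|^m\,dt$, the quadratic expansion controls the local part, and finitely many small $m$ are handled directly), so your route is legitimate and arguably cleaner: it avoids the strip sums entirely and makes the roles of $\|\ga\|_{L^1}$, $\|\bar\fa^{(m)}\|_\infty$, and the subexponential tail of $U(n)$ transparent. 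The only blemish is in \eqref{t3}: after your change of variables the event $\{\ise-\iise\in I_1\}$ corresponds to $w$ lying in a \emph{fixed} interval of length $|I_1|$ (namely $\pm I_1$ up to the sign convention in \eqref{den}), not in $\sqrt{n}\bigl(I_1-(x_n-y_n)/\sqrt{n}\bigr)$ as written; your ``intersected with a bounded window'' phrasing suggests you intend the fixed interval, and with that correction the pointwise lower bounds on $\ga$ and on the two $\bar\fa^{(n-1)}$ factors over a product region of fixed positive measure give the required $\Con^{-1}n^{-1/2}$ numerator bound.
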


	\begin{proof}[Proof of Lemma \ref{as:el}] For simplicity let us write $z_n:=\frac{x_n-y_n}{\sqrt{n}}$. It is enough to prove the Lemma \ref{as:el} for large enough $n$. So, throughout the proof we will assume $n$ is large enough. We first claim that the denominator of the right-hand side of \eqref{wrn} is of the order $n^{-1/2}$, i.e. there exists a $\Con>1$ such that for all large enough $n$ we have
		\begin{align}\label{410}
			\frac1{\Con \sqrt{n}} \le \Ex[\ga(\sqrt{n}(V(n)-U(n)-z_n))]  \le \frac{\Con}{\sqrt{n}}
		\end{align}
	Fix any $\tau\ge 0$. Using the exponential tails of $\ga$ (part~(4) from Lemma \ref{as:in}) we have
	\begin{equation}
		\label{g21}\begin{aligned}
			& \Ex\bigg[\ga(\sqrt{n}(V(n)-U(n)-z_n))\ind_{|\sqrt{n}(V(n)-U(n)-z_n)|\ge \tau}\bigg] \\ & \le  \Ex\bigg[\ga(\sqrt{n}(V(n)-U(n)-z_n))\ind_{|\sqrt{n}(V(n)-U(n)-z_n)|\ge \tau+\sqrt{n}}\bigg] \\ & \hspace{3cm}+\sum_{p\in \ll \tau,\tau+\sqrt{n}\rr} \Ex\bigg[\ga(\sqrt{n}(V(n)-U(n)-z_n))\ind_{|\sqrt{n}(V(n)-U(n)-z_n)|\in [p,p+1]}\bigg] \\ & \le \Con e^{-\frac1{\Con}(\sqrt{n}+\tau)}+ \sum_{p\in \ll \tau,\tau+\sqrt{n}\rr} \Con  e^{-\frac1{\Con}p} \cdot \Pr\big(\tfrac{p}{\sqrt{n}} \le |V(n)-U(n)-z_n|\le \tfrac{p+1}{\sqrt{n}}\big).
		\end{aligned}
	\end{equation}
		Note that $\bar\fa^{(n)}(x)=\sqrt{n}\fa^{\ast n}(x\sqrt{n})$ where $\fa^{\ast n}$ denotes the $n$-fold convolution of $\fa$. As $n\to \infty$, we know by central limit theorem that this should converge to a Gaussian density with appropriate variance. Lemma \ref{sharp} (recall Corollary \ref{rm:con}) records a sharp quantitative version of this convergence. Indeed, the estimate from Lemma \ref{sharp} (recall Corollary \ref{rm:con}) ensures that given any interval $B:=[\frac{p}{\sqrt{n}},\frac{p+1}{\sqrt{n}}]\subset [-2,2]$, for all large enough $n$, we have $$\Pr\big((V(n)-U(n)-z_n)\in B\big)=(1+o(1))\Pr\big(Z_1-Z_2-z_n\in B\big)$$ where $Z_1,Z_2$ are independent Gaussian random variables with same variance as of $\fa$. By Gaussian computations we can ensure that for all large enough $n$ we have
$$\Pr\big((V(n)-U(n)-z_n)\in B\big) \in [R^{-1}/\sqrt{n},R/\sqrt{n}]$$ for some $R>1$ depending only on $M$. This ensures $\Pr\big(\tfrac{p}{\sqrt{n}} \le |V(n)-U(n)-z_n|\le \tfrac{p+1}{\sqrt{n}}\big) \le \frac{R}{\sqrt{n}}$ for all $p\in \ll0,\sqrt{n}\rr$. Plugging this bound back in r.h.s.~\eqref{g21} leads to
		\begin{align}\label{411}
			\Ex\bigg[\ga(\sqrt{n}(V(n)-U(n)-z_n))\ind_{|\sqrt{n}(V(n)-U(n)-z_n)|\ge \tau}\bigg] \le \frac{\Con}{\sqrt{n}}e^{-\tau/\Con}.
		\end{align}
		Taking $\tau=0$ leads to the upper bound in \eqref{410}. For the lower bound we note
		\begin{equation}
			\label{g1}
			\begin{aligned}
				\Ex[g(\sqrt{n}(V(n)-U(n)-z_n))]  & \ge \Ex\bigg[g(\sqrt{n}(V(n)-U(n)-z_n))\ind_{V_n-U_n-x_n+y_n \in (\frac1{\sqrt{n}},\frac2{\sqrt{n}})}\bigg]  \\ & \ge  \Pr\bigg(V(n)-U(n)-z_n \in (\tfrac1{\sqrt{n}},\tfrac2{\sqrt{n}})\bigg)\cdot \inf_{x\in [1,2]} \ga(x) \\ & \ge  \tfrac{R^{-1}}{\sqrt{n}}\cdot \inf_{x\in [1,2]} \ga(x),
			\end{aligned}
		\end{equation}
		which is bounded below by $1/[\Con\sqrt{n}]$, by the property of $\ga$ from part~(3) of Lemma \ref{as:in}. This proves the lower bound in \eqref{410}.
		
		\medskip
		
		Let us now prove the inequalities in Lemma \ref{as:el} one by one. Inserting the upper bound in \eqref{411} and lower bound in \eqref{410} in the formula \eqref{wrn} leads to \eqref{t2}.	 For \eqref{t3} notice that due to \eqref{wrn} and \eqref{g1} we have
		\begin{align*}
			& \Pr\big(\ise-\iise \in I_1, \ise \in \sqrt{n}I_2\big) \\ & \ge \Con_1^{-1} \sqrt{n} \cdot \inf_{x\in I_1} g(-x) \cdot \Pr\Big(U(n)+\tfrac{x_n}{\sqrt{n}} \in \sqrt{n}I_2, U(n)+z_n-V(n)\in {n}^{-1/2}I_1\Big).
		\end{align*}
		Using Lemma \ref{sharp} (recall Corollary \ref{rm:con}) again, the probability above can be shown lower bounded by $\frac{\Con_2^{-1}}{\sqrt{n}}$ for some $\Con_2$ depending on $M,I_1,I_2$ but free of $n$. This proves \eqref{t3}. For \eqref{t1} we observe
		\begin{align*}
			& \Ex\big[g(\sqrt{n}(V(n)-U(n)-z_n))\ind_{|U(n)|\ge \tau}\big] \\ & \le \Con e^{-\frac{\sqrt{n}}{\Con}}+  \sum_{p \in \ll 0,\sqrt{n}\rr} \Con  e^{-\frac{p}{\Con}} \Pr\Big(\tfrac{p}{\sqrt{n}}\le |V(n)-U(n)-z_n|\le \tfrac{p+1}{\sqrt{n}}, |U(n)|\ge \tau\Big).
		\end{align*}
		By a union bound followed by tower property of conditional expectation, we have
		\begin{align*}
			& \Pr\Big(\tfrac{p}{\sqrt{n}}\le |V(n)-U(n)-z_n|\le \tfrac{p+1}{\sqrt{n}}, |U(n)|\ge \tau\Big) \\ & \le \Ex\left[\ind_{\tau \le |U(n)|\le (\log n)^{3/2}} \Pr \left(\tfrac{p}{\sqrt{n}}\le |V(n)-U(n)-z_n|\le \tfrac{p+1}{\sqrt{n}} \mid U(n)\right) \right]+\Pr\big(|U(n)|\ge (\log n)^{3/2}\big).
		\end{align*}
		By Lemma \ref{sharp} (recall Corollary \ref{rm:con}), the conditional probability above can be uniformly bounded above by $\frac{\Con_3}{\sqrt{n}}$ for some $\Con_3$ independent of $p$ and $n$. Exponential tail estimates of $U(n)$, which follows from sub-exponential property (part~(4) of Lemma \ref{as:in}) of $\fa$ (see Theorem 2.8.1 from \cite{vers}), show that the right-hand side of the above equation is at most $\frac{\Con}{\sqrt{n}}e^{-\frac1\Con\tau}$. Combining these estimates yields
$$\Ex\left[g(\sqrt{n}(V(n)-U(n)-z_n))\ind_{|U(n)|\ge \tau}\right] \le \tfrac{\Con}{\sqrt{n}}e^{-\frac1\Con\tau}.$$ Using the lower bound for the denominator from \eqref{g1}, in view of \eqref{wrn}, we get \eqref{t1}.
	\end{proof}

	In order to deal with the $\m{WPRW}$ law, the weighted version of the $\m{PRW}$ law (see Definition \ref{prb}), we next analyze the $\wsc$ weight defined in \eqref{defw}. We  record a convenient lower bound for $\wsc$ that will be useful in our later analysis. Fix any $p,q\in \Z_{\ge 1}$ with $p+q\le n-1$. Given any $\beta>0$, we consider several `Gap' events:
	\begin{align*}
		\m{Gap}_{1,\beta} & :=\{\iks-\iiks \ge \beta k^{1/4} \mbox{ for all } k\in \ll2,p\rr\}, \\
		\m{Gap}_{2,\beta} & :=\{\iks-\iiks \ge \beta (n-k)^{1/4} \mbox{ for all } k\in \ll n-q,n-1\rr\}, \\
		\m{Gap}_{3,\beta} & :=\{\iks-\iiks \ge  n^{1/4} \mbox{ for all } k\in \ll p+1,n-q\rr\}, \\
		\m{Gap}_{4,\beta} & :=\{\se{1}{k-1}-\iks \le \beta^{-1} k^{1/8} \mbox{ for all } k\in \ll2,p\rr\}, \\
		\m{Gap}_{5,\beta} & :=\{\se{1}{k-1}-\iks \le \beta^{-1} (n-k+1)^{1/8} \mbox{ for all } k\in \ll n-q+1,n\rr\}, \\
		\m{Gap}_{6,\beta} & :=\{|\iks-\se{1}{k-1}| \le \beta^{-1}(\log n) \mbox{ for all } k\in \ll p+1,n-q\rr\}.
	\end{align*}
	The events depend on $p$ and $q$ as well, but we have suppressed it from the notation. $\m{Gap}_{1,\beta},\m{Gap}_{2,\beta},$ and $\m{Gap}_{3,\beta}$ requires $\iks-\iiks$ to be bigger than a threshold pointwise in the left ($\ll2,p\rr$), right ($\ll n-q,n-1\rr$), and middle ($\ll p+1,n-q\rr$) region respectively. The type of threshold depends on the region. $\m{Gap}_{4,\beta},\m{Gap}_{5,\beta},$ and $\m{Gap}_{6,\beta}$ controls the increments of $\iks$.	Set
	\begin{align}\label{def:gp}
		\m{Gap}_{\beta}:=\bigcap_{i=1}^6 \m{Gap}_{i,\beta}.
	\end{align}

	We have the following deterministic inequality for $\wsc$.
	\begin{lemma}\label{wgap} Recall $\wsc$ from \eqref{defw}. Given any $\beta>0$, there exists $a_{\beta}>0$ such that for all $n\ge 1$,
		$$\wsc \ge a_{\beta} \cdot \ind_{\m{Gap}_{\beta}\cap \{|\ise-\iise|\le \beta^{-1}\}}.$$
		where $\wsc$ is defined in \eqref{defw}.
	\end{lemma}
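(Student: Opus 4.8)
The plan is to exhibit a concrete positive lower bound for $\wsc$ on the event $\m{Gap}_\beta \cap \{|\ise-\iise|\le \beta^{-1}\}$ by bounding each of the three exponential sums appearing in the definition \eqref{defw}, namely $e^{\iise-\se{1}{2}}$, $\sum_{k=2}^{n-1} e^{\iiks-\se{1}{k+1}}$, and $\sum_{k=2}^{n-1} e^{\iiks-\iks}$. Since $\wsc = \exp(-(\text{sum of nonnegative terms}))$, it suffices to show the total sum is bounded above by a constant $C_\beta$ uniformly in $n$; then $a_\beta := e^{-C_\beta}$ works. The single term $e^{\iise-\se{1}{2}}$ is handled by writing $\iise-\se{1}{2} = (\iise-\ise) + (\ise - \se{1}{2})$; on the event $\{|\ise-\iise|\le \beta^{-1}\}$ the first difference is bounded by $\beta^{-1}$, and the second, $\ise-\se{1}{2} = \se{1}{1}-\se{1}{2}$, is controlled by $\m{Gap}_{4,\beta}$ evaluated at $k=2$ (giving $\le \beta^{-1}2^{1/8}$), so this term contributes at most $e^{2\beta^{-1}}$ or so.

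The main work is the two sums. First I would split the index range $\ll 2,n-1\rr$ into the left region $\ll 2,p\rr$, the middle region $\ll p+1,n-q\rr$, and the right region $\ll n-q+1,n-1\rr$. For the sum $\sum e^{\iiks-\iks}$: in the left region $\m{Gap}_{1,\beta}$ gives $\iks-\iiks \ge \beta k^{1/4}$, so $\sum_{k=2}^p e^{-\beta k^{1/4}}$ converges (bounded by $\int_0^\infty e^{-\beta t^{1/4}}\,dt$, a finite constant depending only on $\beta$); symmetrically, in the right region $\m{Gap}_{2,\beta}$ gives $\iks - \iiks \ge \beta(n-k)^{1/4}$, so $\sum_{k=n-q+1}^{n-1} e^{-\beta(n-k)^{1/4}}$ is bounded by the same constant; in the middle region $\m{Gap}_{3,\beta}$ gives $\iks-\iiks \ge n^{1/4}$, so each of the at most $n$ terms is at most $e^{-n^{1/4}}$, and $n\,e^{-n^{1/4}}$ is bounded uniformly in $n\ge 1$. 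For the sum $\sum e^{\iiks-\se{1}{k+1}}$, I would write $\iiks - \se{1}{k+1} = (\iiks - \se{1}{k}) + (\se{1}{k} - \se{1}{k+1})$; the first difference is $\le -\beta k^{1/4}$ (left region, $\m{Gap}_{1,\beta}$), $\le -\beta(n-k)^{1/4}$ (right), or $\le -n^{1/4}$ (middle), while the second difference $\se{1}{k}-\se{1}{k+1}$ is exactly $-(\se{1}{(k+1)-1} - \se{1}{k+1})$ and is bounded above by $\beta^{-1}(k+1)^{1/8}$ in the left region ($\m{Gap}_{4,\beta}$ at index $k+1$, valid for $k+1\le p$, with the boundary index $k=p$ needing a tiny separate remark), by $\beta^{-1}(n-k)^{1/8}$ in the right region ($\m{Gap}_{5,\beta}$), and by $\beta^{-1}\log n$ in the middle region ($\m{Gap}_{6,\beta}$). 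Combining, the left-region summand is $\le e^{-\beta k^{1/4} + \beta^{-1}(k+1)^{1/8}}$, which since the $k^{1/4}$ term dominates the $k^{1/8}$ term for large $k$, is summable with bound depending only on $\beta$; similarly on the right; and in the middle the summand is $\le e^{-n^{1/4} + \beta^{-1}\log n} = n^{\beta^{-1}} e^{-n^{1/4}}$, so $n$ times this is $n^{1+\beta^{-1}} e^{-n^{1/4}}$, bounded uniformly in $n$.

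The one genuinely delicate point — the part that requires care rather than being routine — is bookkeeping at the interfaces $k=p$ and $k=n-q$ between the three regions, since the $\m{Gap}_{4,\beta}$ / $\m{Gap}_{5,\beta}$ bounds on increments are stated on $\ll 2,p\rr$ and $\ll n-q+1,n\rr$ while the $\m{Gap}_{6,\beta}$ bound on $|\iks - \se{1}{k-1}|$ is on $\ll p+1,n-q\rr$; one must check the index $k+1$ in the shifted difference $\se{1}{k}-\se{1}{k+1}$ still lands in a region whose Gap event controls it. This is a matter of choosing which Gap event to invoke for the at-most-two boundary terms and absorbing them into the constant; there is no analytic obstacle, only the need to be slightly attentive that no index falls through the cracks. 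Once the three sums are each bounded by a finite constant $C_\beta$ (independent of $n$, $p$, $q$), summing them gives $\wsc \ge e^{-3C_\beta - e^{2\beta^{-1}}} =: a_\beta > 0$ on the stated event, and the proof is complete. Note $p,q$ are fixed throughout but unconstrained other than $p+q\le n-1$, so all bounds are automatically uniform in their choice as well.
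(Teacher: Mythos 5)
Your proposal is correct and follows essentially the same route as the paper's proof: bound each of the three contributions to the exponent in $\wsc$ by a constant depending only on $\beta$, using $\m{Gap}_{1,\beta}$--$\m{Gap}_{3,\beta}$ for the differences $\iks-\iiks$, $\m{Gap}_{4,\beta}$--$\m{Gap}_{6,\beta}$ for the increments of $\se{1}{\cdot}$, and the event $\{|\ise-\iise|\le\beta^{-1}\}$ together with $\m{Gap}_{4,\beta}$ for the boundary term $e^{\iise-\se{1}{2}}$. The paper simply packages your three-region case analysis into a single $\min$/$\max$ over the regional thresholds, so there is nothing further to add.
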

	
	\begin{proof} Assume $\m{Gap}_{\beta}$ holds. For $k\in \ll2,n-1\rr$ we have
		\begin{align*}
			\iiks-\iks & \le -\min (\beta k^{1/4}, \beta (n-k)^{1/4}, n^{1/4})=:\tau^{(n)}(k).
		\end{align*}
		Clearly $\sum_{k=2}^{n-1}e^{\iiks-\iks} \le \sum_{k=1}^{n-1} e^{\tau^{(n)}(k)}$ is uniformly bounded in $n$ and hence can be bounded by some constant ${C}_\beta\in (0,\infty)$.	Similarly for $k\in \ll2,n-1\rr$ we have
		\begin{align*}
			\iiks-\se{1}{k+1} & = \iiks-\iks+\iks-\se{1}{k+1} \\ & \le \tau_n^{(1)}(k)+\beta^{-1} \max ((k+1)^{1/8},(n-k)^{1/8},(\log n)) =:\til\tau^{(n)}(k).
		\end{align*}
		Clearly $\sum_{k=2}^{n-1}e^{\iiks-\se{1}{k+1}} \le \sum_{k=1}^{n-1} e^{\til\tau^{(n)}(k)}$ is uniformly bounded in $n$ and hence can be bounded by some constant $\til{C}_\beta\in (0,\infty)$. Finally,
		$$\se{2}{1}-\se{1}{1}= \se{1}{1}-\se{2}{2}+\se{2}{2}-\se{1}{2} \le 3\beta^{-1}$$
		on the event $\{|\se{1}{1}-\se{2}{1}|\le \beta^{-1}\}\cap \m{Gap}_{4,\beta}$.	Thus from the definition of $\wsc$ in \eqref{defw} we have
		\begin{align*}
			\wsc \ge \ind_{\m{Gap}_{\beta}\cap \{|\ise-\iise|\le \beta^{-1}\}} \cdot \exp(-e^{3\beta^{-1}}-C_{\beta}-\til{C}_\beta).
		\end{align*}
		Taking $a_\beta:=\exp(-e^{3\beta^{-1}}-C_{\beta}-\til{C}_\beta)$ completes the proof.
	\end{proof}

Note that upon conditioning on the values of $\se{1}{1}$ and $\se{2}{1}$, a paired random walk  (recall Definition \ref{def:rws}) has the law of two independent $n$-step random bridges (recall Definition \ref{def:rws}) starting from $(\se{1}{1},\se{2}{1})$ and ending at $(x,y)$.

We now introduce modified random bridge above as they are easier to work with than random bridges. Indeed, as described in the proof idea section of the introduction, the main advantage of working with modified random bridges is that they have a (true) random walk portion and one can appeal to  classical non-intersection probability estimates available for the random walks. On the other hand, the laws of random  bridge and modified random bridge can be compared with the help of Lemma \ref{lem:compare} below.
Figure \ref{fmrb2} contains an illustration of such a bridge for $p=q=\lfloor n/4 \rfloor$.

	\begin{definition}[$(n;p,q)$-modified random bridge] \label{def:mrb} Fix $n\ge 1$, and $p,q\in \ll0,n\rr$ with $p+q\le n$ and $p\neq 0$. Take any $a,b\in \R$. Let $(X(i),Y(i))_{i\in \Z_{\ge 1}} \stackrel{i.i.d.}{\sim} \fa$ where $\fa$ is defined in \eqref{def:faga}. Set $\se{}{1}:=a$ and $\se{}{n}:=b$. Set For $k\in \ll2,p\rr$, set $$\se{}{k}:=a+\sum_{j=1}^{k-1} X_j \mbox{ for }k\in \ll2,p\rr, \quad  \se{}{n-k}:=b-\sum_{j=1}^k Y_j \mbox{ for }k\in \ll1,q\rr.$$ Conditioned on $(\se{}{k})_{k\in \ll1,p\rr\cup \ll n-q,n\rr}$, set $(\se{}{k})_{k=p}^{n-q} \sim \pr{n-p-q+1}{\til{a}}{,\til{b}}$ where $\til{a}:=\se{}{p}$, $\til{b}:=\se{}{n-q}$, and $\pr{m}{a}{,b}$ is a $m$-step random walk from $a$ to $b$. We call $(\se{}{k})_{k\in \ll1,n\rr}$ a $(n;p,q)$-modified random bridge of length $n$ starting at $a$ and ending at $b$ and denote its law as $\tpr{(n;p,q)}{a}{,b}$. Again, we shall often consider two independent $(n;p,q)$-modified random bridges starting from $(a_1,a_2)$ and ending at $(b_1,b_2)$. Such bridges can be viewed as a measure on $(\Omega^2_n,\mathcal{F}^2_n)$ space introduced in Definition \ref{prb}. We write $\widetilde{\mathbb{P}}^{(n;p,q);(a_1,a_2),(b_1,b_2)}$ to denote its law.
	\end{definition}

	\begin{lemma}[Comparison Lemma] \label{lem:compare} Fix any $M, \til{M}>0$ and $\delta_1,\delta_2\in [0,1/2)$, and $n\ge 1$. Set $p=\lfloor n\delta_1 \rfloor$ and $q=\lfloor n\delta_2\rfloor$. For $\vec{x}\in \R^{n-2}$, let $V_{a,b}(\vec{x})$ and $\til{V}_{a,b}(\vec{x})$ be the joint density of a $n$-step random bridge and $(n;p,q)$-modified random bridge starting at $a$ and ending at $b$.  Suppose $a,b\in\R$ with $|a-b|\le M\sqrt{n}$.  Then, there exists two constants $\Con_1=\Con_1(M,\delta_1,\delta_2)>0$ and $\Con_2=\Con_2(M,\til{M},\delta_1,\delta_2)>0$ such that for all $\vec{x}\in \R^{n-2}$ and all $a,b\in \R$ with $|a-b|\le M\sqrt{n}$ we have
		\begin{align}\label{eb7}
		&	V_{a,b}(\vec{x}) \le \Con_1 \!\cdot\! \til{V}_{a,b}(\vec{x}),\\
		\label{eb8}
		&	V_{a,b}(\vec{x})\!\cdot\!\ind_{|x_{p}-x_{n-q}|\le \til{M}\sqrt{n}} \ge \Con_2^{-1}\!\cdot\! \til{V}_{a,b}(\vec{x})\!\cdot\!\ind_{|x_{p}-x_{n-q}|\le \til{M}\sqrt{n}}.
		\end{align}
	\end{lemma}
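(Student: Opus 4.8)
The plan is to write both densities explicitly and bound their ratio by controlling only the ``bridge portion'' of the modified random bridge — the part that differs from the true random bridge. Recall that a genuine $n$-step random bridge from $a$ to $b$ has density (against Lebesgue on the $n-2$ interior coordinates) proportional to $\prod_{k=2}^n \fa(x_k-x_{k-1})$ normalized by $\fa^{\ast(n-1)}(b-a)$, while the $(n;p,q)$-modified random bridge replaces the middle stretch $\ll p,n-q\rr$ by an independent random \emph{walk} bridge of length $m:=n-p-q$ conditioned on its two endpoints $x_p$ and $x_{n-q}$. Thus both densities share the same factors $\prod_{k=2}^p \fa(x_k-x_{k-1})$ and $\prod_{k=n-q+2}^n \fa(x_k-x_{k-1})$ coming from the ``walk'' portions, and the same middle factor $\prod_{k=p+1}^{n-q}\fa(x_k-x_{k-1})$; they differ only in the normalizing denominators. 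Writing $z:=x_p$, $w:=x_{n-q}$, a direct computation gives
\begin{align}\label{ratioplan}
\frac{V_{a,b}(\vec x)}{\til V_{a,b}(\vec x)} = \frac{\fa^{\ast(p-1)}(z-a)\,\fa^{\ast(q-1)}(b-w)\,\fa^{\ast(m)}(w-z)}{\fa^{\ast(n-1)}(b-a)}.
\end{align}
So the statement reduces entirely to two-sided bounds on this ratio of convolution densities, with the crucial asymmetry that for the lower bound \eqref{eb8} we are allowed to restrict to the event $|z-w|\le\til M\sqrt n$ (equivalently $|x_p-x_{n-q}|\le\til M\sqrt n$), whereas for the upper bound \eqref{eb7} we must handle all $z,w$.

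The main tool is the local central limit theorem with uniform control, namely Lemma~\ref{sharp} (applicable here by Corollary~\ref{rm:con}), which gives that $\fa^{\ast\ell}(\cdot)$ is uniformly comparable to a Gaussian density of variance $\ell\sigma^2$ — at least for arguments in a $\sqrt\ell(\log\ell)^{C}$-window — together with the uniform subexponential tail bound $\fa(x)\le\Con e^{-|x|/\Con}$ and its convolution consequences (exponential/Gaussian-type tails for $\fa^{\ast\ell}$) from Lemma~\ref{as:in}(4). For the \textbf{upper bound} \eqref{eb7}: the denominator $\fa^{\ast(n-1)}(b-a)$ is, since $|b-a|\le M\sqrt n$, bounded below by $\Con^{-1} n^{-1/2}$ by the LCLT. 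The numerator is a product of three convolution densities which, by the uniform Gaussian upper bound valid globally (the true sup of $\fa^{\ast\ell}$ is $O(\ell^{-1/2})$, and the Gaussian decay dominates), is at most $\Con\, (p\wedge 1)^{-1/2}(q\vee 1)^{-1/2} m^{-1/2}\asymp \Con\, n^{-3/2}$ when $p,q$ are both of order $n$; when $\delta_2=0$ (so $q=0$ and there is no right walk portion) the corresponding factor $\fa^{\ast(q-1)}(b-w)$ is absent and one gets $n^{-1}$ — in every case the numerator is $\le\Con n^{-1/2}\cdot(\text{something}\le 1)$ after absorbing, and the ratio is $O(1)$. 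Some care is needed because $p=\lfloor n\delta_1\rfloor$ could be as small as $1$ when $\delta_1$ is tiny — but $\delta_1>0$ is fixed and $p\ge 1$ always since we assumed $p\ne 0$, so $p$ grows linearly; the degenerate-looking factors are harmless. For the \textbf{lower bound} \eqref{eb8}: now on the event $|z-w|\le\til M\sqrt n$ we also have $|z-a|\le|z-w|+|w-b|+|b-a|\le(\til M+2M)\sqrt n$ (using $|w-b|\le|z-w|+|b-a|$... more carefully $|z-a|\le |z-w|+|w-b|+|b-a|$ and $|w-b|\le|w-z|+|z-a|$ — instead bound directly: $|z-a|+|b-w|\le |b-a|+|z-w|+|w-b|$ is circular, so use $|z-a|\le|b-a|+|z-b|$ and $|z-b|\le|z-w|+|w-b|$; combined with needing a bound on $|w-b|$, one instead notes $|(z-a)|+|(b-w)|+|(w-z)|\ge|b-a|$ always, and for the lower bound picks up from $|z-w|\le \til M\sqrt n$, $|b-a|\le M\sqrt n$ that at least $|z-a|\le (M+\til M)\sqrt n$ OR $|b-w|\le(M+\til M)\sqrt n$; to get \emph{both} small one argues: $|z-a|\le|z-b|+|b-a|$, and $|z-b|\le|z-w|+|w-b|$; then $|w-b|\le|w-z|+|z-a|$; this gives $|z-a|\le 2|z-w|+|b-a|+|z-a|$, useless). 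The clean route: restrict the interior coordinates appearing in $\vec x$ are arbitrary, so \eqref{eb8} is really a pointwise inequality between the \emph{prefactors}; since the LCLT lower bound $\fa^{\ast\ell}(y)\ge \Con^{-1}\ell^{-1/2}$ requires $|y|\le C\sqrt\ell$, I restrict attention to the sub-event where $|z-a|\le\til M'\sqrt n$ is \emph{also} imposed — but it is not part of the hypothesis. The correct resolution: on the event $\{|x_p - x_{n-q}|\le \til M\sqrt n\}$ both densities still involve $\fa^{\ast(p-1)}(z-a)$ as a \emph{common} factor? No — $\til V$ also has this factor. Indeed $\fa^{\ast(p-1)}(z-a)$ appears in BOTH $V$ and $\til V$ (it is the walk portion, shared), so it \emph{cancels} in \eqref{ratioplan} — wait, re-examine: \eqref{ratioplan} shows it does NOT cancel. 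Let me recompute: $\til V$'s normalization is $\fa^{\ast(p-1)}(z-a)$ times $\fa^{\ast(q-1)}(b-w)$ (from the walk portions being conditioned on endpoints) times nothing for the middle-walk-bridge normalization $\fa^{\ast m}(w-z)$. And $V$'s normalization is $\fa^{\ast(n-1)}(b-a)$. So the ratio is exactly \eqref{ratioplan}; the $\fa^{\ast(p-1)}(z-a)$ does \emph{not} cancel — it sits in the numerator of the ratio, i.e.\ in the \emph{denominator} $\til V$. Good, that is favorable for the upper bound and is the obstruction for the lower bound.

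So the genuine difficulty — and this is where I expect to spend the effort — is the lower bound \eqref{eb8}: I must show $\fa^{\ast(p-1)}(z-a)\,\fa^{\ast(q-1)}(b-w)\,\fa^{\ast m}(w-z) \ge \Con^{-1}\,\fa^{\ast(n-1)}(b-a)$ for all $z,w$ with $|z-w|\le\til M\sqrt n$, while only knowing $|a-b|\le M\sqrt n$ — the points $z,w$ can be arbitrarily far from $a$ and $b$. The resolution is to also use that $V_{a,b}(\vec x)$ on the left-hand side \emph{already contains the tail-decaying factors} $\prod_{k=2}^p\fa(x_k-x_{k-1})$ etc., which force $z=x_p$ near $a$ (and $w$ near $b$) with overwhelming weight; precisely, one should not try to prove a pointwise prefactor inequality but rather realize that when $|z-a|$ or $|b-w|$ is large, \emph{both} $V$ and $\til V$ are exponentially small because of the shared walk-portion factors $\prod\fa$, and the ratio of the two Gaussian-tailed quantities $\fa^{\ast(p-1)}(z-a)$ vs.\ (its appearance inside the bridge normalization) remains bounded — more carefully, I will split into $|z-a|\le C\sqrt n$ and $|z-a|> C\sqrt n$: on the first region the LCLT gives all four convolution factors matching up to constants (using $|z-w|\le\til M\sqrt n$ and $|b-w|\le |b-a|+|a-z|+|z-w|\le (M+C+\til M)\sqrt n$, all within LCLT range), yielding numerator $\asymp n^{-3/2}$ and denominator $\asymp n^{-1/2}$, and crucially the \emph{left side of \eqref{eb8}}, $V_{a,b}(\vec x)$, carries an extra $\prod\fa$ factor that is pointwise $\ge$ the corresponding part of $\til V$ up to the normalization already accounted — so the inequality holds with a constant; on the second region $|z-a|>C\sqrt n$, the factor $\fa^{\ast(p-1)}(z-a)$ in $\til V$ decays like $e^{-c|z-a|^2/n}$ (sub-Gaussian upper bound for sums of subexponentials in the moderate regime, Gaussian–to–exponential crossover from Lemma~\ref{as:in}(4)), whereas the common $\prod_{k=2}^{p}\fa(x_k-x_{k-1})$ inside $V$ decays \emph{no slower}, so $\til V$ is dominated by $V$ up to a constant there too. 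Assembling these cases, choosing $C=C(M,\til M,\delta_1,\delta_2)$ large enough, gives $\Con_2$. The upper bound \eqref{eb7} follows by the same case analysis but is easier since one only needs the denominator lower bound $\fa^{\ast(n-1)}(b-a)\ge\Con^{-1}n^{-1/2}$ (valid as $|b-a|\le M\sqrt n$) against uniform upper bounds on the three numerator factors. I will record the auxiliary convolution tail and LCLT statements (or cite Lemma~\ref{sharp} and Lemma~\ref{as:in}) as the technical backbone; the bookkeeping of the $\delta_2=0$ degenerate case (no right walk portion, $m=n-p$) is handled by simply omitting the corresponding factor throughout.
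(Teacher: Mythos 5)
Your starting formula for the ratio of densities is wrong, and the error propagates through the whole argument. By Definition \ref{def:mrb}, the first $p$ points and the last $q$ points of the $(n;p,q)$-modified random bridge are \emph{free} random walks: the values $x_p$ and $x_{n-q}$ are not conditioned upon, so the walk portions contribute the bare products of increment densities with no normalizing denominators $\fa^{\ast(p-1)}(z-a)$ or $\fa^{\ast(q-1)}(b-w)$. The only normalization in $\til V_{a,b}$ comes from the middle bridge, so (with $x_0:=a$, $x_{n-1}:=b$)
\begin{align*}
V_{a,b}(\vec x)=\frac{\prod_{j=0}^{n-2}\fa(x_{j+1}-x_j)}{\fa^{\ast(n-1)}(b-a)},\qquad
\til V_{a,b}(\vec x)=\frac{\prod_{j=0}^{n-2}\fa(x_{j+1}-x_j)}{\fa^{\ast(n-p-q)}(x_p-x_{n-q})},
\end{align*}
and the increment products cancel exactly, leaving
\begin{align*}
\frac{V_{a,b}(\vec x)}{\til V_{a,b}(\vec x)}=\frac{\fa^{\ast(n-p-q)}(x_p-x_{n-q})}{\fa^{\ast(n-1)}(b-a)}.
\end{align*}
From here the lemma is immediate from the local CLT (the paper cites Feller; Lemma \ref{sharp} would also do): the denominator is of order $n^{-1/2}$ uniformly over $|a-b|\le M\sqrt n$, the numerator is $\le \Con\, n^{-1/2}$ for all arguments (giving \eqref{eb7}), and is $\ge \Con^{-1}n^{-1/2}$ on $\{|x_p-x_{n-q}|\le\til M\sqrt n\}$ (giving \eqref{eb8}). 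This two-line computation is the paper's entire proof.

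Your displayed ratio formula instead carries the spurious factors $\fa^{\ast(p-1)}(z-a)\,\fa^{\ast(q-1)}(b-w)$, and this is not cosmetic: with those factors present, the pointwise lower bound \eqref{eb8} would be false (take $|z-a|\gg\sqrt n$ while $|z-w|\le\til M\sqrt n$; your numerator tends to $0$ while $\fa^{\ast(n-1)}(b-a)$ stays of order $n^{-1/2}$). Your attempted rescue, invoking the shared products $\prod\fa(x_k-x_{k-1})$ to argue that both $V$ and $\til V$ are exponentially small in that regime, cannot work: a factor common to both densities divides out of the ratio and cannot repair a pointwise inequality between them, as you yourself half-notice mid-proof. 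The entire case analysis on $|z-a|$ is an artifact of the mis-identified normalization and disappears once the density of the modified bridge is written correctly.
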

	\begin{proof} We have
		\begin{align*}
			V_{a,b}(\vec{x}):=\frac{\prod_{j=0}^{n-2} \fa(x_{j+1}-x_{j})}{\fa^{\ast (n-1)}(b-a)}, \quad \til{V}_{a,b}(\vec{x}):=\frac{\prod_{j=0}^{n-2} \fa(x_{j+1}-x_{j})}{ \fa^{\ast(n-p-q)}(x_{p}-x_{n-q})}.
		\end{align*}
		where $x_{0}:=a$ and $x_{n-1}:=b$. We thus have
		\begin{align}\label{lratio}
			\frac{V_{a,b}(\vec{x})}{\til{V}_{a,b}(\vec{x})}=\frac{ \fa^{\ast(n-p-q)}(x_{p}-x_{n-q})}{\fa^{\ast (n-1)}(b-a)}.
		\end{align}
		By \cite[Theorem 2, Chapter XV.5]{feller} we know
		\begin{align}
			\label{feller}
			\sup_{z\in \R}\bigg|{\sqrt{k}}\fa^{\ast k}(z)-\tfrac1{\sqrt{2\pi \sigma}}e^{-\frac{z^2}{2k\sigma^2}}\bigg|\xrightarrow{k\to\infty} 0,
		\end{align}
		where $\sigma=\int_{\R} x^2\fa(x)dx$. Thus, there exist a constant $\til\Con_1>1$ depending on $M,\delta_1,\delta_2$ such that
		\begin{align*}
			\sqrt{n}\fa^{\ast (n-p-q)}(z) \le \til\Con_1 \mbox{ for all }z\in \R, \quad \tfrac1{\til\Con_1} \le \sqrt{n}\fa^{\ast (n-1)}(b-a) \le {\til\Con_1}.
		\end{align*}
		for all large enough $n$. Inserting these bounds in the numerator and denominator of r.h.s.~\eqref{lratio} we get the \eqref{eb7} by setting $\Con_1:=\til{\Con}_1^2$. When $|x_p-x_{n-q}|\le \til{M}\sqrt{n}$, we may utilize the limit result in \eqref{feller} to obtain a new constant $\til\Con_2>0$ depending on $\til{M},\delta_1,\delta_2$ such that
		\begin{align*}
			\sqrt{n}\fa^{\ast (n-p-q)}(x_p-x_{n-q}) \ge \tfrac1{\til\Con_2} \mbox{ whenever } |x_p-x_{n-q}|\le \til{M}\sqrt{n},
		\end{align*}
	for all large enough $n$. Using this bound and the upper bound for $\sqrt{n}\fa^{\ast (n-1)}(b-a)$ we get the desired result. We arrive at \eqref{eb8} by setting $\Con_2:=\til{\Con}_2\cdot\til{\Con}_1$.
	\end{proof}

With all the preparatory results in hand, we are now ready to prove Propositions \ref{lem:ep} and \ref{l:rpass}. Recall that in the introduction we gave a proof sketch for Proposition \ref{lem:ep} (that does not appeal to Proposition \ref{l:rpass}). In what follows, we shall use the techniques outlined in that sketch to establish more sophisticated intermediate results (such as Lemma \ref{lem:tre}). These results will allow us to prove Proposition \ref{l:rpass} first. Then using those intermediate results we shall then establish Proposition \ref{lem:ep}.

	\begin{proof}[Proof of Proposition \ref{l:rpass} for the $p=2$ case (supercritical phase)] We split the proof into several steps.
		
		\medskip
		
		\noindent\textbf{Step 1.} In this step, we reduce the proof of Proposition \ref{l:rpass} to the claim around \eqref{toshow1}. Fix $r>0$. Set $T:=\lfloor rN^{2/3}\rfloor$ and $n=2T$. Recall $y_i$'s and the event $\m{RP}_{2,M}$ from the statement of the lemma. 
		Since $\m{RP}_{2,M}$ is a monotone event, by Proposition \ref{p:gmc} we have
		\begin{align*}
			\Pr_{\alpha_2}^{\vec{y},(-\infty)^{2T};2,2T}(\m{RP}_{2,M})\ge \Pr_{\alpha_2}^{\vec{x},(-\infty)^{2T};2,2T}(\m{RP}_{2,M}).
		\end{align*}
		where $x_1=-2MN^{1/3}$, $x_2=-2MN^{1/3}-\sqrt{n}$.  By translation invariance (Lemma \ref{obs1} \ref{traninv}), we may lift the Gibbs measure by $2MN^{1/3}$ units so that the boundary conditions changes from $(x_1,x_2)$ to $(0,-\sqrt{n})$. The $\m{RP}_{2,M}$ event now requires the second curve to be above the (lifted) barrier $4MN^{1/3}$ under this new boundary condition. Since $4MN^{1/3} \le 8Mr^{-1/2}\sqrt{n}$. it thus suffices to show that there exists $\phi=\phi(r,M)>0$ such that
		\begin{align*}
			 \Pr_{\alpha_2}^{(0,-\sqrt{n}),(-\infty)^{n};2,n}\bigg(\inf_{i\in \ll1,n\rr} L_2(i) \ge 8Mr^{-1/2}\sqrt{n}\bigg) \ge \phi.
		\end{align*}
		for all large enough $n$. Towards this end we claim that there exists $\phi=\phi(r,M)>0$ such that
		\begin{align}\label{toshow1}
			\liminf_{n\to \infty}\Pr_{\alpha_2}^{(0,-\sqrt{n}),(-\infty)^{2n};2,n}(\m{D}_m) \ge 2\phi,
		\end{align}
		where
		\begin{align*}
			\m{D}_m:=\bigg\{ (L_1(2i-1),L_2(2i)) \in ( 10m\sqrt{n},11m\sqrt{n})^2 \mbox{ for all }i\in \ll1,n/2\rr\bigg\},
		\end{align*}
		and $m:=Mr^{-1/2}$. Let us complete the proof assuming \eqref{toshow1}. Note that \eqref{toshow1} controls the even points of the second curve. By Lemma \ref{l:LSCrit}, we know conditioned on the even points,  $L_2(2k+1)\sim \qo_{\theta,\theta;-1}^{(L_2(2k),L_2(2k+2))}$ for $k=1,2,\ldots,2n-1$. In view of Lemma \ref{qlemma}, on the event $\m{D}_m$ we have $$\Ex\left[\ind_{L_2(2k+1) \le 8m\sqrt{n}} \mid \sigma\big(L_2(2k),L_2(2k+2)\big)\right] \le \Con e^{-\frac1{\Con} m\sqrt{n}}.$$  By Lemma \ref{l:LSCrit}, $L_2(1) \sim G_{\alpha_2+\theta,1}+L_2(2)$. Since the density  $G_{\alpha_2+\theta,1}$ have exponential tails, we see that on the event $\m{D}_m$ we have $\Pr(L_2(1) \le 8m\sqrt{n} \mid L_2(2)) \le \Con e^{-\frac1{\Con} m\sqrt{n}}.$
		Thus by a union bound,
		\begin{align*}
			\Pr_{\alpha_2}^{(0,-\sqrt{n}),(-\infty)^{2n};2,n}\bigg(\inf_{i\in \ll1,n\rr} L_2(i) \ge 8m\sqrt{n}\bigg) \ge \Pr_{\alpha_2}^{(0,-\sqrt{n}),(-\infty)^{2n};2,n}(\m{D}_m) - \Con \cdot n e^{-\frac1{\Con} m\sqrt{n}} \ge \phi,
		\end{align*}
		for large enough $n$. This establishes Proposition \ref{l:rpass} for $p=2$, modulo \eqref{toshow1}.
		
		\medskip
		
			\noindent\textbf{Step 2.} In this and subsequent steps we prove \eqref{toshow1}.  Recall the $\m{PRW}$ and $\m{WPRW}$ laws from Definition \ref{prb}. Recall from Lemma \ref{l:LSCrit} that $(L_1(2i-1),L_2(2i))_{i\in \ll1,n\rr} \sim \wprw{n}{(0,-\sqrt{n})}$.  We the terminology from Definition \ref{prb} to write
		\begin{align}\label{toshow4}
			\wprw{n}{(0,-\sqrt{n})}(\m{D}_m) = \frac{\erw{n}{(0,-\sqrt{n})}[\wsc\ind_{\m{D}_m}]}{\erw{n}{(0,-\sqrt{n})}[\wsc]}
		\end{align}
		where $\wsc$ is defined in \eqref{defw} and $\m{D}_m$ is now defined  as
		\begin{align*}
			\m{D}_m:= \big\{ (\se{1}{i},\se{2}{i}) \in (10m\sqrt{n},11m\sqrt{n})^2 \mbox{ for all } i\in \ll 1,n/2\rr \big\}.
		\end{align*}
		For the remainder of the proof we write $\Pr$ and $\Ex$ for $\prw{n}{(0,-\sqrt{n})}$ and $\erw{n}{(0,-\sqrt{n})}$ respectively. We claim that there exists constants $\Con>0$ and $\til{\Con}=\til{\Con}(m)>0$ such that
		\begin{align}\label{twobdsa}
			 \Ex[\wsc\ind_{\m{D}_m}] \le \tfrac{\til\Con^{-1}}{\sqrt{n}}, \qquad \Ex[\wsc] \ge \tfrac{\Con}{\sqrt{n}}.
		\end{align}
		Clearly plugging these bounds back in \eqref{toshow4} verifies \eqref{toshow1}. Let us thus focus on proving \eqref{twobdsa}.	For the upper bound we use the following lemma.
		\begin{lemma}\label{lem:tre} There exists a constant $\Con>0$ such that for all Borel sets $A\subset \R^2$ we have
			\begin{align*}
				\Ex\left [\wsc\ind_{\m{A}}\right] \le \tfrac{\Con}n+\tfrac{\Con}{\sqrt{n}}\Ex\bigg[\ind_{\m{A}}\Big((\ise-\iise+1)\vee 1\Big)\Big(\tfrac{|\ise|}{\sqrt{n}}\vee 2\Big)^{3/2}\bigg],
			\end{align*}
			where $\m{A}:=\{(\ise,\iise)\in A\}$.
		\end{lemma}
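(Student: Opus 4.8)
The plan is to bound $\wsc$ by a small deterministic term plus a hard non-intersection indicator, and then translate that indicator into a non-intersection probability for independent random bridges which can be estimated uniformly via the tools in Appendix \ref{app2}. More precisely, recall from \eqref{defnipab} the events $\ni_p=\{\iks-\iiks\ge -p \text{ for all }k\in\ll2,n-1\rr\}$ and the deterministic inequality \eqref{eqwineq}, namely $\wsc\le \ind_{\ni}+\sum_{p\ge 0} e^{-e^p}\ind_{\ni_{p+1}}$ (this uses only the term $e^{\iiks-\iks}$ in the exponent of $\wsc$; the other terms are bounded by $1$). Conditioning on $(\ise,\iise)$, the $\m{PRW}$ law becomes two independent $n$-step random bridges from $(\ise,\iise)$ to $(0,-\sqrt n)$ under the law $\Pr^{n;(\ise,\iise),(0,-\sqrt n)}$, so by the tower property
\begin{align*}
\Ex[\wsc\ind_{\m A}] \le \Ex\Big[\ind_{\m A}\sum_{p\ge 0} e^{-e^p}\,\Pr^{n;(\ise,\iise),(0,-\sqrt n)}(\ni_{p+1})\Big].
\end{align*}
By Lemma \ref{l:nipp} there is an absolute $\Con$ with $\Pr^{n;(\ise,\iise),(0,-\sqrt n)}(\ni_p)\le e^{\Con p}\Pr^{n;(\ise,\iise),(0,-\sqrt n)}(\ni)$, and since $\sum_{p\ge 0} e^{-e^p}e^{\Con(p+1)}<\infty$, this collapses the sum to $\Con\cdot\Ex[\ind_{\m A}\Pr^{n;(\ise,\iise),(0,-\sqrt n)}(\ni)]$.

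Next I would split according to whether $|\ise|+|\iise|\le \sqrt n(\log n)^{3/2}$ or not. On the complement, the exponential entrance-law tails \eqref{t1}--\eqref{t2} of Lemma \ref{as:el} (valid since $x_n=0$, $y_n=-\sqrt n$ are of order $\sqrt n$) give $\Pr(|\ise|+|\iise|> \sqrt n(\log n)^{3/2})\le \Con e^{-(\log n)^{3/2}/\Con}\le \Con/n$ for large $n$, which contributes the $\Con/n$ term. On the bulk event we invoke Lemma \ref{genni}, i.e. the bound
\begin{align*}
\ind_{|\ise|+|\iise|\le \sqrt n(\log n)^{3/2}}\cdot\Pr^{n;(\ise,\iise),(0,-\sqrt n)}(\ni)\le \tfrac{\Con}{\sqrt n}\max\{\ise-\iise,1\}\cdot\max\big\{\tfrac1{\sqrt n}|\ise|,2\big\}^{3/2},
\end{align*}
which is exactly the quantity appearing in the statement (up to the harmless $(\ise-\iise+1)\vee 1$ versus $\max\{\ise-\iise,1\}$ rewriting). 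Plugging this into the conditional expectation and adding the two pieces yields
\begin{align*}
\Ex[\wsc\ind_{\m A}]\le \tfrac{\Con}{n}+\tfrac{\Con}{\sqrt n}\Ex\Big[\ind_{\m A}\big((\ise-\iise+1)\vee 1\big)\big(\tfrac{|\ise|}{\sqrt n}\vee 2\big)^{3/2}\Big],
\end{align*}
as desired.

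The only genuinely nontrivial inputs are Lemma \ref{l:nipp} and Lemma \ref{genni} from Appendix \ref{app2}; modulo those, the argument here is just the chain "deterministic bound $\to$ condition on entrance points $\to$ Lemma \ref{l:nipp} to remove the $p$-sum $\to$ split on $|\ise|+|\iise|$ with Lemma \ref{as:el} handling the tail and Lemma \ref{genni} handling the bulk". I expect the main obstacle in a fully detailed write-up to be purely bookkeeping: verifying that the constants from Lemma \ref{as:el} and Lemma \ref{genni} apply uniformly in $n$ with the specific boundary data $(0,-\sqrt n)$ (they do, since these are order $\sqrt n$), and checking the summability constant $\sum_{p\ge0} e^{-e^p+\Con(p+1)}$ absorbs cleanly. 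No new ideas are needed beyond what the proof-idea section already sketched for the upper bound in \eqref{wprwest}; this lemma is simply that argument carried out with a general Borel set $A$ in place of the particular increasing event, which is why the bound is phrased in terms of the two-point-difference and slope factors rather than a clean tail estimate.
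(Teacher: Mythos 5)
Your proposal is correct and follows essentially the same route as the paper's proof: the deterministic bound on $\wsc$ via the $\ni_p$ events, Lemma \ref{l:nipp} to collapse the $p$-sum after conditioning on $(\ise,\iise)$, then Lemma \ref{as:el} for the tail contribution and Lemma \ref{genni} for the bulk (the paper truncates the sum at $p=\lfloor\log\log n\rfloor$ and absorbs the remainder into the $\Con/n$ term rather than using the infinite sum, which is immaterial). The only detail to add is that the bulk event must also include $|\ise-\iise|\le(\log n)^{3/2}$, as required by the hypotheses of Lemma \ref{genni} in Appendix \ref{app2}, but this is handled by the \eqref{t2} tail you already cite.
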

		Note that taking $A=\R^2$ in Lemma \ref{lem:tre} and utilizing the exponential tail estimates of $|\se{1}{1}-\se{2}{1}|$ and $|\se{1}{1}|/\sqrt{n}$ from Lemma \ref{as:el}, the upper bound in \eqref{twobdsa} follows.
		\begin{proof}[Proof of Lemma \ref{lem:tre}] As in \eqref{defnipab}, define
			\begin{align}\label{def:nip}
				\ni_p:=\left\{ \iks-\iiks \ge -p \mbox{ for all } k\in \ll2,n-1\rr\right\}.
			\end{align}
			We set $\ni:=\ni_0$. Here $\m{NI}$ stands for non-intersection.	Observe that for any $q\in \Z_{\ge 1}$ we have
			\begin{align}
				\ind_{\ni}+\sum_{p=0}^q\ind_{\ni_{p+1}\cap \ni_p^c} +\ind_{\ni_q^c}=1.
			\end{align}
			Thus, taking $q=\lfloor \log\log n\rfloor$ we have
			\begin{align}\nonumber
				\Ex[\wsc\ind_{\m{A}}] & = \Ex[\wsc\ind_{\m{A}\cap\ni}]+\sum_{p=0}^{\lfloor\log\log n\rfloor-1}\Ex[\wsc \ind_{\m{A}\cap\ni_{p+1}\cap \ni_{p}^c}]+\Ex[\wsc\ind_{\m{A}\cap \ni_{\lfloor\log\log n\rfloor}^c}] \\ & \le \sum_{p=0}^{\lfloor\log\log n\rfloor}e^{-e^{p-1}} \Pr\left(\m{A}\cap\ni_{p}\right) + \tfrac1n, \label{retyd}
			\end{align}
			where the above inequality follows by noting that on $\ni_{p}^c$ we have $W \le e^{-e^{p-1}}$. For the probability term above we condition on $\mathcal{F}:=\sigma(\ise, \iise)$ and write $\Pr(\m{A}\cap\ni_p)=\Ex\left[\ind_{\m{A}}\Ex[\ind_{\ni_p}\mid \mathcal{F}]\right]$. Upon conditioning on $\se{1}{1}, \se{2}{1}$, the paired random walk $\Pr_{\operatorname{PRW}}^{n;(0,-\sqrt{n})}$ law is equal to the law of two independent $n$-step random walks starting from $(\se{1}{1},\se{2}{1})$ to $(0,-\sqrt{n})$. Thanks to this observation, we may now appeal to Lemma \ref{l:nipp} to conclude that $\Ex[\ind_{\ni_p}\mid \mathcal{F}] \le e^{\Con p}\Ex[\ind_{\ni}\mid \mathcal{F}]$ holds for some deterministic constant $\Con>0$. This allows us to conclude
			$$\Pr(\m{A}\cap\ni_p) \le e^{\Con p}\cdot \Pr(\m{A}\cap\ni).$$
			Plugging this  into \eqref{retyd} and observing that the series $\sum_{p\ge 0} e^{-e^{p-1}}\cdot e^{\Con p}$ is summable shows
			\begin{align}\label{ttg}
				\Ex[\wsc\ind_{A}] \le \tfrac1n+\Con \cdot \Pr(\m{A}\cap \ni).
			\end{align}
			Thus to suffices to bound $\Pr(\m{A}\cap \ni)$. Towards this end, we first define the event
			\begin{align*}
				\mathsf{B}:= \left\{|\se{i}{1}|\le (\log n)^{3/2}\sqrt{n} \mbox{ for }i=1,2, \mbox{ and } |\ise-\iise|\le (\log n)^{3/2} \right\}.
			\end{align*}
			By the union bound we have $\Pr(\m{A}\cap\ni) \le \Pr(\m{A}\cap\mathsf{B}\cap\ni)+\Pr(\mathsf{B}^c).$
			For the second term note that by tail estimates from Lemma \ref{as:el} we have
			\begin{align}\label{retyh}
				\Pr(\mathsf{B}^c) \le \sum_{i=1}^2\Pr(|\se{i}{1}|\ge (\log n)^{3/2})+\Pr\left(|\ise-\iise|\ge (\log n)^{3/2} \right) \le \tfrac{\Con}{n}.
			\end{align}
			For the first term we write $\Pr(\m{A}\cap\mathsf{B}\cap\ni)=\Ex\left[\ind_{\m{A}\cap\mathsf{B}}\Ex[\ind_{\ni} \mid \mathcal{F}]\right]$. Again, since upon conditioning on $\se{1}{1}, \se{2}{1}$, the paired random walk law is equal to the law of of two independent $n$-step random walks starting from $(\se{1}{1},\se{2}{1})$ to $(0,-\sqrt{n})$, we may use random bridge estimates from Appendix \ref{app2}. In particular, Lemma \ref{genni} (recall Corollary \ref{rm:con}) shows there exists a constant $\Con>0$ such that
			\begin{align*}
				\ind_{\m{B}}\Ex[\ind_{\ni} \mid \mathcal{F}] \le \ind_{\m{B}} \cdot \tfrac{\Con}{\sqrt{n}}\Big((\ise-\iise+1)\vee 1\Big)\Big(\tfrac{|\ise|}{\sqrt{n}}\vee 2\Big)^{3/2}.
			\end{align*}
			for all $n$. Taking expectation  with respect to  $\mathcal{F}$ on both sides of the above equation and then using the fact that $\ind_{\m{A}\cap\m{B}}\le \ind_{\m{A}}$ leads to
			\begin{align}
				\Pr(\m{A}\cap\m{B}\cap\ni)\le \tfrac{\Con}{\sqrt{n}}\cdot \Ex\left[\ind_{\m{A}}  \Big((\ise-\iise+1)\vee 1\Big)\Big(\tfrac{|\ise|}{\sqrt{n}}\vee 2\Big)^{3/2}\right].
			\end{align}
			Inserting this bound along with the bound in \eqref{retyh} back in \eqref{ttg}, we arrive at the desired bound stated in the statement of the lemma.
		\end{proof}

	\medskip
	
	\noindent\textbf{Step 3.} In this step we prove the lower bound  in \eqref{twobdsa}. Towards this end, consider the event
	\begin{align*}
		\m{E}_m:= \big\{ 1 \le \ise-\iise \le 2, \mbox{ and } \ise, \iise \in (\tfrac{41}4m\sqrt{n},\tfrac{43}4m\sqrt{n})\big\},
	\end{align*}
	and $\sigma$-algebra $\mathcal{F}:=\sigma(\ise,\iise)$.  Fix any $\beta>0$ and recall $\m{Gap}_{\beta}$ from \eqref{def:gp}. We have
	\begin{align}\label{r2r}
		\Ex[\wsc\ind_{\m{D}_m}] \ge \Ex[\wsc\ind_{\m{E}_m}\ind_{\m{D}_m}\ind_{\m{Gap}_{\beta}}] \ge a_{\beta}\Ex[\ind_{\m{E}_m}\Ex\big[\ind_{\m{D}_m\cap\m{Gap}_{\beta}}\mid {\mathcal{F}}]\big]
	\end{align}
	where the second inequality above follows by noting that $W \ge a_{\beta}$ on $\m{Gap}_{\beta}\cap \m{E}$ (Lemma \ref{wgap}). As mentioned in the proof of Lemma \ref{lem:tre}, upon conditioning on $\mathcal{F}$, the paired random walk $\Pr_{\operatorname{PRW}}^{n;(0,-\sqrt{n})}$ law is equal to the law of two independent $n$-step random walks starting from $(\se{1}{1},\se{2}{1})$ to $(0,-\sqrt{n})$. For simplicity set $b_1=0$, $b_2=-\sqrt{n}$. We shall use the comparison between random bridge to $(n;n\rho,0)$-modified random bridge from Lemma \ref{lem:compare}, for a special $\rho\in (0,1)$ coming from Corollary \ref{tailni2} (recall Corollary \ref{rm:con}). Using the lower bound in \eqref{eb8} we get
	\begin{align}\label{rrr}
		\ind_{\m{E}_m}\cdot\Ex[\ind_{\m{D}_m\cap\m{Gap}_{\beta}} \mid \mathcal{F}] & \ge \ind_{\m{E}_m}\Con^{-1}\cdot\til{\Pr}_\rho^{(\ise,\iise)}(\m{D}_m\cap\m{Gap}_{\beta}) \\ & =\ind_{\m{E}_m}\Con^{-1}\cdot\til{\Pr}_\rho^{(\ise,\iise)}(\m{D}_m\cap\m{Gap}_{\beta}\mid \ni)\til{\Pr}_\rho^{(\ise,\iise)}(\ni) \nonumber
	\end{align}
	for some $\Con>0$ depending on $m$ and $\rho$. Here $\til{\Pr}_\rho^{(\ise,\iise)}:=\tpr{(n;n\rho,0)}{(\ise,\iise)}{,(0,\sqrt{n})}$ is the joint law of two independent $(n;n\rho,0)$-modified random bridge from $\se{i}{1}$ to $b_i$ defined in Definition \ref{def:mrb}.
	We now claim that there exists $\til{\phi}=\til{\phi}(m)>0$ such that
	\begin{align}\label{4prov}
		\til{\Pr}_\rho^{(a_1,a_2)}(\m{D}_m\mid \ni) \ge 2\til{\phi}.
	\end{align}
	uniformly over all $(a_1,a_2)\in \mathcal{P}_1$ where we define
	$$\mathcal{P}_1:=\big\{(z_1,z_2)\in (\tfrac{41}{4}m\sqrt{n},\tfrac{43}{4}\sqrt{n}) : 1\le z_1-z_2\le 2\big\}.$$
We postpone the proof of this claim to the next step. Let us complete the proof of the lemma assuming it. Since, under $\til{\Pr}_{\rho}^{(a_1,a_2)}$,  $S_1(\ll1,n\rho\rr), S_2(\ll1,n\rho\rr)$ are two independent random walks, we may use non-intersection type estimates for random walks from Appendix \ref{app2}.  In particular, using Lemmas \ref{l:class} and \ref{tailni} (recall Corollary \ref{rm:con}) we can get constants $\delta>0, M_2>0$ and $\Con_1>0$ all depending on $m$ and $\rho$ such that uniformly over $(a_1,a_2)\in\mathcal{P}_1$ we have
	$$\til{\Pr}_\rho^{(a_1,a_2)}\big(\iks\ge \iiks\mbox{ for all }k\in \ll2,n\rho\rr, \se{1}{n\rho}-\se{2}{n\rho}\ge \delta\sqrt{n}, |\se{i}{n\rho}|\le M_2\sqrt{n}\big) \ge \tfrac{\Con_1^{-1}}{\sqrt{n}}.$$
	Set $\m{G}:=\{\se{1}{n\rho}-\se{2}{n\rho}\ge \delta\sqrt{n},|\se{i}{n\rho}|\le M_2\sqrt{n}\}$. Recall from the definition of $(n;n\rho,0)$-modified random bridge that on $\ll n\rho,n\rr$ the modified random bridge is just a random bridge from $\se{i}{n\rho}$ to $b_i$. Applying Lemma \ref{unini} (recall Corollary \ref{rm:con}) it follows that
	$$\ind_{\m{G}}\cdot\til{\Pr}_{\rho}^{(a_1,a_2)}\big(\iks\ge \iiks\mbox{ for all }k\in \ll n\rho,n-1\rr\big) \ge \ind_{\m{G}}\cdot{\Con_2^{-1}},$$
	for some constant $\Con_2>0$ depending on $m$ and $\rho$ only. Thus we get $\til{\Pr}_\rho^{(a_1,a_2)}(\ni) \ge \frac{\Con_4^{-1}}{\sqrt{n}}$ uniformly on $\m{E}_m$ for some deterministic constant $\Con_4$ depending on $m$ and $\rho$ only. By Lemma \ref{l:gapev} (recall Corollary \ref{rm:con}), we may choose $\beta$ small enough depending on $m$ and $\rho$ such that $\til{\Pr}_\rho^{(a_1,a_2)}(\m{Gap}_{\beta}\mid \ni) \ge 1-\til{\phi}$ uniformly over $(a_1,a_2)\in \mathcal{P}_1$. Plugging this estimates back in \eqref{rrr}, we see that $$	\ind_{\m{E}_m}\cdot\Ex[\ind_{\m{D}_m\cap\m{Gap}_{\beta}} \mid \mathcal{F}] \ge 	\ind_{\m{E}_m}\cdot\til{\phi}\cdot\frac{\Con_4^{-1}}{\sqrt{n}}.$$ Now, by Lemma \ref{as:el} (equation \eqref{t3} in particular) we know that $\Pr(\m{E}_m)\ge \Con_5^{-1}>0$ for some $\Con_5$ depending on $m$. Plugging this back in \eqref{r2r} we see that
	\begin{align}\label{dest}
		\Ex[\wsc\ind_{\m{D}_{m}}]  \ge a_{\beta} \cdot \Pr(\m{E}_m) \cdot \til{\phi} \cdot \tfrac{\Con_4^{-1}}{\sqrt{n}} =: \tfrac{\til\Con^{-1}}{\sqrt{n}}.
	\end{align}
	where $\til{\Con}>0$ is a constant depending only on $m$ and $\rho$.

	\medskip
	
	\noindent\textbf{Step 4.} In this step we prove \eqref{4prov}. By equation \eqref{b21} in Lemma \ref{tailni} (recall Corollary \ref{rm:con}), we know there exists $\delta \in (0,\frac18(m\wedge1))$ small enough depending only on $\rho$ such that
	\begin{align}
		\label{l21}
		\til{\Pr}_{\rho}^{(a_1,a_2)}\big(\se{1}{n\rho}-\se{2}{n\rho} \ge \delta \sqrt{n}\mid \ni\big) \ge \tfrac{15}{16}
	\end{align}
	uniformly over $(a_1,a_2)\in \mathcal{P}_1$. We shall now choose $\rho$ as $\rho(\frac1{16},\frac{m\wedge 1}{8})$ where the latter is a constant depending on $m$ and comes from Corollary \ref{tailni2} (recall Corollary \ref{rm:con}). In view of this choice of $\rho$, applying Corollary \ref{tailni2} (recall Corollary \ref{rm:con}), we see that uniformly over $(a_1,a_2)\in \mathcal{P}_1$ we have
	\begin{align}\label{l22}
		\til{\Pr}_\rho^{(a_1,a_2)}\left(\sup_{k\in \ll1,n\rho\rr,i=1,2} |\kis-\se{i}{1}|\le \tfrac{m\wedge 1}{8}\sqrt{n} \mid \ni \right) \ge \tfrac{15}{16}.
	\end{align}
	Since on $\mathcal{P}_1$ we also have $(a_1,a_2) \in (\tfrac{41}4m\sqrt{n},\tfrac{43}4m\sqrt{n})$, combining \eqref{l21} and \eqref{l22} we get
	\begin{align*}
		\til{\Pr}_\rho^{(a_1,a_2)}\bigg(\big\{\se{1}{n\rho}-\se{2}{n\rho} \ge \delta \sqrt{n}\big\}\cap\m{K}_1 \mid \ni\bigg) \ge \tfrac{7}{8},
	\end{align*}
	where
	\begin{align*}
		\m{K}_1:= \left\{ \iks,\iiks \in (\tfrac{81}8m\sqrt{n},\tfrac{87}8m\sqrt{n}) \mbox{ for all } k \in \ll1,n\rho\rr, \mbox{ and } |\se{1}{n\rho}-\se{2}{n \rho}|\le \tfrac{\sqrt{n}}2\right\}.
	\end{align*}
	Following the definition of $(n;n\rho,0)$-modified random bridge, to prove \eqref{4prov} it suffices to show
	\begin{align}\label{lpts}
		\pr{n-n\rho+1}{(c_1,c_2)}{,(b_1,b_2)}\bigg(\big\{\se{1}{k},\se{2}{k} \in (10m\sqrt{n},11m\sqrt{n}) \mbox{ for all } k \in \ll1,n/2\rr\big\} \cap \ni\bigg) \ge \tfrac{16}{7}\til{\phi},
	\end{align}
	uniformly over $(c_1,c_2)\in \mathcal{P}_2$ where
	$$\mathcal{P}_2:=\{(c_1,c_2)\in \R^2: c_i\in (\tfrac{81}8m\sqrt{n},\tfrac{87}8m\sqrt{n}), \mbox{ and }\tfrac12\sqrt{n} \ge c_1-c_2 \ge \delta\sqrt{n}\}.$$
	and $\pr{n-n\rho+1}{(c_1,c_2)}{,(b_1,b_2)}$ is the law of two independent random bridges of length $n-n\rho+1$ starting at $(c_1,c_2)$ and ending at $(b_1,b_2)$. For simplicity set $u:=n-n\rho+1 \ge \frac34n$. By the KMT coupling of random bridges \cite{xd} we may assume there are two independent Brownian bridges $B_1,B_2$ (with variance $\int x^2\fa(x)dx$) on the same probability space such that
	\begin{align}
		\label{kmtbr2}
		\pr{u}{(c_1,c_2)}{,(b_1,b_2)}\left(\sup_{k\in \ll1,u\rr,i=1,2}| \se{i}{k}-\sqrt{u}B_{i}(k/u)-c_i-\tfrac{k}{u}(b_i-c_i)| \ge \Con \log n\right) \le \tfrac1n.
	\end{align}
	Let $r_{n,i}(x)$ be the piece-wise linear function interpolated by three points: $r_{n,i}(0)=r_{n,i}(1)=0$ and $r_{n,i}(3/4)=\frac{3}{4\sqrt{u}}(b_i-c_i)$. Let $\mathcal{U}_i$ be the $L^{\infty}$ open ball of $r_{n,i}(x)$ of radius $\frac1{4}\delta$ (this is the same $\delta$ that was chosen at the beginning of Step 4). By properties of Brownian bridge, there exists a $\til{\phi}=\til{\phi}(m)>0$ such that for all $(c_1,c_2) \in \mathcal{P}_2$, we have
	\begin{align*}
		\pr{u}{(c_1,c_2)}{,(b_1,b_2)}(B^{(i)}\in \mathcal{U}_i \mbox{ for }i=1,2) \ge \tfrac{32}{7}\til{\phi}.
	\end{align*}
	Note that the above equation along with \eqref{kmtbr2} implies that with probability $\tfrac{32}{7}\til{\phi}-\frac1n$,  for all $n$ large enough (and hence $u$ large enough) under the law $\pr{u}{(c_1,c_2)}{,(b_1,b_2)}$ we have the following items simultaneously:
	\begin{itemize}[leftmargin=18pt]
		\item 	For all $k\in \ll 1,3/4u\rr$
		$$|\se{i}{k}-c_i|\le \Con \log n+ \tfrac14\sqrt{u}\delta \le \tfrac{m}8\sqrt{u} <\tfrac{m}{8}\sqrt{n}.$$
		\item For all $k\in \ll1,u\rr$ we have
		\begin{align*}
			\se{1}{k} & \ge \sqrt{u}r_{n,1}(\tfrac{k}u)+c_1+\tfrac{k}u(b_1-c_1)-\tfrac14\sqrt{u}\delta-\Con \log n \\ & \ge \se{2}{k}+ \sqrt{u}(r_{n,1}(\tfrac{k}u)-r_{n,2}(\tfrac{k}u))-\tfrac12\sqrt{u}\delta +c_1-c_2+\tfrac{k}u(b_1-b_2-c_1+c_2)-2\Con \log n
			\\ & \ge \sqrt{u}(r_{n,1}(k/u)-r_{n,2}(k/u))+\tfrac12\sqrt{u}\delta -2\Con \log n+\se{2}{k}.
		\end{align*}
		We have $r_{n,1}(x)\ge r_{n,2}(x)$ by construction, and $c_1-c_2+\tfrac{k}u(b_1-b_2-c_1+c_2)\ge \sqrt{u}\delta$ for all $(c_1,c_2)\in \mathcal{P}_2$. Thus for all large enough $n$, $\se{1}{k}>\se{2}{k}$ for all $k\in \ll0,u\rr$.
	\end{itemize}
	For $n$ large enough $\tfrac{32}{7}\til{\phi}-\frac1n \ge \tfrac{16}{7}\til{\phi}$. This establishes \eqref{lpts} and hence also Proposition \ref{l:rpass}.	
	\end{proof}

	\begin{corollary}\label{corb} There exists an absolute constant $\Con>0$ such that for all $n\ge 1$.
		\begin{align*}
			\erw{n}{(0,-\sqrt{n})}[\wsc] \ge \tfrac{\Con^{-1}}{\sqrt{n}}.
		\end{align*}
	\end{corollary}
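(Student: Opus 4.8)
The plan is to obtain this as an immediate consequence of the lower bound already produced in Step 3 of the proof of Proposition \ref{l:rpass}, after removing the dependence of that bound on the auxiliary parameter $m$. Recall that Step 3 established (equation \eqref{dest}, equivalently the second inequality in \eqref{twobdsa}) that for each fixed $m>0$ there exist a constant $\til\Con=\til\Con(m)>0$ and an integer $n_0=n_0(m)$ with $\erw{n}{(0,-\sqrt{n})}\big[\wsc\ind_{\m{D}_m}\big]\ge \tfrac1{\til\Con\sqrt{n}}$ for all $n\ge n_0$; the derivation of \eqref{dest} uses nothing about $m$ beyond its being a fixed positive number (the parameter $\rho$ is then selected in Step 4 as a function of $m$ alone, and all the invoked estimates from Appendix \ref{app2} produce constants depending only on $m$ and $\rho$). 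So the first step is to specialize that argument to a universal choice, say $m=1$, which fixes $\rho$ and turns $\til\Con$ and $n_0$ into absolute constants. Since $0\le \ind_{\m{D}_1}\le 1$ we have $\wsc\ge\wsc\ind_{\m{D}_1}$ pointwise, hence $\erw{n}{(0,-\sqrt{n})}[\wsc]\ge \tfrac1{\til\Con\sqrt{n}}$ for all $n\ge n_0$.

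Next I would handle the finitely many remaining indices $n\in\ll1,n_0-1\rr$ (a possibly empty set). For each such $n$, the weight $\wsc$ in \eqref{defw} is a strictly positive, $[0,1]$-valued random variable, so $c_n:=\erw{n}{(0,-\sqrt{n})}[\wsc]\in(0,1]$; setting $\Con_1:=\min_{1\le n<n_0}c_n>0$ gives $\erw{n}{(0,-\sqrt{n})}[\wsc]\ge \Con_1\ge \tfrac{\Con_1}{\sqrt n}$ for these $n$, using $n\ge 1$. Combining the two ranges, the constant $\Con:=\max\{\til\Con,\Con_1^{-1}\}$ works for all $n\ge 1$, which is the assertion.

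I do not expect any genuine obstacle: the substance of the corollary is precisely the uniform-in-$n$ lower bound on $\erw{n}{(0,-\sqrt{n})}[\wsc]$ already obtained, and the only point to verify is that the constants there can be chosen independent of $n$ and of the parameters $M,r$ of Proposition \ref{l:rpass}. This is automatic once one observes that the Step 3--4 argument only needs some fixed value of $m$, and that the small values of $n$ are disposed of trivially by positivity of $\wsc$.
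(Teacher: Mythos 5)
Your proposal is correct and is essentially the paper's own argument: the paper derives the corollary in one line from \eqref{dest} via the pointwise bound $\wsc\ge\wsc\ind_{\m{D}_1}$, exactly as you do with $m=1$. Your additional remarks on the uniformity of the constants in $m$ and on the finitely many small $n$ are sound (and slightly more careful than the paper's one-line justification), but they do not change the route.
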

	\noindent The above corollary follows from \eqref{dest} as $\erw{n}{(0,-\sqrt{n})}[\wsc]\ge \erw{n}{(0,-\sqrt{n})}[\wsc\ind_{\m{D}_1}]$. We remark that here it is important that the endpoints are $O(\sqrt{n})$ apart to get the precise order of $\Ex[\wsc]$. We expect a different order if the endpoints are closer or lie in a reversed order. Later, in Lemma \ref{crude}, we shall prove a different lower bound for $\erw{n}{(x,y)}[\wsc]$ that is uniform over all possible endpoints $(x,y)$ in a specific window.
	\begin{proof}[Proof of Proposition \ref{lem:ep} in the $p=2$ case (supercritical phase)] Given the machinery developed in the above proof, proof of Proposition \ref{lem:ep} follows easily. By Lemma \ref{l:LSCrit} we have
		\begin{align}
			\nonumber	\Pr_{\alpha_2}^{(0,-\sqrt{T}),(-\infty)^{T};2,T}(|L_i(i)|\ge M\sqrt{T}) & =\wprw{T}{(0,-\sqrt{T})}(|\se{i}{1}| \ge M\sqrt{T}) \\ & = \frac{\erw{T}{(0,-\sqrt{T})}[\wsc\ind_{{|\se{i}{1}| \ge M\sqrt{T}}}]}{\erw{T}{(0,-\sqrt{T})}[\wsc]}. \label{try}
		\end{align}
		Now by Corollary \ref{corb} we have $\erw{T}{(0,-\sqrt{T})}[\wsc] \ge \frac{\Con}{\sqrt{T}}$ and by Lemma \ref{lem:tre} we have
		\begin{align*}
			& \erw{T}{(0,-\sqrt{T})}[\wsc\ind_{|\se{i}{1}| \ge M\sqrt{T}}] \\ & \le \tfrac{1}T+\tfrac{\Con}{\sqrt{T}}\erw{T}{(0,-\sqrt{T})}\bigg[\ind_{|\se{i}{1}| \ge M\sqrt{T}}\Big((S_1(1)-S_2(1)+1)\vee 1\Big)\Big(\tfrac{|\se{i}{1}|}{\sqrt{T}}\vee 2\Big)^3\bigg] \\  & \le \tfrac{1}T+\tfrac{\Con}{\sqrt{T}}\sqrt{\erw{T}{(0,-\sqrt{T})}\left[\Big((\se{1}{1}-\se{2}{1}+1)\vee 1\Big)^2\right]}\sqrt{\erw{T}{(0,-\sqrt{T})}\left[\ind_{|\se{i}{1}| \ge M\sqrt{T}}\Big(\tfrac{|\se{1}{1}|}{\sqrt{T}}\vee 2\Big)^6\right]},
		\end{align*}
		where the last inequality follows from Cauchy-Schwarz.	Taking $T$ and $M$ large enough, in view of the tail estimates from Lemma \ref{as:el}, it follows that \eqref{try} can be made arbitrarily small. By a union bound, we can thus make $\Pr_{\alpha_2}^{(0,-\sqrt{T}),(-\infty)^T;2,T}(|L_1(1)|+|L_2(2)|\ge M\sqrt{T})$ arbitrarily small by taking $T$ and $M$ large enough.
	\end{proof}

	\section{Modulus of continuity: proof of Theorem \ref{t:main0}}
	\label{sec:mc}
	
	In this section we prove our main theorem, Theorem \ref{t:main0}, about spatial tightness of $\hslg$ polymers. Due to the relation in \eqref{impi}, Theorem \ref{t:main0} essentially follows by controlling modulus of continuity of the first curve of log-gamma line ensemble. Towards this end, we recall the definition of modulus of continuity function. Given a continuous function $f: \Z_{\ge 1}\to \R$ and $U>1$, we define the modulus of continuity function as
	\begin{align}\label{eq:modcont}
		\omega_{\delta}^N(f;\ll1,U\rr) := \sup_{\substack{i_1,i_2\in \ll1,U\rr \\ |i_1-i_2|\le \delta N^{2/3}}} |f(i_1)-f(i_2)|.
	\end{align}
	We have the following result that is proved in Section \ref{sec:mcd}.
	\begin{proposition}\label{e:mcd}
		Fix $r,\gamma >0$ and $p\in \{1,2\}$. Set $\alpha=\alpha_p$ according to \eqref{acric}. We have
		\begin{align}\label{ee:mcd}
			\lim_{\delta\downarrow 0}\limsup_{N\to\infty}\Pr_{\alpha_p}\left(\omega_{\delta}^N(\L_1^N,\ll1,2\lfloor rN^{2/3}\rfloor-1\rr) \ge \gamma N^{1/3}\right)=0.
		\end{align}
	\end{proposition}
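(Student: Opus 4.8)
\textbf{Proof plan for Proposition \ref{e:mcd}.}

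The plan is to reduce the modulus of continuity estimate for $\L_1^N$ to an estimate under the bottom-free two-curve Gibbs measure, and then to the $\m{WPRW}$ law, where the machinery of Section \ref{sec:rpe} and Appendix \ref{app2} applies. First I would invoke item \ref{it1}-type results already established: by Theorem \ref{p:high2} (high point on the second curve) together with Theorem \ref{t:order} (ordering), for any $M_1$ there is, with probability close to $1$, a random point $p^*\in \ll M_1 N^{2/3}, M_2N^{2/3}\rr$ (for suitable $M_2=M_2(M_1)$) at which $N^{-1/3}\L_1^N(2p^*-1)$ and $N^{-1/3}\L_2^N(2p^*)$ are tight. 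I fix $M_1$ large enough that $M_1N^{2/3}$ exceeds $2\lfloor r N^{2/3}\rfloor$, so the interval of interest $\ll 1, 2\lfloor rN^{2/3}\rfloor - 1\rr$ lies entirely to the left of $2p^*-1$. Conditioning on $\L_1^N(2p^*-1)$, $\L_2^N(2p^*)$, and $\L_3^N\ll 1, 2p^*\rr$, the Gibbs property (Theorem \ref{thm:conn} \ref{i02}) identifies the conditional law of $(\L_1^N, \L_2^N)$ on this window with a two-curve $\hslg$ Gibbs line ensemble; by stochastic monotonicity (Proposition \ref{p:gmc}) and Theorem \ref{l:lhigh} (which controls $\sup \L_3^N$) I can further dominate this by the bottom-free measure \eqref{law} with $L_3 \equiv -\infty$ and boundary values $x_n = \L_1^N(2p^*-1)$, $y_n = \L_2^N(2p^*)$ of order $\sqrt{n}$, where $n \asymp N^{2/3}$. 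Crucially, since $\omega^N_\delta$ on $\L_1^N$ is \emph{not} a monotone functional, I cannot use monotonicity directly on the modulus event; instead I use the soft-non-intersection control on the exit-point difference $x_n - y_n \ge -(\log n)^{7/6}$ coming from Theorem \ref{t:order}, so the argument proceeds under the boundary hypotheses $|x_n|, |y_n| \lesssim \sqrt n$ and $x_n - y_n \ge -(\log n)^{7/6}$ exactly as in the proof idea for \ref{it3} in Section \ref{sec:133}.

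Next, by the reduction \eqref{e.reduction}, on this window the law of $(L_1(2k-1), L_2(2k))_{k=1}^n$ is $\Pr_{\operatorname{WPRW}}^{n;(x_n,y_n)}$, so it suffices to bound $\Pr_{\operatorname{WPRW}}^{n;(x_n,y_n)}(\m B)$ for the two-point-difference event $\m B = \m B(\delta,\gamma) = \{\sup_{i_1,i_2\in\ll 1,n/4\rr,\,|i_1-i_2|\le \delta n}|S_1(i_1)-S_1(i_2)| \ge \gamma\sqrt n\}$, showing it tends to $0$ as $\delta \to 0$ uniformly in $n$ (and in the admissible boundary data). Using \eqref{wscint} I split into numerator $\Ex_{\operatorname{PRW}}^{n;(x_n,y_n)}[\wsc \ind_{\m B \cap \m G^c}]$ and denominator $\Ex_{\operatorname{PRW}}^{n;(x_n,y_n)}[\wsc]$, where $\m G = \m G(M) = \{|S_1(1)|+|S_2(1)| \ge M\sqrt n\}$ is removed first (its $\Pr_{\operatorname{WPRW}}$-probability is small uniformly in $n$ by the $p=2$ case of Proposition \ref{lem:ep}, equivalently by the arguments behind \eqref{wprwest} with $\m A = \m G$). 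For the denominator I invoke Lemma \ref{crude} / Corollary \ref{corb}, which give $\Ex_{\operatorname{PRW}}^{n;(x_n,y_n)}[\wsc] \ge \Con^{-1}e^{-\Con(\log n)^{5/4}}$ under the present boundary hypotheses. For the numerator I follow the \ref{it3} sketch: a deterministic bound $\wsc \le \Con(e^{-(\log n)^2} + \sum_{p=0}^{\lfloor 2\log\log n\rfloor}e^{-e^p}\ind_{\ni_{p+1}})$, then on conditioning on $(S_1(1),S_2(1))$ the $\m{PRW}$ law becomes two independent random bridges and a shift of the $S_1$-walk by $p+1$ turns $\ni_{p+1}$ into $\ni$; the density-comparison Lemma \ref{lem:compare} (slope control supplied by $\ind_{\m G^c}$) passes to modified random bridges; then Lemma \ref{l:class}, Lemma \ref{l:nipp} and Corollary \ref{l:niexp} give $\til\Pr_p(\ni) \le \Con n^{-1/2}e^{\Con p}\max\{S_1(1)-S_2(1),1\}\pr{n/4}{(x_n,y_n)}{}(\til\ni)$, and finally $\til\Pr_p(\m B \mid \ni)$ is controlled by the random-walk-under-non-intersection modulus-of-continuity estimate Lemma \ref{mret}, uniformly in $p \in \ll 0, \lfloor 2\log\log n\rfloor\rr$: $\sup_p \til\Pr_p(\m B\mid \ni) \to 0$ as $\delta \to 0$. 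Combining, the numerator is at most $\Con e^{-(\log n)^2} + \Con n^{-1/2}\cdot \sup_p\til\Pr_p(\m B\mid \ni)\cdot (\text{integrable tail factor in } S_1(1)-S_2(1)) \cdot \pr{n/4}{(x_n,y_n)}{}(\til\ni)$, and dividing by the denominator the $e^{-(\log n)^2}$ term is negligible against $e^{-\Con(\log n)^{5/4}}$ while the remaining ratio is $\Con e^{\Con(\log n)^{5/4}}\cdot n^{-1/2}\cdot o_\delta(1)$; a more careful accounting (keeping the $\pr{n/4}{}{}(\til\ni) \asymp $ lower bound that also appears in the denominator's lower bound, so that the $(\log n)^{5/4}$ factors cancel the same way they do in \eqref{eq:crude0}) yields a bound of the form $\Con \cdot o_\delta(1)$ uniform in $n$.

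Finally I assemble: fixing $\e>0$, choose $M$ so $\Pr_{\operatorname{WPRW}}(\m G)<\e$ uniformly in $n$; the preceding paragraph gives $\delta_0$ so that for $\delta<\delta_0$, $\Pr_{\operatorname{WPRW}}^{n;(x_n,y_n)}(\m B\cap \m G^c)<\e$ uniformly over $n$ large and over admissible $(x_n,y_n)$; tracking back through the reduction — the bottom-free domination, the Gibbs conditioning on the high point $p^*$, the high-probability events from Theorems \ref{t:order}, \ref{p:high2}, \ref{l:lhigh} — and using that the even-indexed points of $L_1$ are controlled by the Gibbs property given the odd ones (Remark \ref{r.evenodd}, Lemma \ref{l:LSCrit}) with negligible additional fluctuation, one concludes $\Pr_{\alpha_p}(\omega^N_\delta(\L_1^N, \ll1, 2\lfloor rN^{2/3}\rfloor - 1\rr) \ge \gamma N^{1/3}) < \Con\e$ for $N$ large. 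In the critical case $\alpha=\alpha_1$ the same scheme applies but with the critical-phase representation Lemma \ref{l:LCrit} and KMT-coupling estimates \eqref{coupl} in place of the $\m{WPRW}$ estimates, which is strictly easier. The main obstacle is the numerator bound with its non-monotone event $\m B$: one must genuinely invoke the uniform (over starting/ending points in a $\sqrt n$ window) non-intersecting random bridge estimates of Appendix \ref{app2}, and be careful that the $(\log n)^{5/4}$-type losses from working with boundary data $x_n - y_n \ge -(\log n)^{7/6}$ (rather than $x_n = 0, y_n = -\sqrt n$) cancel between numerator and denominator — this is exactly the delicate bookkeeping that distinguishes the proof of \ref{it3} from that of \ref{it2}.
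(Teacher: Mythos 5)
Your second half—the reduction to the $\m{WPRW}$ law and the estimate of $\Pr_{\operatorname{WPRW}}^{n;(x_n,y_n)}(\m B\cap\m G^c)$ via the deterministic bound on $\wsc$, the shift-and-compare with modified bridges (Lemma \ref{lem:compare}), Lemmas \ref{l:class}, \ref{l:nipp}, Corollary \ref{l:niexp}, Lemma \ref{mret}, and Lemma \ref{crude} for the denominator, with the critical case handled by Lemma \ref{l:LCrit} and KMT—is essentially the paper's Proposition \ref{pimc} and is fine. The genuine gap is in your first half, in the sentence ``by stochastic monotonicity (Proposition \ref{p:gmc}) and Theorem \ref{l:lhigh} \dots I can further dominate this by the bottom-free measure.'' Stochastic monotonicity only compares probabilities of \emph{monotone} events, and you yourself note that the modulus event is not monotone; knowing $\sup_s\L_3^N(s)\le QN^{1/3}$ does not allow you to replace $\L_3^N$ by $-\infty$. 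What the conditional Gibbs law actually gives (Definition \ref{def:btf}, Eq.~\eqref{e2gib}) is that it equals the bottom-free law reweighted by $V_p^T(\vec y,\vec z)^{-1}\prod_j W(z_j;\cdot,\cdot)$, and since $W\le 1$ one gets, for an arbitrary event $\m E$, only $\Pr^{\vec y,\vec z}(\m E)\le V_p^T(\vec y,\vec z)^{-1}\,\Pr^{\vec y,(-\infty)}(\m E)$. So the missing ingredient is a high-probability lower bound on the normalizing constant $V_p^T(\vec y,\vec z)$; this is not automatic, because the relevant boundary values $\vec y$ are only $O(N^{1/3})$ and hence comparable to the third-curve data $\vec z$ (Corollary \ref{cora} requires $\vec y$ to sit about $QN^{1/3}$ \emph{above} $\vec z$). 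In the paper this is exactly the event $\m C(p,\beta)$ and the claim \eqref{lastpart}, proved by a size-biasing argument: positive probability of uniform separation from the region-pass event (Proposition \ref{l:rpass}), boosted to high probability through the exchange identity \eqref{eq:clm2}/\eqref{longiden} together with Corollary \ref{cora}. This constitutes roughly half of the paper's proof of Proposition \ref{e:mcd} (Steps 2, 3 and 5 of Section \ref{sec:mcd}) and is entirely absent from your outline; without it the passage to the bottom-free measure for the non-monotone event $\m{MC}_\delta$ does not go through.

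A secondary issue: you condition at the random location $2p^*-1$, but the Gibbs property is applied at deterministic locations; as the paper notes in Section \ref{sec:133}, the randomness of $p^*$ must be handled either by a decomposition over its possible values or, as the paper does, by conditioning at the deterministic time $T=8\lfloor rN^{2/3}\rfloor$ and separately proving that the boundary values there are $O(N^{1/3})$ with high probability (Eqs.~\eqref{e1low} and \eqref{e:l2h}; the latter itself requires the last-exceedance decomposition of Step 2 together with Lemma \ref{l:sup}). This part is repairable along the lines you sketch, but as written it is also a gap.
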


\begin{proof}[Proof of Theorem \ref{t:main0}]
By a standard criterion for functional tightness \cite[Theorem 7.3]{bill}, Proposition \ref{e:mcd} along with endpoint tightness from Theorem \ref{thm:eptight} implies tightness of the probability law of $N^{-1/3}\L_1^N(\ll 1,2\lfloor rN^{2/3}\rfloor-1\rr)$. In light of the matching in distribution in Theorem \ref{thm:conn}  (i), this immediately translates into tightness of the measure $\mathbb{P}^N_\alpha$ desired to prove Theorem \ref{t:main0}. Note that cases (1) and (2) of Theorem \ref{t:main0} follow by using the $p=1$ and $p=2$ cases of Proposition \ref{e:mcd} and Theorem \ref{thm:eptight}.
\end{proof}

The rest of this section is devoted to proving Proposition \ref{e:mcd}. This relies on the following technical result which deals with the modulus of continuity for the \btf\ measure.
	
	\begin{proposition}\label{pimc} Fix any $M, V, k_1, k_2, \gamma>0$ with $k_2>k_1$. For each $N>0$, define the sets
		\begin{align*}
I_{1,M}:=\{y\in \R, |y|\le 2MN^{1/3}\}, \mbox{  and  }		I_{2,M}:=\{(y_1,y_2)\in \R^2 : y_i\in I_{1,M/2}, y_1-y_2\ge -(\log N)^{7/6}\}.
		\end{align*}
		For each $p\in \{1,2\}$, there exist $\delta=\delta(M,V,k_1,k_2,\gamma,\e)>0$ and $N_0=N_0(M,V,k_1,k_2,\gamma,\e)>0$ such that for all $\vec{x}\in I_{p,M}$, $T\in \ll k_1N^{2/3},k_2N^{2/3}\rr$, and $N\ge N_0$ we have	
		\begin{align*}
			\sum_{i=1}^p\Pr_{\alpha_p}^{\vec{x},(-\infty)^{T};p,T}\left(\sum_{k=1}^i|L_k(k)|\le VN^{1/3},\mbox{ and } \omega_{\delta}^N(L_i, \ll1,T/4+i-2\rr) \ge \gamma 	N^{1/3}\right)\le \e.
		\end{align*}
	\end{proposition}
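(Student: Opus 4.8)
The plan is to prove Proposition~\ref{pimc} separately in the critical phase ($p\in\{1,2\}$ with $\alpha=\alpha_1$) and the supercritical phase ($p=2$ with $\alpha=\alpha_2$), using the two representations of the \btf\ measure from Lemma~\ref{l:LCrit} and Lemma~\ref{l:LSCrit}. In both cases the idea is to reduce the modulus-of-continuity estimate for $L_i$ to a modulus-of-continuity estimate for a pair of (unweighted) random bridges or walks, and then absorb the Radon--Nikodym weight ($\wcr$ or $\wsc$) using the lower bounds on the partition function already established (Corollary~\ref{corb}, \eqref{twobdsa}, and their critical-phase analogues in the proof of Proposition~\ref{lem:ep}). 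A preliminary step is to note that, since the events in question involve the constraint $\sum_{k=1}^i|L_k(k)|\le VN^{1/3}$ (which forces the left endpoints to be $O(\sqrt T)$), and we are free to shift by translation invariance (Lemma~\ref{obs1}\ref{traninv}) and to use stochastic monotonicity (Proposition~\ref{p:gmc}) to reduce $\vec x$ to a fixed reference boundary condition of order $\sqrt T$, we may assume the boundary data is $0$ for $p=1$ and $(0,-\sqrt T)$ for $p=2$, exactly as was done in the proof sketch for \ref{it2} in the introduction. This is why the statement only needs $\vec x$ to range over $I_{p,M}$.

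\textbf{Critical phase.} Here $\alpha_1=N^{-1/3}\mu$, so the attraction is negligible. By Lemma~\ref{l:LCrit}, under $\bar\Pr^{(y_1,y_2)}$ the odd points of $L_1$ and even points of $L_2$ are independent random walks with i.i.d.\ mean-zero increments $V_i(j)$, and $\Pr_{\alpha_1}^{\vec y,(-\infty)^T;2,T}$ is obtained by reweighting by $\wcr$. For the modulus of continuity of $L_1$ on $\ll1,T/4\rr$ (and similarly $L_2$), I would: (i) apply the KMT coupling \eqref{coupl} to compare the random-walk portion of $L_i$ with a Brownian motion uniformly up to an $O(\log T)$ error; (ii) use the standard modulus-of-continuity bound for Brownian motion to control $\omega_\delta$ for the coupled walk; (iii) handle the $\qo$-distributed intermediate (even/odd-complementary) points using the tail bound for the $\qo$-distribution (Lemma~\ref{qlemma}), which shows they do not move the curve by more than $O(\log N)$ beyond the neighboring points on the event that those points are well-separated; and (iv) remove the weight $\wcr$: since the events in the statement sit inside $\{\sum|L_k(k)|\le VN^{1/3}\}$, the partition function $\bar\Ex^{(y_1,y_2)}[\wcr]$ is bounded below by a constant times $\beta_1^2\beta_2$ (as in the proof of Proposition~\ref{lem:ep} in the critical case), so the reweighting only costs a bounded multiplicative factor.

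\textbf{Supercritical phase.} This is the main obstacle. For $p=2$, $\alpha_2=O(1)$ and the weight $\wsc$ concentrates on an asymptotically-null event, so KMT is not available; instead I would use the reduction \eqref{e.reduction} to the $\m{WPRW}$ law, so that $\omega_\delta^N(L_1,\cdot)$ on the odd lattice becomes $\sup_{|i_1-i_2|\le\delta n}|S_1(i_1)-S_1(i_2)|$ under $\Pr_{\operatorname{WPRW}}^{n;(x_n,y_n)}$, with $n$ of order $N^{2/3}$. This is precisely the event $\m{B}=\m{B}(\delta,\gamma)$ from the proof sketch of \ref{it3} in the introduction. The plan mirrors that sketch: write $\m{B}=\m{B}\cap\m{G}^c$ plus $\m{B}\cap\m{G}$, bound $\Pr_{\operatorname{WPRW}}^{n;(x_n,y_n)}(\m{G})$ using the exponential tails from Lemma~\ref{as:el}, and for $\m{B}\cap\m{G}^c$ use \eqref{wscint} to reduce to bounding $\Ex_{\operatorname{PRW}}^{n;(x_n,y_n)}[\wsc\ind_{\m{B}\cap\m{G}^c}]$ from above and $\Ex_{\operatorname{PRW}}^{n;(x_n,y_n)}[\wsc]$ from below. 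The lower bound is Corollary~\ref{corb} (reduced to the $(0,-\sqrt n)$ endpoints; for general endpoints in the window one uses Lemma~\ref{crude}). For the upper bound, expand $\wsc\le \Con(e^{-(\log n)^2}+\til W)$ with $\til W=\sum_{p=0}^{\lfloor2\log\log n\rfloor}e^{-e^p}\ind_{\ni_{p+1}}$; condition on $(S_1(1),S_2(1))$ so that the law is two independent random bridges; pass to modified random bridges via Lemma~\ref{lem:compare} (using $\ind_{\m{G}^c}$ to control the slopes); bound $\til\Pr_p(\ni)$ using the non-intersection estimates from Appendix~\ref{app2} (Lemmas~\ref{l:class},~\ref{l:nipp}, Corollary~\ref{l:niexp}); and finally bound the conditional probability $\til\Pr_p(\m{B}\mid\ni)$ using modulus-of-continuity estimates for a single random walk under non-intersection conditioning (Lemma~\ref{mret}), which tend to $0$ as $\delta\to0$ uniformly in $p\in\ll0,\lfloor2\log\log n\rfloor\rr$. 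Combining the two bounds, the $n^{-1/2}$ factors cancel and the whole expression is $o(1)$ as $\delta\to0$, uniformly in $N$. The last routine point is to transfer control of $L_1$ on the odd lattice and $L_2$ on the even lattice to the complementary (even/odd) lattice points via the $\qo$-distribution Lemma~\ref{qlemma} and the Gibbs property (Remark~\ref{r.evenodd}, Lemma~\ref{l:LSCrit}), which again only contributes errors of logarithmic order and hence does not affect the $\gamma N^{1/3}$-scale modulus bound.

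\textbf{Where the difficulty concentrates.} The genuinely hard step is the supercritical upper bound on $\Ex_{\operatorname{PRW}}^{n;(x_n,y_n)}[\wsc\ind_{\m{B}\cap\m{G}^c}]$: unlike the increasing events treated in item~\ref{it2}, the event $\m{B}$ is a two-point-difference event, so stochastic monotonicity cannot be used to reduce to canonical endpoints, and one must carry the general boundary data $x_n-y_n\ge-(\log n)^{7/6}$ throughout — this is exactly why the weaker (logarithmic) lower bound \eqref{eq:crude0} on the partition function, rather than the sharp $n^{-1/2}$ bound, is what is available and must suffice. Making the modulus-of-continuity estimate for the random bridge under non-intersection conditioning (Lemma~\ref{mret}) uniform over the shift $p\le 2\log\log n$ and over the allowed diffusive window of starting points is the technical crux; everything else is bookkeeping around the two representations and the already-established tail and comparison lemmas.
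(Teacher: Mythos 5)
Your route is the paper's: split into the critical ($p=1$) and supercritical ($p=2$) phases via the representations of Lemmas \ref{l:LCrit} and \ref{l:LSCrit}; treat the critical case by KMT coupling plus the $\qo$-tail bound (Lemma \ref{qlemma}); and treat the supercritical case by the $\m{WPRW}$ reduction, the decomposition of $\wsc$ over $\ni_{p+1}\cap\ni_p^c$ up to $p\le 2\log\log n$, the comparison to modified random bridges (Lemma \ref{lem:compare}), the Appendix \ref{app2} non-intersection estimates, the conditioned modulus-of-continuity bound (Lemma \ref{mret}), and the uniform lower bound on $\Ex_{\operatorname{PRW}}[\wsc]$ from Lemma \ref{crude}. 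Two points in the write-up need correcting, though neither changes the architecture.

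First, your preliminary reduction is wrong as stated and contradicts your own closing paragraph: stochastic monotonicity (Proposition \ref{p:gmc}) only transfers probabilities of monotone events, and $\{\omega_\delta^N(L_i,\cdot)\ge\gamma N^{1/3}\}$ is a two-point-difference event, hence not monotone. So for $p=2$ you cannot reduce the right boundary data $\vec x\in I_{2,M}$ to $(0,-\sqrt T)$; translation invariance (Lemma \ref{obs1} \ref{traninv}) only lets you normalize $x_1=0$, and the gap $x_1-x_2\ge -(\log N)^{7/6}$ must be carried as arbitrary throughout --- which is precisely why Lemma \ref{crude} is stated uniformly over $I_{2,M}$ and yields only the $e^{-\Con(\log n)^{5/4}}$ lower bound rather than the sharp $n^{-1/2}$ bound of Corollary \ref{corb}. (Note also that the constraint $\sum_k|L_k(k)|\le VN^{1/3}$ controls the \emph{left} endpoints $L_k(k)$, not the conditioned right boundary $\vec x$, so it cannot be used to normalize $\vec x$ either.) Delete the opening reduction and keep the version in your final paragraph. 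Second, the critical case of this proposition is $p=1$, a single-curve measure: by Lemma \ref{l:LCrit}(a) the odd points of $L_1$ under $\Pr_{\alpha_1}^{x,(-\infty)^T;1,T}$ form a genuine random walk with no Radon--Nikodym weight, so there is no second curve and no $\wcr$ to remove; your steps (i)--(iii) are exactly the needed argument and step (iv) is vacuous here (the two-curve critical measure with $\wcr$ arises in Proposition \ref{lem:ep}, not in this proposition).
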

	We postpone the proof of Proposition \ref{pimc} to Section \ref{sec:pimc}. Section \ref{sec:prep} contains a few lemmas that are used in the proof of Proposition \ref{e:mcd} later in Section \ref{sec:mcd}.

	\subsection{Preparatory Lemmas} \label{sec:prep} We first discuss a few consequences of Proposition \ref{lem:ep} that form the preparatory tools for our modulus of continuity analysis.
	
	\begin{lemma}\label{l:x} Fix any $\e\in (0,\frac12)$ and $T\ge 2$. Let $(X(i))_{i=1}^{2T-1}$ be a random vector with $X(1)=X(2T-1)\equiv 0$ and density at $(X_i)_{i=2}^{2T-2} = (u_i)_{2=1}^{2T-2}$ proportional to
$$\prod_{i=1}^{2T-2} G_{\theta,(-1)^{i+1}}(u_i-u_{i+1})$$
respectively where $u_1=0$ and $u_{2T-1}=0$ and $G$ is defined in \eqref{def:gwt}. Similarly, define an independent random vector  $(Y(i))_{i=1}^{2T-1}$  precisely as with $X$ except that $G_{\theta,(-1)^{i+1}}$ is replaced by $G_{\theta,(-1)^{i}}$. Then, there exists $M_0(\e)>0$ such that for all $T\ge 2$ we have
		\begin{align}\label{exe}
			\Pr\bigg(\sup_{i\in\ll1,2T-1\rr}\big(|X(i)|+|Y(i)|\big)\ge M_0\sqrt{T}\bigg) \le \e.
		\end{align}
	\end{lemma}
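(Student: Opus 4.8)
The plan is to recognize $(X(i))_{i=1}^{2T-1}$ as (essentially) the odd-point marginal of a single-curve $\hslg$ bottom-free random bridge, and then invoke the endpoint-type estimates already established. More precisely, observe that the density of $(X(i))_{i=2}^{2T-2}$ is exactly the density appearing in the $\hslg$ Gibbs measure on a single line (there are no $W$ factors since $k=1$ has no black edges contributing, and no $\alpha$ weight since the process is pinned at both ends at $0$) — compare with $f_{1,T}^{\vec{y},\vec{z}}$ from Lemma \ref{def:hsgm1}\ref{deff} and the two-sided Gibbs property in Lemma \ref{i2a}. In fact, after integrating out the even-indexed coordinates, $(X(2j+1))_{j=0}^{T-1}$ is a random bridge of length $T$ with i.i.d.\ increments of density $\fa = G_{\theta,+1}\ast G_{\theta,-1}$ from \eqref{def:faga}, started and ended at $0$; this is precisely $\Pr^{T;0;0}$ in the notation of Definition \ref{def:rws}. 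The vector $(Y(i))$ is the same object with the parity of the $G$ factors flipped, i.e.\ the increments of the odd-point marginal have density $G_{\theta,-1}\ast G_{\theta,+1}=\fa$ as well (convolution is commutative), so $Y$ has the identical law.

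The key steps, in order, would be: (i) Establish the marginal identities above by explicit integration — for each even index $2j$ the factor $G_{\theta,-1}(X(2j-1)-X(2j))G_{\theta,+1}(X(2j)-X(2j+1))$ integrates in $X(2j)$ to (a constant times) a density in $X(2j+1)-X(2j-1)$, which one checks equals $\fa(X(2j-1)-X(2j+1))$ up to normalization using \eqref{def:faga}; this reduces control of the odd points to the random bridge $\Pr^{T;0;0}$. (ii) Bound the even-point fluctuations given the odd points: conditionally, $X(2j) \sim \qo_{\theta,\theta;\pm1}^{(X(2j-1),X(2j+1))}$ in the notation of \eqref{qdist}, and the exponential tail bound for the $\qo$-distribution (Lemma \ref{qlemma}, as used repeatedly in Section \ref{sec:rpe}) shows that $|X(2j)-X(2j-1)|$ has exponential tails uniformly, so $\sup_j |X(2j)|$ is at most $\sup_j|X(2j\pm1)| + O(\log T)$ with probability $\ge 1-\e/4$. (iii) Control $\sup_{j}|X(2j+1)|$ for the random bridge: since $\fa$ has exponential (indeed sub-exponential) tails by Lemma \ref{as:in}(4), a maximal inequality for the bridge — obtainable via the KMT coupling of random bridges \cite{xd} to a Brownian bridge of variance $\int x^2\fa(x)\,dx$, plus the reflection-principle tail bound $\Pr(\sup_{[0,1]}|B^{\mathrm{br}}|\ge M)\le Ce^{-cM^2}$ — gives that $\sup_{j}|X(2j+1)|\le M_0\sqrt{T}/2$ with probability $\ge 1-\e/4$ for $M_0$ large. (iv) Repeat (ii)-(iii) verbatim for $Y$ (or simply note $Y\stackrel{d}{=}X$ by commutativity of convolution), then combine via a union bound to get \eqref{exe}.

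I expect the main obstacle to be a minor bookkeeping point rather than a deep one: namely verifying cleanly in step (i) that integrating out the even coordinates of the two-periodic $G$-weighted chain really does produce i.i.d.\ $\fa$-increments on the odd sublattice with the correct normalization — one must be careful that the two flavors $G_{\theta,+1}$ and $G_{\theta,-1}$ alternate in exactly the pattern that makes each pair convolve to $\fa$, and that the boundary conditions $X(1)=X(2T-1)=0$ leave the resulting bridge pinned at $0$ on both ends (the parity of $2T-1$ being odd is what makes this work out). Everything downstream is standard: the $\qo$-tail bound and the bridge maximal inequality are already invoked elsewhere in the paper, so steps (ii)-(iv) should be short. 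An alternative route that sidesteps step (i) is to apply the KMT coupling \cite{kmt} directly to the (non-i.i.d.\ but stationary, two-periodic, sub-exponential) increment sequence of $(X(i))_{i=1}^{2T-1}$ itself — such couplings hold for $m$-dependent or block-stationary sequences — but invoking the odd-point reduction keeps the argument within tools already present in the excerpt.
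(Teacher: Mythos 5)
Your proposal is correct and follows essentially the same route as the paper's proof: identify the odd-indexed points as a random bridge with i.i.d.\ $\fa$-increments pinned at $0$, control its supremum via the KMT coupling for random bridges and Brownian bridge estimates, control the even-indexed points conditionally via the $\qo$-distribution tail bound (Lemma \ref{qlemma}) and a union bound, and handle $Y$ identically. The bookkeeping point you flag in step (i) is exactly the convolution identity $G_{\theta,+1}\ast G_{\theta,-1}=\fa$ from \eqref{def:faga}, which the paper uses without further comment.
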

		
	We refer to Figure \ref{fig:order2} for graphical representation of the distributions appearing in Lemma \ref{l:x}.
	
	\begin{figure}[h!]
		\centering
		\begin{tikzpicture}[line cap=round,line join=round,>=triangle 45,x=2cm,y=1cm]
			\foreach \x in {0,1,2}
			{
				\draw[line width=1pt,blue,{Latex[length=2mm]}-]  (\x,0) -- (\x-0.5,-0.5);
				\draw[line width=1pt,blue,{Latex[length=2mm]}-] (\x,0) -- (\x+0.5,-0.5);
				\draw [fill=blue] (\x,0) circle (2pt);
				\draw [fill=blue] (\x-0.5,-0.5) circle (2pt);
			}
			\draw [fill=blue] (2.5,-0.5) circle (2pt);
			\node at (-0.6,-0.5) {$0$};
			\node at (2.6,-0.5) {$0$};	
			\foreach \x in {4,5,6}
			{
				\draw[line width=1pt,blue,{Latex[length=2mm]}-]  (\x-0.5,0)--(\x,-0.5);
				\draw[line width=1pt,blue,{Latex[length=2mm]}-] (\x+0.5,0)--(\x,-0.5);
				\draw [fill=blue] (\x,-0.5) circle (2pt);
				\draw [fill=blue] (\x-0.5,0) circle (2pt);
			}
			\draw [fill=blue] (6.5,0) circle (2pt);
			\node at (3.4,0) {$0$};
			\node at (6.6,0) {$0$};		
		\end{tikzpicture}
		\caption{Graphical representation of $X$ (left) and $Y$ (right) distribution from Lemma \ref{l:x}.}
		\label{fig:order2}
	\end{figure}
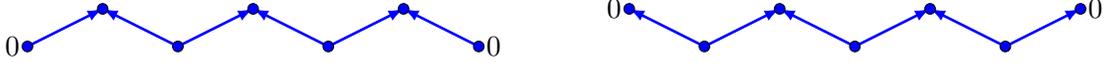
	
	\begin{proof} Fix $\e\in (0,1)$. Note that $(X(2i-1))_{i=1}^{T}$ forms a random bridge from $0$ to $0$ with increment from $G_{\theta,+1}\ast G_{\theta,-1}$. By the KMT coupling for random bridges \cite{xd} along with Brownian bridge estimates, there exists a constant $M>0$ such that (here we temporarily use $\Pr$ and $\Ex$ for the probability and expectation for the $X$ and $Y$ vectors)
		\begin{align*}
			\Pr(\m{A})\le \tfrac\e4, \mbox{ where }	\m{A}:=\left\{\sup_{i\in\ll1,T\rr} |X(2i-1)|\ge M\sqrt{T}\right\}.
		\end{align*}
		Let us write $\mathcal{F}:=\sigma\big((X(2i-1))_{i=1}^T\big)$. By a union bound we have
		\begin{align}\label{ant}
			\Pr\Big(\sup_{i\in\ll1,2T-1\rr} |X(i)|\ge 5M\sqrt{T}\Big) \le \frac\e4 +\sum_{i=1}^{T-1}\Ex\left[\ind_{\neg\m{A}}\cdot \Ex\left[\ind_{|X(2i)|\ge 5M\sqrt{T}}\mid \mathcal{F}\right]\right].
		\end{align}
		Note that the distribution of even points given the odd points are given by the $\qo$-distribution introduced in \eqref{qdist}. Observe that by Lemma \ref{qlemma},
		\begin{align*}
			\ind_{X(2i-1),X(2i+1) \in (-M\sqrt{T},M\sqrt{T})}\cdot \Ex\left[\ind_{|X(2i)|\ge 5M\sqrt{T}}\mid \mathcal{F}\right] \le \Con \exp(-\tfrac1\Con\sqrt{T}),
		\end{align*}
		for some absolute constant $\Con>0$. Plugging the above bound back in \eqref{ant} and taking $T$ large enough we get that r.h.s.~\eqref{ant} is at most $\frac\e2$. Similarly we see that  $\Pr\big(\sup_{i\in\ll1,2T-1\rr} |Y(i)|\ge 5M\sqrt{T}\big) \le \frac\e2$. Adjusting $M$, we arrive at \eqref{ety}.
	\end{proof}

	\begin{lemma}\label{l:sup} Recall for $p\in \{1,2\}$, $\alpha:=\alpha_p$ from \eqref{acric}. Fix any $r \ge 1$ and $\e >0$. Set $T=\lfloor rN^{2/3}\rfloor$. There exists $M=M(\e)>0$ and $N_0(\e)>0$ such that for all $N\ge N_0$ we have
		\begin{align}\label{eq:l5.t}
			& \Pr_{\alpha_1}^{0,(-\infty)^{T};1,T}\bigg(\sup_{i\in\ll1,2T-1\rr}|L_1(i)|\ge M\sqrt{T}\bigg)\le \e, \\  & \label{eq:l5.u}	\Pr_{\alpha_2}^{(0,-\sqrt{T}),(-\infty)^{T};2,T}\bigg(\sup_{i\in\ll1,2T-1\rr}|L_1(i)|+\sup_{j\in\ll1,2T\rr}|L_2(j)|\ge M\sqrt{T}\bigg)\le \e,
		\end{align}
		where the bottom free law $\Pr_{\alpha_p}^{\vec{x},(-\infty)^{2T};2,T}$ is defined in Definition \ref{def:btf}.
	\end{lemma}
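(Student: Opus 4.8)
The plan is to use the critical- and supercritical-phase representations of the bottom-free measures (Lemma \ref{l:LCrit} and Lemma \ref{l:LSCrit}) to reduce each of \eqref{eq:l5.t}, \eqref{eq:l5.u} to a supremum bound for a ``skeleton'' process — a single $T$-step random walk in the critical case, and the pair of $\m{WPRW}$ coordinates $(\iks,\iiks)_{k=1}^T$ in the supercritical case — and then to fill in the remaining coordinates using conditional tail bounds. Indeed, in both phases the coordinates outside the skeleton are, conditionally on the skeleton, either $\qo$-distributed given two nearby skeleton values (Lemma \ref{qlemma} gives exponential tails away from the $\max$/$\min$ of those two values) or distributed as $G_{\alpha_2+\theta,1}$ shifted by $L_2(2)$ (which also has exponential tails). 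Since $T=\lfloor rN^{2/3}\rfloor$ and $\sqrt T\asymp N^{1/3}$, once the skeleton is uniformly $O(\sqrt T)$ on an event of probability $\ge 1-\e/2$, a union bound over the $O(T)$ non-skeleton coordinates of a $\Con e^{-\sqrt T/\Con}$-tail contributes $\le \Con T e^{-\sqrt T/\Con}\to 0$, so the full supremum is $O(\sqrt T)$ with probability $\ge 1-\e$ for $N$ large. It therefore remains to control the skeletons.

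For $p=1$: by Lemma \ref{l:LCrit}(a) the odd-indexed coordinates $(L_1(2T-1-2k))_{k=0}^{T-1}$ form a $T$-step random walk started at $L_1(2T-1)=0$, with increments the i.i.d.\ mean-zero $V_1(j)$ plus a deterministic drift, whose total magnitude over $T$ steps is $O(|\alpha_1|T)=O(N^{1/3})=O(\sqrt T)$ uniformly by \eqref{logu}. This is precisely the setting of Lemma \ref{l:x} except that the walk is not pinned at its far endpoint; applying the KMT coupling \eqref{coupl} and the reflection principle for the coupling Brownian motion $B_1$ (whose running maximum over $[0,T]$ is $\sqrt T$ times that over $[0,1]$, which has exponential tails) yields $M_1(\e)$ with $\Pr(\sup_k|L_1(2T-1-2k)|\ge M_1\sqrt T)\le \e/2$ for $N$ large. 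The fill-in step above, using the $\qo_{\theta-\alpha_1,\theta+\alpha_1;\pm1}$ tails from Lemma \ref{qlemma}, then gives \eqref{eq:l5.t} with $M$ a suitable multiple of $M_1$.

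For $p=2$: by Lemma \ref{l:LSCrit} the skeleton is $(\iks,\iiks)_{k=1}^T\sim\wprw{T}{(0,-\sqrt T)}$, so it suffices to bound $\wprw{T}{(0,-\sqrt T)}(\m A)$ for $\m A=\{\sup_k|\iks|\ge M\sqrt T\}$ and for $\m A=\{\sup_k|\iiks|\ge M\sqrt T\}$. Writing this as $\erw{T}{(0,-\sqrt T)}[\wsc\ind_{\m A}]/\erw{T}{(0,-\sqrt T)}[\wsc]$, the denominator is $\ge\Con^{-1}/\sqrt T$ by Corollary \ref{corb}, so we need $\erw{T}{(0,-\sqrt T)}[\wsc\ind_{\m A}]=o(1/\sqrt T)$ as $M\to\infty$, uniformly in $T$. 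Following the scheme of the proof of Lemma \ref{lem:tre}, decomposing $\wsc$ along the events $\ni_p$ gives $\erw{T}{(0,-\sqrt T)}[\wsc\ind_{\m A}]\le \Con/T+\Con\,\prw{T}{(0,-\sqrt T)}(\m A\cap\ni)$, where the replacement of $\ni_p$ by $\ni$ uses Lemma \ref{l:nipp} — and here it is crucial that $\{\sup_k|\iks|\ge M\sqrt T\}$ depends only on the top curve, so the up-shift coupling underlying Lemma \ref{l:nipp} preserves $\m A$ (for the event involving $\sup_k|\iiks|$ the same coupling shifts the bottom curve by $p=O(\log\log T)$, which is harmless). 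Conditioning on $\mathcal F=\sigma(\ise,\iise)$: on the high-probability event $\m B$ of Lemma \ref{as:el}, Lemma \ref{genni} bounds $\Pr^{T;(\ise,\iise),(0,-\sqrt T)}(\ni)$ by $\Con/\sqrt T$ times a polylog factor, and one multiplies by $\Pr^{T;(\ise,\iise),(0,-\sqrt T)}(\m A\mid\ni)$. Bounding this last factor is the new ingredient: a supremum-under-non-intersection estimate showing that a random bridge conditioned on $\ni$ with endpoints of order $\sqrt T$ (after intersecting $\m B$ with $\{|\ise|\le\tfrac M2\sqrt T\}$, the complement being absorbed by the tail \eqref{t1}) exceeds $M\sqrt T$ with probability at most $h(M)$, $h(M)\to 0$, uniformly over the admissible $O(\sqrt T)$-window of starting points. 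I would obtain this from the modified-bridge comparison Lemma \ref{lem:compare} together with the bridge KMT coupling (as in \eqref{kmtbr2}) and the Brownian-bridge supremum bound, parallel to how Lemma \ref{mret} handles modulus of continuity. Combining, $\prw{T}{(0,-\sqrt T)}(\m A\cap\ni)\le(\Con/\sqrt T)h(M)+\Con/T$, which gives the required estimate and hence \eqref{eq:l5.u}.

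The main obstacle will be this last ingredient of the supercritical case: the uniform-over-starting-points supremum bound for non-intersecting bridges, and the bookkeeping needed to keep the $1/\sqrt T$ gain from $\wsc$ — a cruder bound such as $\wsc\le 1$ loses exactly this factor and fails. Everything else (the critical case, the $\qo$/$G$ fill-in, the $\ni_p$-decomposition) is a direct reuse of already-available tools: Lemmas \ref{l:x}, \ref{l:LCrit}, \ref{l:LSCrit}, \ref{qlemma}, \ref{as:el}, \ref{l:nipp}, \ref{genni}, \ref{lem:compare}, Corollary \ref{corb}, and the couplings \eqref{coupl} and \eqref{kmtbr2}.
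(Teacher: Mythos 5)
Your treatment of the critical case \eqref{eq:l5.t} is fine: the free-walk representation from Lemma \ref{l:LCrit}(a) plus the KMT coupling \eqref{coupl} and the $\qo$-tail fill-in from Lemma \ref{qlemma} does give a uniform $O(\sqrt T)$ bound, and is essentially equivalent to what the paper does (the paper instead invokes endpoint tightness and stochastic monotonicity to reduce to the \emph{pinned} process $X(\cdot)$ of Lemma \ref{l:x}, but both routes work).

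The supercritical case is where there is a genuine gap. Your scheme hinges on the ``new ingredient'' you flag yourself: a bound $h(M)\to 0$, uniform over an $O(\sqrt T)$ window of starting points, for the probability that a random bridge \emph{conditioned on non-intersection} exceeds $M\sqrt T$ in supremum. This is not available in the paper and is not a routine consequence of Lemma \ref{lem:compare} plus KMT: the KMT coupling controls the unconditioned bridge, and conditioning on $\ni$ stochastically \emph{raises} the top curve (Proposition \ref{smrw}), so the unconditioned supremum bound does not transfer. One would have to redo the argument of Lemma \ref{tailni}\,\eqref{b22}--\eqref{b24} in the bridge setting with uniformity over endpoints, which is real additional work. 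A second, smaller issue is your use of Lemma \ref{l:nipp}: as stated it compares $\Pr(\ni_p)$ with $\Pr(\ni)$, and the proof of Lemma \ref{lem:tre} only applies it to events measurable with respect to $\sigma(\ise,\iise)$; for your path-dependent event $\m A$ you must go back to the pointwise density comparison $\Upsilon_p\le e^{\Con p}\Upsilon_0$ inside its proof and track the $p$-shift of the event, as the paper does in the proof of Proposition \ref{pimc}. Both points are repairable, but as written the supercritical half of your argument is incomplete at its load-bearing step.

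The paper avoids all of this with a two-stage stochastic-monotonicity argument (Figure \ref{glt}) that never touches the $\m{WPRW}$ ratio, Corollary \ref{corb}, or any non-intersection estimate. First, to bound $\sup_j L_2(j)$: on the event $\{L_2(2)\le M_0\sqrt T\}$ supplied by Proposition \ref{lem:ep}, push $L_1\equiv+\infty$ and raise $L_2$'s boundary values to $M_1\sqrt T$; by Proposition \ref{p:gmc} this only increases the probability of the (increasing) supremum event, and the resulting law of $L_2$ is exactly the pinned process $Y(\cdot)$ of Lemma \ref{l:x}, whose supremum is controlled by \eqref{exe}. Then, to bound $\sup_i L_1(i)$: on the intersection of $\{L_1(1)\le M_1\sqrt T\}$ with the complement of the $L_2$-event just controlled, raise $L_1$'s boundary data and flatten $L_2$ to the constant $(M_0+M_1)\sqrt T$; the conditional law of $L_1$ is then the process $X(\cdot)$ of Lemma \ref{l:x} reweighted by a Radon--Nikodym factor $\Delta\in(0,1]$ with $\Ex[\Delta]\ge\beta>0$, so $\Pr(\m A_1)\le\e/\beta$ again by \eqref{exe}. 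The infima and the point $L_2(1)$ are handled symmetrically. You should adopt this route: it converts the problem into two applications of Lemma \ref{l:x} and eliminates the unproven bridge estimate entirely.
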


	\begin{proof}  For clarity we divide the proof into two steps.
		
		\medskip
		
		\noindent\textbf{Step 1.} Fix any $\e\in (0,\frac12)$ and consider $M_0(\e)$ from Lemma \ref{l:x}. In this step we prove \eqref{eq:l5.t}.  From Proposition \ref{lem:ep} choose $M_1(\e)>0$ such that for all large enough $T$ we have
		\begin{align}\label{ety}
			\Pr_{\alpha_1}^{0,(-\infty)^T;1,T}\big(|L_1(1)|\ge M_1\sqrt{T}\big) \ge \e, \quad 	\Pr_{\alpha_2}^{(0,-\sqrt{T}),(-\infty)^T;2,T}\big(|L_1(1)|+|L_2(2)|\ge M_1\sqrt{T}\big) \ge \e.
		\end{align}
We will use the first bound immediately, and the second  a bit later.
		Set $M_3:=2M_0+M_1+1$, and
		\begin{align*}
		\m{A}_1:=\left\{\sup_{i\in \ll1,2T-1\rr} L_1(i)\ge (M_3+M_0)\sqrt{T}\right\},\qquad
\m{A}_2:=\left\{\sup_{i\in \ll2,2T\rr} L_2(i)\ge (M_0+M_1)\sqrt{T}\right\}.
		\end{align*}
Finally, introduce shorthand notation $\Pr_1$ for $\Pr_{\alpha_1}^{(0,-\sqrt{T}),(-\infty)^{T};1,T}$ and $\Pr_2$ for $\Pr_{\alpha_2}^{(0,-\sqrt{T}),(-\infty)^{T};2,T}$ (and likewise for $\Ex$).
		In view of \eqref{ety}, by a union bound we have
		\begin{align*}
			\Pr_1(\m{A}_1) \le \e+\Ex_1\left[\ind_{L_1(1)\le M_1\sqrt{T}}\Ex_1\left[\ind_{\m{A}_1}\mid \sigma(L_1(1))\right]\right].
		\end{align*}
		As $\m{A}_1$ is an increasing event with respect to the boundary data, due to stochastic monotonicity (Proposition \ref{p:gmc}), increasing the boundaries will only increase the conditional probability. Thus to get an upper bound, we may assume $L_1(1)=L_1(2T-1)=M_1\sqrt{T}$. But note that under this boundary condition we have $(L_1(i)-M_1\sqrt{T})_{i=1}^{2T-1}\stackrel{(d)}{=}(X(i))_{i=1}^{2T-1}$, where $X(\cdot)$ is defined in Lemma \ref{l:x}. Thus, owing to \eqref{exe}, almost surely we have
		\begin{align*}
			\ind_{L_1(1)\le M_1\sqrt{T}}\Ex_1\left[\ind_{\m{A}_1}\mid \sigma(L_1(1))\right] \le \Pr\bigg(\sup_{i\in\ll1,2T-1\rr}|X(i)|\ge (2M_0+M)\sqrt{T}\bigg) \le \e.
		\end{align*}
		This implies $\Pr_1(\m{A}_1) \le 2\e$. Following similar calculations one can show $$\Pr_1\bigg(\inf_{i\in \ll1,2T-1\rr} L_1(i)\le -(M_3+M_0)\sqrt{T}\bigg) \le 2\e.$$
		This proves \eqref{eq:l5.t} with $M=M_3$ for $\e \mapsto 2\e$.
		
		\medskip
		
		\noindent\textbf{Step 2.} In this step we prove \eqref{eq:l5.u}. At this point we encourage the readers to look at Figure \ref{glt} and its caption for an overview of the proof idea.
				
		\begin{figure}[h!]
			\centering
			\begin{overpic}[width=7cm]{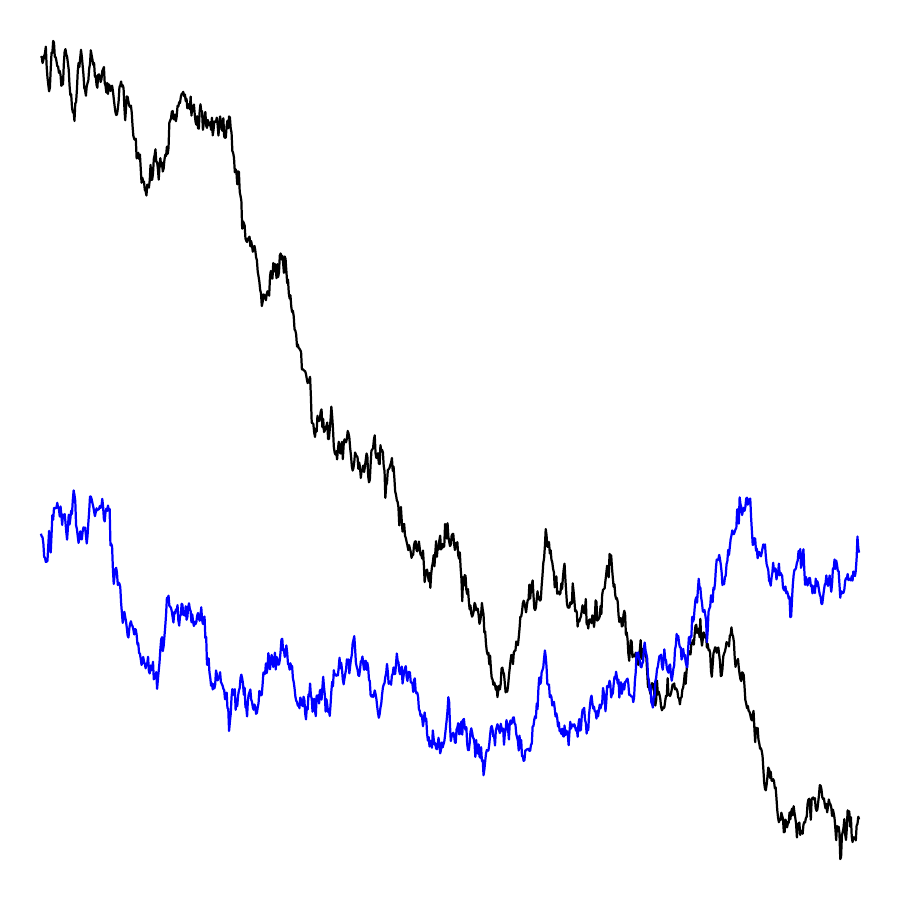}
				\put(2.7,-5){
					\begin{tikzpicture}
						\draw[dashed] (0,0) -- (0,7.5);
				\end{tikzpicture}}
				\put(93.3,-5){
					\begin{tikzpicture}
						\draw[dashed] (0,0) -- (0,7.5);
				\end{tikzpicture}}
				\put(0,50){
					\begin{tikzpicture}
						\draw[dashed] (0.4,0)--(6.8,0);
				\end{tikzpicture}}
			\end{overpic}
			\begin{overpic}[width=7cm]{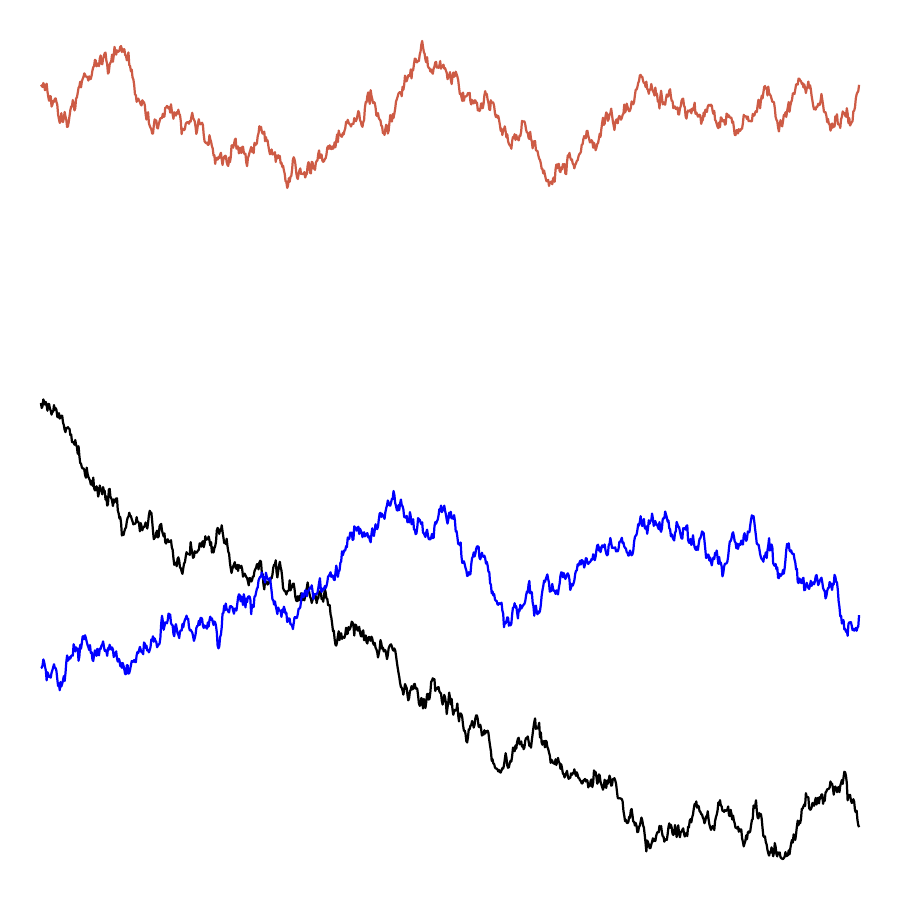}
				\put(2.7,-5){
					\begin{tikzpicture}
						\draw[dashed] (0,0) -- (0,7.5);
				\end{tikzpicture}}
				\put(93.3,-5){
					\begin{tikzpicture}
						\draw[dashed] (0,0) -- (0,7.5);
				\end{tikzpicture}}
				\put(0,50){
					\begin{tikzpicture}
						\draw[dashed] (0.4,0)--(6.8,0);
				\end{tikzpicture}}
				\put(4,54){\begin{tikzpicture}
						\draw[fill=black] (0,0) circle (2pt);
				\end{tikzpicture}}
				\put(94,10){\begin{tikzpicture}
						\draw[fill=black] (0,0) circle (2pt);
				\end{tikzpicture}}
				\put(4,90){\begin{tikzpicture}
						\draw[fill=red] (0,0) circle (2pt);
				\end{tikzpicture}}
				\put(94,90){\begin{tikzpicture}
						\draw[fill=red] (0,0) circle (2pt);
				\end{tikzpicture}}
			\end{overpic}
			\vspace{0.5cm}
			
			\caption{In the above figure, we have plotted $L_1\ll1,2T-1\rr$ (black curve) and $L_2\ll2,2T\rr$ (blue curve). Endpoint tightness, Proposition \ref{lem:ep}, ensures that $L_1(1), L_2(2) \in (-M_0\sqrt{T},M_0\sqrt{T})$. Assuming this, in order to seek an uniform upper bound for the blue curve, by stochastic monotonicity we may push the black curve all the way to $+\infty$. The resulting law for the blue curve is given by $Y(\cdot)$ (upto a translation) introduced in  Lemma \ref{l:x}. A uniform upper bound for the resulting law for the blue curve law can then be estimated by Lemma \ref{l:x}. The upper bound is shown in the dashed line above. Once we have a uniform upper bound for the blue curve, we may elevate the endpoints of black curve much higher (from black points to red points in the above right figure) so that the curve no longer feels the effect of the blue curve. The red curve above denotes a sample for $L_1$ from this elevated end points. Without interaction with the blue curve, its law (upto a translation) equals to $X(\cdot)$ in Lemma \ref{l:x}. A uniform upper bound for the red curve can then be estimated by Lemma \ref{l:x}.}
			\label{glt}
		\end{figure}

		Let us set $\mathcal{F}_1= \sigma(L_2(2),(L_1(i))_{i=1}^{2T-1})$ and $\mathcal{F}_2=\sigma(L_1(1),(L_2(i))_{i=2}^{2T})$. In view of \eqref{ety}, by a union bound
		\begin{align*}
			\Pr_2(\m{A}_2) & \le \e+\Pr_2\bigg(\{L_2(2) \le M_0\sqrt{T}\} \cap \m{A}_2\bigg)\le\e +\Ex_2\left[\ind_{L_2(2)\le M_0\sqrt{T}}\Ex_2\left[\ind_{\m{A}_2} \mid  \mathcal{F}_1\right]\right].
		\end{align*}
		As $\m{A}_2$ is an increasing event with respect to the boundary data, due to stochastic monotonicity (Proposition \ref{p:gmc}), increasing the boundaries will only increase the conditional probability of $\m{A}_2$. Thus, to get an upper bound, we may assume $L_2(2T)=L_2(2)=M_1\sqrt{T}$, and $L_1(i)=+\infty$ for all $i\in \ll1,2T-1\rr$. Under this boundary condition we have $(L_2(i+1)-M_1\sqrt{T})_{i=1}^{2T-1}\stackrel{(d)}{=}(Y(i))_{i=1}^{2T-1}$ where $Y(\cdot)$ is defined in Lemma \ref{l:x}. Thus,  almost surely we have (recall $\Pr$ is the law of $Y$ below)
		\begin{align*}
			\ind_{L_2(2)\le M_0\sqrt{T}}\cdot\Ex_2\left[\ind_{\m{A}_2} \mid  \mathcal{F}_1\right] \le \Pr\bigg(\sup_{i\in\ll1,2T-1\rr}|X(i)|\ge M_0\sqrt{T}\bigg) \le \e.
		\end{align*}
		Thus $\Pr_2(\m{A}_2)\le 2\e$. In view of this bound, applying a union bound we have
		\begin{align*}
			\Pr_2(\m{A}_1) \le 3\e +\Ex_2\left[\ind_{\{L_1(1)\le M_1\sqrt{T}\}\cap \neg\m{A}_2}\Ex_2\left[\ind_{\m{A}_1}\mid\mathcal{F}_2\right]\right].
		\end{align*}
		As $\m{A}_1$ is an increasing event with respect to the boundary data, due to stochastic monotonicity (Proposition \ref{p:gmc}), increasing the boundaries will only increase the conditional probability. Thus, to get an upper bound we may assume $L_1(1)=L_1(2T-1)=M_3\sqrt{T}$ and $L_2(i)=(M_0+M_1)\sqrt{T}$ for all $i\in \ll2,2T\rr$. From the definition of the Gibbs measure, almost surely we have
		\begin{align*}
			\ind_{\{L_1(1)\le M_1\sqrt{T}\}\cap \neg\m{A}_2}\Ex_2\left[\ind_{\m{A}_1}\mid\mathcal{F}_2\right] \le  \frac1{\Ex[\Delta]}\Ex\left[\Delta\cdot \ind_{\m{A}_1}\right],
		\end{align*}
		where on the right-hand side, $\m{A}_1$ is defined as the event $\{\sup_{i\in\ll1,2T-1\rr} X(i) \ge M_0\sqrt{T}\}$ and
$$\Delta=\exp\left(-\sum_{i=1}^{T-1} \left(e^{-(M_0+1)\sqrt{T}-X_1(2i-1)}+e^{-(M_0+1)\sqrt{T}-X_1(2i+1)}\right)\right).$$
		As $\Delta\le 1$, by \eqref{exe}, $\Ex\left[\Delta\!\cdot\! \ind_{\m{A}_1}\right] \le \Ex[\ind_{\m{A}_1}] \le \e$. By \eqref{exe} we have  $\Ex[\Delta] \ge (1-\e)\cdot e^{-2(T-1)e^{-\sqrt{T}}} \ge \beta$ for some absolute constant $\beta>0$. Thus, $\Pr_2(\m{A}_1) \le (3+\beta^{-1})\e$. Similarly one can show
		\begin{align*}
			& \Pr_2\left(\inf_{i\in \ll2,2T\rr} L_2(i)\le -(M_3+M_0)\sqrt{T}\right) \le (3+\beta^{-1})\e, \\	& \Pr_2\left(\inf_{i\in \ll1,2T-1\rr} L_1(i)\le -(M_0+M_1)\sqrt{T}\right) \le 2\e.
		\end{align*}
		Thus adjusting the constants we can find $\til{M}$ such that
		\begin{align*}
			\Pr_{2}\bigg(\sup_{i\in\ll1,2T-1\rr}|L_1(i)|+\sup_{j\in\ll2,2T\rr}|L_2(j)|\ge (M-1)\sqrt{T}\bigg)\le \e/3.
		\end{align*}
		Finally via Lemma \ref{l:LSCrit} we know $L_2(1)-L_2(2)\sim G_{\alpha_2+\theta,1}$. Thus, by a union bound, for all $T$ large enough we have
		$\Pr_2(|L_2(1)|\ge M\sqrt{T}) \le \e/3 +\Pr_2(|L_2(1)-L_2(2)|\ge \sqrt{T}) \le 2\e/3$. By another union bound, we arrive at \eqref{eq:l5.u}.
	\end{proof}

	Recall the normalizing constant $V_p^T(\vec{y},\vec{z})$ from  \eqref{defvkt}. One can easily obtain a lower bound for this normalizing constant as a consequence of the Lemma \ref{l:sup}.
	\begin{corollary}\label{cora} Fix any $r>0$ and for each $N>0$ set $T=\lfloor rN^{2/3}\rfloor$. Fix any $p\in \{1,2\}$ and set $\alpha=\alpha_p$ according to \eqref{acric}. Recall $V_{p}^T(\vec{y},\vec{z})$ from \eqref{defvkt}. There exists $Q_0=Q_0(r)>0, N_0=N_0(r)>0$ such that for all $Q\ge Q_0$ and $N\ge N_0$, $V_p^T(\vec{y},\vec{z})\ge \tfrac12$ for all $\vec{z}\in \R^{T}$ with $z_i\le QN^{1/3}$ for $i\in \ll 1,T\rr$ and $\vec{y}\in \R^p$ with $y_i\ge (2Q-1)N^{1/3}$ for $i\in \ll 1,p\rr$. Here we assume $L_p(2T+1):=\infty$.
	\end{corollary}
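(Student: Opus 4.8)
The plan is to exploit the structure of the factors in the definition \eqref{defvkt} of $V_p^T(\vec y,\vec z)$: each factor is $W(z_\cdot;L_p(2j+1),L_p(2j-1))=\exp(-e^{z_\cdot-L_p(2j+1)}-e^{z_\cdot-L_p(2j-1)})\in(0,1]$, so $V_p^T(\vec y,\vec z)\le1$ and it remains only to bound it below, and each factor is essentially $1$ as soon as the bottom curve $L_p$ of the \btf\ measure on $\mathcal K_{p,T}$ lies well above the heights $z_\cdot$. First I would fix $\e_0:=1/4$, let $M=M(\e_0)$ be the constant from Lemma \ref{l:sup}, and set
\[
h:=(2Q-1-M\sqrt r)N^{1/3},\qquad Q_0:=2+2M\sqrt r,
\]
and let $\mathcal H$ be the event that $L_p(\ell)\ge h$ for every $\ell$ in the domain of $L_p$ inside $\mathcal K_{p,T}$. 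On $\mathcal H$, using $z_\cdot\le QN^{1/3}$, $y_i\ge(2Q-1)N^{1/3}\ge h$ and the convention $L_p(2T+1)=+\infty$, every argument occurring in the product satisfies $z_\cdot-L_p(\cdot)\le QN^{1/3}-h=(1+M\sqrt r-Q)N^{1/3}\le-N^{1/3}$ for all $Q\ge Q_0$, uniformly in such $Q$. Hence on $\mathcal H$,
\[
\prod_{j=1}^{T}W(z_\cdot;L_p(2j+1),L_p(2j-1))\ \ge\ \exp\!\big(-2Te^{-N^{1/3}}\big)\ \ge\ \exp\!\big(-2rN^{2/3}e^{-N^{1/3}}\big)\ \ge\ \tfrac34
\]
for all $N\ge N_0(r)$, so that $V_p^T(\vec y,\vec z)\ge\tfrac34\,\Pr_{\alpha_p}^{\vec y,(-\infty)^{T};p,T}(\mathcal H)$; it thus suffices to show $\Pr_{\alpha_p}^{\vec y,(-\infty)^{T};p,T}(\mathcal H)\ge\tfrac34$.

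To bound that probability below I would combine the endpoint control of Lemma \ref{l:sup} with translation invariance (Lemma \ref{obs1}\ref{traninv}) and stochastic monotonicity (Proposition \ref{p:gmc}). For $p=1$ (where $|\alpha_1|<\theta$ for $N$ large, so the \btf\ measure is well defined), translation invariance identifies $\Pr_{\alpha_1}^{y_1,(-\infty)^{T};1,T}$ with the law under boundary $0$ shifted up by $y_1$; then \eqref{eq:l5.t} gives, with probability at least $\tfrac34$, that $\inf_i L_1(i)\ge y_1-M\sqrt T\ge(2Q-1)N^{1/3}-M\sqrt r\,N^{1/3}=h$, which forces $\mathcal H$. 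For $p=2$ I would first observe that $\{\inf_\ell L_2(\ell)<h\}$ is an increasing event in the sense of Definition \ref{def:hslggibbs}, so by Proposition \ref{p:gmc} its probability only grows when the boundary $\vec y$ is lowered coordinatewise; lowering to $\vec y':=\big((2Q-1)N^{1/3},\,(2Q-1)N^{1/3}-\sqrt T\big)$, which is $\preceq\vec y$ because $y_2\ge(2Q-1)N^{1/3}>(2Q-1)N^{1/3}-\sqrt T$, and then applying translation invariance and \eqref{eq:l5.u} bounds this probability by $\e_0=\tfrac14$. Either way $\Pr_{\alpha_p}^{\vec y,(-\infty)^{T};p,T}(\mathcal H)\ge\tfrac34$, finishing the proof; note that $Q_0$ and $N_0$ produced this way depend only on $r$, as required by the statement.

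The only step requiring real care is the $p=2$ reduction: one must check that $\{\inf_\ell L_2(\ell)<h\}$ genuinely lies in the class of increasing events handled by the grand monotone coupling of Proposition \ref{p:gmc}, that the comparison boundary $\vec y'$ is coordinatewise dominated by every admissible $\vec y$, and that after translating $\vec y'$ down to $(0,-\sqrt T)$ one is exactly in the setting of \eqref{eq:l5.u} (where $\sup_{j}|L_2(j)|$ controls the relevant points of $L_2$). Everything else is elementary bookkeeping of the constants $M$, $Q_0$, $N_0$ in terms of $r$, together with the observation $2rN^{2/3}e^{-N^{1/3}}\to0$.
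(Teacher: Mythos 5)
Your proposal is correct and follows essentially the same route as the paper's proof: restrict to the event that the bottom curve $L_p$ stays uniformly high, bound each factor $W$ below by $\exp(-2Te^{-N^{1/3}})$ on that event, and control its probability via translation invariance (Lemma \ref{obs1}\ref{traninv}), stochastic monotonicity (Proposition \ref{p:gmc}), and Lemma \ref{l:sup}. The only differences are cosmetic: you fix $Q_0$ explicitly in terms of $M\sqrt r$, require all points of $L_p$ (rather than just the odd ones entering the product) to be high, and in the $p=2$ case apply monotonicity to the complement event instead of to the high event itself.
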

	\begin{proof} Consider the event $$\m{A}:=\left\{\inf_{j\in \ll1,T\rr} L_p(2j-1) \ge (Q+1)N^{1/3}\right\}.$$
		Observe that
		\begin{align*}
			V_p^T(\vec{y},\vec{z}) & \ge \Ex_{\alpha_p}^{\vec{y},(-\infty)^{T};p,T}\left[\ind_{\m{A}}\prod_{j=1}^T W(z_j;L_p(2j+1),L_p(2j-1))\right] \\ & \ge \exp(-2Te^{-N^{1/3}}) \cdot \Pr_{\alpha_p}^{\vec{y},(-\infty)^{T};p,T}\left(\m{A}\right).
		\end{align*}	
		Taking $N$ large enough ensures $\exp(-2Te^{-N^{1/3}}) \ge 1/\sqrt{2}$. Since $\m{A}$ is an increasing event with respect to the boundary data, applying stochastic monotonicity (Proposition \ref{p:gmc}) and translation invariance (Lemma \ref{obs1} \ref{traninv}) of $\hslg$ Gibbs measures we have
		\begin{align*}
			\Pr_{\alpha_p}^{\vec{y},(-\infty)^{T};p,T}\left(\m{A}\right)=\Pr_{\alpha_p}^{\vec{x},(-\infty)^{T};p,T}\left(\inf_{j\in \ll1,T\rr} L_p(2j-1) \ge -(Q-2)N^{1/3}\right)
		\end{align*}
		where $\vec{x}=0$ if $p=1$ and $\vec{x}=(0,-\sqrt{T})$ if $p=2$. Appealing to Lemma \ref{l:sup} we may choose $Q$ large enough so that the above probability is at least $1/\sqrt{2}$.
	\end{proof}

	\subsection{Proof of Proposition \ref{e:mcd}} \label{sec:mcd} For clarity we divide the proof into three steps.
	
	\medskip
	
	\noindent\textbf{Step 1.}  In this step, we give a roadmap of the proof of \eqref{e:mcd} leaving the technical details to later steps.	Fix $r,\e, \delta>0$ and $p\in \{1,2\}$. Fix $N\ge 3$ large enough so that $T=8\lfloor rN^{2/3}\rfloor \ge 24$.  Set $\alpha=\alpha_p$ according to \eqref{acric} and consider the $\hslg$ line ensemble $\L^N$ from Definition \ref{l:nz} with parameters $(\alpha,\theta)$.  Consider the modulus of continuity event
	\begin{align*}
		\m{MC}_{\delta} := \big\{\omega_{\delta}^N(\L_1^N,\ll1,T/4-1\rr) \ge \gamma N^{1/3}\big\}.
	\end{align*}		
	By Theorem \ref{thm:eptight}, there exists $V=V(\e)>0$ such that
	\begin{align}\label{opti}
		\Pr(\m{A}_{1}) \ge 1-\e,\,\,  \mbox{   where   }\,\,  \m{A}_{1}:= \left\{N^{-1/3}|\L_1^N(1)|+N^{-1/3}|\L_2^N(2)|\le V\right\}.
	\end{align}
	By Proposition \ref{p:high}, there exists $M_1(\e)>0$ such that for all large enough $N$
	\begin{align}\label{e1low}
		\Pr\left(\L_1^N(2T-1) \ge M_1N^{1/3}\right) \le \e.
	\end{align}
	We claim that there exists $M_2(r,\e)>0$ such that for all large enough $N$
	\begin{align}\label{e:l2h}
		\Pr\left(\L_p^N(2T+p-2) \le -M_2N^{1/3}\right) \le \e.
	\end{align}
	We shall prove \eqref{e:l2h} in \textbf{Step 2}. Let us assume it for now. Set $M=\max\{M_1,M_2,4\}$ and consider the events
	\begin{align*}
		\m{B}_{1} & :=\big\{|\L_1^N(2T-1)| \le 2MN^{1/3}\big\}, \\
		\m{B}_{2} & :=\Big\{\L_2^N(2T) \ge -MN^{\frac13}, \,\,\L_1^N(2T-1)\le MN^{\frac13}, \,\,\L_1^N(2T-1) \ge \L_1^N(2T)-(\log N)^{7/6}\Big\},
	\end{align*}
	For each $\beta>0$  we define
	\begin{align}\label{c61}
		\m{C}_{}(p,\beta) & := \Big\{V_p^{T}\left((\L_j^N(2T+j-2))_{j\in \ll1,p\rr};(\L_{p+1}^N(2k))_{k=1}^T\right) \ge \beta\Big\},
	\end{align}
	where $V_p^T(\cdot,\cdot)$ is defined in \eqref{defvkt}. We now claim that there exists $\beta(r,\e)>0$ such that
	\begin{align}\label{lastpart}
		\Pr\left(\neg\m{C}_{}(p,\beta)\right) \le \e.
	\end{align}
	We work with this choice of $\beta$ for the rest of this step. We postpone the proof of \eqref{lastpart} to \textbf{Step 3}.  Let us now complete the proof of Proposition \ref{e:mcd} assuming it. Consider the following $\sigma$-algebra:
	\begin{align}\label{sigmaf}
		\mathcal{F}_{p,k}:=\sigma\left((\L_i^N\ll 1,2N-2i+2\rr)_{i\ge p+1}, (\L_i^N(j))_{j\ge 2k+i-2, i\in \ll1,p\rr}\right).
	\end{align}
	Clearly $\m{B}_{p}\cap \m{C}_{}(p,\beta)$ is measurable with respect to $\mathcal{F}_{p,T}$. By union bound and tower property of conditional expectation we have
	\begin{equation}
		\label{3terms}
		\begin{aligned}
			\Pr(\m{MC}_{\delta}) &  \le \Pr(\neg \m{A}_{1})+\Pr\left(\neg \m{B}_{p}\right)+\Pr\left(\neg \m{C}_{}(p,\beta)\right) \\ & \hspace{2cm}+\Ex\left[\ind_{\m{B}_{p}\cap\m{C}_{}(p,\beta)}\Ex\left[\ind_{\m{A}_{1}\cap \m{MC}_{\delta}}\mid \mathcal{F}_{p,T}\right]\right].
		\end{aligned}
	\end{equation}
	
We bound the four terms on the right-hand side of the above equation separately.

\smallskip\noindent\textbf{$\m{A}_{1}$ event:} We have $\Pr(\neg \m{A}_{1}) \le \e$ due to \eqref{opti}.

\smallskip\noindent\textbf{$\m{B}_{p}$ event:} Note that for large enough $N$, $\m{B}_{2} \subset \m{B}_{1}$. Combining \eqref{e1low}, \eqref{e:l2h}, and Theorem \ref{t:order} (with $\rho\mapsto\frac12, M\mapsto M$), by a union bound we see that for all large enough $N$,
		\begin{align*}
			\Pr(\neg \m{B}_{p}) & \le \Pr(\neg \m{B}_{2}) \\ & \le \Pr( \L_2^N(2T) \le -MN^{1/3})+\Pr(\L_1^N(2T-1)\ge MN^{1/3}) \\ & \hspace{1cm}+ \Pr\left(\L_1^N(2T-1) \le \L_1^N(2T)-(\log N)^{7/6}\right) \\ & \le 2\e+2^{-N} \le 3\e.
		\end{align*}

\smallskip \noindent \textbf{$\m{C}_{}(p,\beta)$ event:} We have $\Pr(\neg \m{C}_{}(p,\beta)) \le \e$ due to \eqref{lastpart}.

\smallskip \noindent\textbf{Conditional probability:} By Theorem \ref{thm:conn} and \eqref{e2gib} we have
		\begin{align}\label{e45}
			\Ex\left[\ind_{\m{A}_{1}\cap \m{MC}_{\delta}}\mid \mathcal{F}_{p,T}\right]=\frac{\Ex_{\alpha_p}^{\vec{y},(-\infty)^{2T};p,T}\left[V_p^T\left(\vec{y};(\L_{p+1}^N(2i))_{i=1}^T\right) \cdot\ind_{\m{A}_{1}\cap \m{MC}_{\delta}}\right]}{V_p^{T}\left(\vec{y};(\L_{p+1}^N(2i))_{i=1}^T\right)}
		\end{align}
		where $\vec{y}:=(\L_j^N(2T+j-2))_{j\in \ll1,p\rr}$ and $V_{p}^T(\cdot;\cdot)$ is defined in \eqref{defvkt}. From definition we have $V_p^{T}\left(\vec{y};(\L_{p+1}^N(2i))_{i=1}^T\right)\in [0,1]$. On $\m{C}_{}(p,\beta)$ we  have
		\begin{align*}
			\ind_{\m{C}_{}(p,\beta)} \cdot \mbox{r.h.s.~\eqref{e45}} \le \ind_{\m{C}_{}(p,\beta)} \cdot  \beta^{-1}   \cdot  \Pr_{\alpha_p}^{\vec{y},(-\infty)^{2T};p,T}\left(\m{A}_{1}\cap \m{MC}_{\delta}\right).
		\end{align*}
		Observe that the event $\m{B}_{p}$ ensures $\vec{y}\in I_{p,M}$ where the set $I_{p,M}$ is defined in the statement of Proposition \ref{pimc}. We can thus apply Proposition \ref{pimc} with $\e\mapsto \beta\cdot \e$, to get a $\delta>0$ such that
		\begin{align*}
			\ind_{\m{B}_{p}}\cdot \Pr_{\alpha_p}^{\vec{y},(-\infty)^{2T};p,T}\left(\m{A}_{1}\cap \m{MC}_{\delta}\right) \le \ind_{\m{B}_{p}}\cdot \e,
		\end{align*}
		for all large enough $N$. Thus overall we have
		\begin{align*}
			\Ex\big[\ind_{\m{B}_{p}\cap\m{C}_{}(p,\beta)}\Ex\left[\ind_{\m{A}_{1}\cap \m{MC}_{\delta}}\mid \mathcal{F}_{p,T}\right]\big]\le \e.
		\end{align*}
	Plugging the above four estimates  into r.h.s.~\eqref{3terms} and taking limsup $N\to \infty$, then $\delta\downarrow 0$, yields
	\begin{align*}
		\limsup_{\delta\downarrow 0}\limsup_{N\to \infty} 	\Pr(\m{MC}_{\delta}) \le 6\e.
	\end{align*}
	As $\e$ is arbitrary, we thus have \eqref{ee:mcd}, completing the proof.
	
	\medskip

	\noindent\textbf{Step 2.} In this step we prove \eqref{e:l2h}. We write $\Pr_{\alpha_p}$ instead of $\Pr$ to stress the fact that the $\hslg$ line ensemble has boundary parameter $\alpha_p$, defined in \eqref{acric}. We claim that there exists $M_2(r,\e)$ such that for all large enough $N$
	\begin{align}\label{e:l2hn}
		\Pr_{\alpha_p}\left(\m{Fall}_{p}^{(M_2)}\right) \le \tfrac{\e}{4}, \quad \m{Fall}_{p}^{(M_2)}:=\left\{\inf_{j\in \ll1,4T+4\rr,i\in \ll1,p\rr} \L_i^N(j) \le -M_2N^{1/3}\right\}.
	\end{align}
	Note that as $\{\L_p^N(2T+p-2)\le -M_2N^{1/3}\} \subset \m{Fall}_{p}^{(M_2)}$, \eqref{e:l2hn} implies \eqref{e:l2h}. To show \eqref{e:l2hn}, we first define a few more  events. For each $R\ge 32r+1$ we define
	\begin{align*}
		\m{B}_{i}^{(R,j)} & :=\left\{\L_i^N(2j+i-2) \ge -R^2N^{1/3}\right\}, \quad \til{\m{B}}_{i}^{(R,j)} :=  \m{B}_{i}^{(R,j)}\cap \bigcup_{k\in \ll j+1,RN^{2/3}\rr} \neg \m{B}_{i}^{(R,k)}, \\ \m{B}_{i}^{(R)} & :=\bigcup_{j\in \ll4T+4,RN^{2/3}\rr}\m{B}_{i}^{(R,j)}=\bigsqcup_{j\in \ll4T+4,RN^{2/3}\rr}\til{\m{B}}_{i}^{(R,j)}=\bigg\{\sup_{j\in \ll 4T+4,RN^{2/3}\rr} \L_i^N(2j+i-2) \ge -R^2N^{1/3}\bigg\}, \\
		\m{Dif}^{(R)} & :=\left\{ \L_1^N(2j-1)\ge \L_2^N(2j)+(\log N)^2 \mbox{ for all }j\in \ll 1,RN^{2/3}\rr\right\}.
	\end{align*}
	By Theorem \ref{t:order}, Theorem \ref{p:high2}, and Proposition \ref{p:high}, we can find a $R=R(r,\e)\ge 1$ such that for all large enough $N$, and for $v\in \{1,2\}$
	\begin{align}\label{eqbrav}
		\Pr_{\alpha_v}\left(\neg \m{B}_{1}^{(R)}\right)+\Pr_{\alpha_v}\left(\neg \m{B}_{2}^{(R)}\right)+\Pr_{\alpha_v}\left(\neg \m{Dif}^{(R)}\right) \le \tfrac{\e}{8}.
	\end{align}
	We fix this choice of $R$.  Observe that for large enough $N$, we have
	\begin{align*}
		\til{\m{B}}_{2}^{(R,i)}\cap \m{Dif}^{(R)} \subset \til{\m{B}}_{2}^{(R,i)}\cap \m{B}_{1}^{(2R,i)},
	\end{align*}
	uniformly for all $i\in \ll 4T+4,RN^{2/3}\rr$. For $p=2$, by the union bound and the tower property of conditional expectation, in view of \eqref{eqbrav}, we have
	\begin{equation}
		\label{2fall}
		\begin{aligned}
			\Pr_{\alpha_2}\left(\m{Fall}_{2}^{(M_2)}\right) &  \le \Pr_{\alpha_2}\left(\neg \m{B}_{2}^{(R)}\right)+\Pr\left(\neg \m{Dif}^{(R)}\right) +\sum_{j\in \ll4T+4,RN^{2/3}\rr} \Pr_{\alpha_2}\left(\til{\m{B}}_{2}^{(R,j)}\cap \m{B}_{1}^{(2R,j)} \cap\m{Fall}_{2}^{(M_2)}\right) \\ &  \le \tfrac{\e}{8} +\sum_{j\in \ll4T+4,RN^{2/3}\rr} \Ex\left[\ind_{\til{\m{B}}_{2}^{(R,j)}\cap \m{B}_{1}^{(2R,j)}}\Ex_{\alpha_2}\left[\ind_{\m{Fall}_{2}^{(M_2)}}\mid \mathcal{F}_{2,j}\right]\right],
		\end{aligned}
	\end{equation}
	where $\mathcal{F}_{p,k}$ is defined in \eqref{sigmaf}. For $p=1$, applying union bound and using \eqref{eqbrav} we have
	\begin{equation}
		\label{1fall}
		\begin{aligned}
			\Pr_{\alpha_1}\left(\m{Fall}_{1}^{(M_2)}\right) &  \le \Pr_{\alpha_1}\left(\neg \m{B}_{1}^{(R)}\right)+\sum_{j\in \ll4T,RN^{2/3}\rr} \Pr_{\alpha_1}\left(\til{\m{B}}_{1}^{(R,j)} \cap\m{Fall}_{1}^{(M_2)}\right) \\ &  \le \tfrac{\e}{8} +\sum_{j\in \ll4T+4,RN^{2/3}\rr} \Ex\left[\ind_{\til{\m{B}}_{1}^{(R,j)}}\Ex_{\alpha_1}\left[\ind_{\m{Fall}_{1}^{(M_2)}}\mid \mathcal{F}_{1,j}\right]\right].
		\end{aligned}
	\end{equation}
	We now proceed to control the conditional expectation $\Pr_{\alpha_p}\left(\m{Fall}_{p}^{(M_2)}\mid \mathcal{F}_{p,j}\right)$ separately for $p=1$ and $p=2$. Applying the Gibbs property (Theorem \ref{thm:conn}),  we have
	\begin{align*}
		\ind_{\til{\m{B}}_{2}^{(R,j)}\cap \m{B}_{1}^{(2R,j)}}\cdot	\Ex_{\alpha_2}\left[\m{Fall}_{2}^{(M_2)}\mid \mathcal{F}_{2,j}\right] & = \ind_{\til{\m{B}}_{2}^{(R,j)}\cap \m{B}_{1}^{(2R,j)}}\cdot\Pr_{\alpha_2}^{\vec{y},\vec{z};2,j}\bigg(\m{Fall}_{2}^{(M_2)}\bigg) \\ & \le \ind_{\til{\m{B}}_{2}^{(R,j)}\cap \m{B}_{1}^{(2R,j)}}\cdot\Pr_{\alpha_2}^{(0,-\sqrt{j}),(-\infty)^{j};2,j}\bigg(\m{Fall}_{2}^{(M_2-4R^2)}\bigg).
	\end{align*}
	Here $\vec{y}=(\L_1^N(2j-1),\L_2^N(2j))$ and $\vec{z}=(\L_{3}^N(2m))_{m=1}^{j}$. Let us briefly explain the above inequality. Note that on $\til{\m{B}}_{2}^{(R,j)}\cap \m{B}_{1}^{(2R,j)}$ we have $y_i \ge (-4R^2N^{1/3}-(i-1)\sqrt{j})$ for $i=1,2$. Furthermore $\m{Fall}_{2}^{(M_2)}$ is an event which decreases with respect to boundary data. Thus to obtain an upper bound, by stochastic monotonicity (Proposition \ref{p:gmc}), we may take the boundary data from $(y_1,y_2)$ to $(-4R^2N^{1/3},-4R^2N^{1/3}-\sqrt{j})$ and $\vec{z}$ to $(-\infty)^{j}$. The above inequality then follows by translation invariance (see Lemma \ref{obs1} \ref{traninv}). Similar applications of the Gibbs property and stochastic monotonicity yield that on $\til{\m{B}}_{1}^{(R,j)}$ we have
	\begin{align*}
		\Ex_{\alpha_1}\left[\ind_{\m{Fall}_{1}^{(M_2)}}\mid \mathcal{F}_{2,j}\right] \le \Pr_{\alpha_1}^{0,(-\infty)^{j};1,j}\left(\m{Fall}_{1}^{(M_2-4R^2)}\right).
	\end{align*}
	We now claim that one can choose $M_2(r,\e)>0$ large enough such that for all $j\in \ll4T+4,RN^{2/3}\rr$,
	\begin{align}\label{ver}
		\Pr_{\alpha_p}^{\vec{x},(-\infty)^{j};p,j}\left(\m{Fall}_{p}^{(M_2-4R^2)}\right) \le \tfrac{\e}{8},
	\end{align}
	where $\vec{x}:=0$ (if $p=1$) or $\vec{x}:=(0,-\sqrt{j})$ (if $p=2$). Plugging the above bound back in \eqref{1fall} and \eqref{2fall} and using the fact that $\{\til{\m{B}}_{p}^{(R,j)}\}_{j\in \ll4T+4,RN^{2/3}\rr}$ is a disjoint collection of events we arrive at the bound in \eqref{e:l2hn}. Thus we are left to verify \eqref{ver} in this step. Observe that
	\begin{align*}
		\Pr_{\alpha_p}^{\vec{x},(-\infty)^{j};p,j}\left(\m{Fall}_{p}^{(M_2-4R^2)}\right) \le \Pr_{\alpha_p}^{\vec{x},(-\infty)^{j};p,j}\left(\inf_{k\in \ll1,2j+i-2\rr, i\in \ll1,p\rr} L_i(k) \le -(M_2-4R^2)N^{1/3}\right)
	\end{align*}
	By Lemma \ref{l:sup}, one can choose $M_2$ large enough such that the above expression is bounded above by $\e/8$ for all $j\in \ll4T,RN^{2/3}\rr$. This proves \eqref{ver} completing our work for this step.
	
	\medskip

	\noindent\textbf{Step 3.} In this step we prove \eqref{lastpart}.
	For each $Q>0$ consider the event
	\begin{align}\label{defam}
		\m{D}_{Q} & :=\left\{ \sup_{i\in \ll 1,4T+4\rr}\L_{p+1}^N(i) \le QN^{1/3}, \,\, \inf_{j\in \ll1,p\rr}\L_j^N(4T+j+2) \ge -QN^{1/3}+\sqrt{2T+1}\right\}.
	\end{align}	
	By Theorem \ref{t:order}, Proposition \ref{l:lhigh}, and \eqref{e:l2hn} there exists $Q(r,\e)>0$ large enough such that $\Pr(\neg\m{D}_{p,Q}) \le \frac{\e}{2}$. 
	Consider $\mathcal{F}_{p,2T+2}$ from \eqref{sigmaf}. Recall the event ${\m{C}}_{}(p,\beta)$ from \eqref{c61}. By union bound and the tower-property of the expectation, we have
	\begin{align}\label{eq:twtmr}
		\Pr(\neg\m{C}_{}(p,\beta))  & \le \Pr(\neg\m{C}_{}(p,\beta) \cap \m{D}_Q)+\tfrac{\e}{2}  = \Ex\Big[\ind_{\m{D}_{Q}}\Ex\big[\ind_{\m{C}_{}(p,\beta)}\mid \mathcal{F}_{p,2T+2}\big]\Big]+\tfrac{\e}{2}.
	\end{align}
	Applying the Gibbs property and \eqref{e2gib} we have
	\begin{align*}
		\Ex[\ind_{\neg\m{C}_{}(p,\beta)}\mid \mathcal{F}_{p,2T+2}] & =\Pr_{\alpha_p}^{\vec{y};	\vec{z};p,2T+2}\big(\neg\m{C}_{}(p,\beta)\big)
	\end{align*}
	with $\vec{y}=(y_1,\ldots,y_p)$ and $y_j=\L_j^N(4T+j+2)$ for $j\in \ll1,p\rr$, and $\vec{z}=(\L_{p+1}^N(2k))_{k=1}^{2T+2}$. Set $\vec{x}=(-QN^{1/3}+\sqrt{2T+1})^{p}$. We claim that there exists $Q_0(r,\e)>0$, $N_0(r,\e)>0$ and $\beta(r,\e)>0$, such that for all $N\ge N_0$, $Q\ge Q_0$, $y_i\ge x_i$ and $\vec{z}\in \R^{2T+2}$ with $\sup_{i\in \ll 1,2T+2\rr} z_i\le QN^{1/3}$ we have
	\begin{align}\label{eq:clm1}
		\Pr_{\alpha_p}^{\vec{y};	\vec{z};p,2T+2}\left(\neg\til{\m{C}}_{}(p,\beta)\right) \le \tfrac{\e}{2}, \mbox{ where } \til{\m{C}}_{}(p,\beta):=\left\{\mathcal{V}_p\ge\beta\right\},
	\end{align}
	where we set (see \eqref{defvkt})
\begin{equation}\label{eq:Vp}
\mathcal{V}_p:=V_p^T\big((L_i(2T+i-2))_{i\in\ll1,p\rr},(z_1,\ldots,z_T)\big).
\end{equation}
Clearly in view of the definition of $\m{D}_{Q}$ from \eqref{defam}, the above claim shows that r.h.s.~\eqref{eq:twtmr} is at most $\e$ \eqref{lastpart}. Thus, to complete our proof it suffices to check \eqref{eq:clm1}. Towards this end, we first claim that for all $\vec{y}\in \R^p, \vec{z}\in \R^{2T+2}$
	\begin{align}\label{eq:clm2}
		\Pr_{\alpha_p}^{\vec{y};	\vec{z};p,2T+2}\left(\neg\til{\m{C}}_{}(p,\beta)\right)= \frac{\Ex_{\alpha_p}^{\vec{y};\vec{w};p,2T+2}\left[\ind_{\til{\m{C}}_{}(p,\beta)}\cdot \mathcal{R}_p\cdot \mathcal{V}_p\right]}{\Ex_{\alpha_p}^{\vec{y};	\vec{w};p,2T+2}\left[\mathcal{R}_p\cdot\mathcal{V}_p\right]}, \quad \mbox{ where } \,\,\mathcal{R}_p:=e^{-e^{z_T-L_2(2T+1)}\ind_{p=2}},
	\end{align}
and where $\vec{w}\in [-\infty,\infty)^{2T+2}$ is defined by setting $w_i=-\infty$ for $i\le T$ and $w_i=z_i$ for $i>T$. We postpone the proof of \eqref{eq:clm2} to the next step.

	\medskip
	
	Assuming \eqref{eq:clm2}, to prove \eqref{eq:clm1}, we provide upper and lower bounds for the numerator and denominator of r.h.s.~\eqref{eq:clm2} respectively. Consider the events
	\begin{align*}
		\m{R}_1 & :=\left\{L_1(2T-1)\ge 2QN^{1/3}\right\}, \\
		\m{R}_2 & :=\left\{L_2(2T)\ge 2QN^{1/3}, L_2(2T+1)\ge 2QN^{1/3}, L_1(2T-1) \ge (2Q-1)N^{1/3}\right\}
	\end{align*}
	Note that
	\begin{align}
		\nonumber
		\Ex_{\alpha_p}^{\vec{y},	\vec{w};p,2T+2}\left[\mathcal{R}_p\cdot\mathcal{V}_p\right] & \ge \Ex_{\alpha_p}^{\vec{y},	\vec{w};p,2T+2}\left[\ind_{ \m{R}_p}\cdot \mathcal{R}_p\cdot \mathcal{V}_p\right] \\ \nonumber & \ge \tfrac12 \exp(-e^{-QN^{1/3}})\cdot \Pr_{\alpha_p}^{\vec{y},\vec{w};p,2T+2}\big(\m{R}_{p}\big) \\ & \ge \tfrac12 \exp(-e^{-QN^{1/3}})\cdot \Pr_{\alpha_p}^{\vec{x},(-\infty)^{2T+2};p,2T+2}\big(\m{R}_{p}\big). \label{anoth}
	\end{align}
	where the penultimate inequality follows from the definition of $\mathcal{R}_p$ and Corollary \ref{cora} and the final inequality follows via stochastic monotonicity (Proposition \ref{p:gmc}) as $\m{R}_p$ is an increasing event with respect to the boundary data (recall $y_i\ge x_i$). To lower bound the above expression, we proceed into two cases depending on the value of $p$.
	
	\medskip
	
\noindent\textbf{Case 1. $p=1$.} Note that $\m{R}_1 \supset \m{RP}_{1,Q}$ event defined in \eqref{defrp}.  By Proposition \ref{l:rpass}, we have $\Pr_{\alpha_1}^{\vec{x},(-\infty)^{2T+2};1,2T+2}(\m{R}_1)\ge \Pr_{\alpha_1}^{-QN^{1/3},(-\infty)^{2T+2};1,2T+2}(\m{RP}_{1,Q}) \ge \phi_1>0$ for some $\phi_1$ free of $N$.
	
	\medskip
	
\noindent\textbf{Case 2. $p=2$.} Let $\vec{u}:=(-QN^{1/3}+\sqrt{2T+2},-QN^{1/3})$. Let us use the shorthand notation $\Pr_{2}^{\gamma_1,\gamma_2}$ for $\Pr_{\alpha_2}^{(\gamma_1,\gamma_2),(-\infty)^{2T+2};2,2T+2}$. Note that by stochastic monotonicity and union bound we have
	\begin{equation}
		 \label{2termss}
			\begin{aligned}
			\Pr_{2}^{\vec{x}}(\m{R}_2) \ge \Pr_{2}^{\vec{u}}(\m{R}_2) & \ge \Pr_{2}^{\vec{u}}\Big(\{L_2(2T) \ge 2QN^{1/3}\}\cap \{L_2(2T+1)\ge 2QN^{1/3}\}\Big)  \\ & \hspace{2cm}-\Pr_{2}^{\vec{u}}\big(L_1(2T-1)\le L_2(2T)-N^{1/3}\big).
		\end{aligned}
	\end{equation}

	Note that $\m{RP}_{2,Q} \subset \{L_2(2T) \ge 2QN^{1/3}\}\cap \{L_2(2T+1)\ge 2QN^{1/3}\}$ (with $T$ replaced by $T+1$ in \eqref{defrp}). Applying stochastic monotonicity (Proposition \ref{p:gmc}) and Proposition \ref{l:rpass} with $p\mapsto 2$ and $T\mapsto T+1$, we see that the first term in the above equation can be bounded as
	\begin{align}\label{1termss}
		& \Pr_{2}^{\vec{u}}\Big(\{L_2(2T) \ge 2QN^{1/3}\}\cap \{L_2(2T+1)\ge 2QN^{1/3}\}\Big)  \ge \Pr_{2}^{(-QN^{1/3},-(Q+1)N^{1/3})}\big(\m{RP}_{2,Q}\big) \ge \phi_2,
	\end{align}
	for some $\phi_2>0$ free of $N$. As for the second term in r.h.s.~\eqref{2termss}, by translation invariance (Lemma \ref{obs1} \ref{traninv}) we have
	\begin{align*}
		\Pr_{2}^{\vec{u}}\big(L_1(2T-1)\le L_2(2T)-N^{1/3}\big)  & =\Pr_{2}^{(0,-\sqrt{2T+1})}\big(L_1(2T-1)\le L_2(2T)-N^{1/3}\big) \\ &  =\frac{\erw{2T+2}{(0,\sqrt{2T+1})}\left[\wsc\ind_{\se{1}{T-1}\le \se{2}{T-1}-N^{1/3}}\right]}{{\erw{2T+2}{(0,\sqrt{2T+1})}[\wsc]}},
	\end{align*}
	where the last equality follows from Lemma \ref{l:LSCrit} (recall the PRW law from Definition \ref{prb} and $\wsc$ from \eqref{defw}). Now by Corollary \eqref{corb}, $\erw{2T+2}{(0,\sqrt{2T+1})}[\wsc]\ge \Con/\sqrt{2T+2}$ for some absolute constant $\Con>0$. However on the event $\{\se{1}{T-1}\le \se{2}{T-1}-N^{1/3}\}$, $\wsc \le \exp(-e^{N^{1/3}})$. Thus, $$\Pr_{2}^{\vec{u}}\big(L_1(2T-1)\le L_2(2T)-N^{1/3}\big) \to 0$$ as $N\to \infty$. Hence, inserting \eqref{1termss} back in r.h.s.~\eqref{2termss}, we see that for all large enough $N$,
$$\Pr_{2}^{\vec{x}}(\m{R}_{2})\ge \phi_2-\Pr_{2}^{\vec{u}}\big(L_1(2T-1)\le L_2(2T)-N^{1/3}\big) \ge \frac12\phi_2.$$
	
	\bigskip
	
	Summarizing the above two cases, for all large enough $N$, \eqref{anoth} is lower bounded by some $\phi>0$ free of $N$.	 For the numerator in r.h.s.~\eqref{eq:clm2} observe that as $\mathcal{R}_p\le 1$, by definition of the event  $\til{\m{C}}_{}(p,\beta)$, we have	$\ind_{\neg\til{\m{C}}_{}(p,\beta)}\cdot \mathcal{R}_p\cdot \mathcal{V}_p \le \beta$. Let us now choose $\beta=\phi \e$. Plugging these bounds back in r.h.s.~\eqref{eq:clm2} yields \eqref{eq:clm1}.
	\medskip
	
	\noindent\textbf{Step 5.} All that remains is to prove \eqref{eq:clm2}. We will do this for the $p=2$ case. The $p=1$ case is done analogously. Fix any $\vec{y}\in \R^2$, $\vec{z}\in \R^{2T+2}$ and define $\vec{w}\in [-\infty,\infty)^{2T+2}$ such that $w_i=-\infty$ for $i\le T$ and $w_i=z_i$ for $i>T$. Assume $(L_1\ll1,4T+3\rr,L_2\ll1,4T+4\rr) \sim \Pr_{\alpha_2}^{\vec{y},\vec{z};2,2T+2}$. Let $\mathcal{G}:=\sigma(L_i\ll 2T+i-2,4T+i+2\rr)_{i\in \ll1,2\rr}$. Fix any event $\m{F}$ measurable with respect to $\mathcal{G}$. Set $L_2(4T+1)=\infty$. We claim that (recall $W(a;b,c)$ from \eqref{def:wfn}, $\mathcal{V}_p$ from \eqref{eq:Vp}, and $\mathcal{R}_2$ from \eqref{eq:clm2})
	\begin{equation}
		\label{longiden}
		\begin{aligned}
			& \Ex_{\alpha_2}^{\vec{y},(-\infty)^{2T+2};2,2T+2}\left[\ind_{\m{F}} \cdot\prod_{j=1}^{2T+2}{ W(z_{j};L_2(2j+1),L_2(2j-1))}\right] \\ & = \Ex_{\alpha_2}^{\vec{y},(-\infty)^{2T+2};2,2T+2}\left[\ind_{\m{F}} \cdot \mathcal{R}_2 \cdot \mathcal{V}_2\prod_{j=1}^{2T+2}{ W(w_{j};L_2(2j+1),L_2(2j-1))} \right].
		\end{aligned}
	\end{equation}	
Assuming \eqref{longiden} we can finish the proof of \eqref{eq:clm2} (for $p=2$) via the following string of equalities:
\begin{align*}
	 & \Pr_{\alpha_2}^{\vec{y};\vec{z};2,2T+2}\bigg(\neg\til{\m{C}}(2,\beta)\bigg) \\ & =\frac{\Ex_{\alpha_2}^{\vec{y};(-\infty)^{2T+2};2,2T+2}\bigg[\ind_{\neg\til{\m{C}}(2,\beta)}\cdot \prod\limits_{j=1}^{2T+2} W(z_{j};L_2(2j+1),L_2(2j-1))\bigg]}{\Ex_{\alpha_2}^{\vec{y};(-\infty)^{2T+2};2,2T+2}\bigg[\prod\limits_{j=1}^{2T+2}{ W(z_{j};L_2(2j+1),L_2(2j-1))}\bigg]} \\ & = \frac{\Ex_{\alpha_2}^{\vec{y};(-\infty)^{2T+2};2,2T+2}\bigg[\ind_{\neg\til{\m{C}}(2,\beta)}\cdot \mathcal{R}_2 \cdot \mathcal{V}_2 \prod\limits_{j=1}^{2T+2} W(w_{j};L_2(2j+1),L_2(2j-1))\bigg]}{\Ex_{\alpha_2}^{\vec{y};(-\infty)^{2T+2};2,2T+2}\bigg[\mathcal{R}_2 \cdot \mathcal{V}_2\prod\limits_{j=1}^{2T+2}{ W(w_{j};L_2(2j+1),L_2(2j-1))}\bigg]} \\ & = \frac{\Ex_{\alpha_2}^{\vec{y};\vec{w};2,2T+2}\bigg[\ind_{\neg\til{\m{C}}(2,\beta)}\cdot \mathcal{R}_2 \cdot \mathcal{V}_2 \bigg]}{\Ex_{\alpha_2}^{\vec{y};\vec{w};2,2T+2}\bigg[\mathcal{R}_2 \cdot \mathcal{V}_2\bigg]}.
\end{align*}
Let us briefly explain the above equalities. The first equality is due to \eqref{e2gib} and \eqref{defvkt}. In the second equality we have applied \eqref{longiden} to the numerator and denominator by taking  $\m{F}=\neg\m{C}(2,\beta)$ and $\m{F}=\Omega$ (the full set, i.e., $\ind_{\m{F}}=1$) respectively. The last equality follows by applying \eqref{e2gib} and \eqref{defvkt} again. This proves \eqref{eq:clm2} modulo \eqref{longiden}.

To see why \eqref{longiden} holds, observe that
	\begin{align*}
		& \Ex_{\alpha_2}^{\vec{y},(-\infty)^{2T+2};2,2T+2}\left[\ind_{\m{F}} \cdot\prod_{j=1}^{2T+2}{ W(z_{j};L_2(2j+1),L_2(2j-1))}\right] \\ & = \Ex_{\alpha_2}^{\vec{y},(-\infty)^{2T+2};2,2T+2}\left[\ind_{\m{F}} \cdot \mathcal{R}_2\prod_{j=T+1}^{2T+2}{ W(z_{j};L_2(2j+1),L_2(2j-1))} \right. \\ & \hspace{2.5cm} \left.\cdot\Ex_{\alpha_2}^{\vec{y},(-\infty)^{2T+2};2,2T+2}\left[\mathcal{R}_2\prod_{j=1}^{T-1}{ W(z_{j};L_2(2j+1),L_2(2j-1))} \mid \mathcal{G}\right]\right].
	\end{align*}
	By the Gibbs property, the inner expectation, when viewed as a random variable, is almost surely equal to $\mathcal{V}_2$ defined in \eqref{defvkt}. On the other hand, we have
	$$\prod_{j=T+1}^{2T+2}{ W(z_{j};L_2(2j+1),L_2(2j-1))} = \prod_{j=1}^{2T+2} W(w_{j};L_2(2j+1),L_2(2j-1)).$$
	Combining the above two observations, leads to \eqref{longiden}.

	\subsection{Proof of Proposition \ref{pimc}} \label{sec:pimc} As with the proof of Propositions \ref{lem:ep} and \ref{l:rpass}, we divide the proof of Proposition \ref{pimc} into two parts depending on $p=1$ (critical) or $p=2$ (supercritical).
	
	\begin{proof}[Proof of Proposition \ref{pimc} in the $p=1$ case (critical phase)] Fix any $T\in \ll k_1N^{\frac23},k_2N^{\frac23}\rr$. Fix any $\delta \le \gamma/{6\kappa}$. We recall the representation of \btf\ law in $p=1$ case from Lemma \ref{l:LCrit}. Consider the Brownian motion $B_1$ obtained via the KMT coupling that satisfies \eqref{coupl}. Define
		\begin{align*}
			\m{A}_{\delta} & :=\bigg\{\sup_{\substack{i_1,i_2\in \ll1,T\rr \\ |i_1-i_2|\le \frac{\delta}{2}N^{2/3}}} |L_1(2i_1-1)-L_1(2i_2-1)| \ge \tfrac16\gamma N^{\frac13}\bigg\}, \\
			\m{B}_{}(k) & :=\left\{ |L_1(2k-1)-L_1(2k)|, |L_1(2k+1)-L_1(2k)| \ge  \tfrac13\gamma N^{\frac13} \right\}.
		\end{align*}
		Fix any $x\in \R$ and write $\Pr_1:=\Pr_{\alpha_1}^{x,(-\infty)^{T};1,T}$. Observe that by union bound we have
		\begin{align}\label{unbf}
			\Pr_1\left(\omega_{\delta}^N(L_1, \ll1,2T-1\rr) \ge \gamma 	N^{1/3}\right)\le \Pr_1(\m{A}_{\delta})+\sum_{k=1}^{T-1}\Pr_1(\neg\m{A}_{\delta}\cap \m{B}_{}(k)).
		\end{align}
		We now proceed to bound each of the above term separately. For the first term, by \eqref{def:LCrit2} and \eqref{coupl}, in view of the estimate in \eqref{logu} we have for all large enough $N$ that
		\begin{align*}
			\Pr_1(\m{A}_{\delta}) & \le \Pr_1\bigg(\sup_{\substack{i_1,i_2\in \ll1,T\rr \\ |i_1-i_2|\le \frac{\delta}{2}N^{2/3}}}\sigma|B_1{(T-i_1-1)}-B_1{(T-i_2-1)}| \ge \tfrac{\gamma}{12}N^{1/3}-2\Con \log T\bigg) \\ & \le \Pr_1\bigg(\sup_{\substack{i_1,i_2\in \ll1,T\rr \\ |i_1-i_2|\le \frac{\delta}{2}N^{2/3}}}\sigma|B_1{(i_1)}-B_1{(i_2)}| \ge \tfrac{\gamma}{24}N^{1/3}\bigg).
		\end{align*}
		By modulus of continuity of Brownian motion, the right-hand side of the above equation can be made smaller than $\frac12\e$ by choosing $\delta$ small enough depending on $\mu,\theta,\gamma,k_1,k_2$. For the second term in r.h.s.~\eqref{unbf} we use Lemma \ref{qlemma} to get
		\begin{align*}
			\Pr_1(\neg\m{A}_{\delta}\cap \m{B}_{}(k)) \le \Con e^{-\tfrac1\Con \gamma N^{\frac13}}.
		\end{align*}
		Plugging the bounds back in \eqref{unbf} and taking $N$ large enough we get the desired result.
	\end{proof}
	
	\begin{proof}[Proof of Proposition \ref{pimc} in the $p=2$ case (supercritical phase)] Fix any $(x_1,x_2) \in I_{2,M}$, and $T\in \ll k_1N^{\frac23},k_2N^{\frac23}\rr$. Set $n:=T$. Recall the law paired random walk and weighted paired random walk defined in Definition \ref{prb}.  We recall from Lemma \ref{l:LSCrit} that the \btf\ law $\Pr_{\alpha_2}^{(x_1,x_2),(-\infty)^T;2,T}$ is equal to $\wprw{n}{(x_1,x_2)}$ for the supercritical case. At this point is it also good to recall the random walk measures from Definition \ref{def:rws}.
		
A key to this proof is the following estimate for $\erw{n}{(x_1,x_2)}[\wsc]$ (recall $\wsc$ from \eqref{defw}).
		
		\begin{lemma}\label{crude} There exist constants $\Con_1, \Con_2>0$, depending on $M$, such that for all $(x_1,x_2)\in I_{2,M}$ we have
			\begin{align}\label{eq:crude}
				\erw{n}{(x_1,x_2)}[\wsc] \ge \tfrac1{\sqrt{n}}{\Con_1^{-1}}\cdot\pr{\lfloor n/4 \rfloor}{(x_1,x_2)}{}(\til{\ni}) \ge {\Con_2^{-1}}e^{-\Con_2 (\log n)^{5/4}},
			\end{align}
			where $\til{\ni}:=\{S_1(k) \ge S_2(k) \mbox{ for all } k\in \ll 1,n/4\rr\}$ and $S_1,S_2$ are random walks under the law $\pr{\lfloor n/4 \rfloor}{(x_1,x_2)}{}$.
		\end{lemma}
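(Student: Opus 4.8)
The plan is to run, uniformly over the window $(x_1,x_2)\in I_{2,M}$, the lower‑bound argument for $\erw{n}{\cdot}[\wsc]$ whose special case $(x_1,x_2)=(0,-\sqrt n)$ is sketched in the introduction and recorded in Corollary \ref{corb}. The first inequality in \eqref{eq:crude} comes out of that argument essentially verbatim; the genuinely new input, needed for the second inequality, is a uniform lower bound on $\pr{\lfloor n/4\rfloor}{(x_1,x_2)}{}(\til\ni)$ that is robust to the mild mis‑ordering $x_1-x_2\ge-(\log n)^{7/6}$ of the endpoints. Throughout one uses that $n=T\asymp N^{2/3}$, so $|x_i|\le\tfrac{M}{2}N^{1/3}=O(\sqrt n)$, which is what makes the entrance‑law estimates of Lemma \ref{as:el} and the comparison estimates of Lemma \ref{lem:compare} applicable.

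First I would apply Lemma \ref{wgap} to get $\wsc\ge a_\beta\ind_{\m{Gap}_\beta\cap\{|\ise-\iise|\le\beta^{-1}\}}$, reducing the bound to estimating a $\prw{n}{(x_1,x_2)}$‑probability. I would then intersect with a good entrance event $\m{E}:=\{1\le\ise-\iise\le2\}\cap\{|\ise|\vee|\iise|\le C\sqrt n\}$, which has $\prw{n}{(x_1,x_2)}$‑probability bounded below by a constant depending only on $M$ (by \eqref{t1} and \eqref{t3} of Lemma \ref{as:el}), and condition on $\mathcal F=\sigma(\ise,\iise)$, under which the $\m{PRW}$ law is a product of two independent $n$‑step random bridges from $(\ise,\iise)$ to $(x_1,x_2)$. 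On $\m{E}$ the endpoint separations are $O(\sqrt n)$, so the Comparison Lemma \ref{lem:compare} lets me replace these bridges, up to a multiplicative constant, by two independent $(n;\lfloor n/4\rfloor,\lfloor n/4\rfloor)$‑modified random bridges $\til\Pr$, after also restricting to the event $\m{E}'$ that the two quarter‑point values are $O(\sqrt n)$ apart. Writing $\til\Pr(\m{Gap}_\beta\cap\m{E}')=\til\Pr(\ni)\,\til\Pr(\m{Gap}_\beta\cap\m{E}'\mid\ni)$, the uniform estimates of Appendix \ref{app2} (combining Lemma \ref{l:class} and Corollary \ref{l:niexp}, as in \eqref{e3r}) give $\til\Pr(\ni)\ge\Con^{-1}n^{-1/2}\,\pr{\lfloor n/4\rfloor}{(x_1,x_2)}{}(\til\ni)$, since the last quarter of the modified bridge is precisely a pair of length‑$\lfloor n/4\rfloor$ random walks started at $(x_1,x_2)$; and Lemma \ref{l:gapev} together with the modulus‑of‑continuity estimate for non‑intersecting random walks (Lemma \ref{mret}) give $\til\Pr(\m{Gap}_\beta\cap\m{E}'\mid\ni)\ge\Con^{-1}$ for $\beta$ small. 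Taking expectation over $\mathcal F$ and using $\prw{n}{(x_1,x_2)}(\m{E})\ge\Con^{-1}$ assembles the first inequality in \eqref{eq:crude}.

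For the second inequality I would lower bound $\pr{\lfloor n/4\rfloor}{(x_1,x_2)}{}(\til\ni)$ directly. The difference walk $D_k=S_1(k)-S_2(k)$ starts at $x_1-x_2\ge-(\log n)^{7/6}$, so I would force $D$ to make one increment of size at least $(\log n)^{7/6}$ — costing at most $e^{-\Con(\log n)^{7/6}}$ by the exponential increment tails of Lemma \ref{as:in}(4) and a ballot‑type estimate — which brings the pair into the correct order at an $O(1)$ separation, and then to stay ordered over the remaining $\asymp n/4$ steps, costing $\asymp n^{-1/2}$ by the classical (and, through Appendix \ref{app2}, uniform) non‑intersection asymptotics for random walks with $O(1)$‑separated, $O(\sqrt n)$‑scale starting points. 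Hence $\pr{\lfloor n/4\rfloor}{(x_1,x_2)}{}(\til\ni)\ge\Con^{-1}n^{-1/2}e^{-\Con(\log n)^{7/6}}$, so $n^{-1/2}\pr{\lfloor n/4\rfloor}{(x_1,x_2)}{}(\til\ni)\ge\Con^{-1}n^{-1}e^{-\Con(\log n)^{7/6}}\ge\Con^{-1}e^{-\Con(\log n)^{5/4}}$ for all large $n$, with room to spare (since $(\log n)^{5/4}$ eventually dominates $\log n+\Con(\log n)^{7/6}$). One point requiring care: as written $\til\ni$ constrains the index $k=1$, where the endpoints may already be out of order; this is harmless because the argument is run over $\ll2,\lfloor n/4\rfloor\rr$ — the convention already used for the non‑intersection events in \eqref{defnipab} and in Appendix \ref{app2} — the step $k=1$ being exactly the one spent on the uncrossing increment.

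The step I expect to be the main obstacle is the pair of conditional estimates $\til\Pr(\ni)\ge\Con^{-1}n^{-1/2}\pr{\lfloor n/4\rfloor}{(x_1,x_2)}{}(\til\ni)$ and $\til\Pr(\m{Gap}_\beta\cap\m{E}'\mid\ni)\ge\Con^{-1}$: these must hold \emph{uniformly} as $(\ise,\iise)$ ranges over the diffusive $O(\sqrt n)$ window carved out by $\m{E}$ and as $(x_1,x_2)$ ranges over $I_{2,M}$. Supplying that uniformity is precisely the purpose of the non‑intersecting‑random‑walk machinery of Appendix \ref{app2}, so the real work here is to check that the hypotheses of Lemmas \ref{l:class}, \ref{lem:compare}, \ref{l:gapev}, \ref{mret} and Corollary \ref{l:niexp} are met on the good event $\m{E}\cap\m{E}'$ and to keep track of which constants depend on $M$ versus which are universal.
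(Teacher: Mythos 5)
Your proposal is correct and follows essentially the same route as the paper's proof: Lemma \ref{wgap} reduces the bound to a $\m{PRW}$ probability of $\m{Gap}_\beta$ intersected with a good entrance event and a quarter-point tightness event, the Comparison Lemma \ref{lem:compare} passes to modified bridges, Corollary \ref{l:niexp} and Lemma \ref{l:class} extract the factor $n^{-1/2}\,\Pr^{\lfloor n/4\rfloor;(x_1,x_2)}(\til{\ni})$, and the second inequality is obtained by spending one increment of size $(\log n)^{7/6}$ to restore the ordering before invoking the classical $n^{-1/2}$ non-intersection bound. The only cosmetic differences are that the paper controls the quarter-point values under the non-intersection conditioning via the endpoint estimates of Lemma \ref{tailni} rather than the modulus-of-continuity Lemma \ref{mret}, and your remark about the index $k=1$ in $\til{\ni}$ correctly identifies the convention the paper implicitly adopts.
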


		Before proving Lemma \ref{crude} we complete the proof of Proposition \ref{pimc} in the following two steps.
		
		\medskip
		
		\noindent\textbf{Step 1.} Fix any $V,\gamma>0$.  Set $v=\gamma/\sqrt{k_2}, u=V/\sqrt{k_1}$, and $t=\lfloor 2\log\log n \rfloor$. Let $\mathcal{F}:=\sigma(\ise,\iise)$.
		Consider the events
		\begin{align*}
			\m{MC}_{\delta} & := \Big\{|\ise|+|\iise|\le u\sqrt{n}, \omega_{\delta}^N(\se{i}{\cdot}, \ll0,\tfrac{n}8\rr) \ge \tfrac16v \sqrt{n}, \mbox{ for }i=1,2\Big\}, \mbox{ for } \delta>0.
		\end{align*}
		We claim that given $\e>0$, there exists $\delta$ small enough and $N$ large enough such that
		\begin{align}\label{mcde}
			\wprw{n}{(x_1,x_2)}(\m{MC}_{\delta}) \le \e.
		\end{align}
		 We finish the proof of the lemma assuming \eqref{mcde}. Lemma \ref{l:LSCrit} implies that $\big(L_1(2j+1),L_2(2j+2)\big)_{j=0}^{n}$ is distributed as $\m{WPRW}$. Write $L_2^{\m{even}}(k):=L_2(2k)$ and $\Pr_2:=\Pr_{\alpha_2}^{\vec{x},(-\infty)^{T};2,T}$. Then  \eqref{mcde} implies
		\begin{align}\label{mcde2}
			\Pr_2\big(\{|L_1(1)|+|L_2(2)|\le VN^{1/3}\}\cap\m{A}\big) \le \e, \quad \m{A}:=\Big\{ \omega_{\delta}^N(L_2^{\m{even}}, \ll 1,T/8\rr) \ge \tfrac16\gamma N^{1/3}\Big\}.
		\end{align}
	On the event $\neg \m{A}$ the increments of $L_2^{\m{even}}$ are well controlled. By Lemma \ref{l:LSCrit}, conditioned on the even points of $L_2$, the distribution of the odd points of $L_2$ are given by $\qo$-distributions defined in \eqref{qdist}. Once we have a bound on  the increments of $L_2^{\m{even}}$, we may invoke the tails estimates of $\qo$-distributions from Lemma \ref{qlemma} to control increments of $L_2$. In particular, due to Lemma \ref{qlemma},
		$$\ind_{\neg\m{A}}\cdot \Ex_2\left[\ind_{|L_2(2k+1)-L_2(2k)|,|L_2(2k+1)-L_2(2k+2)|\ge \frac13\gamma N^{1/3}}\mid \sigma\big(L_2^{\m{even}}\ll 1,T/8\rr\big)\right] \le \Con \exp(-\tfrac1\Con \gamma N^{1/3})$$
	for all $k\ge 1$. For the first point in $L_2$, i.e., $L_2(1)$, we recall from Lemma \ref{l:LSCrit} that $L_2(1)\sim X+L_2(2)$ where $X\sim G_{\alpha_2+\theta,1}$. The explicit form of $G_{\alpha_2+\theta,1}$ from \eqref{def:gwt} allow us to derive that	
	$$\Pr_2\bigg(|L_2(1)-L_2(2)| \ge \tfrac16\gamma N^{1/3} \mid \sigma\big(L_2^{\m{even}}\ll 1,T/8\rr\big)\bigg) \le \Con \exp(-\tfrac1\Con \gamma N^{1/3}).$$
		Thus, in view of \eqref{mcde2}, by the union bound
		\begin{align*}
			\Pr_2\left(|L_1(1)|+|L_2(2)| \le VN^{\frac13}, \omega_{\delta}^N(L_2, \ll1,T/4\rr) \ge \gamma 	N^{\frac13}\right) \le \e+\Con\cdot k_2N^{\frac23} \exp(-\tfrac1\Con \gamma N^{\frac13})
		\end{align*}
		which can be made arbitrarily small taking $N$ large enough. A similar argument shows that
		\begin{align*}
			\Pr_2\left(|L_1(1)|+|L_2(2)| \le VN^{\frac13}, \omega_{\delta}^N(L_1, \ll1,T/4-1\rr) \ge \gamma 	N^{\frac13}\right)
		\end{align*}
	can  be made arbitrarily small as well taking $N$ large enough.	This proves  Proposition \ref{pimc}.
		\medskip
		
		\noindent\textbf{Step 2.} In this step we prove \eqref{mcde}. First, recall that due to \eqref{wscint},
$$\wprw{n}{(x_1,x_2)}(\m{MC}_{\delta})=\frac{\erw{n}{(x_1,x_2)}[\wsc\ind_{\m{MC}_{\delta}}]}{\erw{n}{(x_1,x_2)}[\wsc]}$$
where $\wsc$ is defined in \eqref{defw}.
 We first define a few more necessary events.
		\begin{align*}
			\m{G}_{1} & := \{|\ise|+|\iise|\le u\sqrt{n}, |\ise-\iise|\le (\log n)^{3/2}\}, \\
			\m{G}_{2} & := \{|\ise|+|\iise|\le u\sqrt{n}, 1\le \ise-\iise\le 2\}.
		\end{align*}
		Recall the non-intersection event $\ni_p$ from \eqref{def:nip}. Let us temporarily set $t=\lceil 2\log\log n\rceil$. As $\wsc\le 1$, we write
		\begin{align*}
			\erw{n}{(x_1,x_2)}[\wsc\ind_{\m{MC}_{\delta}}] \le \underbrace{\erw{n}{(x_1,x_2)}[\wsc\ind_{\m{MC}_{\delta}\cap \m{G}_{1} \cap \ni_{t}}]}_{(\mathbf{I})}+\underbrace{{\erw{n}{(x_1,x_2)}[\wsc\ind_{\neg\m{NI}_{t}}]+\erw{n}{(x_1,x_2)}[\ind_{\neg\m{G}_{1}}]}}_{(\mathbf{II})}.
		\end{align*}
		For $(\mathbf{II})$, note that on $\neg \ni_t$, we have $\wsc\le e^{-e^t}\le e^{-(\log n)^2}$ and by Lemma \ref{as:el}, $\prw{n}{(x_1,x_2)}(\neg\m{G}_{1}) \le \Con e^{-\Con^{-1}(\log n)^{3/2}}$. Thus, $(\mathbf{II})\le \Con e^{-\Con^{-1}(\log n)^{3/2}}$. In view of Lemma \ref{crude}, $(\erw{n}{(x_1,x_2)}[\wsc])^{-1}\cdot (\mathbf{II}) \to 0$. For $(\mathbf{I})$, note that
		\begin{equation}
			\label{rty3}
			\begin{aligned}
				(\mathbf{I}) & = \erw{n}{(x_1,x_2)}[\wsc\ind_{\m{MC}_{\delta}\cap \m{G}_{1}  \cap \ni_{0}}]+\sum_{p=1}^{t} \erw{n}{(x_1,x_2)}[\wsc\ind_{\m{MC}_{\delta}\cap \m{G}_{1}  \cap \ni_{p}\cap \neg\ni_{p-1}}] \\ & \le  \sum_{p=0}^{t} \Con e^{-e^p} {\erw{n}{(x_1,x_2)}}\big[\ind_{\m{G}_{1}}\erw{n}{(x_1,x_2)}[\ind_{\m{MC}_{\delta}\cap\ni_p}\mid\mathcal{F}]\big].
			\end{aligned}
		\end{equation}
		Upon conditioning on $\mathcal{F}$, the conditional law is the law of two independent $n$-step random  bridges from $(\ise,\iise)$ to $(x_1,x_2)$. We may lift the bridges by $p$ units. The modulus of continuity event remains unchanged and $\ni_p$ event turns into $\ni$. Now we apply the comparison trick between random bridges and modified random bridges via Lemma \ref{lem:compare}. By Lemma \ref{lem:compare}, there exists a constant $\Con$ depending only on $u$ such that
		\begin{equation}
			\label{rty1}
		\begin{aligned}
			\ind_{\m{G}_1}\erw{n}{(x_1,x_2)}[\ind_{\m{MC}_{\delta}\cap\ni_p}\mid\mathcal{F}] & =\ind_{\m{G}_1}\Pr^{n;(\ise,\iise),(x_1,x_2)}(\m{MC}_{\delta}\cap \ni_p) \\ & =\ind_{\m{G}_1}\Pr^{n;(\ise+p,\iise),(x_1+p,x_2)}(\m{MC}_{\delta}\cap \ni) \\ & \le \Con \cdot \ind_{\m{G}_1}\til\Pr_{p}(\m{MC}_{\delta}\cap\ni) = \Con \cdot \ind_{\m{G}_1}\til\Pr_{p}(\m{MC}_{\delta}\mid \ni) \cdot \til\Pr_{p}(\ni)
		\end{aligned}
	\end{equation}
			where $\til{\Pr}_{p}$ denote the law of a $(n;\lfloor n/4\rfloor,\lfloor n/4\rfloor)$-modified random bridge defined in Definition \ref{def:mrb} starting from $(\ise+p,\iise)$ to $(x_1+p,x_2)$.  Observe that $\til\Pr_{p}(\ni)$ is $\mathcal{F}$-measurable. By Lemmas \ref{l:class} and \ref{l:nipp} (recall Corollary \ref{rm:con}),
		\begin{align}\label{rty2}
			\ind_{\m{G}_1}\cdot\til{\Pr}_{p}(\ni) \le \ind_{\m{G}_1}\cdot \tfrac\Con{\sqrt{n}} \cdot e^{\Con p} \cdot \max\{\ise-\iise,1\}\cdot \Pr^{\lfloor n/4\rfloor ,(x_1,x_2)}\big(\til{\ni}\big).
		\end{align}
		We plug the estimates from \eqref{rty1} and \eqref{rty2} back in \eqref{rty3}. Thus setting $\Con_3:=\sum_{r=1}^\infty 2\Con_1 \Con^3 e^{\Con r}e^{-e^r}$ (with $\Con_1$ coming from Lemma \ref{crude}) and utilizing the lower bound for $\erw{n}{(x_1,x_2)}[\wsc]$ from Lemma \ref{crude}, we have
		\begin{align*}
			(\erw{n}{(x_1,x_2)}[\wsc])^{-1}\cdot (\mathbf{I})\le \Con_3 \cdot  \erw{n}{(x_1,x_2)}\left[\ind_{\m{G}_{1}} \cdot \max\{\ise-\iise,1\} \cdot \sup_{p\in \ll 0,t\rr} \til{\Pr}_{p}(\m{MC}_{\delta}\mid \ni)\right]
		\end{align*}
		Now we claim that one can choose $\delta$ sufficiently small such that
		\begin{align}\label{efrfr}
			\erw{n}{(x_1,x_2)}\left[\ind_{\m{G}_{1}} \cdot \max\{\ise-\iise,1\} \cdot \sup_{p\in \ll 0,t\rr} \til{\Pr}_{p}(\m{MC}_{\delta}\mid \ni)\right]   \le \tfrac12\Con_1^{-1}\e.
		\end{align}
		We write $\m{G}_{1}=\m{G}_{1,M_2}\cup \til{\m{G}}_{1,M_2}$, where
		\begin{align*}
			\m{G}_{1,M_2}:=\{|\ise|+|\iise|\le u\sqrt{n}, |\ise-\iise|\le M_2\}, \quad \til{\m{G}}_{1,M_2}:=\m{G}_{1} \cap \neg \m{G}_{1,M_2}.
		\end{align*}
		Given the tail estimates, one can choose $M_2$ large enough such that
		\begin{align*}
			\erw{n}{(x_1,x_2)}\left[\ind_{\til{\m{G}}_{1,M_2}} \cdot \max\{\ise-\iise,1\} \right]   \le \tfrac14\Con_1^{-1}\e.
		\end{align*}
		This fixes our choice for $M_2$. Now note that the event $\m{MC}_{\delta}$ depends only on the first $\lfloor n/8\rfloor$ points of the two $(n;\lfloor n/4\rfloor ,\lfloor n/4\rfloor)$-modified random bridges. By definition, the first $\lfloor n/4\rfloor$ points of a $(n;\lfloor n/4\rfloor,\lfloor n/4\rfloor)$-modified random bridge is just a random walk. Thus, in view of Lemma \ref{mret} (recall Corollary \ref{rm:con}), one can then choose $\delta$ small enough and $N$ large enough such that on uniformly on $\m{G}_{1,M_2}$ we have $$\sup_{p\in \ll 0,t\rr}\til{\Pr}_p(\m{MC}_{\delta}\mid \ni) \le \tfrac14\Con_1^{-1} M_2^{-1} \e.$$ Thus,
		we have
		\begin{align*}
			\mbox{l.h.s.~\eqref{efrfr}} & \le \erw{n}{(x_1,x_2)}\Big[\ind_{\til{\m{G}}_{1,M_2}} \!\!\!\cdot \max\{\ise-\iise,1\} \Big] + M_2 \!\cdot\!\erw{n}{(x_1,x_2)}\Big[\ind_{\m{G}_{1,M_2}} \!\cdot\!\sup_{p\in \ll 0,t\rr} \til{\Pr}_{p}(\m{MC}_{\delta}\mid \ni)\Big] \\ & \le \tfrac14\Con_1^{-1}\e+M_2\!\cdot\!\tfrac14\Con_1^{-1}M_2^{-1}\e =\tfrac12\Con_1^{-1}\e,
		\end{align*}
		verifying the inequality in \eqref{efrfr}.
	\end{proof}
	
	\begin{proof}[Proof of Lemma \ref{crude}] Recall the definition of $(n,p,q)$-modified random bridge from Definition \ref{def:mrb}, in particular that $\tpr{(n;\lfloor n/4\rfloor,\lfloor n/4\rfloor)}{(a_1,a_2)}{,(x_1,x_2)}$ denotes the law of two independent $(n,\lfloor n/4\rfloor,\lfloor n/4\rfloor)$-modified random bridge started at $(a_1,a_2)$ and ended at $(x_1,x_2)$. We shall use the shorthand $\til{\Pr}^{(a_1,a_2)}$ for $\tpr{(n;\lfloor n/4\rfloor,\lfloor n/4\rfloor)}{(a_1,a_2)}{,(x_1,x_2)}$.  Also recall the notation $\pr{m}{(b_1,b_2)}{}$ from Definition \ref{def:rws} to denote the law of two independent random walks of length $m$ started at $(b_1,b_2)$ with same increment law as the modified bridges.
	
		\medskip
		
		Recall the events $\ni$ and $\m{Gap}_{\beta}$ from \eqref{def:nip} and \eqref{def:gp} respectively.  Invoking Lemma \ref{l:gapev} we first fix a $\beta=\beta(M) \le \frac12$ small enough so that it satisfies $$\til{\Pr}^{(a_1,a_2)}(\m{Gap}_{\beta}\mid \ni) \ge \tfrac34,$$ for all $|a_i|\le \sqrt{n}$ with $1\le a_1-a_2\le 2$. Next by Lemma \ref{tailni}, we fix $\xi=\xi(M)>0$ so that
$$
\pr{\lfloor n/4\rfloor}{(b_1,b_2)}{}\Big(|\se{1}{\lfloor n/4\rfloor}|,|\se{2}{\lfloor n/4\rfloor}| \le \xi \sqrt{n}\mid \til{\ni}\Big) \ge \sqrt{\tfrac34}
$$
 for all $|b_i|\le (M+1)\sqrt{n}$. Here $\til{\ni}:=\{S_1(k) \ge S_2(k) \mbox{ for all } k\in \ll 2,n/4\rr\}$ is the non-intersection event over $\lfloor n/4\rfloor$ points.
		
		We consider the following events
		\begin{align*}
			\m{G}_{3} & := \big\{ |\se{i}{1}| \le \sqrt{n} \mbox{ for }i=1,2, \  1\le \ise-\iise \le 2\big\},  \\
			\m{T}_{\xi} & := \big\{|\se{i}{\lfloor n/4\rfloor }|,|\se{i}{n-\lfloor n/4\rfloor }| \le \xi\sqrt{n} \mbox{ for }i=1,2\big\},
		\end{align*}
		where $\m{T}$ stands for tightness. Observe that by Lemma \ref{wgap} we have
		\begin{equation}
			\label{wr1}
			\begin{aligned}
				\erw{n}{(x_1,x_2)}[\wsc]  \ge \erw{n}{(x_1,x_2)}[\wsc\ind_{\m{Gap}_{\beta}\cap\m{G}_{3}\cap\m{T}_{\xi}}]  & \ge \tfrac1\Con \prw{n}{(x_1,x_2)}(\m{Gap}_{\beta}\cap\m{G}_{3}\cap\m{T}_{\xi}) \\ &  = \tfrac1\Con  \erw{n}{(x_1,x_2)}\left[\ind_{\m{G}_{3}}\erw{n}{(x_1,x_2)}[\ind_{\m{Gap}_{\beta},\m{T}_{\xi}}\mid\mathcal{F}]\right]
			\end{aligned}
		\end{equation}
		where $\mathcal{F}:=\sigma(\ise,\iise)$. Under the event $\m{G}_{3}$ and $\m{T}_{\xi}$ we may invoke Lemma \ref{lem:compare} to get
		\begin{align}\label{wr2}
			\ind_{\m{G}_3}\cdot	\erw{n}{(x_1,x_2)}[\ind_{\m{Gap}_{\beta}\cap\m{T}_{\xi}}\mid\mathcal{F}] \ge \Con^{-1} \cdot \ind_{\m{G}_3}\cdot \til{\Pr}^{(\se{1}{1},\se{1}{2})}(\m{Gap}_{\beta}\cap\m{T}_{\xi})
		\end{align}
		almost surely. By Corollary \ref{l:niexp} (recall Corollary \ref{rm:con}),
		\begin{equation}\label{wr3}
			\begin{aligned}
			& \ind_{\m{G}_3}\cdot	\til{\Pr}^{(\se{1}{1},\se{1}{2})}({\m{Gap}_{\beta}\cap\m{T}_{\xi}}) \\ & = \ind_{\m{G}_3}\cdot	\til{\Pr}^{(\se{1}{1},\se{1}{2})}({\m{Gap}_{\beta}\cap\m{T}_{\xi}}\mid \ni)\til{\Pr}_{(a_1,a_2)}(\ni) \\ & \ge \Con^{-1} \ind_{\m{G}_3}\cdot	\til{\Pr}^{(\se{1}{1},\se{1}{2})}({\m{Gap}_{\beta}\cap\m{T}_{\xi}}\mid \ni) \cdot \pr{\lfloor n/4 \rfloor}{(\se{1}{1},\se{1}{2})}{}(\til{\ni}) \pr{\lfloor n/4 \rfloor}{(x_1,x_2)}{}(\til{\ni}).
			\end{aligned}
		\end{equation}
		By our choice of $\beta$ and $\xi$, we have $\ind_{\m{G}_3}\cdot	\til{\Pr}^{(\se{1}{1},\se{1}{2})}({\m{Gap}_{\beta},\m{T}_{\xi}}\mid\ni) \ge \frac12\ind_{\m{G}_3}$ almost surely.	By Lemma \ref{l:class} (recall Corollary \ref{rm:con}), we have $\ind_{\m{G}_3}\cdot	\pr{\lfloor n/4 \rfloor}{(\se{1}{1},\se{1}{2})}{}(\til{\ni}) \ge \frac{\Con^{-1}}{\sqrt{n}}$ almost surely. Thus combining \eqref{wr1}, \eqref{wr2}, and \eqref{wr3} we have
		\begin{align*}
			\erw{n}{(x_1,x_2)}[\wsc] \ge \tfrac{1}{\sqrt{n}}\Con^{-1}\cdot \pr{\lfloor n/4 \rfloor}{(x_1,x_2)}{}(\til{\ni}) \cdot \prw{n}{(x_1,x_2)}(\m{G}_{3}).
		\end{align*}
		By Lemma \ref{as:el} (\eqref{t3} in particular), $\prw{n}{(x_1,x_2)}(\m{G}_{3}) \ge \Con^{-1}$. Plugging this back in the above equation we get the first inequality in \eqref{eq:crude}.	For the second inequality, we consider the event:
		\begin{align*}
			\m{G}_{4} & := \{|\se{1}{2}-x_1|\le 1,\,\, |\se{2}{2}-\min\{x_1-3,x_2\}| \le 1\}.
		\end{align*}
		Observe that
		\begin{align*}
			\pr{\lfloor n/4 \rfloor}{(x_1,x_2)}{}(\til{\ni}) \ge \pr{\lfloor n/4 \rfloor}{(x_1,x_2)}{}\bigg(\m{G}_{4} \cap \{S_1(j) \ge S_2(j) \mbox{ for all }j\in \ll3,n/4\rr\}\bigg).
		\end{align*}
		By the tail bounds of the increments from Lemma \ref{tailf}, and given the condition $x_1-x_2\ge -(\log N)^{7/6}$, we have $\pr{\lfloor n/4 \rfloor}{(x_1,x_2)}{}(\m{G}_{4}) \ge  \Con^{-1}\exp(-\Con (\log n)^{7/6})$ (recall $n \ge k_1N^{2/3}-1$). Furthermore, on $\m{G}_{4}$ we must have $\se{1}{2}\ge \se{2}{2}$. By Lemma \ref{l:class} (recall Corollary \ref{rm:con}), we have $$\pr{\lfloor n/4 \rfloor-1}{(a_1,a_2)}{}\big(S_1(j) \ge S_2(j) \mbox{ for all }j\in \ll2,n/4-1\rr \big) \ge \Con^{-1}/\sqrt{n}$$ for all $a_1\ge a_2$. Thus we have
		$$\pr{\lfloor n/4 \rfloor}{(x_1,x_2)}{}\bigg(\m{G}_{4} \cap \{S_1(j) \ge S_2(j) \mbox{ for all }j\in \ll3,n/4\rr\}\bigg) \ge \Con^{-1}\exp(-\Con (\log n)^{7/6})\cdot \tfrac{1}{\sqrt{n}}.$$
		Adjusting the constant we get the second inequality in \eqref{eq:crude}. 	\end{proof}

\appendix

\section{Stochastic monotonicity}\label{appc}
The goal of this section is to prove the stochastic monotonicity of $\hslg$ Gibbs measure (Proposition \ref{p:gmc}). Let $\Lambda= \{(i,j) : k_1\le i\le k_2, a_i\le j\le b_{i}\}$. Let $w_1,\ldots,w_{|\Lambda|}$ be the enumeration of points in $\Lambda$ in the lexicographic order. Set $\Lambda_r=\{w_1,w_2,\ldots,w_r\}$, so that $\Lambda_{|\Lambda|}=\Lambda$. Let $E_r:= E(\Lambda_r\cup \partial\Lambda_r)$ (the edges in $\Z^2$ connecting points in $\Lambda_r\cup\partial\Lambda_r$), and, recalling the weights $W_e$ from \eqref{def:wfn}, let
\begin{equation}\label{eq:Hrden}
	H_r(x;(u_v)_{v\in \partial \Lambda_r}):=\int_{\R^{|\Lambda_{r-1}|}} \prod_{e=\{v_1\to v_2\}\in E_r} W_e(u_{v_1}-u_{v_2})\prod_{v\in \Lambda_{r-1}}du_v,
\end{equation}
where $u_{w_r}=x$.  The proof of Proposition \ref{p:gmc} relies on the following technical lemma.

\begin{lemma}\label{lem45} Fix $r\in \ll 1,|\Lambda|\rr$. For each $v\in \partial \Lambda_r$, fix any $u_v,u_v'\in \R$ with $u_v\le u_v'$. For all $s\ge t$
	\begin{align}\label{c1}
		H_r\big(s;(u_v)_{v\in \partial \Lambda_r}\big)H_r\big(t;(u_v')_{v\in \partial \Lambda_r}\big) \le H_r\big(s;(u_v')_{v\in \partial \Lambda_r}\big)H_r\big(t;(u_v)_{v\in \partial \Lambda_r}\big)
	\end{align}
\end{lemma}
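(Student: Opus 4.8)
The plan is to derive \eqref{c1} from the fact that the integrand defining $H_r$ is log-supermodular (satisfies the FKG lattice condition) jointly in all of its arguments, combined with the standard principle that log-supermodularity is preserved under marginalization. Throughout, by Lemma \ref{b2} the quantities $H_r$ are finite, and being integrals of strictly positive continuous integrands they are also strictly positive, so nothing below is ill-posed. The only model-specific input is the log-concavity of the three edge-weight types; everything else is formal.

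\emph{Steps 1--2: the integrand is log-supermodular.} By \eqref{def:wfn}, each $W_e$ is one of $x\mapsto\exp(\vartheta x-e^x)$, $x\mapsto\exp(-e^x)$, or $x\mapsto\exp(-\alpha x)$, and in all three cases $(\log W_e)''\le 0$ on $\R$, i.e.\ $W_e$ is log-concave. Consequently, for an edge $e=\{v_1\to v_2\}$ the map $(a,b)\mapsto W_e(a-b)$ is log-supermodular on $\R^2$ with the coordinatewise order: the defining inequality is a trivial equality unless the two points are incomparable, and in that case the pair of meet/join arguments $\{a\vee a'-b\wedge b',\,a\wedge a'-b\vee b'\}$ has the same sum as but is no more dispersed than $\{a-b,\,a'-b'\}$, so log-concavity of $W_e$ gives the inequality. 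Now set $\Phi\big(x,(u_v)_{v\in\partial\Lambda_r},(u_v)_{v\in\Lambda_{r-1}}\big):=\prod_{e=\{v_1\to v_2\}\in E_r}W_e(u_{v_1}-u_{v_2})$, a nonnegative function on the lattice $\R^{1+|\partial\Lambda_r|+|\Lambda_{r-1}|}$ with the coordinatewise order, the first coordinate being $x=u_{w_r}$; since $\Lambda_r=\Lambda_{r-1}\cup\{w_r\}$, every edge of $E_r$ has both endpoints among these coordinates. Each factor, regarded as a function of all coordinates that depends only on two of them, is log-supermodular by the above, and a finite product of log-supermodular functions is log-supermodular; hence $\Phi$ is log-supermodular.

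\emph{Step 3: marginalization and conclusion.} By the Ahlswede--Daykin four-functions theorem in the form asserting that marginalization of a log-supermodular function over a sub-family of totally ordered coordinates preserves log-supermodularity (applied to the discrete approximations of $\Phi$ obtained by restricting each of the $|\Lambda_{r-1}|$ integration coordinates to a fine grid, and then refining the grid, the passage to the limit being justified by the finiteness in Lemma \ref{b2} and the continuity of $\Phi$), the function
\[
(x,(u_v)_{v\in\partial\Lambda_r})\ \longmapsto\ H_r\big(x;(u_v)_{v\in\partial\Lambda_r}\big)=\int_{\R^{|\Lambda_{r-1}|}}\Phi\big(x,(u_v)_{v\in\partial\Lambda_r},(u_v)_{v\in\Lambda_{r-1}}\big)\prod_{v\in\Lambda_{r-1}}du_v
\]
is log-supermodular on $\R^{1+|\partial\Lambda_r|}$. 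Applying this to the points $p=(s,(u_v)_{v\in\partial\Lambda_r})$ and $q=(t,(u'_v)_{v\in\partial\Lambda_r})$, for which $p\vee q=(s,(u'_v)_v)$ and $p\wedge q=(t,(u_v)_v)$ by the hypotheses $s\ge t$ and $u_v\le u'_v$, yields $H_r(p\vee q)\,H_r(p\wedge q)\ge H_r(p)\,H_r(q)$, which is exactly \eqref{c1}.

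The main obstacle is the marginalization step: rigorously passing from the (elementary, finite-lattice) Ahlswede--Daykin inequality to the continuum setting here, i.e.\ controlling the grid limits uniformly using the integrability provided by Lemma \ref{b2}. The remaining ingredients are the routine log-concavity check for the three edge weights and the purely formal fact that products of pairwise-log-supermodular functions are log-supermodular. One could alternatively integrate the coordinates of $\Lambda_{r-1}$ one at a time, reducing to marginalization over a single totally ordered coordinate (a Chebyshev-type rearrangement inequality), but the same limiting issue reappears there.
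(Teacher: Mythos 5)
Your proof is correct, and it reaches \eqref{c1} by a genuinely different route from the paper. The paper proceeds by induction on $r$: it peels off the vertex $w_r$, isolates the single blue edge weight $F_r$ connecting $w_r$ to $w_r+(0,1)$ via the factorization $H_r = h_r\cdot F_r$, and proves the two-point inequality for $h_{r+1}=d_r\cdot\int h_r F_r$ by splitting the double integral over $\{x<y\}$ and $\{y<x\}$ and writing the difference of the two sides as $\int_{x<y}(AB-CD)(XY-WZ)\le 0$, a Chebyshev-type correlation argument that exploits the product structure $h_r(y;\cdot)F_r(x-y)$ of the integrand. You instead observe that the \emph{full} integrand is MTP$_2$ (log-supermodular) in all coordinates jointly --- which rests on exactly the same elementary inequality for log-concave $W_e$ as the paper's \eqref{pineq} --- and then invoke preservation of log-supermodularity under marginalization. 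What your approach buys is generality and brevity: no induction on the vertex enumeration and no use of the specific graph structure. What the paper's approach buys is that every step is a short explicit computation tailored to the factorized integrand. One remark: the ``main obstacle'' you identify (passing Ahlswede--Daykin from fine grids to the continuum) is not actually an obstacle and the discretization is unnecessary. The continuum statement you need is the Karlin--Rinott fact that marginals of MTP$_2$ densities are MTP$_2$, and its proof is a direct pointwise argument: integrating out one coordinate $z$ at a time, symmetrize over $z\le w$ versus $z\ge w$ and show $ab+cd\ge\alpha\beta+\gamma\delta$ for the eight relevant values of $f$, using $cd\ge\max(\alpha\beta,\gamma\delta)$, $abcd\ge\alpha\beta\gamma\delta$ (both instances of joint MTP$_2$) and the monotonicity of $t\mapsto t+P/t$ for $t\ge\sqrt{P}$; the resulting pointwise inequality integrates directly, with integrability supplied by Lemma \ref{b2}. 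If you include that two-point lemma explicitly (or cite Karlin--Rinott), your argument is complete as written.
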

We prove Lemma \ref{lem45} at the end of this section and now complete the proof of Proposition \ref{p:gmc}.
\begin{proof}[Proof of Proposition \ref{p:gmc}]
	Fix $r\in \ll 1,|\Lambda|\rr$. We first claim that for all boundary conditions $(u_v)_{v\in \partial\Lambda_r}$ and $(u_v')_{v\in \partial\Lambda_r}$ with $u_v\le u_v'$ for all $v\in \partial\Lambda_r$, and $s\in \R$,
	\begin{equation}\label{eq:PrLwr}
		\Pr\big(L(w_r)\leq s \mid L(v)=  u_v \textrm{ for all } v\in \partial\Lambda_r\big)\geq \Pr\big(L(w_r)\leq s \mid L(v)=  u_v' \textrm{ for all } v\in \partial\Lambda_r\big).
	\end{equation}
	To show this, observe that $H_r(x;(u_v)_{v\in\partial \Lambda_r})$ in \eqref{eq:Hrden}  is proportional to the conditional density at $x$ of $L(w_r)$ given $\big(L(v)\big)_{v\in \partial\Lambda_{r}} =(u_v)_{v\in\partial \Lambda_r}$. Thus,
	\begin{equation}\label{PrLwdef}
		\Pr\big(L(w_r)\leq s \mid L(v)=  u_v \textrm{ for all } v\in \partial\Lambda_r\big)= F_r\big(s;(u_v)_{v\in\partial \Lambda_r}\big) :=\frac{\int_{-\infty}^s H_r(x;(u_v)_{v\in \partial \Lambda_r})dx}{\int_{-\infty}^{\infty} H_r(x;(u_v)_{v\in \partial \Lambda_r})dx}
	\end{equation}
	To prove \eqref{eq:PrLwr} observe that owing to Lemma \ref{lem45}, the derivative of
	$$
	\log \int_{-\infty}^s H_r(x;(u_v)_{v\in \partial \Lambda_r})dx-\log\int_{-\infty}^s H_r(x;(u_v')_{v\in \partial \Lambda_r})dx.
	$$
	is non-positive for all $s$. This implies for $s'\geq s$ we have
	\begin{align*}
		\frac{\int_{-\infty}^s H_r(x;(u_v)_{v\in \partial \Lambda_r})dx}{\int_{-\infty}^s H_r(x;(u_v')_{v\in \partial \Lambda_r})dx} \ge \frac{\int_{-\infty}^{s'} H_r(x;(u_v)_{v\in \partial \Lambda_r})dx}{\int_{-\infty}^{s'} H_r(x;(u_v')_{v\in \partial \Lambda_r})dx}
	\end{align*}
	Taking $s'\to \infty$ and cross-multiplying yields the desired inequality \eqref{eq:PrLwr}, in light of \eqref{PrLwdef}.
	
	Given $(u_v)_{v\in \partial\Lambda}\in \R^{|\partial \Lambda|}$, we now define a sequence of random variables according to the following algorithm. Note that below, $x\leftarrow y$ means to assign the value $y$ to the variable $x$.
	
	\begin{algorithm}[H]
		\caption{Defining the random vectors}
		\begin{algorithmic}
			\STATE Generate $U_1,\ldots,U_{|\Lambda|}$ i.i.d.~random variables from $U[0,1]$
			\STATE $Y_{|\Lambda|} \leftarrow (u_v)_{v\in \partial\Lambda}$
			\STATE $r\leftarrow |\Lambda|$
			\WHILE{$r\ge 1$}
			\STATE $L(w_r;(u_w)_{v\in \partial\Lambda}) \leftarrow F_r^{-1}(U_r;Y_r)$
			\STATE $\til{u}_v \leftarrow u_v$ for all $v\in \partial\Lambda_{r-1}\cap\partial\Lambda_{r}$
			\STATE $\til{u}_{w_r}\leftarrow L(w_r;(u_w)_{v\in \partial\Lambda})$
			\STATE $Y_{r-1} \leftarrow (\til{u}_v)_{v\in \partial\Lambda_{r-1}}$
			\STATE $r\leftarrow r-1$
			\ENDWHILE
		\end{algorithmic}
	\end{algorithm}
	This defines a collection of random variables
	$L(w_i;(u_v)_{v\in \partial\Lambda})$ indexed by  $i\in\ll 1, |\Lambda|\rr$ and $(u_v)_{v\in \partial\Lambda}\in \R^{|\partial\Lambda|}$,
	all on the common probability space on which $U_1,\ldots,U_{|\Lambda|}$ are defined. It is clear from the definition that for each $(u_v)_{v\in \partial\Lambda}\in \R^{|\partial\Lambda|}$, the law of $\big(L(w_i;(u_v)_{v\in \partial\Lambda})\big)_{i\in \ll 1 ,|\Lambda|\rr}$ is given by the $\hslg$ Gibbs measure on the domain $\Lambda$ with boundary condition $(u_v)_{v\in \partial\Lambda}$. Take two boundary conditions $(u_v)_{v\in \partial\Lambda}$ and $(u_v')_{v\in \partial\Lambda}$ with $u_v\le u_v'$ for all $v\in \partial\Lambda$. As each $F_r$ is stochastically increasing with respect to the boundary condition, i.e., \eqref{eq:PrLwr}, sequentially we obtain that with probability 1 on our probability space $L(w_r;(u_v)_{v\in \partial\Lambda}) \le  L(w_r;(u_v')_{v\in \partial\Lambda})$ for all $r$, thus completing the proof.
\end{proof}

\begin{proof}[Proof of Lemma \ref{lem45}] Let us begin with a few pieces of notations. Fix any $1\le r\le |\Lambda|$. Set $e_r:=\{w_r\to (w_{r}+(0,1)),(w_{r}+(0,1))\to w_r\}\cap E_r$. In words, this is the directed blue edge (see Figure \ref{fig20} A) with $w_r$ as the left point of $e_r$.

	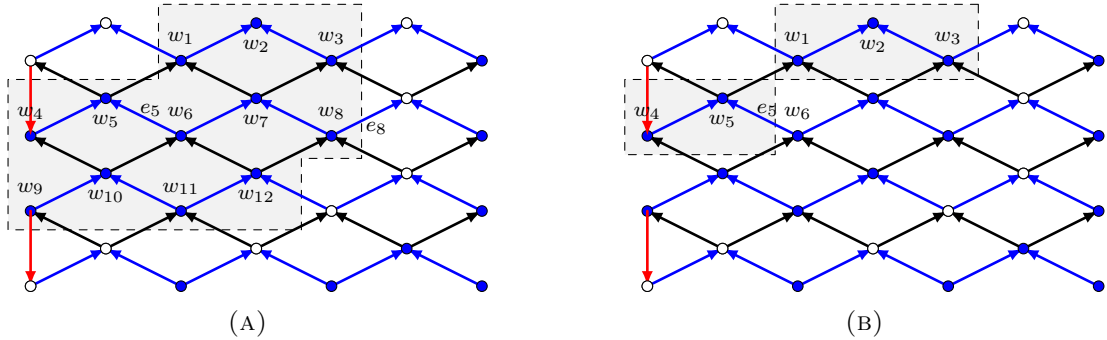
\begin{figure}[h!]
		\centering
		\begin{subfigure}[b]{0.45\textwidth}
			\centering
			\begin{tikzpicture}[line cap=round,line join=round,>=triangle 45,x=2cm,y=1cm]
				\draw[fill=gray!10,dashed] (1.7,0.25)--(0.35,0.25)--(0.35,-0.75)--(-0.65,-0.75)--(-0.65,-2.75)--(1.3,-2.75)--(1.3,-1.8)--(1.7,-1.8)--(1.7,0.25);
				\foreach \x in {0,1,2}
				{
					\draw[line width=1pt,blue,{Latex[length=2mm]}-]  (\x,0) -- (\x-0.5,-0.5);
					\draw[line width=1pt,blue,{Latex[length=2mm]}-] (\x,0) -- (\x+0.5,-0.5);
					\draw[line width=1pt,black,{Latex[length=2mm]}-] (\x-0.5,-0.5) -- (\x,-1);
					\draw[line width=1pt,black,{Latex[length=2mm]}-] (\x+0.5,-0.5) -- (\x,-1);
					\draw[line width=1pt,blue,{Latex[length=2mm]}-]  (\x,-1) -- (\x-0.5,-1.5);
					\draw[line width=1pt,blue,{Latex[length=2mm]}-] (\x,-1) -- (\x+0.5,-1.5);
					\draw[line width=1pt,black,{Latex[length=2mm]}-] (\x-0.5,-1.5) -- (\x,-2);
					\draw[line width=1pt,black,{Latex[length=2mm]}-] (\x+0.5,-1.5) -- (\x,-2);
					\draw[line width=1pt,blue,{Latex[length=2mm]}-]  (\x,-2) -- (\x-0.5,-2.5);
					\draw[line width=1pt,blue,{Latex[length=2mm]}-] (\x,-2) -- (\x+0.5,-2.5);
					\draw[line width=1pt,black,{Latex[length=2mm]}-] (\x-0.5,-2.5) -- (\x,-3);
					\draw[line width=1pt,black,{Latex[length=2mm]}-] (\x+0.5,-2.5) -- (\x,-3);
					\draw[line width=1pt,blue,{Latex[length=2mm]}-]  (\x,-3) -- (\x-0.5,-3.5);
					\draw[line width=1pt,blue,{Latex[length=2mm]}-] (\x,-3) -- (\x+0.5,-3.5);
				}
				\foreach \x in {0,1,2}
				{
					\draw [fill=blue] (\x,0) circle (2pt);
					\draw [fill=blue] (\x,-1) circle (2pt);
					\draw [fill=blue] (\x,-2) circle (2pt);
					\draw [fill=blue] (\x-0.5,-0.5) circle (2pt);
					\draw [fill=blue] (\x-0.5,-1.5) circle (2pt);
					\draw [fill=blue] (\x-0.5,-2.5) circle (2pt);
					\draw [fill=blue] (\x-0.5,-3.5) circle (2pt);
					\draw [fill=blue] (\x,-3) circle (2pt);
				}
				\draw[line width=1pt,red,{Latex[length=2mm]}-] (-0.5,-1.5) -- (-0.5,-0.5);
				\draw[line width=1pt,red,{Latex[length=2mm]}-] (-0.5,-3.5) -- (-0.5,-2.5);
				\draw [fill=blue] (2.5,-0.5) circle (2pt);
				\draw [fill=blue] (2.5,-1.5) circle (2pt);
				\draw [fill=blue] (2.5,-2.5) circle (2pt);
				\draw [fill=blue] (2.5,-3.5) circle (2pt);
				\draw [fill=white] (2,0) circle (2pt);
				\draw[fill=white] (2,-1) circle (2pt);
				\draw[fill=white] (2,-2) circle (2pt);
				\draw[fill=white] (1.5,-2.5) circle (2pt);
				\draw[fill=white] (0,-3) circle (2pt);
				\draw[fill=white] (1,-3) circle (2pt);
				\draw[fill=white] (0,0) circle (2pt);
				\draw[fill=white] (-0.5,-0.5) circle (2pt);
				\draw[fill=white] (-0.5,-3.5) circle (2pt);
				\begin{scriptsize}
					\node at (0.5,-0.2) {$w_1$};
					\node at (1,-0.3) {$w_2$};
					\node at (1.5,-0.2) {$w_3$};
					\node at (-0.5,-1.2) {$w_4$};
					\node at (0,-1.3) {$w_5$};
					\node at (0.5,-1.2) {$w_6$};
					\node at (1,-1.3) {$w_7$};
					\node at (1.5,-1.2) {$w_8$};
					\node at (-0.5,-2.2) {$w_{9}$};
					\node at (0,-2.3) {$w_{10}$};
					\node at (0.5,-2.2) {$w_{11}$};
					\node at (1,-2.3) {$w_{12}$};
					\node at (0.3,-1.15) {$e_5$};
					\node at (1.8,-1.4) {$e_8$};
				\end{scriptsize}
			\end{tikzpicture}
			\caption{}
		\end{subfigure}\qquad
		\begin{subfigure}[b]{0.45\textwidth}
			\centering
			\begin{tikzpicture}[line cap=round,line join=round,>=triangle 45,x=2cm,y=1cm]
				\draw[fill=gray!10,dashed] (1.7,0.25)--(0.35,0.25)--(0.35,-0.75)--(-0.65,-0.75)--(-0.65,-1.75)--(0.35,-1.75)--(0.35,-0.75)--(1.7,-0.75)--(1.7,0.25);
				\foreach \x in {0,1,2}
				{
					\draw[line width=1pt,blue,{Latex[length=2mm]}-]  (\x,0) -- (\x-0.5,-0.5);
					\draw[line width=1pt,blue,{Latex[length=2mm]}-] (\x,0) -- (\x+0.5,-0.5);
					\draw[line width=1pt,black,{Latex[length=2mm]}-] (\x-0.5,-0.5) -- (\x,-1);
					\draw[line width=1pt,black,{Latex[length=2mm]}-] (\x+0.5,-0.5) -- (\x,-1);
					\draw[line width=1pt,blue,{Latex[length=2mm]}-]  (\x,-1) -- (\x-0.5,-1.5);
					\draw[line width=1pt,blue,{Latex[length=2mm]}-] (\x,-1) -- (\x+0.5,-1.5);
					\draw[line width=1pt,black,{Latex[length=2mm]}-] (\x-0.5,-1.5) -- (\x,-2);
					\draw[line width=1pt,black,{Latex[length=2mm]}-] (\x+0.5,-1.5) -- (\x,-2);
					\draw[line width=1pt,blue,{Latex[length=2mm]}-]  (\x,-2) -- (\x-0.5,-2.5);
					\draw[line width=1pt,blue,{Latex[length=2mm]}-] (\x,-2) -- (\x+0.5,-2.5);
					\draw[line width=1pt,black,{Latex[length=2mm]}-] (\x-0.5,-2.5) -- (\x,-3);
					\draw[line width=1pt,black,{Latex[length=2mm]}-] (\x+0.5,-2.5) -- (\x,-3);
					\draw[line width=1pt,blue,{Latex[length=2mm]}-]  (\x,-3) -- (\x-0.5,-3.5);
					\draw[line width=1pt,blue,{Latex[length=2mm]}-] (\x,-3) -- (\x+0.5,-3.5);
				}
				\foreach \x in {0,1,2}
				{
					\draw [fill=blue] (\x,0) circle (2pt);
					\draw [fill=blue] (\x,-1) circle (2pt);
					\draw [fill=blue] (\x,-2) circle (2pt);
					\draw [fill=blue] (\x-0.5,-0.5) circle (2pt);
					\draw [fill=blue] (\x-0.5,-1.5) circle (2pt);
					\draw [fill=blue] (\x-0.5,-2.5) circle (2pt);
					\draw [fill=blue] (\x-0.5,-3.5) circle (2pt);
					\draw [fill=blue] (\x,-3) circle (2pt);
				}
				\draw[line width=1pt,red,{Latex[length=2mm]}-] (-0.5,-1.5) -- (-0.5,-0.5);
				\draw[line width=1pt,red,{Latex[length=2mm]}-] (-0.5,-3.5) -- (-0.5,-2.5);
				\draw [fill=blue] (2.5,-0.5) circle (2pt);
				\draw [fill=blue] (2.5,-1.5) circle (2pt);
				\draw [fill=blue] (2.5,-2.5) circle (2pt);
				\draw [fill=blue] (2.5,-3.5) circle (2pt);
				\draw [fill=white] (2,0) circle (2pt);
				\draw[fill=white] (2,-1) circle (2pt);
				\draw[fill=white] (2,-2) circle (2pt);
				\draw[fill=white] (1.5,-2.5) circle (2pt);
				\draw[fill=white] (0,-3) circle (2pt);
				\draw[fill=white] (1,-3) circle (2pt);
				\draw[fill=white] (0,0) circle (2pt);
				\draw[fill=white] (-0.5,-0.5) circle (2pt);
				\draw[fill=white] (-0.5,-3.5) circle (2pt);
				\begin{scriptsize}
					\node at (0.3,-1.15) {$e_5$};
					\node at (0.5,-0.2) {$w_1$};
					\node at (1,-0.3) {$w_2$};
					\node at (1.5,-0.2) {$w_3$};
					\node at (-0.5,-1.2) {$w_4$};
					\node at (0,-1.3) {$w_5$};
					\node at (0.5,-1.2) {$w_6$};
				\end{scriptsize}
			\end{tikzpicture}
			\caption{}
		\end{subfigure}
		
		\caption{(A)  A possible domain $\Lambda$ includes all the vertices in the shaded region. $w_i$'s are the vertices of $\Lambda$ enumerated in lexicographic order. Directed edges $e_r$ going are shown above for $r=5$ and $r=8$. These are the blue edges with $w_r$ as the left point of $e_r$. (B) The domain $\Lambda_5$ includes the vertices in the shaded region. $Q_5$ is the set of all red and black edges that have one vertex as $w_6$ and one vertex in $\partial\Lambda_6$. In the above figure, $Q_5$ is composed of two black edges that points toward $w_6$.}
		\label{fig20}
	\end{figure}

	Define
	\begin{align*}
		h_r\big(x;(u_v)_{v\in \partial\Lambda_r}\big):=\int_{\R^{|\Lambda_{r-1}|}} \prod_{e=\{v_1\to v_2\}\in E_r\setminus \{e_r\}} W_e(u_{v_1}-u_{v_2})\prod_{v\in \Lambda_{r-1}} du_v
	\end{align*}
	with the convention $u_{w_r}=x$. Observe that the difference between $H_r$ from  \eqref{eq:Hrden} and $h_r$ above is that the weight of the directed blue edge $e_r$ is included in the former but not in the latter. Note that the vertices of $e_r$ are not in $\Lambda_{r-1}$. Thus in the definition of $H_r$, the edge weight function corresponding to $e_r$ can be pulled out of the integrand leading to
	\begin{align}\label{Hrt}
		H_r\big(x;(u_v)_{v\in \partial\Lambda_r}\big)=h_r\big(x;(u_v)_{v\in \partial\Lambda_r}\big)\cdot F_r(u_{w_r+(0,1)}-x)
	\end{align}
	where $F_r(y)$ is the directed blue edge weight corresponding to $e_r$, i.e., $F_r(y):=e^{\vartheta_r y-e^{y}}$ or $F_r(y)=e^{-\vartheta_r y-e^{-y}}$ depending on the direction of the $e_r$ edge between $w_r$ and $w_r+(0,1)$. Here $\vartheta_r$ is the parameter linked to the blue edge $e_r$.
	
	With the above introduced notation, we now turn towards the proof of \eqref{c1}. Note that given a function $P(x)=e^{-R(x)}$ with $R$ being convex, we have
	\begin{align}\label{pineq}
		P(\delta-\beta)P(\gamma-\alpha)\ge P(\delta-\alpha)P(\gamma-\beta)
	\end{align}
	for all $\alpha,\beta,\gamma,\delta\in \R$ with $\alpha\le \beta$ and $\gamma\le \delta$. All our weight functions in \eqref{def:wfn} are of this type. In particular, this implies that \eqref{pineq} holds for $P=F_r$. In view of this and the relation \eqref{Hrt}, to show \eqref{c1} it suffices to show the same holds for $h_r$ replacing $H_r$, i.e.,
	\begin{align}\label{hrc1}
		h_r\big(s;(u_v)_{v\in \partial \Lambda_r}\big)h_r\big(t;(u_v')_{v\in \partial \Lambda_r}\big) \le h_r\big(s;(u_v')_{v\in \partial \Lambda_r}\big)h_r\big(t;(u_v)_{v\in \partial \Lambda_r}\big).
	\end{align}
	We shall prove \eqref{hrc1} via induction. Note that
	\begin{align*}
		h_1(x;(u_v)_{v\in \partial\{w_1\}})= \prod_{e=\{v_1\to v_2\}\in E_1\setminus \{e_1\}} W_e(u_{v_1}-u_{v_2})
	\end{align*}
	is the product of edge weights without any integration and with the convention $u_{w_1}=x$. Applying \eqref{pineq} to each such weight function yields \eqref{hrc1} for $r=1$.
	Observe the recursion relation for  $h_{r}$:
	\begin{align*}
		h_{r+1}\big(x;(u_v)_{v\in \partial\Lambda_{r+1}}\big)=d_r\big(x;(u_v)_{v\in \partial\Lambda_{r+1}}\big)\cdot\int_{\R} h_{r}\big(y;(u_v)_{v\in \partial\Lambda_{r}}\big)F_{r}(x-y)dy
	\end{align*}
	where by convention we set $u_{w_{r+1}}=x$ and where we define
	\begin{align*}
		d_r(x;(u_v)_{v\in \partial\Lambda_{r+1}})=\prod_{e=\{v_1\to v_2\}\in Q_r} W_e(u_{v_1}-u_{v_2})
	\end{align*}
	with $Q_r$ being the set of all red and black edges that have one vertex as $w_{r+1}$ and another vertex in $\partial \Lambda_{r+1}$, see Figure \ref{fig20} (B). Note that the blue edge $e_{r+1}$ between $w_{r+1}$ and $w_{r+1}+(0,1)$ is excluded from $Q_r$. Appealing to \eqref{pineq} again, we have
	\begin{align}\label{dr}
		d_r(s;(u_v)_{v\in \partial\Lambda_{r+1}})d_r(t;(u_v')_{v\in \partial\Lambda_{r+1}}) \le d_r(s;(u_v')_{v\in \partial\Lambda_{r+1}})d_r(t;(u_v)_{v\in \partial\Lambda_{r+1}})
	\end{align}
	for all $s\ge t$ and for all $u_v'\ge u_v$ with $v\in \partial\Lambda_{r+1}$. Under same conditions we claim that
	\begin{equation}
		\label{eqt}
		\begin{aligned}
			& \int_{\R^2} h_{r}(y;(u_v)_{v\in \partial\Lambda_{r}})F_{r}(s-y) h_{r}(x;(u_v')_{v\in \partial\Lambda_{r}})F_{r}(t-x)dx dy \\ & \hspace{3cm}\le \int_{\R^2} h_{r}(y;(u_v')_{v\in \partial\Lambda_{r}})F_{r}(s-y) h_{r}(x;(u_v)_{v\in \partial\Lambda_{r}})F_{r}(t-x)dx dy.
		\end{aligned}
	\end{equation}
	Combining the above inequality with \eqref{dr} we have \eqref{hrc1} completing the proof.	To see why \eqref{eqt} holds, we split the integrals in \eqref{eqt} over $\{x<y\}$ and $\{y<x\}$ and swap the $x,y$ labels in the region $\{y<x\}$ to get that \eqref{eqt} is equivalent to
	$$
	\int_{x<y} A(y)Y(y) B(x)X(x)+C(x)Z(x)D(y)W(y) \le\int_{x<y} D(y)Y(y)C(x)X(x)+B(x)Z(x)A(y)Z(y)
	$$
	where we let
	$$A(y)=h_r(y;(u_v)_{v\in \partial\Lambda_{r}}), B(x)=h_r(x;(u_v')_{v\in \partial\Lambda_{r}}), C(x)=h_r(x;(u_v)_{v\in \partial\Lambda_{r}}), D(y)=h_r(y;(u_v')_{v\in \partial\Lambda_{r}}),$$
	$$X(x)=F_{r}(t-x), Y(y)=F_{r}(s-y), W(y)=F_{r}(t-y), Z(x)=F_{r}(s-x).$$
	The integral above can be rewritten as $\int_{x<y}\big(A(y)B(x)-C(x)D(y)\big)\big(X(x)Y(y)-W(y)Z(x)\big)$ and thus it suffices to show for each $x\leq y$ the integrand is non-positive.
	By induction hypothesis, $A(y)B(x)\le C(x)D(y)$ for all $x\leq y$ and  since the weight function $F_r$ satisfies \eqref{pineq} (with $P=F_r$), we also have $X(x)Y(y)\ge W(y)Z(x)$. This proves \eqref{eqt}, completing the proof of the lemma.
\end{proof}

\section{Basic properties of log-gamma type random variables}

\label{app1}

In this section we collect some basic facts about log-gamma type random variables. Towards this end, for each $\theta,\kappa >0$, and $m\in \Z_{\ge 1}$ we consider the following function:
\begin{align*}
	H_{\theta,(-1)^m,\kappa}(y):=\frac{\kappa^{\theta}}{\Gamma(\theta)}\exp(\theta(-1)^m y-\kappa e^{(-1)^my}).
\end{align*}
It is plain to check $H$ is a valid probability density function. Observe that $H_{\theta,(-1)^m,1}\equiv G_{\theta,(-1)^m}$ where $G$ is defined in \eqref{def:gwt}. The following lemma collects some useful properties of $H$. Its proof follows via straightforward computations and is hence omitted.

\begin{lemma} Suppose $X \sim H_{\theta,1,\kappa}$. Then $-X \sim H_{\theta,-1,\kappa}$. For all $\alpha>-\theta$ we have $\Ex[e^{\alpha X}]= \frac{\Gamma(\alpha+\theta)}{\kappa^{\alpha}\Gamma(\theta)}$.
\end{lemma}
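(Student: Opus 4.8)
The plan is to handle the two assertions separately, both by direct computation from the explicit density
$$H_{\theta,1,\kappa}(y)=\frac{\kappa^{\theta}}{\Gamma(\theta)}\exp\bigl(\theta y-\kappa e^{y}\bigr),\qquad y\in\R.$$

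For the first assertion I would simply invoke the change-of-variables formula for the density of $-X$: if $X$ has density $H_{\theta,1,\kappa}$, then $-X$ has density $y\mapsto H_{\theta,1,\kappa}(-y)=\frac{\kappa^{\theta}}{\Gamma(\theta)}\exp(-\theta y-\kappa e^{-y})$, and this is exactly $H_{\theta,-1,\kappa}(y)$ since $(-1)^{m}=-1$ for odd $m$. This step is immediate.

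For the moment identity I would write
$$\Ex\bigl[e^{\alpha X}\bigr]=\frac{\kappa^{\theta}}{\Gamma(\theta)}\int_{\R}\exp\bigl((\alpha+\theta)y-\kappa e^{y}\bigr)\,dy$$
and substitute $u=\kappa e^{y}$ (so $dy=du/u$ and $e^{y}=u/\kappa$), which converts the integral into $\kappa^{-(\alpha+\theta)}\int_{0}^{\infty}u^{\alpha+\theta-1}e^{-u}\,du=\kappa^{-(\alpha+\theta)}\Gamma(\alpha+\theta)$. Collecting the prefactors gives $\Ex[e^{\alpha X}]=\Gamma(\alpha+\theta)/(\kappa^{\alpha}\Gamma(\theta))$, which is the claim.

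The only point that warrants any care — and the nearest thing to an obstacle — is the range of validity: the integral $\int_{0}^{\infty}u^{\alpha+\theta-1}e^{-u}\,du$ converges at $u=0$ precisely when $\alpha+\theta>0$, i.e.\ $\alpha>-\theta$, which is exactly the stated hypothesis, while convergence at $u=\infty$ is automatic; so the manipulation is justified. (Taking $\alpha=0$ along the way also records that $H_{\theta,1,\kappa}$ is a genuine probability density.) As an alternative one could note that $X\stackrel{(d)}{=}-\log Y$ with $Y\sim\operatorname{Gamma}(\theta)$ of rate $\kappa$ and read the identity off the standard formula for negative moments of a Gamma variable, but the direct substitution above is the cleanest route.
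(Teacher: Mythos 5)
Your proof is correct: the change of variables for $-X$ and the substitution $u=\kappa e^{y}$ reducing the exponential moment to $\Gamma(\alpha+\theta)$, with the convergence condition $\alpha+\theta>0$ matching the hypothesis, is exactly the straightforward computation the paper has in mind (it omits the proof, remarking only that it "follows via straightforward computations").
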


We next define generalized $\hslg$ $\Theta$-Gibbs measures in the same vein as $\hslg$ $\Theta$-Gibbs measures (see Definition \ref{def:hslggibbs}) but by considering the weight function
\begin{align*}
	\til{W}_{e}(x)=\begin{cases}
		\exp(\vartheta x-\kappa e^x) & \mbox{ if $e$ is \blue{\color{blue}($\vartheta$)}} \\
		\exp(-\gamma e^x) & \mbox{ if $e$ is \black} \\
		\exp(-\alpha x) & \mbox{ if $e$ is \red.}
	\end{cases}
\end{align*}
instead of $W$ defined in \eqref{def:wfn}. $\kappa=\gamma=1$ in above weights lead to the usual Gibbs measures. The following result ensures that generalized $\hslg$ $\Theta$-Gibbs measures (and hence the usual ones from Definition \ref{def:hslggibbs}) are well-defined.

\begin{lemma}\label{b2} Fix any $\gamma,\kappa>0,$ $\Theta:=\{\vartheta_{m,n}>0 : (m,n)\in \Z_{\ge 1}^2\}$ and $\alpha>-\min \{\vartheta_{m,n} : (m,n)\in \Z_{\ge 1}^2\}$. Recall the graph $G$ from Section \ref{sec:1.2} used in defining $\hslg$ $\Theta$-Gibbs measures. Given a domain $\Lambda$ and a boundary condition $\{u_{i,j} : (i,j)\in \partial\Lambda\}$, we have
	\begin{align*}
		\int_{\R^{|\Lambda|}}	\prod_{e=\{v_1\to v_2\}\in E(\Lambda\cup \partial\Lambda)} \til{W}_{e}(u_{v_1}-u_{v_2})\prod_{v\in \Lambda} du_v <\infty.
	\end{align*}
	Let us suppose $|u_{i,j}| \le R$ for all $(i,j)\in \partial\Lambda$. Let us assume $\Lambda= \mathcal{K}_{k,T}$ or $\mathcal{K}'_{k,T}$ defined in \eqref{def:kkt}. There exists a constant $\Con$ that depends only on $\gamma,\kappa,\theta,$ and $\alpha$ such that 	
	\begin{align*}
		\int_{\R^{|\Lambda|}}	\prod_{e=\{v_1\to v_2\}\in E(\Lambda\cup \partial\Lambda)} \til{W}_{e}(u_{v_1}-u_{v_2})\prod_{v\in \Lambda} du_v \le \Con^{kT+R}.
	\end{align*}
\end{lemma}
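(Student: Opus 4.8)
Both parts are proved by integrating out the free variables $u_v$, $v\in\Lambda$, one at a time, and controlling each resulting one‑dimensional integral. The reason the statement is phrased for arbitrary $\kappa,\gamma>0$ is purely cosmetic: every bound produced below is uniform over $\kappa,\gamma$ in a fixed compact subset of $(0,\infty)$, so the final constant depends only on $\theta,\alpha,\kappa,\gamma$. The elementary facts used repeatedly are: (i) $\exp(-\gamma e^x)\le 1$, so every black weight is $\le 1$; (ii) for all $\vartheta,\kappa>0$ and $d\in\R$, $\int_\R \exp(\vartheta(x-d)-\kappa e^{x-d})\,dx=\Gamma(\vartheta)\kappa^{-\vartheta}$ and likewise with $x-d$ replaced by $d-x$; and (iii) for all $c\in\R$, $\kappa_1,\kappa_2>0$, $a,b\in\R$, $\int_\R e^{cx}\exp(-\kappa_1 e^{x-a}-\kappa_2 e^{b-x})\,dx\le C(c,\kappa_1,\kappa_2)\,e^{|c|(|a|\vee|b|)}$, the two double‑exponential cutoffs confining the integrand to $x\in[b-O(1),a+O(1)]$ up to rapidly decaying tails (and when $c=0$ the bound is just $C$, uniformly in $a,b$).

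\textbf{Structural input.} Recall from Section \ref{sec:1.2} that along each curve the blue edges alternate orientation, so every odd‑index vertex is the tail of one or two blue edges and every even‑index vertex is the head of one or two blue edges; a leftmost vertex $(i,1)$ carries exactly one blue edge (to $(i,2)$), the red factor $e^{(-1)^i\alpha u_{i,1}}$, and the black edge $(i+1,2)\to(i,1)$ whose other endpoint $(i+1,2)$, being a graph‑neighbour of $(i,1)$, always lies in $\Lambda\cup\partial\Lambda$. Consequently: for any free vertex $v$ that is not a leftmost vertex, the blue weights incident to $v$ already make $\int(\cdot)\,du_v$ converge, the increasing direction of $u_v$ being killed by a double‑exponential term of a blue weight and the decreasing direction by a second blue weight (interior $v$) or by the monomial $e^{\pm\vartheta u_v}$ of the correct sign (endpoint $v$); for a leftmost $(i,1)$ with $i$ even the extra factor $e^{\alpha u_{i,1}}$ only \emph{strengthens} the decay as $u_{i,1}\to-\infty$ (here $\theta+\alpha>0$), so blue alone still suffices; and for $i$ odd one additionally retains the black weight $\exp(-\gamma e^{u_{i+1,2}-u_{i,1}})$, which supplies the double‑exponential decay as $u_{i,1}\to-\infty$ needed when $\alpha\ge\theta$. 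In every case $\int(\cdot)\,du_v$ has one of the forms (ii)--(iii).

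\textbf{Peeling and conclusion.} One now fixes an enumeration $w_1,\dots,w_{|\Lambda|}$ of $\Lambda$ (a lexicographic order as in Appendix \ref{appc}, modified so that each leftmost vertex $(i,1)$ is handled in tandem with its black‑neighbour $(i+1,2)$) and integrates out $w_{|\Lambda|},\dots,w_1$ in turn; at each step one bounds by $1$ every weight incident to the current vertex except the distinguished controlling weight(s) described above — legitimate for black weights by (i), while each discarded blue weight belongs to a vertex still to be processed and is controlled when its own turn arrives. The remaining one‑dimensional integrals are all finite by (ii)--(iii), which proves Part 1 for arbitrary $\Lambda$ and boundary data. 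For Part 2 one tracks constants: for $\Lambda\in\{\mathcal{K}_{k,T},\mathcal{K}'_{k,T}\}$ there are $|\Lambda|\le kT$ peeling steps, each contributing a multiplicative constant of the form $\Gamma(\vartheta)\kappa^{-\vartheta}$ or $C(\theta,\alpha,\kappa,\gamma)$ — together a factor $\Con^{kT}$ — while the only $e^{|c|\cdot(\text{vertex value})}$ prefactors arise from the $c\neq 0$ case of (iii), i.e.\ from the finitely many red‑tilted vertices, and these either get absorbed into the super‑exponential decay produced by a previously‑integrated black neighbour or, when the relevant neighbour is a boundary vertex, contribute a factor $e^{O(R)}$; assembling these yields the bound $\Con^{kT+R}$. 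The main obstacle is exactly this combinatorial bookkeeping — choosing the peeling order so that at the step where a leftmost vertex with $\alpha\ge\theta$ is removed the required double‑exponential decay is genuinely available, and so that the exponential‑in‑$R$ contributions do not compound beyond a single $\Con^R$; everything else is the routine log‑gamma computations (ii)--(iii) together with the structural dichotomy between odd (source) and even (sink) vertices along each curve.
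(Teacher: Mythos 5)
Your overall route coincides with the paper's (discard the black weights, except the one entering each odd leftmost vertex, and reduce everything to explicit one--dimensional log--gamma integrals), but there is a genuine gap at precisely the step you yourself call ``the main obstacle'': the bookkeeping of the exponential prefactors created at the red--tilted boundary vertices when $\alpha\ge\theta$. Your fact (iii), applied at an odd leftmost vertex $(i,1)$ (with $c=\theta-\alpha$, $a=u_{i,2}$, $b=u_{i+1,2}$), produces a leftover of the form $\Con\,e^{-\theta u_{i,2}}e^{(\alpha-\theta)(|u_{i,2}|\vee|u_{i+1,2}|)}$, i.e.\ a factor growing exponentially in \emph{both} directions of the neighbouring \emph{free} variables, not just at red vertices. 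After the blue edge $(i,1)\to(i,2)$ has been consumed and all interior black weights discarded, the only decay left at $(i,2)$ as $u_{i,2}\to+\infty$ is the rate-$\theta$ tilt of the blue edge to $(i,3)$; already for the first curve (where $(1,2)$ carries no black edge at all) and $\alpha\ge 3\theta$ the next one--dimensional integral in your peeling is infinite, and for $\theta\le\alpha<3\theta$ the factors keep propagating along the curve without any verification that the process closes with uniform constants. The claimed resolution --- that these prefactors are ``absorbed into the super-exponential decay produced by a previously-integrated black neighbour'' --- is not the operative mechanism and cannot be: the black weights that would supply such decay are exactly the ones you bounded by one, and a weight consumed at an earlier step is not available later. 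Note also that even your benign even-row case produces a leftover $e^{+\alpha u_{2i,2}}\Gamma(\theta+\alpha)\kappa^{-\theta-\alpha}$ sitting on a non-red vertex, contradicting the claim that exponential prefactors occur only at red-tilted vertices.

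What actually closes the argument (and is how the paper proceeds) is a \emph{pointwise pairing}, not an integration-order device: combine $e^{-\alpha u_{2i-1,1}}$ with the retained black weight $e^{-\gamma e^{u_{2i,2}-u_{2i-1,1}}}$ and use $\sup_{x}e^{\alpha x-\gamma e^{x}}\le \Con$ to replace the pair by $\Con\,e^{-\alpha u_{2i,2}}$; this transferred tilt is then \emph{exactly cancelled} by the opposite-sign red factor $e^{+\alpha u_{2i,1}}$ of the adjacent even curve together with its boundary blue edge, which merge into the integrable density $\propto e^{(\theta+\alpha)(u_{2i,1}-u_{2i,2})-\kappa e^{u_{2i,1}-u_{2i,2}}}$ (this is where $\alpha>-\theta$ enters). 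After this pointwise step the integrand factorizes over the curves into chains of normalized log--gamma densities (the paper's Type I/II/III pieces), each vertex contributes one constant, no exponential factor ever reaches an interior vertex, and the single surviving factor $e^{\pm\alpha c}$ with $c\in\partial\Lambda$ yields the $\Con^{R}$. Your peeling can be repaired along the same lines --- replace fact (iii) by the sharp bound $e^{-\gamma e^{b-x}}\le \Con\, e^{-\alpha(b-x)}$ and process each even leftmost vertex $(2i,1)$ \emph{before} its odd partner $(2i-1,1)$, so that the two $\pm\alpha$ tilts cancel at $(2i,2)$ before that vertex is integrated --- but as written the decisive cancellation between paired curves is absent, and without it the argument does not establish either finiteness or the bound $\Con^{kT+R}$ for large $\alpha$.
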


\begin{proof} We shall prove this lemma only for the homogeneous case, i.e.~$\vartheta_{m,n}\equiv \theta>0$. The general case is notationally more cumbersome but follows in an exact same manner as the homogeneous case. First note that for red edges $\{v_1\to v_2\}$ the corresponding weight function $W_e(u_{v_1}-u_{v_2})$ factors out as $e^{-\alpha u_{v_1}} \cdot e^{\alpha u_{v_2}}$. Hence they can be viewed as vertex weight functions. More specifically, at each vertex $(k,1)$ we can associate the vertex weight function $V_{k}(u):=e^{(-1)^{k}\alpha u}$. They replace the role of red edge weights. We denote this vertex weights as red circles in Figure \ref{fig:g1}. We now divide our analysis into two cases based on the value of $\alpha$.
	
	\medskip

	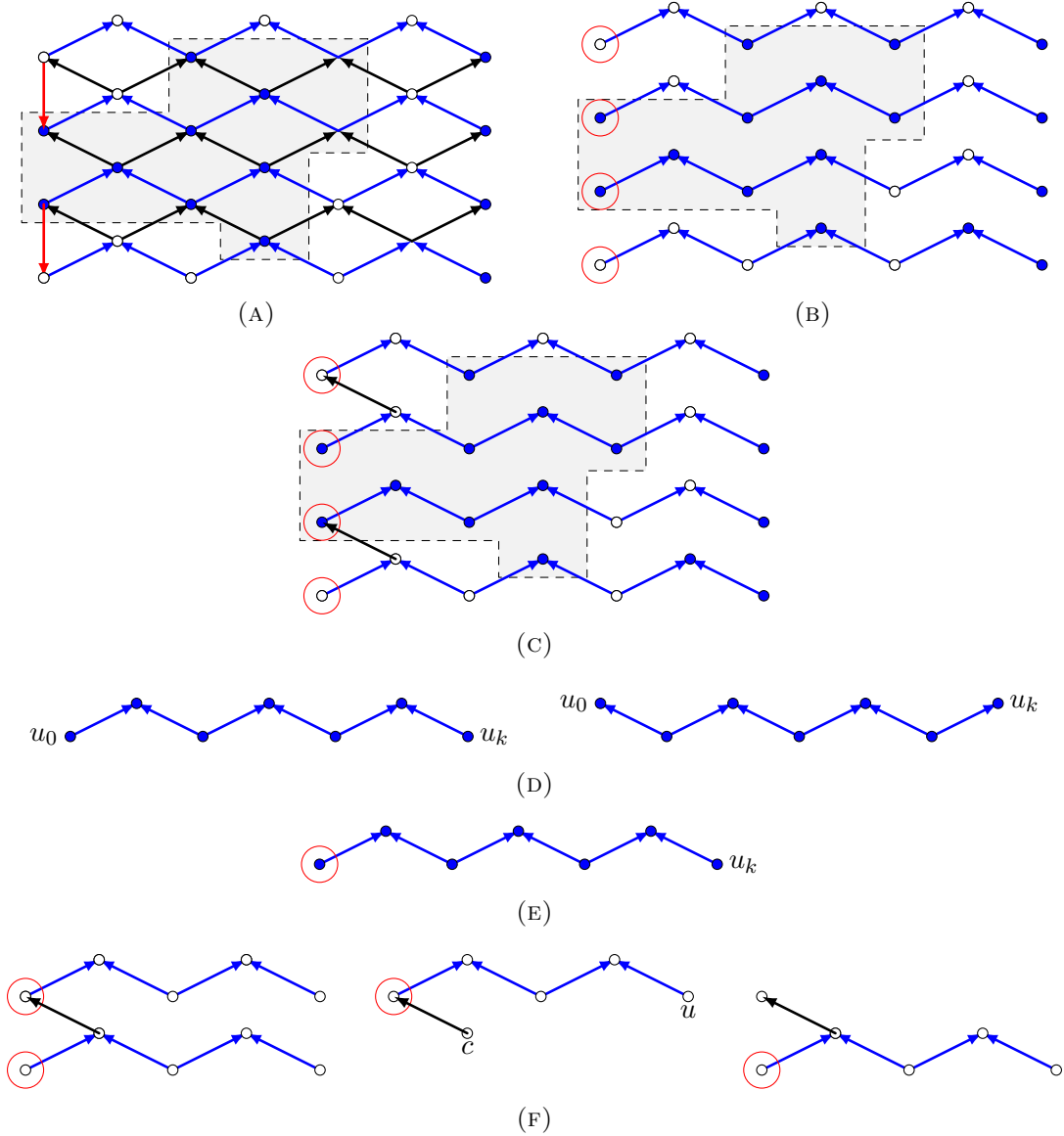
\begin{figure}[h!]
		\centering
		\begin{subfigure}[b]{0.45\textwidth}
			\centering
			\begin{tikzpicture}[line cap=round,line join=round,>=triangle 45,x=2cm,y=1cm]
				\draw[fill=gray!10,dashed] (1.7,-0.25)--(0.35,-0.25)--(0.35,-1.25)--(-0.65,-1.25)--(-0.65,-2.75)--(0.7,-2.75)--(0.7,-3.25)--(1.3,-3.25)--(1.3,-1.8)--(1.7,-1.8)--(1.7,-0.25);
				\foreach \x in {0,1,2}
				{
					\draw[line width=1pt,blue,{Latex[length=2mm]}-]  (\x,0) -- (\x-0.5,-0.5);
					\draw[line width=1pt,blue,{Latex[length=2mm]}-] (\x,0) -- (\x+0.5,-0.5);
					\draw[line width=1pt,black,{Latex[length=2mm]}-] (\x-0.5,-0.5) -- (\x,-1);
					\draw[line width=1pt,black,{Latex[length=2mm]}-] (\x+0.5,-0.5) -- (\x,-1);
					\draw[line width=1pt,blue,{Latex[length=2mm]}-]  (\x,-1) -- (\x-0.5,-1.5);
					\draw[line width=1pt,blue,{Latex[length=2mm]}-] (\x,-1) -- (\x+0.5,-1.5);
					\draw[line width=1pt,black,{Latex[length=2mm]}-] (\x-0.5,-1.5) -- (\x,-2);
					\draw[line width=1pt,black,{Latex[length=2mm]}-] (\x+0.5,-1.5) -- (\x,-2);
					\draw[line width=1pt,blue,{Latex[length=2mm]}-]  (\x,-2) -- (\x-0.5,-2.5);
					\draw[line width=1pt,blue,{Latex[length=2mm]}-] (\x,-2) -- (\x+0.5,-2.5);
					\draw[line width=1pt,black,{Latex[length=2mm]}-] (\x-0.5,-2.5) -- (\x,-3);
					\draw[line width=1pt,black,{Latex[length=2mm]}-] (\x+0.5,-2.5) -- (\x,-3);
					\draw[line width=1pt,blue,{Latex[length=2mm]}-]  (\x,-3) -- (\x-0.5,-3.5);
					\draw[line width=1pt,blue,{Latex[length=2mm]}-] (\x,-3) -- (\x+0.5,-3.5);
				}
				\foreach \x in {0,1}
				{
					\draw [fill=blue] (\x,0) circle (2pt);
					\draw [fill=blue] (\x,-1) circle (2pt);
					\draw [fill=blue] (\x,-2) circle (2pt);
					\draw [fill=blue] (\x-0.5,-0.5) circle (2pt);
					\draw [fill=blue] (\x-0.5,-1.5) circle (2pt);
					\draw [fill=blue] (\x-0.5,-2.5) circle (2pt);
					\draw [fill=blue] (\x-0.5,-3.5) circle (2pt);
					\draw [fill=blue] (\x,-3) circle (2pt);
				}
				\draw[line width=1pt,red,{Latex[length=2mm]}-] (-0.5,-1.5) -- (-0.5,-0.5);
				\draw[line width=1pt,red,{Latex[length=2mm]}-] (-0.5,-3.5) -- (-0.5,-2.5);
				\draw [fill=blue] (2.5,-0.5) circle (2pt);
				\draw [fill=blue] (2.5,-1.5) circle (2pt);
				\draw [fill=blue] (2.5,-2.5) circle (2pt);
				\draw [fill=blue] (2.5,-3.5) circle (2pt);
				\draw [fill=white] (2,0) circle (2pt);
				\draw[fill=white] (2,-1) circle (2pt);
				\draw[fill=white] (2,-2) circle (2pt);
				\draw[fill=white] (1.5,-2.5) circle (2pt);
				\draw[fill=white] (1.5,-3.5) circle (2pt);
				\draw[fill=white] (0.5,-3.5) circle (2pt);
				\draw[fill=white] (-0.5,-3.5) circle (2pt);
				\draw[fill=white] (0,-3) circle (2pt);
				\draw[fill=white] (0,0) circle (2pt);
				\draw[fill=white] (0,-1) circle (2pt);
				\draw[fill=white] (-0.5,-0.5) circle (2pt);
				\draw[fill=white] (1,0) circle (2pt);
			\end{tikzpicture}
			\caption{}
		\end{subfigure}
		\begin{subfigure}[b]{0.45\textwidth}
			\centering
			\begin{tikzpicture}[line cap=round,line join=round,>=triangle 45,x=2cm,y=1cm]
				\draw[fill=gray!10,dashed] (1.7,-0.25)--(0.35,-0.25)--(0.35,-1.25)--(-0.65,-1.25)--(-0.65,-2.75)--(0.7,-2.75)--(0.7,-3.25)--(1.3,-3.25)--(1.3,-1.8)--(1.7,-1.8)--(1.7,-0.25);
				\foreach \x in {0,1,2}
				{
					\draw[line width=1pt,blue,{Latex[length=2mm]}-]  (\x,0) -- (\x-0.5,-0.5);
					\draw[line width=1pt,blue,{Latex[length=2mm]}-] (\x,0) -- (\x+0.5,-0.5);
					\draw[line width=1pt,blue,{Latex[length=2mm]}-]  (\x,-1) -- (\x-0.5,-1.5);
					\draw[line width=1pt,blue,{Latex[length=2mm]}-] (\x,-1) -- (\x+0.5,-1.5);
					\draw[line width=1pt,blue,{Latex[length=2mm]}-]  (\x,-2) -- (\x-0.5,-2.5);
					\draw[line width=1pt,blue,{Latex[length=2mm]}-] (\x,-2) -- (\x+0.5,-2.5);
					\draw[line width=1pt,blue,{Latex[length=2mm]}-]  (\x,-3) -- (\x-0.5,-3.5);
					\draw[line width=1pt,blue,{Latex[length=2mm]}-] (\x,-3) -- (\x+0.5,-3.5);
				}
				\foreach \x in {0,1,2}
				{
					\draw [fill=blue] (\x,0) circle (2pt);
					\draw [fill=blue] (\x,-1) circle (2pt);
					\draw [fill=blue] (\x,-2) circle (2pt);
					\draw [fill=blue] (\x-0.5,-0.5) circle (2pt);
					\draw [fill=blue] (\x-0.5,-1.5) circle (2pt);
					\draw [fill=blue] (\x-0.5,-2.5) circle (2pt);
					\draw [fill=blue] (\x-0.5,-3.5) circle (2pt);
					\draw [fill=blue] (\x,-3) circle (2pt);
				}
				\draw [fill=blue] (2.5,-0.5) circle (2pt);
				\draw [fill=blue] (2.5,-1.5) circle (2pt);
				\draw [fill=blue] (2.5,-2.5) circle (2pt);
				\draw [fill=blue] (2.5,-3.5) circle (2pt);
				\draw [fill=white] (2,0) circle (2pt);
				\draw[fill=white] (2,-1) circle (2pt);
				\draw[fill=white] (2,-2) circle (2pt);
				\draw[fill=white] (1.5,-2.5) circle (2pt);
				\draw[fill=white] (1.5,-3.5) circle (2pt);
				\draw[fill=white] (0.5,-3.5) circle (2pt);
				\draw[fill=white] (-0.5,-3.5) circle (2pt);
				\draw[fill=white] (0,-3) circle (2pt);
				\draw[fill=white] (0,0) circle (2pt);
				\draw[fill=white] (0,-1) circle (2pt);
				\draw[fill=white] (-0.5,-0.5) circle (2pt);
				\draw[fill=white] (1,0) circle (2pt);
				\draw[red] (-0.5,-0.5) circle (7pt);
				\draw[red] (-0.5,-1.5) circle (7pt);
				\draw[red] (-0.5,-2.5) circle (7pt);
				\draw[red] (-0.5,-3.5) circle (7pt);
			\end{tikzpicture}
			\caption{}
		\end{subfigure}
		
		\begin{subfigure}[b]{0.45\textwidth}
			\centering
			\begin{tikzpicture}[line cap=round,line join=round,>=triangle 45,x=2cm,y=1cm]
				\draw[fill=gray!10,dashed] (1.7,-0.25)--(0.35,-0.25)--(0.35,-1.25)--(-0.65,-1.25)--(-0.65,-2.75)--(0.7,-2.75)--(0.7,-3.25)--(1.3,-3.25)--(1.3,-1.8)--(1.7,-1.8)--(1.7,-0.25);
				\foreach \x in {0,1,2}
				{
					\draw[line width=1pt,blue,{Latex[length=2mm]}-]  (\x,0) -- (\x-0.5,-0.5);
					\draw[line width=1pt,blue,{Latex[length=2mm]}-] (\x,0) -- (\x+0.5,-0.5);
					\draw[line width=1pt,blue,{Latex[length=2mm]}-]  (\x,-1) -- (\x-0.5,-1.5);
					\draw[line width=1pt,blue,{Latex[length=2mm]}-] (\x,-1) -- (\x+0.5,-1.5);
					\draw[line width=1pt,blue,{Latex[length=2mm]}-]  (\x,-2) -- (\x-0.5,-2.5);
					\draw[line width=1pt,blue,{Latex[length=2mm]}-] (\x,-2) -- (\x+0.5,-2.5);
					\draw[line width=1pt,blue,{Latex[length=2mm]}-]  (\x,-3) -- (\x-0.5,-3.5);
					\draw[line width=1pt,blue,{Latex[length=2mm]}-] (\x,-3) -- (\x+0.5,-3.5);
				}
				\foreach \x in {0,1,2}
				{
					\draw [fill=blue] (\x,0) circle (2pt);
					\draw [fill=blue] (\x,-1) circle (2pt);
					\draw [fill=blue] (\x,-2) circle (2pt);
					\draw [fill=blue] (\x-0.5,-0.5) circle (2pt);
					\draw [fill=blue] (\x-0.5,-1.5) circle (2pt);
					\draw [fill=blue] (\x-0.5,-2.5) circle (2pt);
					\draw [fill=blue] (\x-0.5,-3.5) circle (2pt);
					\draw [fill=blue] (\x,-3) circle (2pt);
				}
				\draw [fill=blue] (2.5,-0.5) circle (2pt);
				\draw [fill=blue] (2.5,-1.5) circle (2pt);
				\draw [fill=blue] (2.5,-2.5) circle (2pt);
				\draw [fill=blue] (2.5,-3.5) circle (2pt);
				\draw [fill=white] (2,0) circle (2pt);
				\draw[fill=white] (2,-1) circle (2pt);
				\draw[fill=white] (2,-2) circle (2pt);
				\draw[fill=white] (1.5,-2.5) circle (2pt);
				\draw[fill=white] (1.5,-3.5) circle (2pt);
				\draw[fill=white] (0.5,-3.5) circle (2pt);
				\draw[fill=white] (-0.5,-3.5) circle (2pt);
				\draw[fill=white] (0,-3) circle (2pt);
				\draw[fill=white] (0,0) circle (2pt);
				\draw[fill=white] (0,-1) circle (2pt);
				\draw[fill=white] (-0.5,-0.5) circle (2pt);
				\draw[fill=white] (1,0) circle (2pt);
				\draw[red] (-0.5,-0.5) circle (7pt);
				\draw[red] (-0.5,-1.5) circle (7pt);
				\draw[red] (-0.5,-2.5) circle (7pt);
				\draw[red] (-0.5,-3.5) circle (7pt);
				\draw[line width=1pt,black,{Latex[length=2mm]}-] (-0.5,-0.5) -- (0,-1);
				\draw[line width=1pt,black,{Latex[length=2mm]}-] (-0.5,-2.5) -- (0,-3);
			\end{tikzpicture}
			\caption{}
		\end{subfigure}

		\vspace{0.3cm}

		\begin{subfigure}[b]{0.9\textwidth}
			\centering
			\begin{tikzpicture}[line cap=round,line join=round,>=triangle 45,x=1.8cm,y=0.9cm]
				\foreach \x in {0,1,2}
				{
					\draw[line width=1pt,blue,{Latex[length=2mm]}-]  (\x,0) -- (\x-0.5,-0.5);
					\draw[line width=1pt,blue,{Latex[length=2mm]}-] (\x,0) -- (\x+0.5,-0.5);
					\draw [fill=blue] (\x,0) circle (2pt);
					\draw [fill=blue] (\x-0.5,-0.5) circle (2pt);
				}
				\draw [fill=blue] (2.5,-0.5) circle (2pt);
				\node at (-0.7,-0.5) {$u_0$};
				\node at (2.7,-0.5) {$u_k$};	
				\foreach \x in {4,5,6}
				{
					\draw[line width=1pt,blue,{Latex[length=2mm]}-]  (\x-0.5,0)--(\x,-0.5);
					\draw[line width=1pt,blue,{Latex[length=2mm]}-] (\x+0.5,0)--(\x,-0.5);
					\draw [fill=blue] (\x,-0.5) circle (2pt);
					\draw [fill=blue] (\x-0.5,0) circle (2pt);
				}
				\draw [fill=blue] (6.5,0) circle (2pt);
				\node at (3.3,0) {$u_0$};
				\node at (6.7,0) {$u_k$};		
			\end{tikzpicture}
			\caption{}
		\end{subfigure}
		
		\vspace{0.3cm}

		\begin{subfigure}[b]{0.45\textwidth}
			\centering
			\begin{tikzpicture}[line cap=round,line join=round,>=triangle 45,x=1.8cm,y=0.9cm]
				\foreach \x in {0,1,2}
				{
					\draw[line width=1pt,blue,{Latex[length=2mm]}-]  (\x,0) -- (\x-0.5,-0.5);
					\draw[line width=1pt,blue,{Latex[length=2mm]}-] (\x,0) -- (\x+0.5,-0.5);
					\draw [fill=blue] (\x,0) circle (2pt);
					\draw [fill=blue] (\x-0.5,-0.5) circle (2pt);
				}
				\draw [fill=blue] (2.5,-0.5) circle (2pt);
				\node at (2.7,-0.5) {$u_k$};	
				\draw[red] (-0.5,-0.5) circle (7pt);
			\end{tikzpicture}
			\caption{}
		\end{subfigure}
		
		\vspace{0.3cm}
		
		\begin{subfigure}[b]{0.9\textwidth}
			\centering
			\begin{tikzpicture}[line cap=round,line join=round,>=triangle 45,x=2cm,y=1cm]
				\foreach \x in {0,1}
				{
					\draw[line width=1pt,blue,{Latex[length=2mm]}-]  (\x,0) -- (\x-0.5,-0.5);
					\draw[line width=1pt,blue,{Latex[length=2mm]}-] (\x,0) -- (\x+0.5,-0.5);
					\draw[line width=1pt,blue,{Latex[length=2mm]}-]  (\x,-1) -- (\x-0.5,-1.5);
					\draw[line width=1pt,blue,{Latex[length=2mm]}-] (\x,-1) -- (\x+0.5,-1.5);
				}
				\foreach \x in {0,1}
				{
					\draw [fill=white] (\x,0) circle (2pt);
					\draw [fill=white] (\x,-1) circle (2pt);
					\draw [fill=white] (\x-0.5,-0.5) circle (2pt);
					\draw [fill=white] (\x-0.5,-1.5) circle (2pt);
				}
				\draw [fill=white] (1.5,-0.5) circle (2pt);
				\draw[fill=white] (1.5,-1.5) circle (2pt);
				\draw[fill=white] (0,0) circle (2pt);
				\draw[fill=white] (0,-1) circle (2pt);
				\draw[fill=white] (-0.5,-0.5) circle (2pt);
				\draw[fill=white] (1,0) circle (2pt);
				\draw[red] (-0.5,-0.5) circle (7pt);
				\draw[red] (-0.5,-1.5) circle (7pt);
				\draw[line width=1pt,black,{Latex[length=2mm]}-] (-0.5,-0.5) -- (0,-1);
				
				\foreach \x in {2.5,3.5}
				{
					\draw[line width=1pt,blue,{Latex[length=2mm]}-]  (\x,0) -- (\x-0.5,-0.5);
					\draw[line width=1pt,blue,{Latex[length=2mm]}-] (\x,0) -- (\x+0.5,-0.5);
				}
				\foreach \x in {2.5,3.5}
				{
					\draw [fill=white] (\x,0) circle (2pt);
					\draw [fill=white] (\x-0.5,-0.5) circle (2pt);
					
				}
				\draw [fill=white] (4,-0.5) circle (2pt);
				\draw[fill=white] (2.5,-1) circle (2pt);
				\draw[red] (2,-0.5) circle (7pt);
				\draw[line width=1pt,black,{Latex[length=2mm]}-] (2,-0.5) -- (2.5,-1);
				
				\foreach \x in {5,6}
				{
					\draw[line width=1pt,blue,{Latex[length=2mm]}-]  (\x,-1) -- (\x-0.5,-1.5);
					\draw[line width=1pt,blue,{Latex[length=2mm]}-] (\x,-1) -- (\x+0.5,-1.5);
				}
				\foreach \x in {5,6}
				{
					\draw [fill=white] (\x,-1) circle (2pt);
					
					\draw [fill=white] (\x-0.5,-1.5) circle (2pt);
				}
				\draw[fill=white] (6.5,-1.5) circle (2pt);
				\draw [fill=white] (5-0.5,-0.5) circle (2pt);
				\draw[fill=white] (5,-1) circle (2pt);
				\draw[red] (4.5,-1.5) circle (7pt);
				\draw[line width=1pt,black,{Latex[length=2mm]}-] (4.5,-0.5) -- (5,-1);
				\node at (2.5,-1.2)  {$c$};
				\node at (4,-0.7) {$u$};
			\end{tikzpicture}
			\caption{}
		\end{subfigure}
		
		\caption{(A) A possible domain $\Lambda$. (B) Reduction in the case of $\alpha\in (-\theta,\theta)$. (C) Reduction in the case of $\alpha>0$. (D) Type I Gibbs measures. The figure shows two of them of even length. It may also have odd length with one edge at either of the end removed. (E) Type II Gibbs measures. It may also have odd length with one edge at right end removed. (F) Few examples of Type III Gibbs measures.}
		\label{fig:g1}
	\end{figure}

	Suppose $\alpha \in (-\theta,\theta)$. As black edge weights are less than $1$, we may drop all of them to get a Gibbs measure based on the blue and red edge weights only, see Figure \ref{fig:g1} (B). The integral of the reduced Gibbs measure can be viewed as a product of integrals of several smaller Gibbs measures that are two types: Type I and Type II as in Figure \ref{fig:g1} (D) and (E) respectively. Type I Gibbs measures are those for which red vertex weights do not appear. The integral corresponding to Type I takes the following form:
	\begin{align*}
		\left(\kappa^{\theta}(\Gamma(\theta))^{-1}\right)^k \int_{\R^{k-1}}\prod_{i=1}^k H_{\theta,\kappa,(-1)^{i+m}}(u_{i-1}-u_{i})\prod_{i=1}^{k-1}du_i
	\end{align*}
	where $u_0$ and $u_k$ are in $\partial\Lambda$. In this case, we may use $H_{\theta,\kappa,(-1)^{i+m}}(u_{k-1}-u_{k}) \le \Con$ and the fact that $H$ is a probability density function to get that the integral is bounded by $\Con \cdot \left(\kappa^{\theta}(\Gamma(\theta))^{-1}\right)^k$. Type II Gibbs measures are the ones where red vertex weights are present. The integral corresponding to the Type II Gibbs measures takes the form
	\begin{align*}
		\int_{\R^{k}}\prod_{i=1}^k e^{(-1)^{m}\alpha u_0}\cdot e^{(-1)^{i+m}\theta(u_{i-1}-u_i)-\kappa e^{(-1)^{i+m} (u_{i-1}-u_i)}}\prod_{i=1}^{k}du_i.
	\end{align*}
	The integrand can be manipulated to show that the above integral is equal to
	\begin{align*}
		& e^{(-1)^{m+k-1}\alpha u_k}\prod_{i=1}^{k-1}(\Gamma(\theta+(-1)^{m+i+1}\alpha))\kappa^{-\theta+(-1)^{m+i}\alpha} \int_{\R^{k}}\prod_{i=1}^k  H_{\theta+(-1)^{m+i+1}\alpha,\kappa,(-1)^{i-1}}(x_i)\prod_{i=1}^{k}dx_i \\ & = e^{(-1)^{m+k-1}\alpha u_k}\prod_{i=1}^{k-1}(\Gamma(\theta+(-1)^{m+i+1}\alpha))\kappa^{-\theta+(-1)^{m+i}\alpha}.
	\end{align*}
	Since we factor out the measure into these independent pieces and all integrals are finite, the claim follows for $\alpha\in (-\theta,\theta)$.
	
	\medskip
	
	For $\alpha>0$, we remove all the black edges except the ones connecting $(2i-1,1)$ to $(2i,1)$ (again since weights of black edges are atmost $1$, removal of them only increases the integrand). This leads to a reduced Gibbs measures shown in Figure \ref{fig:g1} (C). The reduced Gibbs measure decomposes into several Type I Gibbs measures and Type III Gibbs measures. Type III Gibbs measures are those for which red vertex weights do appear. Because of the presence of black edge in this case, Type III Gibbs measures are different from Type II. A few of the possible Type III Gibbs measures are shown in Figure \ref{fig:g1} (F).
	
	\begin{itemize}[leftmargin=18pt]
		\item If a Type III Gibbs measure has two red vertices in its domain or boundary, we may use the fact that the weight of the figure
		
		\begin{center}
			\begin{tikzpicture}[line cap=round,line join=round,>=triangle 45,x=2cm,y=1cm]
				\draw[line width=1pt,blue,{Latex[length=2mm]}-]  (0,-1) -- (-0.5,-1.5);
				\draw [fill=white] (-0.5,-1.5) circle (2pt);
				\draw[fill=white] (0,-1) circle (2pt);
				\draw[fill=white] (-0.5,-0.5) circle (2pt);
				\draw[red] (-0.5,-0.5) circle (7pt);
				\draw[red] (-0.5,-1.5) circle (7pt);
				\draw[line width=1pt,black,{Latex[length=2mm]}-] (-0.5,-0.5) -- (0,-1);
				\node at (-0.7,-0.5) {$a$};
				\node at (-0.7,-1.5) {$b$};
				\node at (0.2,-1) {$c$};
			\end{tikzpicture}
		\end{center}	
		is $e^{(\theta+\alpha)(b-c)-\kappa e^{b-c}}\cdot e^{\alpha (c-a)-\gamma e^{c-a}} \le e^{(\theta+\alpha)(b-c)-\kappa e^{b-c}}\cdot \left(\sup_{x\in \R} e^{\alpha x-\gamma e^x}\right) \le \Con \cdot e^{(\theta+\alpha)(b-c)-\kappa e^{b-c}}$.
		
		\item  If a Type III Gibbs measure has only one red vertex in its domain or boundary, then it must contain either of the two following figures

		\begin{center}
			\begin{tikzpicture}[line cap=round,line join=round,>=triangle 45,x=2cm,y=1cm]
				\draw[line width=1pt,blue,{Latex[length=2mm]}-]  (0,-1) -- (-0.5,-1.5);
				\draw [fill=white] (-0.5,-1.5) circle (2pt);
				\draw[fill=white] (0,-1) circle (2pt);
				\draw[red] (-0.5,-1.5) circle (7pt);
				\node at (-0.7,-1.5) {$b$};
				\node at (0.2,-1) {$c$};
				\draw[fill=white] (2,-1.5) circle (2pt);
				\draw[fill=white] (1.5,-1) circle (2pt);
				\draw[red] (1.5,-1) circle (7pt);
				\draw[line width=1pt,black,{Latex[length=2mm]}-] (1.5,-1) -- (2,-1.5);
				\node at (1.3,-1) {$a$};
				\node at (2.2,-1.5) {$c$};
			\end{tikzpicture}
		\end{center}
		with $c\in \partial\Lambda$. The corresponding weights are $e^{\alpha c} \cdot e^{(\theta +\alpha)(b-c)-\kappa e^{b-c}} \le \Con' e^{\alpha c}$ and $e^{-\alpha c} \cdot e^{\alpha(c-a)-\gamma e^{c-a}} \le \Con' e^{-\alpha c}$ respectively where $\Con':=\sup_{x\in \R} e^{\alpha x-\gamma e^x}$.
	\end{itemize}
	
	Based on the kind of Type III Gibbs measures, we may insert the above bounds on the Gibbs weights in the integrand of this type of Gibbs measures. The resulting integral can then be computed explicitly to yield a bound of the form $\Con^{V} e^{|\alpha c|}$ where $V$ is the number of vertices in the Gibbs measures. For example, for the middle figure in Figure \ref{fig:g1} (F) we have (with $u_4:=u$)
	\begin{align*}
		& \left(\kappa^{-\theta}\Gamma(\theta)\right)^4\int_{\R^4} e^{-\alpha u_0} e^{-\gamma e^{c-u_0}}\prod_{i=0}^3 H_{\theta,\kappa,(-1)^{i}}(u_{i}-u_{i+1})du_i
		\\ & \le \left(\kappa^{-\theta}\Gamma(\theta)\right)^4\cdot \Con e^{-\alpha c}\int_{\R^4}\prod_{i=0}^3 H_{\theta,\kappa,(-1)^{i}}(u_{i}-u_{i+1})du_i  \le \left(\kappa^{-\theta}\Gamma(\theta)\right)^4\cdot \Con e^{|\alpha c|}.
	\end{align*}

	This establishes the lemma for $\alpha>0$.				
\end{proof}

We end this section we two lemmas concerning with the tail properties of $\fa$ and $\qo$-distributions defined in \eqref{def:faga} and \eqref{qdist} respectively.
\begin{lemma}\label{tailf} For all $x\in \R$ we have $$e^{-2e} \le (\Gamma(\theta))^2\fa(x)e^{\theta|x|} \le \Gamma(2\theta),$$
	where $\fa$ is defined in \eqref{def:faga}.
\end{lemma}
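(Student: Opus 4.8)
The plan is to evaluate the convolution defining $\fa$ essentially explicitly and then read off both inequalities. Unfolding \eqref{def:faga} and \eqref{def:gwt} (and substituting $z'=x-z$ where convenient), one has
\[
	\Gamma(\theta)^2\,\fa(x)=\int_{\mathbb R} e^{\theta z-e^{z}}\,e^{-\theta(x-z)-e^{-(x-z)}}\,dz=\int_{\mathbb R}e^{2\theta z-\theta x-e^{z}-e^{z-x}}\,dz .
\]
Since $\fa$ is symmetric (Lemma \ref{as:in}(1)) it suffices to treat $x\ge 0$. For the upper bound I would simply drop the term $e^{z-x}\ge 0$ and substitute $u=e^{z}$, giving $\Gamma(\theta)^2\fa(x)\le e^{-\theta x}\int_{\mathbb R}e^{2\theta z-e^{z}}\,dz=e^{-\theta x}\int_0^\infty u^{2\theta-1}e^{-u}\,du=\Gamma(2\theta)e^{-\theta|x|}$, i.e.\ $(\Gamma(\theta))^2\fa(x)e^{\theta|x|}\le\Gamma(2\theta)$. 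For the lower bound, since $x\ge 0$ we have $e^{z-x}\le e^{z}$, so $e^{z}+e^{z-x}\le 2e^{z}$, and the substitution $v=2e^{z}$ yields $\Gamma(\theta)^2\fa(x)\ge e^{-\theta x}\int_{\mathbb R}e^{2\theta z-2e^{z}}\,dz=e^{-\theta x}\,2^{-2\theta}\int_0^\infty v^{2\theta-1}e^{-v}\,dv=2^{-2\theta}\Gamma(2\theta)e^{-\theta|x|}$. (Equivalently, evaluating the Gamma integral exactly gives the clean identity $\fa(x)=\Gamma(2\theta)\big/\big(\Gamma(\theta)^2(2\cosh(x/2))^{2\theta}\big)$, from which both bounds are immediate once one notes $e^{|x|/2}\le 2\cosh(x/2)\le 2e^{|x|/2}$.) It then remains to prove the purely numerical inequality $2^{-s}\Gamma(s)\ge e^{-2e}$ for every $s>0$, which will be applied with $s=2\theta$.

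For that inequality I would write $2^{-s}\Gamma(s)=\int_0^\infty w^{s-1}e^{-2w}\,dw$ (substitute $u=2w$). When $s\ge 1$, restrict to the interval $[s/2,s]$, on which $w^{s-1}$ is nondecreasing and $e^{-2w}\ge e^{-2s}$, to get $2^{-s}\Gamma(s)\ge(s/2)^{s-1}e^{-2s}\cdot\tfrac s2=(s/2)^{s}e^{-2s}$; the scalar function $s\mapsto s(\log s-\log 2-2)$ has derivative $\log(s/2e)$ and hence a global minimum at $s=2e$ of value $-2e$, so $(s/2)^{s}e^{-2s}\ge e^{-2e}$. When $0<s<1$ one uses only that $\Gamma$ is decreasing on $(0,1]$, so that $\Gamma(s)\ge\Gamma(1)=1$ and $2^{-s}>\tfrac12>e^{-2e}$.

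There is no genuine difficulty here; the statement is a soft consequence of the explicit convolution. The only point needing a little care is to arrange the crude lower bound on $2^{-s}\Gamma(s)$ so that the explicit constant comes out as $e^{-2e}$ rather than something messier — which is exactly why $[s/2,s]$ is the right interval to integrate over in the main case — and to keep the directions of the elementary inequalities straight so that the factor $e^{\theta|x|}$ is matched on the nose and not merely up to a $\theta$-dependent constant.
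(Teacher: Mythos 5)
Your proof is correct. It starts from the same explicit convolution formula $(\Gamma(\theta))^2\fa(x)=e^{-\theta x}\int_{\R}e^{2\theta y-e^{y}-e^{y-x}}\,dy$ as the paper, and your upper bound (drop the term $e^{y-x}$ and recognize $\Gamma(2\theta)$) is exactly the paper's. Where you diverge is the lower bound: the paper simply restricts the $y$-integral to $[0,1]$, on which (for $x\ge 0$ and $\theta>0$) the integrand is pointwise at least $e^{-2e}$, and concludes in two lines — this is where the otherwise mysterious constant $e^{-2e}$ comes from. You instead bound $e^{y}+e^{y-x}\le 2e^{y}$ (or evaluate the integral exactly, getting the clean identity $\fa(x)=\Gamma(2\theta)\big/\big(\Gamma(\theta)^2(2\cosh(x/2))^{2\theta}\big)$), which yields the sharper constant $2^{-2\theta}\Gamma(2\theta)e^{-\theta|x|}$, and you then reduce the stated bound to the scalar inequality $2^{-s}\Gamma(s)\ge e^{-2e}$ for $s=2\theta>0$, proved by the $[s/2,s]$ integration trick plus a one-variable minimization (whose minimum value $-2e$ at $s=2e$ is precisely what makes the constant match) and the monotonicity of $\Gamma$ on $(0,1]$. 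The trade-off is clear: your route gives the exact density and a $\theta$-dependent constant that is essentially optimal, at the cost of the auxiliary numerical lemma; the paper's route is shorter and treats $e^{-2e}$ as a crude but sufficient constant. Both arguments correctly match the factor $e^{\theta|x|}$ with no hidden $\theta$-dependence, which is the only delicate point in the statement.
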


\begin{proof} Since $\fa$ is symmetric, it suffices to show the lemma for $x>0$. We have
	\begin{align*}
		\big(\Gamma(\theta)\big)^2\fa(x)=\int_{\R} e^{\theta y-e^{y}+\theta(y-x)-e^{y-x}}dy=e^{-\theta x}\int_{\R} e^{2\theta y-e^{y}-e^{y-x}}dy.
	\end{align*}
	Now for the lower bound we observe
	\begin{align*}
		\int_{\R} e^{2\theta y-e^{y}-e^{y-x}}dy \ge \int_{0}^1 e^{2\theta y-e^{y}-e^{y-x}}dy \ge e^{-2e},
	\end{align*}
	whereas for the upper bound we have
	\begin{align*}
		\int_{\R} e^{2\theta y-e^{y}-e^{y-x}}dy \le \int_{\R} e^{2\theta y-e^{y}}dy = \Gamma(2\theta).
	\end{align*}
\end{proof}

\begin{lemma}\label{qlemma} Fix any $\theta_0>1$. For any $\theta_1,\theta_2 \in [\theta_0^{-1},\theta_0]$ and $a,b\in \R$, define the random variable $X_{\theta_1,\theta_2;\pm 1}^{(a,b)}\sim \qo_{\theta_1,\theta_2;\pm 1}^{(a,b)}$ where $ \qo_{\theta_1,\theta_2;\pm 1}^{(a,b)}$ is defined in \eqref{qdist}. There exists a constant $\Con>0$ depending only $\theta_0$ such that for all $\theta_1,\theta_2 \in [\theta_0^{-1},\theta_0]$, for all $a,b\in \R$ and for all  $r\ge |a-b|$ we have
	\begin{align*}
		\Pr\Big(X_{\theta_1,\theta_2;\pm 1}^{(a,b)} \notin \big[\min\{a,b\}-2r,\max\{a,b\}+2r]\Big) \le \Con e^{-\frac1\Con r}.
	\end{align*}
\end{lemma}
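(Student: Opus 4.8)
The plan is to reduce to one explicit one‑parameter family of densities and then do elementary integral estimates, split over the two tails.

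First I would remove the sign. Since $G_{\theta,-1}(y)=G_{\theta,+1}(-y)$, if $X\sim\qo_{\theta_1,\theta_2;-1}^{(a,b)}$ then $-X\sim\qo_{\theta_1,\theta_2;+1}^{(-a,-b)}$, and the event $X\notin[\min\{a,b\}-2r,\max\{a,b\}+2r]$ becomes the analogous event for $(-a,-b)$ with the same $r$; so it suffices to treat the $+1$ case, where $\qo_{\theta_1,\theta_2;+1}^{(a,b)}(x)\propto e^{\theta_1(a-x)-e^{a-x}}e^{\theta_2(b-x)-e^{b-x}}$. This density is symmetric under the swap $(\theta_1,a)\leftrightarrow(\theta_2,b)$, so I may assume $a\le b$ and set $d:=b-a\in[0,r]$ (the hypothesis $r\ge|a-b|$ is exactly what gives $d\le r$). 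Changing variables $y:=x-b$, the law of $X-b$ has density proportional to $e^{\til h(y)}$ with $\til h(y):=-(\theta_1+\theta_2)y-e^{-d-y}-e^{-y}$, the additive constant $\theta_1(a-b)$ having been dropped since it cancels between numerator and denominator of the normalized density. In the new variable the bad set $x\notin[a-2r,b+2r]$ is $\{y>2r\}\cup\{y<-d-2r\}$.

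Next I would bound the total mass from below: for $y\in[0,1]$ one has $e^{-y}\le1$, $e^{-d-y}\le1$ and $-(\theta_1+\theta_2)y\ge-2\theta_0$, so $\int_{\R}e^{\til h}\ge e^{-2\theta_0-2}$, a bound depending only on $\theta_0$. For the right tail, on $\{y>2r\}$ we have $\til h(y)\le-(\theta_1+\theta_2)y\le-2\theta_0^{-1}y$, whence $\int_{2r}^{\infty}e^{\til h}\le\tfrac{\theta_0}{2}e^{-4\theta_0^{-1}r}$. For the left tail substitute $z:=-d-y>2r$: then $-(\theta_1+\theta_2)y=(\theta_1+\theta_2)(d+z)\le2\theta_0(r+z)$ (using $d\le r$) and $e^{-d-y}=e^{z}$, so, discarding the harmless term $-e^{d+z}\le0$, $\til h\le2\theta_0(r+z)-e^{z}$. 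Picking $z_0=z_0(\theta_0)$ with $2\theta_0 z\le\tfrac12 e^{z}$ for $z\ge z_0$, for $r\ge z_0/2$ this gives $\til h\le2\theta_0 r-\tfrac12 e^{z}$ on $\{z>2r\}$, and then, via $w=e^{z}$, $\int_{-\infty}^{-d-2r}e^{\til h}\,dy=\int_{2r}^{\infty}e^{\til h}\,dz\le e^{2\theta_0 r}\int_{2r}^{\infty}e^{-e^{z}/2}\,dz\le 2\,e^{(2\theta_0-2)r}e^{-e^{2r}/2}$. Dividing the two tail bounds by $e^{-2\theta_0-2}$ yields $\Pr\big(X\notin[\min\{a,b\}-2r,\max\{a,b\}+2r]\big)\le Ce^{-r/C}$ for all $r$ above a threshold $R_0(\theta_0)$; for $r<R_0$ the left side is a probability, hence $\le1\le Ce^{-r/C}$ once $C$ is chosen large enough in terms of $R_0$. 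Taking the larger of the two constants gives a single $C=C(\theta_0)$.

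There is no genuine obstacle; the two points that need care are (i) noting that the additive constant in $\til h$ cancels in the normalized density, which is what makes the lower bound on the mass uniform in $a,b$, and (ii) invoking $r\ge|a-b|$ to bound the quantity $d$ in the left‑tail estimate — without it the leftward mass could sit arbitrarily far below $b$ and the claimed window $[\min\{a,b\}-2r,\max\{a,b\}+2r]$ would be too narrow. Everything else is a routine double‑exponential tail computation, uniform over the compact range $\theta_1,\theta_2\in[\theta_0^{-1},\theta_0]$.
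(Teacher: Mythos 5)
Your proof is correct, and it is essentially the paper's argument: the paper likewise bounds the probability by the ratio of the unnormalized tail mass (over $(-\infty,\min\{a,b\}-2r]\cup[\max\{a,b\}+2r,\infty)$) to the unnormalized mass on a unit interval near $\max\{a,b\}$, using the exponential/double-exponential decay of the $G$ factors and the hypothesis $r\ge|a-b|$ to absorb the $e^{-\theta|a-b|}$ discrepancy. Your sign reduction, centering at $b$, and explicit handling of the small-$r$ regime are just a more detailed writing of the same elementary estimate.
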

\begin{proof} Fix any $\theta_1,\theta_2 \in [\theta_0^{-1},\theta_0]$. We shall prove the bound for $X_{\theta_1,\theta_2;1}^{(a,b)}$. The proof for the case $X_{\theta_1,\theta_2;-1}^{(a,b)}$ is analogous.  Without loss of generality assume $b\le a$. Observe that
\begin{equation}\label{twre}
\Pr\Big(X_{\theta_1,\theta_2;\pm 1}^{(a,b)} \notin \big[\min\{a,b\}-2r,\max\{a,b\}+2r\big]\Big)\le \frac{\int_{(-\infty,b-2r]\cup [a+2r,\infty)} G_{\theta_1,1}(a-x)G_{\theta_2,1}(b-x)}{\int_{a}^{a+1} G_{\theta_1,1}(a-x)G_{\theta_2,1}(b-x)}.
\end{equation}
Note that
	\begin{align*}
		\int_{a}^{a+1} G_{\theta_1,1}(a-x)G_{\theta_2,1}(b-x) \ge \tfrac1\Con \cdot e^{-\max\{\theta_1,\theta_2\} \cdot (a-b)},
	\end{align*}
	where we have used the fact that $G_{\beta,1}(-y)\ge \Con^{-1} e^{-\beta y}$ (recall $G$ from \eqref{def:gwt}). Similarly
	\begin{align*}
		\int_{x\le b-2r} G_{\theta_1,1}(a-x)G_{\theta_2,1}(b-x)\d x+	\int_{x\ge a+2r} G_{\theta_1,1}(a-x)G_{\theta_2,1}(b-x)\d x \le \Con \cdot e^{-2(\theta_1+\theta_2)r}.
	\end{align*}
	Thus as long as $r\ge a-b$, inserting the above two bounds back in \eqref{twre} and adjusting the constant $\Con$ we get the desired result.
\end{proof}

\section{Estimates for non-intersection probability}

\label{app2}

In this section, we study non-intersection probability of random walks and random bridges (defined in Definition \ref{def:rws}), and modified random bridges (defined in Definition \ref{def:mrb}). Throughout this section we shall assume the increments are drawn from a density $\ffa$ that satisfies the following assumptions. It is worth recalling that due to Corollary \ref{rm:con}, $\fa$ defined in \eqref{def:faga} satisfies the conditions of Assumption \ref{asp:in} and hence all results of this section can be applied to random walks with that increment law.

	\begin{assumption}[Assumption on the increments] \label{asp:in} The density $\ffa$ satisfies the following properties.
	\begin{enumerate}
		\item The density $\ffa$ is symmetric and $\log \ffa$ is concave.
		
		\item Let $\psi$ denote the characteristic function corresponding to $\ffa$. $|\psi|$ is integrable. Given any $\delta>0$, there exists $\eta$ such that $\sup_{t\ge \delta} |\psi(t)|=\eta <1$.
		
		\item There exists a constant $\Con>0$ such that $\ffa(x)\le \Con e^{-|x|/\Con}$. In particular, this implies that if $X \sim \ffa$, there exists $v>0$ such that and
		\begin{align*}		\sup_{|t|\le v}\big[\Ex[e^{tX}]\big] <\infty.
		\end{align*}
		In other words $X$ is an subexponential random variable.
	\end{enumerate}
\end{assumption}

The following lemma concerns with sharp rate of convergence of the probability density function of $(X(1)+X(2)+\cdots+X(n))/\sqrt{n}$, where $X(i)\stackrel{i.i.d.}{\sim} \ffa$, to the Gaussian density with appropriate variance.

\begin{lemma}\label{sharp} Let $\ffa^{*n}$ be the $n$-fold convolution of $\ffa$. There exists a constant $\Con>0$ such that
	\begin{align*}
		\sup_{|x|\le (\log n)^2} \left|\frac{\sqrt{n}\ffa^{*n}(x\sqrt{n})}{\phi_{\sigma}(x)}-1\right|\le  \Con \cdot n^{-3/4}.
	\end{align*}
	where $\phi_{\sigma}(x):=\frac1{\sqrt{2\pi \sigma^2}}e^{-\frac{x^2}{2\sigma^2}}$ and $\sigma^2:=\int x^2\ffa(x)dx$.
\end{lemma}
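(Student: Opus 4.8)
\textbf{Proof proposal for Lemma \ref{sharp}.} The plan is to establish this sharp local central limit theorem via Fourier inversion, following the classical Edgeworth-type route (as in Feller, \emph{An Introduction to Probability Theory}, Chapter XVI, or Petrov, \emph{Sums of Independent Random Variables}), but tracking the error term uniformly over the specified window $|x|\le(\log n)^2$. Write $\psi$ for the characteristic function of $\ffa$, so that the characteristic function of $(X(1)+\cdots+X(n))/\sqrt n$ is $\psi(t/\sqrt n)^n$. By Assumption \ref{asp:in}(2), $|\psi|$ is integrable, so $\ffa^{*n}$ has the inversion representation $\sqrt n\,\ffa^{*n}(x\sqrt n)=\frac1{2\pi}\int_{\R}e^{-itx}\psi(t/\sqrt n)^n\,dt$, and similarly $\phi_\sigma(x)=\frac1{2\pi}\int_\R e^{-itx}e^{-\sigma^2t^2/2}\,dt$. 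Thus the difference is controlled by $\int_\R|\psi(t/\sqrt n)^n-e^{-\sigma^2t^2/2}|\,dt$ plus a careful comparison of the two integrands' real/imaginary structure to extract the ratio bound. First I would record that $\ffa$ has moments of all orders (indeed exponential moments, by Assumption \ref{asp:in}(3)), that $\ffa$ is symmetric so all odd moments vanish, and in particular the third moment is $0$; this is what upgrades the generic $O(n^{-1/2})$ Edgeworth error to $O(n^{-3/4})$ — actually the next correction term is of order $n^{-1}$, so $n^{-3/4}$ is a comfortable bound.

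The main steps are as follows. (i) \emph{Central region.} For $|t|\le \e\sqrt n$ with $\e$ a small fixed constant, use the Taylor expansion $\log\psi(s)=-\tfrac{\sigma^2}2 s^2+c_4 s^4+O(s^6)$ (valid for $|s|\le\e$, using symmetry to kill the cubic term, with $c_4$ involving the fourth cumulant), so that $\psi(t/\sqrt n)^n=e^{-\sigma^2 t^2/2}\bigl(1+c_4 t^4/n+O(t^6/n^2+t^8/n^2)\bigr)$ for $|t|\le \e\sqrt n$. Integrating against $e^{-itx}$ over this range and comparing with the Gaussian, the leading correction is $\frac{c_4}{2\pi n}\int e^{-itx-\sigma^2t^2/2}t^4\,dt=\frac{c_4}{n}H_4(x/\sigma)\phi_\sigma(x)/(\text{const})$ for a Hermite-type polynomial $H_4$; on $|x|\le(\log n)^2$ this is of size $O((\log n)^8/n)\cdot\phi_\sigma(x)$, hence relative error $O((\log n)^8/n)=O(n^{-3/4})$ for $n$ large. (ii) \emph{Moderate region.} For $\e\sqrt n\le |t|\le \delta_0 n$ (with $\delta_0$ chosen so $|\psi(s)|\le e^{-c s^2}$ for $|s|\le\delta_0$, which holds for small $\delta_0$ since $\psi(s)=1-\tfrac{\sigma^2}2s^2+o(s^2)$), we have $|\psi(t/\sqrt n)^n|\le e^{-c\e^2 n}$ times a polynomially-bounded tail, so this contributes a super-polynomially small error, absorbed into $n^{-3/4}$. (iii) \emph{Tail region.} For $|t|\ge\delta_0 n$, i.e. $|t/\sqrt n|\ge\delta_0\sqrt n\ge\delta_0$, Assumption \ref{asp:in}(2) gives $\sup_{|s|\ge\delta_0}|\psi(s)|=\eta<1$, while integrability of $|\psi|$ gives $\int_{|s|\ge\delta_0}|\psi(s)|\,ds<\infty$; writing $\psi(t/\sqrt n)^n = \psi(t/\sqrt n)^{n-1}\psi(t/\sqrt n)$ and bounding $|\psi(t/\sqrt n)^{n-1}|\le\eta^{n-1}$, the contribution is at most $\eta^{n-1}\cdot\frac1{2\pi}\int_{|s|\ge\delta_0}\sqrt n|\psi(s)|\,ds\le C\sqrt n\,\eta^{n-1}$, again negligible; and the corresponding Gaussian tail is even smaller. (iv) \emph{Lower bound on $\phi_\sigma(x)$ in the window.} Since $|x|\le(\log n)^2$, $\phi_\sigma(x)\ge \frac1{\sqrt{2\pi}\sigma}e^{-(\log n)^4/(2\sigma^2)}$, which decays only sub-polynomially; dividing the \emph{absolute} error bound $C(\log n)^8/n$ (from (i)) plus $C\sqrt n\eta^{n-1}$ (from (ii)–(iii)) by $\phi_\sigma(x)$ still yields a bound of the form $C'n^{-3/4}$ for $n$ large, because $e^{(\log n)^4/(2\sigma^2)}=n^{o(1)}$ is swallowed by the gap between $n^{-1}$ and $n^{-3/4}$. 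Collecting the three regions gives the claimed uniform relative error $O(n^{-3/4})$.

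The step I expect to be the genuine obstacle — more bookkeeping than conceptual — is (i), and specifically making the Taylor expansion of $\log\psi$ and the resulting comparison of oscillatory integrals \emph{uniform in $x$ over the whole window} rather than merely pointwise. One has to be careful that the $O(t^6/n^2)$ remainder in the exponent, once exponentiated and integrated against $e^{-itx-\sigma^2t^2/2}$, does not blow up: the Gaussian factor $e^{-\sigma^2t^2/2}$ provides the damping, but the polynomial prefactors $t^4, t^6,\ldots$ interact with the phase $e^{-itx}$, and one must check that the resulting Hermite-polynomial-times-Gaussian expressions, evaluated at $|x|$ as large as $(\log n)^2$, are genuinely dominated by $\phi_\sigma(x)$ times a power of $n^{-1}$ (which they are, since $|H_k(x/\sigma)|\le C_k(1+|x|)^k = C_k n^{o(1)}$). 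A clean way to organize this, which I would adopt, is to first prove the \emph{absolute} error estimate $\sup_{x\in\R}|\sqrt n\,\ffa^{*n}(x\sqrt n)-\phi_\sigma(x)(1+\tfrac{c_4}{n}P_4(x))|\le Cn^{-2}$ for an explicit degree-4 polynomial-times-Gaussian correction $\phi_\sigma(x)P_4(x)$ (this is just the second-order Edgeworth expansion for a symmetric law with finite sixth moment, standard given Assumption \ref{asp:in}), and then, on the restricted window $|x|\le(\log n)^2$, divide through by $\phi_\sigma(x)$ and note $|c_4 P_4(x)/n|\le C(\log n)^{4}/n\le n^{-3/4}$ and $Cn^{-2}/\phi_\sigma(x)\le Cn^{-2}e^{(\log n)^4/(2\sigma^2)}\le n^{-3/4}$ for $n$ large. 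This reduces the lemma to a citation-plus-routine-estimate of the classical local CLT with Edgeworth correction, whose hypotheses are exactly Assumption \ref{asp:in}.
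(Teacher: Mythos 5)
Your Fourier-inversion framework is the same as the paper's, and steps (ii)--(iii) are fine modulo a cosmetic mislabeling of the region boundaries (they should be $\e\sqrt n\le|t|\le\delta_0\sqrt n$ and $|t|\ge\delta_0\sqrt n$, not $\delta_0 n$; as written, part of your ``moderate'' region lies where the bound $|\psi(s)|\le e^{-cs^2}$ is not justified, though the patch is routine). The genuine gap is in your final division step. You claim $e^{(\log n)^4/(2\sigma^2)}=n^{o(1)}$, but in fact $e^{(\log n)^4/(2\sigma^2)}=n^{(\log n)^3/(2\sigma^2)}$, which grows \emph{faster than any power of $n$}. Consequently $\Con n^{-2}/\phi_\sigma(x)\le \Con n^{-2}e^{(\log n)^4/(2\sigma^2)}$ is not $\le n^{-3/4}$ --- it diverges. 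More structurally: any strategy that first proves a uniform \emph{absolute} Edgeworth bound (remainder $O(n^{-2})$, or indeed $O(n^{-k})$ for any fixed $k$) and then divides by $\phi_\sigma(x)$ cannot work on the window $|x|\le(\log n)^2$, because on that window $\phi_\sigma(x)$ itself can be as small as $n^{-(\log n)^3/(2\sigma^2)}$, i.e.\ smaller than the absolute remainder. The same problem afflicts the $O(t^6/n^2)$ remainder in your step (i): integrated against $e^{-itx-\sigma^2 t^2/2}$ it only yields an absolute bound $O(n^{-2})$, not $O(n^{-2})\phi_\sigma(x)$, since without further structure the oscillatory phase cannot be exploited.

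The paper's proof avoids this by never producing a merely polynomial absolute error. It cuts the central region at $|t|\le n^{1/16}$, where $n\log\psi(t/\sqrt n)=-t^2/2+O(n^{4\cdot\frac1{16}-1})=-t^2/2+O(n^{-3/4})$ \emph{uniformly in $t$}, so that $\psi^n(t/\sqrt n)=e^{-t^2/2}\bigl(1+O(n^{-3/4})\bigr)$ and the main contribution to the integral is already in multiplicative (relative-error) form --- no Edgeworth correction term is needed at all. The only absolute errors are the contributions from $|t|\ge n^{1/16}$, which are of size $e^{-cn^{1/8}}$; these are stretched-exponentially small and therefore do dominate $\phi_\sigma(x)^{-1}\le Ce^{(\log n)^4/(2\sigma^2)}$, precisely because $(\log n)^4=o(n^{1/8})$. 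To repair your argument you would need either to adopt this multiplicative decomposition in the central region, or to invoke a local limit theorem with \emph{relative} error (a Cram\'er--Petrov-type statement), which is a strictly stronger input than the standard Edgeworth expansion you cite.
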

\begin{proof} This proof is adapted from Theorem 5 in Chapter XV in \cite{feller}. In what follows we shall use the big $O$-notation and write $a_n=O(b_n)$ if $a_n/b_n$ is uniformly bounded above by some universal constant. Let $\psi$ denote the characteristic function of $\fa$. The explicit form of $\psi$ was given in \eqref{eq:chint}. In particular, $|\psi|$ is integrable. In what follows, for simplicity we will assume $\sigma^2=1$. 	Set $f_n(x):=\int_{\R} e^{i tx}\psi^n(t/\sqrt{n})dt$. By the Fourier inversion formula, $f_n(x)$ is the density of $(X(1)+X(2)+\cdots+X(n))/\sqrt{n}$ where $X(i)\stackrel{i.i.d.}{\sim} \ffa$. Hence we have $\sqrt{n}\ffa^{*n}(x/\sqrt{n})=f_n(x)$.  Since $\ffa$ is symmetric and has all finite moments, by Taylor expansion we have
	\begin{align*}
		\psi(t/\sqrt{n})= 1-\tfrac{t^2}{2n}+O(\tfrac{t^4}{n^2}).
	\end{align*}
	Set $\alpha=1/16$. Thus for $|t|\le n^{\alpha}$, we have $\psi(t/\sqrt{n})=1-\frac{t^2}{2n}+O(n^{4\alpha-2})=e^{-t^2/2n+O(n^{4\alpha-2})}$. Thus $\psi^n(t/\sqrt{n})=e^{-t^2/2}(1+O(n^{-3/4}))$, where the $O$ term is free of $t$ in that specified range. Thus,
	\begin{align*}
		f_n(x) & = (1+O(n^{-3/4}))\int_{|t|\le n^{\alpha}} e^{i tx}e^{-t^2/2}dt+\int_{|t|\ge n^{\alpha}} e^{i tx}\psi^n(t/\sqrt{n})dt \\ & = (1+O(n^{-3/4}))\int_{\R} e^{i tx}e^{-t^2/2}dt+\int_{|t|\ge n^{\alpha}} e^{i tx}\psi^n(t/\sqrt{n})dt- (1+O(n^{-3/4}))\int_{|t|\ge n^{\alpha}} e^{i tx}e^{-t^2/2}dt.
	\end{align*}
	We next compute the order of the last two integrals above. Clearly $\int_{|t|\ge n^{\alpha}} e^{-t^2/2} dt  \le \Con e^{-c n^{2\alpha}}$. For the second one, we  choose $\delta>0$ small enough such that $|\psi(t)|\le e^{-t^2/4}$ for all $|t|\le \delta$. This implies
	\begin{align*}
		\int_{n^{\alpha}\le t\le \sqrt{n}\delta} |\psi^n(t/\sqrt{n})|dt  \le \Con e^{-c n^{2\alpha}}.
	\end{align*}
	For $|t|\ge \sqrt{n}\delta$, we know $\sup_{t\ge \delta}|\psi(t)|=\eta < 1$ by part~(2) of Lemma \ref{as:el}. This forces
	\begin{align*}
		\int_{|t|\ge \sqrt{n}\delta} |\psi^n(t/\sqrt{n})| dt  \le \eta^{n-1}\sqrt{n}\int_{\R} \psi(t) dt.
	\end{align*}
	Thus the error integrals are at most $\Con e^{-\frac1\Con n^{1/8}}$ in absolute value uniform in $x$. Furthermore if we assume $|x|\le (\log n)^2$, $\phi_1(x)\ge \frac1{\sqrt{2\pi}}e^{-(\log n)^2/2}$, which dominates the error coming from the integrals. Hence we may divide $\phi_1(x)$ and still obtain that the errors are going to zero.
\end{proof}

For any $p\in [0,\infty)$, $s,t\in \ll1,n\rr$, we set
\begin{align*}
	\ni_p\ll s,t\rr:=\{\se{1}{k}-\se{2}{k}\ge-p, \mbox{ for all }k\in \ll s,t\rr\}.
\end{align*}
When $s=2, t=n-1$ we write $\ni_p:=\ni_p\ll2,n-1\rr$ so that it coincides with the $\ni_p$ event defined in \eqref{defnipab}. When $a_1-a_2=O(1)$, it is well known that $\pr{n}{(a_1,a_2)}{}(\ni)=O(n^{-1/2})$. We record this classical fact in the following lemma.

\begin{lemma}\label{l:class} For all $(a_1,a_2)\in \R^2$ we have $\pr{n}{(a_1,a_2)}{}(\ni) \le \Con\frac{\max\{a_1-a_2,1\}}{\sqrt{n}}$ for some absolute constant $\Con>0$. If in addition $a_1\ge a_2$, we have $\pr{n}{(a_1,a_2)}{}(\ni) \ge \frac{\Con^{-1}}{\sqrt{n}}$.
\end{lemma}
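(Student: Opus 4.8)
The plan is to reduce the two-curve non-intersection probability to a one-dimensional barrier problem for the difference walk, and then invoke classical fluctuation-theory estimates for a single random walk with subexponential, symmetric, log-concave increments. First I would set $D(k):=\se{1}{k}-\se{2}{k}$. Under $\pr{n}{(a_1,a_2)}{}$, the two walks are independent with i.i.d.\ increments from $\ffa$ (Assumption \ref{asp:in}), so $D$ is itself a random walk started at $a_1-a_2$ whose increment is the difference of two independent copies of $\ffa$; call its law $\ffa\ast\check\ffa$ where $\check\ffa(x)=\ffa(-x)$. Since $\ffa$ is symmetric (Assumption \ref{asp:in}(1)), this increment is mean-zero and symmetric, has a finite second moment $2\sigma^2$ (from the subexponential tail, Assumption \ref{asp:in}(3)), and its characteristic function is $|\psi|^2$, which is integrable and bounded away from $1$ outside any neighborhood of the origin (Assumption \ref{asp:in}(2)); moreover, being a convolution of two log-concave densities it is itself log-concave, hence unimodal. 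The event $\ni$ (as used here, over $k\in\ll 1,n\rr$ with the convention from \eqref{defnipab} — here I will treat the full-range version; the restriction to $\ll2,n-1\rr$ only makes the probability larger, costing a harmless constant) is precisely $\{D(k)\ge 0\text{ for all }k\in\ll 1,n\rr\}$, i.e.\ the event that the difference walk stays nonnegative for $n$ steps starting from $D(1)=a_1-a_2$.

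For the upper bound: if $a_1\le a_2$ the probability is bounded by $\pr{}{}{}(D(k)\ge 0\ \forall k \mid D(1)=0)$ by stochastic monotonicity of the barrier event in the starting point (a one-line coupling of two versions of $D$ differing by a constant shift), and the classical estimate for the survival probability of a mean-zero finite-variance random walk above a barrier started at $0$ gives the bound $\Con/\sqrt n$; for general $a_1>a_2$ one splits according to the first time $D$ drops to or below $0$ (if ever), using that the walk is recurrent and that from any level the survival-probability-from-there is $O(1/\sqrt n)$, and the initial overshoot contributes the factor $\max\{a_1-a_2,1\}$ — concretely, a standard reflection/ballot-type argument or the Sparre Andersen identity combined with the renewal bound for the ladder-height distribution yields $\pr{n}{(a_1,a_2)}{}(D(k)\ge 0\ \forall k)\le \Con (a_1-a_2+1)/\sqrt n$. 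For the lower bound, when $a_1\ge a_2$, I would use the local central limit theorem for $D$ — Lemma \ref{sharp} applied to the difference walk (whose increment law satisfies Assumption \ref{asp:in} as well, the needed properties being exactly those verified above) — together with the classical lower bound for the probability that a mean-zero, finite-variance, aperiodic walk started at a nonnegative point stays nonnegative for $n$ steps: this is $\ge \Con^{-1}/\sqrt n$ uniformly in the (nonnegative) starting point. One clean route: condition on the endpoint $D(n)$ lying in a window $[\epsilon\sqrt n, 2\epsilon\sqrt n]$ (probability $\asymp 1/\sqrt n$ by the LCLT), and on that event invoke the known fact that a bridge of a mean-zero finite-variance walk from a nonnegative point to a point of order $\sqrt n$ stays nonnegative with probability bounded below by an absolute constant; alternatively, invoke directly the renewal-theoretic lower bound $\pr{}{}{}(\min_{k\le n} D(k)\ge 0\mid D(1)=a)\ge c/\sqrt n$ valid for all $a\ge 0$, which follows from Kozlov's estimates (as cited around \eqref{genniineq}).

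I expect the main obstacle to be the uniformity in the starting point in the lower bound — one must ensure the constant $\Con^{-1}$ does not degrade as $a_1-a_2\to 0^+$ or as $a_1-a_2$ grows, and the cleanest way to handle this is to note that survival from a higher starting point only increases the probability (monotonicity), so it suffices to prove the lower bound for $a_1-a_2=0$, where it is the textbook statement that a symmetric finite-variance walk started at $0$ stays nonnegative for $n$ steps with probability $\asymp n^{-1/2}$ (Sparre Andersen: this probability equals $\binom{2n}{n}4^{-n}\asymp n^{-1/2}$ for continuous symmetric increments, exactly because ties have probability zero here as $\ffa\ast\check\ffa$ has a density). The upper bound's only subtlety is extracting the linear-in-$(a_1-a_2)$ dependence, which is standard from the ladder-height renewal measure having bounded density; the subexponential tail from Assumption \ref{asp:in}(3) guarantees the ladder heights have the finite mean needed for the renewal bound. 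The later-cited Lemma \ref{l:nipp} (comparison $\ni_p$ vs.\ $\ni$) and Lemma \ref{genni} will be proved by similar difference-walk reductions, so it is natural to establish this base case first.
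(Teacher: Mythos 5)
Your proof is correct and takes essentially the same route as the paper: the paper's entire proof of this lemma is a citation of Kozlov's Theorem A for the upper bound and Spitzer's Theorem 3.5 for the lower bound, both applied (implicitly) to the difference walk $D(k)=\se{1}{k}-\se{2}{k}$ exactly as you set it up, and your ladder-height/renewal argument (giving the factor $\max\{a_1-a_2,1\}$ via $V(x)\le \Con(1+x)$) and your Sparre Andersen computation plus monotonicity in the starting point are precisely the content of those cited results. The only phrase to tighten is your claim that passing between the full-range survival event and $\ni=\ni_0\ll 2,n-1\rr$ "costs a harmless constant": the restricted event has \emph{larger} probability, so for the upper bound you should instead condition on $D(2)$, apply the classical one-walk estimate to the $(n-2)$-step window $\ll 2,n-1\rr$, and integrate $\max\{D(2),1\}$ against the increment's exponential tail (this is exactly the paper's bound $\Ex[\max\{Z-b,1\}\ind_{Z\ge b}]\le \Con\,\Pr(Z\ge b)$ used around its proof of Lemma~\ref{tailni}).
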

\begin{proof}
	The first part is  \cite[Theorem A]{kozlov} and the second part is \cite[Theorem 3.5]{spit}.
\end{proof}

We again remind the readers that the above result, as well as all the results stated below within this section, the random walks/bridges or the modified random bridges are assumed to have increments drawn from a density $\ffa$ satisfying Assumption \ref{asp:in}. In many of our arguments below, we shall often appeal to stochastic monotonicity of non-intersecting random walks or bridges  with respect to boundary data. We record this result below.	

\begin{proposition}[Stochastic monotonicity of random bridges and random walks] \label{smrw}
Fix $n\in \Z_{\ge 1}$.
et $a_i^{(j)},b_i^{(j)} \in [-\infty,\infty]$ for $i,j\in \{1,2\}$. Suppose $a_i^{(1)} \ge a_i^{(2)}$ and $b_i^{(1)} \ge b_i^{(2)}$ for $i\in \{1,2\}$.
		\begin{enumerate}[label=(\alph*), leftmargin=18pt]
			\item There exists a probability space that supports a collection of random variables
			\begin{align*}
				\big(S_1^{(j)}(k),S_2^{(j)}(k) : j\in \{1,2\}, k\in \ll1,n\rr \big)
			\end{align*}
			such that $S_i^{(1)}(k) \ge S_i^{(2)}(k)$ for all $i\in \{1,2\}$ and $k\in \ll1,n\rr$, and marginally $(S_1^{(j)}(\cdot),S_2^{(j)}(\cdot)) \sim \pr{n}{(a_1^{(j)},a_2^{(j)})}{,(b_1^{(j)},b_2^{(j)})}(\cdot \mid \ni)$ for each $j\in \{1,2\}$.
			\item There exists a probability space that supports a collection of random variables
			\begin{align*}
				\big(S_1^{(j)}(k),S_2^{(j)}(k) : j\in \{1,2\}, k\in \ll1,n\rr \big)
			\end{align*}
			such that $S_i^{(1)}(k) \ge S_i^{(2)}(k)$ for all $i\in \{1,2\}$ and $k\in \ll1,n\rr$, and marginally $(S_1^{(j)}(\cdot),S_2^{(j)}(\cdot)) \sim \pr{n}{(a_1^{(j)},a_2^{(j)})}{}(\cdot \mid \ni)$ for each $j\in \{1,2\}$.
		\end{enumerate}

	\end{proposition}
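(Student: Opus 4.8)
The plan is to adapt, essentially verbatim, the argument used to prove Proposition~\ref{p:gmc} in Appendix~\ref{appc}. The key observation is that the conditioned laws in the statement are themselves Gibbs-type measures with explicit densities: if $L=(L_i(k))_{i\in\{1,2\},\,k\in\ll 1,n\rr}$ is distributed as $\pr{n}{(a_1,a_2)}{,(b_1,b_2)}(\cdot\mid\ni)$, then the density of the interior coordinates $(L_i(k))_{i\in\{1,2\},\,k\in\ll 2,n-1\rr}$ at $(u_i(k))$ is proportional to
\[
	\prod_{i=1}^{2}\prod_{k=2}^{n}\ffa\big(u_i(k)-u_i(k-1)\big)\cdot\prod_{k}\ind_{u_1(k)\ge u_2(k)},
\]
with $u_i(1)=a_i$, $u_i(n)=b_i$, and the last product over the index set defining $\ni$. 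Part~(b) is the same formula with the right endpoints $b_i$ deleted (equivalently $b_i=+\infty$), so that each $L_i(n)$ is a free coordinate with no weight to its right; since $0\le\ind\le 1$ and $\ffa>0$ on $\R$, these conditioned densities are dominated by the (integrable) unconstrained ones and the conditioned measures are well-defined. Thus it suffices to build a monotone coupling of two such densities as the boundary data $(a_1,a_2)$ and $(b_1,b_2)$ are increased.

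Following Appendix~\ref{appc}, I would enumerate the free vertices $w_1,\dots,w_N$ (with $N=2(n-2)$ in case~(a), $N=2(n-1)$ in case~(b)) in lexicographic order, set $\Lambda_r=\{w_1,\dots,w_r\}$, and define partial integrated densities $H_r\big(x;(u_v)_{v\in\partial\Lambda_r}\big)$ exactly as in \eqref{eq:Hrden}: integrate out $w_1,\dots,w_{r-1}$ against the product of all $\ffa$-factors and $\ind$-factors with an endpoint in $\Lambda_r\cup\partial\Lambda_r$, fixing the value at $w_r$ to be $x$. The heart of the matter is the analogue of Lemma~\ref{lem45}, the log-supermodularity estimate
\[
	H_r\big(s;(u_v)_v\big)\,H_r\big(t;(u_v')_v\big)\ \le\ H_r\big(s;(u_v')_v\big)\,H_r\big(t;(u_v)_v\big)\qquad\text{for }s\ge t,\ u_v\le u_v'\ \forall v,
\]
proved by induction on $r$. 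The induction rests on the elementary inequality \eqref{pineq}, $P(\delta-\beta)P(\gamma-\alpha)\ge P(\delta-\alpha)P(\gamma-\beta)$ for $\alpha\le\beta$ and $\gamma\le\delta$, applied both to $P=\ffa$ (valid since $\log\ffa$ is concave by Assumption~\ref{asp:in}(1)) and to $P=\ind_{\,\cdot\,\ge 0}$ (valid by inspection: if the right side is $1$ then $\delta-\alpha\ge 0$ and $\gamma-\beta\ge 0$, hence $\delta-\beta\ge\gamma-\beta\ge 0$ and $\gamma-\alpha\ge\gamma-\beta\ge 0$, so the left side is $1$). Granting this estimate, the conditional distribution function $F_r(s;(u_v)_v):=\int_{-\infty}^{s}H_r\big/\int_{\R}H_r$ of the coordinate at $w_r$ is non-increasing in each boundary value; the monotone coupling is then produced by the inverse-CDF sampling algorithm driven by a single family of i.i.d.\ uniforms, precisely as in the proof of Proposition~\ref{p:gmc}. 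Specializing to the two boundary configurations in the statement yields the asserted joint construction, with $S_i^{(1)}(k)\ge S_i^{(2)}(k)$ almost surely.

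The only place where the argument is not a literal transcription of Appendix~\ref{appc} is that the non-intersection weight $\ind_{\,\cdot\,\ge 0}$ is not of the form $e^{-R}$ with $R$ finite and convex, so Lemma~\ref{b2}-type integrability and the cited supermodularity framework do not apply to it verbatim. This is the one genuine obstacle, and I would dispatch it in either of two ways: (i) directly, noting that the inductive proof of the estimate above uses only the inequality \eqref{pineq} for each weight, which I verified for $\ind_{\,\cdot\,\ge 0}$ above; or (ii), to stay strictly within the cited framework, by approximating $\ind_{x\ge 0}=\lim_{t\to\infty}\exp(-e^{-tx})$, running the coupling for each finite $t$ (where the weights $\exp(-e^{-t(u_1(k)-u_2(k))})$ are of the $W$-type covered by Appendix~\ref{appc}), and passing to the limit using that the law $\pr{n}{}{}$ reweighted by $\prod_k\exp(-e^{-t(S_1(k)-S_2(k))})$ converges weakly to $\pr{n}{}{}(\cdot\mid\ni)$ as $t\to\infty$. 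Everything else is routine.
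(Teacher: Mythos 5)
Your proposal is correct and follows exactly the route the paper itself endorses: the paper gives no detailed proof of Proposition \ref{smrw}, remarking only that one can either import Markov-chain coupling arguments from the literature or adapt the direct construction of Appendix \ref{appc}, and the latter is precisely what you carry out. Your observation that the hard non-intersection weight $\ind_{x\ge 0}$ is of the form $e^{-R}$ with $R$ convex (extended-valued), so that the supermodularity inequality \eqref{pineq} still holds for it (with the soft-weight approximation $\exp(-e^{-tx})$ as a fallback), correctly handles the only ingredient not already present in the proof of Proposition \ref{p:gmc}.
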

\begin{proof}
This proposition of as a discrete analogue of Lemma 2.6 in \cite{ch14} and is true under log-concavity assumption on $\ffa$. Instead of giving the full details, we explain the two possible ways in proving this proposition. One is via Markov Chain arguments as done, for example, in \cite{ch14,wu,serio} previously. In \cite{serio} for example, the stochastic monotonicity was proved for non-intersecting random  bridges under discrete bounded increment assumption. One can take a discrete to continuous limit of the increments to obtain the above result. The second route is via direct construction argument is the style of \cite{bcd,xd1}. We have, in fact, already adapted that technique in proving stochastic monotonicity for $\hslg$ Gibbs measures in Appendix \ref{appc} and a similar argument can be carried out to prove Proposition \ref{smrw}.
\end{proof}
	
	\medskip
	
Next we study diffusive properties of the random walks under the non-intersecting event.
\begin{lemma}\label{tailni} Given any $\e>0$ there exists a constant $\delta(\e)>0$ such that for all $n\in \Z_{\ge 1}$ and $(a_1,a_2)\in \R^2$  we have
	\begin{align} \label{b21}
		& \pr{n}{(a_1,a_2)}{}\bigg( \se{1}{n}-\se{2}{n} \ge \delta\sqrt{n} \mid \ni\bigg) \ge 1-\e, \\ & \label{b22} \pr{n}{(a_1,a_2)}{}\bigg( \sup_{k\in \ll1,n\rr} (\se{1}{k}-\se{2}{k}) \le \delta^{-1}\sqrt{n}+\max\{a_1-a_2,0\} \mid \ni\bigg) \ge 1-\e, \\
		& \pr{n}{(a_1,a_2)}{}\bigg( \inf_{k\in \ll1,n\rr} \se{1}{k}-a_1 \ge -\delta^{-1}\sqrt{n}\mid \ni\bigg) \ge 1-\e, \label{b23} \\
		& \pr{n}{(a_1,a_2)}{}\bigg( \sup_{k\in \ll1,n\rr} \se{2}{k}-a_2 \le \delta^{-1}\sqrt{n} \mid \ni\bigg) \ge 1-\e. \label{b24}
	\end{align}
\end{lemma}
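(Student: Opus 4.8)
The statement to prove is Lemma~\ref{tailni}: a collection of four uniform (over starting points $(a_1,a_2)$ and over $n$) diffusive estimates for a pair of random walks conditioned on the non-intersection event $\ni$. Note that all four estimates are conditional on $\ni$, so the normalization $\pr{n}{(a_1,a_2)}{}(\ni)$ cancels in every statement; what matters is really a ratio of probabilities, one with the extra constraint and one without.

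\textbf{Reduction to $a_1=a_2=0$ via monotonicity and translation.} The first move I would make is to observe that all four events appearing inside the conditional probabilities have a monotonicity structure with respect to the boundary data $(a_1,a_2)$. The events $\{\se{1}{n}-\se{2}{n}\ge\delta\sqrt n\}$ in \eqref{b21} and $\{\inf_k \se{1}{k}-a_1\ge -\delta^{-1}\sqrt n\}$ in \eqref{b23} are increasing in $\se{1}{\cdot}$ and decreasing in $\se{2}{\cdot}$ (after subtracting the appropriate shifts), while $\{\sup_k(\se{1}{k}-\se{2}{k})\le \delta^{-1}\sqrt n+\max\{a_1-a_2,0\}\}$ in \eqref{b22} and $\{\sup_k\se{2}{k}-a_2\le\delta^{-1}\sqrt n\}$ in \eqref{b24} are of the opposite monotonicity. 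Using Proposition~\ref{smrw}(b) (stochastic monotonicity of non-intersecting random walks in the boundary data) together with translation invariance, I would reduce each bound to the case $a_1=a_2=0$: lowering $a_1$ to $a_2$ and then shifting both to $0$ only worsens (or leaves unchanged) the conditional probability of the relevant event in the direction we need, up to replacing $\delta$ by a possibly smaller constant and absorbing the $\max\{a_1-a_2,0\}$ term. One must be slightly careful that conditioning on $\ni$ itself changes under the coupling, but since Proposition~\ref{smrw}(b) couples the two \emph{conditioned} laws directly, this is handled. So it suffices to prove \eqref{b21}--\eqref{b24} for $\pr{n}{(0,0)}{}(\,\cdot\mid\ni)$.

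\textbf{KMT coupling and Brownian computations.} With $a_1=a_2=0$, the plan is to invoke a KMT-type strong approximation (as used in \eqref{coupl} and cited in the critical-phase section, e.g.\ \cite{kmt,xd}) to couple each of $\se{1}{\cdot},\se{2}{\cdot}$ with a Brownian motion up to error $O(\log n)$, and then transfer the estimates from the analogous statements for two-dimensional Dyson-type Brownian motion (equivalently, a Brownian motion conditioned to stay in a half-space, via the reflection/Karlin--McGregor formula). For \eqref{b21}: a pair of Brownian bridges/motions started at the same point and conditioned to stay ordered has $B_1(n)-B_2(n)$ of order $\sqrt n$ with probability bounded below uniformly, which is the standard fact that $(B_1-B_2)/\sqrt2$ conditioned to stay positive is a Bessel(3)-type process and $\mathrm{Bessel}_3(1)$ has no mass near $0$; choosing $\delta$ small handles the $O(\log n)$ KMT error since $\log n = o(\sqrt n)$. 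For \eqref{b22}--\eqref{b24}: these are maximal inequalities — the running max of a Brownian motion (or of the gap process conditioned positive) over $[0,n]$ is $O(\sqrt n)$ with high probability, and conditioning on an event of probability $\asymp n^{-1/2}$ (Lemma~\ref{l:class}) only inflates probabilities by a factor $O(\sqrt n)$, which is dominated by the Gaussian tail $e^{-c\delta^{-2}}$ once $\delta$ is small. Concretely I would write $\pr{n}{(0,0)}{}(\m{Bad}\mid\ni)\le \pr{n}{(0,0)}{}(\m{Bad})/\pr{n}{(0,0)}{}(\ni)\le \Con\sqrt n\cdot\pr{n}{(0,0)}{}(\m{Bad})$ using the lower bound $\pr{n}{(0,0)}{}(\ni)\ge\Con^{-1}n^{-1/2}$ from Lemma~\ref{l:class}, and then bound the unconditioned $\pr{n}{(0,0)}{}(\m{Bad})$ by $e^{-c\delta^{-2}}$ via Doob/Etemadi maximal inequalities plus sub-Gaussian concentration (Assumption~\ref{asp:in}(3) gives sub-exponential increments, and a truncation argument upgrades the maximal bound to the needed Gaussian-type decay on the relevant scale $\delta^{-1}\sqrt n$).

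\textbf{Main obstacle.} The routine parts are the maximal inequalities and the KMT transfer; the delicate point is \eqref{b21} and the lower-bound direction generally, because there one cannot simply use the crude $\pr{n}{}{}(\ni)\ge\Con^{-1}n^{-1/2}$ bound — one genuinely needs that, \emph{conditioned} on surviving, the gap at the endpoint is macroscopically large with probability close to one, uniformly in $n$. The clean way I would handle this is to not condition on the full event $\ni$ at once but to split the time interval, say use that on $\ll1,n/2\rr$ the walk conditioned on $\ni\ll1,n\rr$ still has a gap of order $\sqrt n$ at time $n/2$ with probability bounded below (a local-limit / Karlin--McGregor computation for the half-space heat kernel gives the entrance density at time $n/2$ explicitly up to the $\sqrt n$-scaling), and then run the remaining $n/2$ steps, using monotonicity to push the gap down and the fact that a gap of order $\sqrt n$ has a uniformly positive chance of staying of order $\sqrt n$. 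Making all constants genuinely uniform over $n\ge 1$ (not just large $n$) requires separately noting that for $n$ below any fixed threshold the statements are trivial by compactness after translation, or by direct inspection since $\delta$ can be taken tiny. Once \eqref{b21} is in hand, \eqref{b22}--\eqref{b24} follow by the ratio argument sketched above, which I expect to be essentially mechanical.
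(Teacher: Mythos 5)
Your plan has two genuine gaps, both at the places where the lemma is actually hard.

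First, the reduction to $a_1=a_2=0$ by stochastic monotonicity does not work for \eqref{b21} when $a_1<a_2$. The difference $U(k)=\se{1}{k}-\se{2}{k}$ is a one-dimensional walk conditioned to stay non-negative on $\ll2,n-1\rr$, and the event $\{U(n)\ge\delta\sqrt n\}$ is \emph{increasing} in the starting value $U(1)=a_1-a_2$. So when $a_1\ge a_2$ you may lower the start to $0$ and only decrease the conditional probability (this is what the paper does, then invokes Iglehart's convergence of the conditioned walk to the Brownian meander). But when $a_1<a_2$, pushing the start up to $0$ \emph{increases} the conditional probability, so the bound at $(0,0)$ says nothing about the crossed configuration. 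The lemma is stated for all $(a_1,a_2)\in\R^2$ and the crossed case is genuinely needed (the starting points come from the $\m{PRW}$ entrance law, which does not enforce ordering). The paper spends most of its proof of \eqref{b21} on exactly this case: it writes the conditional probability as a ratio, conditions on $(\se{1}{2},\se{2}{2})$, lower-bounds the denominator and upper-bounds the numerator via Lemma \ref{l:class} together with the moment estimate $\Ex[\max\{Z-b,1\}\ind_{Z\ge b}]\le\Con\,\Pr(Z\ge b)$, and only then reduces to the meander statement at the origin. Your alternative sketch for this point (splitting time at $n/2$ and arguing the gap "has a uniformly positive chance of staying of order $\sqrt n$") only produces a positive-probability lower bound, not the required $1-\e$.

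Second, the ratio argument $\pr{n}{(0,0)}{}(\m{Bad}\mid\ni)\le\Con\sqrt n\,\pr{n}{(0,0)}{}(\m{Bad})$ is fatal for \eqref{b22}--\eqref{b24}. The unconditioned probability that a random walk exceeds $\delta^{-1}\sqrt n$ in running maximum over $n$ steps is of order $e^{-c\delta^{-2}}$ --- a constant in $n$, not $o(n^{-1/2})$ --- so $\sqrt n\cdot e^{-c\delta^{-2}}\to\infty$ for every fixed $\delta$, and no $n$-independent choice of $\delta$ closes the bound. The paper avoids the $\sqrt n$ loss entirely by using monotonicity in the \emph{other} direction: for \eqref{b23} it sends $a_2\downarrow-\infty$, under which the conditioning becomes vacuous, showing that $\se{1}{\cdot}$ conditioned on $\ni$ stochastically dominates an unconditioned walk (and symmetrically $\se{2}{\cdot}$ is dominated by one for \eqref{b24}); for \eqref{b22} it raises the conditioning barrier and appeals to diffusive tightness of the walk conditioned to stay positive. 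If you want to keep a ratio-type argument you would instead need $\Pr(\m{Bad}\cap\ni)\le\e\, n^{-1/2}$, i.e.\ an estimate on the bad event \emph{jointly} with survival, which is not supplied by an unconditioned maximal inequality.
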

We remark that Lemma \ref{tailni} holds if $\ni=\ni_0\ll2,n-1\rr$ is replaced by $\ni_0\ll2,n\rr$. The same argument presented below essentially works when the conditional event is the latter one instead.
\begin{proof} \textbf{Proof of Eq.~\eqref{b21}.} Set $U(k):=\se{1}{k}-\se{2}{k}$. Under $\pr{n}{(a_1,a_2)}{}$, $(U(k))_{k=1}^n$ is a random walk starting from $a_1-a_2$ with increments drawn from $\ffa\ast\ffa$. The non-intersection condition for $(\se{1}{k},\se{2}{k})_{k=1}^n$ translates to $(U(k))_{k=1}^n$ staying non-negative. If $a_1\ge a_2$, since $\{U(n)\ge \delta\sqrt{n}\}$ is an increasing event  with respect to the boundary conditions, we have
	\begin{align*}
		\pr{n}{(a_1,a_2)}{}\bigg( \se{1}{n}-\se{2}{n} \ge \delta\sqrt{n} \mid \ni\bigg) \ge \pr{n}{(0,0)}{}\bigg( \se{1}{n}-\se{2}{n} \ge \delta\sqrt{n} \mid \ni\bigg)
	\end{align*}
		But under $\pr{n}{(0,0)}{}$, it is known from \cite{igl} that the random walk $(U(k))_{k=1}^{n}$, conditioned to stay non-negative converges weakly to a Brownian meander under diffusive scaling. Since the endpoint of a Brownian meander is a strictly positive continuous random variable, we thus have \eqref{b21}. If $a_1\le a_2$, the argument is a bit more involved. We first write the (complement of the) conditional probability as a ratio:
		\begin{align*}
			\pr{n}{(a_1,a_2)}{}\big( \se{1}{n}-\se{2}{n} \le \delta\sqrt{n} \mid \ni\big) =\frac{\pr{n}{(a_1,a_2)}{}\big( \big\{\se{1}{n}-\se{2}{n} \le \delta\sqrt{n}\big\} \cap \ni\big)}{\pr{n}{(a_1,a_2)}{}\big(\ni\big)}.
		\end{align*}
	For the denominator we condition on $(\se{1}{2},\se{2}{2})$ and use the lower bound from Lemma \ref{l:class}:
	\begin{align}
		\pr{n}{(a_1,a_2)}{}(\ni) & =\Ex^{n;(a_1,a_2)}\left[\ind_{\se{1}{2}\ge \se{2}{2}}\Ex^{n;(a_1,a_2)}\left[\ind_{\ni_0\ll 3,n-1\rr}\mid \sigma(\se{1}{2},\se{2}{2})\right]\right] \\ & \ge \frac{\Con^{-1}}{\sqrt{n}}  \Pr^{n;(a_1,a_2)}({\se{1}{2}\ge \se{2}{2}}). \label{b21m}
	\end{align}
	For the numerator, we again condition on $(\se{1}{2},\se{2}{2})$ to get
		\begin{align}\nonumber
			& \pr{n}{(a_1,a_2)}{}\big( \big\{\se{1}{n}-\se{2}{n} \le \delta\sqrt{n}\big\} \cap \ni\big) \\ & =\Ex\left[\ind_{\se{1}{2}\ge \se{2}{2}}\Ex\left[\ind_{\{\se{1}{n}-\se{2}{n} \le \delta\sqrt{n}\}\cap\ni_0\ll 3,n-1\rr}\mid \sigma(\se{1}{2},\se{2}{2})\right]\right]. \label{b211}
		\end{align}
	Upon conditioning on $(\se{1}{2},\se{2}{2})$, the random walks starts at $(\se{1}{2},\se{2}{2})$. For any $b_1\ge b_2$, utilizing the upper bound from Lemma \ref{l:class} and stochastic monotonicity we have
	\begin{align*}
	&	\Pr^{n-1;(b_1,b_2)}\left(\{\se{1}{n-1}-\se{2}{n-1} \le \delta\sqrt{n}\}\cap\ni_0\ll 2,n-2\rr\right) \\ & = \Pr^{n-1;(b_1,b_2)}(\ni_0\ll2,n-2\rr)\Pr^{n-1;(b_1,b_2)}\left(\{\se{1}{n-1}-\se{2}{n-1} \le \delta\sqrt{n}\}\mid\ni_0\ll 2,n-2\rr\right) \\ & \le \frac{\Con}{\sqrt{n}}\max\{b_1-b_2,1\}\cdot \Pr^{n-1;(0,0)}\left({\{\se{1}{n-1}-\se{2}{n-1} \le \delta\sqrt{n}\}\mid\ni_0\ll 2,n-2\rr}\right)
	\end{align*}
	Taking $b_1=\se{1}{2}$ and $b_2=\se{2}{2}$, we insert the above bound back in \eqref{b211} to get
	\begin{align}
	\nonumber & \pr{n}{(a_1,a_2)}{}\big( \big\{\se{1}{n}-\se{2}{n} \le \delta\sqrt{n}\big\} \cap \ni\big)	\\ & \label{b212} \hspace{1cm}\le \frac{\Con}{\sqrt{n}}\Ex^{n;(a_1,a_2)}\left[\ind_{\se{1}{2}\ge \se{2}{2}}\max\{\se{1}{2}-\se{2}{2},1\}\right] \\ \nonumber & \hspace{4cm}\cdot \Pr^{n-1;(0,0)}\left({\{\se{1}{n-1}-\se{2}{n-1} \le \delta\sqrt{n}\}\mid\ni_0\ll 2,n-2\rr}\right)
	\end{align}
	Note that under $\Pr^{n;(a_1,a_2)}$, $\se{1}{2}-\se{2}{2}\stackrel{(d)}{=} Z-b$ where $Z\sim \ffa\ast\ffa$ and $b=a_2-a_1\ge 0$. We claim that there exists a constant $\Con>0$ such that for all $b\ge 0$ we have
	\begin{align}\label{b213}
		{\Ex\left[\max\{Z-b,1\}\ind_{Z\ge b}\right]} \le \Con \cdot \Pr(Z\ge b).
	\end{align}
Plugging this bound in the expectation in \eqref{b212}  we have
\begin{align*}
	& \pr{n}{(a_1,a_2)}{}\big( \big\{\se{1}{n}-\se{2}{n} \le \delta\sqrt{n}\big\} \cap \ni\big)	\\ & \le \frac{\Con}{\sqrt{n}}\Pr^{n;(a_1,a_2)}({\se{1}{2}\ge \se{2}{2}}) \!\cdot\!\Pr^{n-1;(0,0)}\left({\{\se{1}{n-1}-\se{2}{n-1} \le \delta\sqrt{n}\}\mid\ni_0\ll 2,n-2\rr}\right)
\end{align*}
Combining this with the lower bound on the denominator from \eqref{b21m} we get that
\begin{align*}
	\pr{n}{(a_1,a_2)}{}\big( \se{1}{n}-\se{2}{n} \le \delta\sqrt{n} \mid \ni\big) \le	\Con \!\cdot\! \Pr^{n-1;(0,0)}\left({\{\se{1}{n-1}-\se{2}{n-1} \le \delta\sqrt{n}\}\mid\ni_0\ll 2,n-2\rr}\right)\!.\!
\end{align*}
From here, we can again appeal to \cite{igl} and Brownian meander properties to show that the above bound can be made arbitrarily small by choosing $\delta$ small enough. Thus we are left to \eqref{b213}.

	Suppose $Z\sim \ffa\ast \ffa$. Observe that for any $b\in \R$ we have
\begin{align*}
	\Ex\left[\max\{Z-b,1\}\ind_{Z\ge b}\right] & =\sum_{k=1}^{\infty} \Ex\left[\max\{Z-b,1\}\ind_{Z\in [b+k-1,b+k]}\right] \\ & \le \Pr(Z\ge b) \left[1+\sum_{k=2}^{\infty} k\cdot \frac{\Pr(Z\ge b+k-1)}{\Pr(Z\ge b)}\right]
\end{align*}
If we assume $b\ge 0$ additionally, using exponential tail bounds for $\ffa$ (and hence $\ffa\ast\ffa$), we may get a constant $\Con>0$ free of $b$, such that $\frac{\Pr(Z\ge b+k-1)}{\Pr(Z\ge b)} \le \Con e^{-k/\Con}$ for all $k\ge 2$. This ensure the infinite sum above can be bounded uniformly over $b\in [0,\infty)$. This proves \eqref{b213}.

	\medskip
	
	\noindent\textbf{Proof of Eq.~\eqref{b22}.} Set $U(k):=\se{1}{k}-\se{2}{k}$. To obtain \eqref{b22}, observe the following inequalities
	\begin{align*}
		& \pr{n}{(a_1,a_2)}{}\bigg( \sup_{k\in \ll1,n\rr}U(k) \le \delta^{-1}\sqrt{n}+\max\{a_1-a_2,0\} \mid \bigcap_{k=2}^{n}\{U(k)\ge 0\}\bigg) \\ & \ge \pr{n}{(\max\{a_1,a_2\},a_2)}{}\bigg(\sup_{k\in \ll1,n\rr}U(k) \le \delta^{-1}\sqrt{n}+\max\{a_1-a_2,0\} \mid \bigcap_{k=2}^{n}\{U(k)\ge 0\}\bigg) \\ & \ge \pr{n}{(\max\{a_1,a_2\},a_2)}{}\bigg( \sup_{k\in \ll1,n\rr}U(k) \le \delta^{-1}\sqrt{n}+\max\{a_1-a_2,0\} \mid \bigcap_{k=2}^{n}\{U(k)\ge \max\{a_1-a_2,0\}\}\bigg) \\ & =  \pr{n}{(a_2,a_2)}{}\bigg( \sup_{k\in \ll1,n\rr}U(k) \le \delta^{-1}\sqrt{n} \mid \bigcap_{k=2}^{n}\{U(k)\ge 0\} \bigg) \ge 1-\e.
	\end{align*}
	Let us briefly explain the above inequalities that imply \eqref{b22}. The first inequality follows from stochastic monotonicity applied to the boundary point. We are conditioning on the event that requires the random walk $(U(k))_{k=1}^n$ to stay above the barrier zero. By stochastic monotonicity, increasing this barrier will only decrease the conditional probability. This implies the second inequality. The equality in the last line follows by translating the random walk. The final inequality follows by taking $\delta$ small enough due to the tightness of the random walk paths conditioned to stay positive (when scaled by diffusively) \cite{igl}.
	\medskip
	
	\noindent\textbf{Proof of Eq.~\eqref{b23} and Eq.~\eqref{b24}} Note that due to stochastic monotonicity (Proposition \ref{smrw}), taking $a_2\downarrow -\infty$ we get
	\begin{align*}
		\pr{n}{(a_1,a_2)}{}\bigg(\inf_{k\in \ll1,n\rr}\se{1}{k}-a_1 \ge -\delta^{-1}\sqrt{n} \mid \ni\bigg) & \ge \pr{n}{(a_1,-\infty)}{}\bigg(\inf_{k\in \ll1,n\rr}\se{1}{k}-a_1 \ge -\delta^{-1}\sqrt{n} \mid \ni\bigg) \\ & = \pr{n}{(a_1,-\infty)}{}\bigg(\inf_{k\in \ll1,n\rr}\se{1}{k}-a_1 \ge -\delta^{-1}\sqrt{n}\bigg)\\ & = \pr{n}{(a_1,a_2)}{}\bigg(\inf_{k\in \ll1,n\rr}\se{1}{k}-a_1 \ge -\delta^{-1}\sqrt{n}\bigg).
	\end{align*}
The first equality above is due to the fact that $\ni$ happens almost surely when the second walk starts at $-\infty$. The second equality follows from noting that $\se{1}{\cdot}$ and $\se{2}{\cdot}$ are independent and hence the probability is independent of the starting point of the second walk. Thus the non-intersecting condition makes $\se{1}{\cdot}$ stochastically larger than a usual random walk.	By diffusive behavior of random walks one can choose $\delta$ small enough so that the above quantity is at least $1-\e$. Similarly the non-intersecting condition makes $\se{2}{\cdot}$ stochastically smaller than a usual random walk. Combining this with the diffusive behavior of random walks leads to \eqref{b24}.
\end{proof}

\begin{corollary}\label{tailni2} Fix any $n\in \Z_{\ge 2}$. Suppose $a_1,a_2 \in \R$ with $|a_1-a_2|\le n/\log n$. Given any $\e,\gamma>0$ there exists a constant $\rho(\e,\gamma) \in (0,\frac14]$ such that for all large enough $n$ we have
	\begin{align*}
		\pr{n}{(a_1,a_2)}{}\bigg( \sup_{k\in \ll1,n\rho\rr,i=1,2} \big|\se{i}{k}-a_{i}\big| \ge \gamma\sqrt{n} \mid \ni\bigg) \ge 1-\e.
	\end{align*}
\end{corollary}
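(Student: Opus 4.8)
Throughout, write $m:=\lfloor n\rho\rfloor$ and note that for $\rho\le\tfrac14$ we have $n-m\ge\tfrac34 n$. The statement is a localization of Lemma \ref{tailni} (especially \eqref{b22}--\eqref{b24}) to the short window $\ll1,n\rho\rr$; as it is used in \eqref{l22} the relevant event is that the $\ni$-conditioned walks stay within $\gamma\sqrt n$ of their starting points over $\ll1,m\rr$, i.e. we must show $\pr{n}{(a_1,a_2)}{}\big(\sup_{k\le m,\,i}|S_i(k)-a_i|>\gamma\sqrt n \mid \ni\big)\le\e$ uniformly over $|a_1-a_2|\le n/\log n$. The plan is first to reduce to a purely diffusive bound: it suffices to produce a constant $C=C(\e)$, independent of $\gamma$, of $(a_1,a_2)$ and of $n$, such that $\pr{n}{(a_1,a_2)}{}\big(\sup_{k\le m,\,i}|S_i(k)-a_i|>C\sqrt{m}\mid\ni\big)\le\e$ for every $\rho\le\tfrac14$, and then to set $\rho:=\rho(\e,\gamma):=\min\{\tfrac14,(\gamma/C)^2\}$, which gives $C\sqrt m\le\gamma\sqrt n$ and hence the claim. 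A union bound over four events $\{\inf_{k\le m}S_1(k)<a_1-C\sqrt m\}$, $\{\sup_{k\le m}S_2(k)>a_2+C\sqrt m\}$, $\{\sup_{k\le m}S_1(k)>a_1+C\sqrt m\}$, $\{\inf_{k\le m}S_2(k)<a_2-C\sqrt m\}$ reduces matters to bounding each by $\e/4$.

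The two ``outward'' events are handled by stochastic monotonicity (Proposition \ref{smrw}): sending $a_2\downarrow-\infty$ removes the non-intersection constraint, so $S_1$ under $\pr{n}{(a_1,a_2)}{}(\cdot\mid\ni)$ dominates an unconditioned $m$-step walk started at $a_1$; by Assumption \ref{asp:in}(3) and a Kolmogorov/Doob maximal inequality, $\prw{m}{a_1}\big(\inf_{k\le m}S_1(k)<a_1-C\sqrt m\big)\le\Con/C^2$, whence $\pr{n}{(a_1,a_2)}{}\big(\inf_{k\le m}S_1(k)<a_1-C\sqrt m\mid\ni\big)\le\Con/C^2$, which is $\le\e/4$ for $C=C(\e)$ large. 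The event $\{\sup_{k\le m}S_2(k)>a_2+C\sqrt m\}$ is symmetric, sending $a_1\uparrow+\infty$.

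The two ``inward'' events are the heart of the matter, since conditioning on $\ni$ biases $S_1$ upward and $S_2$ downward and cannot be dominated by unconditioned walks. For $\mathsf{B}:=\{\exists k\le m:\ S_1(k)>a_1+C\sqrt m\}$ I would write the conditional probability as a ratio and split $\ni$ at time $m$. Conditioning on $\mathcal{G}:=\sigma\big(S_i(\ll1,m\rr)\big)$ and applying Lemma \ref{l:class} to the independent pair of walks over $\ll m,n\rr$ (legitimate as $n-m\ge\tfrac34 n$, so the relevant denominators are $\gtrsim n^{-1/2}$ and numerators $\lesssim n^{-1/2}\max\{S_1(m)-S_2(m),1\}$) gives
$$\pr{n}{(a_1,a_2)}{}\big(\mathsf{B}\cap\ni\big)\le\tfrac{\Con}{\sqrt n}\,\Ex^{m;(a_1,a_2)}\!\big[\ind_{\mathsf{B}}\,\ind_{\ni_0\ll2,m\rr}\max\{S_1(m)-S_2(m),1\}\big],\qquad \pr{n}{(a_1,a_2)}{}(\ni)\ge\tfrac{\Con^{-1}}{\sqrt n}\,\pr{m}{(a_1,a_2)}{}(\ni),$$
where the difference between $\ni_0\ll2,m\rr$ and $\ni_0\ll2,m-1\rr$ is absorbed into a constant using symmetry and log-concavity of the increments (Assumption \ref{asp:in}(1)). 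Dividing, the powers of $n$ cancel, and the conditional probability of $\mathsf{B}$ is at most $\Con^2\,\Ex^{m;(a_1,a_2)}\big[\ind_{\mathsf{B}}\max\{S_1(m)-S_2(m),1\}\mid\ni\big]$; Cauchy--Schwarz bounds this by $\Con^2\sqrt{\pr{m}{(a_1,a_2)}{}(\mathsf{B}\mid\ni)}\cdot\sqrt{\Ex^{m;(a_1,a_2)}[\max\{S_1(m)-S_2(m),1\}^2\mid\ni]}$, the second factor being controlled by \eqref{b22} of Lemma \ref{tailni} together with exponential-tail moments (Assumption \ref{asp:in}(3)) and the hypothesis $|a_1-a_2|\le n/\log n$, and the first by a further ratio argument (numerator bounded by conditioning on the path of $S_1$ alone and a ballot/reflection estimate for $S_2$ staying below the barrier $\{S_1(k)\}$, denominator bounded below by Lemma \ref{l:class}), yielding a bound that is uniform in $(a_1,a_2)$ and decays as $C\to\infty$. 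The event $\{\inf_{k\le m}S_2(k)<a_2-C\sqrt m\}$ is treated symmetrically. The main obstacle is precisely this last step: getting the $\mathsf{B}$-estimate to be simultaneously decaying in $C$ and uniform over the full admissible range of $(a_1,a_2)$ --- and it is exactly here that the growth restriction $|a_1-a_2|\le n/\log n$ is needed to keep the second-moment factor from overwhelming the large-deviation gain.
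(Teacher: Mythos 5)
Your reading of the statement is right (the inequality in the displayed event is a typo; the version actually used in \eqref{l22} is that the walks stay \emph{within} $\gamma\sqrt n$ of their starting points with conditional probability at least $1-\e$), and your treatment of the two ``outward'' events coincides with the paper's: exactly as in the proofs of \eqref{b23} and \eqref{b24}, stochastic monotonicity (Proposition \ref{smrw}, sending one starting point to $\mp\infty$) reduces them to maximal inequalities for free walks. The problem is the two ``inward'' events. Your chain numerator/denominator $\Rightarrow$ $\Pr^{n;(a_1,a_2)}(\m{B}\mid\ni)\le \Con^2\,\Ex^{m;(a_1,a_2)}\big[\ind_{\m{B}}\max\{\se{1}{m}-\se{2}{m},1\}\mid \ni_0\ll2,m\rr\big]$ is fine, but the Cauchy--Schwarz step then produces the factor $\big(\Ex^{m}[\max\{\se{1}{m}-\se{2}{m},1\}^2\mid\ni]\big)^{1/2}$, which is of order $\sqrt{m}+\max\{a_1-a_2,0\}$ (under the window-conditioned law the gap at time $m$ is diffusive), while the other factor $\sqrt{\Pr^{m}(\m{B}\mid\ni)}$ does \emph{not} decay in $n$: by the invariance principle for non-crossing walks, the probability that the top walk exceeds $a_1+C\sqrt m$ somewhere in $\ll1,m\rr$, conditioned on non-crossing over that same window, converges to a strictly positive constant depending on $C$ (and the rescaled initial gap). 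Since $C=C(\e)$ must be chosen independently of $n$ (it determines $\rho$), the product diverges like $\sqrt{n\rho}$ and cannot be made $\le\e/4$; the hypothesis $|a_1-a_2|\le n/\log n$ does not rescue this, and it is not what that hypothesis is for. A secondary issue is that Lemma \ref{tailni} is purely qualitative (``for every $\e$ there is $\delta$'') and does not supply the second-moment bound you invoke. So the inward-event step, as written, has a genuine gap, and the size-biased structure it creates would need substantially more quantitative input (matched two-sided versions of Lemma \ref{l:class}, tails for the conditioned gap) to close.

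The paper avoids all of this with a one-line telescoping. Writing $U(k):=\se{1}{k}-\se{2}{k}$, one has
\begin{align*}
\sup_{k\in\ll1,n\rho\rr}\big[\se{1}{k}-a_1\big]\;\le\;(a_2-a_1)+\sup_{k\in\ll1,n\rho\rr}U(k)+\sup_{k\in\ll1,n\rho\rr}\big[\se{2}{k}-a_2\big],
\end{align*}
and symmetrically for $\inf_k[\se{2}{k}-a_2]$. The last term is an ``outward'' event for $\se{2}{\cdot}$ and is handled exactly as in \eqref{b24}; the middle term is the supremum over an initial time-fraction $\rho$ of a single random walk conditioned to stay nonnegative, which by the monotonicity-plus-meander argument of \eqref{b22} (Iglehart's tightness, restricted to the initial window) is at most $\max\{a_1-a_2,0\}+\delta^{-1}\sqrt{n\rho}$ with high probability, so the starting-gap terms cancel and every fluctuation is of order $\sqrt{n\rho}$; choosing $\rho$ small then gives the claim. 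In short: the inward deviations of each walk are controlled by the inward deviation of the \emph{gap walk} plus the outward deviation of the other walk, and both of those are already available from Lemma \ref{tailni}; no ratio of non-intersection probabilities or size-biasing is needed.
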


\begin{proof} Let us focus only on $\se{1}{\cdot}$. We may control lower drift of $\se{1}{\cdot}$ around $a_1$, i.e., $\inf_{k\in \ll1,n\rho\rr} [\se{1}{k}-a_1]$ by an argument similar to the proof of \eqref{b23}. For upper drift we use
	\begin{align*}
		\sup_{k\in \ll1,n\rho\rr} [\se{1}{k}-a_1] \le a_2-a_1+\sup_{k\in \ll1,n\rho\rr} [\se{1}{k}-\se{2}{k}]+\sup_{k\in \ll1,n\rho\rr} [\se{2}{k}-a_2]
	\end{align*}
The second and third term can be controlled by an argument similar to the proof of \eqref{b22} and \eqref{b24} respectively. Note that by diffusive properties all the fluctuations are of the order $\sqrt{n\rho}$. Hence one can choose $\rho$ small enough so that
\begin{align*}
	\pr{n}{(a_1,a_2)}{}\bigg( \sup_{k\in \ll1,n\rho\rr} \big|\se{1}{k}-a_{1}\big| \ge \gamma\sqrt{n} \mid \ni\bigg) \ge 1-\e.
\end{align*}
\end{proof}

\smallskip

We now study non-intersecting probabilities for random  bridges $\pr{n}{(a_1,a_2)}{,(b_1,b_2)}$ defined in Definition \ref{def:rws} (increments drawn from $\ffa$).
 The following lemma shows that when the starting points and endpoints are far apart in the diffusive scale, non-intersection probability is bounded from zero.
\begin{lemma}\label{unini} Fix $\delta>0$. For each $n\in \Z_{\ge4}$, consider the set \begin{align}\label{rnd}
		R_{n,\delta}:=\{(x_1,x_2): |x_i|\le 2\sqrt{n}(\log n)^{3/2}, x_1-x_2 \ge \delta\sqrt{n}\}
	\end{align}
	There exists $\phi=\phi(\delta)>0$ such that for all $n$ large enough and all $(a_1,a_2),(b_1,b_2)\in R_{n,\delta}$ we have
	\begin{align*}
		\pr{n}{(a_1,a_2)}{,(b_1,b_2)}\left(\inf_{k\in \ll1,n\rr} \big[\iks-\iiks\big]\ge \tfrac14\delta\sqrt{n}\right) \ge \phi.
	\end{align*}
\end{lemma}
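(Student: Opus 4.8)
The plan is to couple the pair of random bridges to two independent Brownian bridges and then to use that a Brownian bridge whose endpoints sit at height at least $\delta\sqrt n$ in the diffusive scale stays above $\tfrac14\delta\sqrt n$ with probability bounded away from $0$.

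By the KMT coupling for random bridges \cite{xd}, on a common probability space we may realize, for each fixed choice of endpoints $(a_1,a_2),(b_1,b_2)\in R_{n,\delta}$, the bridges $(\iks,\iiks)_{k=1}^n\sim\pr{n}{(a_1,a_2)}{,(b_1,b_2)}$ together with two independent Brownian bridges $B_1,B_2$ (each of variance $\int_\R x^2\ffa(x)\,dx$) so that, with $\pr{n}{(a_1,a_2)}{,(b_1,b_2)}$-probability at least $1-\tfrac1n$,
\begin{align*}
\sup_{k\in\ll1,n\rr,\,i\in\{1,2\}}\Big|\kis-\sqrt{n}\,B_i(k/n)-a_i-\tfrac{k}{n}(b_i-a_i)\Big|\le\Con\log n .
\end{align*}
On this event, for every $k\in\ll1,n\rr$,
\begin{align*}
\iks-\iiks&\ge\sqrt{n}\big(B_1(k/n)-B_2(k/n)\big)+\Big[\tfrac{n-k}{n}(a_1-a_2)+\tfrac{k}{n}(b_1-b_2)\Big]-2\Con\log n\\
&\ge\sqrt{n}\big(B_1(k/n)-B_2(k/n)\big)+\delta\sqrt{n}-2\Con\log n,
\end{align*}
where the second inequality uses that the bracketed term is a convex combination of $a_1-a_2\ge\delta\sqrt n$ and $b_1-b_2\ge\delta\sqrt n$. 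Taking $n$ large enough that $2\Con\log n\le\tfrac14\delta\sqrt n$, the right side is at least $\tfrac14\delta\sqrt n$ whenever $\inf_{t\in[0,1]}\big(B_1(t)-B_2(t)\big)\ge-\tfrac12\delta$. Since $B_1-B_2$ is a constant multiple of a standard Brownian bridge, $p(\delta):=\Pr\big(\inf_{t\in[0,1]}(B_1(t)-B_2(t))\ge-\tfrac12\delta\big)$ is a positive constant independent of $n$ and of the endpoints, so
\begin{align*}
\pr{n}{(a_1,a_2)}{,(b_1,b_2)}\Big(\inf_{k\in\ll1,n\rr}\big[\iks-\iiks\big]\ge\tfrac14\delta\sqrt n\Big)\ge p(\delta)-\tfrac1n\ge\tfrac12p(\delta)=:\phi(\delta)
\end{align*}
for all large $n$, which is the assertion.

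The one point requiring care is that the coupling error $\Con\log n$ above must be uniform over the entire family $R_{n,\delta}$, since the endpoints are allowed to be as large as $2\sqrt n(\log n)^{3/2}$. This is fine because a random bridge from $a_i$ to $b_i$ is an exponential tilt of a random walk whose tilt parameter is governed by $(b_i-a_i)/n=o(1)$, so the tilted increment densities form a compact family satisfying Assumption \ref{asp:in} with uniform constants, and the strong approximation of \cite{xd} then holds with an error $O(\log n)$ that does not depend on $(a_1,a_2),(b_1,b_2)\in R_{n,\delta}$ (the same regime of validity as in \eqref{kmtbr2}). Alternatively, one may first invoke translation invariance of the difference $\iks-\iiks$ together with stochastic monotonicity of random bridges in their endpoints (Proposition \ref{smrw}) to reduce to the extremal configuration $a_1-a_2=b_1-b_2=\delta\sqrt n$ with all coordinates of order $\sqrt n$, and then apply the coupling in its most standard range.
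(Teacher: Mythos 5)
Your proof is correct and follows essentially the same route as the paper: KMT strong coupling of the bridges to Brownian bridges (Eq.~\eqref{kmtbr1}), the observation that the drift term is a convex combination of $a_1-a_2$ and $b_1-b_2$, both at least $\delta\sqrt n$, and a Brownian-bridge small-ball bound. The only cosmetic difference is that the paper, citing Theorem 2.3 of \cite{xd} with $z=b_i-a_i$ of size up to $4\sqrt n(\log n)^{3/2}$, records the coupling error as $\Con(\log n)^3$ rather than $\Con\log n$ — which is exactly the uniformity issue you flag, and is still $o(\sqrt n)$, so the conclusion is unaffected.
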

\begin{proof} Fix any $(a_1,a_2),(b_1,b_2)\in R_{n,\delta}$. For simplicity let us write $\Pr$ for $\pr{n}{(a_1,a_2)}{,(b_1,b_2)}$. Note that $|b_i-a_i| \le 4\sqrt{n}(\log n)^{3/2}$. By the KMT coupling for Brownian bridges (Theorem 2.3 in \cite{xd} with $z=b_i-a_i$ and $p=0$), there exists a constant $\Con>0$ such that for all $n\in \Z_{\ge1}$ we have
	\begin{equation}
		\label{kmtbr1}
			\begin{aligned}
			& \pr{n}{(a_1,a_2)}{,(b_1,b_2)}\bigg(\neg\m{SC}_{(a_1,a_2)}^{(b_1,b_2)}\bigg) \le \tfrac1{n}, \\ & \hspace{0.5cm} \mbox{where }\m{SC}_{(a_1,a_2)}^{(b_1,b_2)}:=\bigg\{\sup_{k\in \ll1,n\rr,i=1,2} \left|\kis-\sqrt{n}B_i(k/n)-a_i-\tfrac{k}{n}(b_i-a_i)\right| \le \Con \log^3 n\bigg\},
		\end{aligned}
	\end{equation}
and where $B_1, B_2$ are Brownian bridges on the same probability space with variance $\int x^2\ffa(x)dx$ ($\m{SC}$ stands for `strong coupling'). By Brownian bridge properties, there exists $\phi=\phi(\delta)>0$ so that
	\begin{align*}
		\pr{n}{(a_1,a_2)}{,(b_1,b_2)}\bigg(\sup_{x\in [0,1]} (|B_1(x)|+|B_2(x)|) \le \tfrac18\delta\bigg) \ge 2\phi.
	\end{align*}
	Combining the previous two math displays we see that with probability $2\phi-\frac1n$ we have
	\begin{align*}
		\iks-\iiks & \ge a_1-a_2+\tfrac{k}{n}(b_1-a_1-b_2+a_2)-2\Con (\log n)^{3}-\tfrac14\delta \sqrt{n} \\ & = \tfrac{n-k}{n}a_1-a_2+\tfrac{k}{n}(b_1-b_2)-2\Con (\log n)^{3}-\tfrac14\delta \sqrt{n} \\ & \ge -2\Con (\log n)^{3}+\tfrac12\delta \sqrt{n} > \tfrac14\delta \sqrt{n}
	\end{align*}
	for all large enough $n$. Taking $n$ large enough ensures $2\phi-\frac1n \ge \phi$ completing the proof.
\end{proof}
Our next lemma gives a crude bound for the weak non-intersection probability in terms of true non-intersection probability.
\begin{lemma}\label{l:nipp} There exists $\Con>0$ such that for all $p \in [0,\infty)$, $(a_1,a_2), (b_1,b_2)\in \R^2$, $n\in \Z_{\ge 1}$
	\begin{align*}
		\pr{n}{(a_1,a_2)}{,(b_1,b_2)}(\ni_p) \le e^{\Con p} \cdot \pr{n}{(a_1,a_2)}{,(b_1,b_2)}(\ni).
	\end{align*}
	
\end{lemma}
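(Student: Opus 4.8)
The plan is to reduce the difference-walk $U(k) := S_1(k) - S_2(k)$ to a single random walk/bridge and estimate how much the ``soft'' constraint $\mathsf{NI}_p = \{U(k) \ge -p \textrm{ for } k \in \ll 2,n-1\rr\}$ inflates the probability over the hard constraint $\mathsf{NI} = \mathsf{NI}_0$. Under $\Pr^{n;(a_1,a_2),(b_1,b_2)}$ the pair $(S_1,S_2)$ consists of two independent bridges with increment density $\ffa$; since $\ffa$ is symmetric and log-concave (Assumption \ref{asp:in}(1)), the increment density $\ffa \ast \ffa$ of $U$ is also symmetric and log-concave, so $(U(k))_{k=1}^n$ is a log-concave random bridge from $a := a_1 - a_2$ to $b := b_1 - b_2$. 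Thus it suffices to prove: there is an absolute constant $\Con$ such that for every $p \ge 0$ and every bridge law $\mathbb{Q}$ of the above type, $\mathbb{Q}(U(k) \ge -p \ \forall k \in \ll 2,n-1\rr) \le e^{\Con p}\, \mathbb{Q}(U(k) \ge 0 \ \forall k \in \ll 2,n-1\rr)$.

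The main step is a shift-and-compare argument. First I would handle the case $p \le 1$ trivially (absorb it into the constant using that $e^{\Con p} \ge 1$), so assume $p \ge 1$ and set $m := \lceil p \rceil$. The idea is to condition on the two interior endpoints $U(2)$ and $U(n-1)$ and, on $\mathsf{NI}_p$, raise the whole path by $m$ units: the map $U(\cdot) \mapsto U(\cdot) + m$ sends a path satisfying $U(k) \ge -p$ on $\ll 2,n-1\rr$ to one satisfying $U(k) \ge m - p \ge 0$ there, so it maps $\mathsf{NI}_p$ into $\mathsf{NI}$ — but now with shifted endpoints. Concretely, for fixed values $U(2) = u$, $U(n-1) = v$ the conditional law of $(U(k))_{k=2}^{n-1}$ is an $(n-2)$-step bridge from $u$ to $v$; raising by $m$ gives a bridge from $u+m$ to $v+m$, and the constraint becomes non-negativity. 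I would then compare the bridge-from-$u$-to-$v$-staying-above-$(-p)$ probability with the bridge-from-$(u+m)$-to-$(u+m+\cdots)$-staying-above-$0$ probability via log-concavity: the key inequality is that for a log-concave bridge the map $(u,v) \mapsto \log \mathbb{Q}^{\textrm{bridge}}_{u,v}(\textrm{stay above } c)$ behaves monotonically/submodularly under translation of the endpoints, with the ratio of the $c = -p$ and the shifted-$c = 0$ versions controlled by $e^{\Con m}$ times a factor coming from the ratio of the (log-concave) densities of the free increments $\ffa^{\ast(n-3)}(v-u)$ versus $\ffa^{\ast(n-3)}(v-u)$ — which is exactly the same density, so this factor is $1$. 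The shift only changes the prefactor through the normalizations $\ffa^{\ast(n-3)}(v-u)$ which cancel. Then I would also need to control the outer increments $U(1)\to U(2)$ and $U(n-1) \to U(n)$ and the effect of the shift on $U(2), U(n-1)$ themselves: integrating over $u,v$ and using that raising the endpoints of the bridge only increases the non-intersection probability (stochastic monotonicity of log-concave bridges with respect to boundary data, which is essentially the content needed from Proposition \ref{smrw}/Lemma \ref{lem45}-type monotonicity), plus the exponential tails of $\ffa$ (Assumption \ref{asp:in}(3)) to absorb the factors $\ffa(U(2) - a), \ffa(b - U(n-1))$ versus their $m$-shifted counterparts, each of which costs at most $e^{\Con m}$.

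A cleaner route, which I would try first, is the union-bound-over-return-times argument of Doob-type: write $\mathsf{NI}_p = \bigsqcup$ over the last time $k^*$ that $U(k^*) < -\lceil p\rceil + 1$ wait — actually the cleanest is: $\mathsf{NI}_p \setminus \mathsf{NI}$ forces $U$ to take some value in $[-p, 0)$, and by log-concavity the bridge can be ``reflected/pushed'' up by $\lceil p \rceil$ at cost $e^{\Con p}$ using the one-step estimate $\ffa(x-\lceil p\rceil)/\ffa(x) \le e^{\Con\lceil p\rceil}$ valid uniformly in $x$ by the exponential tail bound, applied at the two places (near $k=2$ and near $k=n-1$) where the shifted path needs to be ``reconnected'' to the fixed boundary data $U(1) = a$, $U(n) = b$. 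The main obstacle I anticipate is making the translation-comparison fully rigorous uniformly in $n$ and in the endpoints: one must be careful that shifting the path by $m$ changes $U(2)$ and $U(n-1)$, so the comparison is not literally between the same event under two measures but involves a change of the bridge endpoints, and controlling the Radon–Nikodym factor $\prod \ffa(\textrm{shifted increments})/\prod \ffa(\textrm{original increments})$ requires that this product telescopes — which it does, leaving only the two boundary increment factors $\ffa(U(2) - a - m)/\ffa(U(2) - a)$ and $\ffa(b - U(n-1) + m)/\ffa(b - U(n-1))$, each bounded by $e^{\Con m}$ by the exponential-tail / log-concavity bound, giving the claimed constant. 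Everything else (symmetry and log-concavity of $\ffa \ast \ffa$, exponential tails, stochastic monotonicity in boundary data) is already available in the excerpt.
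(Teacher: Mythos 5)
Your overall strategy is the paper's: lift the configuration by $p$ so that $\ni_p$ becomes $\ni$, observe that the density ratio telescopes down to the boundary increments, and bound those by $e^{\Con p}$. (The paper lifts the first bridge $S_1$ by $p$, conditions on the second and penultimate points of both bridges, and bounds the product $\Upsilon_p$ of the four boundary density factors by $e^{\Con p}\Upsilon_0$, the interior integral being unchanged.) However, two steps in your write-up do not hold as stated.

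First, the reduction to the difference walk is invalid: the difference $U=S_1-S_2$ of two \emph{independent random bridges} is not a random bridge with increment density $\ffa\ast\ffa$. Conditioning on $(S_1(n),S_2(n))=(b_1,b_2)$ is strictly more information than conditioning on $U(n)=b_1-b_2$; already for $n=3$ the density of $U(2)$ is proportional to $\int_{\R}\ffa(u+y-a_1)\ffa(b_1-u-y)\ffa(y-a_2)\ffa(b_2-y)\,dy$, which is not of the product form $(\ffa\ast\ffa)(u-a)\cdot(\ffa\ast\ffa)(b-u)$. So ``it suffices to prove the statement for a single log-concave bridge'' is not a legitimate reduction. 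It is fixable in exactly the way the paper does it: shift only $S_1$ by $p$ (which turns $\ni_p$ into $\ni$ verbatim) and condition on all four points $S_1(2),S_2(2),S_1(n-1),S_2(n-1)$; the interior conditional probability and the normalizations $\ffa^{\ast(n-1)}(a_i-b_i)$ are unaffected by the shift, and only two of the four boundary factors, $\ffa(a_1+p-x_1)$ and $\ffa(y_1-b_1-p)$, carry the $p$.

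Second, the boundary ratio bound $\ffa(x\pm p)/\ffa(x)\le e^{\Con p}$ does \emph{not} follow from the upper tail bound of Assumption \ref{asp:in}(3), with or without log-concavity: a centered Gaussian satisfies every item of Assumption \ref{asp:in}, yet $\ffa(x-p)/\ffa(x)=e^{px-p^2/2}$ is unbounded in $x$. What is needed is a \emph{two-sided} exponential bound, i.e.\ a matching lower bound on the density; for the relevant increment law $\fa$ this is exactly Lemma \ref{tailf}, which gives $\fa(x+p)\le \Con e^{\theta p}\fa(x)$ uniformly in $x$. This is the one quantitative input the proof genuinely requires, and it is the lemma the paper cites at this point. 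With these two repairs the rest of your argument goes through.
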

\begin{proof}
	By lifting the first random bridge by $p$ units we see that
	\begin{align*}
		\pr{n}{(a_1,a_2)}{,(b_1,b_2)}(\ni_p)=\pr{n}{(a_1+p,a_2)}{,(b_1+p,b_2)}(\ni).
	\end{align*}
	Conditioning on the second point and the penultimate point of both the random bridges we get
	\begin{align}\label{b15}
		\pr{n}{(a_1+p,a_2)}{,(b_1+p,b_2)}(\ni) = \frac{\int_{x_1\ge x_2,y_1\ge y_2} \Lambda_{x_1,x_2}^{n;(y_1,y_2)}(\ni) \Upsilon_p(x_1,x_2;y_1,y_2) dx_1dx_2dy_1dy_2}{\ffa^{\ast (n-1)}(a_1-b_1)\ffa^{\ast (n-1)}(a_2-b_2)}.
	\end{align}
	where
	\begin{align*}
		& \Upsilon_p(x_1,x_2;y_1,y_2):=\ffa(a_1+p-x_1)\ffa(a_2-x_2)\ffa(y_1-b_1-p)\ffa(y_2-b_2), \\
		& \Lambda_{x_1,x_2}^{n;(y_1,y_2)}(\ni) := \int_{x_{j,1}\ge x_{j,2}, j\in \ll3,n-2\rr} \prod_{j=2}^{n-2} \ffa(x_{j,1}-x_{j+1,1})\ffa(x_{j,2}-x_{j+1,2})\prod_{j=3}^{n-2} dx_{j,1}dx_{j,2}.
	\end{align*}
	Here in the above integration we set $x_{2,1}:=x_1$, $x_{2,2}:=x_2$, $x_{n-1,1}:=y_1$, $x_{n-1,2}:=y_2$. From Lemma \ref{tailf}, we have that $\Upsilon_p(x_1,x_2;y_1,y_2) \le e^{\Con p}\Upsilon_0(x_1,x_2;y_1,y_2)$, where the $\Con>0$ depends only on $\theta$. Plugging this bound back in \eqref{b15} we get the desired result.
\end{proof}
The following technical lemma, which can be thought of as the bridge analog of Lemma \ref{l:class}, studies the non-intersection probability for random bridges when the starting points are close.

\begin{lemma}\label{genni} Fix $M>0$ and $n\in \Z_{\ge 2}$.  There exist a constant $\Con=\Con(M)>0$ such that for all $|a_i|\le \sqrt{n}(\log n)^{3/2}$ with $|a_1-a_2|\le (\log n)^{3/2}$, and $|b_i|\le M\sqrt{n}$ with $b_1\ge b_2$ we have
	\begin{align*}
			\pr{n}{(a_1,a_2)}{,(b_1,b_2)}(\ni) \le \Con\tfrac1{\sqrt{n}}{\max\{a_1-a_2,1\}\cdot \max\left\{\tfrac1{\sqrt{n}}|a_1-b_1|,2\right\}^{3/2}}.
	\end{align*}
\end{lemma}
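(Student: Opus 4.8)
The natural approach is to reduce the non-intersection probability for the two bridges to a one-dimensional ``stays positive'' estimate for a single random bridge --- to which Lemma~\ref{l:class} and the local limit theorem apply --- by passing to the difference process, and to track the imperfection of this reduction through the slope-correction factor. Throughout write $\delta u:=(b_1-b_2)-(a_1-a_2)$ and $\delta v:=(b_1+b_2)-(a_1+a_2)$, and note that under our hypotheses $|\delta u|\le 3M\sqrt n$ and $|\delta v|\le 3\sqrt n(\log n)^{3/2}$ for all large $n$; the finitely many small $n$ are trivial since the asserted bound is $\ge\Con/\sqrt n$ with $\Con$ free.

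First I would set $U(k):=\se{1}{k}-\se{2}{k}$. Under $\pr{n}{(a_1,a_2)}{,(b_1,b_2)}$ the increment pairs $(\Delta\se{1}{k},\Delta\se{2}{k})$ are i.i.d.\ conditioned on the two endpoint values, so writing $(\Delta U_k,\Delta V_k)=(\Delta\se{1}{k}-\Delta\se{2}{k},\Delta\se{1}{k}+\Delta\se{2}{k})$ and using that $\fa$ is symmetric (here $\fa(x)\propto\cosh(x/2)^{-2\theta}$, and $\Delta U_k,\Delta V_k$ are uncorrelated, the conditional law of $\Delta V_k$ given $\Delta U_k=u$ being that of $u+2X$ with $X$ of density $\propto\fa(\cdot+u)\fa(\cdot)$), one obtains that the marginal law of the path $(U(k))_{k=1}^n$ is absolutely continuous with respect to the law of a genuine random bridge with increment density $\fa\ast\fa$ from $a_1-a_2$ to $b_1-b_2$, with Radon--Nikodym derivative
\[
\frac{(\fa\ast\fa)^{\ast(n-1)}(\delta u)\cdot P_V(\vec u)}{\fa^{\ast(n-1)}(b_1-a_1)\,\fa^{\ast(n-1)}(b_2-a_2)},
\]
where $P_V(\vec u)$ denotes the conditional density at $\delta v$ of $\sum_k\Delta V_k$ given $\vec{\Delta U}=\vec u$. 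I would bound this derivative by $\Con(M)\max\{|a_1-b_1|/\sqrt n,2\}^{3/2}$ using the local limit theorem \eqref{feller} and its quantitative refinement Lemma~\ref{sharp} for the heat kernels $\fa^{\ast(n-1)}(b_i-a_i)$ and $(\fa\ast\fa)^{\ast(n-1)}(\delta u)$ --- legitimate since $|b_i-a_i|,|\delta u|\le\sqrt n(\log n)^2$ for large $n$ --- together with a conditional local limit bound for $\sum_k\Delta V_k$. The Gaussian exponential factors cancel up to a bounded factor $e^{\delta u^2/(4\sigma^2 n)}\le\Con(M)$, and the remaining slope-dependence enters only through the non-Gaussian $O(n^{-3/4})$ error in Lemma~\ref{sharp} and --- crucially --- through the dependence of $P_V$ on $\vec u$, the conditional variance $\operatorname{Var}(\Delta V\mid\Delta U=u)$ growing (exponentially) in $|u|$; tracking these is what produces the exponent $3/2$.

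Granting the domination, $\pr{n}{(a_1,a_2)}{,(b_1,b_2)}(\ni)=\Pr(U(k)\ge 0,\ k\in\ll2,n-1\rr)\le\Con(M)\max\{|a_1-b_1|/\sqrt n,2\}^{3/2}\cdot q$, where $q$ is the corresponding probability for the genuine $\fa\ast\fa$-bridge, and it remains to show $q\le\Con(M)\max\{a_1-a_2,1\}/\sqrt n$. Since $\fa\ast\fa$ also satisfies Assumption~\ref{asp:in} (log-concavity is preserved under convolution, etc.), I would write $q$ as the ratio of the restricted endpoint sub-density of an $(\fa\ast\fa)$-walk to $(\fa\ast\fa)^{\ast(n-1)}(\delta u)$; the denominator is $\ge\Con(M)^{-1}/\sqrt n$ by Lemma~\ref{sharp} as $|\delta u|\le 3M\sqrt n$, and for the numerator I would condition on the walk's value at time $m=\lceil n/2\rceil$, drop the positivity constraint on $\ll m,n-1\rr$ (bounding that piece by $\sup_z(\fa\ast\fa)^{\ast(n-m)}(z)\le\Con/\sqrt n$), and invoke Lemma~\ref{l:class} with $(a_1,a_2)\mapsto(a_1-a_2,0)$ --- in that lemma the event $\ni$ is exactly $\{\se{1}{}-\se{2}{}\ge 0\}$ with $\se{1}{}-\se{2}{}$ a walk of increments $\fa\ast\fa$ --- to get $\Pr(\text{an }\fa\ast\fa\text{-walk from }a_1-a_2\text{ stays }\ge 0\text{ on }\ll2,m\rr)\le\Con\max\{a_1-a_2,1\}/\sqrt m$. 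Multiplying gives numerator $\le\Con\max\{a_1-a_2,1\}/n$, hence $q\le\Con(M)\max\{a_1-a_2,1\}/\sqrt n$, which with the previous paragraph proves the lemma.

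The main obstacle is the change-of-measure step. The Radon--Nikodym derivative above is genuinely \emph{not} uniformly bounded: on increment configurations $\vec u$ with atypically large entries the conditional variance of $\sum_k\Delta V_k$ is large, so the denominator $\fa^{\ast(n-1)}(b_1-a_1)\fa^{\ast(n-1)}(b_2-a_2)$ can be much smaller (roughly $e^{-(\log n)^3}$) than the numerator, and this is exactly the regime in which the individual slopes $a_i/\sqrt n$ --- invisible to $U$, whose slope is always $O(1)$ --- matter. The resolution is to pair the derivative estimate with the fact that on $\ni$ the path $U$ cannot have too violent excursions, balancing the subpolynomial probability of a large jump against the subpolynomial growth of the derivative; it is this balance that forces the factor $\max\{|a_1-b_1|/\sqrt n,2\}^{3/2}$ rather than a bounded constant. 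One must also verify the uniformity of the conditional local limit estimate for $\sum_k\Delta V_k$ over the relevant (possibly atypical) $\vec u$, for which the explicit hyperbolic-secant form of $\fa$ is convenient.
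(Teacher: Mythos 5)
Your reduction to the difference process $U(k)=\se{1}{k}-\se{2}{k}$ and your second half (writing the stay-positive probability of the $\fa\ast\fa$-bridge as a density ratio, lower-bounding the denominator via Lemma \ref{sharp}, and handling the numerator by conditioning at time $n/2$ and applying Lemma \ref{l:class}) are sound, but the proof stands or falls on the change-of-measure step, and there is a genuine gap there. The Radon--Nikodym derivative you write down is correct, but the claimed bound $\Con\max\{|a_1-b_1|/\sqrt n,2\}^{3/2}$ is neither proved nor produced by the mechanism you describe. In fact, typically the derivative is $O_M(1)$ with no slope dependence: since $(b_1-a_1)^2+(b_2-a_2)^2=\tfrac12\big[((b_1-b_2)-(a_1-a_2))^2+((b_1+b_2)-(a_1+a_2))^2\big]$ and the conditional variance of $\sum_k\Delta V_k$ given the $U$-increments concentrates at $2\sigma^2 n$, the Gaussian factors cancel. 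The exponent $3/2$ in the lemma is an artifact of the paper's argument, which is entirely different: there one sets $r=\max\{|a_1-b_1|/\sqrt n,2\}$, $p=\lfloor nr^{-3}\rfloor$, shows via stochastic monotonicity and KMT that conditionally on $\ni$ both walks stay within $m\sqrt n/r$ of their starting points up to time $p$ with probability at least $1/2$ (this is \eqref{toshow6}), observes that on this event the bridge-to-walk density ratio over the first $p$ steps is $O(1)$ by Lemma \ref{sharp} (the window length $nr^{-3}$ is chosen exactly so the tilt from the far endpoint is harmless), and then applies Lemma \ref{l:class} over those $p$ steps, with $1/\sqrt p=r^{3/2}/\sqrt n$. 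So attributing the $3/2$ to the dependence of $P_V$ on $\vec u$ is a misdiagnosis, and what your route actually requires is a uniform conditional local limit theorem for $\sum_k\Delta V_k$ given the (random, positivity- and endpoint-conditioned) $U$-increments, valid in the moderate-deviation range $|(b_1+b_2)-(a_1+a_2)|\le 3\sqrt n(\log n)^{3/2}$ where the Gaussian exponent is of order $(\log n)^3$ and hence the conditional variance must be controlled to relative accuracy $(\log n)^{-3}$. Lemma \ref{sharp} is an i.i.d.\ statement and does not supply this.

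Your proposed repair for the unboundedness of the derivative is also quantitatively off and incomplete. The conditional variance of $\Delta V$ given $\Delta U=u$ grows like $u^2$ (the conditional density is essentially flat on $[-|u|,|u|]$), not exponentially; the potential gain in the derivative from an inflated total conditional variance is $e^{O((\log n)^3)}$ --- superpolynomial, not ``subpolynomial'' --- so the balance only closes if the bad event $\{\Sigma^2\gg 2\sigma^2 n\}$ is shown to be stretched-exponentially unlikely under the conditioned law, and moreover the error term $\Ex[\mathrm{RN}\,\ind_{\mathrm{bad}}\ind_{\ni}]$ must be shown to be $o(\max\{a_1-a_2,1\}/\sqrt n)$, not merely $o(1)$; none of this is carried out. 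Finally, the lemma sits in Appendix \ref{app2} for a general increment density $\ffa$ satisfying Assumption \ref{asp:in}, so an argument leaning on the explicit $\cosh^{-2\theta}$ form of $\fa$ would in any case prove less than the stated result. As it stands, the central estimate of your plan is asserted rather than proved, so the proposal has a genuine gap.
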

\begin{proof} It suffices to prove the lemma only for large enough $n$ (since we can always choose the $\Con$ large enough). Set $r=\max\{\frac1{\sqrt{n}}|a_1-b_1|,2\}$ and $p=\lfloor nr^{-3}\rfloor$. We first claim that there exists $m(M)>0$ such that
	\begin{align}\label{toshow6}
		\pr{n}{(a_1,a_2)}{,(b_1,b_2)}(\ni) \le 2\cdot\pr{n}{(a_1,a_2)}{,(b_1,b_2)}\left(\{|\se{i}{p}-a_i|\le m\sqrt{n}r^{-1} \mbox{ for } i=1,2\} \cap \ni\right).
	\end{align}
Let us first complete the proof of the lemma assuming \eqref{toshow6}.
	Note that the density of $\se{i}{p}$ at $x$ is given by $\frac{\ffa^{*(p-1)}(x-a_i)\ffa^{*(n-p)}(b_i-x)}{\ffa^{*(n-1)}(b_i-a_i)}$. By Lemma \ref{sharp}, we may replace the $n$-fold convolution with Gaussian densities at the expensive of a multiplicative factor close to $1$. In particular for large enough $n$ we have
	\begin{align*}
		\sup_{|x-a_i|\le m\sqrt{n}r^{-1}}\frac{\ffa^{*(n-p)}(b_i-x)}{\ffa^{*(n-1)}(b_i-a_i)} & =2\exp\left(\tfrac1{2\sigma^2}\left(r^2-\tfrac{(r-mr^{-1})^2}{1-r^{-3}}\right)\right) \\ &  =2\exp\left(\tfrac1{2\sigma^2(1-r^{-3})}\left(-r^{-1}-m^2r^{-2}+2m\right)\right) \le 2e^{2m/\sigma^2}.
	\end{align*}
	Thus $\frac{\ffa^{*(p-1)}(x-a_i)f^{*(n-p)}(b_i-x)}{\ffa^{*(n-1)}(b_i-a_i)} \le  2e^{2m/\sigma^2} \cdot \ffa^{*(p-1)}(x-a_i)$ whenever $|x-a_i|\le m\sqrt{n}r^{-1}$. This allows us to go from random bridge laws to random walk laws. We thus have
	\begin{align*}
		& \pr{n}{(a_1,a_2)}{,(b_1,b_2)}\left(\{|\se{i}{p}-a_i|\le m\sqrt{n}r^{-1} \mbox{ for } i=1,2\} \cap \ni\right) \\ & \le \pr{n}{(a_1,a_2)}{,(b_1,b_2)}\left(\{|\se{i}{p}-a_i|\le m\sqrt{n}r^{-1} \mbox{ for } i=1,2\} \cap \bigcap_{k=1}^p \{\iks\ge \iiks\}\right)  \\ & \le 2e^{2m/\sigma^2}\cdot \pr{n}{(a_1,a_2)}{}\left(\{|\se{i}{p}-a_i|\le m\sqrt{n}r^{-1} \mbox{ for } i=1,2\} \cap \bigcap_{k=1}^p \{\iks\ge \iiks\}\right) \\ & \le   2e^{2m/\sigma^2} \cdot \pr{p}{(a_1,a_2)}{}\left(\bigcap_{k=1}^p \{\iks\ge \iiks\}\right) \le \tfrac{\Con}{\sqrt{n}} r^{3/2}\cdot \max\{a_1-a_2,1\}.
	\end{align*}
	where the last inequality uses Lemma \ref{l:class}. This completes the proof modulo \eqref{toshow6}. The rest of the proof is devoted to showing \eqref{toshow6}.
	
	\medskip

	We claim that
	\begin{align}\label{bf1}
		& \pr{n}{(a_1,a_2)}{,(b_1,b_2)}(\se{1}{p}-a_1\le -m\sqrt{n} r^{-1} \mid \ni) \le \tfrac18, \\ & \label{bf2} \pr{n}{(a_1,a_2)}{,(b_1,b_2)}(\se{1}{p}-a_1\ge m\sqrt{n} r^{-1} \mid \ni) \le \tfrac18, \\ \nonumber
		& \pr{n}{(a_1,a_2)}{,(b_1,b_2)}(\se{2}{p}-a_2\le -m\sqrt{n} r^{-1} \mid \ni) \le \tfrac18, \\  \nonumber &  \pr{n}{(a_1,a_2)}{,(b_1,b_2)}(\se{2}{p}-a_2\ge m\sqrt{n} r^{-1} \mid \ni) \le \tfrac18,
	\end{align}
	for all large enough $n$. Applying an union bound, leads to \eqref{toshow6}. We shall prove the first two inequalities: \eqref{bf1} and \eqref{bf2}, the remaining two follows in a similar fashion.
	
\noindent\textbf{Proof of Eq.~\eqref{bf2}.}	Similar to the proof of \eqref{b23} and \eqref{b24}, by stochastic monotonicity for random bridges (Proposition \ref{smrw}) we have
	\begin{align}\label{eeb9}
		\pr{n}{(a_1,a_2)}{,(b_1,b_2)}(\se{1}{p}-a_1\le -m\sqrt{n} r^{-1} \mid \ni) \le \pr{n}{(a_1,a_2)}{,(b_1,b_2)}(\se{1}{p}-a_1\le -m\sqrt{n} r^{-1})
	\end{align}
We invoke the KMT coupling 	for random bridges \cite{xd} to define Brownian bridge $B_1, B_2$ on $[0,1]$ on a common probability space such that \eqref{kmtbr1} holds. By \eqref{kmtbr1}, with probability $1-\frac1n$,
\begin{align*}
	\se{1}{p}-a_1 & \ge \sqrt{n}B_1(p/n)+\tfrac{p}{n}(b_1-a_1)-\Con \log^3 n \\ & = \sqrt{n}B_1(p/n)-\sqrt{n} r^{-2}-\Con \log^3 n \ge \sqrt{n}B_1(p/n)-2\sqrt{n}r^{-1}.
\end{align*}
for large enough $n$. Since $p/n$ is of the order $r^{-3}$, $B_{1}(p/n)$ fluctuates of the order $r^{-3/2}$. By Brownian bridge one point tail estimates, there exists a constant $c>0$ such that for all $m\ge 3$
$$\pr{n}{(a_1,a_2)}{,(b_1,b_2)}(B_{1}(p/n)> -(m-2)r^{-1}) \ge 1-e^{-c m^2r}.$$
Thus by an union bound we have
\begin{align}\label{efre}
	\pr{n}{(a_1,a_2)}{,(b_1,b_2)}(\se{1}{p}-a_1> -m\sqrt{n} r^{-1}) \ge 1-\tfrac1n-e^{-cm^2r}.
\end{align}
Taking $m,n$ are large enough, ensure that $1-\frac1n-e^{-cm^2r}\ge \frac78$. This verifies \eqref{bf1}.

\medskip

\noindent\textbf{Proof of Eq.~\eqref{bf2}.}  By stochastic monotonicity (Proposition \ref{smrw}) at the starting points,
	\begin{align}\nonumber
	&	\pr{n}{(a_1,a_2)}{,(b_1,b_2)}(\se{1}{p}-a_1\ge m\sqrt{n} r^{-1} \mid \ni) \\ \nonumber & \le \pr{n}{(a_1+\sqrt{n}r^{-1},a_2)}{,(b_1,b_2)}(\se{1}{p}-a_1\ge m\sqrt{n} r^{-1} \mid \ni) \\ & \le \frac{\pr{n}{(a_1+\sqrt{n}r^{-1},a_2)}{,(b_1,b_2)}(\se{1}{p}-a_1\ge m\sqrt{n} r^{-1})}{\pr{n}{(a_1+\sqrt{n}r^{-1},a_2)}{,(b_1,b_2)}(\ni)}. \label{eeb0}
	\end{align}
Using an argument similar to the derivation of \eqref{efre}, we find that
	\begin{align}\label{eeb1}
		\pr{n}{(a_1+\sqrt{n}r^{-1},a_2)}{,(b_1,b_2)}(\se{1}{p}-a_1\ge m\sqrt{n} r^{-1}) \le \tfrac1n+e^{-cm^2r}.
	\end{align}
	This gives an upper bound for the numerator of \eqref{eeb0}. For the denominator, recall the event $\m{SC}_{(a_1,a_2)}^{(b_1,b_2)}$ and the Brownian bridges $B_1, B_2$ from \eqref{kmtbr1}. Note that on the event
$$\m{SC}_{(a_1+\sqrt{n}r^{-1},a_2)}^{(b_1,b_2)}\cap \{\inf_{x\in [0,1]} (B_1(x)-B_2(x)) \ge -\frac12r^{-1}\},$$
for large enough $n$ we have
	\begin{align*}
		\iks & \ge \sqrt{n}B_1(k/n)+a_1+\sqrt{n}r^{-1}+\tfrac{k}{n}(b_1-a_1)-\Con (\log n)^3 \\ & \ge \sqrt{n}B_2(k/n)+\tfrac12\sqrt{n}r^{-1}+a_2+\tfrac{k}{n}(b_2-a_2)-2\Con (\log n)^3 \\ & \ge \iiks+\tfrac12\sqrt{n}r^{-1}-3\Con(\log n)^3 \ge \iiks.
	\end{align*}
	where we used that $|a_1-a_2|\le (\log n)^{3/2}$, $b_1\ge b_2$, and $r\le (\log n)^{3/2}$. Thus for large enough $n$,
	\begin{align*}
		\pr{n}{(a_1+\sqrt{n}r^{-1},a_2)}{,(b_1,b_2)}(\ni) & \ge
		\pr{n}{(a_1+\sqrt{n}r^{-1},a_2)}{,(b_1,b_2)}\bigg(\inf_{x\in [0,1]} (B_1(x)-B_2(x))\ge -\tfrac12r^{-1}\bigg) \\ & \quad -\pr{n}{(a_1+\sqrt{n}r^{-1},a_2)}{,(b_1,b_2)}\bigg(\neg\m{SC}_{(a_1+\sqrt{n}r^{-1},a_2)}^{(b_1,b_2)}\bigg) \\ & \ge \Con r^{-2}-\tfrac1n  \ge \tfrac12\Con r^{-2},
	\end{align*}
	where the penultimate inequality follows from \eqref{kmtbr1} and Brownian bridge calculations (see Lemma 2.11 in \cite{ch16} for example). Combining \eqref{eeb1} and the above lower bound we have
	\begin{align*}
		\mbox{r.h.s.~\eqref{eeb0}} \le \tfrac{2}{\Con}\left(\tfrac{r^{2}}{n}+r^2e^{-cm^2r}\right) \le \tfrac18,
	\end{align*}
	for all large enough $n$ and $m$ (as $r\le (\log n)^{3/2}$).
\end{proof}

\begin{corollary}\label{l:niexp}  Fix any $M>0$ and $n\ge 1$. Suppose $|a_i|,|b_i| \le M\sqrt{n}$ for $i=1,2$. There exists a constant $\Con=\Con(M)>0$ such that
	\begin{align*}
	& \pr{n}{(a_1,a_2)}{,(b_1,b_2)}(\ni) \le \Con \!\cdot\! \pr{\lfloor n/4 \rfloor}{(a_1,a_2)}{}(\til{\ni})\pr{\lfloor n/4 \rfloor}{(b_1,b_2)}{}(\til{\ni}) \\ &  \pr{n}{(a_1,a_2)}{,(b_1,b_2)}(\ni) \ge \tfrac1\Con \!\cdot\! \pr{\lfloor n/4 \rfloor}{(a_1,a_2)}{}(\til{\ni})\pr{\lfloor n/4 \rfloor}{(b_1,b_2)}{}(\til{\ni}).
	\end{align*}
where $\til{\ni}:=\{\se{1}{k}\ge \se{1}k \mbox{ for all }k\in \ll2,n/4\rr\}.$
\end{corollary}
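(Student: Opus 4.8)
The goal is a two-sided bound comparing the non-intersection probability of two independent $n$-step random bridges from $(a_1,a_2)$ to $(b_1,b_2)$ (with $|a_i|,|b_i|\le M\sqrt n$) to the product of the non-intersection probabilities of two independent $\lfloor n/4\rfloor$-step random walks started at $(a_1,a_2)$ and at $(b_1,b_2)$ respectively. The natural idea is to split the time interval $\ll1,n\rr$ into three pieces: a left block $\ll1,\lfloor n/4\rfloor\rr$, a middle block $\ll\lfloor n/4\rfloor,n-\lfloor n/4\rfloor\rr$, and a right block $\ll n-\lfloor n/4\rfloor,n\rr$. Conditioning on the values $(S_1(\lfloor n/4\rfloor),S_2(\lfloor n/4\rfloor))$ and $(S_1(n-\lfloor n/4\rfloor),S_2(n-\lfloor n/4\rfloor))$, the bridge decomposes into (i) a random \emph{bridge} of length $\lfloor n/4\rfloor$ from $(a_1,a_2)$ to those left-block endpoints, (ii) a random bridge of length $\approx n/2$ in the middle, and (iii) a random bridge of length $\lfloor n/4\rfloor$ from those right-block endpoints to $(b_1,b_2)$. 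The strategy is: use Lemma \ref{lem:compare} to replace the length-$\lfloor n/4\rfloor$ bridges at the two ends by genuine random \emph{walks} (up to multiplicative constants, valid when the relevant increments are $O(\sqrt n)$); for the middle block use Lemma \ref{unini} (lower bound, when the intermediate endpoints are $\delta\sqrt n$-separated and $O(\sqrt n(\log n)^{3/2})$) and Lemma \ref{l:class}/Lemma \ref{genni} (upper bound); and then integrate over the intermediate endpoint locations using the entrance-law and diffusivity estimates.

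More concretely, for the \textbf{upper bound} I would write $\pr{n}{(a_1,a_2)}{,(b_1,b_2)}(\ni)$ as an expectation over $(S_1(\lfloor n/4\rfloor),S_2(\lfloor n/4\rfloor))$ of the left-block non-intersection probability times the conditional probability that the remaining $\ll\lfloor n/4\rfloor,n\rr$ portion stays non-intersecting; bound the latter trivially by the non-intersection probability of the right-most length-$\lfloor n/4\rfloor$ sub-bridge, which by Lemma \ref{genni} (after checking its hypotheses: $b_i=O(\sqrt n)$, and the left-block endpoints are $O(\sqrt n(\log n)^{3/2})$ with difference controlled — the latter needs a union bound against the tail event that the difference is large) is $O(n^{-1/2})$ times a harmless factor; the left-block bridge density is comparable to a random-walk density by Lemma \ref{lem:compare} \eqref{eb7}, giving $\Con\,\pr{\lfloor n/4\rfloor}{(a_1,a_2)}{}(\widetilde{\ni})$. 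Since $\pr{\lfloor n/4\rfloor}{(b_1,b_2)}{}(\widetilde{\ni})\ge \Con^{-1}n^{-1/2}$ by Lemma \ref{l:class} (using $b_1\ge b_2$, which is part of the setup via the $\ni$ conditioning context, or at worst $\max\{b_1-b_2,1\}=O(\sqrt n)$ absorbable into $\Con(M)$), the spare $n^{-1/2}$ can be traded for $\pr{\lfloor n/4\rfloor}{(b_1,b_2)}{}(\widetilde{\ni})$, producing the claimed product upper bound.

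For the \textbf{lower bound} I would restrict the expectation to the favorable event that the left-block walk ends with $S_1(\lfloor n/4\rfloor)-S_2(\lfloor n/4\rfloor)\ge\delta\sqrt n$ and $|S_i(\lfloor n/4\rfloor)|\le M'\sqrt n$, and similarly (run backwards from $(b_1,b_2)$) that $S_1(n-\lfloor n/4\rfloor)-S_2(n-\lfloor n/4\rfloor)\ge\delta\sqrt n$ and $|S_i(n-\lfloor n/4\rfloor)|\le M'\sqrt n$; on this event the middle block's non-intersection probability is bounded below by a constant via Lemma \ref{unini}. Using Lemma \ref{lem:compare} \eqref{eb8} to replace the two end bridges by random walks (the indicator $|S_{\lfloor n/4\rfloor}-S_{n-\lfloor n/4\rfloor}|\le\widetilde M\sqrt n$ is automatic on the favorable event) and Lemma \ref{tailni} to see that, conditioned on $\widetilde{\ni}$, the endpoint of a length-$\lfloor n/4\rfloor$ walk is $\delta\sqrt n$-separated and $O(\sqrt n)$ with probability bounded below, one gets $\pr{n}{(a_1,a_2)}{,(b_1,b_2)}(\ni)\ge\Con^{-1}\pr{\lfloor n/4\rfloor}{(a_1,a_2)}{}(\widetilde{\ni})\,\pr{\lfloor n/4\rfloor}{(b_1,b_2)}{}(\widetilde{\ni})$. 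This is essentially the argument already used inside the proof of Lemma \ref{l:rpass} (Steps 3–4) and in Lemma \ref{crude}, so most pieces are in place.

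\textbf{Main obstacle.} The delicate point is uniformity and the bookkeeping of the multiplicative constants: Lemma \ref{lem:compare} requires that the bridge-to-modified-bridge comparison endpoints differ by $O(\sqrt n)$, and Lemma \ref{genni} requires the intermediate endpoints to satisfy $|a_i|\le\sqrt n(\log n)^{3/2}$ with difference $\le(\log n)^{3/2}$ — neither holds deterministically, so one must carve out the complementary tail events (the left-block displacement being atypically large) using the diffusivity/entrance-law estimates Lemma \ref{tailni}, Corollary \ref{tailni2}, and the sub-exponential tails, and show their contribution is negligible relative to the $n^{-1/2}$-scale main terms. Threading this — especially for the upper bound, where one cannot simply condition on a favorable event — while keeping all constants depending only on $M$ is the part that needs care; the rest is a repackaging of estimates already established in Appendix \ref{app2}.
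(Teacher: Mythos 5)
Your lower bound is essentially the paper's argument: restrict to the event that the two quarter-walk endpoints land in $R_{n,\delta}$ and are $O(\sqrt n)$ (Lemma \ref{tailni}), use Lemma \ref{unini} for the middle half, and pass from the bridge to the $(n;\lfloor n/4\rfloor,\lfloor n/4\rfloor)$-modified bridge via \eqref{eb8}; this matches the proof of Corollary \ref{l:niexp} in the paper (and the surrounding Steps 3--4 of Proposition \ref{l:rpass}), so that half is fine.

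The upper bound, however, has a genuine gap. First, Lemma \ref{genni} cannot be invoked with the time-$\lfloor n/4\rfloor$ (or $3\lfloor n/4\rfloor$) values of the bridge playing the role of its starting points: that lemma requires $|a_1-a_2|\le(\log n)^{3/2}$, whereas under the bridge law (a fortiori on the non-intersection event) the gap $\se{1}{\lfloor n/4\rfloor}-\se{2}{\lfloor n/4\rfloor}$ is typically of order $\sqrt n$; this is an order-one-probability event, not a tail you can union-bound away. Second, the intermediate estimate you aim for --- that the conditional non-intersection probability of the right portion is $O(n^{-1/2})$ --- is simply false when $b_1-b_2$ is of order $\sqrt n$ (the right portion then avoids crossing with probability of order one), so the chain $\pr{n}{(a_1,a_2)}{,(b_1,b_2)}(\ni)\le \Con\,\pr{\lfloor n/4\rfloor}{(a_1,a_2)}{}(\til{\ni})\cdot n^{-1/2}$ breaks even though the corollary's conclusion remains true in that regime. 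Third, the ``trade'' of the spare $n^{-1/2}$ for $\pr{\lfloor n/4\rfloor}{(b_1,b_2)}{}(\til{\ni})$ uses the lower bound of Lemma \ref{l:class}, which needs $b_1\ge b_2$; the corollary makes no such assumption, and when $b_1<b_2$ that factor can be far smaller than $n^{-1/2}$. The paper avoids all of this: for the upper bound no quantification is needed at all. Apply \eqref{eb7} with $\delta_1=\delta_2=\tfrac14$ (its only hypothesis is $|a_i-b_i|\le 2M\sqrt n$, with no constraint on intermediate values --- the constraint $|x_p-x_{n-q}|\le\til M\sqrt n$ enters only in the lower-direction comparison \eqref{eb8}) to dominate the bridge law by a constant times the modified-bridge law; under the latter the first and last quarters are independent random walks started from $(a_1,a_2)$ and $(b_1,b_2)$ respectively, and $\ni$ implies non-intersection on each quarter, so integrating the indicator of $\ni$ immediately yields $\Con\cdot\pr{\lfloor n/4\rfloor}{(a_1,a_2)}{}(\til{\ni})\,\pr{\lfloor n/4\rfloor}{(b_1,b_2)}{}(\til{\ni})$, uniformly in the signs and sizes of $a_1-a_2$ and $b_1-b_2$.
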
	
\begin{proof} The upper bound follows by applying \eqref{eb7} with $\delta_i=\frac14$ and integrating over the non-intersection event. Let us focus on the lower bound. For simplicity we will drop the floor functions from $\lfloor n/4 \rfloor$.  By Lemma \ref{tailni}, we can choose a constant $\til{M}$ depending only on $M$ such that for all $|c_i|\le M\sqrt{n}$ with $|x_i|\le M\sqrt{n}$, we have
	\begin{align}\label{eb12}
		\pr{n/4}{(c_1,c_2)}{}\bigg(\bigcap_{i=1}^2 \{|\se{i}{n/4}| \le \til{M}\sqrt{n} \}\mid \til{\ni}\bigg) \ge \tfrac34.
	\end{align}
	By Lemma \ref{tailni} we can next choose a $\delta=\delta(M)>0$ small enough such that
	\begin{align}\label{eb13}
		\pr{n/4}{(c_1,c_2)}{}\bigg( (\se{1}{n/4},\se{2}{n/4}) \in R_{n,\delta}\mid \til{\ni}\bigg) \ge \tfrac34.
	\end{align}
	where $R_{n,\delta}$ is from \eqref{rnd}. By Lemma \ref{unini},
	there exists $\phi(\delta)>0$ so that for all $(x_1,x_2), (y_1,y_2) \in R_{n,\delta}$
	\begin{align}\label{eb14}
		\pr{n/2}{(x_1,x_2)}{,(y_1,y_2)}\bigg(\bigcap_{k=1}^{n/2} \{\se{1}{k}\ge \se{2}{k}\}\bigg) \ge \phi.
	\end{align}
	We next consider the events
	\begin{align*}
		\m{E}_1:=\left\{|\se{i}{ n/4}| \le \til{M}\sqrt{n} \mbox{ for }i=1,2 \right\}, \quad \m{E}_2:=\left\{|\se{i}{ 3n/4}| \le \til{M}\sqrt{n} \mbox{ for }i=1,2 \right\}.
	\end{align*}
	Using \eqref{eb8} with $\delta=\frac14$ we have
	\begin{align*}
		\pr{n}{(a_1,a_2)}{,(b_1,b_2)}(\ni) \ge \pr{n}{(a_1,a_2)}{,(b_1,b_2)}(\m{E}_1\cap\m{E}_2\cap\ni) & \ge  \Con^{-1}\!\cdot\! \til{\Pr}(\m{E}_1\cap\m{E}_2\cap\ni)\\ & =\Con^{-1} \!\cdot\!\til{\Pr}(\ni)\til{\Pr}(\m{E}_1\cap \m{E}_2 \mid {\ni}).
	\end{align*}
	for some $\Con>0$ depending on $\til{M},M$. Here $\til{\Pr}:=\tpr{(n;n/4,n/4)}{(a_1,a_2)}{,(b_1,b_2)}$ denotes the joint law of two independent $(n;n/4,n/4)$-modified random bridges of length $n$ starting at $(a_1,a_2)$ and ending at $(b_1,b_2)$ (see Definition \ref{def:mrb}). In view of our $\til{M}$ choice and by the definition of modified random bridges, we have $$\til{\Pr}(\m{E}_1\cap \m{E}_2 \mid {\ni}) = 	\pr{n/4}{(a_1,a_2)}{}\bigg(\bigcap_{i=1}^2 \{|\se{i}{n/4}| \le \til{M}\sqrt{n} \}\mid \til{\ni}\bigg)\pr{n/4}{(b_1,b_2)}{}\bigg(\bigcap_{i=1}^2 \{|\se{i}{n/4}| \le \til{M}\sqrt{n} \}\mid \til{\ni}\bigg)$$ which is lower bounded by $(3/4)^2$ from \eqref{eb12} and \eqref{eb13}. Furthermore, in view of \eqref{eb14}, we have
	$$\til{\Pr}(\ni) \ge \phi \cdot \pr{n/4}{(a_1,a_2)}{}(\til{\ni})\pr{n/4}{(b_1,b_2)}{}(\til{\ni}).$$
	We thus have the desired lower bound.
\end{proof}

We now analyze the $\m{Gap}_{\beta}$ event defined in \eqref{def:gp} under modified random bridge law. Fix any $M>0$, $n\ge 1$, and $(a_1,a_2),(b_1,b_2)\in \R^2$. Suppose $|a_i|,|b_i|\le M\sqrt{n}$ and $a_1\ge a_2$. Take $p,q\in \ll0,n\rr$ with $p+q\le n/2$ and $p\neq 0$. Suppose further that there exists $\rho\in (0,1)$ such that either $q\ge n\rho$ or $b_1-b_2\ge \rho\sqrt{n}$. Consider two independent $(n;p,q)$- modified random bridges $(\kis)_{k\in \ll1,n\rr,i=1,2}$ starting and ending at $(a_1,a_2)$ and $(b_1,b_2)$ respectively. We denote its law by $\tpr{(n;p,q)}{(a_1,a_2)}{,(b_1,b_2)}$. The following lemma asserts $\m{Gap}_{\beta}$ event is very likely under non-intersection.
\begin{lemma}\label{l:gapev}  Fix $\e,\rho\in (0,1)$ and $M>0$. There exist $\beta(\e,\rho,M)>0$, $n_0(\e,\rho,M)>0$, such that for all $n\ge n_0$
	\begin{align*}
		\tpr{(n;p,q)}{(a_1,a_2)}{,(b_1,b_2)}\left(\m{Gap}_{\beta} \mid \ni\right) \ge 1-\e.
	\end{align*}
\end{lemma}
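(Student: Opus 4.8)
I would prove the lemma by a union bound over the six events $\m{Gap}_{i,\beta}$, noting that each is decreasing in $\beta$; so it suffices to find, for every $i\in\ll1,6\rr$, a $\beta_i(\e,\rho,M)>0$ and an $n_{0,i}(\e,\rho,M)$ with $\tpr{(n;p,q)}{(a_1,a_2)}{,(b_1,b_2)}(\m{Gap}_{i,\beta_i}^c\mid\ni)\le\e/6$ for all admissible data and $n\ge n_{0,i}$, then take $\beta=\min_i\beta_i$, $n_0=\max_i n_{0,i}$. The main device is to condition on the four interface values $\Theta:=(S_1(p),S_2(p),S_1(n-q),S_2(n-q))$. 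By Definition \ref{def:mrb}, under the modified--bridge law the three blocks $\ll1,p\rr$, $\ll p,n-q\rr$, $\ll n-q,n\rr$ are, conditionally on $\Theta$, independent pairs of independent $\ffa$--bridges with the prescribed endpoints, and $\ni$ splits as the intersection of $\{S_1(p)\ge S_2(p)\}$, $\{S_1(n-q)\ge S_2(n-q)\}$ and three events $\ni^{\mathrm L},\ni^{\mathrm M},\ni^{\mathrm R}$, each depending on the corresponding block only. Hence the three blocks remain conditionally independent given $(\Theta,\ni)$, with $\m{Gap}_{1,\beta},\m{Gap}_{4,\beta}$ measurable w.r.t. the left block, $\m{Gap}_{2,\beta},\m{Gap}_{5,\beta}$ w.r.t. the right, and $\m{Gap}_{3,\beta},\m{Gap}_{6,\beta}$ w.r.t. the middle. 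Thus I would reduce to showing: (a) with $\tpr{}{}{}(\cdot\mid\ni)$--probability at least $1-\e/4$, $\Theta$ lies in a good set $\mathcal G(\rho,M)$ on which $|S_i(p)|,|S_i(n-q)|\le M'\sqrt n$, $S_1(p)\ge S_2(p)$, $S_1(n-q)\ge S_2(n-q)$, and both interface gaps are at least $\delta_0\sqrt n$ for a small $\delta_0(\rho)>0$; and (b) on $\mathcal G$, each of the three block conditional probabilities of the associated pair of $\m{Gap}$ events is at least $1-\e/12$ once $\beta$ is small and $n$ large.

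\textbf{Step 1 (goodness of $\Theta$).} The bounds $|S_i(p)|,|S_i(n-q)|\le M'\sqrt n$ and $S_1(p)\ge S_2(p)$, $S_1(n-q)\ge S_2(n-q)$ would follow from the exponential--tail and diffusive estimates in Lemma \ref{tailni} and stochastic monotonicity (Proposition \ref{smrw}), after passing from the modified bridge to a genuine bridge by Lemma \ref{lem:compare} and lower--bounding $\tpr{}{}{}(\ni)$ via Lemma \ref{l:class}, Lemma \ref{unini}, Lemma \ref{genni} and Corollary \ref{l:niexp}. The crucial interface--gap lower bounds are where the hypotheses enter: if $q\ge n\rho$, the right block is a true pair of $\ffa$--walks of length $q\ge n\rho$, so by \eqref{b21} in Lemma \ref{tailni} its gap at time $n-q$ under non--intersection is $\gtrsim\sqrt{n\rho}$; if instead $b_1-b_2\ge\rho\sqrt n$, then $S_1(n-q)-S_2(n-q)=b_1-b_2-O(\sqrt q)\ge\tfrac12\rho\sqrt n$ with high probability by Corollary \ref{tailni2}. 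The analogous lower bound at the left interface, $S_1(p)-S_2(p)\gtrsim\sqrt n$, holds when $p\ge n\rho$ (again by \eqref{b21} in Lemma \ref{tailni}) or when $a_1-a_2\ge\rho\sqrt n$, which covers every situation in which this lemma is invoked.

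\textbf{Step 2 (the three block estimates).} For the left block, conditionally on $(\Theta,\ni)$ the gap $k\mapsto S_1(k)-S_2(k)$ on $\ll1,p\rr$ is the difference of two independent $\ffa$--bridges conditioned to stay non--negative, and $\m{Gap}_{1,\beta}$ (growth at least $\beta k^{1/4}$) would follow from the classical fact recalled in the introduction from \cite{ritter} — a walk or bridge conditioned to stay positive grows at least like $k^{1/2-1/4}=k^{1/4}$ — made uniform over the diffusive window of endpoints by the Brownian meander limit, Corollary \ref{tailni2}, and Proposition \ref{smrw}; near $k=p$ one has $\beta k^{1/4}\le\beta p^{1/4}\ll\delta_0\sqrt n\le S_1(p)-S_2(p)$ on $\mathcal G$. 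The increment control $\m{Gap}_{4,\beta}$ ($S_1(k-1)-S_1(k)\le\beta^{-1}k^{1/8}$ on $\ll2,p\rr$) would come from the exponential increment tails (Assumption \ref{asp:in}(3), Lemma \ref{tailf}): unconditionally this event fails with probability at most $\sum_{k\ge2}\Con\,e^{-\beta^{-1}k^{1/8}/\Con}$, which vanishes as $\beta\downarrow0$ uniformly in $p,n$, and is transferred to the $(\Theta,\ni)$--conditional law either through the block decomposition (dividing only by a block non--intersection probability, bounded below by Lemma \ref{genni}/Lemma \ref{l:class}) or by conditioning further on the gap path, on which $\ni$ is measurable, together with the density--ratio bound of Lemma \ref{tailf}. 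The right block is identical after time reversal ($\m{Gap}_{2,\beta},\m{Gap}_{5,\beta}$). For the middle block the gap on $\ll p,n-q\rr$ is the difference of two independent $\ffa$--bridges between points with gap $\ge\delta_0\sqrt n$, conditioned non--negative over a window of length $\ge n/2$; by the KMT couplings \eqref{kmtbr1}, \eqref{kmtbr2} and Brownian bridge estimates this gap stays above $\tfrac12\delta_0\sqrt n\ge n^{1/4}$ for $n$ large, throughout $\ll p+1,n-q\rr$, with probability $1-O(1/n)$, giving $\m{Gap}_{3,\beta}$; and $\m{Gap}_{6,\beta}$ ($|S_1(k)-S_1(k-1)|\le\beta^{-1}\log n$ over at most $n$ steps) follows from a union bound $\Con\,n^{1-\beta^{-1}/\Con}\to0$ valid for $\beta$ small and $n\ge n_0$, transferred to the conditional law as above.

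\textbf{Main obstacle.} I expect the hardest part to be two intertwined points: making the meander--type and \cite{ritter}--type lower--barrier estimates behind $\m{Gap}_{1,\beta},\m{Gap}_{2,\beta},\m{Gap}_{3,\beta}$ uniform over the entire diffusive window of interface data and over all admissible block lengths $p,q$; and guaranteeing that the middle block's gap actually reaches the bulk scale $\sqrt n$ (hence $\gg n^{1/4}$), which genuinely hinges on the structural hypotheses on $q,b_1-b_2$ together with the largeness of $p$ present in every application. Both are to be handled by the interface conditioning, by stochastic monotonicity (Proposition \ref{smrw}) to reduce to canonical boundary data, and by the uniform non--intersection estimates assembled in Lemmas \ref{tailni}, \ref{genni}, Corollaries \ref{tailni2}, \ref{l:niexp} and the KMT couplings.
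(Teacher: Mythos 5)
Your overall architecture coincides with the paper's: a union bound over the six sub-events, the block structure of the modified bridge, the growth estimate of \cite{ritter} for $\m{Gap}_{1,\beta}$ and $\m{Gap}_{2,\beta}$, interface tightness from Lemma \ref{tailni} (the paper's \eqref{png}) feeding an invariance-principle argument for $\m{Gap}_{3,\beta}$, and increment tails for $\m{Gap}_{4,\beta}$--$\m{Gap}_{6,\beta}$. (The paper treats $\m{Gap}_{3,\beta}$ by a compactness argument over $\mathcal{P}_{n,\gamma}$ and $\m{Gap}_{6,\beta}$ by KMT, whereas you propose KMT for the former and a raw union bound for the latter; these variants are interchangeable.)

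There is, however, a genuine gap in your treatment of $\m{Gap}_{4,\beta}$ and $\m{Gap}_{5,\beta}$. Your primary transfer mechanism --- bound the unconditional failure probability by $\sum_{k\ge2}\Con e^{-\beta^{-1}k^{1/8}/\Con}=:c(\beta)$ and divide by the block non-intersection probability --- does not close: by Lemma \ref{l:class} (or Corollary \ref{l:niexp}) the relevant non-intersection probability is of order $\max\{a_1-a_2,1\}/\sqrt{n}$, and in every application $a_1-a_2=O(1)$ while $p\asymp n$, so the resulting bound is $c(\beta)\sqrt{n}$, which diverges for any fixed $\beta$; and $\beta$ must be chosen independently of $n$. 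This is exactly where the paper's proof works hardest: in \eqref{b31} it performs the union bound over the step index $k$ \emph{before} dividing, conditions on the first $k+1$ steps, and applies Lemma \ref{l:class} to the remaining $p-k$ steps, so that the $1/\sqrt{p}$ in the denominator cancels against a $1/\sqrt{p-k+1}$ in the numerator; the surviving term $\sqrt{p/(p-k+1)}\cdot k\cdot e^{-\beta^{-1}k^{1/8}/\Con}$ is summable uniformly in $p,n$ because the stretched-exponential decay beats the blow-up near $k=p$, and an additional estimate \eqref{b34} is needed to control the factor $\max\{\se{1}{k+1}-\se{2}{k+1},1\}$ produced by Lemma \ref{l:class} on the bad-increment event. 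Your fallback --- condition on the gap path, on which $\ni$ is measurable, and control the increments of $S_1$ given the gap increments via Lemma \ref{tailf} --- is a genuinely different idea that could also work, but it is only gestured at, and after conditioning on the interface values the left block becomes a pair of bridges, so the conditional law of the $S_1$-increments given the gap path is not the unconditional one and the density-ratio estimate would have to be redone in that setting. That your ``main obstacle'' paragraph singles out the meander estimates and the middle-block scale, rather than this step, indicates the difficulty was not identified.
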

\begin{proof}  Recall that $\m{Gap}_{\beta}$ event is intersection of six smaller `Gap' events: $\m{Gap}_{i,\beta}$ defined around \eqref{def:gp}. For simplicity write $\til\Pr$ for $\tpr{(n;p,q)}{(a_1,a_2)}{,(b_1,b_2)}$. We now analyze each `Gap' event separately.

\medskip\noindent \textbf{$\m{Gap}_{1,\beta}$ and $\m{Gap}_{2,\beta}$.} Note that for $k\in \ll1,p\rr$, $\iks-\iiks$ is itself a random walk. The $\ni$ event corresponds to the event of this random walk being non-negative. By classical result about growth of random walks conditioned to stay non-negative (see \cite[Theorem 2]{ritter}) it follows that one can choose $\beta$ small enough such that $\til\Pr(\m{Gap}_{1,\beta}\mid \ni)\ge 1-\frac{\e}{6}$. By the same argument one has $\til\Pr(\m{Gap}_{2,\beta}\mid \ni)\ge 1-\frac{\e}{6}$ for all large enough $n$ by choosing $\beta$ small enough.

\medskip\noindent \textbf{$\m{Gap}_{3,\beta}$.}
		Note that combining \eqref{b22}, \eqref{b23}, and \eqref{b24} from Lemma \ref{tailni} we have tightness of the endpoint of random walks conditioned on non-intersection. Combining this with \eqref{b21}, one can choose $\gamma$ small enough such that \begin{align}\label{png}
			\til\Pr\big((\se{1}{p},\se{2}{p}),(\se{1}{n-q},\se{2}{n-q} \mid \ni) \in \mathcal{P}_{n,\gamma}\big) \ge 1-\tfrac{\e}{12},
		\end{align}
		where
		\begin{align}\label{pnga}
			\mathcal{P}_{n,\gamma}:=\{(z_1,z_2)\in \R^2 : |z_i|\le \gamma^{-1}\sqrt{n}, z_1-z_2 \ge \gamma\sqrt{n}\}.
		\end{align}
		
		In other words, with probability $1-\frac{\e}{12}$, the endpoints of the middle portions of the modified random bridges are in $\mathcal{P}_{\gamma}$ when conditioned upon non-intersection . Thus,
		\begin{align}\label{rty}
			\til\Pr(\m{Gap}_{3,\beta}\mid \ni) \ge (1-\tfrac{\e}{12})\cdot \inf_{(a_1,a_2),(b_1,b_2)\in \mathcal{P}_{n,\gamma}} \pr{n-p-q+1}{(a_1,a_2)}{,(b_1,b_2)}(\m{Gap}_{3,\beta}\mid \ni).
		\end{align}
	Since the increments are drawn from a smooth density, for each fixed $n$, the probability $$\pr{n-p-q+1}{(a_1,a_2)}{,(b_1,b_2)}(\m{Gap}_{3,\beta}\mid \ni)$$ is jointly continuous  with respect to  the starting and ending points of the random bridge. As $\mathcal{P}_{n,\gamma}$ is closed, the infimum in \eqref{rty} is attained at some point $(a_1^*,a_2^*), (b_1^*,b_2^*) \in \mathcal{P}_{n,\gamma}$. Take any subsequential limit of $\frac{1}{\sqrt{n}}(a_1^*,a_2^*), \frac{1}{\sqrt{n}}(b_1^*,b_2^*)$ say $(u_1,u_2), (v_1,v_2)$. Then $|u_i|,|v_i|\le \gamma^{-1}$ and $u_1-u_2, v_1-v_2 \ge \gamma$. By invariance principle for Brownian bridges, this conditional law under diffusive scaling converges to non-intersecting Brownian bridges (with variance $\int x^2\ffa(x)dx$) $(B_1,B_2)$ starting at $(u_1,u_2)$ ending at $(v_1,v_2)$. We have $\Pr(\inf_{x\in [0,1]} (B_1(x)-B_2(x))>0)=1$. This implies along this subsequence the limit of $\pr{n-p-q+1}{(a_1^*,a_2^*)}{,(b_1^*,b_2^*)}(\m{Gap}_{3,\beta}\mid \ni)$ is $1$. Since this holds for all subsequences, we thus see that for all large enough $n$, r.h.s.~\eqref{rty} can be made at least $1-\tfrac{\e}{6}$.
		
\medskip\noindent\textbf{$\m{Gap}_{4,\beta}$ and $\m{Gap}_{5,\beta}$.} We shall first show $\m{Gap}_{4,\beta}$ happens with high probability under non-intersection. Note that this event only depends on the first part of the modified random bridge (which is just two independent pure random walk) independent of the other two parts. Hence	
		\begin{align}
			& \til\Pr(\neg\m{Gap}_{4,\beta}\mid \ni) \\ & = \til\Pr\bigg(\bigcap_{k=2}^p \{\se{1}{k}-\se{1}{k-1}\ge \beta^{-1} k^{1/8}\} \mid \bigcap_{k=2}^p \{\se{1}{k}\ge \se{2}{k}\} \bigg)  \\ & \le \sum_{k=2}^p \frac{\til\Pr\bigg(\{\se{1}{k}-\se{1}{k-1}\ge \beta^{-1} k^{1/8}\} \cap \bigcap_{j\in \{2\}\cup \ll k+2,p\rr} \{\se{1}{k}\ge \se{2}{k}\} \bigg)}{\til\Pr\bigg(\bigcap_{k=2}^p \{\se{1}{k}\ge \se{2}{k}\} \bigg)}, \label{b31}
		\end{align}
	where the above inequality follows via an union bound.
	Since under $\til\Pr$, $(\se{1}{\ell},\se{2}{\ell})_{\ell\in \ll1,p\rr}$ are two independent random walks starting from $(a_1,a_2)$. From \eqref{b21m} we get that
	\begin{align}
		\til\Pr\bigg(\bigcap_{k=2}^p \{\se{1}{k}\ge \se{2}{k}\} \bigg) \ge \frac{\Con^{-1}}{\sqrt{p}}\cdot \til\Pr(\se{1}{2}\ge \se{2}{2}).
	\end{align}
	\begin{align}
		& \til\Pr\bigg(\{\se{1}{k}-\se{1}{k-1}\ge \beta^{-1} k^{1/8}\} \cap \bigcap_{j\in \{2\}\cup \ll k+2,p\rr} \{\se{1}{k}\ge \se{2}{k}\} \bigg) \\ & \le \til\Ex\bigg[\ind_{\se{1}{2}\ge \se{2}{2}}\ind_{\se{1}{k}-\se{1}{k-1}\ge \beta^{-1} k^{\frac18}}\!\cdot\! \til\Ex\bigg[\prod_{j=k+2}^{p}\ind_{\se{1}{j}\ge \se{2}{j}} \mid \sigma\big((\se{1}{\ell},\se{2}{\ell})_{\ell\in \ll1,k+1\rr}\big)\bigg]\bigg] \label{b32}
	\end{align}
	By Lemma \ref{l:class}, we have the following bound for the interior conditional expectation above:
	\begin{align}
	 \til\Ex\bigg[\prod_{j=k+2}^{p}\ind_{\se{1}{j}\ge \se{2}{j}} \mid \sigma\big((\se{1}{\ell},\se{2}{\ell})_{\ell\in \ll1,k+1\rr}\big)\bigg] \le  \frac{\Con\!\cdot\! \max\{\se{1}{k+1}-\se{2}{k+1},1\}}{\sqrt{p-k+1}}. \label{b33}
	\end{align}	
Under $\til\Pr$, the increments of $\se{1}{\cdot}$ and $\se{2}{\cdot}$  are independent and distributed as $\ffa$ which has exponential tails by assumption.  We now claim that
\begin{equation}\label{b34}
	\begin{aligned}
		& \til\Ex\bigg[\ind_{\se{1}{2}\ge \se{2}{2}}\ind_{\se{1}{k}-\se{1}{k-1}\ge \beta^{-1} k^{1/8}}\!\cdot\! \max\{\se{1}{k+1}-\se{2}{k+1},1\}\bigg] \\ & \hspace{2cm}\le \Con \cdot k\cdot e^{-\frac1\Con \beta^{-1}k^{1/8}} \cdot \til\Pr(\se{1}{2}\ge \se{2}{2})
	\end{aligned}
\end{equation}
We shall prove \eqref{b34} later. Assuming it, combining the estimates from \eqref{b32}, \eqref{b33}, and \eqref{b34} we get
\begin{align*}
	\eqref{b31}	\le \Con^2 \sum_{k=2}^p  \sqrt{\tfrac{p}{p-k+1}} \cdot k \cdot e^{-\frac1\Con \beta^{-1}k^{1/8}}.
\end{align*}
Taking $\beta$ small enough, the right-hand side can be made smaller than $\e/6$. Thus, $\til\Pr(\m{Gap}_{4,\beta}\mid \ni) \ge 1-\frac\e{6}$ for all small enough $\beta$. An exact same argument leads to $\til\Pr(\m{Gap}_{5,\beta}\mid \ni) \ge 1-\frac\e{6}$ for all small enough $\beta$ as well.

To prove Eq.~\eqref{b34}, we start by writing $X(k):=\se{1}{k}-\se{1}{k-1}$ and $Y(k):=\se{2}{k}-\se{2}{k-1}$. For $k=2$, observe that
\begin{equation*}
	\begin{aligned}
		\mbox{l.h.s.~\eqref{b34}}& \le  \til\Ex\bigg[\ind_{\se{1}{2}\ge \se{2}{2}}\!\cdot\! \max\{\se{1}{3}-\se{2}{3},1\}\bigg] \\ & \le  \til\Ex\bigg[\ind_{\se{1}{2}\ge \se{2}{2}}\!\cdot\! \max\{\se{1}{2}-\se{2}{2},1\}\bigg] + \til\Ex\bigg[\ind_{\se{1}{2}\ge \se{2}{2}}\!\cdot\! \max\{X(3)-Y(3),1\}\bigg].
	\end{aligned}
\end{equation*}
By \eqref{b213}, the first expectation above is less than $\Con' \cdot \til\Pr(\se{1}{2}\ge \se{2}{2})$. For the second expectation by independence we get
$$\til\Ex\bigg[\ind_{\se{1}{2}\ge \se{2}{2}}\cdot \max\{X(3)-Y(3),1\}\bigg]=\til\Pr(\se{1}{2}\ge \se{2}{2})\cdot \til\Ex[\max\{X(3)-Y(3),1\}].$$
Since $X(3)$ and $Y(3)$ have exponential tails, by adjusting the constant $\Con'$ we get $\mbox{l.h.s.~\eqref{b34}} \le \Con' \cdot \til\Pr(\se{1}{2}\ge \se{2}{2})$. This proves \eqref{b34} for $k=2$ upon adjusting $\Con$. For $k\ge 3$, using the fact that $\max\{\sum_i A_i,1\}\le \sum_i \max\{A_i,1\}$, we get
\begin{equation}\label{b342}
	\begin{aligned}
		\mbox{l.h.s.~\eqref{b34}}  & \le \sum_{i=3}^{k+1}\til\Ex\bigg[\ind_{\se{1}{2}\ge \se{2}{2}}\ind_{X(k)\ge \beta^{-1} k^{1/8}}\!\cdot\! \max\{X(i)-Y(i),1\}\bigg] \\ & \hspace{2cm}+ \til\Ex\bigg[\ind_{\se{1}{2}\ge \se{2}{2}}\ind_{X(k)\ge \beta^{-1} k^{1/8}}\!\cdot\! \max\{\se{1}{2}-\se{2}{2},1\}\bigg] \\ & \le \sum_{i=3}^{k+1}\til\Pr(\se{1}{2}\ge \se{2}{2}) \cdot \til\Ex\bigg[\ind_{X(k)\ge \beta^{-1} k^{1/8}}\!\cdot\! \max\{X(i)-Y(i),1\}\bigg] \\ & \hspace{2cm}+ \til\Ex\bigg[\ind_{\se{1}{2}\ge \se{2}{2}}\!\cdot\! \max\{\se{1}{2}-\se{2}{2},1\}\bigg] \cdot \til\Pr\big(X(k)\ge \beta^{-1} k^{1/8}\big).
	\end{aligned}
\end{equation}
Using \eqref{b213} again, we have $\til\Ex\bigg[\ind_{\se{1}{2}\ge \se{2}{2}}\!\cdot\! \max\{\se{1}{2}-\se{2}{2},1\}\bigg] \le \Con'\cdot \til\Pr(\se{1}{2}\ge \se{2}{2})$. Using exponential tail estimates for $X(\ell), Y(\ell)$ we obtain that
\begin{align*}
	& \til\Ex\bigg[\ind_{X(k)\ge \beta^{-1} k^{1/8}}\!\cdot\! \max\{X(i)-Y(i),1\}\bigg] \le \Con \exp\big(-\tfrac1\Con \beta^{-1}k^{1/8}\big), \\ & \til\Pr\big(X(k)\ge \beta^{-1} k^{1/8}\big) \le  \Con \exp\big(-\tfrac1\Con \beta^{-1}k^{1/8}\big).
\end{align*}
Putting this estimates back in r.h.s.~\eqref{b342} we arrive at \eqref{b34}.

\medskip\noindent \textbf{$\m{Gap}_{6,\beta}$.} From \eqref{png}, we get that the endpoints of the middle part of the modified random walk are in $\mathcal{P}_{n,\gamma}$ (defined in \eqref{pnga}) with probability $1-\frac{\e}{12}$. Whenever the endpoints are in $\mathcal{P}_{n,\gamma}$, by Lemma \ref{unini}, the probability of non-intersection of the middle portion of the walk is lower bounded by some constant $\phi>0$. Under this event, we may use the KMT coupling \cite{xd} on the middle portion bridge of the modified random bridge to deduce that
		$$\Pr^{n-p-q+1;(c_1,c_2),(d_1,d_2)}(|\iks-\se{1}{k-1}|\ge \beta^{-1}\log n) 
		\le \tfrac1{n^2}.$$
		for all small enough $\beta$ and for all $(c_1,c_2),(d_1,d_2)\in \mathcal{P}_{n,\gamma}$. Combining all these estimates, by a union bound we have the desired result.
\end{proof}

We end this section with a modulus of continuity estimate for non-intersecting random walks.
\begin{lemma}\label{mret} Fix $M,\gamma>0$. There exists $n_0(M,\gamma)>0$ and $\delta(M,\gamma)>0$ such that for all $n\ge n_0$ and for all $0\le a_1-a_2\le M+2\log\log n$ we have (recall the modulus of continuity $\omega_{\delta}$ from \eqref{eq:modcont})
	\begin{align*}
		\sum_{i=1}^2\Pr^{n;(a_1,a_2)}\bigg(\omega_{\delta}(\se{i}{\cdot},\ll 1,n\rr) \ge \gamma \sqrt{n}\mid \ni_0\ll2,n\rr\bigg) \le \e,
	\end{align*}

\end{lemma}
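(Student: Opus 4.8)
\textbf{Proof plan for Lemma \ref{mret}.} Write $\Pr=\Pr^{n;(a_1,a_2)}$ and $\ni:=\ni_0\ll2,n\rr$, and recall $\omega_\delta(S_i,\ll1,n\rr)=\sup_{|j_1-j_2|\le\delta n}|S_i(j_1)-S_i(j_2)|$. Since both the oscillation event and $\ni$ are unchanged by shifting $S_1,S_2$ by a common constant, we may assume $a_2=0$, so $0\le a_1\le r_n:=M+2\log\log n$; in particular $r_n=o(\sqrt n)$ and we take $n\ge n_0$ large enough that $r_n\le\sqrt n$ and various thresholds below hold. It suffices to treat $i=1$ and bound the conditional probability by $\e/2$. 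First, $\Pr(\omega_\delta(S_1,\ll1,n\rr)\ge\gamma\sqrt n\mid\ni)=\Pr(\{\omega_\delta(S_1,\ll1,n\rr)\ge\gamma\sqrt n\}\cap\ni)/\Pr(\ni)$, and since $a_1\ge a_2$, Lemma \ref{l:class} (applied with $\ni_0\ll2,n-1\rr$, then conditioning on the last increment, whose sign is favourable with probability $\tfrac12$) gives $\Pr(\ni)\ge\Con^{-1}n^{-1/2}$; so it remains to bound the numerator by $\tfrac\e2\Con^{-1}n^{-1/2}$. A warning: one cannot simply couple $(S_1,S_2)$ to a pair of Brownian motions over all of $\ll1,n\rr$, since the $O(\log n)$ displacement of the non-intersection barrier under the KMT coupling would inflate the (rare) probability $\Pr(\ni)\asymp n^{-1/2}$ by a factor $\asymp\log n$, destroying the uniformity in $n$.

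The remedy is to \emph{peel off the strongly constrained region near the start} using the meander-type estimates, and reduce the rest to a bridge whose endpoints are separated at the diffusive scale. Apply Corollary \ref{tailni2} (with the conditioning taken to be $\ni_0\ll2,n\rr$, which is legitimate by the remark following Lemma \ref{tailni}; note $|a_1-a_2|\le r_n\le n/\log n$), with $\gamma\mapsto\gamma/4$ and $\e\mapsto\e/8$, to get $\rho=\rho(\e,\gamma)\in(0,\tfrac14]$ such that, setting $m:=\lceil n\rho\rceil$ and $m':=\lfloor m/2\rfloor$,
$$\Pr\Big(\sup_{k\le m,\,j\in\{1,2\}}|S_j(k)-a_j|\le\tfrac\gamma4\sqrt n\ \Big|\ \ni\Big)\ge 1-\tfrac\e8 .$$
Next, using \eqref{b21}--\eqref{b24} of Lemma \ref{tailni} (again with $\ni_0\ll2,n\rr$) at time $n$, together with the observation that conditioning on $\ni_0\ll2,n\rr$ makes the separation $S_1(m')-S_2(m')$ stochastically at least as large as under $\ni_0\ll2,m'\rr$ (a monotone-coupling fact in the spirit of Proposition \ref{smrw}: imposing the extra constraint on $\ll m',n\rr$ pushes $S_1$ up and $S_2$ down on $\ll1,m'\rr$, so that Lemma \ref{tailni}\eqref{b21} applies at time $m'$ as well), choose $\delta_1>0$ and $M_1>0$ (depending on $\e,\rho$) with $\Pr(\m{Sep}\mid\ni)\ge 1-\tfrac\e8$, where $\m{Sep}:=\{S_1(m')-S_2(m')\ge\delta_1\sqrt n,\ S_1(n)-S_2(n)\ge\delta_1\sqrt n,\ |S_j(m')|\vee|S_j(n)|\le M_1\sqrt n\ (j=1,2)\}$. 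Finally, for $\delta<\rho/2$ any $j_1\le j_2$ with $j_2-j_1\le\delta n$ has either $\{j_1,j_2\}\subset\ll1,m\rr$ or $\{j_1,j_2\}\subset\ll m',n\rr$, so $\{\omega_\delta(S_1,\ll1,n\rr)\ge\gamma\sqrt n\}\subset\m A^{\mathrm{st}}\cup\m A^{\mathrm{bk}}$ where $\m A^{\mathrm{st}},\m A^{\mathrm{bk}}$ are the corresponding oscillation events over $\ll1,m\rr$ and $\ll m',n\rr$; on the first displayed event the oscillation of $S_1$ over $\ll1,m\rr$ is at most $\tfrac\gamma2\sqrt n<\gamma\sqrt n$, hence $\Pr(\m A^{\mathrm{st}}\mid\ni)\le\tfrac\e8$.

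It remains to bound $\Pr(\m A^{\mathrm{bk}}\mid\ni)$. Since $\Pr(\neg\m{Sep}\mid\ni)\le\tfrac\e8$, work on $\m{Sep}$ and condition on $(S_1(m'),S_2(m'))=(z_1,z_2)$ and $(S_1(n),S_2(n))=(w_1,w_2)$. By the Markov property, conditionally on these values and on $\ni$ the process $(S_1,S_2)$ on $\ll m',n\rr$ is a pair of non-intersecting random bridges of length $\ell:=n-m'+1\asymp n$ from $(z_1,z_2)$ to $(w_1,w_2)$, and on $\m{Sep}$ both endpoints lie in the set $R_{\ell,\delta_1}$ of \eqref{rnd}. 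By Lemma \ref{unini} the \emph{unconditioned} non-intersection probability of this bridge pair is $\ge\phi=\phi(\delta_1)>0$, so the conditional oscillation probability is at most $\phi^{-1}$ times the unconditioned bridge probability $\pr{\ell}{(z_1,z_2)}{,(w_1,w_2)}\big(\sup_{|j_1-j_2|\le\delta n}|S_1(j_1)-S_1(j_2)|\ge\gamma\sqrt n\big)$. Now there is no rare event, so we may invoke the KMT coupling for random bridges (cf.\ \eqref{kmtbr1} and \cite{xd}): since $|z_j-w_j|\le 2M_1\sqrt n=O(\sqrt\ell)$, with probability $\ge 1-\ell^{-1}$ each $S_j$ stays within $\Con\log^3\ell$ of its linear interpolation plus $\sigma B_j(\cdot/\ell)$ for Brownian bridges $B_1,B_2$ of variance $\int x^2\ffa$. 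On that event, since the linear-drift contribution over a window of width $\delta n$ is $\le 3M_1\delta\sqrt n$, the event $\sup_{|j_1-j_2|\le\delta n}|S_1(j_1)-S_1(j_2)|\ge\gamma\sqrt n$ forces $\sup_{|s-t|\le2\delta}|B_1(s)-B_1(t)|\ge\tfrac{\gamma}{2\sigma}$ once $\delta<\gamma/(12M_1)$ and $n$ is large. By the classical L\'evy modulus-of-continuity bound for Brownian bridge this last probability tends to $0$ as $\delta\downarrow0$, uniformly; choose $\delta$ small (depending on $\gamma,\e,\rho,M_1,\phi$) so that it is $\le\tfrac\e{16}\phi$. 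Integrating over $(z,w)$ on $\m{Sep}$ and using $\phi^{-1}\ell^{-1}\le\tfrac\e{16}$ for $n$ large yields $\Pr(\m A^{\mathrm{bk}}\mid\ni)\le\tfrac\e8+\phi^{-1}(\ell^{-1}+\tfrac\e{16}\phi)\le\tfrac\e4$. Combining with the previous paragraph, $\Pr(\omega_\delta(S_1,\ll1,n\rr)\ge\gamma\sqrt n\mid\ni)\le\tfrac\e8+\tfrac\e4<\tfrac\e2$, and summing over $i\in\{1,2\}$ finishes the proof.

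\textbf{Main obstacle.} The genuine difficulty is the tension between the rareness of $\ni$ (probability $\asymp n^{-1/2}$) and the size of coupling errors: a whole-trajectory Brownian coupling loses a factor $\asymp\log n$ because it misplaces the non-intersection barrier by $O(\log n)$. The decomposition above is designed to route around this — the strongly constrained window near the start is handled by the meander-type estimates of Corollary \ref{tailni2} and Lemma \ref{tailni}, which already carry the correct asymptotics, while the remaining window is sandwiched between two diffusively separated endpoints, so its non-intersection probability is bounded below (Lemma \ref{unini}) and KMT applies without loss. A secondary point I expect to need a short argument for is the stochastic monotonicity claim that conditioning on the full non-intersection event (rather than only on non-intersection up to time $m'$) does not decrease the separation $S_1(m')-S_2(m')$, so that Lemma \ref{tailni}\eqref{b21} can be used at the intermediate time; this should follow from a monotone coupling in the style of Proposition \ref{smrw}.
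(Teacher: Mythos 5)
Your proposal is correct, and its skeleton — split the interval at a mesoscopic time $\asymp n\rho$ chosen via Corollary \ref{tailni2}, control the near-origin oscillation by the meander-type estimates, and in the bulk remove the non-intersection conditioning at the cost of a constant $\phi^{-1}$ obtained from a uniform lower bound on the non-intersection probability — is exactly the paper's. The one place you diverge is the bulk step. The paper exploits that $\Pr^{n;(a_1,a_2)}$ is a pair of \emph{walks}: the oscillation event over $\ll n\rho/2,n\rr$ is literally independent of the $\sigma$-algebra of the first $n\rho/2$ steps, so after paying $\phi^{-1}$ (forward non-intersection from diffusively separated, tight positions at time $n\rho/2$) it reduces directly to the \emph{unconditioned} random-walk modulus of continuity — no endpoint control at time $n$, no bridges, no KMT. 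You instead condition on both $(S_j(m'))$ and $(S_j(n))$, which forces you to (i) also establish separation and tightness at time $n$ via Lemma \ref{tailni}, (ii) invoke Lemma \ref{unini} for bridges, and (iii) run KMT plus the Brownian-bridge modulus to handle the resulting unconditioned bridge oscillation. Both routes work; yours is somewhat heavier but self-contained, and your explicit flagging (and correct resolution, via the Chebyshev/FKG monotonicity of the forward non-intersection probability in $S_1(m')-S_2(m')$) of why Lemma \ref{tailni}\eqref{b21} can be applied at the intermediate time under the \emph{full} conditioning $\ni_0\ll2,n\rr$ is a point the paper glosses over. Your diagnosis of the "main obstacle" — that a whole-trajectory KMT coupling is useless against an event of probability $\asymp n^{-1/2}$ — is also exactly the reason for the paper's decomposition.
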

\begin{proof} Fix $\gamma>0$. We write $\Pr$ for $\Pr^{n;(a_1,a_2)}$. By Corollary \ref{tailni2} one can choose $\rho$ such that
	\begin{align*}
		\Pr\bigg(\sup_{i=1,2}\omega_{\delta}(\se{i}{\cdot},\ll 1,n\rho\rr) \ge \gamma \sqrt{n}\mid \ni_0\ll2,n\rr\bigg) \le \e.
	\end{align*}
	Thus it suffices to control the modulus of continuity away from zero: on the interval $\ll n\rho/2,n\rr$ (assuming $\delta<\rho/2$). Towards this end let $I_{v}:=\{(x_1,x_2): |x_i|\le v^{-1}\sqrt{n}, x_1-x_2\ge v\sqrt{n}\}$. By Lemma \ref{tailni}, one can choose $v$ small enough to get $\Pr(\m{A}_v\mid \ni_0\ll2,n\rr) \ge 1-\e$ where $\m{A}_v:=\{(\se{1}{n\rho/2},\se{2}{n\rho/2})\in I_v\}$.
	Let $\mathcal{F}:=\sigma\big(\se{1}{n\rho/2},\se{2}{n\rho/2}\big)$. Note that
	\begin{equation}
		\Pr\Big(\omega_{\delta}(\se{i}{\cdot},\ll n\rho/2,n\rr)\ge \gamma\sqrt{n} \mid \ni_0\ll2,n\rr\Big)  \le \e +\frac{\Pr\big(\m{A}_v \cap \{\omega_{\delta}(\se{i}{\cdot},\ll n\rho/2,n\rr)\ge \gamma\sqrt{n}\} \cap \ni_0\ll2,n\rho/2\rr\big)}{\Pr(\ni_0\ll2,n\rr)}. \label{eq:modnonintersect}
	\end{equation}
Note that $\{\omega_{\delta}(\se{i}{\cdot},\ll n\rho/2,n\rr)\ge \gamma\sqrt{n}\}$ is independent of $\mathcal{F}$. By Lemma \ref{unini}, we have $\ind_{\m{A}_v}\cdot\Ex[\ind_{\ni_0\ll n\rho/2,n\rr} \mid \mathcal{F}] \ge \ind_{\m{A}_v}\phi$ for some $\phi>0$. Combining these two facts we get
\begin{align} \nonumber
	& \Pr\left(\m{A}_v \cap \{\omega_{\delta}(\se{i}{\cdot},\ll n\rho/2,n\rr)\ge \gamma\sqrt{n}\} \cap \ni_0\ll2,n\rho/2\rr\right) \\ & \nonumber =\Pr\left(\m{A}_v  \cap \ni_0\ll2,n\rho/2\rr\right)\Pr\left(\omega_{\delta}(\se{i}{\cdot},\ll n\rho/2,n\rr)\ge \gamma\sqrt{n}\right) \\ & \le \nonumber \phi^{-1}\cdot\Ex\left[\ind_{\m{A}_v \cap \ni_0\ll2,n\rho/2\rr}\Ex[\ind_{\ni_0\ll n\rho/2,n\rr}\mid\mathcal{F}]\right]\cdot \Pr\left(\omega_{\delta}(\se{i}{\cdot},\ll n\rho/2,n\rr)\ge \gamma\sqrt{n}\right) \\ & \le \nonumber \phi^{-1}\cdot\Pr(\m{A}_v\cap\ni_0\ll2,n\rr)\cdot \Pr\left(\omega_{\delta}(\se{i}{\cdot},\ll n\rho/2,n\rr)\ge \gamma\sqrt{n}\right) \\ & \le  \phi^{-1}\cdot \Pr(\ni_0\ll2,n\rr)\cdot \Pr\left(\omega_{\delta}(\se{i}{\cdot},\ll n\rho/2,n\rr)\ge \gamma\sqrt{n}\right). \label{b55}
\end{align}
Invoking the modulus of continuity of random walks we can choose $\delta$ small enough such that  $\Pr\left(\omega_{\delta}(\se{i}{\cdot},\ll n\rho/2,n\rr)\ge \gamma\sqrt{n}\right)$ is  at most $\e \phi$ for all large enough $n$. This, implies $$\eqref{b55}\le \e \cdot\Pr(\ni_0\ll2,n\rr).$$ Using this inequality we see that r.h.s.~\eqref{eq:modnonintersect} is at most $2\e$. Hence combining the near zero and away zero modulus of continuity we get the desired result by adjusting $\gamma$ and $\e$.
\end{proof}

\section{Supporting calculations} \label{appd}

In this section we provide a detailed verification of various tedious calculations. We first show how to go from \eqref{e:den1} to \eqref{e:den2}-\eqref{e:den4} under the change of variables $u_{i,j}=\log\big( t_{N+\lfloor j/2\rfloor-i+1,N-\lceil j/2 \rceil-i+2}\big)$ for $(i,j)\in K_N$. This follows from the fact that the  factor $\prod t_{i,j}^{-1}$ in \eqref{e:den1} is absorbed as the Jacobian of the change of variables, as well as the following four relations:
\begin{align}
	\label{tta} & \quad 	\prod_{j=1}^N\left(\frac{\tau_{2N-2j+2}\tau_{2N-2j}}{\tau_{2N-2j+1}^2}\right)^{\theta_j}  =\prod_{i=1}^N \left(e^{-\theta_i u_{i,2N-2i+2}}\prod_{j=1}^{N-i+1} e^{\theta_{N-j+1}(u_{i,2j-1}-u_{i,2j})} \prod_{j=1}^{N-i} e^{\theta_{N-j+1}(u_{i,2j+1}-u_{i,2j})}\right) \\
	\label{ttb}& \quad
	\sum_{i>j} \frac{t_{i-1,j}}{t_{i,j}} = \sum_{i=1}^{N}\sum_{j=1}^{N-i+1}e^{u_{i,2j-1}-u_{i,2j}}+\sum_{i=1}^{N-1}\sum_{j=1}^{N-i}e^{u_{i+1,2j}-u_{i,2j+1}}\\
	\label{ttc}& \quad
	\sum_{i\ge j>1} \frac{t_{i,j-1}}{t_{i,j}} = \sum_{i=1}^{N-1}\sum_{j=1}^{N-i} e^{u_{i,2j+1}-u_{i,2j}}+ \sum_{i=1}^{N-1} \sum_{j=1}^{N-i} e^{u_{i+1,2j}-u_{i,2j-1}} \\
	\label{ttd}& \quad \prod_{j=1}^N t_{j,j}^{(-1)^{N-j+1}\alpha}=\prod_{j=1}^N t_{N-j+1,N-j+1}^{(-1)^{j}\alpha}= \prod_{i=1}^N e^{(-1)^{i}u_{i,1}\alpha}.
\end{align}
While \eqref{ttd} is obvious, \eqref{tta}, \eqref{ttb} and \eqref{ttc} are shown below.  We continue with the same notations as in the proof of Theorem \ref{thm:conn}.

\medskip

\noindent\textbf{Verification of \eqref{tta}.} Note that from the transformation we have
\begin{align*}
	e^{u_{j-i+1,2N-2j+1}}=t_{i+2N-2j,i}, \qquad e^{u_{j-i+1,2N-2j+2}}=t_{i+2N-2j+1,i}.
\end{align*}
This yields
\begin{align*}
	\tau_{2N-2j}^{\theta_j} = \prod_{i=1}^{j} t_{i+2N-2j,i}^{\theta_j} = \prod_{i=1}^j 	e^{\theta_j u_{j-i+1,2N-2j+1}}=\prod_{i=1}^j 	e^{\theta_j u_{i,2N-2j+1}}.
\end{align*}
Similarly we have
\begin{align*}
	\tau_{2N-2j+2}^{\theta_j} =e^{-\theta_j u_{j,2N-2j+3}}\prod_{i=1}^{j} e^{\theta_j u_{i,2N-2j+3}}, \quad \tau_{2N-2j+1}^{\theta_{j}} =\prod_{i=1}^{j} e^{\theta_j u_{i,2N-2j+2}}.
\end{align*}
Thus,
\begin{align*}
	\prod_{j=1}^N \left(\frac{\tau_{2N-2j+2}	\tau_{2N-2j}}{\tau_{2N-2j+1}^2}\right)^{\theta_j} & =\prod_{j=1}^{N}\left(e^{-\theta_j u_{j,2N-2j+3}}\prod_{i=1}^{j}e^{\theta_j(u_{i,2N-2j+1}+u_{i,2N-2j+3}-2u_{i,2N-2j+2})}\right) \\ & = \left(\prod_{i=1}^N e^{-\theta_i u_{i,2N-2i+3}}\right)\cdot \left(\prod_{i=1}^{N}\prod_{j=i}^N  e^{\theta_j(u_{i,2N-2j+1}+u_{i,2N-2j+3}-2u_{i,2N-2j+2})}\right) \\ & {=} \left(\prod_{i=1}^N e^{-\theta_i u_{i,2N-2i+3}}\right)\cdot \left(\prod_{i=1}^{N}\prod_{j=1}^{N-i+1} e^{\theta_{N-j+1}(u_{i,2j-1}+u_{i,2j+1}-2u_{i,2j})}\right),
\end{align*}	
where the last equality follows by changing the dummy variable $j$ has been changed to $N-j+1$. The last term above is clearly equal to the right-hand side of \eqref{tta}.

\medskip

\noindent\textbf{Verification of \eqref{ttb}.} Let us write
\begin{align}\notag
	\sum_{i>j} \frac{t_{i-1,j}}{t_{i,j}}  = \sum_{j=1}^N \sum_{i=j+1}^{2N-j+1} \frac{t_{i-1,j}}{t_{i,j}}  & = \sum_{j=1}^N \sum_{r=1}^{2N-2j+1} \frac{t_{j+r-1,j}}{t_{j+r,j}} \\ & = \sum_{j=1}^N \sum_{r=1}^{N-j} \frac{t_{j+2r-1,j}}{t_{j+2r,j}}+\sum_{j=1}^N \sum_{r=1}^{N-j+1} \frac{t_{j+2r-2,j}}{t_{j+2r-1,j}}. \label{ettb}
\end{align}
Observe that
\begin{align}\label{fttb}
	e^{u_{N-r-j+1,2r+1}}=t_{j+2r,j}, \quad e^{u_{N-r-j+2,2r}}=t_{j+2r-1,j}.
\end{align}
Thus we have
\begin{align*} \eqref{ettb}  & = \sum_{j=1}^N \sum_{r=1}^{N-j} e^{u_{N-r-j+2,2r}-u_{N-r-j+1,2r+1}}+\sum_{j=1}^N \sum_{r=1}^{N-j+1} e^{u_{N-r-j+2,2r-1}-u_{N-r-j+2,2r}} \\ & = \sum_{j=1}^N \sum_{r=1}^{j-1} e^{u_{j-r+1,2r}-u_{j-r,2r+1}}+\sum_{j=1}^N \sum_{r=1}^{j} e^{u_{j-r+1,2r-1}-u_{j-r+1,2r}} \quad (j\mapsto N-j+1) \\ & =\sum_{r=1}^{N-1} \sum_{j=r+1}^{N} e^{u_{j-r+1,2r}-u_{j-r,2r+1}}+\sum_{r=1}^N \sum_{j=r}^{N} e^{u_{j-r+1,2r-1}-u_{j-r+1,2r}}  \\ & =\sum_{r=1}^{N-1} \sum_{i=1}^{N-r} e^{u_{i+1,2r}-u_{i,2r+1}}+\sum_{r=1}^N \sum_{i=1}^{N-r+1} e^{u_{i,2r-1}-u_{i,2r}},
\end{align*}
where $(j\mapsto N-j+1)$ means the dummy variable $j$ has been changed to $N-j+1$ to obtain the equality in the second step. The last equality follows by setting $j-r\mapsto i$ and $j-r\mapsto i-1$ in the first and second sum respectively. A final interchange of sum in each of the two terms leads to the right hand side of \eqref{ttb}.

\medskip

\noindent\textbf{Verification of \eqref{ttc}.} We follow the same above strategy and write
\begin{align}\notag
	\sum_{i\ge j>1} \frac{t_{i,j-1}}{t_{i,j}}  = \sum_{j=2}^N \sum_{i=j}^{2N-j+1} \frac{t_{i,j-1}}{t_{i,j}}  & = \sum_{j=2}^N \sum_{r=1}^{2N-2j+2} \frac{t_{j+r-1,j-1}}{t_{j+r-1,j}} \\ & = \sum_{j=2}^N \sum_{r=1}^{N-j+1} \frac{t_{j+2r-1,j-1}}{t_{j+2r-1,j}} +\sum_{j=2}^N \sum_{r=1}^{N-j+1} \frac{t_{j+2r-2,j-1}}{t_{j+2r-2,j}} \label{ettc}
\end{align}
Due to \eqref{fttb} we have
\begin{align*}
	\eqref{ettc} & = \sum_{j=2}^N \sum_{r=1}^{N-j+1} e^{u_{N-r-j+2,2r+1}-u_{N-r-j+2,2r}} +\sum_{j=2}^N \sum_{r=1}^{N-j+1} e^{u_{N-r-j+3,2r}-u_{N-r-j+2,2r-1}} \\ & =\sum_{j=1}^{N-1} \sum_{r=1}^{j} e^{u_{j-r+1,2r+1}-u_{j-r+1,2r}} +\sum_{j=1}^{N-1} \sum_{r=1}^{j} e^{u_{j-r+2,2r}-u_{j-r+1,2r-1}} \quad (j\mapsto N-j+1) \\ & =\sum_{r=1}^{N-1}\sum_{j=r}^{N-1} e^{u_{j-r+1,2r+1}-u_{j-r+1,2r}} +\sum_{r=1}^{N-1}\sum_{j=r}^{N-1} e^{u_{j-r+2,2r}-u_{j-r+1,2r-1}} \\ & =\sum_{r=1}^{N-1}\sum_{i=1}^{N-r} e^{u_{i,2r+1}-u_{i,2r}} +\sum_{r=1}^{N-1}\sum_{i=1}^{N-r} e^{u_{i+1,2r}-u_{i,2r-1}} \quad (j-r\mapsto i-1).
\end{align*}
A final interchange of sum in each of the two terms leads to the right hand side of \eqref{ttc}. This completes the verification of all three equalities.

\medskip

\noindent\textbf{Verification of \eqref{calcc}.} Note that $\theta_c$ is a function of $p$ defined as a solution of the equation $\Psi'(\theta_c)-p\Psi'(2\theta-\theta_c)=0$. Set $g(p)=\theta_c$. Note that $g(1)=\theta$. By differentiating the equation  with respect to $p$ we get
$$g'(p)\Psi''(g(p))-\Psi'(2\theta-g(p))+p\Psi''(2\theta-g(p))g'(p)=0$$
This implies $g'(1)=\Psi'(\theta)/2\Psi''(\theta)$. Since $p-1=O(N^{-1/3})$. By Taylor expansion around $1$ to first three terms we get
\begin{align*}
	(N-k)f_{\theta,p}& =-(N-k)\big(\Psi(g(p))+p\Psi(2\theta-g(p))\big) \\ & = -(N-k)\Big(2\Psi(\theta)+(p-1)\Psi(\theta)+(p-1)^2\big(\Psi''(\theta)(g'(1))^2-g'(1)\Psi'(\theta)\big)+O(N^{-1})\Big) \\ & =-2N\Psi(\theta)+\frac{k^2(\Psi'(\theta))^2}{\Psi''(\theta)(N-k)}+O(1),
\end{align*}
where in the final line we used the fact that $p-1=2k/(N-k)$ and the formula for $g'(1)$ derived above. Taking $k=MN^{2/3}$ we arrive at the leading orders claimed in the first part of \eqref{calcc}. The second part follows by observing that by Taylor expansion up to first order we have
$$\log \sigma_{\theta,p}=\log\sigma_{\theta,1}+O(p-1)=\log\sigma_{\theta,1}+O(N^{-1/3}).$$
Thus, $\sigma_{\theta,p}/\sigma_{\theta,1}=\sigma_{\theta,p}/(-\Psi''(\theta))^{1/3} \stackrel{N\to\infty}{\to} 1,$ proving the second claim in \eqref{calcc}.
\section*{Glossary}

{\renewcommand{\arraystretch}{1.2}
	\begin{longtable}[t]{lp{0.6\textwidth}l}
		\toprule
		
		\multicolumn{3}{l}{General notation used throughout the text} \\
		\midrule
		$\hslg$ & half-space log-gamma & Sec.~\ref{sec:1.1} \\
		$\mathbb{Z}_{\ge k}$ & set of all integers $\ge k$ & Sec.~\ref{sec:1.1} \\
		$\calI$ &$\big\{(i,j)\in (\mathbb{Z}_{\geq 1})^2: j\leq i\big\}$ &  Sec.~\ref{sec:1.1}\\
        $W_{i,j}$ & inverse-gamma polymer weights & Eq.~\eqref{eq:wt}\\
        $\Pi_{m,n}$ & set of all directed paths from $(1,1)$ to $(m,n)$ in $\calI$ & Sec.~\ref{sec:1.1}\\
        $w(\pi)$ & weight of path $\pi$ & Eq.~\eqref{eq:wz}\\
		$Z_{(\alpha,\vec\theta)}(m,n)=Z(m,n)$ & point-to-point $\hslg$ polymer partition function & Eq.~\eqref{eq:wz}\\
		$\mathcal{F}_N^{\alpha}(s)$  & centered and scaled $\hslg$ free energy process &  Eq.~\eqref{eq:fnalpha}\\
        $\Psi(z)$ & digamma function & Eq.~\eqref{psidef}\\
		$\zl (k)$ & point-to-line $\hslg$ polymer partition function & Eq.~\eqref{rfl} \\
		
		$Z_{\operatorname{sym}}^{(r)}(m,n)$ &  multipath point-to-point symmetrized log-gamma polymer partition function & Eq.~\eqref{zsymr} \\
		
		$\L^N$ & $\hslg$ line ensemble & Def.~\ref{l:nz}\\

		$\mathcal{K}_{k,T}$ and $\mathcal{K}_{k,T}'$ & two important domains for $\hslg$ Gibbs measures & Eq.~\eqref{def:kkt} \\
		
		$\alpha_1$ and $\alpha_2$ & scalings for the boundary parameter & Eq.~\eqref{acric} \\
		$\m{Gap}_{\beta}$ & gap event & Eq.~\eqref{def:gp}\\
		$\omega_{\delta}^N(f;\ll1,U\rr)$ & modulus of continuity & Eq.~\eqref{eq:modcont}\\
		\midrule
		
		\multicolumn{3}{l}{Basic probability densities and distributions} \\
		\midrule
		$\operatorname{Gamma}^{-1}(\beta)$ & inverse-gamma distribution with density against Lebesgue given by $\mathbf{1}\{x>0\}\Gamma^{-1}(\beta)x^{-\beta-1}e^{-1/x}$ &   Sec.~\ref{sec:1.1}\\
		$W_{e}(x)$ & weight function for edges & Eq.~\eqref{def:wfn}\\
		
		\multicolumn{2}{l}{$W(a;b,c):=\exp(-e^{a-b}-e^{a-c}), \ \ a,b,c\in \R$} & Eq.~\eqref{def:wfns} \\
		
		\multicolumn{2}{l}{$G_{\theta,(-1)^m}(x):=e^{\theta (-1)^mx-e^{(-1)^my}}/\Gamma(\theta), \ \ \theta\in \R, m\in \mathbb{Z}_{\ge 0}, y\in \R$} & Eq.~\eqref{def:gwt} \\
		
		\multicolumn{2}{l}{$\fa(x)=\int_{\R} G_{\theta,+1}(y) G_{\theta,-1}(x-y)dy, \ \ \theta\in \R, x\in \R$} & Eq.~\eqref{def:faga} \\
		
		\multicolumn{2}{l}{$\ga(x)=G_{\zeta,+1}(x)$} & Eq.~\eqref{def:faga} \\
		
			{$\qo_{\theta_1,\theta_2;\pm 1}^{(a,b)}(x)$} &  & Eq.~\eqref{qdist} \\		
		\midrule
		
		\multicolumn{3}{l}{Probability distributions on random walks and bridges} \\
		\midrule
$f_{k,T}^{\vec{y},\vec{z}}(\mathbf{u})$ & density of the $\hslg$ Gibbs measure on the domain $\mathcal{K}_{k,T}$ with boundary condition $(\vec{y},\vec{z})$ & Eq.~\eqref{def:fhsgm}					\\
		$\psa$ &  $\hslg$ Gibbs measure on $\mathcal{K}_{k,T}$ with boundary condition $(\vec{y},\vec{z})$ (the $\alpha$ subscript is sometimes dropped when clear)& Def.~\ref{def:hslgfmeas} \\
$\Pr_{\alpha}^{\vec{y};(-\infty)^{T};k,T}$ & bottom free $\hslg$ Gibbs measure on $\mathcal{K}_{k,T}$ with boundary condition $(\vec{y}$ (the $\alpha$ subscript is sometimes dropped when clear)& Def.~\ref{def:btf} \\
$Q_{k,T}^{\vec{y}',\vec{z}}(\mathbf{u})$ &
$\hslg$ Gibbs measure on the domain $\mathcal{K}_{k,T}'$ with boundary condition $(\vec{y},\vec{w})$ & Eq.~\eqref{def:fhsgm2}\\
		  $\Pr_{\operatorname{WPRW}}^{n;(a_1,a_2)}$ and $\Pr_{\operatorname{PRW}}^{n;(a_1,a_2)}$ & law of weighted paired random walk and paired random walk of length $n$ started from $(a_1,a_2)$  & Def.~\ref{prb}\\		
			$\Pr^{n;(a_1,a_2),(b_1,b_2)}$ & law of two independent random  bridges of length $n$ started from $(a_1,a_2)$ and ending at $(b_1,b_2)$ with increments drawn from $\fa$ defined in \eqref{def:faga} & Def.~\ref{def:rws} \\			
			$\til\Pr^{(n;p,q);(a_1,a_2),(b_1,b_2)}$ &  law of two independent $(n;p,q)$-modified random bridges  of length $n$ started from $(a_1,a_2)$ ending at $(b_1,b_2)$ with  increments drawn from $\fa$ & Def.~\ref{def:mrb} \\			
			$\Pr^{n;(a_1,a_2)}$ & law of two independent random walks of length $n$  started from $(a_1,a_2)$ with increments drawn from $\fa$ & Def.~\ref{def:rws} \\
		\bottomrule
	\end{longtable}
}


	\bibliographystyle{alpha}		
	\bibliography{half}
	
\end{document}